%\documentclass[12pt,reqno]{amsart}
               % reqno: equation number on the right
               %size: 12 pt
% ----------------------------------------------------------------
% AMS-LaTeX Paper ************************************************
% **** -----------------------------------------------------------
\documentclass[12pt]{amsart}
\usepackage[margin=1.25in]{geometry}
\usepackage{amsmath,amsthm,amscd,amsfonts,wasysym,amssymb,epic,eepic,bbm,tikz-cd,mathtools,array}
\usepackage{dynkin-diagrams}
\usepackage[pagebackref,colorlinks=true,linkcolor=blue,citecolor=blue]{hyperref}
\usepackage[noabbrev]{cleveref}
\usepackage{MnSymbol}
\usepackage{marginnote}
\usepackage{dynkin-diagrams}
\usepackage{amssymb}
\usepackage{float}
\usepackage{amscd}
\usepackage{amsfonts}
\usepackage[utf8]{inputenc}
 \usepackage{longtable}
 \usepackage{stmaryrd}
%\usepackage{graphicx}%
%\usepackage{fancyhdr}

%\synctex=1
%-------Packages---------
\usepackage{mathtools}

%\usepackage{fullpage}
                    % full page: name and titles will not show up as headers.
                    % The other option is
       %         \usepackage[margin=2cm]{geometry}
%\usepackage{mathbbol}
%\usepackage{amssymb,amsfonts}
\usepackage[all,arc]{xy}
\usepackage{enumerate}
\usepackage{mathrsfs}
\usepackage{color}   %May be necessary if you want to color links
\usepackage{hyperref}
\hypersetup{
    colorlinks=true, %set true if you want colored linsk
    linktoc=all,     %set to all if you want both sections and subsections linked
    linkcolor=blue,  %choose some color if you want links to stand out
}
\usepackage{amsthm}
\usepackage{amsmath}
\usepackage{graphicx}
\usepackage{wasysym}
\usepackage{pgf,tikz}
\usetikzlibrary{positioning}
\usepackage{ marvosym }
\usepackage{tikz-cd}
\usepackage{ textcomp }
\usepackage{bbm}
\usepackage{ stmaryrd }
%-------------end of packages
%\usepackage{cleveref}

%%%%%%%%%%%%%%%%%%%%%%%%%%%%%%%%%%%%%%%%%%%%%%
%  Begin user defined commands

%---------- bold letters

\newcommand{\cc}{\mathbb C}
\newcommand{\ff}{\mathbb F}

\newcommand{\zz}{\mathbb Z}

\newcommand{\qq}{\mathbb Q}
\newcommand{\rr}{\mathbb R}
\newcommand{\A}{\mathbb A}

\newcommand{\la}{\langle}
\newcommand{\ra}{\rangle}
\newcommand{\lra}{\longrightarrow}
\newcommand{\hra}{\hookrightarrow}

\newcommand{\al}{\alpha}
\newcommand{\be}{\beta}
\newcommand{\ga}{\gamma}
\newcommand{\de}{\delta}
\newcommand{\De}{\Delta}
\newcommand{\Ga}{\Gamma}
\newcommand{\ep}{\epsilon}
\newcommand{\lam}{\lambda}
\newcommand{\Lam}{\Lambda}

\newcommand{\sig}{\sigma}
\newcommand{\ka}{\kappa}

\DeclareMathOperator{\Gal}{Gal}
\DeclareMathOperator{\Aut}{Aut}

\DeclareMathOperator{\ad}{ad}

\DeclareMathOperator{\out}{out}

\DeclareMathOperator{\ind}{ind}
\DeclareMathOperator{\GL}{GL}

\DeclareMathOperator{\Hom}{Hom}
\DeclareMathOperator{\Out}{Out}
\DeclareMathOperator{\Ind}{Ind}
\DeclareMathOperator{\im}{im}
\DeclareMathOperator{\PGL}{PGL}
\DeclareMathOperator{\SU}{SU}
\DeclareMathOperator{\rS}{S}
\DeclareMathOperator{\U}{U}
\DeclareMathOperator{\B}{B}
\DeclareMathOperator{\M}{M}
\DeclareMathOperator{\T}{T}
\DeclareMathOperator{\rL}{L}
\DeclareMathOperator{\K}{K}

\DeclareMathOperator{\SO}{SO}

\DeclareMathOperator{\SL}{SL}
\DeclareMathOperator{\Sp}{Sp}
\DeclareMathOperator{\Spin}{Spin}

\DeclareMathOperator{\Lie}{Lie}
\DeclareMathOperator{\sspan}{span}
\DeclareMathOperator{\diag}{diag}

\DeclareMathOperator{\Spec}{Spec}
\DeclareMathOperator{\Stab}{Stab}

\DeclareMathOperator{\coker}{coker}
\DeclareMathOperator{\rk}{rk}
\DeclareMathOperator{\Ad}{Ad}
\DeclareMathOperator{\I}{I}
\DeclareMathOperator{\Res}{Res}
\DeclareMathOperator{\Orb}{O}

\DeclareMathOperator{\inv}{inv}

\DeclareMathOperator{\TF}{TF}
%\DeclareMathOperator{\car}{car}

%-------Lie algebra letters

\newcommand{\fu}{\mathfrak u}
\newcommand{\fg}{\mathfrak g}

\newcommand{\fc}{\mathfrak c}
\newcommand{\fe}{\mathfrak e}
\newcommand{\fa}{\mathfrak a}

\newcommand{\fl}{\mathfrak l}

\newcommand{\fgl}{\mathfrak{gl}}
\newcommand{\fX}{\mathfrak X}
\newcommand{\fN}{\mathfrak N}
\newcommand{\fC}{\mathfrak C}
\newcommand{\fsl}{\mathfrak{sl}}

\newcommand{\modmod}{\backslash\backslash}
% -------sheaf symbols

\newcommand{\calh}{\mathcal{H}}

\newcommand{\calg}{\mathcal{G}}
\newcommand{\calo}{\mathcal{O}}
\newcommand{\calx}{\mathcal{X}}
\newcommand{{\rP}}{\mathrm{P}}
\newcommand{{\rM}}{\mathrm{M}}
\newcommand{{\X}}{\mathrm{X}}

\newcommand{{\cale}}{\mathcal{E}}
\newcommand{\cala}{\mathcal{A}}

\newcommand{\cald}{\mathcal{D}}

\newcommand{\caln}{\mathcal N}
\newcommand{\calm}{\mathcal M}

%---------------Personal notations

\newcommand{\heart}{\heartsuit}
\newcommand{\Nm}{\mathrm{Nm}}

\newcommand{\Gm}{\mathbb{G}_m}

\newcommand{\iso}{\overset{\sim}{\longrightarrow}}
\newcommand{\car}{\textbf{car}}
\newcommand{\kbar}{\overline{k}}

\newcommand{\rH}{\mathrm{H}}

\newcommand{\Ax}{\mathrm{A}_{\mathrm{X}}}
\newcommand{\rA}{\mathrm{A}}
%---------------Stacks

\newcommand{\D}{{\mathcal{D}}}
\newcommand{\G}{{\mathrm{G}}}
\newcommand{\Y}{{\mathrm{Y}}}
%\newcommand{\T}{{\mathrm{T}}}
%\newcommand{\fX}{\mathfrak{X}}

%\newcommand{\dag}{\dagger}

%-----------------Building \Ddots for rising diagonal dots

\makeatletter
\def\Ddots{\mathinner{\mkern1mu\raise\p@
\vbox{\kern7\p@\hbox{.}}\mkern2mu
\raise4\p@\hbox{.}\mkern2mu\raise7\p@\hbox{.}\mkern1mu}}
\makeatother

\newenvironment{psmatrix}
  {\left(\begin{smallmatrix}}
  {\end{smallmatrix}\right)}

%  End user defined commands
%%%%%%%%%%%%%%%%%%%%%%%%%%%%%%%%%%%%%%%%%%%%%%

%%%%%%%%%%%%%%%%%%%%%%%%%%%%%%%%%%%%%%%%%%%%%%
% These establish different environments for stating Theorems, Lemmas, Remarks, etc.

\newtheorem{Thm}{Theorem}[section]
\newtheorem{Prop}[Thm]{Proposition}
\newtheorem{LemDef}[Thm]{Lemma/Definition}
\newtheorem{Lem}[Thm]{Lemma}
\newtheorem{Cor}[Thm]{Corollary}

\newtheorem{Conj}[Thm]{Conjecture}
\newtheorem{Quest}[Thm]{Question}
\newtheorem{Assumption}[Thm]{Assumption}
\newtheorem{Axiom}[Thm]{Axiom}

\theoremstyle{definition}
\newtheorem{Def}[Thm]{Definition}

\theoremstyle{remark}
\newtheorem{Rem}[Thm]{Remark}
\newtheorem{Ex}[Thm]{Example}

\theoremstyle{definition}

\newcommand{\quash}[1]{}
 \setlength{\extrarowheight}{10pt}
% End environments
%%%%%%%%%%%%%%%%%%%%%%%%%%%%%%%%%%%%%%%%%%%%%%%
\numberwithin{equation}{section}

%\crefformat{section}{\S#2#1#3} % see manual of cleveref, section 8.2.1
%\crefformat{subsection}{\S#2#1#3}
%\crefformat{subsubsection}{\S#2#1#3}

%%%%%%%%%%%%%%%%%%%%%%%%%%%%%%%%%%%%%%%%%%%%%%
% Now we're ready to start
%%%%%%%%%%%%%%%%%%%%%%%%%%%%%%%%%%%%%%%%%%%%%%

%--------Meta Data: Fill in your info------
\title[Endoscopy for symmetric varieties]{Symmetric varieties for endoscopic groups}
\author{Spencer Leslie}

\date\today

\address{Department of Mathematics, Boston College, Chestnut Hill, MA, USA}

\email{spencer.leslie@bc.edu}

%\thanks{This work was partially supported by an AMS-Simons Travel Award and NSF grant DMS-2200852}
\subjclass[2010]{Primary 11F70; Secondary 11F55, 11F66}
\keywords{Endoscopy, spherical varieties, symmetric varieties, relative trace formula, Hamiltonian variety, relative Langlands duality}

\begin{document}

\begin{abstract}We introduce a notion of endoscopic varieties for a symmetric variety $\X$, and establish the foundational properties of these varieties such as matching of stable semi-simple orbits. To do this, we introduce certain automorphism groups of homogeneous spherical varieties, which encode the fine rational structure needed to work over non-algebraically closed fields. In particular, we establish the existence and uniqueness of the corresponding symmetric varieties under a mild restriction of the characteristic of the field of definition. We conjecture that this construction plays a role analogous to endoscopic groups in the context of the relative trace formula. As evidence, we show how our construction gives a pre-stabilization of regular elliptic terms of relative trace formulae for many symmetric varieties. 

When the cotangent bundle of the symmetric variety is hyperspherical, we relate our theory to the Hamiltonian variety of the Langlands dual group introduced by Ben-Zvi, Sakellaridis, and Venkatesh, proving some structural conjectures for this variety in the symmetric setting. 
\end{abstract}

\maketitle
\setcounter{tocdepth}{1}
%This is a the post-arXiv submission revision. 
\tableofcontents

\section{Introduction}
%The theory of endoscopy has played a central role in the Langlands program. The notion arises from considerations of the trace formula where the distinction between actually conjugacy classes of a reductive group $\G$ over $\qq$ and rational points of schematic orbits causes fundamental problems. This arithmetic vs. geometric distinction is mediated by the Galois cohomology groups of centralizers, and Langlands' key insight \cite{LanglandsStableconj} was that the duality of Tate and Nakayama enables one to calculate these groups in terms of the Langlands dual group $\check{\G}$. This proved pivotal for the development of the stable trace formula globally, and of the  understanding of the structure of $L$-packets locally.

The primary goal of this paper is to develop a theory of endoscopy in the relative Langlands program of Jacquet and Sakellaridis--Venkatesh. That is, given a connected reductive group $\G$ over a field $k$ and a smooth symmetric variety $\X$, we develop a method of associating symmetric varieties $\X_\fe$ for endoscopic groups $\G_\fe$ in terms of the dual group $\check{\G}_\X$ which is suitable for stabilizing the relative trace formula associated to many pairs $(\G,\X)$. As discussed below, this requires proving the existence and uniqueness of the appropriate symmetric varieties of endoscopic groups, for which we introduce a new invariant (the geometric cocyle) to homogeneous spherical $k$-varieties to differentiate between possible $k$-forms of the variety $\X$ with different arithmetic properties. We then  establish the foundational properties required of a theory of endoscopy, including the matching of geometric semi-simple orbits necessary for comparison of relative trace formulae. In sufficiently nice settings, we place these results in the context of the relative duality conjectures of Ben-Zvi, Sakellaridis, and Venkatesh \cite{BZSV}.

In Section \ref{Section: stabilize}, we show how our notion of endoscopic datum of the symmetric variety gives a pre-stabilization of the regular elliptic part of the relative trace formula for symmetric varieties. This relies on several cohomological preliminaries established in the companion paper \cite{LesliestabFJ}, where we also apply these results to formulate the local harmonic-analytic conjectures (smooth transfer, fundamental lemma, etc.) generalizing notions introduced in \cite{Leslieendoscopy,LeslieUFJFL,Lesliedescent} and stabilize the elliptic part of a large class of relative trace formulae subject to these conjectures. Nevertheless, the main step in the prestabilization is demonstrated in Proposition \ref{Prop: important}, in which all of our existence and uniqueness results become necessary. In a sense, the notion of endoscopic datum of a symmetric variety $\X=\rH\backslash \G$ is formulated to prove Proposition \ref{Prop: important}. 

\subsection{Motivation for a relative theory of endoscopy} Suppose that $\G$ is a connected reductive group over a field $k$, which we assume for the moment is global. In this setting, one is interested in the study of period integrals of automorphic forms and their relations to functoriality and arithmetic quantities such as $L$-functions.  While certainly not the most general setting, this typically involves a closed subgroup $\rH\subset \G$ and one seeks to analyze the period integral
\[
\varphi\in L^2(Z_\rH(\G)(\A)\G(k)\backslash\G(\A))\longmapsto \displaystyle \int_{Z_\rH(\G)(\A)\rH(k)\backslash\rH(\A)}^\ast \varphi(h)dh,
\]
where $Z_\rH(\G)=\rH\cap Z(\G)$ and the asterisk indicates the common need to regularize the integral.

In this context, one of the central tools is the relative trace formula (RTF). Introduced by Jacquet in the 1980's \cite{JacquetWalds}, this gives a distribution designed to relate period questions on different groups via comparisons: by matching up the geometric terms (orbital integrals) of two RTFs associated to $(\G_1,\rH_1)$ and $(\G_2,\rH_2)$, we can extract relations between the respective spectral terms ($L$-values and period integrals). A famous recent example of this method is the proof of the Gan--Gross--Prasad and Ichino--Ikeda conjectures for unitary groups, now a theorem by the combination of many works \cite{YunJR,ZhangRankin,beuzart2019isolation,beuzart2020global}. 

In general, many of the good properties present in the GGP setting (trivial generic stabilizers on the geometric side, local multiplicity one on the spectral side) fail for more general RTFs, and several new problems arise. As noted by Getz and Wambach in their twisted base change formalism \cite{GetzWambach}, many RTFs require a form of \emph{endoscopy and stabilization} analogous to the Arthur--Selberg trace formula in order to be effectively compared. Spectrally, this is connected to questions of whether periods of automorphic forms (or more properly, the squares of their norms) are Eulerian or are sums of Euler products. See \cite[Section 14]{BZSV} for a collection of conjectures of when such formulae are expected to hold; we expect the theory of endoscopy developed here will play a role in establishing cases of these conjectures, and discuss a specific example in Section \ref{Section: Example}.

When $k$ is a local field, one is interested in understanding the space of $\rH(k)$-invariant functionals $\Hom_{\rH(k)}(\pi,\cc)$ for an irreducible admissible representation $\pi$ of $\G(k)$. This is most naturally studied on the level of packets, as illustrated by the local Gan--Gross--Prasad conjectures and the conjectures of Sakellaridis and Venkatesh \cite{SakVenk}. Indeed, questions about the distribution of $\rH$-invariant functionals within packets, as well as general dimension formulae such as \cite{Wandimension,prasad2015relative} often have interpretations in terms of endoscopic transfer. To give an example, the multiplicity formulae for unitary Shalika models in \cite{WanBPfuture} (see also \cite{chen2021unitary}) are explicitly computed via a local trace formula technique combined with the endoscopic properties of $L$-packets for unitary groups.

\subsubsection{Prior work}
As a first case, we developed a putative theory of endoscopy for the case of unitary Friedberg--Jacquet (FJ) periods where $\G=\U_{2n}$ is a unitary group of even rank and $\rH=\U_n\times \U_n$ \cite{Leslieendoscopy, LeslieUFJFL,Lesliedescent}, establishing many of the necessary harmonic-analytic results such as the fundamental lemma and results toward smooth transfer. The stabilization of the elliptic part of this trace formula will appear in \cite{LesliestabFJ}. This case has important applications with arithmetic: in ongoing joint work with Jingwei Xiao and Wei Zhang, we relate these {periods to central values of base change $L$-functions} on associated general linear groups by comparing a relative trace formula to the \emph{stabilized relative trace formula for unitary FJ periods}. Unfortunately, the notion of endoscopic symmetric variety used in \cite{Leslieendoscopy} relies on certain degenerations to relate relative orbital integrals to more classical objects in the setting of endoscopy for Lie algebras. As a result, it does not clearly extend to more general period questions. 

The purpose of this paper is to develop a general framework of endoscopy for symmetric varieties which recovers the theory of \cite{Leslieendoscopy, LeslieUFJFL,Lesliedescent}, allows for the (pre-)stabilization of relative trace formulas for more general symmetric varieties, and is compatible with the conjectures of Sakellaridis--Venkatesh and the more recent relative duality conjectures of \cite{BZSV} (see Section \ref{Section: BZSV intro} below).

 \subsection{The dual group and semi-simple descent}\label{Sec: endoscopy intro} Suppose now that $\G$ is a connected reductive group over a field $k$ of either characteristic zero or large enough positive characteristic in a sense made precise later. Suppose also that $\X$ is a homogeneous spherical $\G$-variety with $x_0\in \X(k)\neq \emptyset$. Setting $\rH:=\mathrm{Stab}_{\G}(x_0)$, we obtain $\X\simeq\rH\backslash\G$. Assume also that $\G$ is quasi-split, and let $\rA\subset \B$ denote a $k$-rational maximal torus and a Borel subgroup. Sphericity of $\X$ is the statement that $\B$ acts on $\X$ with a Zariski-open orbit. As first observed by Brion \cite{BrionVers}, there exists a based root datum $\Psi_\X$ associated to $\X$ which controls the large scale geometry of $\X$.

Motivated by Langlands duality, one may associate a complex reductive group $\check{\G}_\X$ by passing to the dual root datum. In \cite{SakVenk}, Sakellaridis and Venkatesh refined the dual group to a hypothetical homomorphism
\begin{equation}\label{eqn: intro dist morphism}
    \varphi_{\X}: \check{\G}_\X\times \SL_2(\cc)\lra \check{\G}
\end{equation}
to formulate a Plancherel theorem for spherical varieties over p-adic fields. The existence and uniqueness (up to conjugacy) of this morphism was established by Knop and Schalke \cite{KnopSchalke}, who refer to such maps as distinguished  morphisms. As clarified in \cite{KnopFunctorial}, there is an essentially unique way to induce an action of the Galois group $\Ga$ of $k$ on $\check{\G}_\X$ so that $\varphi_\X$ intertwines with the $L$-action on $\check{\G}$.

Now assume that $\X$ is symmetric, in the sense that there is a $k$-rational involutive automorphism $\theta$ of $\G$ such that 
\[
(\G^\theta)^\circ\subset \rH\subset N_{\G}(\G^\theta);
\]
we always assume that $\rH$ is smooth (automatic if $\rH$ connected or if $\mathrm{char}(k)$ is zero). We now assume that {$\mathrm{char}(k)$ is good for $\G$} (cf. Section \ref{section: prelim groups}).%$; the point is to ensure that symmetric varieties over $k$ behave similarly to characteristic zero.

The notion of endoscopic group is closely tied to semi-simple descent on $\check{\G}$. Our starting point for the relative setting is the construction of $\varphi_\X$ by Knop--Schalke (which is compatible with ideas of Nadler \cite{NadlerReal}): the image of $\check{\G}^\ast_\X:=\varphi_{\X}(\check{\G}_\X)$ is (the connected component of the identity of) a symmetric subgroup of a Levi subgroup $\hat{\G}_\X\subset\check{\G}$ (Proposition \ref{Prop: dual involution}). We let
 \[
 \hat{{\X}}:=\check{\G}_{{\X}}^\ast\backslash\hat{\G}_\X.
 \]
 This is a complex symmetric variety equipped with a distinguished point $\hat{x}_0.$ The equivariance of $\varphi_{{\X}}$ implies that $\Ga$ acts on $\hat{{\X}}$. An important property of symmetric varieties is that they are \emph{quasi-Hamiltonian}: they come with a group-valued moment map 
 \[
 \hat{s}:\hat{{\X}}\lra \hat{\G}_\X\subset \check{\G},
 \]
which is $\Ga$-equivariant. We refer to it as the symmetrization map for $\hat{\X}$.
 
In Section \ref{Section: endoscopy defs}, we isolate a closed subvariety of semi-simple points $x\in \hat{\X}^{\heart}\subset\hat{{\X}}^{ss}$ and let $(\check{\G}_x,\hat{\G}_{\X,x},\check{\G}_{{\X},x})$ be the \textbf{descendant at $x;$} that is, $\check{\G}_x$ (resp., $\hat{\G}_{\X,x}$) is the centralizer of $\check{s}(x)$ in $\check{\G}$ (resp., $\hat{\G}_\X$) and $\check{\G}_{{\X},x}$ is the stabilizer of $x\in \hat{\X}$ in $\check{\G}_\X$. The constraint on $x\in \hat{\X}^{ss}$ is to ensure that the morphism from $\SL_2(\cc)\lra \check{\G}$ induced from \eqref{eqn: intro dist morphism} factors through $\check{\G}_x$. This restriction is motivated by harmonic-analytic considerations, but plays a role in our arguments (cf. Proposition \ref{Prop: endoscopic roots}).

Set $\hat{{\X}}_{x}:=\varphi_\X(\check{\G}_{{\X},x})\backslash\hat{\G}_{\X,x}$. The symmetrization map induces a closed immersion
 \[
{\eta}_x:\hat{{\X}}_{x}\lra \hat{{\X}},
 \]
 and a commutative diagram
 \begin{equation}\label{eqn: intro diagram}
 \begin{tikzcd}
\check{\G}_{{\X},x}\ar[d]\ar[r]&\hat{\G}_{x}\ar[d]\ar[r]&\hat{{\X}}_{x}\ar[d,"\eta_x"]\\
 \check{\G}_{{\X}}\ar[r,"\varphi_{{\X}}"]&\hat{\G}\ar[r]&\hat{{\X}}
 \end{tikzcd}
 \end{equation}
 Note that if $\eta:\check{\G}_x^\circ\subset \check{\G}$ is the inclusion, the triple $\fe:=(\check{\G}_x^\circ,\hat{s}(x),\eta)$ is  an endoscopic datum for $\G$. Let $\G_\fe$ denote the corresponding quasi-split reductive group over $k$ satisfying that $\check{\G}_{{\fe}}=\check{\G}_x^\circ$. %For example, if we assume that $\G$ has simply-connected derived group. 

 It is natural to ask if the top row of \eqref{eqn: intro diagram} corresponds to a $k$-rational spherical $\G_\fe$-variety $\X_\fe$ in the sense that $\check{\G}_{\X_\fe} = \check{\G}_{{\X},x}$ and the induced morphism
 \[
  \varphi_{\X}|_{\check{\G}_{{\X},x}}: \check{\G}_{\X,x}\times \SL_2(\cc)\lra \check{\G}_x
 \]
 is the analogue of \eqref{eqn: intro dist morphism} for $(\G_\fe,\X_\fe)$. This is essentially the content of Theorem \ref{Thm: exists} (which we state as Theorem \ref{Thm: exists intro} below), though more work is necessary to state this result. More precisely, we establish the existence and uniqueness (under mild assumptions) of the pair $(\G_\fe,\X_\fe)$ suitable for relative functoriality and for the stabilization of the RTF associated to $(\G,\X)$.

\begin{Rem}[Why symmetric varieties?]
    A natural question is why restrict to symmetric varieties among all spherical varieties. There are several components to this.
    \begin{enumerate}
        \item Semi-simple descent at \emph{all} semi-simple points is central to the theory of elliptic endoscopy, both classically and in the relative setting. For symmetric varieties, there is a well-understood theory for descent via the symmetrization map, so that the existence of the dual involution in Proposition \ref{Prop: dual involution} seems to play a crucial role. It is not clear how to extend this to the general spherical setting, though certainly the existence of a group-valued moment map is a reasonable desideratum. Certain \emph{ad hoc} variations of these techniques can be used to include affine spherical varieties such as $\G_2\backslash\Spin_7$ or $\SL_3\backslash\G_2$, but it is not clear to us if this is the correct approach in these settings.
      \item We make concrete use of the notion of $(\Ga,\theta)$-indices due to \cite{Helminckrational} as an enhancement of the spherical datum of a symmetric variety over a non-algebraically closed field in proving the existence of endoscopic symmetric varieties in Sections \ref{Section: endo invo} and \ref{Section: endo varieties}. It is possible that a similar argument may be handled purely in terms of homogeneous spherical data.
        \item Finally, there are issues of characteristic. While Section \ref{Appendix: spherical roots} discusses general spherical homogeneous spaces, it is essentially restricted to characteristic zero due to the reliance on \cite{Losev, BorovoiGagliardi}. It is likely that this restriction may be weakened to include fields of sufficiently large positive characteristic, but this is not clear. In the symmetric setting, the work of Helminck \cite{Helmincktwo, Helminckrational} and Levy \cite{Levy} allow us to handle positive characteristic settings with minor restrictions.
    \end{enumerate}
\end{Rem}
\begin{Rem}[What about hyperspherical varieties?]
     Generalizing in a different direction, one may ask about the setting of \emph{hyperspherical varieties} as studied in \cite{BZSV} as a good context for ``relative Langlands duality.'' As defined in Section 3.5 of \emph{loc. cit.}, a hyperspherical $\G$-variety $\M$ is a Hamiltonian $\G$-variety with a moment map $\mu:M\to \fg^\ast$ satisfying several assumptions including that $M$ is smooth, affine, and that the stabilizer in $\G$ of a generic point of $M$ is connected. When $M=T^\ast\X$ with $\X$ spherical, then Proposition 3.7.4 of \cite{BZSV} implies that if $M$ is hyperspherical, $\X$ is affine and ``has no spherical roots of type $N$;'' see Section \ref{Section: spherical datum}. More generally, $M$ is determined by the data
     \begin{itemize}
         \item an injective morphism $\rH\times \SL_2\to \G$ with $\rH$ reductive,
         \item a symplectic $\rH$-representation $S$.
     \end{itemize}
     %see \cite{gan2023generalised} for a discussion of this characterization.
     
  Our results and methods readily extend to spherical varieties obtained by parabolic induction from a symmetric variety on a Levi subgroup as well as {Whittaker-type inductions from such varieties}. These Whittaker inductions include several examples of hyperspherical varieties which are not cotangent bundles of symmetric varieties, such as generalized Shalika models and their unitary variants. For the sake of space, we omit a detailed account of these cases, as the core existence and uniqueness results ultimately reduce to the symmetric case. We plan to return to this more general setting in future work.

  On the other hand, this article \emph{does not} impose the restriction that generic stabilizers on $T^\ast\X$ are connected: we allow $\X$ to be a general (smooth) symmetric variety for the majority of the paper, and only impose a restriction on roots of type $N$ when connecting to conjectures of \cite{BZSV}. Indeed, many very natural symmetric varieties have type $N$ roots, such as $\mathrm{O}_n\backslash\GL_n$ or $A_1\cdot A_5\backslash E_6$. While these cases are ruled out of \cite{SakVenk,BZSV}, there is good reason to expect that these theories extend in some fashion to include such case (see \cite{chen2023slices} for a geometric example  and \cite{PollackWanZydor} for an arithmetic one). We hope that working in this generality will fit naturally into future extensions of the framework of \cite{BZSV}.
\end{Rem}

 \subsection{Rationality questions} We first address the question of \emph{uniqueness} of the variety $\X_\fe$ on $\G_\fe$. The problem here is that the data of the dual group $\check{\G}_\X$ and the morphism $\varphi_\X$ in \eqref{eqn: intro dist morphism} -- even with appropriate Galois actions incorporated -- do not determine a unique spherical $k$-variety $\X$, even up to inner twist.

 \begin{Ex}
     An important motivating example is the unitary FJ variety $$\X=\U_n\times \U_n\backslash\U_{2n},$$ where we take all the unitary groups to be quasi-split over $k$. In this case one calculates that $\check{\G}_\X = \Sp_{2n}(\cc)$ and the morphism $\varphi_\X$ is trivial on the $\SL_2(\cc)$ factor and may be arranged to be the natural inclusion
     \[
     \varphi_\X:\Sp_{2n}(\cc)\lra \GL_{2n}(\cc);
     \]
     we may arrange so that the $\Ga$-action on $\GL_{2n}(\cc)$ preserves the standard pinning and fixes $\Sp_{2n}(\cc)$ point-wise.

     On the other hand, we may consider the symmetric $\U_{2n}$-variety $$\X'=\Res_{E/k}(\GL_n)\backslash\U_{2n},$$ where $E/k$ is the quadratic extension of $k$ over which $\G$ splits. One may then show that $\X_{E}\simeq\X_{E}'$, where the subscript denotes the base change to $\Spec(E)$, and that $\X'$ induces the same distinguished morphism  \eqref{eqn: intro dist morphism}. As explored in \cite{chen2021unitary}, these two varieties give rise to deeply related but distinct arithmetic problems in the local and global relative Langlands program.\qed
 \end{Ex}

\subsubsection{Outer forms of $\X$} 
 
Suppose now that $\G$ is quasi-split over $k$ and $\overline{\X}$ is a spherical variety of $\G_{\kbar}$. We will say that $\overline{\X}$ \emph{descends to $k$} if there exists a $\G$-variety $\X$ such that $\X_{\kbar}\simeq \overline{\X}$. When $\overline{\X}$ descends to $k$ the isomorphism classes of $\G$-forms of $\overline{\X}$ are a torsor for $H^1(k,\cala_\X)$, where 
\[
\cala_\X\simeq \rH\backslash N_{\G}(\rH)
\]
is the group of $\G$-equivariant automorphisms of $\X=\rH\backslash\G$. %Importantly, the $k$-structure of $\cala_\X$ depends only on $\G$ and not the form $\X$.

Recall that the Langlands $L$-group ${}^L\G$ only determines the inner class of $\G$. Thus, our goal is to enhance \eqref{eqn: intro dist morphism} so as to determine $\X$ up to ``$\G$-inner twist,'' in a sense to be clarified presently. For any such $\G$-variety $\X=\rH\backslash\G$ there is a canonical morphism 
\[
\out_{\rH}^\G:\cala_\X \lra \Out(\rH);
\]
we set $\Out_{\X}(\rH)$ to be the image of this map\footnote{More canonically, we can work with the limit of such groups over $x\in \X(k)$, which is denoted $\Out(\X)$}. We say that two $\G$-varieties $\X$ and $\X'$ are $\G$-inner twists if there is a $\G_{\kbar}$-equivariant isomorphism $\psi:\X_{\kbar}\iso \X'_{\kbar}$ such that the $1$-cocycle $\sig\in \Ga\mapsto \psi^{-1}{}^\sig\psi$ takes values in $$\cala_\X^\flat:=\ker[\out_{\rH}^\G].$$ It is straightforward to see that if $\X'=\rH'\backslash\G$ is another $k$-form of $\X$ and we consider the corresponding class $[X']\in H^1(k,\cala_\X)$, then $\rH'$ is an inner form of $\rH$ if and only if the image of $[\X']$ in $H^1(k,\Out_\X(\rH))$ is trivial. However, this result depends on the choice of $\X$ as a $k$-forms of $\X_{\kbar}$.% are a torsor for  $H^1(k,\cala_\X)$ and there is no canonical choice.

To remedy this, we introduce a quotient $\Out_{\X}(\rH)\to \Aut_d(\X)$ called the group of ``doubling automorphisms'' of $\X$. When $\rH$ is connected, this group is implicit in the arguments of \cite{Losev} and essentially encodes the actions of $\cala_\X$ on $\B$-orbits of $\X$; see Remark \ref{Rem: geometric} and Example \ref{Ex: Symplectic example}. We develop a different definition when $\rH$ is disconnected that enjoys better functorial properties in Section \ref{Section: doubling aut gen}. In Section \ref{Section: geometric cocycle}, we associate to any $k$-rational spherical $\G$-variety a $1$-cocycle $\mu_\X:\Ga\lra \Aut_d(\X)(\kbar)$, referred to as the \emph{geometric cocycle} for $\X$. Moreover, we show in Lemma \ref{Lem: surjective on dist cohom} and Corollary \ref{Lem: examples of conj} that $H^1(k,\cala_\X)\to H^1(k,\Aut_d(\X))$ is surjective most cases of interest (we conjecture this always holds in Conjecture \ref{Conj: cohom surj}).

We say that $\X$ is \emph{well adapted} if the map $\Out_{\X}(\rH)\to \Aut_d(\X)$ is an isomorphism. In this setting, two $k$-forms $\X$ and $\X'$ of $\overline{\X}$ are $\G$-inner twists of each other if and only if their geometric cocycles are cohomologous. 

It turns out that not all spherical varieties are well adapted. Nevertheless, we completely determine which symmetric varieties are well adapted in Theorem \ref{Thm: Outer in terms of dist}. A nice consequence is that if $\X$ has no roots of type $N$, then it is well-adapted. Along the way, we also show that when a symmetric variety $\X=\rH\backslash\G$ is well adapted, there exists a $\G$-inner twist $\X'$ possessing a point $x\in \X'(k)$ such that $\mathrm{Stab}_{\G}(x)$ is the \emph{quasi-split} inner form of $\rH$; see Theorem \ref{Thm: quasi-split G-inner form}. Proving these two statements involves reducing to the case where $\G$ is $k$-simple and simply connected and appealing to the classification of symmetric varieties over $\kbar$.  This argument depends on an extension of the notion of a $z$-extension to the case of symmetric varieties in Proposition \ref{Prop: good cover}; in fact, this construction  works more generally for any finite order automorphism of $\G$ and may be of independent interest.

\subsection{Endoscopic symmetric varieties} We may now state our main existence theorem. We refer the reader to Section \ref{Section: endo varieties} for any undefined terms. Recall that to each spherical $\G$-variety $\X$, there is a well-defined notion of ``canonical torus'' $\rA_\X$, which encodes the possible Borel semiinvariant rational functions on $\X$; see Section \ref{Section: spherical datum}.

\begin{Thm}\label{Thm: exists intro}
Assume that $\X=\rH\backslash\G$ is a symmetric space associated to a $k$-rational involution $\theta$ of $\G$. Assume also that $\X$ is well-adapted. Let\footnote{See Section \ref{Section: endo varieties} for the definition of $\hat{\X}^\heart$. This condition essentially requires that the $\SL_2(\cc)$-factor of \eqref{eqn: intro dist morphism} factors through $\G_x$. In particular, endoscopic varieties are as ``{non-tempered}'' as $\X$.} $x\in \X^{\heart,\Ga}$. Let $\G_\fe$ be the (quasi-split) endoscopic group associated to $\fe_x:=(\hat{\G}_x^\circ,x,\eta)$. There exists an $k$-rational involution $\theta_\fe:\G_\fe\lra \G_\fe$ and a $k$-rational subgroup $(\G_\fe^{\theta_\fe})^\circ\subset \rH_\fe\subset N_{\G_\fe}(\G^{\theta_\fe}_\fe)$ such that if $\X_\fe:=\rH_\fe\backslash\G_\fe$  is the associated symmetric variety, 
    \begin{enumerate}
    \item\label{item lattice i} $\Ax =\rA_{\X_\fe}$ (up to a Galois $1$-cocycle valued in the little Weyl group $W_{\hat{\X}}$);
        \item\label{item diagram i} the dual sequence
        \[
\check{\G}_{\X_\fe}\lra\hat{\G}_{\X_\fe}\lra\check{\G}_\fe
        \]of the $\G_\fe$-variety $\X_\fe$ is conjugate by an element of $\check{\G}_\fe$ to the top row of \eqref{diag: dual at x};
        \item there is a canonical morphism $\Aut_d(\X)\to \Aut_d(\X_\fe)$.
\end{enumerate}
If Conjecture \ref{Conj: cohom surj} holds for $\X_{\fe}$, then $\X_\fe$ may be chosen so that its geometric $1$-cocycle $\mu_{\X_\fe}:\Ga\lra \Aut_d(\X_\fe)$ is cohomologous to the cocycle obtained by
 \begin{equation}\label{item rep i}
\begin{tikzcd}
    \Ga\ar[dr,"\mu_{\X,\fe}"]\ar[d,"\mu_\X"]&\\
    \Aut_d(\X)\ar[r]& \Aut_d(\X_\fe).
\end{tikzcd}
 \end{equation}
 When $\X_\fe$ is well-adapted, this determines it up to $\G_\fe$-inner form.
%If $H^1(k,\mathcal{A}_\X^d)=1,$ then $\X_\fe$ is unique up to $k$-rational isomorphism.
\end{Thm}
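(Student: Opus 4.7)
The plan is to construct $(\G_\fe,\theta_\fe,\rH_\fe)$ by inverting, in a $\Ga$-equivariant fashion, the passage from a symmetric variety to its dual data. Starting from the top row $\check{\G}_{\X,x}\subset \hat{\G}_{\X,x}\subset \check{\G}_\fe$ of the descent diagram, the key observation is that this triple has precisely the shape of the dual datum of a symmetric variety: $\hat{\G}_{\X,x}$ is a Levi of $\check{\G}_\fe$, and $\check{\G}_{\X,x}$ sits inside $\hat{\G}_{\X,x}$ as (the identity component of) a symmetric subgroup by the structure of semisimple descent on the quasi-Hamiltonian space $\hat{\X}$. The hypothesis $x\in \hat{\X}^{\heart}$ guarantees that the $\SL_2(\cc)$-factor of $\varphi_\X$ factors through $\check{\G}_\fe$, so that the restriction $\varphi_\X|_{\check{\G}_{\X,x}\times \SL_2(\cc)}$ is the candidate distinguished morphism of $\X_\fe$. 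Applying \Cref{Prop: dual involution} in the reverse direction to $\G_\fe$ then produces a geometric involution $\overline{\theta}_\fe$ on $(\G_\fe)_{\kbar}$ inducing the prescribed dual picture, with $(\G_\fe^{\overline{\theta}_\fe})^\circ$ as the minimal candidate stabilizer.

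The first substantive step is descending $\overline{\theta}_\fe$ to $k$. Because $x$ is Galois-fixed and the $\Ga$-action on $\check{\G}_\fe$ is induced from the action on $\check{\G}$ centralizing $\hat{s}(x)$, the dual triple is $\Ga$-stable; uniqueness of the dual involution (up to conjugation by $\check{\G}_{\X,x}$) then forces $\overline{\theta}_\fe$ to be $\Ga$-equivariant, hence $k$-rational. Properties (1) and (2) follow essentially by construction: the cocharacter lattice of $\rA_{\X_\fe}$ is the subquotient of the lattice for $\rA_\X$ cut out by the descendant, and its Galois action may differ from the restriction of the one on $\rA_\X$ only by an inner automorphism of the little Weyl group $W_{\hat{\X}}$, which accounts for the $1$-cocycle alluded to in (1); the dual-sequence statement (2) is then immediate.

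For the component-group data and the cocycle compatibility, I would construct the morphism $\Aut_d(\X)\to \Aut_d(\X_\fe)$ using the fact that descent at $x$ restricts the combinatorics of $\B$-orbits and colors from $\hat{\X}$ to $\hat{\X}_x$ equivariantly for doubling automorphisms, most efficiently verified through the $(\Ga,\theta)$-index formalism of Helminck and Levy, which descends transparently to slices. Pushing $\mu_\X$ forward through this morphism yields a cocycle valued in $\Aut_d(\X_\fe)$, and under \Cref{Conj: cohom surj} for $\X_\fe$ this lifts to a class in $H^1(k,\cala_{\X_\fe})$ specifying a $k$-form $\X_\fe = \rH_\fe\backslash \G_\fe$ with the asserted geometric cocycle. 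When $\X_\fe$ is well-adapted, the resulting $\G_\fe$-inner twist class is independent of the choice of lift by the equivalence between inner-twist classes and cohomology classes of geometric cocycles established earlier in the paper.

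The main obstacle I anticipate is precisely this last step: making $\Aut_d(\X)\to \Aut_d(\X_\fe)$ canonical and verifying its compatibility with geometric cocycles at the level of explicit $1$-cocycles, not merely cohomology classes. Properties (1) and (2) follow more or less formally from the descent construction once $\overline{\theta}_\fe$ is rational, but the compatibility of $\mu_\X$ with $\mu_{\X_\fe}$ demands a concrete combinatorial identification at the level of $(\Ga,\theta)$-indices. This is the point at which the specific structure of symmetric varieties (through the Helminck--Levy classification), as opposed to arbitrary spherical ones, plays its most essential role.
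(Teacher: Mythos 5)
The central gap is in your descent of the involution to $k$. You assert that $\Ga$-stability of the dual triple, together with uniqueness of the dual involution up to $\check{\G}_{\X,x}$-conjugacy, ``forces $\overline{\theta}_\fe$ to be $\Ga$-equivariant, hence $k$-rational.'' But $\Ga$-equivariance of the dual involution $\vartheta_\fe$ on the complex group $\hat{\G}_{\X,x}$ does not by itself produce a $k$-rational involution $\theta_\fe$ on the quasi-split $k$-group $\G_\fe$: it only pins down the restriction $\theta_\fe|_{\rA_\fe}$ to a torus and the combinatorial shadow of $\theta_\fe$, namely a candidate $(\Ga,\theta)$-index $\I_\fe$. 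Whether such an index is actually realized by a $k$-rational involution --- i.e.\ whether $\I_\fe$ is \emph{admissible} in the sense of Helminck --- is precisely the nontrivial question, and it is not a formal consequence of Galois equivariance of dual data. An index can perfectly well be $\Ga$-consistent and still fail admissibility.

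The paper closes this gap with Proposition \ref{Prop: endoscopic roots}, which shows that the $(\Ga,\theta)$-index arising from an endoscopic root system with involution is admissible; its proof reduces via Helminck's localization to relative-rank-one subsystems and then verifies admissibility case by case against the rank-one Satake diagrams in Table \ref{tab:satake}. That case-by-case verification is the essential technical input your proposal treats as automatic, and it is exactly where the restriction to symmetric (as opposed to arbitrary spherical) varieties earns its keep. Once $\theta_\fe$ is secured over $k$, your sketch of items (1)--(3) and of the cocycle lift via Conjecture \ref{Conj: cohom surj} runs roughly parallel to the paper's argument, though the paper obtains the morphism $\Aut_d(\X)\to\Aut_d(\X_\fe)$ directly from the inclusion of root lattices $\Lam_{\X_\fe}\subset\Lam_\X$ induced by $\Phi_\fe\subset\Phi$, rather than by any slice-descent argument on colors. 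You should also note that the paper handles the three cases $\rH=\G^\theta$, $\rH=(\G^\theta)^\circ$, and general $\G^\theta\subset\rH\subset N_\G(\G^\theta)$ separately in the proof, whereas your outline addresses only the first implicitly.
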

\begin{Rem}
    We verify that Conjecture \ref{Conj: cohom surj} holds if $\rH_\fe$ is connected (Lemma \ref{Lem: surjective on dist cohom}) and in many natural settings when $\rH_\fe$ is not connected (cf. Corollary \ref{Lem: examples of conj}). 
\end{Rem}

%Our discussion in the previous subsection addresses uniqueness, and is the reason for the condition on the geometric cocycle \eqref{item rep i}. Separate from this is the question of existence: that is, proving the existence of the $k$-rational involution $\theta_\fe$ on the endoscopic group $\G_{\fe}$. 
Our proof of this relies heavily on our restriction to symmetric varieties, where we may consider first the existence of the involution over the algebraic closure and then consider questions pertaining to the fine structure of $\X_\fe$ over $k$ as in the theorem. For this, we show in Section \ref{Section: endo invo} that there is a good notion of endoscopic root systems with involution; this essentially shows the existence of $\theta_\fe$ (equivalently, the symmetric variety $\overline{\X}_\fe$) over $\kbar$. In Proposition \ref{Prop: endoscopic roots}, we utilize the more subtle notion of a $(\Ga,\theta)$-index from \cite{Helminckrational} to descend this involution to $k$ (see Remark \ref{Rem: what about Helminck}). This corresponds to the statement that the symmetric variety $\overline{\X}_\fe$ descends to $\X_{\fe}$ over $k$. This is not sufficient for the purposes of endoscopy, and the additional constraint on the geometric cocycle isolates the appropriate inner class of $k$-forms. Finally, Theorem \ref{Thm: quasi-split G-inner form} shows that one may typically isolate a unique form in a given $\G_\fe$-inner class possessing points with quasi-split stabilizers. We remark that all of these conditions are satisfied if $\X$ has no spherical roots of type $N$.

With this theorem, we formulate the notion of an endoscopic datum for $(\G,\X)$ as a quintuple
\[
{\fe}=(\G_\fe,\theta_\fe,\X_\fe,\ka,\eta_\fe),
\]
where $(\G_\fe,\ka,,\eta_\fe)$ is a pure endoscopic triple for $\G$ requiring the semi-simple element $\ka\in \check{\G}^{ss}$ to satisfy several constraints, $\theta_\fe$ is the $k$-rational involution on $\G_\fe$ as in the theorem, and $\X_\fe$ is the corresponding symmetric variety. The data of $\theta_\fe$ is technically not unique up to isomorphism due to the issue of pure inner forms (see Definition \ref{Def: endo iso}), but we include it to incorporate the existence and uniqueness of the quasi-split $\G_\fe$-inner form (Theorem \ref{Thm: quasi-split G-inner form}).
\begin{Rem}
    We do not assume that $\X$ not possess type $N$ roots, but only that it be well adapted. As seen in Section \ref{Section: color autos}, these two notions are not unrelated, but the latter assumption is much weaker. 
    
    The presence of such roots can cause the distinguished morphism $\varphi_\X$ to fail to be injective, and even if it is injective, the construction of $\check{\G}_\X$ requires a certain renormalization which loses information. What saves us is our ability to keep track of the involution $\theta$ induced on a maximal torus $\rA\subset \G$ which transfers to $\G_\fe$. 
\end{Rem}
\begin{Ex}
    We show how to recover the notion of endoscopy in \cite{Leslieendoscopy}. Let $\G=\U_{2n}$ be a quasi-split unitary group associated to a quadratic extension $E/k$ of fields, and let $\X=\U_{n}\times \U_{n}\backslash\U_{2n}$ as before.  As noted,  $\check{\G}_{\X}=\Sp_{2n}$ embeds into $\check{\G}=\hat{\G}_{\X}$ in the obvious way. 
%We wish to consider the embedding \[{}^LG'_{X'}\hra{}^LG_X,\] starting from the embedding
%${}^LG'\hra{}^LG$.
%This embedding is described in \cite{RogawskiBook} with characters $\mu_1$ and $\mu_2$ chosen to be the trivial characters. This is allowed by the parities of the ranks of our unitary groups.
 This embedding identifies the image with the $\Ga$-fixed points of $\check{\G}$ so that ${}^L\X=\Sp_{2n}(\cc)\times \Gal(E/k)$.  The dual symmetric variety is thus the variety of non-degenerate alternating forms on $\cc^{2n}$ 
\begin{align*}
\hat{\X}= \Sp_{2n}(\cc)\backslash\GL_{2n}(\cc)&\cong \mathrm{Alt}(\cc^{2n})\\
                    [g]&\longmapsto \check{\theta}(g)^{-1}g ,
\end{align*}
with distinguished base point $x_0:=J\in \hat{\X}^{\Gal(E/k)}$ corresponding to the symplectic form cutting out $\check{\G}_\X$. In this setting $\Aut_d(\X)\cong \mu_2$ and the geometric cocycle $\mu_\X:\Gal(E/k)\to \mu_2(\kbar)$ is the unique non-trivial character $\omega_{E/k}$.

%Denote by $\check{\X}^{ell}$ the subset of elements $x\in \check{\X}$ such that $x$ is semi-simple and $(\check{\G}_{x}, \check{\G}_{\X,x})$ is elliptic. % Note also that $\Ga$ acts on $\check{\X}$ by inversion, so that $Z(\check{\X})^\Ga = \{\pm I_{2n}\}$.
 
 For each  pair $(a,b)\in \zz^{2}_{\geq0}$ such that $a+b=n$, consider the element $x_{a,b}:=s_{a,b} J$ with
 \[
s_{a,b} = \left(\begin{array}{ccc}
     I_a&&  \\
     & -I_{2b}&\\
     &&I_a
\end{array}\right).
\]
Passing to the descendant of $\hat{\X}$ at this point, we obtain $\check{\G}_{\X,a,b}\lra \check{\G}_{a,b}$, where 
\[
\text{$\check{\G}_{\X,a,b}=\Sp_{2a}(\cc)\times \Sp_{2b}(\cc)$ and $\check{\G}_{a,b}=\GL_{2a}(\cc)\times \GL_{2b}(\cc)$}.
\]
By Theorem \ref{Thm: exists intro} we obtain the endoscopic datum
\[
{\fe}_{a,b}=(\G_{a,b},\theta_{a,b},\X_{a,b},x_{a,b},\eta_{a,b}).
\]
 where $\X_{a,b}=\rH_{a,b}\backslash\G_{a,b}$ such that $\rH_{a,b}= \rH_a\times \rH_b$, $\G_{a,b}= \G_a\times \G_b$ with
\[
\rH_a= \U_a\times \U_a\text{ and }\G_a=\U_{2a}
\]
and similarly for $\rH_b$ and $\G_b$. Noting that $\Aut_{d}(\X_{a,b})\simeq\mu_2\times \mu_2$ with $\Aut_{d}(\X_n)\to \Aut_d(\X_{a,b})$ being the diagonal map, this is the unique outer class of $k$-forms satisfying \eqref{item rep i}.
%Then $\check{\X}_{a,b}=\check{\X}_a\times \check{\X}_b$ is isomorphic to the product of the varieties of non-degenerate alternating forms on $\cc^{2a}$ and $\cc^{2b}$ and we choose the embedding $\eta:\check{\G}_{a,b}\lra \check{\G}$ so that $s_{a,b}=(I_{2a},-I_{2b})\in Z(\check{\X}_{a,b})^\Ga$ is sent to
%We refer to elliptic endoscopic data of the form $\fe_{a,b}$ as quasi-split data.
 Finally, we note that everything above is compatible with the Galois actions, so that we obtain the following Cartesian diagram of $L$-groups
\begin{equation}\label{diag endo intro}
    \begin{tikzcd}
{}^L\X_{a,b}\ar[d]\ar[r]&{}^L\G_{a,b}\ar[d]\\
{}^L\X\ar[r]&{}^L\G.
\end{tikzcd}
\end{equation}
It is not hard to show that all elliptic endoscopic data of $(\G,\X)$ are isomorphic to a form of this type. This is compatible with our goal of stabilizing the relative trace formula along transfer from these varieties as discussed in \cite{Leslieendoscopy, LeslieUFJFL,Lesliedescent}.
\end{Ex}

\subsection{Point comparison and prestabilization} An important property of endoscopic groups $\G_\fe$ of $\G$ is the point matching of stable semi-simple conjugacy classes of $\G_\fe$ with those of $\G$. This is foundational to the notion of smooth transfer of orbital integrals. As evidence for our theory, we establish the existence of such a matching of stable semi-simple orbits in Theorem \ref{Thm: point comparison}.
\begin{Thm}\label{Thm: point match intro}
Suppose that $\G$ is a reductive group over $k$. Let $\theta$ be a $k$-rational involution and let  $\X=\rH\backslash\G$ be an associated symmetric variety. Let $\fe = (\G_\fe,\theta_\fe,\X_\fe,\ka,\eta)$ be an endoscopic datum for $(\G,\X)$. There exists a canonical map of affine $k$-schemes
\[
\fa_\fe:\X_\fe\sslash \rH_\fe\lra\X\sslash \rH
\]
between the categorical quotients.    
\end{Thm}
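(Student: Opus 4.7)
The plan is to realize both categorical quotients via a Chevalley-style restriction to the canonical torus, and then use the identifications provided by Theorem \ref{Thm: exists intro} to assemble the desired morphism. First, I would recall the symmetrization map $s:\G\to\G$, $g\mapsto \theta(g)^{-1}g$, which descends to an $\rH$-equivariant closed immersion of $\X$ into the subvariety $\Q\subset \G$ of $\theta$-antifixed elements (with $\rH$ acting by conjugation). Following Richardson and Kostant--Rallis in characteristic zero, and Levy in good positive characteristic, the categorical quotient $\Q\sslash \rH$ is canonically identified with $\rA_\X/W_\X$, where $\rA_\X$ is the canonical torus and $W_\X$ the little Weyl group, and this identification is $\Ga$-equivariant. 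Combined with the symmetrization, this yields a canonical isomorphism
\[
\X\sslash\rH \;\cong\; \rA_\X/W_\X
\]
of affine $k$-schemes. The same machinery applied to $(\G_\fe,\X_\fe,\theta_\fe)$ gives $\X_\fe\sslash\rH_\fe \cong \rA_{\X_\fe}/W_{\X_\fe}$.

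Next, I would invoke Theorem \ref{Thm: exists intro}(1), which supplies an identification $\rA_\X\cong \rA_{\X_\fe}$ over $\kbar$ whose two $\Ga$-actions differ by a $1$-cocycle valued in $W_{\hat{\X}}\simeq W_\X$. Simultaneously, the construction of the endoscopic symmetric variety via the endoscopic root system (Proposition \ref{Prop: endoscopic roots}) yields a canonical inclusion $W_{\X_\fe}\hookrightarrow W_\X$ of little Weyl groups, since $W_{\X_\fe}$ arises as the Weyl group of the sub-root system cut out by $\hat{s}(x)$. Composing the isomorphism of tori with the surjection from the smaller quotient to the larger quotient produces a morphism
\[
\rA_{\X_\fe}/W_{\X_\fe}\;\longrightarrow\;\rA_\X/W_\X
\]
over $\kbar$. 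Because the Galois twist takes values in $W_\X$, it acts trivially on the target $\rA_\X/W_\X$, so this morphism is automatically $\Ga$-equivariant and descends to the required map $\fa_\fe$ of $k$-schemes.

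The main obstacle is verifying the symmetric Chevalley restriction in the generality needed: $\rH$ may lie strictly between $(\G^\theta)^\circ$ and $N_\G(\G^\theta)$, and the characteristic may be positive (albeit good). The discrepancy between $\rH$ and $(\G^\theta)^\circ$ typically produces a finite cover on each side, and one must confirm that the endoscopic construction of $\rH_\fe$ matches these finite covers compatibly; this is the step where the extra data of $\theta_\fe$ (as opposed to just $\G_\fe$) in the endoscopic datum is essential, and where Theorem \ref{Thm: quasi-split G-inner form} enters to give a clean description of $\rH_\fe$. A secondary technical point is to verify that the $W_\X$-valued cocycle from Theorem \ref{Thm: exists intro}(1) acts on $\rA_\X$ via the standard little Weyl group action (and not via a dual or twisted action), so that the passage to the quotient genuinely kills the ambiguity and the map $\fa_\fe$ is canonical rather than defined only up to inner twist.
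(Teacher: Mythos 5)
Your proposal matches the paper's proof in both structure and substance: the paper's argument proceeds exactly via the Chevalley-style identification $\X\sslash\rH\cong\rA_\X/W_\X$ (extended from Richardson's result for $\rH=\G^\theta$ to the general intermediate $\rH$ in Lemma \ref{Lem: chevalley}), the $(\vartheta_\fe,\vartheta)$-equivariant torus identification $\rA_{\X_\fe}\iso\rA_\X$ coming from the endoscopic datum, and the observation that the Galois twist is a $1$-cocycle valued in $W_1=\{w:w(\rA^-)=\rA^-\}$ which descends to $W_\X\cong W_1/W_2$ on $\rA^-$, hence becomes trivial on the quotient. The obstacles you flag (the reduction to $\rH=\G^\theta$, and verifying the cocycle really lands in the little Weyl group acting as expected) are precisely the technical points the paper devotes Lemma \ref{Lem: chevalley} and Lemma \ref{Lem: important simplification} to resolving.
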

 After a simple argument extending the Chevalley-style isomorphism for $\G^\theta\backslash\G$ of \cite{Richardson} to all $(\G^\theta)^\circ\subset \rH\subset N_\G(\G^\theta)$ (Lemma \ref{Lem: chevalley}), the proof relies on this isomorphism as well as important structural properties of the dual symmetric variety $\hat{\X}$.

In particular, one may now formulate the notion of matching stable orbits necessary to formulate the local transfer and fundamental lemma conjectures. This requires enhancing the discussion to the stack-theoretic level via the morphism $\X/\rH\to \X\sslash \rH$, since it is not true in general that we may explicitly relate $k$-points of $\X\sslash \rH$ to stable semi-simple orbits of $\X(k)$ alone. We discuss these notions in more detail in the companion paper \cite{LesliestabFJ}.

%We remark that this result will also play a role in investigations of the relative analogues of the Hitchin fibration for symmetric varieties. 
 We expect the preceding theorem will play a role in establishing of analogues of Ng\^{o}'s work on geometric stabilization in this context, with a view toward establishing fundamental lemmas such as the one established in \cite{LeslieUFJFL}. Building on \cite{LeslieSpringer,Garcia-Prada}, forthcoming work of T. Hameister and B. Morrissey \cite{HameisterMorrissey} completes the structural description of the Hitchin fibration for symmetric pairs and proves analogues of Ng\^{o}'s results on the gerbe structure of the Hitchin fibration with respect to a certain stabilizer group scheme when $\X$ has abelian regular stabilizers, and one may ask whether there exists in that setting an interpretation of our notion of endoscopic data analogous to Ng\^{o}'s re-interpretation \cite{Ngo06}. We also mention in this direction work in the context of Hitchin fibrations for symmetric pairs and more general Vinberg gradings of \cite{garcia2023vinberg,gallego2023multiplicative}. 
%\textcolor{red}{Add comments about ability to compare orbits and so we may formulate the required conjectural transfer of orbital integrals. Also note the interest in comparing relative Hitchin fibrations. Finally, make commentary on the fundamental lemma.}

In Section \ref{Section: stabilize}, we show how Theorem \ref{Thm: point match intro} allows for the pre-stabilization of the regular elliptic part of the relative trace formula for $(\G,\X)$, subject to simplifying assumptions. The more general statement and proof of the cohomological preliminaries is handled in \cite{LesliestabFJ}, where we show that the (regular elliptic part of the) geometric side of the RTF may be written as
\[
\sum_{a\in(\X^{re}\sslash\rH)(k)}\sum_{\ka\in H^1_{ab}(k,{\rH}_a)^D}\Orb_a^\ka({f}),
\] where $\X^{re}$ denotes the regular elliptic locus and ${\rH}_a$ denotes the stabilizer of a representative in $\X(k)$ of the class $a$ (well-defined up to inner form, so that the abelianized cohomology group is well-defined), and $\Orb_a^\ka({f})$ is an adelic $\ka$-orbital integral; see Section \ref{Section: stabilize}. To  illustrate the main application of Theorems \ref{Thm: exists intro} and \ref{Thm: point match intro}, we establish Proposition \ref{Prop: important} which switches the order of summation to give
\[
\sum_{\fe/\sim}\left(\sum_{b\in [\X_\fe^{re}\sslash\rH_\fe]^{\X}(k)}\Orb_{\fa_\fe(b)}^\ka({f})\right)
\]summing over isomorphism classes of elliptic endoscopic data for $(\G,\X)$ and the $\X$-regular locus of $[\X_\fe^{re}\sslash\rH_\fe](k)$. One may now formulate the relevant smooth transfer and fundamental lemma conjectures toward the stabilization of this formula; to reasons of space, we defer this to \cite{LesliestabFJ}.

\subsection{Relative Langlands duality and endoscopy}\label{Section: BZSV intro} Let us now assume that $\X=\rH\backslash\G$ satisfies the constraint that generic (semi-simple) stabilizers are connected; this is related to the notion of ``simply connected'' variety studied in \cite{Lesliedescent}. In this context, the cotangent bundle $\mathcal{M}=T^\ast\X$ is a hyperspherical $\G$-variety\footnote{For the purposes of this article, we ignore the more technical restrictions and grading discussed in \cite{BZSV}.} in the sense of \cite{BZSV}. Ben-Zvi, Sakellaridis, and Venkatesh have conjectured the existence of a \emph{dual Hamiltonian $\check{\G}$-variety} $\check{\calm}$, the symplectic geometry of which encodes the arithmetic properties of the various distinction questions associated to the variety $\X$. Section 4 of \emph{ibid.} constructs the putative variety $\check{\calm}$ for hyperspherical $T^\ast\X$, and conjectures its various properties. 

A consequence of Theorem \ref{Thm: Outer in terms of dist} is that $\X$ is well adapted whenever $T^\ast\X$ is hyperspherical. We show in Section \ref{Section: calculate cocycle} and Section \ref{Sec: symplectic rep} that the geometric cocyle $\mu_\X$ encodes the data of a symplectic representation $S_\X$ of $\check{\G}_\X$ \emph{together with a natural extension} of this representation to ${}^L\X=\check{\G}_\X\rtimes \Ga$; see Theorem \ref{Thm: unique}. In fact, this holds for any well adapted symmetric variety. 

Happily, our construction of the representation $S_\X$ is identical to the conjectured formula of \cite[Section 4.3]{BZSV}! Their formula was derived from analysis of the $L$-factors associated to spherical varieties and agrees precisely with the representation arising from our analysis of rational forms of $\X$. As a consequence, we establish some of the conjectures of these authors when $\X$ is symmetric and $T^\ast\X$ is hyperspherical (Section \ref{Section: BZSV conj}). Moreover, we obtain a natural Galois action on $\check{\calm}$ via the action on $S_\X$; this appears vital for applications beyond the context of split groups (cf. \cite{GetzWambach}).  We expect this connection (and the notion of well-adapted variety) will play a role in understanding their construction in greater generality.

One of the points of relative Langlands duality is to give a robust setting in which to study ``relative functoriality''; that is, the relationship between distinction questions of various groups through Langlands functoriality. Our construction gives rise to such a setting: given a pair $(\G,\X)$ of quasi-split reductive group $\G$ and symmetric variety, we construct a family of pairs $(\G_\fe,\X_\fe)$ where $\G_{\fe}$ is an endoscopic group of $\G$. A basic assertion is that this construction is compatible with functoriality. For example, when $k$ is local we expect that an $\X_{\fe}$-distinguished $L$-packet $\Pi$ of $\G_\fe(k)$ lifts under endoscopic transfer to a $\X$-distinguished packet on $\G(k)$ via functoriality along diagrams generalizing \eqref{diag endo intro}. The natural expectation is that if $\check{\calm}$ is dual to $T^\ast\X$ and $\check{\calm}_\fe$ is dual to $T^\ast\X_{\fe}$, then there ought to be a (derived) Lagrangian correspondence 
\[
\check{\calm}_\fe\longleftarrow \mathcal{L}_\fe\lra \check{\calm}
\]
compatible with the moment maps and the restriction morphism $\check{\fg}^\ast\to \check{\fg}^\ast_\fe$
Under the assumption that the generic stabilizer of $\X$ is abelian and $\G_{der}$ is simple, we verify that this is the case in Proposition \ref{Prop: langrangan}. The restriction on the generic stabilizers is purely for convenience; we will return to the geometric and symplectic relationship between $\check{\calm}$ and $\check{\calm}_\fe$ in future work.

 \begin{Ex}
    Continuing with the example $\G=\U_{2n}$ and $\X= \U_n\times \U_n\backslash\U_{2n}$, we find that $S_\X =T^\ast(\cc^{2n})$, where $\cc^{2n}$ denotes the standard representation of $\check{\G}_\X=\Sp_{2n}(\cc)$. Thus,
    \[
    \check{\calm}_n=T^\ast(\Sp_{2n}(\cc)\backslash\GL_{2n}v\times \cc^{2n}).
    \]
    For $\X$ as above, the $\Gal(E/k)$ action is non-trivial on $S_\X$, acting by swapping the two factors of $T^\ast(\cc^{2n})$. On the other hand, this action is trivial for $\X' = \Res_{E/k}(\GL_n)\backslash\U_{2n}$. In either case, there is a natural closed immersion
    \[
    \left[\Sp_{2a}(\cc)\backslash\GL_{2a}(\cc)\times \cc^{2a}\right]\times \left[\Sp_{2b}(\cc)\backslash\GL_{2b}(\cc)\times \cc^{2b}\right]\subset \Sp_{2n}(\cc)\backslash\GL_{2n}(\cc)\times \cc^{2n},
    \]
    giving rise to the Lagrangian correspondence
    \[
   \check{\calm}_a\times \check{\calm}_b\longleftarrow \mathcal{L}_{a,b}\lra \check{\calm}_n,
    \]
    with $\mathcal{L}_{a,b}$ the fiber product over this closed embedding.
\end{Ex}

In Section 14 of \cite{BZSV}, the authors formulate local and global conjectures relating $\rH$-invariant forms (or $\rH$-period integrals) to fixed points of Langlands parameters on $\check{\calm}$. We hope the results of the present work will aid in establishing these conjectures, at least for varieties related to symmetric varieties. In the number field setting, this can only be a guiding principle at the moment. Nevertheless, Section \ref{Section: Example unitary} discusses the relation between the endoscopy for the Galois symmetric variety $\U_n\backslash\Res_{E/k}(\GL_n)$, unitary periods of Eisenstein series, and the fundamental lemma proved in \cite{LeslieUFJFL}. 

\subsection{Outline} We end with a brief summary of the papers contents. We begin with some notation and conventions in Section \ref{Section: Prelim}.  In Section \ref{Appendix: spherical roots}, we recall the necessary generalities of spherical varieties as well as the rationality results of Borovoi and Gagliardi. In Section \ref{Section: doubling aut}, we introduce the group of doubling automorphisms, the geometric cocycle,  and the notion of well adaptedness. In Section \ref{Section: involutions}, we recall certain generalities on involutions on root systems, including the notions of $\theta$-index and $(\Ga,\theta)$-index due to \cite{Helminckrational}. The new material is our introduction of endoscopic root systems with involution in Section \ref{Section: endo invo}, which is the combinatorial core of the existence result in Theorem \ref{Thm: exists intro}. Section \ref{Section: symmetric data} elucidates the spherical data of symmetric varieties at length. Particular attention is paid to the explicit calculation of the various root systems associated to symmetric varieties. We also show that the notion of $z$-extension generalizes quite naturally to the setting of symmetric varieties in Proposition \ref{Prop: good cover}. Section \ref{Section: indices} includes the necessary results regarding $(\Ga,\theta)$-indices.

Section \ref{Section: color autos} proves Theorems \ref{Thm: Outer in terms of dist} and \ref{Thm: quasi-split G-inner form}, calculating the group $\Out_{\X}(\rH)$ for all symmetric varieties and showing that in all well-adapted cases the quasi-split $\G$-inner form exists over $k$. The argument proceeds via a reduction to the absolutely simple case, where we ultimately compute the various groups case-by-case.

In Section \ref{Section: dual groups}, we recall the dual group of a spherical variety, its construction and properties. We also refine the functoriality of the dual group in Appendix \ref{Appendix: derived subgroup}. We then introduce the dual symmetric variety $\hat{\X}$ in Section \ref{Sec: dual symm space}, and prove its basic properties in Section \ref{Sec dual basic}. Section \ref{Section: endoscopy defs} defines and proves the existence of endoscopic symmetric varieties. Section \ref{Section:orbit match} then establishes the foundational result on point matching for these varieties.

In Section \ref{Sec: symplectic rep}, we show that under the additional assumption that $\rH$ is geometrically connected the geometric cocycle constructed in Section \ref{Section: spherical data} is completely encoded by a symplectic representation of ${}^L\X$, with the cocycle determining the Galois action. This is Theorem \ref{Thm: unique}. Restricting further to the case of so-called excellent symmetric varieties, we discuss the compatibility of our construction with the dual Hamiltonian variety of \cite{BZSV}. Finally, Section \ref{Section: Example} discusses examples of unitary periods and relations to endoscopy, establishes the pre-stabilization of the regular elliptic part of the RTF in Section \ref{Section: stabilize}, and ends with examples of endoscopic varieties when $\G_{der}$ is $k$-simple in Table \ref{tab:Examples}.

%\subsection{General questions}  
%\begin{enumerate}
%    \item Does the \'{e}tale slice theorem allow us to linearize the invariant theoretic problem of obstruction classes, at least locally? The issue is that the slice theorem requires a semi-simple point in the variety, so it seems like we need to start with such a point. %What I mean is something like if $\mathcal{V}=\X\sslash\rH$ is the quotient we are interested in, and $a\in \mathcal{V}(k)$ is a closed point in the quotient, th
%\end{enumerate}

\subsection{Acknowledgements} 
 I want to thank Jayce Getz for his patience with my many questions and for his generosity of ideas. Essentially all of this is an attempt to work out some of the ideas first discussed in \cite{GetzWambach} which were explained to me by him. I also want to thank Yiannis Sakellaridis for several helpful conversations and for encouragement regarding this work, including how to relate unitary periods and the arguments of \cite{LeslieUFJFL} in the context of \cite{BZSV}, and Sol Friedberg for several helpful remarks. I also thank Rapha\"{e}l Beuzart-Plessis, Sanath Devalapurkar, Ben Howard, Keerthi Madapusi, Siddharth Mahendraker, Aaron Pollack, and Wei Zhang for helpful conversations about several aspects of this work. I especially thank Friedrich Knop for answering questions about spherical varieties.

This work was partially supported by an AMS-Simons Travel Award and by NSF grants DMS-1902865 and DMS-2200852.

\part{$\G$-outer forms and symmetric varieties}

\section{Preliminaries}\label{Section: Prelim}

\subsection{Groups and varieties}\label{section: prelim groups}
We use $k$ for a field, which we always assume does not have characteristic $2$.  We generally assume that $\G$ is a connected reductive group over a field $k$. We will either assume that $k$ has characteristic zero or good characteristic for $\G$. We let $\kbar$ denote a fixed algebraic closure of $k$ and $k^{sep}\subset \kbar$ the separable closure. Throughout much of the paper, $\Ga=\Gal(k^{sep}/k)$. 

We let $\G_{der}\subset \G$ denote the derive subgroup of $\G$, $\G_{ad}$ its adjoint quotient, and $\G_{sc}$ the simply connected cover of $\G_{der}$. We write $Z(\G)$ for the center, and $Z_{\G}(\rH)$ for the centralizer of a subgroup $\rH\subset \G$.

For a spherical $\G$-variety $\X$, we set $\Aut^{\G}(\X)$ to the group $k$-scheme of $\G$-equivariant automorphisms (cf. \cite{Losev,KnopAutomorphisms}). In particular, if $\X=\rH\backslash\G$ is homogeneous, then 
\[
\Aut^\G(\X) =\rH\backslash N_{\G}(\rH),
\]
where $N_{\G}(\rH)$ denotes the normalizer of $\rH$ in $\G$.

If $\rH$ is acted on by a group of automorphisms $\Ga_{\rH}\subset \Ga$, we denote by $\Ind_{\Ga/\Ga_{\rH}}(\rH)$ the induced group \cite[Section 4]{BorelAutomorphic}.

Now assume that $\X=\rH\backslash\G$ is symmetric, in the sense that there is a $k$-rational involutive automorphism $\theta$ of $\G$ such that 
\[
(\G^\theta)^\circ\subset \rH\subset N_{\G}(\G^\theta);
\]
we always assume that $\rH$ is smooth (automatic if $\rH$ connected or if $\mathrm{char}(k)$ is zero). When $\mathrm{char}(k)\neq 2$ is positive, we always assume that it is good for $\G$. When $\G_{sc}$ is absolutely simple, this is equivalent to 
\begin{itemize}
    \item $\mathrm{char}(k)\neq2$ if $\G_{\kbar}$ is of Cartan types $A$, $B$, $C$, or $D$,
    \item $\mathrm{char}(k)\neq2,3$ if $\G_{\kbar}$ is of Cartan types $G_2$, $F_4$, $E_6$, $E_7$, 
        \item $\mathrm{char}(k)\neq2,3,5$ if $\G_{\kbar}$ is of Cartan type $E_8$,
\end{itemize} In general, $p$ is good for $\G$ if it is good for each irreducible factor of $\G_{sc}$ This will be our standard assumption throughout, though it is not optimal for much of the paper (for which it suffices to assume $\mathrm{char}(k)\neq 2$). It is made to ensure that our use of certain calculations regarding the nilpotent cone for symmetric varieties in positive characteristic (see \cite{Levy}) behave similarly to the characteristic zero setting.

% When $\mathrm{char}(k)\neq 2$ is positive, we always assume that it is good for $\G$ and that the character $X^\ast(\rA)/\zz\widetilde{\Phi}_\X$ and cocharacter lattices $X_\ast(\rA)/\zz\check{\widetilde{\Phi}}_\X$ associated with the restricted root system (Section \ref{Section: norms sym}) have no $\mathrm{char}(k)$-torsion. This will be our standard assumption throughout, and we refer to this as by saying \textbf{$\mathrm{char}(k)$ is good for $(\G,\X)$}; this assumption is not optimal for the majority of this work (for which it suffices to assume $\mathrm{char}(k)\neq 2$ is good for $\G$. is made to ensure that our use of general facts about symmetric varieties \cite{Levy,Helminckrational,HameisterMorrissey} behave similarly to characteristic zero.

If $\rA\subset \G$ is a torus that is $\theta$-stable, we abuse notation and use $\theta$ to denote the induced involution on the character lattice $\theta:X^\ast(\rA)\to X^\ast(\rA)$ and $\check{\theta}$ the involution on $X_\ast(\rA)$.
\subsection{Invariant theory}\label{Section: Prelim inv}
For any field $k$ and any non-singular affine algebraic variety $\X$ over $k$ with $\mathrm{G}$ a connected reductive algebraic group over $k$ acting algebraically on $\X$, we set $\X^{rss}$ to be the invariant-theoretic regular semi-simple locus. That is, $x\in \X^{rss}(\kbar)$ if and only if its $\mathrm{G}_{\kbar}$-orbit is of maximal possible dimension and Zariski-closed. We also recall the semi-simple locus $\X^{ss}$ of points with Zariski-closed orbits. %When $k$ is a local field of characteristic zero, and we endow $Y(k)$ with the Hausdorff topology, it is known \cite[Theorem 2.3.8]{AizGourdescent} that $x\in Y^{ss}(k)$ if and only if $\G(k)\cdot x\subset Y(k)$ is closed in the Hausdorff topology.

\begin{Def}
    Let an algebraic group $\G$ act on an algebraic variety $\X$. A pair consisting of an algebraic variety $\Y$ and a $\G$-invariant morphism $\pi:\X\to \Y$ is called the \emph{categorical quotient} of $\X$ by the action of $\G$ if for any other such pair $(\pi',\Y')$, there exists a unique morphism $\varphi:\Y\to \Y'$ such that $\pi'=\varphi\circ\pi$. Clearly, if a categorical quotient exists, it is unique up to a unique isomorphism with respect to the obvious notion of isomorphism between pairs. We will denote it by $(\pi_\X,\X\sslash \G)$; by a common abuse of terminology, we often drop reference to the morphism $\pi_\X$ and call $\X\sslash\G$ the categorical quotient.
\end{Def}
\begin{Prop}\cite[Theorem 2.2.2]{AizGourdescent}\label{prop: quotients exist}
   Suppose $\G$ is (not-necessarily connected) reductive and $\X$ affine. Then the categorical quotient $\X\sslash \G$ always exists and is given by $\Spec(k[\X]^{\G})$. Moreover, every fiber of $\pi_\X$ contains a unique Zariski-closed orbit. 
\end{Prop}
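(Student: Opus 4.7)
The plan is to construct the quotient explicitly as $\X\sslash\G := \Spec(k[\X]^\G)$, with $\pi_\X$ induced by the inclusion $k[\X]^\G \hookrightarrow k[\X]$, and then to verify the universal property and the fiber statement separately. The first nontrivial input is that $k[\X]^\G$ is a finitely generated $k$-algebra, so that $\X\sslash\G$ is genuinely an algebraic variety. In characteristic zero this follows from Hilbert's argument using the Reynolds operator coming from complete reducibility of $\G$-representations; in our setting of good positive characteristic we instead invoke geometric reductivity (Haboush's theorem, which applies without connectedness assumption) together with Nagata's theorem to conclude the same finite generation. Note that $\pi_\X$ is $\G$-invariant by construction and dominant since $k[\X]^\G \hookrightarrow k[\X]$ is injective.

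For the universal property, let $\pi' \colon \X \to \Y'$ be any $\G$-invariant morphism. Cover $\Y'$ by affine opens $U_i = \Spec A_i$; on each preimage the pullback $A_i \to k[\pi'^{-1}(U_i)]$ factors through the invariants $k[\pi'^{-1}(U_i)]^\G$. Since $\pi'^{-1}(U_i)$ is a $\G$-saturated affine open in $\X$ (being the preimage of an open under a $\G$-invariant map), its ring of invariants is the localization of $k[\X]^\G$ corresponding to an open subset $V_i \subseteq \X\sslash\G$, and the induced $V_i \to U_i$ glue to a morphism $\varphi\colon \X\sslash\G \to \Y'$ with $\pi' = \varphi \circ \pi_\X$. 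Uniqueness of $\varphi$ follows because $\pi_\X$ is scheme-theoretically dominant.

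The heart of the fiber statement is the following separation property: if $Y_1, Y_2 \subset \X$ are disjoint $\G$-stable closed subvarieties, then there exists $f \in k[\X]^\G$ with $f|_{Y_1} = 0$ and $f|_{Y_2} \neq 0$. This is exactly geometric reductivity, applied to the surjection $k[\X] \twoheadrightarrow k[Y_1] \oplus k[Y_2]$ (whose kernel is $\G$-stable): the image element $(0,1)$ lifts, after raising to a suitable power, to a $\G$-invariant. Given this, if a fiber $\pi_\X^{-1}(y)$ contained two distinct closed orbits $O_1, O_2$, the separation produces an $f \in k[\X]^\G$ taking different values on them, contradicting the fact that $f$ is the pullback of a regular function on $\X\sslash\G$, hence constant on the fiber. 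For existence of a closed orbit in every fiber, take any orbit $O \subset \pi_\X^{-1}(y)$ and consider $\overline{O}$; the boundary $\overline{O} \setminus O$ is a $\G$-stable closed subset of strictly smaller dimension, so Noetherian induction on dimension produces a closed orbit inside $\overline{O} \subset \pi_\X^{-1}(y)$.

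The main obstacle is finite generation of the invariant ring for a possibly disconnected reductive $\G$ in positive characteristic; all remaining steps are then formal consequences of geometric reductivity together with the separation lemma. The subtlety of allowing $\G$ to be disconnected is absorbed by the fact that Haboush's theorem and Nagata's finite generation theorem both extend to this setting, so the argument is uniform across the characteristic-zero and good-characteristic cases assumed in Section \ref{section: prelim groups}.
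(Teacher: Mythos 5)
The paper gives no proof of this proposition; it cites \cite[Theorem 2.2.2]{AizGourdescent}, which rests on the standard affine GIT literature. Your reconstruction follows that same standard route — finite generation of invariants via geometric reductivity, separation of disjoint $\G$-stable closed sets, closed-orbit existence by Noetherian induction and uniqueness by separation — and those individual pieces are correct, including the reduction of the disconnected case to the connected one.

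There is, however, a genuine gap in the universal-property step as you have ordered it. You assert that $\pi'^{-1}(U_i)$ is ``$\G$-saturated,'' so that its ring of invariants is a localization of $k[\X]^\G$, on the grounds that it is the preimage of an open under a $\G$-invariant map. But that only makes it $\G$-\emph{stable}, not $\pi_\X$-\emph{saturated} (a union of $\pi_\X$-fibers), and for a merely $\G$-stable open the localization claim is false: take $\Gm$ acting on $\A^2$ by simultaneous scaling and $W=\{x\neq 0\}$; then $\mathcal{O}(W)^{\Gm}=k[y/x]$, which is not a localization of $k[\A^2]^{\Gm}=k$. Making your step correct requires knowing in advance that $\pi'$ is constant on $\pi_\X$-fibers — which follows from the unique-closed-orbit property, since each fiber has a single closed orbit lying in every orbit closure and $\pi'$ is constant on orbit closures. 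But you establish that property only afterward, so the argument as presented is circular. The fix is a reordering: geometric reductivity gives finite generation and the separation lemma; separation gives uniqueness of the closed orbit in each fiber (existence is by dimension induction); this implies $\pi'$ is constant on $\pi_\X$-fibers, so $\pi'^{-1}(U_i)$ is genuinely $\pi_\X$-saturated and $\pi_\X$ carries it to an open $V_i$; only then can you glue to obtain $\varphi$. You should also work with $\mathcal{O}_\X(\pi'^{-1}(U_i))^\G$ directly rather than a coordinate ring, since $\pi'^{-1}(U_i)$ need not be affine.
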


Suppose now that $\G$ is reductive over a field $k$ and $\X$ is an affine $\G$-variety with categorical quotient $\X\sslash \G$. Suppose that $\cala$ is another (possibly disconnected, but smooth) reductive group with an algebraic action on $\X$ commuting with $\G$. Then one obtains an algebraic action\quash{The map
\[
\X\overset{\phi}{\lra}\X\overset{\pi_{\X}}{\lra} \X\sslash \G
\]
is $\G$-equivariant, so that by the definition, there exists $\phi':\X\sslash \G\to \X\sslash \G$ satisfying
$\pi_\X\circ \phi=\phi'\circ \pi_\X$. Applying the same argument to $\phi^{-1}$, we see that $\phi'\in \Aut(\X\sslash \G)$, and it is easy to see that this induces an injective homomorphism }
\begin{equation}\label{eqn: descend autos}
  m'\sslash\G:  \cala\times \X\sslash \G\lra \X\sslash\G,
\end{equation}
by noting that $\cala\times \X\sslash\G$ is the categorical quotient of $\cala\times \X$ by $\G$, where $\G$ acts trivially on $\cala$. With this, one applies the universal property to the action map $\cala\times \X\to \X$ composed with $\X\to \X\sslash\G$ to obtain the morphism \eqref{eqn: descend autos}, and verifies that the appropriate diagrams commute. This is straightforward.
  %  Suppose now that $\X=\rH\backslash\G$ is an affine homogeneous $\G$-variety, so that $\rH$ is reductive and the categorical quotient $(\pi_\X, \X\sslash \rH)$ exists and is affine. Assume also that $\cala\subset \Aut^{\G}(\X)$ is a smooth diagonalizable group scheme over $k$, which we view as a subgroup of $\Aut(\X\sslash \rH)$ via \eqref{eqn: descend autos}. Note that homogeneity of $\X$ implies that $\Aut^{\G}(\X)$ acts freely.
\begin{Lem}\label{Lem: mod out by autos}
 Setting $\Y=\X\sslash \G$, the categorical quotients $\X\sslash\cala$ and $\Y\sslash\cala$ exist and sit in a commutative diagram of $k$-schemes
    \[
    \begin{tikzcd}
        \X\ar[r]\ar[d]& \X\sslash\cala\ar[d]\\
        \Y\ar[r]&\Y\sslash\cala,
    \end{tikzcd}
    \]
    where each arrow is the corresponding quotient map.
\end{Lem}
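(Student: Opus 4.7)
The proof is essentially formal, relying on the existence of categorical quotients for affine varieties by reductive groups together with the commutativity of the two actions. My plan is as follows.

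First, I would establish existence of both $\X\sslash\cala$ and $\Y\sslash\cala$. Since $\X$ is affine and $\cala$ is reductive, Proposition \ref{prop: quotients exist} gives $\X\sslash\cala = \Spec(k[\X]^{\cala})$. For $\Y\sslash\cala$, one first notes that $\Y = \X\sslash\G = \Spec(k[\X]^\G)$ is affine by the same proposition, and that $\cala$ acts algebraically on $\Y$ via the descended action \eqref{eqn: descend autos}. Another application of Proposition \ref{prop: quotients exist} then yields $\Y\sslash\cala = \Spec(k[\Y]^\cala) = \Spec((k[\X]^\G)^\cala)$.

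Next, I would produce the right vertical morphism and verify commutativity of the square. The composition $\X \to \Y \to \Y\sslash\cala$ is $\cala$-invariant: by the very construction of \eqref{eqn: descend autos}, the quotient $\pi_\X:\X \to \Y$ is $\cala$-equivariant for the descended action, and $\Y \to \Y\sslash\cala$ is $\cala$-invariant by definition of the categorical quotient. The universal property of $\X\sslash\cala$ then produces a unique morphism $\X\sslash\cala \to \Y\sslash\cala$ making the diagram commute. This is "the corresponding quotient map" in the following sense: the commuting actions of $\G$ and $\cala$ make $\X$ a $\G\times\cala$-variety, and the chain of equalities of invariant rings
\[
(k[\X]^\cala)^\G \;=\; k[\X]^{\G\times\cala} \;=\; (k[\X]^\G)^\cala \;=\; k[\Y]^\cala
\]
identifies $\Y\sslash\cala$ with the $\G$-categorical quotient of $\X\sslash\cala$ (with respect to the induced $\G$-action on $\X\sslash\cala$ coming from the commutation of the two actions, by the analog of \eqref{eqn: descend autos} with the roles of $\G$ and $\cala$ swapped).

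There is no substantive obstacle: the argument is formal once the existence of quotients is in hand, and everything boils down to the elementary fact that the invariants under commuting reductive group actions can be taken in either order. The only mild point to check is that the $\cala$-action on $\Y$ constructed in \eqref{eqn: descend autos} indeed makes $\pi_\X$ equivariant, which is immediate from the universal property used to define that action.
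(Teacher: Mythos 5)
Your proof is correct and matches the paper's argument in approach: the paper also invokes Proposition \ref{prop: quotients exist} for existence and the universal property (via the descended action \eqref{eqn: descend autos}) for the diagram; you have simply unpacked the terse "repeated applications" phrase into the explicit chain $(k[\X]^\cala)^\G = k[\X]^{\G\times\cala} = (k[\X]^\G)^\cala$, which is the right way to see that the right vertical arrow is indeed a $\G$-quotient map.
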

\begin{proof}
    If the two claimed quotients exist, the diagram exists by properties of the categorical quotient and repeated applications of the existence of the embedding \eqref{eqn: descend autos}. The quotients exist by Proposition \ref{prop: quotients exist} since by assumption $\cala$ is reductive. 
\end{proof}
%We remark that the isomorphism $\Aut^\G(\X)\simeq \rH\backslash N_{\G}(\rH)$ allows one to identify the quotient $\X\sslash\Aut^\G(\X)=\rH_a\backslash\G$, where $\rH\subset \rH_a\subset\Aut^\G(\X)$, which is smooth and affine if $\X$ is. In particular 
%\[
%\Y\sslash\cala =\left(\rH_a\backslash\G\right)\sslash\rH.
%\]

\subsection{The $\ast$-action and the $L$-group}\label{Section: pre star}
Assume now that $\G$ is a reductive group over $k$  and let $\rA$ denote a maximal torus. If $\mathrm{SAut}(\G)$ denotes the set of semi-linear automorphisms of $\G(\kbar)$ (cf. \cite{BorovoiModels}), the $k$-rational structure of $\G$ induces a morphism
 \[
 \mu: \Ga\lra \mathrm{SAut}(\G),
 \]
which splits the sequence
\[
1\lra \Aut(\G)(\kbar)\lra \mathrm{SAut}({\G})\lra \Ga.
\]
 As explained in \cite{BorovoiModels}, this induces a $\Ga$-action on $\G(\kbar)$, which we refer to as $\mu_\sig$ for $\sig\in \Ga$. 

Over the algebraic closure $\kbar$, there exists a Borel subgroup $\B\supset \rA$, unique up to $N_\G(\rA)$-conjugacy. Fixing one gives rise to a based root datum $\Psi=(X^\ast(\rA), \De,X_\ast(\rA),\check{\De})$. %Let $\G_s$ denote the split group over $k^{sep}$ isomorphic to the base change of $\G$% and let $\psi: \G\iso \G_s$ be a $k^{sep}$-isomorphism..

For any $\sig\in \Ga$, there exists $g_\sig\in \G(k^{sep})$ such that $\sig(\B,\rA) = \Ad(g_\sig)(\B,\rA)$. This induces a $\Ga$-action on $\Psi$. 
%We know that
%\[
%\ga(\sig):=\{\sig\in \Ga\mapsto \psi\cdot (\psi^{-\sig})\}_\sig\in H^1(k, \Aut(\G_s))\simeq H^1(k,\G).
%\]
 Fixing a pinning $\{x_\al\}_{\al\in \De},$ where $x_\al\in U_\al(k^{sep})$ is a root vector for each simple root realizes
$
 \Aut(\Psi(\G))\simeq \Aut(\G,\B,\rA,\{x_\al\}_\al),
$
 as a subgroup of $\Aut(\G)$ is a manner that splits the short exact sequence
 \[
1\lra \mathrm{Inn}(\G)\lra \Aut(\G)\overset{\psi}{\lra} \Aut(\Psi(\G))\lra 1.
\]  Augmenting $\sig\mapsto \Ad(g_\sig)$ by a cocycle valued in $\rA$ if necessary to ensure that
 \[
 \mu_\sig(\B,\rA,\{x_\al\}_\al) = \Ad(g_\sig)(\B, \rA,\{x_\al\}_\al),
 \]
 the assignment $\mu_{\G}(\sig) = \Ad(g_\sig)^{-1}\circ \mu_\sig$ thus gives a homomorphism $\mu_{\G}^\ast:\Ga\lra \mathrm{SAut}({\G})$ such that the corresponding $k$-form $\G^\ast$ is quasi-split. The induced $\Ga$-action on $\Psi$ is known as the \textbf{$\ast$-action}.
 
Passing to the dual based root datum $\check{\Psi}:=(X_\ast(\rA),\check{\De},X^\ast(\rA),\De)$, let ${\check{\G}}$ be the Langlands dual complex reductive group associated to this datum. Since there is a canonical isomorphism
 \[
 \Aut(\Psi(\G))\simeq \Aut(\check{\Psi}({\check{\G}})),
 \]
 we obtain a homomorphism $\mu^\vee_{\G}: \Ga\lra \Aut(\check{\Psi}({\check{\G}}))$, and upon fixing a pinning $\{x_{\check{\al}}\}$ for $(\check{B},\check{T})$, we obtain an action of $\Ga$ on $(\check{\G},\check{B},\check{T},\{x_{\check{\al}}\})$. The (Galois form of the) $L$-group 
 \[
 {}^L\G:=\check{\G}\rtimes \Ga
 \]
 is the corresponding semi-direct product. %Note that the existence theorem \cite{??} implies that the inclusion $\check{T}\subset \check{\G}$ is unique up to (???), while the choice of pinning is unique up to $\mathrm{Inn}(\check{T})$.
 
% \subsubsection{The $\ast$-action}
%When $T$ is defined over $k$, the $1$-cocycle $\sig\mapsto g_\sig$ takes values in $N_{\G}(T)\cap Z_{\G}(A)$, where $A$ is the maximal $k$-split torus contained in $T$. In particular, when $T$ is chosen so that $A$ is a maximal $k$-split torus in $\G$, the cocycle is valued in the $k$-anisotropic kernel of $\G$. 

 %\begin{Lem}
 %Suppose that $\mu:\Ga\lra \Aut(\G(\kbar))$ is the  $\Ga$-action defining $\G$ and let $\mu^\ast$ denote the action obtained by twisting by the $1$-cocycle $\{\sig\mapsto \Ad(g_\sig)^{-1}\}\in Z^1(k,\mathrm{Inn}(\G))$. The $k$-group $\G^\ast$ determined by the action $\mu^\ast$ is the quasi-split inner form of $\G$.
 %\end{Lem}
 
\quash{
 This is compatible with the notion of Tits $\Ga$-indices. In that theory, a reductive $k$-group gives an index $(X^\ast(T), \De, \De_0, \sig_\ast)$, where $\De$ is a $\Ga$-basis (arising from a minimal parabolic $k$-group containing a maximally $k$-split maximal torus $T\subset P\subset \G$), $X_0 = \ker\Nm$, and $\De_0 = \De\cap X_0$. 
 The connection with the preceding remark is that $g_\sig\mapsto w_\sig$ under the natural morphism 
 \[
 \begin{tikzcd}
 N_{\G}(T)\ar[r]& W(G,T)\\
 N_{\G}(T)\cap Z_{\G}(A)\ar[r]\ar[u]&W_0=W(Z_{\G}(A),T)\ar[u].
 \end{tikzcd}
 \]
 In particular, the index knows the quasi-split inner form of $\G$. It also knows the Dynkin diagram of the $k$-anisotropic kernel by remembering $\De_0$, 
}

 \subsection{Galois actions on groups and varieties}\label{Sec: forms and action}
With $\G$ as before, assume $\X$ is a $\G$-variety over $k$. These $k$-rational structures induce morphisms
 \[
 \mu: \Ga\lra \mathrm{SAut}(\G),\qquad \mu_\X:\Ga\lra \mathrm{SAut}^\G(\X)
 \]
with the first morphism as above and the second morphism satisfies $\mu$-equivariance and $\mathrm{SAut}^{\G}(\X)$ denotes the group of $\mu$-equivariant semi-linear automorphisms of $\X$ as in \cite[Section 2.10]{BorovoiGagliardi}. That is, for each $\sig\in \Ga$,
\begin{equation}\label{eqn: ga equivariant}
\mu_{\X,\sig}(g\cdot x) = \mu_\sig(g)\cdot \mu_{\X,\sig}(x).
\end{equation}
 The $k$-rational structure of $\G$ induces an exact sequence
\[
1\lra \Aut^{\G}(\X)(\kbar)\lra \mathrm{SAut}^{\G}(\X)\lra \Ga,
\]
which $\mu_\X$ splits.  These morphisms induce $\Ga$-actions on $\G(\kbar)$ and $\X(\kbar)$, such that for each $\sig\in \Ga$, \eqref{eqn: ga equivariant} holds. 
  \quash{
Assume now that $\X$ is $\G$-homogeneous.  Fix a base point $x_0\in \X(k)$ and let $\Stab_{\G}(x_0)=\rH$. This induces an isomorphism $\X\simeq \rH\backslash\G$. By \cite[Lemma 4.1]{BorovoiModels}, for each $\ga\in \Ga$ there exists $g_\ga\in N_\G(\rH)(\kbar)$ such that
 \[
 \mu_{\ga}(g\rH(\kbar)) = \nu_\ga(g)g_\ga^{-1} \rH(\kbar).
 \]
 As simple calculation shows that $g_\sig\sig(g_\tau)\rH(\kbar)=g_{\sig\tau}\rH(\kbar)$. Thus, we obtain a $1$-cocycle
 \begin{equation}\label{eqn: cocycle from Galois}
      \mu_{\X}:\Ga\overset{\mu}{\lra}N_\G(\rH)(\kbar)/\rH(\kbar)\simeq \Aut^{\G}(\X)(\kbar).
 \end{equation}
 A different choice of base point $x_0=1\rH(\kbar)\in \X(\kbar)$ changes $\mu_{\X}$ by a coboundary. In particular, a $k$-model $\X$ induces a canonical element $c_\X\in H^1(k, \Aut^\G(\X))$. 
 
 Since $\X$ is quasi-projective, a straightforward extension of Theorem 9.17 of \cite{BorovoiModels} shows that this gives a canonical bijection between the set of \emph{all} $\G$-equivariant $k$-models of $\X_{\kbar}$ and $H^1(k,\Aut^{\G}(\X))$. On the other hand, if we fix the base point $x_0\in \X(\kbar)$, the $1$-cocycle is canonical.
}
\subsubsection{$\G$-inner vs. $\G$-outer forms}\label{Section: outer forms}
Assume now that $\X$ is $\G$-homogeneous with $\X(k)\neq \emptyset$. Fix a base point $x_0\in \X(k)$, and let $\Stab_{\G}(x_0)=\rH$. This induces an isomorphism $\X\simeq \rH\backslash\G$ identifying $\rH\cdot 1 = x_0$. A standard calculation shows that
\[
\X(k)=\bigsqcup_{s\in \ker^1(\rH,\G;k)}x_s\cdot \G(k),
\]
where $\ker^1(\rH,\G;k) = \ker[H^1(k,\rH)\to H^1(k,\G)]$ and $x_s\in \X(k)$ satisfying that $\rH_s=\mathrm{Stab}_{\G}(x_s)$ is obtaining by twisting $\rH$ by a $1$-cocycle in the class $s$. In particular, all stabilizers are pure inner twists of $\rH$.

Having fixed $x_0$, the normalizer $N_{\G}(\rH)$ of $\rH$ has a natural map to $\Aut(\rH)$ via the conjugation action, which we may compose with the canonical quotient to obtain a $k$-morphism
\[
\mathrm{out}_{\rH}^{\G}:\Aut^{\G}(\X)\lra \Out(\rH);
\]
here we rely on the $k$-isomorphism $\Aut^{\G}(\X)\simeq \rH\backslash N_{\G}(\rH)$. We claim that this morphism does not depend on the choice of $x_0\in \X(k)$. Indeed, for any other $x\in \X(k)$ with $\rH'=\Stab_{\G}(x)$, there exists $g\in \G(\kbar)$ satisfying $x_0=x\cdot g^{-1}$ so that $\rH' = g\rH g^{-1}$. Then $\Ad(g)$ induces in a isomorphism $\Aut(\rH)\iso \Aut(\rH')$ such that changing $g\mapsto hg$ composes $\Ad(g)$ with an inner automorphism of $\rH'$, so that the induced isomorphism $\Out(\rH)\to \Out(\rH')$ is independent of the choice of $g$. Combining with $N_{\G}(\rH') = gN_{G}(\rH)g^{-1}$, we find that the diagram
\[
\begin{tikzcd}
    \rH\backslash N_{\G}(\rH)\ar[r,"\Ad(g)"]\ar[d]& \rH'\backslash N_{\G}(\rH')\ar[d]\\
    \Out(\rH)\ar[r,"\Ad(g)"]&\Out(\rH')
\end{tikzcd}
\]
commutes. Let $\Out_{\X}(\rH)$ denote the image. One may thus define $$\Out(\X):=\lim_{x} \Out_\X(\rH_x)$$ and $\out:\Aut^\G(\X)\to \Out(\X)$ is independent of the choice of base point. Denote by $\mathcal{A}_\X^\flat\subset \Aut^{\G}(\X)$ the kernel of this map. 

We will often fix $x_0\in \X(k)$ and work with $\Out_\X(\rH)$ below. Then both $\mathcal{A}^\flat_\X$ and $\Out_{\X}(\rH)$ are defined over $k$ with the natural inclusion $\Out_\X(\rH)\subset \mathrm{Out}(\rH)$ being defined over $k$. The following lemma is straightforward.
\begin{Lem}
    Suppose that $\G$ and $\X=\rH\backslash\G$ as above.  The map $\mathrm{out}:=\mathrm{out}_{\rH}^{\G}$ is $\Ga$-equivariant for the natural action on $\Out(\rH)$. Finally, $\mathcal{A}^\flat_\X(\kbar)$ is normal inside $\mathrm{SAut}^\G(\X)$.
\end{Lem}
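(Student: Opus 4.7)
The plan is to unwind the definition of $\mathrm{out} = \mathrm{out}_\rH^\G$ as a composition of $k$-rational maps and then leverage the semidirect product structure on $\mathrm{SAut}^\G(\X)$ provided by the splitting $\mu_\X$.

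For the first assertion, the key observation is that once $x_0 \in \X(k)$ is fixed, the stabilizer $\rH$ is $k$-rational, hence so are its normalizer $N_\G(\rH)$ and the quotient $\Aut^\G(\X) \simeq \rH\backslash N_\G(\rH)$. The conjugation action $N_\G(\rH) \to \Aut(\rH)$, being defined over $k$, descends through the $k$-rational quotient $\Aut(\rH) \to \Out(\rH)$ (which kills the inner automorphisms coming from $\rH$ itself) to a $k$-morphism $\mathrm{out}: \Aut^\G(\X) \to \Out(\rH)$. Since this map is defined over $k$, its induced map on $\kbar$-points is automatically $\Ga$-equivariant; then $\Out_\X(\rH) = \mathrm{out}(\Aut^\G(\X))$ is likewise $k$-rational. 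I expect no real obstacle here beyond being careful that the conjugation map from $N_\G(\rH)$ to $\Aut(\rH)$ is legitimately a morphism of $k$-group schemes, which is standard.

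For the second assertion, the splitting $\mu_\X$ gives $\mathrm{SAut}^\G(\X) = \Aut^\G(\X)(\kbar) \rtimes \Ga$, in which the conjugation action of $\Ga$ on $\Aut^\G(\X)(\kbar)$ is precisely the $\Ga$-action coming from the $k$-rational structure. Take any $\phi \in \mathrm{SAut}^\G(\X)$, writing $\phi = a_0 \cdot \mu_\X(\sigma)$ with $a_0 \in \Aut^\G(\X)(\kbar)$ and $\sigma \in \Ga$, and any $a \in \cala_\X^\flat(\kbar)$. Then
\[
\phi \, a \, \phi^{-1} \;=\; a_0 \cdot {}^\sigma a \cdot a_0^{-1}.
\]
By the first part, $\mathrm{out}({}^\sigma a) = \sigma(\mathrm{out}(a)) = 1$, so ${}^\sigma a \in \cala_\X^\flat(\kbar)$; and since $\cala_\X^\flat(\kbar) = \ker(\mathrm{out})$ is the kernel of a group homomorphism on $\Aut^\G(\X)(\kbar)$, it is normal there, so $a_0 \cdot {}^\sigma a \cdot a_0^{-1} \in \cala_\X^\flat(\kbar)$. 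This gives normality in $\mathrm{SAut}^\G(\X)$.

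In summary, the second claim reduces mechanically to the first together with the tautological fact that kernels of homomorphisms are normal; the only nontrivial content is packaging $\mathrm{out}$ as a $k$-morphism, which is essentially bookkeeping about which of the quotients involved descend to $k$.
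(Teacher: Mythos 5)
Your argument is correct and is the natural one; the paper labels this lemma ``straightforward'' and omits a proof, but your unwinding of $\mathrm{out}$ as a $k$-morphism for the first claim, and the semidirect-product computation $\phi a\phi^{-1}=a_0\cdot{}^{\sigma}a\cdot a_0^{-1}$ combined with the kernel-of-a-homomorphism observation for the second, is exactly what is implicitly intended. The only minor remark is that the identification of the conjugation action of $\mu_\X(\sigma)$ on $\Aut^\G(\X)(\kbar)$ with the natural Galois action uses $\mu_\X(\sigma)(x_0)=x_0$ (i.e.\ that $x_0$ is $\Ga$-fixed), which is worth making explicit but is immediate from $x_0\in\X(k)$.
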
 

In particular, if we define $\mathrm{S}\Out_\X(\rH)\subset \mathrm{SOut}(\rH)$ as $\mathrm{S}\Out_\X(\rH) = \mathrm{SAut}(\X)/\mathcal{A}_\X^\flat(\kbar)$, there is a morphism 
$
\tilde\mu_\X:\Ga\lra \mathrm{SOut}_\X(\rH)
$
fitting into a diagram
\[
\begin{tikzcd}
%1\ar[r]&\mathcal{A}_\X^\flat\ar[r]\ar[d]&\mathrm{S}\mathcal{A}_\X^\flat\ar[r]\ar[d]&\Ga\ar[d,"="]\\
   \Aut^{\G}(\X)(\kbar)\ar[r]\ar[d]&\mathrm{SAut}^{\G}(\X)\ar[r]\ar[d]&\Ga\ar[d,"="]\ar[dl,"\tilde\mu_\X"]\\
   \Out_\X(\rH)(\kbar)\ar[r]&\mathrm{S}\Out_\X(\rH)\ar[r]&\Ga.
\end{tikzcd}
\]
%, so that the left square is Cartesian. %We likewise obtain a $1$-cocycle $\mu_\X:\Ga\lra \Out_\X(\rH)(\kbar)$. 
%For any two morphisms $\mu,\mu':\Ga\lra \mathrm{SOut}_{\X}(\rH)$, the map
%\[
%\mu\cdot \mu^{-1}: \Ga\lra \Out_{\X}(\rH)
%\]
%is a homomorphism. 
%For each $\ga\in \De_\X^{dist}$, we obtain characters $\ep_\ga:\Ga_\ga\lra \{\pm1\}$ by recording the corresponding action of $\sig\in \Ga_{\ga}$ on $\D(\ga)$. 
%We adjoin to the $\Ga$-set $\Psi_\X:=(\fX,\De_\X,\D(\X))$ with the data of the cohomology class $[\out(\mu_\X)]\in H^1(k,\Out_{\X}(\rH))$. 

Suppose now that $\X'$ is another $\G$-variety such that there exists a $\G_{\kbar}$-equivariant isomorphism 
\[
\psi:\X_{\kbar}\iso \X'_{\kbar}.
\] 
satisfying that for each $\sig\in \Ga$
\[
\psi(\mu_\sig(g\cdot x)) = \nu_\sig (g)\cdot\psi(\mu_\sig (x)).
\]
We say that $\X'$ is a \textbf{$\G$-form} of $\X$. If $\X'(k)\neq\emptyset$, then $\X'\simeq\rH'\backslash\G$ for some subgroup $\rH'\subset \G$.
\begin{Rem}\label{Rem: G-form is restrictive}
    Note that $\X$ and $\X'$ being $\G$-forms is more restrictive than $\X_{\kbar}\simeq \X'_{\kbar}$. They are required to be isomorphic in a $\G_{\kbar}$ equivariant way. Indeed, for $\rH_1\backslash\G$ and $\rH_2\backslash\G$ to be $\G$-forms, it is necessary that $\rH_1=g\rH_2g^{-1}$ for some $g\in \G(\kbar)$ \cite[Lemma 4.1]{BorovoiModels}.
\end{Rem}

Fix a $\G$-form $\X'$ and isomorphism $\psi$ as above. For any $\sig\in \Ga$, the automorphism $\psi^{-1}\circ {}^\sig\psi:\X_{\kbar}\lra \X_{\kbar}$ is $\G_{\kbar}$-equivariant. Here ${}^\sig\psi(x):=\tilde{\mu}'_{\sig}\circ\psi(\mu_{\sig^{-1}}(x))$, where $\tilde{\mu}'$ denotes the corresponding semi-linear automorphism of $\X'$. We thus obtain a cocycle $$c_{\X,\X'}:=[\sig \lra \psi^{-1}\circ {}^\sig\psi]\in Z^1(k,\Aut^{\G}(\X)).$$
Augmenting the choice of isomorphism $\psi$ via automorphisms corresponds to twisting the cocycle by a coboundary, so we obtain a cohomology class $[c_{\X,\X'}] \in H^1(k,\Aut^{\G}(\X))$ which depends only on $\X'$.
%In particular, after identifying $\X(\kbar)\simeq \X'(\kbar)$ along such an isomorphism (replacing $\tilde\mu'$ by $\psi^{-1}\circ\tilde\mu'\circ\psi$), there exists a $1$-cocycle $c:\Ga\lra \Aut^{\G}(\X)$ such that $\tilde\mu'_\sig = c_\sig\circ \tilde\mu_\sig$ for all $\sig\in \Ga$. It is an easy check that the cohomology class of $c$ does not depend on the choice of isomorphism $\psi$ with which we identify $\X(\kbar)\simeq \X'(\kbar)$. 
\begin{Def}
    Suppose that $\X=\rH\backslash\G$ and let $\X'=\rH'\backslash\G$ be a $\G$-form of $\X$.  We say $\X'$ is a $\G$-\textbf{inner form} if there exists a choice of $\psi$ such that the cocycle $c_{\X,\X'}\in Z^1(k,\Aut^{\G}(\X))$ takes values in $\cala_\X^\flat(\kbar).$ 
      A $\G$-form of $\X$ which is not a $\G$-inner form is called a \textbf{$\G$-outer form} of $\X=\rH\backslash\G$.
\end{Def}
\begin{Lem}\label{Lem: outer forms classify}
    Suppose that $\G$ is a  reductive $k$-group and suppose that $\X=\rH\backslash\G$ is a homogeneous $k$-variety and let $\X'$ be a $\G$-form of $\X$. The cocycle $c_{\X,\X'}$ induces the trivial cohomology class in $H^1(k,\Out_\X(\rH))$ if and only if $\X$ and $\X'$ are $
    \G$-inner forms. 
   % is in natural bijection with 
  %\[
  % \im^1(\mathcal{A}_{\X}^\flat,\cala_\X;k)=\im[H^1(k,\cala_\X^\flat)\to H^1(k,\cala_\X)].%\lra H^1(k,\mathcal{A}_{\X}^\sharp/Z(\G))].
%\]
%n particular, 
%If we further impose that the $k$-model be a $\G$-variety, this further reduces to the subgroup determined by the image, which corresponds to $(\G^\theta)^\circ$ being a (twisted) Levi subgroup of $\G$ 
%\[
%   \ker^1[\rH/Z_{\G,\rH},\G_{ad};k]\lra H^1(k,\mathcal{A}_\X^\flat).
%\]
\end{Lem}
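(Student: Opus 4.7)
The plan is to identify the statement as an instance of exactness for the short exact sequence of $\Ga$-groups
\[
1 \to \mathcal{A}_\X^\flat \to \Aut^\G(\X) \to \Out_\X(\rH) \to 1,
\]
with normality of $\mathcal{A}_\X^\flat$ supplied by the preceding lemma, and surjectivity on $\kbar$-points by construction of $\Out_\X(\rH)$ as the (scheme-theoretic) image of $\mathrm{out}^\G_\rH$. The associated exact sequence of pointed sets in non-abelian Galois cohomology
\[
H^1(k,\mathcal{A}_\X^\flat) \to H^1(k,\Aut^\G(\X)) \xrightarrow{\mathrm{out}_\ast} H^1(k,\Out_\X(\rH))
\]
reduces matters to showing that $\X'$ being a $\G$-inner form of $\X$ is equivalent to $[c_{\X,\X'}]$ lying in the image of the left-hand map, i.e., being cohomologous in $Z^1(k,\Aut^\G(\X))$ to a cocycle valued in $\mathcal{A}_\X^\flat$.

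The forward implication is essentially tautological: if $\X'$ is a $\G$-inner form, then by definition one can choose $\psi$ such that $c_{\X,\X'}$ already takes values in $\mathcal{A}_\X^\flat(\kbar)$, whence $\mathrm{out}_\ast[c_{\X,\X'}] = 1$.

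For the converse, suppose $\mathrm{out}_\ast[c_{\X,\X'}] = 1$, so that there exists $\alpha \in \Out_\X(\rH)(\kbar)$ with $\mathrm{out}(c_{\X,\X'}(\sigma)) = \alpha^{-1}\,\sigma(\alpha)$ for all $\sigma \in \Ga$. Using surjectivity on $\kbar$-points, lift $\alpha$ to an element $\beta \in \Aut^\G(\X)(\kbar)$, and replace $\psi$ by $\psi' := \psi \circ \beta^{-1}$, still a $\G_{\kbar}$-equivariant isomorphism $\X_{\kbar} \iso \X'_{\kbar}$. A direct computation using ${}^\sigma\psi' = {}^\sigma\psi \circ \sigma(\beta)^{-1}$ gives
\[
(\psi')^{-1} \circ {}^\sigma\psi' \;=\; \beta \cdot c_{\X,\X'}(\sigma) \cdot \sigma(\beta)^{-1},
\]
whose image under $\mathrm{out}$ is $\alpha \cdot \mathrm{out}(c_{\X,\X'}(\sigma)) \cdot \sigma(\alpha)^{-1} = 1$. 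Hence the new cocycle takes values in $\mathcal{A}_\X^\flat(\kbar)$, certifying $\X'$ as a $\G$-inner form. The only real content is verifying the transformation law above; this is a formal manipulation from the definition of the semi-linear Galois action, and is where the hypothesis on $\psi$ being $\G_{\kbar}$-equivariant enters. The rest is bookkeeping on pointed sets, so I expect no substantive obstacle beyond ensuring that the scheme-theoretic image structure on $\Out_\X(\rH)$ really provides the surjectivity used above.
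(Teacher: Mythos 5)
Your argument is correct and is essentially the paper's proof: the paper simply declares the statement immediate from the long exact sequence in Galois cohomology for $1 \to \mathcal{A}_\X^\flat \to \Aut^\G(\X) \to \Out_\X(\rH) \to 1$, and your write-up spells out exactly the twisting-by-a-lift computation that makes this exactness at $H^1(k,\Aut^\G(\X))$ do what is wanted.
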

%\textcolor{red}{The rest of the section is experimental. Not sure what I am able to prove when the variety is not spherically closed.}
\begin{proof} This is immediate by considering the long exact sequence in Galois cohomology associated to the commutative diagram with exact rows
\[
\begin{tikzcd}
   1\ar[r] &\mathcal{A}_{\X}^\flat\ar[r]\ar[d]&\Aut^{\G}(\X)\ar[r]\ar[d] &\Out_{\X}(\rH)\ar[d]\ar[r]&1\\
    1\ar[r] &\rH_{ad}\ar[r]&\Aut(\rH)\ar[r] &\Out(\rH)\ar[r]&1.  \qedhere
\end{tikzcd}
\]
\end{proof}

For any $k$-subgroup $\rH\subset \G$, recall the notation
\[
\ker^1(\rH,\G;k):=\ker[H^1(k,\rH)\to H^1(k,\G)].
\]
The following is an easy exercise.

\begin{Lem}\label{Lem: G inner forms}
     Let $\G$ be a reductive group over $k$, $\rH\subset \G$ as $k$-rational reductive subgroup, and $\X=\rH\backslash\G$. Set $\cala_\X = \rH\backslash N_{\G}(\rH)$. \begin{enumerate}
         \item   There is a natural exact sequence
     \[
    \ker^1(\rH,\G;k)\to\ker^1(N_\G(\rH),\G;k)\to  H^1(k,\cala_\X).
     \]
     \item  If $\cala_\X^\flat = \ker[\cala_\X\to \Out(\rH)]$, and $\rH\subset\rH^\flat\subset N_\G(\rH)$ is the subgroup corresponding to $\cala^\flat_\X$, then there is a natural exact sequence
     sequence
     \[
    \ker^1(\rH,\G;k)\to\ker^1(\rH^\flat,\G;k)\to  H^1(k,\cala^\flat_\X).
     \]
     \end{enumerate}
     In particular, two $\G$-forms $\X'=\rH'\backslash\G$ and $\X=\rH\backslash\G$ are $\G$-equivariantly isomorphic if and only if  $\rH$ to $\rH'$ are \emph{pure inner forms} with $\rH'$ corresponding to a class in $\ker^1(\rH,\G;k)$.
\end{Lem}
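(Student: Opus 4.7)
The plan is to deduce both parts from the long exact sequences in non-abelian Galois cohomology associated to the two short exact sequences of $k$-groups
\[
1 \lra \rH \lra N_\G(\rH) \lra \cala_\X \lra 1 \qquad \text{and} \qquad 1 \lra \rH \lra \rH^\flat \lra \cala_\X^\flat \lra 1.
\]
The first is just the definition of $\cala_\X$; the second uses that $\cala_\X^\flat = \ker[\cala_\X \to \Out(\rH)]$ is $k$-rational (as noted earlier) and that the preimage $\rH^\flat$ in $N_\G(\rH)$ is therefore a $k$-subgroup with quotient $\cala_\X^\flat$.

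For (1), I would write the induced long exact sequence of pointed sets
\[
H^1(k,\rH) \overset{i_\ast}{\lra} H^1(k, N_\G(\rH)) \overset{q_\ast}{\lra} H^1(k, \cala_\X),
\]
compose with the inclusion $N_\G(\rH) \subset \G$ to land in $H^1(k,\G)$, and restrict the first two terms to $\ker^1(-,\G;k)$. This gives all the maps in the claim. The essential step is verifying exactness at $\ker^1(N_\G(\rH),\G;k)$ in the pointed-set sense: given $\eta$ in this kernel with $q_\ast(\eta)$ trivial, the long exact sequence produces $\eta' \in H^1(k,\rH)$ with $i_\ast(\eta')=\eta$, and because $H^1(k,\rH) \to H^1(k,\G)$ factors through $H^1(k,N_\G(\rH))$, the assumption that $\eta$ dies in $H^1(k,\G)$ forces $\eta'\in \ker^1(\rH,\G;k)$. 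Triviality of the composition is immediate from functoriality. For (2), the same argument applies verbatim with $N_\G(\rH)$ and $\cala_\X$ replaced by $\rH^\flat$ and $\cala_\X^\flat$.

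For the final characterization, I would combine (1) with the classification of $\G$-forms of $\X$ by $H^1(k,\cala_\X)$ discussed earlier in the section: a $\G$-form $\X'$ is $\G$-equivariantly $k$-isomorphic to $\X$ iff $[c_{\X,\X'}]$ is trivial in $H^1(k,\cala_\X)$. When $\X'(k)\neq\emptyset$ has stabilizer $\rH'$, a choice of $g\in \G(\kbar)$ with $g\rH' g^{-1}=\rH$ produces the canonical cocycle $\sig \mapsto g^{-1}\sig(g) \in N_\G(\rH)(\kbar)$, lifting $[c_{\X,\X'}]$ to a class in $\ker^1(N_\G(\rH),\G;k)$. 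By (1), triviality of this class in $H^1(k,\cala_\X)$ is equivalent to its lift coming from $\ker^1(\rH,\G;k)$, which is exactly the assertion that $\rH'$ is a pure inner form of $\rH$ cut out by a class in $\ker^1(\rH,\G;k)$.

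The only real obstacle is the bookkeeping intrinsic to non-abelian cohomology: one must interpret ``exactness'' in the pointed-set sense and track lifts by hand through each long exact sequence rather than invoke a snake lemma. No deeper ingredients are required.
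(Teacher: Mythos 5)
Your argument is correct and is the natural one: since the paper marks this as ``an easy exercise'' and gives no proof, there is no text to compare against, but the long exact sequence of pointed sets for $1\to \rH\to N_\G(\rH)\to \cala_\X\to 1$ (and the analogous one for $\rH^\flat$), combined with the fact that $H^1(k,\rH)\to H^1(k,\G)$ factors through $H^1(k,N_\G(\rH))$, is exactly the intended route. The one thing worth being explicit about, which you handle correctly, is that exactness in non-abelian cohomology is only at the base point; since you only test triviality in $H^1(k,\cala_\X)$ and in $H^1(k,\G)$, this suffices, and the final equivalence then follows from the identification of $\G$-forms with $H^1(k,\cala_\X)$ and Remark \ref{Rem: G-form is restrictive} as you invoke.
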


We remark that the preceding claim relies on the notion of $\G$-equivariant isomorphism $\psi: \X\to \X'$, which requires the diagram
\[
\begin{tikzcd}
    \G\times \X\ar[r,"m"]\ar[d,"id\times \psi"]&\X\ar[d,"\psi"]\\
    G\times \X'\ar[r,"m'"]&\X'
    \end{tikzcd}
\]
to commute. If we allow more general automorphisms of $\G$ in the preceding diagram, certain $\G$-inner forms may be shown to be isomorphic to $\X$.
\quash{
\begin{Ex}\label{Ex: Galois inner twist}%\textcolor{red}{Think through the Galois case}
 Suppose $k'/k$ is a quadratic \'{e}tale $k$-algebra. If  $\G=\Res_{k'/k} (\rH_{k'})$ acting on $\X=\rH\backslash\G$, then $\cala_\X=\cala_\X^\flat\cong \Nm_{k'/k}(Z(\G))$, where
 \[
 \Nm_{k'/k}(Z(\G)):= Z(\rH)\backslash\Res_{k'/k} (Z(\rH)_{k'});
 \] from this we see that $\Out_\X(\rH)=1$. For any cocycle $z:\Ga\to \cala_\X^\flat(\kbar)$, we may lift $z$ to a cocycle $\tilde{z}:\Ga\to \rH^\flat(\kbar)$. In this case, we may fix an isomorphism $\G_{\kbar}\cong \rH_{\kbar}\times \rH_{\kbar}$ so that $\Ga$ acts coordinate-wise and $\rH_{\kbar}$ embeds diagonally in $\G_{\kbar}$. Under this identification, $\rH^\flat_{\kbar} \simeq\rH_{\kbar}\times [\{1\}\times Z(\rH)_{\kbar}]$, so that we may modify $\tilde{z}$ so that it takes values in $ [\{1\}\times Z(\rH)]_{\kbar}\subset Z(\G)_{\kbar}$. Thus there exists $s=(1,s_1)\in \G(\kbar)$ such that $\tilde{z}(\sig)=\sig(s)s^{-1}\in Z(\G)(\kbar)$ maps to the class of $z$ in $H^1(k,\cala_\X^\flat)$. Note that this forces $\sig(s_1)s^{-1}_1\in Z(\rH)(\kbar)$ as well.

 We obtain the $\G$-inner form $\X_z\simeq\{(h,\Ad(s_1)(h^{-1})):h\in \rH\}=\rH_z\backslash\G$, where 
 \[
 \rH_z=\{(h,\Ad(s_1)h): h\in \rH\}\simeq\rH.
 \]
Note that these are indeed $k$-varieties since $$\sig(\Ad(s_1)h) = \Ad(s_1)[\Ad(s_1^{-1}\sig(s_1))h] = \Ad(s_1)h.$$ To obtain an isomorphism $\psi:\X\to \X_z$, we may use $\Ad(s):\X\to \X_z$. Note that this is a $k$-automorphism since $\sig(s)s^{-1}\in Z(\G)(\kbar)$. The twist $\G_z$ acting on $\X_z$ satisfies $\G\simeq \G_z$ via the $k$-isomorphism $\Ad(s): \G\to \G_z$, and the morphism $\X\to \X_z$ is $\Ad(s)$-equivariant with respect to the two actions of $\G$.

    For example, let $k=\rr$ and $\G=\SL_2\times \SL_2$ acting via conjugation on $\X=\SL_2$. Then $\cala_\X= \{\pm1\}$, so $H^1(\rr,\cala_\X^\flat)\neq 1$. % but the action on $\SL_2$ via outer automorphisms is trivial. Nevertheless, the cohomology of $\mathcal{A}^\flat_\X$  plays a role in the number of inner forms arise. In this case, $H^1(\rr,\mathcal{A}_\X^\flat)=\zz/2$.
    If we twist by the cocycle
    \[
    z(\sig) = s^{-1}\sig(s)=\begin{cases}
        (I_2,I_2)&: \sig= 1\in \Gal(\cc/\rr),\\
        (I_2,-I_2)&: \sig = c\in \Gal(\cc/\rr),
    \end{cases}\qquad s=\left(\begin{psmatrix}
        1&\\&1
    \end{psmatrix},\begin{psmatrix}
        -i&\\&i
    \end{psmatrix}\right),
    \]
    then $\X_z(\rr)=\{g\in\SL_2(\cc): c(g) = -g\}.$\qed\quash{ Note that if $ \X_z \simeq \rH_z\backslash\G,$
    \[
   \rH_z(\rr) = \{(g,\Ad\begin{psmatrix}
        -1&\\&1
    \end{psmatrix}g): g\in \SL_2(\rr)\},
    \]
    so that $\rH\simeq \rH_z$ and $\X\simeq \X_z\cong \SL_2$. Realizing this isomorphism uses the automorphism $\Ad\begin{psmatrix}
        -1&\\&1
    \end{psmatrix}$ of $\SL_2$, so is not $\G$-equivariant in the sense above.} % We remark that it is easy to descend this argument to Galois symmetric pairs as well, using that $\rH^\flat_{\kbar} \simeq\rH_{\kbar}\times [\{1\}\times Z(\rH)_{\kbar}].$\qed
\end{Ex}}
 \quash{
\subsubsection{Existence of quasi-split $\G$-inner form} Our applications of these notions to endoscopy in Section \ref{Section: endoscopy defs} run into the problem of the existence of a distinguished $\G$-inner form. We develop the necessary assumption here. Suppose that $\G$ is a quasi-split reductive $k$-group and that $\rH\subset\G$ is a $k$-rational subgroup such that $\rH^\circ$ is reductive. This implies that $\X=\rH\backslash\G$ is affine. We are interested in whether there exists a $\G$-inner form $\X_{qs}=\rH_{qs}\backslash\G$ such that $\rH_{qs}^\circ$ is quasi-split over $k$. 

%\begin{Assumption}\label{Assumption: qs pure inner twist}
%    
%\end{Assumption}
\begin{Rem} If it exists, $\X_{qs}$ is unique up to $\G$-equivariant isomorphism, so we refer to it as the quasi-split $\G$-inner form of $\X$. %Indeed, suppose $\X=\rH\backslash\G$ and $\X'=\rH'\backslash\G$ are $\G$-inner twists. Then $\rH$ and $\rH'$ are inner forms of each other, so that there is at most one quasi-split form in the class of $\G$-inner twists.
\end{Rem}
The existence of a quasi-split form  may be established in many cases.
\begin{Lem}
    Suppose that one of the following hold:
    \begin{enumerate}
        \item the field $k$ contains a sub-field $k_0$ which is algebraically closed;
        \item the field $k$ is finite;
        \item if $\rH=\rH^\circ$ and $H^1(k,\rH_{ad})=0$.
    \end{enumerate}
    Then there exists a quasi-split $\G$-form for all pairs $(\G,\rH)$. 
\end{Lem}
\begin{proof}
    In the first two settings, all reductive groups are quasi-split. In the second, $\rH$ is the unique form in its inner class, hence must be quasi-split.
\end{proof}
We establish the existence a quasi-split $\G$-inner form for symmetric varieties in Theorem \ref{Thm: quasi-split G-inner form}. More generally, the existence of such a form may be established under appropriate cohomological assumptions.
\begin{Lem}
    Suppose that $\G$ is quasi-split and $\rH\subset \G$ is a connected reductive subgroup. Assume that $\rH$ is a pure inner twist of its quasi-split inner form $\rH_{qs}$. Then there exists an embedding $\rH_{qs}\subset \G$ such that $\rH_{qs}\backslash\G$ is a $\G$-inner form of $\rH\backslash\G$.
\end{Lem}
\begin{proof}
    The assumption implies there exists a $1$-cocycle $z_1:\Ga\to \rH(\kbar)$ such that the twisted group $\rH_1 = \rH_{qs}$ is a quasi-split inner form. As the cocycle is valued in $\rH\subset \G$, we may also twist $\G$ by this cocycle to obtain the pair $(\G_1,\rH_1)$ with $\rH_1$ quasi-split and $\G_1$ an (pure) inner form of $\G$.

   Suppose that $\rA_1$ is a maximal $k$-split torus of $\rH_1$. Lemma \ref{Lem: rational stable Borel} below implies that $\G_1$ is quasi-split if and only if the Levi subgroup $L_1:=Z_{\G_1}(\rA_1)$ is a quasi-split group. Now since $\G_1$ is a pure inner twist of $\G$, $L_1$ is a pure inner twist of a Levi of $\G$ (in particular, the $1$-cocycle $z_2$ of $\G_1$ may be chosen to take values in $L_1$). Twisting by such a $1$-cocyle, we obtain quasi-split groups $L_2\subset \G_2$ and the inner twist $\rH_2\subset \G_2$ of $\rH_1$. Since the cocycle $z_2$ took values in $L_1=Z_{\G_1}(\rA_1)$, we see that $\rA_1\simeq \rA_2\subset \rH_2$, so that $\rH_2$ is also quasi-split.

   Finally, $\G_2\simeq \G$ as they are quasi-split pure inner forms of each other. 
    \end{proof}

Suppose $k$ is an infinite field and consider a symmetric pair $(\G,\rH)$ over $k$. The following lemma will prove useful for certain rationality statements. We say that an element $h\in \rH^{reg}(k)$ is $\G$-regular if it is regular as an element of $\G$. 

For any field $k$, we claim that the set of $\G$-regular semi-simple elements $\rH^{\G-rs}$ is non-empty over $k$. To see this note that 
\[
\rH^{rss}=\{h\in \rH: Z_{\rH}(h)^\circ\text{ is a maximal torus of }\rH\}.
\]
Now if $s\in \rH^{rss}(k)$  with $S=Z_{\rH}(h)^\circ$ the associated maximal torus, then $Z_{\G}(S)=:T$ is a maximal torus of $\G$ \cite[Lemma 5.3]{Richardson}. However, there exists $t\in S(k)$ such that $Z_{\G}(t)=Z_{\G}(S)$ \cite[Proposition 8.18]{Borel}. This forces $t\in \G^{rss}(k)$ which a fortiori implies $t\in \rH^{rss}(k)$.
\begin{Lem}\label{Lem: rational stable Borel}
Suppose $\rH$ is quasi-split and $A$ is a maximal $k$-split torus of $\rH$. Then $\G$ is quasi-split if and only if $Z_{\G}(A)$ is quasi-split. 
%Suppose $\rH$ is quasi-split over $k$ and that there exists a $\G$-regular $k$-cocharacter 
%\[
%\lam: \Gm\lra \rH;
%\] that is, there exists $t_0\in F^\times$ such that $\lam(t_0)\in \rH^{\G-rs}(k)$. Then $\G$ is quasi-split and there exists an $k$-rational $\theta$-stable Borel subgroup $B\subset \G$.
%Moreover, there exists a $k$-rational $\theta$-stable Borel subgroup of $\G$ if and only if $Z_{\G}(A)$ is a torus. 
\end{Lem}
\begin{proof}
%Suppose that $t\in \rH^{\G-rs}(k)$ lies in a $k$-split torus $A\subset \rH$. Since $A\iso \Gm^k$ for some $k$, there exists a rational cocharacter $\lam\in X_\ast(A)$ such that 
%\[
%\lam(t_0)=t
%\]
%for some $t_0\in F^\times$. Furthermore, if $\lam:\Gm\lra \rH$ is a cocharacter such that $t=\lam(t_0)\in \rH^{\G-rs}(k)$ for some $t_0\in F^\times$, then every maximal $k$-split torus meets $\rH^{\G-rs}(k)$ non-trivially; this follows from the $k$-conjugacy of all such tori. 
%\textcolor{red}{The assumption is that the dimension of $Z_{\G}(s)^\circ$ is constant along the open set $\rH^{rss}$, which need not be true. What is true is that the intersection
%\[
%\rH\cap\G^{rss}=\rH^{\G-rs}
%\]
%is non-empty. We need to include the assumption that there exists $k$-split $\G$-regular cocharacters.}
If $\G$ is quasi-split, $Z_{\G}(A)$ is a Levi subgroup of a quasi-split group and hence is quasi-split. More explicitly, let $A'\supset A$ be a maximal $k$-split torus of $\G$ and take a Borel subgroup $B$ containing $A'$. Then $B\cap Z_{\G}(A) = Z_{B}(A)$ is a Borel subgroup of $Z_{\G}(A)$.

On the other hand, suppose that $Z_{\G}(A)$ is quasi-split and let $A'\supset A$ be a maximal $k$-split torus of $Z_{\G}(A)$. We must show that the Levi subgroup  $Z_{\G}(A')\subset Z_{\G}(A)$ is a torus. On the other hand, we know
\[
Z_{Z_{\G}(A)}(A') =Z_{\G}(A')\cap Z_{\G}(A)=Z_{\G}(A')
\]
is a maximal torus of $Z_{\G}(A)$, and the claim follows. 
%Let $B_{\rH}\subset \rH$ be an $k$-rational Borel subgroup. Let $A\subset B_{\rH}$ be a maximal $k$-split torus contained in $B_{\rH}$. By assumption, there exists $t\in A(k)$ such that $t\in \G^{rss}(k)$. %Then $Z_{\rH}(A)=:S$ is a Levi subgroup of $B_{\rH}$, hence is a maximally $k$-split maximal torus of $\rH$. Another application of \cite[Proposition 8.18]{Borel} implies that there exists $t\in A(k)$ such that $S=Z_{\rH}(t)$ and $s\in S(k)$ such that 
%\[
%T=Z_{\G}(S) = Z_{\G}(s);
%\]
%in particular, $s\in \rH^{\G-rs}(k)$.
%In particular,
%\begin{equation}\label{torus inclusion} 
%  T:=Z_{\G}(t)^\circ\supset Z_{\G}(A) 
%\end{equation}
%is a maximal torus containing the centralizer $Z_{\G}(A)$. As the centralizer of a $k$-split torus, there exists an $k$-rational parabolic $P\subset \G$ such that $T=Z_{\G}(A)$ is a Levi subgroup. But \eqref{torus inclusion} implies $Z_{\G}(A)$ is abelian, forcing $P=B$ to be a Borel. 
%The final statement on the existence of a $\theta$-stable Borel $B\subset \G$ follows directly from \cite[Proposition 3.5]{HelminckWang}.
\end{proof}
%We do not know the generality in which Assumption \ref{Assumption: qs pure inner twist} if the holds. On the other hand, we are not aware of a counter-example when $\rH\subset \G$ is symmetric. It is possible that a sufficient relative analogue of the notion of rigid inner twist (e.g. \cite{Kaletharigid}) resolves this consideration, which we will take up in future work.
 
}

\section{Spherical data and automorphisms}\label{Appendix: spherical roots}

In this section, we recall generalities of spherical varieties, their combinatorial data, and rationality properties. We then analyze certain quotients of the automorphism group of a homogeneous spherical variety in Section \ref{Section: doubling aut}. The main tool developed is the notion of geometric cocycle in Definition \ref{Def: geometric cocycle}.
\subsection{Generalities}\label{Section: spherical datum}
Let $\G$ be a connected reductive group over a field $k$, which for this section we assume has characteristic zero. This is essentially for convenience as our main references \cite{Losev, BorovoiGagliardi} work in this setting. However, most of these results hold with mild assumptions on the residue characteristic. In any case, we discuss the positive characteristic setting for symmetric varieties in Section \ref{Section: symmetric data}.

Fix a Borel subgroup $\B$ with $\rA=\B/[\B,\B]$ the canonical Cartan of $\G$. The canonicity of the quotient implies that $\rA$ inherits a unique $k$-structure, and we use $\rA$ to denote the resulting $k$-torus. In particular, for any maximally $k$-split maximal torus $T\subset \G$ there is a $k$-isomorphism $T\iso \rA$. Let $X^\ast(\rA)=X^\ast(\B)$ denote the character lattice, with $\De\subset \Phi^+\subset \Phi$ be the associated simple and positive roots. Then we have the based root datum $\Psi:=(X^\ast(\rA),\De,X_\ast(\rA),\check{\De})$.

Suppose that $\X=\rH\backslash\G$ is a homogeneous spherical variety over $k$. Let $\mathring{\X}$ denote the open $B$-orbit. Then $\rA$ acts on $ \mathring{\X}\sslash[B,B]$ through a quotient $\rA\lra \Ax$; we call the torus $\Ax$ the (canonical) torus of $\X$.

We let $\fX:=X^\ast(\Ax)$ denote the weight lattice of $\Ax$. It corresponds to the lattice of $B$-semiinvariants 
\[
0\lra \kbar^\times \lra \kbar(\X)^{(B)}\lra \fX\lra 0.
\]
We note that since we are not assuming that $k$ is algebraically closed, this should be viewed as a sequence of $\Ga$-modules. We also have the \textbf{spherical weights} of $\X$
\[
\fX^+:=\{\lam\in X^\ast(\rA)^{+}: V(\lam)_{\kbar}^{\rH_{\kbar}}\neq 0\},
\]
which satisfies 
\[
\kbar[\X]=\bigoplus_{\lam\in \fX^+}V(\lam).
\]

One considers the set $\mathcal{V}:=\mathcal{Z}(\X)$ of \emph{central discrete valuations} on $\kbar(\X)$; that is, valuations $\nu:\kbar(\X)\lra \zz\cup \{\infty\}$ that are $\G$-invariant and trivial on $\B$-invariants $\kbar(\X)^{\B}=\kbar$. On any $\B$-semiinvariant, such a valuation depends only on the corresponding character, so that there is a map 
\[
\mathcal{V}\lra \fa_{\X,\qq}:=\Hom(\fX,\qq).
\]
It is known that this map is injective and that the image realizes $\mathcal{V}$ as a  finitely-generated convex cone which is not contained in any hyperplane. Indeed, it gives a chamber for the so-called little Weyl group $W_\X$, which acts on $\fX$ and $\fa_{\X,\qq}$. %\textcolor{red}{Is this true for reductive group or just semisimple?}

Let $\De_\X:=\{\ga_1,\ldots,\ga_s\}\subset \fX_\qq$ be a minimal set of ``outward normal vectors'' such that
\[
\mathcal{V}:= \{a\in \fa_{\X,\qq}: a(\ga_i)\leq0,\:\:i\in \{1,\ldots,s\}\};
\] these are the \emph{spherical roots} of $\X$. At the moment, these roots are only defined up to a rational scalar.

\subsubsection{Normalizations of spherical roots}\label{App: normalizations}
%We largely refer to \cite{BraviGandini}.
\quash{Suppose that $k$ is a field of characteristic zero (for simplicity), $\G$ is a connected reductive group over $k$, and let $\X$ be a spherical $\G$-variety. For simplicity, we assume that $\X=\rH\backslash\G$ is homogeneous and affine; this implies that $\rH$ is reductive.

Fix a Borel subgroup $B$, and let $\rA=B/[B,B]$ denote the canonical Cartan of $\G$. We assume that $B$ is $k$-rational, so that $\G$ is quasi-split. Let $\Psi:=(X^\ast(\rA),\De,X_\ast(\rA),\check{\De})$ be the induced based root datum. Let $\X^\circ$ denote the open $B$-orbit. Then $\rA$ acts on $[B,B]\backslash\backslash \X^\circ$ through a quotient $\rA\lra \Ax$; we call the torus $\Ax$ the (canonical) torus of $\X$.

We let $\fX:=X^\ast(\Ax)$ denote the weight lattice of $\Ax$, which we at times refer to as the weight lattice of $\X$. It corresponds to the lattice of $B$-semiinvariants 
\[
0\lra \kbar^\times \lra \kbar(\X)^{(B)}\lra \fX\lra 0.
\]
We note that since we are working over a not-necessarily algebraically closed field $k$, this should be viewed as a sequence of $\Ga=\Gal(k^{sep}/k)$-modules.

We also have the \textbf{spherical weights} of $\X$
\[
\fX^+:=\{\lam\in X^\ast(\rA)^{+}: V(\lam)_{\kbar}^{\rH_{\kbar}}\neq 0\},
\]
which satisfies 
\[
\kbar[\X]=\bigoplus_{\lam\in \fX^+}V(\lam).
\]
We denote the unique $\G_{\kbar}$-module isomorphic to $V(\lam)$ as $E(\lam)$.

For reasons that will be clear shortly, we also consider $\rH$-semiinvariants. That is, for each $\chi\in X^\ast(\rH)$, consider $\kbar[\G]_\chi^{(\rH)}$. This is also a multiplicity-free $\G_{\kbar}$-module. We obtain a larger monoid of \emph{semi-spherical weights}
\[
\Omega_{\X}^+ := \{ \lam\in X^\ast(\rA): V(\lam)^{(\rH)}\neq 0\}
\]

\subsection{Root systems}}
For our purposes, there are $3$ natural root systems associated to $\X$, each of which plays a distinct role in the structure theory. Let $W_{\X}$ denote the little Weyl group of $\X$; all root systems have this as their Weyl group, and differ only in replacing simple roots with scalar multiples.
\begin{enumerate}
    \item \textbf{The $n$-spherical roots:}  Suppose that $\lam,\mu,\nu\in \fX^+$ such that $V(\nu)\subset V(\lam)\cdot V(\mu)$ in $\kbar[\X]$. Denote
\[
\Lam_\X^+:= \zz_{>0}\{\lam+\mu-\nu : V(\nu)\subset V(\lam)\cdot V(\mu)\}.
\]
A theorem of Knop states that this is a free monoid; let $\De^n_{\X}$ denote its (unique) set of free generators. This is the set of $n$-spherical roots of $\X$, and we let $\Phi^n_{\X}=W_{\X}\cdot\De^n_{\X}$. 
\begin{Rem}
    Knop shows in \cite{KnopAutomorphisms} that the \emph{root lattice} $\Lam_\X:=\zz\Lam_\X^+$  of $\X$ is precisely the kernel of 
    \[
    X^\ast(\Ax)\lra X^\ast(\Aut^{\G}(\X)),
    \]
    at least when $\X$ is quasi-affine.
\end{Rem}
%\item \textbf{The $sc$-spherical roots:} This root system is produced in a similar fashion, but allows for $\rH$-semiinvariants. That is, suppose $\lam,\mu\,\nu\in \Omega_{\X}^+$ such that $E_{\chi+\chi'}(\nu)\subset E_\chi(\lam)\cdot E_{\chi'}(\mu)$ for $\chi,\chi'\in X^\ast(\rH)$. This similarly implies that $\lam+\mu-\nu\in \fX\cap \zz_{>0}\De$, and we let
%\[
%\mathcal{M}_{\X}^{sc}:= \zz_{>0}\{\lam+\mu-\nu : E_{\chi+\chi'}(\nu)\subset E_\chi(\lam)\cdot E_{\chi'}(\mu)\text{ for some }\chi,\chi'\in X^\ast(\rH)\}.
%\]
%Another theorem of Knop is that this is a free monoid, and we let $\De^{sc}_{\X}$ denote its (unique) set of free generators. This is the set of $sc$-spherical roots of $\X$, and we let $\Phi^{sc}_{\X}=W_{\X}\cdot\De^{sc}_{\X}$.
\item \textbf{The minimal root system:} By construction, we have inclusion
\[
\Lam_\X\subset \fX.
\]
However, $\De^n_{\X}$ need not be primitive in $\fX$. Knop defines $\De^{min}_{\X}$ as the set of primitive elements in $\fX$ associated to the extremal rays of the cone generated by $\De^n_{\X}$ in $\fX\otimes_{\zz}\qq$. Set $\Phi^{min}_{\X}=W_{\X}\cdot\De^{min}_{\X}$.
\begin{Rem}
    We take this to be the definition of \emph{spherical roots} of $\X$, and set $\De_\X:=\De_{\X}^{min}$.
\end{Rem}
\item\textbf{The normalized root system:}  Motivated by integrality constraints imposed by Langlands functoriality (see Section \ref{Section: dual groups}), Sakellaridis--Venkatesh introduce an additional normalization in \cite{SakVenk}. %These authors re-scale the spherical roots $\De^{min}_{\X}$ they are primitive in the root lattice $\zz \Phi$ of $\G$.
    The \textbf{normalized spherical roots} of $\X$, denoted as $\De_{\X}^{sv}$, are obtained by replacing $\al\in\De_{\X}$ with the corresponding primitive elements in the root lattice of $\G$, $\zz\Phi\subset X^\ast(\rA)$.
\end{enumerate}

With this definition, it may happen that $\De_{\X}^{sv}\not\subset\fX$, so one must also re-normalize 
\[
{\fX}^{sv}:=\fX+\zz\De_{\X}^{sv}.
\]
Relying on work of Brion, there is a natural trichotomy of spherical roots.
\begin{Def}
    Let $\ga\in \De_{\X}^{min}$ be an unnormalized spherical root. Then $\ga$ is either
    \begin{enumerate}
        \item \label{type T} proportional to a root $\al\in \Phi$ of $\G$ such that $\al\in \fX$. We say $\ga$ is a root of type $T$,
             \item \label{type N} proportional to a root $\al\in \Phi$ of $\G$ that does not lie in $\fX$. We say $\ga$ is a root of type $N$, or
                  \item \label{type G} proportional to a sum $\al+\be$ where $\al,\be\in \Phi$ are strongly orthogonal roots of $\G$. We say $\ga$ is a root of type $G$.
    \end{enumerate}
\end{Def}
A standard assumption in the harmonic analysis of spherical varieties is to assume that $\X$ has no spherical roots of type $N$. This implies that $\fX^{sv} = \fX$ \cite[Proposition 3.1.6]{SakVenk}. This is a very restrictive assumption in the context of symmetric varieties, so we will not impose this constraint until Section \ref{Section: hamiltonian endoscopy} when we connect with the ideas of \cite{BZSV}.

Let $\De_\X:=\{\ga_1,\ldots,\ga_s\}\subset \fX$ denote the spherical roots. 
\begin{Def} For each $i$, $ \ga^{sv}_i = \sum_{\al\in \De} n_\al \al$ with $n_\al\in \zz$ and $\gcd(n_\al) =1$. For a given spherical root $\ga$, we define its \emph{support} $|\ga|$ to be $\{\al\in \De: n_\al>0\}$. More generally, we set $|\Sigma_0|=\bigcup_{\ga\in \Sigma_0}|\ga|$ for any subset $\Sigma_0\subset \De_\X$.
\end{Def}

\quash{\begin{Ex}[Type $G$ example]
We give an example of a root of type $\G$ in the context of symmetric varieties that requires renormalization. Suppose that $\G$ is of type $D_n$ for $n\geq 2$. Then the spherical root $\ga\in \De_{\X}^{min}$ is
    \[
    \ga = 2(\al_1+\cdots +\al_{n-2})+\al_{n-1}+\al_n.
    \]
    Letting $\{e_i\}_{1\leq i\leq n}$ denote a basis for the weight lattice $X^\ast(\rA)$, then $\ga = 2e_1$. Note that the root lattice is spanned by
    \[
    e_1-e_2,\: e_2-e_3,\ldots, e_{n-1}-e_n,\text{ and } e_{n-1}+e_n,
    \]
so that $e_1\notin \zz\Phi$.

    Now if we consider the involution $\theta$ on 
    \[
    \zz\Phi\subset X^\ast(\rA)\subset \Hom(\zz\check{\Phi},\zz)
    \]
    given by 
    \[
\theta(e_i) =\begin{cases}
    -e_1&: i=1,\\
    e_i&: i\neq 1,
\end{cases}
    \]
    then $\theta(e_1) = -e_1$. When $\G=\SO(2n)$ and $\rH= \mathrm{O}(n)=\SO(2n-1)\times \{\pm1\}$, then $e_1\notin X^\ast(\Ax)$ as there is no solution $\ga\in X^\ast(\rA)$ to $e_1 = \ga-\theta(\ga)$\footnote{There is such a solution if $\G=\Spin (2n)$ (for example $\lam = \frac{1}{2}\sum_{i}e_i$). This also requires renormalization as $e_1\notin \zz\Phi$.}. On the other hand, if we set $\rH=\SO(2n-1)$, then $e_1\in X^\ast(\Ax) = \{\lam\in X^\ast(\rA): \theta(\lam)=-\lam\}$ and so gives the minimal spherical root. This is not in the root lattice of $\G$, so the normalized root is $\ga = 2\al$. 

    Thus, if we set $\X=\SO(2n)/\SO(2n-1)$, the minimal root system of $\X$ is 
    \[
    \Psi^{min}_{\X}=(\zz e_1,\{e_1\}, \zz e_1^\ast, \{2e_1^\ast\}),
    \]
    with dual group $\SL_2(\cc)$. Renormalizing our root as in Sakellaridis--Venkatesh, we have
    \[
     \Psi_{\X}=(\zz e_1,\{2e_1\}, \zz e_1^\ast, \{e_1^\ast\}),
    \]
    we obtain $\check{\G}_{\X}=\SO_3(\cc)=\PGL_2(\cc)$. Refering to Section \ref{Section: dual groups} for any undefined notations, this is necessary for the embedding. Indeed, $\hat{\G}_{\X}\simeq \Gm^{n-2}\times \SO(4)$, and the natural inclusion
    \[
    \SO(3)\overset{\varphi_{\X}}{\lra} \Gm^{n-2}\times \SO(4)\subset \SO(2n)=\check{\G}
    \]
     is the correct map since the map of tori
    $
    \check{\rA}_\X\lra \check{\rA}
    $
    is injective.
    \end{Ex}
\begin{Cor}\label{Cor: no need to extend}
    Suppose $\G$ is connected reductive over $k$ and $\X$ is a symmetric $\G$-variety without type $N$ roots. Then $\De_{\X}\subset \fX$.
\end{Cor}
\begin{proof}
    The assumption that $\X$ possesses no roots of type $N$ implies that the only possible source of elements $\sig\notin \fX$ is in the renormalization of a root of type $\G$.  A glance at Table \ref{tab:associated roots} shows that only spherical roots of type $D_{n}$ for $n\geq2$ correspond to spherical roots of type $G$ for a symmetric variety. Depending on the precise nature of $\X$, these roots may require normalization. The claim is that this does not enlarge $\fX$.
    
    Suppose that $\al\in\De_\X$ is of type $G$, and let $\X_{\al}$ be the corresponding boundary degeneration (cf. \cite[Section 2]{SakVenk}); then $\Ax = \rA_{\X_\al}$ and $\De_{\X_\al} = \{\al\}$. We may thus assume $\X$. Nevertheless, the a straightforward calculation  shows that $\sig\in X^\ast(\Ax)$ for any such spherical root. \textcolor{red}{This seems to be a gap. Need to somehow deduce}
\end{proof}
}
\quash{\subsection{Interpretation of the normalizations}
Recall that the group of $\G$-equivariant automorphisms $\Aut^{\G}(\X) = N_{\G}(\rH)/\rH$ is a diagonalizable subgroup of $\Ax$. Let $\overline{\rH}$ denote the \emph{spherical closure} of $\rH$, defined as the kernel of the action of $N_{\G}(\rH)$ on $X^\ast(\rH)$. Then
\[
\rH\subset \overline{\rH}\subset N_{\G}(\rH),
\]
and $\overline{\rH}/\rH=\mathcal{A}_{\X}^\sharp$ is a certain subgroup of $\Aut^{\G}(\X)$ discussed in Section \ref{Section: color autos}.

With this notation, it is easy to verify that
\[
\De_{\X}^n = \De^{min}_{\G/N_{\G}(\rH)}\qquad\qquad \De_{\X}^{sc} = \De^{min}_{\G/\overline{\rH}}
\]
justifying the notation. One also has
\[
X^\ast(\Aut^{\G}(\X)) = \fX/\zz \De_{\X}^n \qquad\qquad X^\ast(\mathcal{A}^\sharp_\X)=\fX/\zz\De_{\X}^{sc}.
\]
In the literature, there are several different normalizations of the spherical roots, which we recall in Appendix \ref{App: normalizations}. We let $\De_{\X}:=\De_\X^{min}$ be the set of minimal ray generators contained in $\fX$. Sakellaridis and Venkatesh \cite{SakVenk} re-normalize $\De_\X$ so that they are primitive in the root lattice $\zz \Phi$ of $\G$, and we will refer to these normalized notes as $\De_{\X}^{sv}$. With this definition, it may happen that $\De^{sv}_\X\not\subset \fX$, so we re-normalize 
\[
{\fX}^{sv}:=\fX+\zz\De^{sv}_\X.
\]
We refer to $\fX^{sv}$ as the (SV) \emph{normalized} weight lattice of $\X$. We note that Corollary \ref{Cor: no need to extend} states that ${\fX}^{sv}=\fX$ for any symmetric variety without type $N$ roots (we recall this notion below).
}

\subsubsection{Associated coroots}
There is an obvious bijection $\De_\X\iso \De_{\X}^{sv}$, sending a spherical root $\ga$ to its normalization $\ga^{sv}$. For any spherical root $\ga\in\De_\X$, either $\ga^{sv}\in \Phi^+$ (types $T$ and $N$ roots) or $\ga^{sv}=\ga_1+\ga_2$ is a sum of two strongly orthogonal roots $\ga_1,\ga_2\in \Phi^+$ (type $G$). For any root $\ga$ of type $\G$, there is a unique pair of such roots $\{\ga_1,\ga_2\}$ such that
\[
\check{\ga}_1-\check{\ga}_2 = \check{\de}_1-\check{\de}_2, \text{ for appropriate simple roots}\de_1,\de_2\in \De.
\]
This constraint implies that the restrictions of $\check{\ga}_i$ to ${\fX}^{sv}$ coincide and define a unique element $\check{\ga}\in \check{\fX}^{sv} =\Hom({\fX}^{sv},\zz).$
For spherical roots $\ga\in \Phi^+$ of type $T$ or $N$, we set $\check{\ga}$ for the corresponding coroot in the usual sense. Let $\check{\De}_{\X}:=\{\check{\ga}:\ga\in \De^{sv}_\X\}$ and
\begin{align}
\hat{\De}_\X=\{\check{\ga}:\ga\text{ of type $T$ or $N$}\}\sqcup\{\check{\ga}_1,\check{\ga}_2:\ga\text{ of type $G$}\}.
\end{align}
This latter set is called the associated coroots of $\X$; the corresponding roots $\hat{\Sigma}_\X$ is the set of associated roots. The table below gives the possible set of associated roots for spherical roots of type $\G$. The first column refers to the underlying root system of the rank $1$ boundary degeneration of $\X$ associated to the spherical root $\ga$; see \cite{KnopSchalke} for details.

\begin{table}[htp]
  \caption{Associated roots}
    \label{tab:associated roots}
    \centering
    \begin{tabular}{|c|c|c|c|}
	\hline 
	$|\ga|$ & $\ga_1,\:\ga_2$ & $\ga_1^\vee,\ga_2^\vee$ &$\de_1^\vee,\:\de_2^\vee$\\[.25 cm] \hline 
	$\mathbf{D_2}$ & $\al_1,\:\al_2$  & $\al_1^\vee,\al_2^\vee$ &   $\al_1^\vee,\al_2^\vee$ \\[.25 cm] \hline
	$\mathbf{D_l \: (l\geq 3)}$ & \shortstack{$(\al_1+\cdots+\al_{l-2})+\al_{l-1}$\\$(\al_1+\cdots+\al_{l-2})+\al_{l}$} &\shortstack{$(\al^\vee_1+\cdots+\al^\vee_{l-2})+\al^\vee_{l-1}$\\$(\al^\vee_1+\cdots+\al^\vee_{l-2})+\al^\vee_{l}$} & $\al_{l-1}^\vee,\al_l^\vee$  \\[.25 cm] \hline
	$\mathbf{B_3}$ & $\al_1+\al_2+2\al_3,\: \al_2+\al_3$ & $\al^\vee_1+\al^\vee_2+\al^\vee_3,\: 2\al^\vee_2+\al^\vee_3$&$\al_1^\vee,\al_2^\vee$ \\[.25 cm] \hline
    \end{tabular}
  
\end{table}

\iffalse
\begin{Quest}
\begin{enumerate}
    \item I don't think that these geometric identifications prove anything of interest from a representation theoretic point of view. What can be said to include the cases of order $3$ automorphisms? The $\G_2$-cases seems similarly treatable and would make things very nice. 
    \item For a more general spherical variety, the notion of twisted boundary degenerations can likely be formulated in terms of cohomology classes of flats. What can take the role of \emph{elliptic} endoscopy spaces in the split settings?
\end{enumerate}
\end{Quest}
\fi

\subsection{Combinatorial homogeneous spherical data}\label{Section: spherical data}

We say a simple root $\al\in \De$ is \emph{parabolic} for $\X$ if $x\cdot P_\al =  x\cdot \B$ for any $x\in \X$ in the (unique) open $\B$-orbit $\mathring{\X}\subset \X$; here, $P_\al$ is the minimal non-solvable parabolic subgroup associated to $\al$. We set $\De^p_\X\subset \De$ to be the set of parabolic simple roots of $\X$.

%\begin{Rem}
%As noted in \cite[Remark 2.3 ii)]{KnopFunctorial}, the choice to include only roots of type $T$ and $G$ is problematic from a representation theoretic point of view. Such considerations will require better understanding of roots of type $N$, which do not appear in this classification as they have been ``normalized out.'' Most likely, one should keep track of the \emph{unnormalized} spherical roots defined as minimal generators contained in $\fX$ as well as the \emph{normalized} spherical roots here defined.
%\end{Rem}

%We include a table of all rank $1$ spherical varieties over $\kbar$ in Appendix \ref{App: Table}. In that table,
%The last piece of data is the color data of $\X$.
Let $\mathcal{D}(\X) = \D^B(\X)$ denote the set of $\B$-stable prime
divisors of $\X$, called the \emph{colors of $\X$}. Each color $D\in \D(\X)$ defines a $\B$-invariant valuation on $\kbar(\X)^{(\B)}$. We thus obtain a map
\[
\rho:\D(\X)\lra \fa_{\X,\qq}%:=\check{\fX}_{\qq}
\]
sending $D$ to the associated valuation $\nu_D$. For $D\in \D(\X)$, let $P_D$ denote the stabilizer of $D$ in $\G$. Clearly, $P_D\supset B$ is a parabolic subgroup. For $\al\in\De$, let $P_\al$ denote the corresponding minimal parabolic
subgroup of $\G$ containing $\B$. Let $\De(D)$ denote the set of $\al\in \De$ such that $P_\al\not\subset P_D$. We obtain a map
\[
\varsigma: \D(\X)\lra \mathcal{P}(\De),
\]
where $\mathcal{P}(\De)$ denotes the power set of $\De$. 

Consider the map 
\begin{equation}\label{eqn: color function}
    \rho\times \varsigma:\D(\X)\lra \fa_{\X,\qq}\times \mathcal{P}(\De),
\end{equation}
and let $\Omega$ denote its image. The fibers of this map over $\Omega$ contain either one or $2$ colors \cite[Appendix B]{BorovoiModels}. Let $\Omega^{(1)},\:\Omega^{(2)}$ denote the corresponding subsets determined by the size of the fiber.
 By a variant Losev’s Uniqueness Theorem \cite[Theorem 1]{Losev}, the invariants
\begin{equation}\label{eqn: spherical invariants}
    \Omega_\X:=(\fX,\De_{\X},\Omega^{(1)},\Omega^{(2)})
\end{equation}
determine the spherical homogeneous space $\X_{\kbar}=\rH_{\kbar}\backslash\G_{\kbar}$ of the reductive $\kbar$-group $\G_{\kbar}$ up to a $\G_{\kbar}$-equivariant isomorphism (the spherical roots $\De_{\X}$ encode the valuation cone $\mathcal{V}$).

On the other hand, our $k$-group $\G$ induces a $\ast$-action $\{\sig_\ast\}_{\sig\in \Ga}$ of $\Ga$ on $\X^\ast(\rA)$ and on $\De$.  As explained in \cite[Section 2]{BorovoiGagliardi}, the $\ast$-action induces for each $\sig\in\Ga$ a homogeneous spherical datum $\sig_\ast\Omega_\X$. A necessary condition for the existence of a $k$-model of $\X_{\kbar}$ is that $\sig_\ast\Omega_\X=\Omega_\X$ for each $\sig\in \Ga$.
\begin{Prop}\cite[Proposition 2.17]{BorovoiGagliardi}
    Suppose that $\G$ is a connected reductive $k$-group and $\X=\rH\backslash\G$ is a spherical homogeneous $k$-variety. The induced $\ast$-action on $\X^\ast(\rA)$ and $\De$ preserves the invariants $\Omega_\X$.
\end{Prop}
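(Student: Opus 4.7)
The plan is to unpack the definition of the $\ast$-action and check, piece by piece, that it preserves each component of $\Omega_\X$. Recall that $\sig_\ast$ on $\Psi(\G)$ factors as the composite of the natural Galois action $\mu_\sig$ (coming from the $k$-structure of $\G$) with the inner automorphism $\Ad(g_\sig^{-1})$, where $g_\sig \in \G(k^{sep})$ is chosen so that $\mu_\sig(\B, \rA) = \Ad(g_\sig)(\B, \rA)$. I would show that both factors preserve $\Omega_\X$, so that their composite does as well.

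For the natural Galois action, the key point is that since $\X$ is a $k$-rational $\G$-variety, $\mu_\sig$ extends to a $\sig$-semilinear $\mu$-equivariant automorphism $\mu_{\X, \sig}$ of $\X(\kbar)$. By naturality, this map sends the open $\B$-orbit $\mathring{\X}$ to the open $\mu_\sig(\B)$-orbit; sends a $\B$-semiinvariant of weight $\chi \in X^\ast(\rA)$ to a $\mu_\sig(\B)$-semiinvariant of weight $\mu_\sig(\chi) \in X^\ast(\mu_\sig \rA)$; sends a color $D \in \D(\X)$ to a $\mu_\sig(\B)$-stable divisor with stabilizer $\mu_\sig(P_D)$; and sends the set $\mathcal{V}$ of central discrete valuations to itself (this piece being intrinsic to $\X_{\kbar}$ independently of the choice of Borel). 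Hence $\mu_\sig$ intertwines the spherical datum of $\X_{\kbar}$ relative to $(\B, \rA)$ with the one relative to $(\mu_\sig \B, \mu_\sig \rA)$.

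For the second factor, $\Ad(g_\sig^{-1})$ is a $\G_{\kbar}$-automorphism of $\X_{\kbar}$ as an abstract variety, and transports the spherical datum relative to $(\mu_\sig \B, \mu_\sig \rA)$ back to the one relative to $(\B, \rA)$; all the relevant objects and morphisms appearing in $\Omega_\X$ transform functorially under conjugation. Composing the two steps, $\sig_\ast$ stabilizes $\fX$, permutes $\De_\X$ (which is characterized intrinsically as the set of primitive generators of the extremal rays of $-\mathcal{V}$), and preserves the subsets $\Omega^{(1)}, \Omega^{(2)} \subset \fa_{\X,\qq} \times \mathcal{P}(\De)$.

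The main subtlety, though not a deep one, is the mixed nature of the color data: $\varsigma$ takes values in $\mathcal{P}(\De)$, which depends on the choice of $\B$, so one must track how $\varsigma(D)$ transforms under the change of Borel $\B \leadsto \mu_\sig(\B)$ and then back under $\Ad(g_\sig^{-1})$. The key observation is that $\mu_\sig$ sends the minimal parabolic $P_\al$ to $P_{\mu_\sig(\al)}$ and preserves the non-containment relation $P_\al \not\subset P_D$, so $\varsigma(\mu_{\X,\sig}(D)) = \mu_\sig(\varsigma(D))$; the analogous statement after $\Ad(g_\sig^{-1})$ is immediate. This yields $\sig_\ast$-equivariance of $\rho \times \varsigma$ and hence of the pair $(\Omega^{(1)}, \Omega^{(2)})$.
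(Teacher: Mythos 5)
The paper does not prove this proposition — it cites it directly from \cite[Proposition 2.17]{BorovoiGagliardi} — so there is no in-paper proof to compare against; your reconstruction follows the same standard strategy as the cited source. The decomposition of $\sig_\ast$ into the semilinear automorphism $\mu_{\X,\sig}$ (which transports all spherical data from $(\B,\rA)$ to $(\mu_\sig\B,\mu_\sig\rA)$) followed by the change-of-Borel-pair identification is exactly the right argument, and you correctly identify the one point requiring care, namely the $\mathcal{P}(\De)$-valued piece $\varsigma$. One minor imprecision: the second factor is not literally $\Ad(g_\sig^{-1})$ acting on $\X$ (this need not stabilize $\rH$); what you want is right translation of $\X=\rH\backslash\G$ by the appropriate element, which intertwines the $\B$-orbit structure with the $\mu_\sig(\B)$-orbit structure and hence identifies the two spherical data. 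This doesn't affect the substance of the argument, but the phrasing ``$\Ad(g_\sig^{-1})$ is a $\G_{\kbar}$-automorphism of $\X_{\kbar}$'' should be adjusted to avoid suggesting $\G$-equivariance or that conjugation descends to an automorphism of the homogeneous space.
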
 

When $\G$ is quasi-split, Borovoi and Gagliardi prove the following converse of this result (see \cite{BorovoiGagliardi} for their more general results). Fix also a $k$-rational Borel subgroup $\B\subset \G$, and let $\D(\X)$ denote the colors of $\X$. This comes equipped with map $\rho\times \varsigma$, defined in \eqref{eqn: color function}. Note that the $k$-rationality of $\B$ implies that the Galois action on $\overline{\X}$ induced by $\mu_\X$ preserves the set $\B$-orbits and $\rho\times \varsigma$ is $\Ga$-equivariant.

 When $\sig_\ast\Omega_\X=\Omega_\X$ for each $\sig\in \Ga$, we obtain a continuous $\Ga$-action on $\Omega$, and one may always choose a continuous lift
\[
\al_{\D}: \Ga\times \D(\X)\lra \D(\X),
\]
which amounts to giving an action of $\Ga$ on the fibers over $\Omega^{(2)}$.
\begin{Prop}\label{Prop: quasirational}\cite[Theorem 6.13 and 6.15]{BorovoiGagliardi}
    Suppose that $\G$ is quasi-split and that $\overline{\X}=\overline{\rH}\backslash\G_{\kbar}$ is a spherical homogeneous space of $\G_{\kbar}$. Assume that the corresponding $\ast$-action $\{\sig_\ast\}_{\sig\in \Ga}$ determined by $\G$ preserves $\Omega_\X$. Then for any lift $\al_{\D}$ of the $\Ga$-action from $\Omega$ to a continuous action on $\D(\X)$, there exists a $\G$-equivariant $k$-model $\X$ of $\overline{\X}$ inducing $\al_{\D}$. Moreover, $\X(k)\neq \emptyset$, so that $\X=\rH\backslash\G$ for a $k$-rational subgroup $\rH\subset \G$.
\end{Prop}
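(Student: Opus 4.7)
The plan is to proceed by Galois descent. Since $\overline{\X}$ is quasi-projective, producing a $\G$-equivariant $k$-model of $\overline{\X}$ amounts to giving, for each $\sig\in\Ga$, a $\sig_{\G}$-semilinear $\G_{\kbar}$-equivariant automorphism $\tilde{\sig}:\overline{\X}\to\overline{\X}$ satisfying the cocycle condition $\tilde{\sig}\tilde{\tau}=\tilde{\sig\tau}$, and arranging that the induced permutation of colors is the prescribed $\al_{\D}(\sig)$. The existence of such data would then yield $\X$ by effective descent, and separately one must produce a $k$-point.

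First I would construct semilinear lifts $\tilde{\sig}$. The $\ast$-action twists $\overline{\X}$ to a new homogeneous spherical $\G_{\kbar}$-variety whose combinatorial datum is $\sig_\ast \Omega_\X$, which equals $\Omega_\X$ by hypothesis. The cited variant of Losev's uniqueness theorem then guarantees a $\G_{\kbar}$-equivariant isomorphism between the two, hence a semilinear lift $\tilde{\sig}$. Any two such lifts differ by an element of $\cala_{\X}(\kbar)=\overline{\rH}\backslash N_{\G_{\kbar}}(\overline{\rH})$. The natural action of $\cala_{\X}$ on $\D(\X)$ preserves $\rho\times\varsigma$, so it acts on the fibers of this map by permutations (nontrivially only on the two-element fibers above $\Omega^{(2)}$). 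Specifying $\al_{\D}(\sig)$ therefore pins down the class of $\tilde{\sig}$ modulo the kernel $N\subset\cala_{\X}$ of the $\D(\X)$-action.

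The heart of the argument, and in my view the main obstacle, is verifying the cocycle condition. The defect $c(\sig,\tau):=\tilde{\sig}\tilde{\tau}\tilde{\sig\tau}^{-1}$ takes values in $N(\kbar)$ and defines an a priori nontrivial class in $H^2(k,N)$; the model exists exactly when this obstruction is killed by a suitable rectification of the $\tilde{\sig}$. Here $N$ is a diagonalizable $k$-subgroup (since $\cala_{\X}\subset\Ax$), so $H^2(k,N)$ need not vanish in general. One must therefore show that the specific class of $c$ is zero, using that $N$ acts trivially on all colors and preserves each valuation $\rho(D)$, and that $\al_{\D}$ is a genuine continuous $\Ga$-action on $\D(\X)$ rather than just a compatible family of set maps. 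This rigidity ought to force $c$ into a precisely controlled subgroup on which the cohomological obstruction collapses; this step is where the combinatorial content of $(\Omega^{(1)},\Omega^{(2)})$ (and in particular the fact that $\Omega^{(2)}$ parametrizes genuine $\zz/2$-ambiguities) is used decisively.

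Once the cocycle has been rectified, effective Galois descent yields a quasi-projective $k$-variety $\X$ with a $k$-rational $\G$-action and the correct induced action on colors. For the final assertion $\X(k)\ne\emptyset$, the quasi-splitness of $\G$ gives a $k$-rational Borel $\B$; the open $\B$-orbit $\mathring{\X}\subset\X$ is then a $k$-rational subvariety, isomorphic over $\kbar$ to $(\B\cap\overline{\rH})\backslash\B$ for an appropriate $k$-form of the geometric stabilizer. Since $\B$ is connected solvable and its stabilizer subgroup is likewise solvable, a Hilbert $90$ / Sansuc-style vanishing argument produces a $k$-point of $\mathring{\X}$, and setting $\rH=\mathrm{Stab}_{\G}(x_0)$ for such an $x_0$ gives the presentation $\X=\rH\backslash\G$.
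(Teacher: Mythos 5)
The paper does not prove this statement; it is quoted from Borovoi--Gagliardi (their Theorems~6.13 and~6.15) and used as an input. So the comparison is between your sketch and the cited proof, not a proof internal to this paper. Your framework — semilinear $\G_{\kbar}$-equivariant lifts $\tilde\sigma$ supplied by Losev uniqueness, the ambiguity group $\cala_\X$, the kernel $N=\cala_\X^\sharp$ of the action on $\D(\overline{\X})$, the fact that prescribing $\al_\D$ pins each $\tilde\sigma$ down modulo $N$, and the resulting $H^2(k,N)$-valued obstruction $c$ — is correct and is the same scaffolding the cited work uses. But there are two genuine gaps, and together they are exactly the content of the theorems you are reproving.

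The obstruction vanishing is asserted, not argued. You correctly observe that $N$ is diagonalizable and that $H^2(k,N)$ need not vanish, and the sentence ``This rigidity ought to force $c$ into a precisely controlled subgroup\dots'' contains no mechanism for killing the class. In the cited proof the vanishing is achieved by a change of level: one replaces $\overline{\rH}$ by its spherical closure $\overline{\rH}^\sharp$, for which $\cala^\sharp$ becomes trivial and the spherical variety admits a wonderful (smooth projective) $\G_{\kbar}$-embedding; projective descent with trivial equivariant automorphism group has no $H^2$ obstruction, and one then comes back to $\overline{\X}$ by analyzing the $\cala_\X^\sharp$-torsor $\overline{\X}\to\overline{\X}^\sharp$. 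That reduction is the missing step, and it is also what makes the bookkeeping via $\al_\D$ be exactly the right data. Separately, the $k$-point claim does not follow from the argument you give: $\B\cap\overline{\rH}$ is solvable but in general disconnected, so a priori the open $\B$-orbit over $k$ determines a gerbe rather than a torsor under a connected solvable group, and neither Hilbert~90 nor a Sansuc-type vanishing applies directly. This is why $\X(k)\neq\emptyset$ is a separate theorem in the citation, proved by a more careful analysis of the open Borel orbit under the quasi-split hypothesis rather than by a bare $H^1$-vanishing of the stabilizer.
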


%What we are interested in is the rational structure of colors. Since the number of colors over a given cocharacter in the image of $\nu$ is either $1$ or $2$, a quadratic Galois character will control the existence of rational points (provided the cocharacter is $k$-rational, of course).
\quash{Combining this result with Lemma \ref{Lem: outer forms classify}, we obtain the following corollary.
\begin{Cor}\label{Cor: up to G-inner}
   Let $\G$ be a quasi-split reductive group over $k$. To a spherical homogeneous $\G$-variety $\X=\rH\backslash\G$, consider the \textbf{combinatorial outer data}
\[
\Psi_\X:=(\Omega_\X,\mu_\X),\qquad\text{where }\Omega_X:=(\fX,\De_{\X},\Omega^{(1)},\Omega^{(2)})
\]
 $\mu_\X:\Ga\lra \Out_\X(\rH)$ is a $1$-cocycle, and $\Ga$ acts on the combinatorial data. This data determines $\X$ up to $\G$-inner twist. More precisely, two spherical $\G$-varieties $\X$ and $\Y$ inducing isomorphic combinatorial outer data (with the obvious notion of isomorphism) are $\G$-inner forms.
\end{Cor}}

The following is a simple example where distinct lifts $\al_{\D(\X)}$ of the Galois action exist. It illustrates the issue of determining what possible $\G$-outer forms may exists.%They produce distinct rational forms.

\begin{Ex}\label{Ex: unitary example}
 Let $E/k$ be a quadratic extension of fields and let $(V,\la\cdot,\cdot\ra)$ be a $2$-dimensional Hermitian $E$-vector space containing an isotropic line. Let $\G= \U(V)$ denote the corresponding quasi-split unitary group. We assume that the Hermitian form is represented by the matrix
    \[
    J = \ep\begin{psmatrix}
        &1\\-1&
    \end{psmatrix},\qquad \ep\in E_{tr=0}.
    \]
    Thus, $\G = \{g\in \GL(V): J{}^T\overline{g}^{-1}J^{-1} = g\}.$ Now we consider the two involutions 
    \[
    \theta_t = \Ad\begin{psmatrix}
        &t^{-1}\\t&
    \end{psmatrix},\qquad t\in \{1,\ep\}.
    \]
    We compute
       \begin{align*}
    \rH_t:=\G^{\theta_t}=\left\{\begin{psmatrix}
    a&b\\t^2 b&a
\end{psmatrix}: \Nm(a)-t^2 \Nm(b)=1,\quad a\overline{b}=\overline{a}b\right\};
    \end{align*}
    so that $\rH_1\simeq  \Res_{E/k}(\Gm)$ while $\rH_\ep \simeq \Nm^1_{E/k}(\Gm)^2$.
    We have
       \[
   \X_t =\rH_t\backslash\G= \left\{\begin{psmatrix}
    x&y\\-t^2 {y}&z
\end{psmatrix}: x,y,z\in k,\: xz+t^2 y^2=1\right\}.
 \] 
 Then $\Ax\simeq \rA$ with the canonical map $\rA\lra \Ax$ being the squaring map. In particular, ${\De}^n_\X=\{2\al\}$. Since $-I\in \X(k)$, we see that $\al\in \fX$. The simple calculation now shows that the divisor $\{z=0\}$ is stable under the upper triangular Borel subgroup with cocharacter $\check{\omega} = \frac{\check{\al}}{2}\in X_\ast(\Ax)$, geometrically with two irreducible components $\{D_1,\D_2\}=\mathcal{D}(\X_t)$.
 
The two symmetric $k$-varieties $\X_t$ become isomorphic upon base changing to $E$, so that they have isomorphic homogeneous spherical data
 \[
 \Omega_\X =(2X^\ast(\rA), \{\al\},\emptyset,\{(\check{\omega},\{\al\})\})
 \]
 and the fiber $(\rho\times \varsigma)^{-1}(\check{\omega},\{\al\})=\{D_1,D_2\}$ consists of the two colors in $\X_{\kbar}$. Note that
 \begin{align*}
     X_{t,\{z=0\}}(k) %&=\left\{\begin{psmatrix}
    %x&y\\-t^2 {y}&0
%\end{psmatrix}: x,y\in k,\: t^2 y^2=1\right\}\\
&=\begin{cases}
    \left\{\pm \begin{psmatrix}
    x&1\\-1&
\end{psmatrix}\right\}&: t=1,\\
\qquad\emptyset&: t = \al, \text{ since } \ep^2\notin (k^\times)^2.
\end{cases}
 \end{align*}
 Thus, the $\Ga$-action on $\{D_1,D_2\}$ is trivial when $t=1$ and acts non-trivially through the quotient $\Gal(E/k)$ when $t=\ep$. \qed %We note that a similar dichotomy occurs if we were to work with $\G=\GL(2)$ or $\SL(2)$. In the $\SL(2)$ case, the possible outer forms of $\rH=T$ are indexed by quadratic extensions.\qed
\end{Ex}
%This motivates the isolation of automorphisms of $\X$  which  permute the colors.
\subsection{Automorphisms and forms of varieties}For this section, we let $\overline{\G}$ be a reductive group over $\kbar$ and $\overline\X=\overline\rH\backslash\overline{\G}$ be a homogeneous spherical $\overline{\G}$-variety, and let $\G$ (respectively, $\X$) denote a given $k$-form of $\overline{\G}$ (resp. $\overline\X$). 
 \quash{As in \cite[Section 2]{BorovoiGagliardi}, we thus obtain a $\Ga$-semilinear equivariant action $\mu:\Ga\lra \mathrm{SAut}^{\overline{\G}}(\X)$ satisfying that 
\[
\mu_\sig(g\cdot x) = \sig(g)\cdot \mu_\sig(x)\qquad\text{ for all $\sig\in \Ga$, $g\in \overline{\G}(\kbar)$ and $x\in \X(\kbar)$.}
\]
Note that any algebraic semilinear action on $\X_0$ descends to a $k$-model \cite[Section 2]{BorovoiGagliardi}.}

Recall from \cite{KnopAutomorphisms} that the group $\Aut^{\overline{\G}}(\overline\X)=\overline\rH\backslash N_{\overline{\G}}(\overline\rH)$ of $\overline{\G}$-automorphisms is diagonalizable and comes equipped with a canonical inclusion $\Aut^{\overline{\G}}(\overline\X)\hra \rA_{\overline\X}$. The following result considers the $k$-rational structure on this group determined by the quasi-split form.

\begin{Prop}\cite[Appendix B]{BorovoiGagliardi}
   % Suppose that the induced $\Ga$-action on $X^\ast(\rA)$ and $\De$ preserves the data $(X^\ast(\Ax),\De_{\X},\D(\X))$. 
    Suppose that $\G$ is a quasi-split $k$-form of $\overline{\G}$ and suppose that $\X$ is a $\G$-model of $\overline\X$. The model $\X$ induces a $\Ga$-action on the automorphism group $\Aut^{\overline{\G}}(\overline{\X})$, with respect to which the embedding
      \begin{equation}\label{eqn: embedding auts}
           \Aut^{\overline\G}(\overline\X)\hra \rA_{\overline{\X}}
      \end{equation}
  is equivariant. In particular, $\cala_\X:=\Aut^\G(\X)$ is a $k$-form of $\Aut^{\overline\G}(\overline\X)$ with a $k$-rational structure and the embedding \eqref{eqn: embedding auts} descends to an embedding of diagonalizable group $k$-schemes $\mathcal{A}_\X\hra \rA_{\X}$. 
\end{Prop}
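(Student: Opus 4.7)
The plan is to construct the Galois action on $\Aut^{\overline{\G}}(\overline{\X})$ via conjugation against the semi-linear structure associated to $\X$, verify equivariance of the canonical embedding using its naturality, and then descend.

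First, recall that $\X$ is encoded by a $\mu$-equivariant homomorphism $\mu_\X : \Ga \to \mathrm{SAut}^{\overline{\G}}(\overline{\X})$ satisfying $\mu_{\X,\sig}(g \cdot x) = \mu_\sig(g) \cdot \mu_{\X,\sig}(x)$. For any $\phi \in \Aut^{\overline{\G}}(\overline{\X})(\kbar)$ and $\sig \in \Ga$, I will set
\[
\sig \cdot \phi := \mu_{\X,\sig} \circ \phi \circ \mu_{\X,\sig}^{-1}.
\]
A short diagram chase using the displayed compatibility shows that $\sig \cdot \phi$ remains $\overline{\G}$-equivariant, and the assignment $\sig \mapsto (\phi \mapsto \sig \cdot \phi)$ is a continuous homomorphism into $\Aut^{\overline{\G}}(\overline{\X})(\kbar)$. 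Since $\Aut^{\overline{\G}}(\overline{\X})$ is diagonalizable and of finite type, this continuous action is automatically algebraic.

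Next, I will verify that the canonical embedding $\iota : \Aut^{\overline{\G}}(\overline{\X}) \hookrightarrow \rA_{\overline{\X}}$ is $\Ga$-equivariant. The embedding arises from the fact that every $\overline{\G}$-equivariant automorphism preserves the open $\B$-orbit $\mathring{\overline{\X}}$ and therefore descends to an automorphism of the $\rA$-torsor $\mathring{\overline{\X}} \sslash [\B,\B]$, producing an element of $\rA_{\overline{\X}}$. Since $\G$ is quasi-split one may choose $\B$ to be $k$-rational, so $\mu_\sig$ fixes $\B$ and $[\B,\B]$; combined with the displayed compatibility, the map $\mu_{\X,\sig}$ preserves $\mathring{\overline{\X}}$ and induces a $\mu_{\rA,\sig}$-semi-linear automorphism on the torsor quotient, where $\mu_\rA$ denotes the $k$-structure on $\rA$. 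Unwinding the definitions, the image $\iota(\sig \cdot \phi)$ coincides with the Galois transform of $\iota(\phi)$ inside $\rA_{\overline{\X}}(\kbar)$ induced by the $k$-structure on $\rA_{\overline{\X}}$ (which descends from that on $\rA$), yielding the desired equivariance.

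Finally, standard Galois descent for a diagonalizable group equipped with a continuous algebraic $\Ga$-action yields a $k$-form, and by construction its $\Ga$-fixed points are exactly those $\phi$ that commute with $\mu_\X$; these are the $\G$-equivariant automorphisms of the $k$-model, so $\cala_\X = \Aut^\G(\X)$. The equivariance established in the preceding paragraph then forces $\iota$ to descend to a closed immersion of diagonalizable $k$-group schemes $\cala_\X \hookrightarrow \rA_\X$. The main obstacle I anticipate lies in Step 2: one must verify that the naturality of $\iota$ is strong enough to match $\iota(\sig \cdot \phi)$ with the intrinsic Galois transform of $\iota(\phi)$, which ultimately rests on $\iota$ being canonically defined from only the $\overline{\G}$- and $\B$-equivariant structures, both of which are compatible with the Galois semi-linear data.
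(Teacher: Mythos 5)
The paper does not supply its own proof of this proposition: it is quoted verbatim and attributed to \cite[Appendix B]{BorovoiGagliardi}, so there is no in-paper argument to compare against. Evaluating your argument on its own, it is correct and is essentially the standard descent argument one would expect. Step 1 (conjugation by $\mu_{\X,\sig}$ defines the $\Ga$-action) is a direct computation using the $\mu$-equivariance of $\mu_\X$, and the resulting action on $\kbar$-points extends to the diagonalizable group scheme because $\Ga$ acts on the character lattice. Step 2 is indeed the crux; your observation that the canonical embedding is built from data (the open $\B$-orbit and the $[\B,\B]$-action) that are intrinsically $\Ga$-semi-linear once $\B$ is taken $k$-rational, combined with the fact that conjugating a torsor-translation by a semi-linear torsor automorphism applies the Galois twist to the translating element, is exactly the right point. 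Step 3 is standard descent for diagonalizable groups, and the identification of the descended form with $\Aut^\G(\X)$ follows since the conjugation action is precisely the semi-linear action coming from the $k$-structure of $\Aut^\G(\X)$.

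One small caveat worth stating explicitly: you assert that $\iota$ is a closed immersion without proof, which is fine here since that is the content of \cite{KnopAutomorphisms} and the paper takes it as given. Your argument then only needs to verify equivariance, not injectivity, and you do so correctly.
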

We note that the $\Ga$-action on $X^\ast(\Ax)$ depends on the $k$-form $\X$, but the $\Ga$-action on $\Lam_\X$ depends only on the $\ast$-action associated to $\G$.
\quash{\begin{Ex}
    Suppose $k'/k$ is a quadratic field extension and $\G=\Res_{k'/k}(\GL_2)$, and let $\sig$ denote the $\Gal(k'/k)$-action on the entries of $\G$. We may consider the two symmetric varieties $    \X_\ep=\rH_\ep\backslash\G$ where $\ep\in \{1,2\}$, $\rH_\ep = \G^{\theta_\ep}$, and
    \[
\theta_1(g) = \begin{psmatrix}
    0&1\\1&0
\end{psmatrix}\sig(g)\begin{psmatrix}
    0&1\\1&0
\end{psmatrix},\quad\text{ and }\quad \theta_2(g)={}^T\sig(g)^{-1}.
    \]
    Then $\rH_1\simeq\GL_2$ and $\rH_2\simeq\U_2$ for an appropriate Hermitian form. If we write a $\zz$-basis for $X^\ast(\rA)$ as $\{e_1,e_2,\overline{e}_1,\overline{e}_2\}$ with $\sig(e_i) = \overline{e}_i$, one may easily compute that
    \[
    X^\ast(\rA_{\X_\ep}) = \zz[e_1-(-1)^\ep e_1, e_2-(-1)^\ep e_2],
    \]
    and $\De_{\X_1}=\De_{\X_2} = \zz[e_1+\overline{e_1}-e_2-\overline{e}_2]$. Here $\cala_{\X_1}\simeq\Res_{k'/k}^1(\Gm)$ is the anisotropic torus of norm $1$ elements in $\Res_{k'/k}(\Gm)$ and $\cala_{\X_2}\simeq \Gm$. \qed
\end{Ex}
\begin{Cor}
Let $\G$, $\X$, and $\cala_\X$ be as in the previous proposition. The diagonalizable group $k$-scheme $\cala_\X$ depends only on $\G$ and is independent of the $k$-form $\X$. %That is, if $\X_1$ is another $\G$-model of $\overline\X$ over $k$, the $k$-structure $\cala_1$ obtained on $\Aut^{\overline{\G}}((\overline\X)$ is canonically isomorphic to $\cala_\X$.
\end{Cor}
\begin{proof}
    In \cite{KnopAutomorphisms}, Knop shows that the root lattice $\Lam_{\overline\X}\subset X^\ast(\rA_{\overline\X})$ satisfies
    \[
X^\ast(\Aut^{\overline\G}(\overline\X)) = X^\ast(\rA_{\overline\X})/\Lam_{\overline\X}
\]
see Section \ref{App: normalizations} for discussion of $\Lam_{\overline\X}$.    The result now follows from Proposition \ref{Prop: quasirational} and the fact that the Galois action on $X^\ast(\rA_{\overline\X})/\Lam_{\overline\X}$ is determined by $\G$ and independent of the $k$-form $\X$.
\end{proof}}

We now assume that $\G$ is quasi-split over $k$ for the remainder of this section and fix a $k$-rational Borel $\B\subset \G$. Assume that the $\ast$-action preserves the combinatorial data of $\overline{\X}$. Proposition \ref{Prop: quasirational} implies the existence of a $\G$-form $\X$ for any choice $\al_\cald$, and we set $\mathcal{A}_\X:=\Aut^{\G}(\X)$ for the canonical $k$-group described above.
The $k$-form $\G$ induces a homomorphism $\mu:\Ga\lra \rS\Aut(\overline{\G})$ (see Section \ref{Sec: forms and action}) and an exact sequence 
\begin{equation}\label{eqn: sequence on auts}
    1\lra \cala_\X(\kbar)\lra \rS\cala_\X\lra \Ga,
\end{equation}
where $\rS\cala_\X$ is the group of $\mu$-semi-linear automorphisms of $\overline{\X}$.%; this sequence is independent of the $k$-form $\X$, with each choice of such a form determining a splitting $\mu_\X:\Ga\lra \rS\cala_\X$.

\subsubsection{Doubled and distinguished roots}\label{Sec: dist roots}
One of the main results of \cite{Losev} is to clarify the distinction between $\De_\X$ and $\De_\X^n$, which effectively calculates the root lattice $\Lam_\X=\zz\De_\X^n$. This requires the introduction of so-called \emph{distinguished spherical roots}.
\begin{Lem}\label{Lem: spherical roots distinguished}
    If $\X= \rH\backslash\G$ is a homogeneous spherical variety, then ${\De}^{n}_{\X}$ is obtained from $\De_{\X}$ by replacing any $\ga\in \De_{\X}$ satisfying \eqref{a}, \eqref{b}, \eqref{c} or \eqref{d} below by ${2}\ga$
    \begin{enumerate}[(A)]
        \item \label{a} (Type $A$, or doubled root) $\ga=\al\in\De$ such that there exists $D\in \D$ with $\rho(D) = \frac{1}{2}\al^\vee|_{\fa_{\X}}$;
        \item \label{b} (type $B$) there is a subset $\Sigma\subset \De$ of type $B_k$ with $k\geq2$ such that 
        \[
        \ga= \al_1+\al_2+\cdots+\al_k,
        \]
        and $\al_i\in \De^p_\X$ for $i>1$;
        \item \label{c} (type $\G_2)$  there is a subset $\{\al_1,\al_2\}\subset \De$ of type $\G_2$ with $\al_1$ short such that 
        \[
        \ga= 2\al_1+\al_2;
        \]
        \item \label{d} $\ga\in X^\ast(\Ax)$ but $\ga\notin \zz\Phi$,
    \end{enumerate}
%    All three cases occur.
\end{Lem}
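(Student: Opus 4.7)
The plan is to reduce to the rank one case and then invoke the classification of rank one spherical homogeneous varieties, essentially following the strategy of \cite{Losev}. The key observation is that the comparison between $\De_\X$ and $\De^n_\X$ is purely local in the following sense: for each spherical root $\ga\in \De_\X$, whether $\ga$ or $2\ga$ belongs to $\De^n_\X$ is detected by the rank one boundary degeneration $\X_\ga$ of $\X$ along the facet corresponding to $\ga$. So as a first step, I would verify that this local reduction is valid, using that $\De^n_\X$ generates the root lattice $\Lam_\X$ and that $\Lam_\X$ is recovered from the rank one data by Knop's theorem, while $\De_\X$ is by construction the set of primitive integer vectors on the extremal rays of $\mathcal V$ inside $\fX$.

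Once we are reduced to rank one, the question becomes: given a primitive element $\ga\in\fX$ generating the spherical root, when does $\ga\in \Lam_\X$ and when does $\Lam_\X=\zz(2\ga)$? This is answered by inspecting the Akhiezer--Brion--Wasserman classification of rank one spherical homogeneous varieties; the doubled cases are exactly those in which there is an obstruction to $\ga$ being expressible as $\lam+\mu-\nu$ for a triple with $V(\nu)\subset V(\lam)\cdot V(\mu)$ in $\kbar[\X]$, while $2\ga$ always can be so expressed. Working case by case through the classification (Type $A$ with a color whose valuation is $\tfrac{1}{2}\check{\al}|_{\fa_\X}$; Type $B_k$ chain; Type $G_2$; and the case when $\ga\notin\zz\Phi$), I would match each doubling with one of the conditions \eqref{a}--\eqref{d}.

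For the converse direction, one needs to show that none of these doublings can be avoided and that no other spherical roots get doubled. For \eqref{a}, the color with $\rho(D)=\tfrac{1}{2}\check{\al}|_{\fa_\X}$ forces the $B$-semiinvariant generating the $\al$-eigenspace of $\Ax$ on $\mathring\X$ to be a square of a rational function in $\kbar(\X)^{(B)}$; the coordinate ring side shows $\al\notin\Lam_\X^+$ while $2\al\in\Lam_\X^+$. For \eqref{b} and \eqref{c} the parabolic simple roots in the support allow one to use Demazure-style operators inside the corresponding parabolic to produce $V(2\ga)\hookrightarrow V(\lam)V(\mu)$ but not $V(\ga)$. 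For \eqref{d}, since $\ga\notin\zz\Phi$ there is no relation $V(\nu)\subset V(\lam)V(\mu)$ with $\lam+\mu-\nu=\ga$ because differences of highest weights lie in $\zz\Phi$; but $2\ga\in\zz\Phi$ after renormalization. Each verification reduces to a small $\SL_2$- or $\SL_3$-type computation inside the rank one boundary degeneration.

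The main obstacle, as usual for these statements, is the careful case analysis for rank one varieties of types $B$ and $G_2$, and keeping straight the distinction between $\De_\X=\De^{min}_\X$ and the normalization $\De^{sv}_\X$ while working with $\De^n_\X$. In particular, one must check that the list \eqref{a}--\eqref{d} is exhaustive, which I would do by comparing against Wasserman's tables of rank one spherical varieties and observing that in every case not on the list, there is an explicit triple $(\lam,\mu,\nu)$ in $\fX^+$ realising $\ga$ itself as a generator of $\Lam_\X^+$.
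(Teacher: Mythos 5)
The paper does not prove this lemma; the quoted proof is a single sentence: ``This is \cite[Theorem 2]{Losev}.'' So your proposal is an attempt to reconstruct Losev's argument from scratch, and the broad strategy you describe --- localize at a facet to pass to a rank-one boundary degeneration, then compare against the Akhiezer--Brion--Wasserman classification of rank-one spherical homogeneous spaces, and use the tensor-product description of $\Lam_\X^+$ to decide whether $\ga$ or $2\ga$ is the generator --- is indeed the skeleton of Losev's proof. The first reduction step (that doubling is detected by the rank-one degeneration) is valid because $\Lam_\X$ is generated by its rank-one pieces and $\De_\X$ is read off from the extremal rays of $\mathcal V$, both of which are compatible with boundary degeneration.

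That said, two of your case arguments are shaky as written. In case \eqref{a} you assert that $\rho(D)=\tfrac12\check\al|_{\fa_\X}$ ``forces the $B$-semiinvariant \ldots\ to be a square of a rational function.'' This does not follow: $\rho(D)$ is a cocharacter of $\Ax$, and the condition $\rho(D)=\tfrac12\check\al$ is a statement about the half-integral pairing with the color lattice, not a factorization of a semiinvariant; Losev's argument here instead compares the lattice $\fX$ with the lattice generated by the color valuations and the restricted root $\al$ directly. Similarly, for cases \eqref{b} and \eqref{c} the invocation of ``Demazure-style operators inside the corresponding parabolic'' is a placeholder rather than an argument; what actually happens in Losev (and in Wasserman's tables) is an explicit inspection of the possible rank-one models and the positions of their colors, with no Demazure machinery. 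Finally, note that since the paper's proof is pure citation, writing out the classification argument in full --- as you're proposing --- would be a substantially longer route than the paper takes, even if eventually correct.
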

\begin{proof}
    This is \cite[Theorem 2]{Losev}.
\end{proof}
More precisely, Losev identifies roots of type \eqref{a}, \eqref{b}, and \eqref{c} as \emph{distinguished roots}. Note that roots of type \eqref{c} do not appear as spherical roots of symmetric varieties.  We thus refer only to type  \eqref{a} or \eqref{b} distinguished roots in Section \ref{Section: color autos}. 

\begin{Rem} Roots of type \eqref{d} correspond to automorphisms arising from $Z(\G)$, and so do not play the same role. 
\end{Rem}

Distinguished roots of type \eqref{a} play a special role in the general theory. Knop defines a natural subgroup
\[
\mathcal{A}^\sharp_\X:=\{\phi\in \cala_\X: \phi\text{ stabilizes each } D\in \D(\X)\},
\]
which may be identified with the subgroup of $\cala_\X$ inducing the trivial automorphism all the data $(\fX,\De_\X,\D({\X}))$. Since our Borel subgroup $\B$ is $k$-rational, this subgroup is defined over $k$.
\quash{This subgroup may be characterized as follows.
\begin{Lem}\cite[Lemma 7.3]{KnopAutomorphisms}\label{Lem: characterize the center}
    For $\phi\in \cala_\X$, the following are equivalent:
    \begin{enumerate}
        \item $\phi\in \mathcal{A}^\sharp_\X$,
        \item $\phi$ acts trivially on $\mathrm{Pic}(\X)$,
        \item $\phi$ acts trivially on $\mathrm{Pic}^{\G}(\X)\simeq X^\ast(\rH)$.
    \end{enumerate}
    In particular, $\cala_\X= \mathcal{A}^\sharp_{\X}$ if $X^\ast(\rH)=0$.
\end{Lem}
}

Set $\Aut_{\Omega}(\D(\X))\subset \Aut(\fX,\De_\X,\D({\X}))$ to be the subgroup of automorphisms acting trivially on $\Omega_\X$, but acting on the fibers of $\rho\times \varsigma$ over $\Omega^{(2)}$. Then $\Aut_{\Omega}(\D(\X))$ consists of automorphisms that swap two \emph{undetermined} colors $\{D^+,D^-\}$ that lie over the same $\check{\ga}$. This is a finite \'{e}tale group scheme, and we have a short exact sequence of diagonalizable $k$-group schemes \cite[(7.7)]{BorovoiGagliardi}
\begin{equation}
    1\lra \mathcal{A}_{\X}^\sharp\lra \Aut^{G}(\X)\lra \Aut_{\Omega}(\D(\X))\lra 1.
\end{equation}

This quotient is well understood: we have the dual sequence of $\Ga$-modules
\[
0\lra X^\ast( \Aut_{\Omega}(\D(\X)))\lra X^\ast(\Ax)/\Lam_{\X}\lra X^\ast(\Ax)/\Lam_{\X}^\sharp\lra 0,
\]
where $\Lam_{\X}^\sharp$ is the sublattice spanned by the set $\De_\X^\sharp$ obtained from $\De_{\X}^n$ by replacing $2\al$ with $\al$ for every distinguished root of type \eqref{a}.
\begin{Rem}
    It turns out that $\De_\X^\sharp$ is the set of spherical roots of the \emph{spherical closure} $\rH^\sharp$ of $\rH$. More generally, $\X$ is said to be \emph{spherically closed} when $\cala_\X^\sharp = \{1\}$.
\end{Rem}
\quash{
We mention this to remark the following immediate consequence of Proposition \ref{Prop: quasirational}.
\begin{Cor}
    With the notation as above, the induced morphism
    \[
    H^1(k,\cala_\X)\lra H^1(k,\Aut_\Omega(\D(\X)))
    \]
    is surjective.
\end{Cor}
\begin{proof}
    This is equivalent via the long exact sequence in cohomology to Proposition 7.9 of \cite{BorovoiGagliardi}.
\end{proof}}

\quash{Recall that $\varsigma^{-1}(\ga) =\{D^+,D^-\}$ if and only if $\ga\in \De_{\X}\cap\De$. Let $$\De^{(2)}_{\X} = \{\ga\in \De_{\X}\cap\De:\rho(D^+)=\rho(D^-)=\check{\ga}^n\}\subset \De_\X^{dist}.$$ Such roots are called \emph{doubled roots}. In particular,
\begin{equation}\label{eqn: dual of Aut(D)}
  X^\ast( \Aut_{\Omega}(\D(\X)))\simeq \Lam_{\X}^{sc}/\Lam^n_{\X}\simeq \la \De^{(2)}_{\X}\ra /\la 2\De^{(2)}_{\X}\ra.   
\end{equation}
%%%%%%%%%%%%%%%%%%%%%%%%%%%%%%%%%%%%%%%%%%%%%%%%%%%%%%%%%%%%%%%%%%%%%%%%%%%%%%%%%%%%%%%%%%%%%%%%%%%%%%%%%%%%%%%%%%%%%%%%%%%%%%%%%%%%%%%%%%%%%%%%%%%%%%%%%%%%%%%%%%%%%%%%%%%%%%%%%%%%%%%%%%%%%%%%%%%%%%%%%%%%%%%%%%%%%%%%%%%%%%%%%%%%%%%%%%%%%%%%%%%%%%%%%%%%%%%%%%%%%%%%%%%%%%%%%%%%%%%%%%%%%%%%%%%%%%%%%%%%%%%%%%%%%%%%%%%%%%%%%%%%%%%%%%%%%%%%%%%%%%%%%%%%%%%%%%%%%%%%%%%%%%%%%%%%%%%%%%%%%%%%%%%%%%%%%%%%%%%%

We say that two  $k$-models $\X$ induce the same $\Ga$-actions on $\Psi_\X=(X^\ast(\Ax),\De_{\X}, \D(\X))$ if there exists a $\G_{\kbar}$-equivariant isomorphism inducing a $\Ga$-equivariant isomorphism
\[
\Psi_{\X}\iso \Psi_{\X'}.
\]

\begin{Lem}\label{Lem: almost enough for Gal on colors alone}%\textcolor{red}{I'm not sure about this. This should count inner forms of $\X$ that don't arise as a pure inner twist (I think), but perhaps I am over counting since the cocycle could come from $Z(\G)$.}, which corresponds to $(\G^\theta)^\circ$ being a (twisted) Levi subgroup of $\G$ 
    Suppose that $\G=\G_q$ is quasi-split and suppose that $\X=\rH\backslash\G$ is a $k$-models of $\X_0$. The set of isomorphism classes of $k$-models $\X'$ inducing the same $\Ga$-actions on $(X^\ast(\Ax),\De_{\X}, \D(\X))$ as $\X$ is in natural bijection with 
  $
   H^1(k,\mathcal{A}_{\X}^\sharp).%\lra H^1(k,\mathcal{A}_{\X}^\sharp/Z(\G))].
$  
\end{Lem}
\begin{proof} This is analogous to Lemma \ref{Lem: outer forms classify}. \quash{extension of Theorem 9.17 of \cite{BorovoiModels}, where we see that there is a canonical bijection between the set of \emph{all} $\G$-equivariant $k$-models of $\X_{\kbar}$ and $H^1(k,\cala_\X)$. The claim is that the assumption on the Galois action implies a reduction to $ H^1(k,\mathcal{A}_{\X}^\sharp)$.

By assumption, there exists a $\G_{\kbar}$-equivariant isomorphism 
\[
\psi:\X_{\kbar}\iso \X'_{\kbar}.
\] 
such that for each $\sig\in \Ga$
\[
\psi({}^\sig(g\cdot x)) = {}^\sig g\cdot\psi({}^\sig x).
\]
Thus, for any $\sig\in \Ga$, the automorphism $\psi^{-1}\circ {}^\sig\psi:\X_{\kbar}\lra \X_{\kbar}$ is $\G_{\kbar}$-equivariant. Here ${}^\sig\psi(x):={}^{\sig'}\psi({}^{\sig^{-1}}x)$, where $\sig'$ denotes the corresponding semi-linear automorphism of $\X'$. We thus obtain a cocycle $$[\sig \lra \psi^{-1}\circ {}^\sig\psi]\in Z^1(k,\cala_\X).$$
Since $\sig$ and $\sig'$ induce the same action on $(X^\ast(\Ax),\De_{\X}, \D(\X))$, $\psi^{-1}\circ {}^\sig\psi\in \mathcal{A}_{\X}^\sharp(\kbar
)$ for each $\sig$, so that the class lands in $H^1(k,\mathcal{A}_{\X}^\sharp)$.}
\end{proof}
}
%%%%%%%%%%%%%%%%%%%%%%%%%%%%%%%%%%%%%%%%%%%%%%%%%%%%%%%%%%%%%%%%%%%%%%%%%%%%%%%%%%%%%%%%%%%%%%%%%%%%%%%%%%%%%%%%%%%%%%%%%%%%%%%%%%%%%%%%%%%%%%%%%%%%%%%%%%%%%%%%%%%%%%%%%%%%%%%%%%%%%%%%%%%%%%%%%%%%%%%%%%%%%%%%%%%%%%%%%%%%%%%%%%%%%%%%%%%%%%%%%%%%%%%%%%%%%%%%%%%%%%%%%%%%%%%%%%%%%%%%%%%%%%%%%%%%%%%%%%%%%%%%%%%%%%%%%%%%%%%%%%%%%%%%%%%%%%%%%%%%%%%%%%%%%%%%%%%%%%%%%%%%%%%%%%%%%%%%%%%%%%%%%%%%%%%%%%%%%%%%
\subsection{The group of doubling automorphisms}\label{Section: doubling aut}
We generalize this by introducing a quotient $\Aut_d(\X)$ of $\cala_\X$ which plays an important role in the rational theory of spherical varieties. For functoriality reasons, our definition will proceed in two steps: first the case when $\rH$ is connected, and then the general case. Section \ref{Section: geometric cocycle}, discusses the relationship between this group and the issue of $\G$-outer forms to the geometry of $\X$.
\subsubsection{The connected case}
We first assume that $\X=\rH\backslash\G$ with $\rH$ connected. Let $\De_\X^{dist}\subset \De_\X$ denote the subset of distinguished spherical roots, and consider the set $\Sigma^{d}$ obtained from the normalized spherical roots $\De_\X^{n}$ by replacing $2\al$ with $\al$ for every $\al\in \De_\X^{dist}$ (so that only those roots of type \eqref{d} are doubled), and set $\Lam^d:=\zz\Sigma^d$. This is clearly $\Ga$-stable, so that we obtain a short exact sequence of $\Ga$-modules
\[
0\lra \Lam_\X^d/\Lam_\X\lra \fX/\Lam_\X\lra \fX/\Lam_\X^d\lra0
\]
and a dual sequence of diagonalizable $k$-schemes
\begin{equation}\label{eqn: distinguished group ses}
    1\lra\cala_\X^d\lra \cala_\X\lra \Aut_d(\X)\lra 1;
\end{equation}
we refer to the quotient group $\Aut_d(\X)$ as the group of \emph{doubling automorphisms}. To motivate the definition, recall that $\cala_\X\subset \Ax$ so that for any $\ga\in \fX = X^\ast(\Ax)$, we set $\la \ga, a\ra\in \kbar^\times$ for $a\in \Ax(\kbar)$.
\begin{Def}\cite[Definition 4.1.6]{Losev}
 For $\al\in \De_\X^{dist}$, an automorphism $a\in \cala_\X(\kbar)$ is said to \emph{double} $\al$ if $\la \al,a\ra=-1$.
\end{Def}
In particular, each $\al\in \De_\X^{dist}$ determines a canonical character $\mu_\al=\la\al,-\ra:\cala_\X(\kbar)\lra \mu_2(\kbar)$ via this pairing. In particular,
\[
\Aut_d(\X)_{\kbar}\overset{\prod_\al\mu_\al}{\lra}\prod_\al\mu_2
\]
is a $\kbar$-isomorphism. The $k$-structure on $\cala_\X$ induces a $k$-form on the quotient. %, and we now descend this decomposition.
 Set $I_\X$ be a finite index set for the $\Ga$-orbits on $\De_\X^{dist}$. We have the $\Ga$-orbit decomposition
\begin{equation}\label{eqn: decomp of dist}
\De_\X^{dist}=\bigsqcup_{i\in I_\X} \calo_i=\bigsqcup_{i\in I_\X} \Ga/\Ga_i\cdot \ga_i,
\end{equation}
where $\ga_i\in \calo_i$ and $\Ga_i = \Gal(\kbar/k_i)$ is the stabilizer of $\ga_i$. This orbit structure completely determines the $\Ga$-action on 
\[
\Lam_\X^d/\Lam_\X\simeq \zz\De_\X^{dist}/2\zz\De_\X^{dist}.
\]
More precisely, 
\begin{equation}\label{eqn: decomp of Out general}
    X^\ast(\Aut_d(\X))=\bigoplus_{i\in I_\X}\Ind_{\Ga_i}^{\Ga}(\zz/2\zz\ga_i)\simeq (\zz/2\zz)^{\De_\X^{dist}}.
\end{equation}
%where the prime indicates the sum over those $i\in I_\X$ such that $\De_{\X_i}^{dist}\neq\emptyset$ and where $\ga_i\in \De_{\X'_i}^{dist}$. 
This induces a product formula
\begin{equation}\label{eqn: formula for Aut conn}
    \Aut_d({\X}) =  \prod_{i\in I_\X} \Res_{k_i/k}(\mu_2).
\end{equation}
The next lemma asserts that $\Aut_d({\X})$ depends only on $\G$, and not on the specific $k$-form $\X$.

\begin{Lem}\label{Lem: unique on aut}
  Suppose that $\G$ is quasi-split and that $\overline{\X}=\overline{\rH}\backslash\G_{\kbar}$ is a spherical homogeneous space of $\G_{\kbar}$. Assume that the corresponding $\ast$-action $\{\sig_\ast\}_{\sig\in \Ga}$ determined by $\G$ preserves $\Omega_\X$. The $k$-structure on the group $\Aut_d(\X)$ is independent of the choice of $\G$-equivariant $k$-model $\X$ of $\overline{\X}$.
\end{Lem}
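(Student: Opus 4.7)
The plan is to show that $X^\ast(\Aut_d(\X))$, regarded as a $\Ga$-module, depends only on $\G$ (and the combinatorial invariants $\Omega_{\overline{\X}}$, which are themselves an intrinsic invariant of $\overline{\X}$); Cartier duality will then transport the statement to $\Aut_d(\X)$ itself. By the defining sequence \eqref{eqn: distinguished group ses} and the description of $\Lam_\X^d$,
\[
X^\ast(\Aut_d(\X)) \;=\; \Lam_\X^d/\Lam_\X \;\simeq\; \bigoplus_{\al\in\De_\X^{dist}}(\zz/2\zz)\cdot\al,
\]
so the entire $\Ga$-module structure is encoded by the $\Ga$-set $\De_\X^{dist}$. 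Thus the plan reduces to exhibiting $\De_\X^{dist}$ as a $\Ga$-set intrinsic to $(\G,\Omega_{\overline{\X}})$.

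First I would invoke the observation recalled in the excerpt just above: although the $\Ga$-action on $\fX=X^\ast(\Ax)$ can genuinely depend on the chosen $k$-form $\X$, the induced $\Ga$-action on the sublattice $\Lam_\X=\zz\De_\X^n$ depends only on the $\ast$-action of $\G$. Since $\De_\X^n$ is Knop's intrinsic free basis of $\Lam_\X$, it is a $\Ga$-stable subset with the same independence property, and via the canonical doubling bijection $\De_\X\leftrightarrow \De_\X^n$ (fixed by Lemma \ref{Lem: spherical roots distinguished} in terms of $\Omega_{\overline{\X}}$ alone) the $\Ga$-action on $\De_\X$ itself is likewise determined by $\G$ and $\Omega_{\overline{\X}}$, independent of $\X$.

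Next I would verify that the subset $\De_\X^{dist}\subset\De_\X$ is stable under this action. By Lemma \ref{Lem: spherical roots distinguished}, membership in $\De_\X^{dist}$ is cut out by the combinatorial conditions (A)--(C), and each of these refers only to the simple roots $\De$, the parabolic roots $\De_\X^p\subset\De$, the image set $\Omega$ of the color map $\rho\times\varsigma$ in \eqref{eqn: color function}, and specific sub-configurations of $\De$. All of these are invariants of $(\G,\Omega_{\overline{\X}})$, and the hypothesis $\sig_\ast\Omega_\X=\Omega_\X$ makes each of (A)--(C) $\Ga$-equivariant. The only point requiring care is condition (A), which on its face mentions a geometric object (a color $D$); however the assertion $\rho(D)=\tfrac{1}{2}\al^\vee|_{\fa_\X}$ is equivalent to the pair $(\tfrac{1}{2}\al^\vee|_{\fa_\X},\{\al\})$ lying in $\Omega\subset \fa_{\X,\qq}\times\mathcal{P}(\De)$, and $\Omega$ is part of $\Omega_{\overline{\X}}$. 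I expect this check on (A) to be the only mildly non-formal step; granting it, combining with the orbit decomposition \eqref{eqn: decomp of dist} and the product description \eqref{eqn: formula for Aut conn} yields a canonical identification $\Aut_d(\X)\simeq\prod_{i\in I_\X}\Res_{k_i/k}(\mu_2)$ in which each $k_i$ is the fixed field of the $\Ga$-stabilizer of a point of $\De_\X^{dist}$, an object depending only on $\G$.
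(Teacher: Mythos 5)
Your proof is correct and follows essentially the same route as the paper's: both observe that the $\ast$-action on $\Omega_{\overline{\X}}$ determines the $\Ga$-action on $\De_\X^n$ independently of the $k$-form, which in turn determines the $\Ga$-set $\De_\X^{dist}$ and hence the $\Ga$-module $\Lam_\X^d/\Lam_\X$; your version simply spells out the intermediate verifications (the doubling bijection, the combinatorial nature of conditions (A)--(C)) that the paper compresses into a one-line assertion.
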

\begin{proof}
The assumption that $\Ga$ stabilizes $\Omega_\X$ induces a $\Ga$-action on $\De_\X^n$ independent of the choice of $k$-form. This determines the $\Ga$-actions on both $\Lam_\X=\zz\De_\X^n$ and $\De_\X^{dist}\subset \De\X$. In particular, this determines the action on $\Lam_X^d$, hence on 
\[
\Lam^d_\X/\Lam_\X.
\]
By duality, this determines the diagonalizable $k$-group $\Aut_d(\X)$ up to isomorphism. 
\end{proof}
Moreover, we have the following surjectivity result.
\begin{Lem}\label{Lem: surjective on dist cohom}
   Suppose that $\X=\rH\backslash\G$ is spherical and that $\rH$ is geometrically connected. In the long exact sequence on cohomology induced by \eqref{eqn: distinguished group ses}, the map 
    \[
    H^1(k,\cala_\X)\lra H^1(k,\Aut_d(\X))
    \]
    is surjective.
\end{Lem}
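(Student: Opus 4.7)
My plan is to show that the connecting map $\delta:H^1(k,\Aut_d(\X))\to H^2(k,\cala_\X^d)$ arising from the long exact sequence of \eqref{eqn: distinguished group ses} is identically zero; this is equivalent to the claimed surjectivity. The strategy is to compare \eqref{eqn: distinguished group ses} with an ambient short exact sequence involving the canonical torus $\Ax$ and a quasi-trivial torus, for which a naturality argument makes the obstruction vanish, and then transfer this vanishing back through an injective comparison on $H^2$.

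First, I set up the comparison diagram. For each $\Ga$-orbit $\calo_i\subset \De_\X^{dist}$ with stabilizer $\Ga_i$, the sublattice of $\fX$ generated by $\calo_i$ is $\Ga$-isomorphic to $\Ind_{\Ga_i}^{\Ga}\zz$, and the $\zz$-linear independence of the spherical roots assembles these into an injection of $\Ga$-lattices $L':=\bigoplus_{i\in I_\X}\Ind_{\Ga_i}^{\Ga}\zz \hookrightarrow \fX$. Dualizing yields a surjection of $k$-tori $\pi:\Ax\twoheadrightarrow T:=\prod_{i\in I_\X}\Res_{k_i/k}(\Gm)$. Since each $\ga_i$ lies in the generating set $\Sigma^d$ of $\Lam_\X^d$, the restriction $\pi|_{\cala_\X}$ factors through $\cala_\X\twoheadrightarrow \Aut_d(\X)$, and the induced map $\Aut_d(\X)\hookrightarrow T$ is precisely the product of Kummer inclusions $\Res_{k_i/k}(\mu_2)\hookrightarrow \Res_{k_i/k}(\Gm)$. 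Setting $K:=\ker \pi$, these assemble into a commutative diagram of short exact sequences with vertical inclusions:
\[
\begin{tikzcd}[column sep=small]
1\ar[r]&\cala_\X^d\ar[r]\ar[d,hook,"\iota"]&\cala_\X\ar[r]\ar[d,hook]&\Aut_d(\X)\ar[r]\ar[d,hook]&1\\
1\ar[r]&K\ar[r]&\Ax\ar[r,"\pi"]&T\ar[r]&1.
\end{tikzcd}
\]
Because $T$ is quasi-trivial, Hilbert 90 together with Shapiro's lemma give $H^1(k,T)=0$, so the connecting map of the bottom row vanishes; naturality of connecting maps then forces $\iota_{*}\circ \delta =0$, placing the image of $\delta$ inside $\ker \iota_{*}$.

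The main step is the injectivity of $\iota_{*}:H^2(k,\cala_\X^d)\to H^2(k,K)$, for which it suffices that $Q:=K/\cala_\X^d$ is a quasi-trivial torus. The character lattice of $Q$ is the kernel of $\fX/L'\twoheadrightarrow \fX/\Lam_\X^d$, namely $\Lam_\X^d/L'$. Writing out the basis $\Sigma^d$ of $\Lam_\X^d$ explicitly and observing that reducing modulo $L'=\zz[\De_\X^{dist}]$ eliminates exactly the basis vectors corresponding to distinguished roots, one finds that $\Lam_\X^d/L'$ is a free $\Ga$-module with basis indexed by $\De_\X\setminus \De_\X^{dist}$. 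This decomposes as $\bigoplus_{\calo}\Ind_{\Ga_\calo}^{\Ga}\zz$ over $\Ga$-orbits in $\De_\X\setminus \De_\X^{dist}$, so $Q$ is quasi-trivial and $H^1(k,Q)=0$. The long exact sequence of $1\to \cala_\X^d\to K\to Q\to 1$ now yields the injectivity of $\iota_{*}$, and combined with $\iota_{*}\circ \delta=0$ this gives $\delta=0$. The primary technical obstacle is the combinatorial identification of $\Lam_\X^d/L'$ as a direct sum of induced $\Ga$-modules: this is what forces $L'$ to be defined using exactly the $\Ga$-orbits of distinguished roots, so that the quotient retains exactly one free summand per non-distinguished spherical root.
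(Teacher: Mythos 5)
Your argument is correct, but it takes a genuinely different (and somewhat more elaborate) route than the paper's. The paper works directly with the inclusion $\cala_\X^d\hookrightarrow \Ax$: since both $\Lam_\X=\zz\De_\X^n$ and $\Lam_\X^d=\zz\Sigma^d$ admit $\Ga$-stable $\zz$-bases, the quotient tori $\Ax/\cala_\X^d$ and $\Ax/\cala_\X$ are quasi-trivial, so their $H^1$ vanishes by Hilbert 90 plus Shapiro, and a two-row comparison of long exact sequences into $H^2(k,\Ax)$ forces $H^2(k,\cala_\X^d)\to H^2(k,\cala_\X)$ to be injective, killing the connecting map. You instead interpose the sublattice $L'=\zz[\De_\X^{dist}]\subset\Lam_\X^d$, dualize to get $\Ax\twoheadrightarrow T$ with kernel $K$, and then need two further verifications the paper avoids: that the restriction of $\pi$ to $\cala_\X$ factors through $\Aut_d(\X)$ with $\cala_\X\cap K=\cala_\X^d$ (equivalently $L'+\Lam_\X=\Lam_\X^d$), and that the quotient lattice $\Lam_\X^d/L'$ is again a permutation $\Ga$-module so that $Q=K/\cala_\X^d$ is quasi-trivial. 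Both checks go through because $\Sigma^d$ is a $\Ga$-stable $\zz$-basis of $\Lam_\X^d$ in which $\De_\X^{dist}$ sits as a $\Ga$-stable sub-basis. Your route buys nothing beyond what the paper already establishes — in fact $H^2(k,\cala_\X^d)\hookrightarrow H^2(k,K)$ follows immediately from the paper's $H^2(k,\cala_\X^d)\hookrightarrow H^2(k,\Ax)$ since $K\subset\Ax$ — but it is a valid alternative, and the auxiliary torus $T$ does make the vanishing of $\iota_*\circ\delta$ transparent via the map of short exact sequences rather than by chasing the two-row $H^2(k,\Ax)$ diagram.
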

\begin{proof}
    %The argument is inspired by the second proof of Proposition 7.9 given in \cite[Appendix C]{BorovoiGagliardi}. That is, w
    We show that 
    \[
    i:H^2(k, \cala_X^d)\to H^2(k,\cala_\X)
    \]
    is injective. This proves the claim via the long exact sequence in cohomology. For this, let recall that $\fX = X^\ast(\Ax)$, and let $\rA^n_{\X}$ (resp., $\rA^d_{\X}$) be the $k$-torus with character group $\Lam_\X$ (resp. $\Lam_\X^d$). We have the commutative diagram (with exact rows)
    \[
    \begin{tikzcd}
        0\ar[r]&\Lam_X\ar[r]\ar[d]&\fX\ar[r]\ar[d,"="]&X^\ast(\cala_X)\ar[r]\ar[d]&0\\
        0\ar[r]&\Lam^d_X\ar[r]&\fX\ar[r]&X^\ast(\cala^d_X)\ar[r]&0,
    \end{tikzcd}
    \]
    which is dual to the commutative diagram (with exact rows)
        \[
    \begin{tikzcd}
        1\ar[r]&\cala_\X^d\ar[r]\ar[d]&\Ax\ar[r]\ar[d,"="]&\rA^n_{\X}\ar[r]\ar[d]&1\\
        1\ar[r]&\cala_X\ar[r]&\Ax\ar[r]&\rA^d_{\X}\ar[r]&1.
    \end{tikzcd}
    \]
    Passing to cohomology, we have the following commutative diagram (with exact rows)
            \[
    \begin{tikzcd}
        H^1(k,\rA^d_{\X})\ar[r]\ar[d]&H^2(k,\cala_\X^d)\ar[r]\ar[d,"i"]&H^2(k,\Ax)\ar[d,"="]\\
        H^1(k,\rA^n_{\X})\ar[r]&H^2(k,\cala_X)\ar[r]&H^2(k,\Ax).
    \end{tikzcd}
    \]
    Since each of the lattices $\Lam_\X$ and $\Lam_\X^d$ possess a $\Ga$-stable $\zz$-basis ($\De^n_\X$ and $\De_\X^d$, respectively), the first cohomology groups of the tori $\rA^n_{\X}$ and $\rA^d_{\X}$ vanish by Shapiro's lemma and Hilbert's theorem 90. This forces $i$ to be injective, proving the claim.
\end{proof}

\subsubsection{The general case}\label{Section: doubling aut gen}
 Now suppose that $\pi_0(\rH)$ is not the trivial group $k$-scheme, set $\rH^\circ$ to be the connected component of the identity and set $\X^\circ= \rH^\circ\backslash\G$, so that the map $\X^\circ\to \X$ is finite \'etale (recall our assumption that $\rH$ is smooth in the positive characteristic setting). The definition given above works just as well for $\rH$, but the group it produces is generally ``too small'' in the sense to be made precise below  (see Remark \ref{Rem: interpretting Aut gen}). The following example illustrates one of the deficiencies of such an approach.

\begin{Ex}\label{Ex: SLxSL disconnect}
    Let $\G=\SL_2\times\SL_2$ and let $\rH = (T_0\times T_0)\sqcup(T_0w\times T_0w)$, where $T_0$ is the maximal torus fixed pointwise by $\theta(g) ={}^Tg^{-1}$ and $w\in N_{\SL_2}(T_0)\setminus{T_0}$. Then $\rH^\circ=T_0\times T_0$ is the connected component of the identity and the induced map
    \[
    \X^\circ=\rH^\circ\backslash\G\lra \X=\rH\backslash\G
    \]
    is an \'{e}tale double cover.   Let $\rA = T\times T$ be the diagonal torus, so we have the two canonical tori $\rA_\X^\circ$ and $\Ax$ with surjective morphisms
    \[
\rA\lra \rA_{\X^\circ}\lra \Ax.
    \]
    If $X^\ast(\rA) = \zz\omega_1\oplus\zz\omega_2$, then  $X^\ast(\rA_{\X^\circ}) = \zz(2\omega_1)\oplus\zz(2\omega_2)$ and 
    \[
     X^\ast(\rA_\X) = \zz(4\omega_1)+\zz(4\omega_2)+ \zz(2\omega_1+2\omega_2).
    \]
    In particular, we see that $\De_{\X^\circ}^n = \De_\X^n = \{4\omega_1,4\omega_2\}$;    this implies 
  $$
  \cala_{\X^\circ} \simeq\mu_2\times \mu_2,\text{ and } \cala_{{\X}} \simeq \mu_2\times \mu_2/\De\mu_2\simeq \mu_2.
   $$  
   While both spherical roots are distinguished for $\X^\circ$, they are both of type $N$ for $\X$. This implies that  $\Aut_d(\X^\circ)\simeq \mu_2\times \mu_2$, while $\Aut^{naive}_d(\X) = 1$ if we use the na\"ive definition of mirroring the definition of $\Aut_d(\X^\circ)$ for $\X$.  \qed%Since $\Aut_d(\pi_0(\rH))=\la\Ad(w\times w)\ra\cong\mu_2$, we do not obtain an.
\end{Ex}
The issue in this example is that while $\Aut_d(\X^\circ)$ encodes a great deal of arithmetic information about $\X^\circ$, this definition of $\Aut^{naive}_d(\X)$ forgets too much information and fails to be functorial with respect to the natural map $\X^\circ\to \X$. More specifically,  $\Out_{\X^\circ}(\rH^\circ)\simeq\Aut_d(\X^\circ)$ while $\Out_{\X}(\rH)\simeq \mu_2$ is bigger than $\Aut^{naive}_d(\X)$.

To remedy this, we define
\begin{equation}
    \Aut_d(\X):=\Aut_d(\X^\circ)/\Aut_d(\pi_0(\rH)),
\end{equation}
where we slightly abuse notation to mean that we quotient out by the image of $\pi_0(\rH)$ under the morphism to $\Aut_d(\X^\circ)$. This corresponds to setting $\Lam_\X^d:= \fX\cap \Lam_{\X^\circ}^d$, so that $\Aut_d(\X)$ is dual to
\[
(\fX\cap \Lam_{\X^\circ}^d)/\Lam_\X\subset \Lam_{\X^\circ}^d/\Lam_\X;
\]
the same argument as Lemma \ref{Lem: unique on aut} applies in this case. This definition ensures that there exists a commutative diagram of diagonalizable $k$-groups with exact rows
\begin{equation}\label{eqn: functorial on Aut}
\begin{tikzcd}
    \pi_0(\rH)\ar[d]\ar[r]&\cala_{\X^\circ}\ar[d]\ar[r]&\cala_{\X}\ar[d]\\
  \Aut_d(\pi_0(\rH))\ar[r]&  \Aut_d(\X^\circ)\ar[r]&\Aut_d(\X).
\end{tikzcd}
\end{equation}
Note that $\Aut_d(\X^\circ)\simeq\Aut_d(\X)$ if $\rH^\circ\subset \rH\subset \rH^\circ\cdot Z(\G)$.

\begin{Rem}\label{Rem: interpretting Aut gen}%\textcolor{red}{Not sure about this one}
This will be our main application of the functorial properties of $\Aut_d(\X)$. It may be enhanced to give a functor from the category of quasi-affine homogeneous spherical $\G$-varieties with dominant $\G$-equviariant morphisms to the category of diagonalizable group $k$-schemes, but we do not need this.%Let $\G$ be a reductive $k$-group and let $\Hom_{qa}(\G)$ denote the category of quasi-affine homogeneous spherical $\G$-varieties with morphisms given by dominant $\G$-equviariant morphisms $\varphi:\X\to \Y$. Lemma 3.1.3 of \cite{Losev} shows that sending $\X\in \mathrm{Ob}(\Hom_{qa}(\G))$ to its automorphism group $\cala_\X$ is a functor $\Aut^{\G}:\Hom_{qa}(\G) \to \mathrm{Diag}_k$ to the category of diagonalizable group $k$-schemes.
%
%Consider now the rule $\Aut_d: \mathrm{Hom}_{qa}(\G)\to \mathrm{Diag}_k$ sending $\X$ to $\Aut_d(\X)$. We claim this is a functor. Indeed, if $\X_1\to \X_2$ is dominant $\G$-equivariant morphism of homogeneous varieties
\end{Rem}

If $H^1(k,\Aut_d(\X^\circ))\to H^1(k,\Aut_d(\X))$ is surjective, then certainly the map $H^1(k,\cala_\X)\to H^1(k,\Aut_d(\X))$ is also surjective so that Lemma \ref{Lem: surjective on dist cohom} extends to this case. \quash{Indeed, functoriality gives a commutative square
    \[
    \begin{tikzcd}
         H^1(k,\cala_{\X^\circ})\ar[r]\ar[d]& H^1(k,\Aut_d(\X^\circ))\ar[d]\\
          H^1(k,\cala_\X)\ar[r]& H^1(k,\Aut_d(\X)).
    \end{tikzcd}
    \]
    Lemma \ref{Lem: surjective on dist cohom} implies the top horizontal arrow surjects. Combined with commutativity, the bottom horizontal arrow is forced to be surjective if the right vertical arrow is.} This surjectivity need not occur. 
\quash{    \begin{Ex}\label{Ex: components gone wild}
    Suppose that $K=\prod_{i\in I}k_i$ is a finite \'etale $k$-algebra, where the finite index set $I$ and the field extensions $k_i$ ($i\in I$) are arbitrary. Let $\G=\Res_{K/k}(\SL_2)$ with $\rH^\circ=\Res_{K/k}(\Gm)$ with its normalizer $\Res_{K/k}(N)$. Then if $\X^\circ =\rH^\circ\backslash\G$, we see
    \[
\cala_{\X^\circ} \simeq\Res_{K/k}(\mu_2)\simeq \prod_{i\in I}\Res_{k_i/k}(\mu_2).
    \]
    Let $\rH\subset \Res_{\cc/\rr}(N)$ be the $k$-rational subgroup corresponding to the subgroup $\De\mu_2\subset\Res_{K/k}(\mu_2)$. Then $I_{\X^\circ} \simeq I$, while $\pi_0(\De)/\Ga =\{\ast\}$ so that every orbit is identified in the quotient. \qed%Note that this example can be augmented to produce any
\end{Ex}}
Nevertheless, we expect the following to hold.

\quash{
we make a choice $\mathrm{i}$ of representatives $i\in [i]\in I_{\X}$ for each equivalence class, there exist $k$-rational splitting of the quotient $\Aut_d(\X^\circ)\to \Aut_d(\X)$
\begin{align*}
    s_{\mathrm{i}}: \Aut_d(\X)&\lra \Aut_d(\X^\circ)\\
        (\ep_{[i]})_{[i]\in I_{\X}}&\longmapsto (\ep^\ast_i)_{i\in I_{\X^\circ}},
\end{align*}
where $\ep_j^\ast = \ep_{[i]}$ for $j\notin \mathrm{i}\cap[i]$ and $\ep_j^\ast =1$ otherwise. }

\begin{Conj}\label{Conj: cohom surj}
%\textcolor{red}{This might still hold: counterexample to the surjectivity still satisfies the claim of the lemma.}
      Suppose that $\G$ is a quasi-split reductive $k$-group and $\X=\rH\backslash\G$ is a homogeneous spherical $\G$-variety. The map 
    \[
    H^1(k,\cala_\X)\lra H^1(k,\Aut_d(\X))
    \]
    is surjective.
\end{Conj}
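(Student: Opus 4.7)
The strategy is to reduce to the already-established connected case (Lemma \ref{Lem: surjective on dist cohom}) by factoring through the finite \'etale cover $\X^\circ = \rH^\circ\backslash\G \to \X$. The functoriality diagram \eqref{eqn: functorial on Aut} yields a commutative square
\[
\begin{tikzcd}
H^1(k, \cala_{\X^\circ}) \ar[r] \ar[d] & H^1(k, \Aut_d(\X^\circ)) \ar[d] \\
H^1(k, \cala_\X) \ar[r] & H^1(k, \Aut_d(\X))
\end{tikzcd}
\]
in which the top horizontal is already surjective. Consequently, the conjecture follows from surjectivity of the right-hand vertical map $H^1(k, \Aut_d(\X^\circ)) \to H^1(k, \Aut_d(\X))$, which is the cleaner statement I would target directly.

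The next step is to exploit the explicit product description \eqref{eqn: formula for Aut conn}, which gives
\[
\Aut_d(\X^\circ) \;\cong\; \prod_{i \in I_{\X^\circ}} \Res_{k_i/k}(\mu_2),
\]
so that $H^1(k, \Aut_d(\X^\circ)) \cong \prod_{i} k_i^\times/(k_i^\times)^2$ by Shapiro's lemma and Hilbert 90. Writing $K$ for the kernel of $\Aut_d(\X^\circ) \to \Aut_d(\X)$ and appealing to the six-term cohomology sequence, the task reduces to showing that $H^2(k, K) \to H^2(k, \Aut_d(\X^\circ))$ is injective. The cleanest sufficient condition, and the one I would pursue first, is the existence of a $k$-rational splitting of the quotient $\Aut_d(\X^\circ) \to \Aut_d(\X)$; in that event the long exact sequence splits and surjectivity on $H^1$ is automatic.

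I would therefore attempt to produce such a splitting combinatorially, by analyzing how $\pi_0(\rH)$ acts on the finite set $\De_{\X^\circ}^{dist}$ of distinguished spherical roots of the cover $\X^\circ$. By the construction underlying \eqref{eqn: functorial on Aut}, the subgroup $K$ is the image of $\pi_0(\rH)$ inside $\prod_{i} \Res_{k_i/k}(\mu_2)$, and this image is governed by the induced permutation action of $\pi_0(\rH)$ on the $\Ga$-orbits indexed by $I_{\X^\circ}$. When this combined action admits a $k$-rational system of $\pi_0(\rH)$-orbit representatives among $\Ga$-orbits, one can project $\Aut_d(\X^\circ)$ onto the corresponding sub-product and obtain a section of the quotient; this is essentially the mechanism implicit in Corollary \ref{Lem: examples of conj}, and it handles Example \ref{Ex: SLxSL disconnect}, where $K = \Delta\mu_2 \subset \mu_2 \times \mu_2$ and either coordinate projection splits the quotient.

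The main obstacle is controlling this combinatorial interaction in full generality. A priori, $\pi_0(\rH)$ can identify $\Ga$-orbits of distinguished roots in a twisted manner for which no $k$-rational system of representatives exists, and correspondingly $K$ can sit inside $\prod_i \Res_{k_i/k}(\mu_2)$ as a subgroup scheme that does not admit a $\Ga$-equivariant complementary sub-product. In such cases the splitting approach fails, and injectivity of $H^2(k, K) \to H^2(k, \Aut_d(\X^\circ))$ must be established by a more indirect cohomological argument, perhaps by realising $K$ as the kernel of a morphism into an auxiliary induced torus (so that its higher cohomology becomes controllable via Shapiro's lemma, as in the proof of Lemma \ref{Lem: surjective on dist cohom}). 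A fully general proof, or a potential counterexample, seems to require a structural classification of which component groups $\pi_0(\rH)$ and which orbit patterns on $\De_{\X^\circ}^{dist}$ can actually arise for homogeneous spherical $\G$-varieties, and is likely to draw on the results of \cite{BorovoiGagliardi} and \cite{Losev}.
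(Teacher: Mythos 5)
First, a framing point you should be aware of: this statement is labelled a \emph{conjecture} in the paper, and the paper offers no proof of it in general. What the paper does establish is Lemma \ref{Lem: surjective on dist cohom} (the geometrically connected case), the commutative square \eqref{eqn: functorial on Aut} you invoke, and the list of verifiable cases in Corollary \ref{Lem: examples of conj}. So you are not missing a proof you failed to find; there is none to compare against. Your honest acknowledgement at the end that the argument is incomplete is therefore the right posture, but the specific route you have chosen has a flaw that would prevent it from ever closing the gap.

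The fatal step is your very first reduction: you argue that the conjecture would follow from surjectivity of $H^1(k,\Aut_d(\X^\circ))\to H^1(k,\Aut_d(\X))$ and then direct your entire effort---the $H^2$-injectivity criterion, the splitting of the quotient, the combinatorial orbit analysis---at establishing that surjectivity. The implication is of course valid, but the stronger statement you are now chasing is \emph{false} in general; the paper says so explicitly just before stating the conjecture (``This surjectivity need not occur''). Concretely, one can arrange a disconnected symmetric subgroup for which $\Aut_d(\X^\circ)\simeq\Res_{\cc/\rr}(\mu_2)$ while $\pi_0(\rH)$ contributes the diagonal $\mu_2$, giving $\Aut_d(\X)\simeq\mu_2$. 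Then Shapiro's lemma gives
\[
H^1\bigl(\rr,\Res_{\cc/\rr}(\mu_2)\bigr)\simeq H^1(\cc,\mu_2)=1,
\]
whereas $H^1(\rr,\mu_2)\simeq\zz/2\zz$, so the right vertical arrow is $1\to\zz/2\zz$ and is nowhere near surjective. No splitting exists, no ``more indirect'' argument for $H^2$-injectivity can exist, and the structural classification you propose at the end would not save you: you are attempting to prove a false statement.

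What you have discarded by factoring through the cover is exactly the part of $\cala_\X$ that makes the conjecture plausible: in the example above $\cala_\X$ in fact equals $\Aut_d(\X)$, so the bottom horizontal map is the identity and the conjecture holds trivially even though your intermediate target fails. A correct line of attack must compare $H^1(k,\cala_\X)$ with $H^1(k,\Aut_d(\X))$ directly, via the short exact sequence $1\to\cala_\X^d\to\cala_\X\to\Aut_d(\X)\to1$ and the question of injectivity of $H^2(k,\cala_\X^d)\to H^2(k,\cala_\X)$, exactly as in the proof of Lemma \ref{Lem: surjective on dist cohom}. There the key point was that both $\Lam_\X$ and $\Lam_\X^d$ admit $\Ga$-stable $\zz$-bases, so the relevant tori have vanishing $H^1$. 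The genuine difficulty in the disconnected case is that $\Lam_\X^d=\fX\cap\Lam_{\X^\circ}^d$ need no longer be a permutation $\Ga$-module when $\pi_0(\rH)$ twists the distinguished roots; your closing observation about needing to understand the possible actions of $\pi_0(\rH)$ on $\De_{\X^\circ}^{dist}$ is the right intuition, but it must be applied to this sequence, not to the one you set up.
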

%We remark that this conjecture depends only on the $k$-form $\G$ and not on the form $\X$; that is, it holds for $\X$ if and only if it holds for every $\G$-form of $\X_{\kbar}$.
It is not hard to show that it holds in the following cases.

%\textcolor{red}{Still needs some work this section. Here to the geometric cocycle section is quarantined.}
\quash{\subsubsection{Well--behaved varieties} This section gives evidence for the preceding conjecture by giving a class of varieties where it can be verified; see Corollary \ref{Lem: examples of conj}. It is not directly used in rest of the paper. 

To begin, we may obtain a description of $\Aut_d(\X)$ similar to \eqref{eqn: formula for Aut conn} in terms of an equivalence relation on $\De^{dist}_{\X^\circ}$: $\al\sim \al'$ if $\al|_{\pi_0(\rH)}=\al'|_{\pi_0(\rH)}$.% If $\al|_{\pi_0(\rH)}\equiv 1$, then $\al$ is equivalent to only itself. 
\quash{Then
\[
X^\ast(\Aut_d(\pi_0(\rH)) = (
\zz/2\zz)^{\pi_0(\De)}, \qquad \pi_0(\De):=\{\al|_{\pi_0(\rH)}:\al\in \De_{\X^\circ}^{dist}\}\setminus\{0\}.
\]
}
 Setting $\pi_0(\De)\subset X^\ast(\pi_0(\rH))$, the fibers of the map $g:\De_\X^{dist}\to \pi_0(\De)\sqcup\{0\}$ give the equivalence classes.
 
The $k$-rationality of $\pi_0(\rH)\subset \cala_{\X^\circ}$ implies that this equivalence relation descends to one on $I_{\X^\circ}$. In particular, if we set $\pi_0(\De)/\Ga$ for the set of $\Ga$-orbits on $\pi_0(\De)$, then we obtain a function $$\overline{g}:I_{\X^\circ} \to \pi_0(\De)/\Ga\sqcup\{0\}.$$
For each $\calo\in I_{\X^\circ}$ and $\al\in \calo$, let $\Ga_{g(\al)}$ be the stabilizer of $g(\al)$. This corresponds to a subfield $k_{g(\al)}\subset k_\al$.  If $g$ is injective on $\calo\subset \De_\X^{dist}$, then $k_{g(\al)}= k_\al$ for all $\al\in\calo$. In general, if $\al,\be\in g^{-1}(g(\calo))$, there are subfields $k_{g(\al)}\subset k_\al$ (resp., $k_{g(\be)}\subset k_\be$) and an isomorphism $k_{g(\al)}\simeq k_{g(\be)}$ which extends to $k_\al\simeq k_\be$ if $\al$ and $\be$ lie in the same orbit of $\De_\X^{dist}$.

%\footnote{Be wary of the example
%\[
%1\to \mu_2\to \Res_{\cc/\rr}(\mu_2)\to \mu_2\to 1.
%\]
%In this case, $H^1(\rr, \Res_{\cc/\rr}(\mu_2))\to H^1(\rr,\mu_2)$ is NOT surjective. This arises if the above map is not injective on $\Ga$-orbits on $\De_\X^{dist}$.}

\quash{ we decompose $X^\ast(\Aut_d(\X^\circ))=\bigoplus_{[i]\in I_\X}Y^\circ_{[i]}$, where $$Y^\circ_{[i]} = \bigoplus_{j\in [i]}\Ind_{\Ga_i}^\Ga(\zz/2\zz).$$ Then $X^\ast(\Aut_d(\X))=\bigoplus_{[i]\in I_\X}Y_{[i]}$ where if $\overline{g}([i])\neq 0$, then
\begin{equation}\label{eqn: decomp of Out general}
    Y_{[i]}=\{(\ep_i)\in \bigoplus_{j\in [i]}\Ind_{\Ga_j}^\Ga(\zz/2\zz): \;\sum_{j\in[i]}\ep_i=0\in \zz/2\zz\};
\end{equation}
here, $\sum_i\ep_i$ is viewed as an element of $\Hom(\Ga,\zz/2\zz)$; we set $Y_{[i]}=Y_{[i]}^\circ$ if $\overline{g}([i])= 0$. In particular, if we set $Z_{[i]}=\{\sum_{j\in[i]}\ep_i: (\ep_i)\in Y^\circ_{[i]}\}$, then $X^\ast(\Aut_d(\pi_0(\rH))) = \bigoplus_{[i]\in I_\X}Z_{[i]}$ so that }

Setting $I_\X:= \pi_0(\De)/\Ga\sqcup\{0\}(=I_{\X^\circ}/\sim)$, we denote an element by $[\calo]=\overline{g}^{-1}(\overline{g}(\calo))$. The preceding discussion shows that 
\[
\Aut_d(\pi_0(\rH))\simeq \prod_{[\calo]\in \pi_0(\De)/\Ga}\pi_0([\calo]), \:\text{where}\: \pi_0([\calo])\simeq \Res_{k_{g(\al)}/k}(\mu_2),
\] 
for any $\al\in g^{-1}([\calo])$. We have the \'etale $k$-algebra $k_{[\calo]}=\prod_{\calo\in [\calo]}k_\calo$, where $k_\calo\simeq k_\al$ for any $\al\in \calo$. We consider the subgroup 
\[
\Res_{k_{[\calo]}/k}(\mu_2)=\prod_{\calo\in [\calo]}\Res_{k_\calo/k}(\mu_2)\subset \Aut_d(\X^\circ).
\] The product formula \eqref{eqn: formula for Aut conn} thus induces
\begin{equation}\label{eqn: product isom general}
\Aut_d(\X) \simeq \prod_{[\calo]\in I_{\X}}\Res_{k_{[\calo]}/k}(\mu_2)/\pi_0([\calo]),%\simeq\prod_{calo\in     
\end{equation}
where for each $\al\in\calo\in [\calo]$, the composite $\pi_0([\calo])\to \Res_{k_{[\calo]}/k}(\mu_2)\to\Res_{k_\calo/k}(\mu_2)\simeq \Res_{k_\al/k}(\mu_2)$ has image $\Res_{k_{g(\al)}/k}(\mu_2)$.

  %The only case to consider is when It is not hard to see that this occurs only when the composition $\pi_0([\calo])\to \Res_{k_{[\calo]}/k}(\mu_2)\to\Res_{k_\calo/k}(\mu_2)$ fails to be surjective for every orbit. \quash{Indeed, decomposing $\Aut_{d}(\X^\circ) = \prod_{[\calo]}\Res_{k_{[\calo]}/k}(\mu_2)$ and applying the standard long exact sequence in Galois cohomology for each $[\calo]$, we have the exact sequence
 % \[
%\prod_{\calo\in[\calo]}H^1(k_{\calo},\mu_2) \to H^1(k,\Res_{k_{[\calo]}/k}(\mu_2)/\pi_0([\calo]))\to H^2(k_{g[\calo]},\mu_2),
 % \]}
 It is now easy to find examples where $H^1(k,\Aut_d(\X^\circ))\to H^1(k,\Aut_d(\X))$ fails to be surjective.
 
  \begin{Ex}\label{Ex: components gone wild cohom} Recall the set up from Example \ref{Ex: components gone wild}. Let $\rH$ be the symmetric subgroup of $\Res_{K/k}(N)$ corresponding to a subgroup of the form $\De\Res_{L/k}(\mu_2)\subset \Res_{K/k}(\mu_2)$ where $L/k$ is a fields extension equipped with an embedding $L\hra k_i$ for each $i\in I$. Setting $\X=\rH\backslash\G$, then $$\Aut_d(\X) = \Res_{K/k}(\mu_2)/\De\Res_{L/k}(\mu_2).$$ In this case, $H^1(k,\Aut_d(\X^\circ))\to H^1(k,\Aut_d(\X))$ is rarely surjective. For example if $k=\rr$ and $K=\cc$, $\Aut_d(\X)\simeq \mu_2$ so we obtain  $$H^1(\rr,\Res_{\cc/\rr}(\mu_2)) = \{1\}\to H^1(\rr,\mu_2)\simeq \zz/2\zz.$$ Nevertheless, $H^1(k,\cala_{\X})\to H^1(k,\Aut_d(\X))$ is still surjective here since $\cala_\X=\Aut_d(\X)$.\qed
\end{Ex}
We now isolate a subset of varieties for which the map $$H^1(k,\Aut_d(\X^\circ))\to H^1(k,\Aut_d(\X))$$ is surjective.
\begin{LemDef}\label{Def: well behaved}
Suppose that $\G$ is a connected reductive $k$-group, quasi-split over $k$ and $\X=\rH\backslash\G$ be a spherical $\G$-variety such that
\begin{enumerate}
\item for each $\calo\in I_{\X^\circ}$, the composition  $\pi_0([\calo])\to \Res_{k_{[\calo]}/k}(\mu_2)\to\Res_{k_\calo/k}(\mu_2)$ is surjective. Equivalently, the function $g:\De_{\X^\circ}^{dist}\to \pi_0(\De)$ is injective on $\calo$;
\item for any two orbits $\calo,\calo'\in I_{\X^\circ}$ with $\overline{g}(\calo)=\overline{g}(\calo')\in \pi_0(\De)/\Ga$, there is a $k$-isomorphism $k_{\calo}\simeq k_{\calo'}$ inducing the isomorphism 
\[
\Res_{k_{\calo}/k}(\mu_2)\simeq\pi_0([\calo])\simeq\Res_{k_{\calo'}/k}(\mu_2).
\]
where the morphisms are induced by the projections.
\end{enumerate} 
 Then $H^1(k,\Aut_d(\X^\circ))\to H^1(k,\Aut_d(\X))$ is surjective. We say in this setting that  $\pi_0(\rH)$ (or $\X=\rH\backslash\G$ itself) is \textbf{well-behaved}.
\end{LemDef}
\begin{proof}
The assumption implies that, referring to \eqref{eqn: product isom general}, we compute
\[
\Aut_d(\X) \simeq \prod_{[\calo]\in I_{\X}}\Res_{k_{\calo}/k}(\mu_2)^{m(\calo)}/\De\Res_{k_{\calo}/k}(\mu_2),
\] where $m(\calo)$ is the number of orbits in $I_{\X^\circ}$ mapping to $\overline{g}(\calo)$. 
If we make a choice $\{\calo_0\}$ of representatives $\calo\in [\calo]\in I_{\X}$, there exist a $k$-isomorphism 
\begin{align*}
   \phi: \Aut_d(\X)&\lra  \prod_{[\calo]\in I_{\X}}\Res_{k_{\calo}/k}(\mu_2)^{m(\calo)-1}
       %(\ep_{[i]})_{[i]\in I_{\X}}&\longmapsto (\ep^\ast_i)_{i\in I_{\X^\circ}},
\end{align*}
where for each $[\calo]$ the map sends $(\ep_\calo)_{\calo\in [\calo_0]}$ to $(\ep_\calo\ep_{\calo_0}^{-1})_{\calo\neq \calo_0}$. This isomorphism induces a $k$-rational splitting of $\Aut_d(\X^\circ)\to \Aut_d(\X)$
\begin{align*}
    s_{\{\calo_0\}}: \Aut_d(\X)&\lra \Aut_d(\X^\circ)
\end{align*}
via the natural inclusion $\prod_{[\calo]\in I_{\X}}\Res_{k_{\calo}/k}(\mu_2)^{m(\calo)-1}$ into  $\prod_{[\calo]\in I_{\X}}\Res_{k_{\calo}/k}(\mu_2)^{m(\calo)}$ determined by the choice of representatives.

The existence of this section induces a section of $H^1(k,\Aut_d(\X^\circ))\to H^1(k,\Aut_d(\X))$, proving surjectivity.
\end{proof}
\quash{
\begin{Lem}\textcolor{red}{This seems false}
Suppose that $\G$ is a connected reductive $k$-group, quasi-split over $k$ and let $\theta$ be an involution. Then $\pi_0(\G^\theta)$ is well-behaved.
\end{Lem}
\begin{proof}
We are free to assume $\G_{der}$ is simply connected via a $z$-extension argument. Then $\G_{der}^\theta$ is connected
\end{proof}
}

\quash{ From our preceding discussion, it is easy to see that the only obstruction to this surjectivity is if for some $[i]\in I_\X$, the map $\calo_i\to \pi_0(\De)$ is not injective: that is if $\al = \sig\be\in \calo_i$ and $\al|_{\pi_0(\rH)} = \be|_{\pi_0(\rH)}\neq 1$. This occurs if and only if the composition
 \[
 \pi_0[i]\to \prod_{j\in [i]}\Res_{k_j/k}(\mu_2)\to\Res_{k_j/k}(\mu_2)
 \]
 is not surjective for some (equivalently, every) $j\in [i]$.}}
%The following lemma lists a few contexts where Conjecture \ref{Conj: cohom surj} is known to hold.
\begin{Cor}\label{Lem: examples of conj}
 Conjecture \ref{Conj: cohom surj} holds in the following contexts:
    \begin{enumerate}
        \item When $\rH^\circ\subset \rH\subset \rH^\circ\cdot Z(\G)$;
        \item if $\X$ is spherically closed (this includes the case $\X$ is automorphism-free);
        \item when the $\Ga$-action on $\De_{\X^\circ}^{dist}$ is trivial.
    \end{enumerate}
\end{Cor}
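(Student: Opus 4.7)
The plan is to handle each case by reducing to Lemma \ref{Lem: surjective on dist cohom} via a diagram chase through \eqref{eqn: functorial on Aut}. For case (1), the remark following the definition of $\Aut_d(\X)$ (in the disconnected case) shows that the natural morphism $\Aut_d(\X^\circ) \to \Aut_d(\X)$ is an isomorphism whenever $\rH^\circ \subset \rH \subset \rH^\circ \cdot Z(\G)$. Thus, given $c \in H^1(k, \Aut_d(\X))$, identify it with a class $c' \in H^1(k, \Aut_d(\X^\circ))$; since the stabilizer $\rH^\circ$ is connected, Lemma \ref{Lem: surjective on dist cohom} lifts $c'$ to some $\tilde c \in H^1(k, \cala_{\X^\circ})$. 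The image of $\tilde c$ under $\cala_{\X^\circ} \to \cala_\X$ serves as the required preimage of $c$ by commutativity of the right-hand square of \eqref{eqn: functorial on Aut}.

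For case (2), I want to show that for $\X$ spherically closed the map $\cala_\X \to \Aut_d(\X)$ is an isomorphism, which trivializes the claim. Spherical closedness means $\cala_\X^\sharp = 1$, equivalently $\Lam_\X^\sharp = \fX$, where $\Lam_\X^\sharp$ (the character lattice dual to $\Aut_\Omega(\D(\X))$) is obtained from $\Lam_\X^n$ by undoubling only the type \eqref{a} distinguished roots. Since $\Lam_\X^d$ is obtained by undoubling types \eqref{a}, \eqref{b}, \eqref{c}, we have the chain $\Lam_\X \subset \Lam_\X^\sharp \subset \Lam_\X^d \subset \fX$, so $\Lam_\X^d = \fX$ as well. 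This forces $\cala_\X^d = 1$, so $\cala_\X \simeq \Aut_d(\X)$ and surjectivity on $H^1$ is immediate. For disconnected $\rH$, I will verify that spherical closedness of $\X$ implies the analogous lattice equality for $\X^\circ$ (via the finite \'{e}tale cover $\X^\circ \to \X$ and compatibility of the distinguished root combinatorics), after which the definition $\Aut_d(\X) := \Aut_d(\X^\circ)/\Aut_d(\pi_0(\rH))$ and the corresponding quotient $\cala_{\X^\circ}/\pi_0(\rH) = \cala_\X$ again identify $\cala_\X$ with $\Aut_d(\X)$.

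For case (3), the product formula \eqref{eqn: formula for Aut conn} gives $\Aut_d(\X^\circ) \simeq \mu_2^N$ with \emph{trivial} Galois action, where $N = |\De_{\X^\circ}^{dist}|$ (each orbit being a singleton with $k_i = k$). The subgroup $\Aut_d(\pi_0(\rH)) \subset \Aut_d(\X^\circ)$ also carries trivial Galois action, and since any surjection of elementary abelian $2$-group $k$-schemes with trivial $\Ga$-action is split by a choice of $\mathbb{F}_2$-basis, the quotient map $\Aut_d(\X^\circ) \to \Aut_d(\X)$ admits a $k$-rational splitting. In particular, $H^1(k,\Aut_d(\X^\circ)) \to H^1(k,\Aut_d(\X))$ is surjective; combined with surjectivity of $H^1(k,\cala_{\X^\circ}) \to H^1(k,\Aut_d(\X^\circ))$ from Lemma \ref{Lem: surjective on dist cohom} and commutativity of \eqref{eqn: functorial on Aut}, the desired surjectivity for $\X$ follows.

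The main obstacle is case (2) in the disconnected setting: the notion of spherical closedness is stated for $\X$ using $\cala_\X^\sharp$, but the construction of $\Aut_d(\X)$ routes through $\X^\circ$, so one must carefully track how the lattices $\Lam^\sharp$ and $\Lam^d$ behave under passage between $\X$ and $\X^\circ$. Cases (1) and (3) are largely formal once the relevant identifications and splittings are in place.
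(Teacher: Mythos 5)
Your proposal is essentially correct and matches the framework the paper sets up for this corollary (the paper gives no explicit proof, stating only that "it is not hard to show"). Cases (1) and (3) are handled exactly as the paper intends: the sentence immediately preceding Conjecture \ref{Conj: cohom surj} notes that surjectivity of $H^1(k,\Aut_d(\X^\circ))\to H^1(k,\Aut_d(\X))$ already implies the conjecture by composition with Lemma \ref{Lem: surjective on dist cohom} through \eqref{eqn: functorial on Aut}; you verify precisely this (via the isomorphism $\Aut_d(\X^\circ)\simeq\Aut_d(\X)$ in case (1), and via an $\mathbb{F}_2$-linear splitting with trivial $\Ga$-action in case (3)).

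For case (2), your connected-case argument is fine, but your proposed handling of disconnected $\rH$ takes an unnecessary detour. You say you will "verify that spherical closedness of $\X$ implies the analogous lattice equality for $\X^\circ$," but spherical closedness of $\X$ does \emph{not} obviously descend to a statement about $\X^\circ$ (indeed $\X^\circ$ will typically carry distinguished roots that $\X$ does not, cf.\ Example \ref{Ex: SLxSL disconnect}), so this route is at best murky. The cleaner argument, available directly from the paper, is to apply Lemma \ref{Lem: center in flat in sharp} — which is stated and proved for \emph{general} (not necessarily connected) $\rH$ — to get $\cala_\X^\flat\subset\cala_\X^d\subset\cala_\X^\sharp$. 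Spherical closedness means $\cala_\X^\sharp=1$, so $\cala_\X^d=1$, so $\cala_\X\to\Aut_d(\X)$ is an isomorphism of $k$-groups and surjectivity on $H^1$ is immediate. This avoids any comparison between $\X$ and $\X^\circ$ in case (2) entirely.
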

\subsubsection{Geometric cocycles}\label{Section: geometric cocycle} Now let $\X=\rH\backslash\G$ be a homogeneous spherical $\G$-variety. We first assume that $\rH^\circ\subset \rH\subset \rH^\circ\cdot Z(\G)$, so that $\Aut_d(\X)$ is of the form \eqref{eqn: formula for Aut conn}. Recall the exact sequence \eqref{eqn: sequence on auts}. %Let $\X^\circ =\rH^\circ\backslash\G$.% and set $\De_\X^{dist}:=\De_{\X^\circ}^{dist}$\footnote{This obviously flies in the face of the natural meaning of $\De_\X^{dist}$, but leads to better formulae.}
Quotienting out by the $\Ga$-stable normal subgroup $\cala_\X^d(\kbar)$, we obtain the commutative diagram

        \[
    \begin{tikzcd}
        1\ar[r]&\cala_\X(\kbar)\ar[r]\ar[d]&\rS\cala_\X\ar[r]\ar[d]&\Ga\ar[d,"="]\\
    1\ar[r]&\Aut_d(\X)(\kbar)\ar[r]&\rS\Aut_d(\X)\ar[r]&\Ga,
  %  1\ar[r]&\mu_2(\kbar)^{\De_\X^{dist}}\ar[r]&\mu_2(\kbar)^{\De_\X^{dist}}\rtimes \Ga\ar[r]&\Ga,
    \end{tikzcd}
    \]
where $\rS\Aut_d(\X) = \rS\cala_\X/\cala_\X^d(\kbar)$. The $k$-structure of $\X$ gives a splitting $\mu_\X:\Ga\to \rS\cala_\X$, which descends to a splitting $\Ga\to \rS\Aut_d(\X)$. Recalling \eqref{eqn: formula for Aut conn} and Lemma \ref{Lem: unique on aut}, we have an isomorphism $\Aut_d(\X)(\kbar)\iso\mu_2(\kbar)^{\De_\X^{dist}}$ independently of the choice of $k$-form. This gives rise to a canonical extension
\[
\begin{tikzcd}
1\ar[r]&\mu_2(\kbar)^{\De_\X^{dist}}\ar[r]&\mu_2(\kbar)^{\De_\X^{dist}}\rtimes \Ga\ar[r]&\Ga,
\end{tikzcd}\] where the group law on $\mu_2(\kbar)^{\De_\X^{dist}}\rtimes \Ga$ is given by
\[
[(\ep^\sig_\al)_\al,\sig][(\ep^\tau_\al)_\al,\tau] = [(\ep^{\sig}_\al\ep^\tau_{\sig(\al)})_\al,\sig\tau].
\]
On the other hand, when acting on $\overline{\X}(\kbar)$ by a semi-automorphism, the doubling automorphisms $\mu_\al(s)\in\mu_2(\kbar)=\{\pm1\}$ are still defined, but the semi-linearity forces $\mu_\al(s t) = \mu_\al(s)\mu_{\sig(\al)}(t)$ where $s$ lies over $\sig\in \Ga$. We obtain the isomorphism $\ep: \rS\cala_\X\lra \mu_2(\kbar)^{\De_\X^{dist}}\rtimes \Ga$ denote the morphism
\[
\ep(s) =[(\mu_\al(s))_\al,\sig],
\] where $s$ lies over $\sig\in \Ga$.

\begin{Def}\label{Def: geometric cocycle}
Suppose that $\X$ is a $\G$-form of $\overline\X$, inducing a section $\mu_\X: \Ga\lra \rS\cala_\X$, defining a $\nu$-equivariant $\Ga$-action on $\overline{\X}(\kbar)$. Composing with $\ep$, we obtain a section
\begin{equation*}
    \widetilde{\mu}_\X^{dist}:\Ga\overset{\mu_\X}{\lra}\rS\cala_\X\overset{\ep}{\lra}\mu_2(\kbar)^{\De_\X^{dist}}\rtimes \Ga.
\end{equation*}
Comparing with the trivial section $\sig\mapsto [(1)_\al,\sig]$, we thus obtain a \emph{canonical} $1$-cocycle 
\begin{equation}\label{eqn: geometric cocycle}
    \mu_\X^{d}:\Ga\lra \mu_2(\kbar)^{\De_\X^{dist}} = \prod_{\al\in \De_\X^{dist}}\{\pm1\}.
\end{equation}
We refer to this as the \textbf{geometric cocycle} of $\X$.   
\end{Def}
\begin{Rem}\label{Rem: geometric}
    The name ``geometric'' is motivated by how this cocycle records the actions of $\Ga$ on the geometric structures of $\overline{\X}$, at least when $\rH\subset \rH^\circ\cdot Z(\G)$.  For example, if $\al\in \De_\X^{(2)}\subset \De_\X^{dist}$ is a doubled root, we have $\mu_\al(\sig) = -1$ if and only if the automorphism by which $\sig$ acts on $\overline{\X}(\kbar)$ swaps the two colors $D^+$ and $D^-$ associated to $\al$ \cite[Remark 4.1.7]{Losev} (see also \cite[Proposition C.5]{BorovoiGagliardi}). 

For a distinguished root $\ga$ of type \eqref{b} or\eqref{c}, there is a unique color $D_\ga$ associated to $\ga$ so that the automorphism acts trivially on $(\fX,\De_\X,\D(\X))$. Nevertheless, in this case there exist $\B$-orbits of \emph{higher codimension} in the closure of $D_\ga$ which are permuted by automorphisms. The doubling character $\mu_\ga$ encodes whether the an automorphism permutes these orbits or stabilizes them. See also the discussion in \cite[Section 4.8]{BZSV}, where the authors use the language of ``colors of even sphere type.'' The next example illustrates this point geometric nature of a doubling automorphism of type \eqref{b}.
\end{Rem}
\begin{Ex}[Doubling automorphism for $\Sp_{2n}\times \Sp_{2n}\backslash\Sp_{4n}$\:]\label{Ex: Symplectic example}
Let $(V,\la\cdot,\cdot\ra)$ be a $4n$-dimensional symplectic space and let $\G=\Sp(V)$. Let $\X$ be the variety of decompositions $V=V_1\oplus V_2$ into $2n$-dimensional non-degenerate symplectic subspaces of $V$, so that the sum is orthogonal. Then after fixing a base point, we identify $$\X\simeq \Sp(V_1)\times \Sp(V_2)\backslash\Sp(V).$$
This symmetric variety possesses a wonderful compactification
\[
\X\hra  \mathrm{Gr}_{2n}(V)
\]
into the Grassmannian of $2n$-planes in $V$, with open orbit $\X\subset \mathrm{Gr}_{2n}(V)$ corresponds to those partial flags
\[
0\subset W\subset V
\]
such that $\la\cdot,\cdot\ra|_{W\otimes W}$ is non-degenerate. The unique $\Sp(V)$-equivariant automorphism $\varphi\in\cala_\X$ doubling the distinguished spherical root corresponds to sending $W$ to its annihilator
\begin{align*}
    \varphi:\X&\lra \X\\
            W&\longmapsto W^\perp.
\end{align*}

Fix a Borel $\B\subset \Sp(V)$, which determines a full symplectic flag
\[
0\subset U_1\subset  \cdots\subset U_{2n}\subset \cdots \subset U_{4n-1}\subset V,
\]
where $U_{2n}$ is a Lagrangian and $U_{4n-i+1}/U_{4n-i}\simeq (U_{i+1}/U_i)^\ast$.
 Let $\ga\in \De_{\X}^{dist}$ denote the unique distinguished root (which is of type \eqref{b}). The unique color $D\in \D(\X)$ associated to $\ga$ may be described as
    \[
    D=\{W\subset V: \dim(U_{2n}\cap W)=1\}.
    \]
 Note that if $W\in D$, then $W^\perp\in D$ as well since $V=W\oplus W^\perp$, reflecting that $\varphi\in \cala_\X^\sharp(\kbar)$ stabilizes the color.

   %  Note that $D$ so defined is of codimension $1$ in $\X$. It suffice to show that it is $\B$-stable. But this is immediate since $bU_2= U_2$, so that $U_2\cap bW = b(U_2\cap W)$.
One now  considers the $B$-stable subvariety $D''=\{W\subset V: U_1\subset W\}\subset D$ of codimension $2$, and notes
 \[
 \varphi(D'') =\{W\subset V: U_1\subset W^\perp\}\neq D''.
 \]
Thus, $\varphi$ acts non-trivially on Borel orbits of higher codimension which lie in the closure of $D$.
 %It is clear that this example generalizes to $(V,\la\cdot,\cdot\ra)$ of dimension $4n$.
 \qed
\end{Ex}

If we now allow more general $\pi_o(\rH)$, let $\X =\rH\backslash\G$ and $\X^\circ = \rH^\circ\backslash\G$. Consider the quotient  $m_{\rH}$ defined by the diagram
\begin{equation*}
    \begin{tikzcd}
        \Aut_d(\X^\circ)(\kbar)\ar[d]\ar[r]&\Aut_d(\X)(\kbar)\ar[d]\\
\mu_2(\kbar)^{\De_{\X^\circ}^{dist}}\ar[r,"m_{\rH}"]&\mu_2(\kbar)^{\De_{\X^\circ}^{dist}}/\Aut_{d}(\pi_0(\rH))&.
    \end{tikzcd}
\end{equation*}
The $k$-rationality of $\Aut_d(\pi_0(\rH))\subset \Aut_d(\X^\circ)$ implies that the subgroup 
\[
\Aut_d(\pi_0(\rH))(\kbar) = \Aut_d(\pi_0(\rH))(\kbar) \rtimes \{1\}\subset\mathrm{S}\Aut_d(\X^\circ)(\kbar)\simeq \mu_2(\kbar)^{\De_{\X^\circ}^{dist}}\rtimes \Ga
\]
is normal so that we obtain the diagram
        \[
    \begin{tikzcd}
        1\ar[r]&\cala_\X(\kbar)\ar[r]\ar[d]&\rS\cala_\X\ar[r]\ar[d]&\Ga\ar[d,"="]\\
    1\ar[r]&\Aut_d(\X)(\kbar)\ar[r]\ar[d,"\simeq"]&\rS\Aut_d(\X)\ar[r]\ar[d,"\simeq"]&\Ga\ar[d,"="]\\
    1\ar[r]&\mu_2(\kbar)^{\De_{\X^\circ}^{dist}}/\Aut_{d}(\pi_0(\rH))\ar[r]&\left(\mu_2(\kbar)^{\De_{\X^\circ}^{dist}}/\Aut_{d}(\pi_0(\rH))\right)\rtimes \Ga\ar[r]&\Ga.
    \end{tikzcd}
    \]
We may similarly obtain a \emph{canonical} cocycle $\mu_{\X}^d$ valued in the above quotient group from the section of $\rS\cala_\X\to \Ga$ as in the preceding case.% geometric cocycle for $\X^\circ$. Let $\mu_\X^d$ denote the composition of $\mu_{\X^\circ}^d$. 
\quash{
\quash{ From our preceding discussion, it is easy to see that the only obstruction to this surjectivity is if for some $[i]\in I_\X$, the map $\calo_i\to \pi_0(\De)$ is not injective: that is if $\al = \sig\be\in \calo_i$ and $\al|_{\pi_0(\rH)} = \be|_{\pi_0(\rH)}\neq 1$. This occurs if and only if the composition
 \[
 \pi_0[i]\to \prod_{j\in [i]}\Res_{k_j/k}(\mu_2)\to\Res_{k_j/k}(\mu_2)
 \]
 is not surjective for some (equivalently, every) $j\in [i]$.}

\subsubsection{Geometric cocycles} Now let $\X=\rH\backslash\G$ be a homogeneous spherical $\G$-variety. We first assume that $\rH^\circ\subset \rH\subset \rH^\circ\cdot Z(\G)$, so that $\Aut_d(\X)$ is of the form \eqref{eqn: formula for Aut conn}. Now consider the exact sequence \eqref{eqn: sequence on auts}. %Let $\X^\circ =\rH^\circ\backslash\G$.% and set $\De_\X^{dist}:=\De_{\X^\circ}^{dist}$\footnote{This obviously flies in the face of the natural meaning of $\De_\X^{dist}$, but leads to better formulae.}
Quotienting out by the normal subgroup $\cala_\X^d(\kbar)$, we obtain the commutative diagram

        \[
    \begin{tikzcd}
        1\ar[r]&\cala_\X(\kbar)\ar[r]\ar[d]&\rS\cala_\X\ar[r]\ar[d]&\Ga\ar[d,"="]\\
    1\ar[r]&\Aut_d(\X)(\kbar)\ar[r]\ar[d,"\simeq"]&\rS\Aut_d(\X)\ar[r]\ar[d,"\simeq"]&\Ga\ar[d,"="]\\
    1\ar[r]&\mu_2(\kbar)^{\De_\X^{dist}}\ar[r]&\mu_2(\kbar)^{\De_\X^{dist}}\rtimes \Ga\ar[r]&\Ga,
    \end{tikzcd}
    \]
where $\rS\Aut_d(\X) = \rS\cala_\X/\cala_\X^d(\kbar)$ and $\mu_2(\kbar)^{\De_\X^{dist}}\rtimes \Ga$ is defined to be tuples $[(\ep^\sig_\al)_\al,\sig]$ where $\sig\in \Ga$ and $\ep_\al^\sig\in \{\pm1\}$ for each $\al\in \De_\X^{dist}$ and the group law is given by
\[
[(\ep^\sig_\al)_\al,\sig][(\ep^\tau_\al)_\al,\tau] = [(\ep^{\sig}_\al\ep^\tau_{\sig(\al)})_\al,\sig\tau].
\]
Let $\pi_{dist}: \rS\cala_\X\lra \mu_2(\kbar)^{\De_\X^{dist}}\rtimes \Ga$ denote the composition of the middle vertical arrows.

\begin{Def}\label{Def: geometric cocycle}
Now suppose that $\X$ is a $\G$-form of $\overline\X$. This induces a section $\mu_\X: \Ga\lra \rS\cala_\X$, defining a $\mu$-equivariant $\Ga$-action on $\overline{\X}$. Composing with $\pi_{dist}$, we obtain a section
\begin{equation*}
    \widetilde{\mu}_\X^{dist}:\Ga\overset{\mu_\X}{\lra}\rS\cala_\X\overset{\pi_{dist}}{\lra}\mu_2(\kbar)^{\De_\X^{dist}}\rtimes \Ga.
\end{equation*}
Comparing with the trivial section $\sig\mapsto [(1)_\al,\sig]$, we thus obtain a \emph{canonical} $1$-cocycle 
\begin{equation}\label{eqn: geometric cocycle}
    \mu_\X^{d}:\Ga\lra \mu_2(\kbar)^{\De_\X^{dist}} = \prod_{\al\in \De_\X^{dist}}\{\pm1\}.
\end{equation}
We refer to this as the \textbf{geometric cocycle} of $\X$.   
\end{Def}
\begin{Rem}
    The name ``geometric'' is motivated by how this cocycle records the actions of $\Ga$ on the geometric structures of $\overline{\X}$, at least when $\rH\subset \rH^\circ\cdot Z(\G)$.  For example, if $\ga_i\in \De_\X^{(2)}\subset \De_\X^{dist}$ is a doubled root, we have $\mu_i(\sig) = -1$ if and only if the automorphism by which $\sig$ acts on $\overline{\X}(\kbar)$ swaps the two colors $D^+$ and $D^-$ associated to $\ga_i$ \cite[Remark 4.1.7]{Losev} (see also \cite[Proposition C.5]{BorovoiGagliardi}). 

For a distinguished root $\ga$ of type \eqref{b}, there is a unique color $D_\ga$ associated to $\ga$ so that the automorphism acts trivially on $(\fX,\De_\X,\D(\X))$. The next example illustrates the geometric nature of a doubling automorphism of type \eqref{b}.
\end{Rem}
\begin{Ex}[Doubling automorphism for $\Sp_{4n}/\Sp_{2n}\times \Sp_{2n}$\:]\label{Ex: Symplectic example}
Let $(V,\la\cdot,\cdot\ra)$ be a $4n$-dimensional symplectic space and let $\G=\Sp(V)$. Let $\X$ be the variety of decompositions $V=V_1\oplus V_2$ into $2n$-dimensional non-degenerate symplectic subspaces of $V$, so that the sum is orthogonal. Then for a fixed base point $$\X\simeq \Sp(V)/\Sp(V_1)\times \Sp(V_2).$$
This symmetric variety possesses a wonderful compactification
\[
\X\hra \overline{\X} = \mathrm{Gr}_{2n}(V)
\]
into the Grassmannian of $2n$-planes in $V$. The open orbit $\X\subset \overline{\X}$ corresponds to those partial flags
\[
0\subset W\subset V
\]
such that $\la\cdot,\cdot\ra|_{W\otimes W}$ is non-degenerate. The unique $\Sp(V)$-equivariant automorphism $\varphi\in\cala_\X$ doubling the distinguished spherical root corresponds to sending $W$ to its annihilator
\begin{align*}
    \varphi:\X&\lra \X\\
            W&\longmapsto W^\perp.
\end{align*}

Fix a Borel $\B\subset \Sp(V)$, which determines a full symplectic flag
\[
0\subset U_1\subset  \cdots\subset U_{2n}\subset \cdots \subset U_{4n-1}\subset V,
\]
where $U_{2n}$ is a Lagrangian and $U_{4n-i+1}/U_{4n-i}\simeq (U_{i+1}/U_i)^\ast$.
 Let $\ga\in \De_{\X}^{dist}$ denote the unique distinguished root (which is of type \eqref{b}). The unique color $D\in \D(\X)$ asscociated to $\ga$ may be described as
    \[
    D=\{W\subset V: \dim(U_{2n}\cap W)=1\}.
    \]
 Note that if $W\in D$, then $W^\perp\in D$ as well since $V=W\oplus W^\perp$, reflecting that $\varphi\in \cala_\X^\sharp$.

   %  Note that $D$ so defined is of codimension $1$ in $\X$. It suffice to show that it is $\B$-stable. But this is immediate since $bU_2= U_2$, so that $U_2\cap bW = b(U_2\cap W)$.
One now  considers the $B$-stable subvariety $D''=\{W\subset V: U_1\subset W\}\subset D$ of codimension $2$, and notes
 \[
 \varphi(D'') =\{W\subset V: U_1\subset W^\perp\}\neq D''.
 \]
Thus, $\varphi$ acts non-trivially on Borel orbits of higher codimension.
 %It is clear that this example generalizes to $(V,\la\cdot,\cdot\ra)$ of dimension $4n$.
 \qed
\end{Ex}

More generally, let $\X =\rH\backslash\G$ and $\X^\circ = \rH^\circ\backslash\G$. Let $\mu_{\X^\circ}^d$ denote the geometric cocycle for $\X^\circ$. Let $\mu_\X^d$ denote the composition of $\mu_{\X^\circ}^d$ with the quotient  $m_{\rH}$ defined by the diagram
\begin{equation*}
    \begin{tikzcd}
        \Aut_d(\X^\circ)(\kbar)\ar[d]\ar[r]&\Aut_d(\X)(\kbar)\ar[d]\\
\mu_2(\kbar)^{\De_\X^{dist}}\ar[r,"m_{\rH}"]&\mu_2(\kbar)^{\De_\X^{dist}}/\Aut_{d}(\pi_0(\rH))&.
    \end{tikzcd}
\end{equation*}
The $k$-rationality of $\Aut_d(\pi_0(\rH))\subset \Aut_d(\X^\circ)$ implies that the subgroup 
\[
\Aut_d(\pi_0(\rH))(\kbar) = \Aut_d(\pi_0(\rH))(\kbar) \rtimes \{1\}\subset\mathrm{S}\Aut_d(\X^\circ)(\kbar)\simeq \mu_2(\kbar)^{\De_\X^{dist}}\rtimes \Ga
\]
is normal so that we obtain the diagram
        \[
    \begin{tikzcd}
        1\ar[r]&\cala_\X(\kbar)\ar[r]\ar[d]&\rS\cala_\X\ar[r]\ar[d]&\Ga\ar[d,"="]\\
    1\ar[r]&\Aut_d(\X)(\kbar)\ar[r]\ar[d,"\simeq"]&\rS\Aut_d(\X)\ar[r]\ar[d,"\simeq"]&\Ga\ar[d,"="]\\
    1\ar[r]&\mu_2(\kbar)^{\De_\X^{dist}}/\Aut_{d}(\pi_0(\rH))\ar[r]&\left(\mu_2(\kbar)^{\De_\X^{dist}}/\Aut_{d}(\pi_0(\rH))\right)\rtimes \Ga\ar[r]&\Ga.
    \end{tikzcd}
    \]
The natural commutative diagram \eqref{eqn: functorial on Aut} shows that the composition $\mu_\X^d=m_{\rH}\circ\mu_{\X^\circ}^d$ is compatible with the section of $\rS\cala_\X\to\Ga$ induced by $\X$. This composition defines the \textbf{geometric cocycle} of $\X$.
}
%We now make several definitions.
%\begin{Def}
%    Let $\G$ be a quasi-split reductive $k$-form of $\overline{\G}$ and let $\overline{\X}$ be a homogeneous spherical $\overline{\G}$-variety. We say that a $\G$-form $\X$ of $\overline{\X}$ is \textbf{(geometrically) neutral} if its geometric cocycle \eqref{eqn: geometric cocycle} is a coboundary.
%\end{Def}

If $\X$ and $\X'$ are two $\G$-forms of $\overline{\X}$\footnote{Recall that this requires that $\X_{\kbar}\simeq\X'_{\kbar}$ in a $\G_{\kbar}$-equivariant way (cf. Remark \ref{Rem: G-form is restrictive}).}, we say they are in the same \textbf{geometric class} if the geometric cocycles $\mu_{\X}^d$ and $\mu_{\X'}^d$ are cohomologous in $H^1(k,\Aut_d(\X))$. It is an easy check that this is the case if $\X$ and $\X'$ are $\G$-equivariantly isomorphic. When $\pi_0(\rH)$ is trivial, Lemma \ref{Lem: surjective on dist cohom} implies that there exist $\G$-forms of $\X$ associated to each geometric class as soon as any $\G$-form exists, which occurs as soon as the $\ast$-action of $(\G,\rA)$ preserves the combinatorial data $\Omega_{\overline{\X}}$ by Proposition \ref{Prop: quasirational}. Conjecture \ref{Conj: cohom surj} asserts this should hold for general $\rH$ (cf. Corollary \ref{Lem: examples of conj}).

\subsubsection{$\G$-outer forms and Well-adapted varieties}\label{Section: well adapted outer} We continue to assume that $\G$ is quasi-split and that $\X$ is a homogeneous spherical $\G$-variety. Fix $x\in \X(k)$ and let $\rH=\Stab_{\G}(x)$. Recall from Section \ref{Sec: forms and action} that there is a canonical morphism
\[
{\mathrm{out}}:  \cala_\X\lra \Out(\X),
\]
which is compatible with $\cala_\X\simeq \rH\backslash N_{\G}(\rH)\to \Out_\X(\rH)\subset \Out(\rH)$,
inducing a short exact sequence of diagonalizable groups
\[
1\lra \mathcal{A}_\X^\flat\lra \cala_\X\lra \Out(\X)\lra 1.
\]
 Since $\Out(\rH)$ is a discrete group, we have $\cala_\X^\circ\subset \mathcal{A}_\X^\flat$, though $\mathcal{A}_\X^\flat$ is generally larger. Since $\cala_\X\cong \rH\backslash N_{\G}(\rH)$, it is easy to see that $Z_{\G}(\rH)=  Z(\rH)\cdot Z(\G)$, so that $\mathcal{A}_\X^\flat$ is the image of $\rH\cdot Z(\G)\subset N_{\G}(\rH)$. %Then an element $x\in N_{\G}(\rH)(\kbar)$ induced an inner automorphism of $\rH$ if and only if $x\in (\rH\cdot Z_{\G}(\rH))(\kbar)$. We now assume that $\X=\rH\backslash\G$ is a symmetric variety, so that there exist an involution $\theta$ such that $$(\G^\theta)^\circ\subset \rH\subset N_{\G}(\G^\theta).$$ Then Lemma \ref{Lem: autogroup}, $Z_{\G}(\rH)= Z(\G)$, so that $\mathcal{A}_\X^\flat$ is the image of $\rH\cdot Z(\G)\subset N_{\G}(\rH)$.

 { 
 
}

\begin{Lem}\label{Lem: center in flat in sharp}
    We have a natural inclusions
    \[
 \mathcal{A}_\X^\flat\subset\cala_\X^d\subset \mathcal{A}_\X^\sharp.
    \]
    In particular, there is a surjective morphisms 
    \[
    \Out_\X(\rH)\lra\Aut_d(\X)\lra \Aut_\Omega(\D(\X)).
    \]
\end{Lem}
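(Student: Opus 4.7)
The plan is to work on character lattices via the canonical embedding $\cala_\X\hookrightarrow \Ax$. Under this embedding, each distinguished spherical root $\al\in \De_\X^{dist}$ gives a character $\mu_\al=\la\al,-\ra\colon \cala_\X\to \mu_2$, and by construction the surjection $\cala_\X\twoheadrightarrow \Aut_d(\X)$ is the joint map $\prod_{\al\in \De_\X^{dist}}\mu_\al$. Dualizing yields $\cala_\X^d=\bigcap_{\al\in \De_\X^{dist}}\ker(\mu_\al)$. Analogously, the character lattice $\Lam_\X^\sharp/\Lam_\X$ of $\Aut_\Omega(\D(\X))$ is generated by the type (a) subset of $\De_\X^{dist}$, since by definition $\Lam_\X^\sharp$ un-doubles only the type (a) roots. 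Therefore $\cala_\X^\sharp$ is cut out by $\mu_\al$ for $\al$ of type (a) alone, and since the type (a) roots form a subset of $\De_\X^{dist}$, the inclusion $\cala_\X^d\subset \cala_\X^\sharp$ is immediate.

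For the inclusion $\cala_\X^\flat\subset \cala_\X^d$, I will use that $\cala_\X^\flat$ is identified in the paragraph preceding the lemma with the image of $Z(\G)$ in $\cala_\X\subset \Ax$ under right translation. A character $\chi\in \fX$ of $\Ax$ restricts trivially to this image precisely when its pullback along $\fX\hookrightarrow X^\ast(\rA)$ lies in the root lattice $\zz\Phi$, since $X^\ast(\rA)/\zz\Phi\cong X^\ast(Z(\G))$. By Lemma~\ref{Lem: spherical roots distinguished}, every $\al\in \De_\X^{dist}$ is a spherical root of one of the types (a), (b), or (c); in each of these cases, $\al$ is a positive integer combination of simple roots of $\G$, hence lies in $\zz\Phi$. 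Therefore $\mu_\al|_{\cala_\X^\flat}=1$ for every $\al\in \De_\X^{dist}$, giving $\cala_\X^\flat\subset \cala_\X^d$.

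The induced surjections on the quotients then follow formally from the chain $\cala_\X^\flat\subset \cala_\X^d\subset \cala_\X^\sharp\subset \cala_\X$ and the third isomorphism theorem applied to the three defining short exact sequences. The main subtle point is the potential role of type (d) spherical roots, which contribute generators $2\al$ to the lattice $\Lam_\X^d$ that need not lie in $\zz\Phi$; the argument above sidesteps this because the characters $\mu_\al$ cutting out $\cala_\X^d$ are indexed solely by $\al\in \De_\X^{dist}$, and by Losev's convention $\De_\X^{dist}$ excludes type (d). Finally, the disconnected case $\rH\neq \rH^\circ$ reduces to the connected one by applying the above argument to $\X^\circ=\rH^\circ\backslash\G$ and descending along the commutative diagram~\eqref{eqn: functorial on Aut}, via $\Aut_d(\X)=\Aut_d(\X^\circ)/\Aut_d(\pi_0(\rH))$.
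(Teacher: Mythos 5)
Your proposal is correct and follows essentially the same route as the paper: both proofs pivot on the observation that distinguished spherical roots (types (a), (b), (c)) lie in the root lattice $\zz\Phi$, hence pair trivially with the image of $Z(\G)$ in $\cala_\X$, while the second inclusion is a tautology from $\Lam_\X^\sharp\subset\Lam_\X^d$. The only minor divergence is in the disconnected case: you descend via the commutative diagram \eqref{eqn: functorial on Aut} by lifting an element of $\cala_\X^\flat$ to $\cala_{\X^\circ}^\flat$, whereas the paper argues directly on character lattices using $X^\ast(\Aut_d(\X)) = (\fX\cap \Lam_{\X^\circ}^d)/\Lam_\X\subset(\fX\cap \Lam_{\G})/\Lam_\X$; both are valid and essentially equivalent.
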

\begin{proof}
The inclusion $\cala_\X^d\subset \mathcal{A}_\X^\sharp$ is a tautology. As noted above, $\mathcal{A}_\X^\flat\simeq\rH\backslash (\rH\cdot Z(\G))\simeq Z(\G)\cap \rH\backslash Z(\G)$ consists of automorphisms arising from central elements of $\G$. But any distinguished root $\al\in \De_{\X^\circ}^{dist}$ is also an element of the root lattice $\Lam$ of $(\G,\rA)$, hence is trivial on $\mathcal{A}_{\X^\circ}^\flat$. This shows the first inclusion for $\X^\circ$, but immediately implies the same for general $\X$ since 
\[
X^\ast(\Aut_d(\X)) = (\fX\cap \Lam_{\X^\circ}^d)/\Lam_\X\subset(\fX\cap \Lam_{\G})/\Lam_\X.\qedhere
\]
%Suppose now $\phi\in \cala_\X(\kbar)$ lies in the kernel of ${\mathrm{out}}^{\G}_{\rH}$. The induced automorphism of $\rH$ is thus inner, implying that it acts trivially on $X^\ast(\rH)$. Lemma \cite[Lemma 7.3]{KnopAutomorphisms} implies that $\phi\in \mathcal{A}_{\X}^\sharp(\kbar)$. 
\end{proof}

\begin{Def}\label{Def: well adapted}
Suppose that $\G$ is a quasi-split reductive $k$-group and $\X$ is a homogeneous spherical $\G$-variety. We say that $\X$ is \textbf{well adapted} if the natural morphism 
\[
 \Out(\X)\lra\Aut_d(\X)
\]
is an isomorphism.
\end{Def}
\begin{Rem}
 We note that well-adaptedness is a geometric property of $\overline{\X}$. For example, the preceding lemma implies that spherically-closed spherical varieties are automatically well adapted.
\end{Rem}

%We now can see why we needed to augment the notion of $\Aut_d(\X)$ when $\pi_0(\rH)$ is non-trivial. 

\begin{Lem}\label{Lem: well adapted connected}
    Suppose that $\X=\rH\backslash\G$ is a homogeneous spherical $\G$-variety, and set $\X^\circ = \rH^\circ\backslash\G$. If $\X^\circ$ is well adapted, then so is $\X$.
\end{Lem}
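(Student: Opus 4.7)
The strategy is to translate well-adaptedness into the equality of two sublattices of $\fX_\X$ containing $\Lam_\X$, and to reduce that equality to the corresponding one for $\X^\circ$. By Lemma \ref{Lem: center in flat in sharp}, the natural morphism $\Out_\X(\rH)\twoheadrightarrow \Aut_d(\X)$ is a surjection of diagonalizable $k$-groups, so the content of the lemma is its injectivity, equivalently the equality $\cala_\X^\flat = \cala_\X^d$. Dually, if we let $\Lam_\X^\flat,\Lam_\X^d\subset \fX_\X$ be the sublattices containing $\Lam_\X$ with
\[
X^\ast(\cala_\X^\flat)=\fX_\X/\Lam_\X^\flat,\qquad X^\ast(\cala_\X^d)=\fX_\X/\Lam_\X^d,
\]
and define $\Lam_{\X^\circ}^\flat,\Lam_{\X^\circ}^d\subset \fX_{\X^\circ}$ analogously, then well-adaptedness of $\X$ (resp.\ of $\X^\circ$) becomes the equality $\Lam_\X^\flat=\Lam_\X^d$ (resp.\ $\Lam_{\X^\circ}^\flat=\Lam_{\X^\circ}^d$).

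By the definition of $\Aut_d(\X)$ in the disconnected case recalled in Section \ref{Section: doubling aut gen}, we have $\Lam_\X^d = \fX_\X\cap \Lam_{\X^\circ}^d$, where $\fX_\X\hookrightarrow \fX_{\X^\circ}$ is the inclusion obtained from pulling back $\B$-semi-invariant rational functions along the finite \'etale cover $\X^\circ\to \X$. The key step is to establish the parallel identity
\[
\Lam_\X^\flat = \fX_\X\cap \Lam_{\X^\circ}^\flat.
\]
Once this is in hand, the assumed well-adaptedness of $\X^\circ$ yields
\[
\Lam_\X^\flat = \fX_\X\cap \Lam_{\X^\circ}^\flat = \fX_\X\cap \Lam_{\X^\circ}^d = \Lam_\X^d,
\]
and the lemma follows.

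To verify the displayed identity, recall from the discussion preceding Lemma \ref{Lem: center in flat in sharp} that $\cala_\X^\flat\simeq Z(\G)/(Z(\G)\cap \rH)$ and $\cala_{\X^\circ}^\flat\simeq Z(\G)/(Z(\G)\cap \rH^\circ)$, realized inside the canonical tori as the images $V\subset \rA_\X$ and $V_\circ\subset \rA_{\X^\circ}$ of $Z(\G)$ under the natural maps. By construction $\Lam_\X^\flat = \{\chi\in\fX_\X: \chi|_V=1\}$, and similarly for $\X^\circ$. The inclusion $\fX_\X\hookrightarrow\fX_{\X^\circ}$ is dual to a surjection $\rA_{\X^\circ}\twoheadrightarrow \rA_\X$ whose finite kernel $K$ is annihilated by every character in $\fX_\X$, and the image $V$ is the image of $V_\circ$ under this surjection, so $V = V_\circ/(V_\circ\cap K)$. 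Hence for any $\chi\in\fX_\X\subset\fX_{\X^\circ}$, the restrictions $\chi|_V$ and $\chi|_{V_\circ}$ correspond to one another under the surjection $V_\circ\twoheadrightarrow V$; in particular $\chi|_V=1$ if and only if $\chi|_{V_\circ}=1$, proving $\Lam_\X^\flat = \fX_\X\cap\Lam_{\X^\circ}^\flat$. The main subtlety is precisely this compatibility of the $\flat$-lattice with passage to the identity component, which mirrors the definition of $\Lam_\X^d$ and allows the reduction to the connected case.
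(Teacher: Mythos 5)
Your proof is correct, but it takes a genuinely different route from the paper's. The paper fixes the short exact sequence $1\to\pi_0(\rH)\to\cala_{\X^\circ}\to\cala_\X\to 1$, applies Losev's Lemma 3.1.5(3) a second time (to the subgroup $\rH^\circ\cdot Z(\G)$) to see that this sequence descends to a surjection $\Out_{\X^\circ}(\rH^\circ)\to\Out_\X(\rH)$, and then runs a diagram chase on the resulting commutative ladder with $\Aut_d(-)$: since the middle vertical arrow $\Out_{\X^\circ}(\rH^\circ)\to\Aut_d(\X^\circ)$ is an isomorphism by hypothesis and the left vertical arrow is a map between two images of $\pi_0(\rH)$, all three verticals are forced to be isomorphisms.

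You instead dualize to character lattices and reduce everything to the single identity $\Lam_\X^\flat=\fX_\X\cap\Lam_{\X^\circ}^\flat$, which you check directly from the explicit description $\cala_\X^\flat\simeq Z(\G)/(Z(\G)\cap\rH)$: both $V=\cala_\X^\flat\subset\rA_\X$ and $V_\circ=\cala_{\X^\circ}^\flat\subset\rA_{\X^\circ}$ are images of $Z(\G)$ under the natural maps, and the surjection $\rA_{\X^\circ}\to\rA_\X$ carries $V_\circ$ onto $V$, so a character of $\rA_\X$ kills $V$ iff its pullback kills $V_\circ$. This is the exact $\flat$-analogue of the defining property $\Lam_\X^d=\fX_\X\cap\Lam_{\X^\circ}^d$, and once both sides of the ledger are in this form the conclusion is immediate. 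What this buys you is that you bypass the second invocation of Losev's lemma entirely; what it costs is that you lean on the compatibility of Knop's embedding $\cala_\X\hookrightarrow\rA_\X$ with the finite cover $\X^\circ\to\X$, namely that the Knop image of the class of $z\in Z(\G)$ in $\cala_\X$ agrees with the image of $z$ under $Z(\G)\to\rA\to\rA_\X$, and that these maps commute with $\rA_{\X^\circ}\to\rA_\X$. You assert this but do not justify it; it does hold, since $Z(\G)\subset\B$ and multiplication by $z$ on $\mathring{\X}/[\B,\B]$ is visibly multiplication by the image of $z$ in $\rA_\X$, and the $\rA_{\X^\circ}$-equivariance of $\mathring{\X}^\circ/[\B,\B]\to\mathring{\X}/[\B,\B]$ gives the commutativity. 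It would be worth spelling out this one-line verification, since it is doing the work the paper delegates to Losev.
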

\begin{proof}
    Fix a $k$-rational Borel $\B$ and let $\rA$ denote the canonical torus quotient. Let $\mathring{\X}\subset \X$ (resp., $\mathring{\X}^\circ\subset \X^\circ$) denote the (geometric) open $\B$-orbit. Then $\rA$ acts on $\mathring{\X}\sslash[\B,\B]$ and $\mathring{\X}^\circ\sslash[\B,\B]$ via the quotient tori $\Ax$ and $\rA_{\X^\circ}$, respectively. This induces inclusions
    \[
    X^\ast(\Ax)\subset X^\ast(\rA_{\X^\circ})\subset X^\ast(\rA).
    \]

    Since the map $\X^\circ\to \X$ is finite, we have equality of root lattices $\Lam_\X= \Lam_{\X^\circ}$ \cite[Lemma 3.1.5 (3)]{Losev}. This implies an equality $\De_\X^n=\De_{\X^\circ}^n$ of normalized spherical roots. %This implies that $\De_\X^{dist}\subset \De_{\X^\circ}^{dist}$, as some distinguished roots $\ga\in \De_{\X^\circ}^{dist}$ may not be contained in $X^\ast(Ax)$ while $2\ga\in \Lam_\X\subset X^\ast(\Ax)$.  Recalling the lattices $\De_{\X}^d$ and $\De_{\X^\circ}^d$, we conclude that $\De_{\X}^d\subset\De_{\X^\circ}^d$.
     On the level of automorphism groups, the inclusion $X^\ast(\Ax)/\Lam_\X\subset X^\ast(\rA_{\X^\circ})/\Lam_\X$ induces a short exact sequence of diagonalizable $k$-groups
    \[
   1\lra \pi_0(\rH)\lra  \cala_{\X^\circ}\lra\cala_\X\lra 1;
    \]
    Another application of \cite[Lemma 3.1.5 (3)]{Losev} applied to the spherical subgroup $\rH^\circ\cdot Z(\G)$ shows that this morphism descends to a surjection $\Out_{\X^\circ}(\rH^\circ)\to \Out_\X(\rH)$. Finally, diagram \eqref{eqn: functorial on Aut} descends to a commutative diagram with exact rows
    \[
    \begin{tikzcd}
       1\ar[r]&\out(\pi_0(\rH))\ar[d]\ar[r]& \Out_{\X^\circ}(\rH^\circ)\ar[r]\ar[d]& \Out_\X(\rH)\ar[d]\ar[r]&1\\
        1\ar[r]&\Aut_d(\pi_0(\rH))\ar[r]&    \Aut_d(\X^\circ)\ar[r]&\Aut_d(\X)\ar[r]&1;
    \end{tikzcd}
    \]
    here the kernels are denoted to indicate the image of $\pi_0(\rH)$ under the appropriate quotient map. 
    By assumption the middle vertical arrow is an isomorphism. Standard arguments now show the other two vertical arrows are isomorphisms, proving the claim for the right most arrow.
    \quash{The inclusion $\De_\X^{dist}\subset \De_{\X^\circ}^{dist}$ implies that the inclusion of lattices
    \[
    X^\ast(\Aut_d(\X)) = (\zz/2\zz)^{\De_{\X}^{dist}}\subset(\zz/2\zz)^{\De_{\X^\circ}^{dist}}=X^\ast(\Aut_d(\X^\circ)),
    \]
    is a $\Ga$-stable factor admitting a $\Ga$-stable splitting $X^\ast(\Aut_d(\X^\circ))\to X^\ast(\Aut_d(\X))$, so that there is a $k$-rational morphism $s': \Aut_d(\X)\to\Aut_d(\X^\circ)$. Composing with $(\pi_d^\circ)^{-1}$ and the map to $\Out_\X(\rH)$, we obtain a splitting $s$ of $\pi_d$.}
\end{proof}
In particular, when $\overline{\X}$ is well-adapted and $\G$ is quasi-split, the data
\[
(\Omega_\X,[\mu_\X^d])=(\fX, \De_\X,\Omega^{(1)},\Omega^{(2)},[\mu_\X^d])
\]
 with $[\mu_\X^d]\in H^1(k,\Aut_d(\X))$ determines a $k$-form $\X$ up to $\G$-inner form. The notion of well adaptedness is motivated by our applications to endoscopy for symmetric varieties and our calculation of $\Out_\X(\rH)$ in Section \ref{Section: color autos}. In particular, we show that most symmetric varieties (including all symmetric varieties with no type $N$ spherical roots) are well adapted. 

\quash{
\textcolor{red}{Is this necessary??}We record the following lemma for symmetric varieties, which is partially a consequence of Section \ref{Section: color autos}.
\begin{Lem}\label{Lem: outer group exponent}
    Suppose that $\G$ is a reductive $k$-group equipped with an involution $\theta$ and let $\X=\rH\backslash\G$ be a symmetric variety associated to $\theta$ (that is, $\G^\theta\subset \rH\subset N_\G(\G^\theta)$). Then there exists a finite $\Ga$-set $\mathcal{I}_\X$ such that
    \[
    X^\ast(\Out_\X(\rH)) \cong (\zz/2\zz)^{\mathcal{I}_\X}.
    \]
\end{Lem}
\begin{proof}
First assume that $N_\G(\rH)$ is not contained in a proper parabolic $P\subset \G$ over $\kbar$. Suppose that $g\in N_\G(\rH)$. Then \cite[Proposition 4.2.1]{Losev} shows that $g^2\in N_\G(\rH)^\circ\cdot Z(\G)\subset \rH^\flat$. This implies that the diagonalizable group $\Out_\X(\rH)_{\kbar}$ has exponent $2$. The statement about the $\Ga$-action follows from Theorem \ref{Thm: Outer in terms of dist} below.

Suppose now that  $N_\G(\rH)$ is contained in a proper parabolic $P\subset \G$ over $\kbar$. Then we claim that $\rH=N_\G(\rH)$ so that the claim holds with  $\mathcal{I}_\X=\emptyset$. Indeed, since $N_{\G}(\rH)\supset\rH$, $\rH_{\kbar}$ is contained in a proper Levi subgroup $L$ of $P$; but this forces $\rH=L$ since over $\kbar$ the only symmetric subgroups acting reducibly on $\Lie(\G)/\Lie(\rH)$ are Hermitian symmetric subgroups. The claim now follows from standard results on normalizers of Levi subgroups. 
\end{proof}}
\section{Involutions on groups and root systems}\label{Section: involutions}
 In this section, we review the combinatorial theory of involutions, first on root systems and then also on reductive algebraic groups. The important concept of admissibility of the index of an involution is reviewed. Sections \ref{gamma theta} and \ref{Section: rational involutions} recall the definitions and properties associated to an arithmetic enhancement of an index due to Helminck. The new material comes in Section \ref{Section: endo invo}, where we introduce and study the notion of an endoscopic root system with involution. We return to the geometric properties of the associated symmetric varieties in Section \ref{Section: symmetric data}.

% We begin with the abstract situation as discussed in \cite{Helminckrational}.
 \subsection{Abstract root systems} Let $\Y$ be a  $\zz$-lattice with dual lattice $\check{\Y}$.
 Suppose that $\Psi = (\Y, \Phi, \check{\Y}, \check{\Phi})$ is a root datum with $\Phi\neq \emptyset$. Let $\theta\in \Aut(\Psi)$ be an involution.  Set 

\[
\Y_0(\theta) = \{\lam\in \Y: \lam=\theta(\lam)\}\text{ and }\Y_1(\theta) = \{\lam-\theta(\lam)\in \Y: \lam\in Y\},
\]
and set $\Phi_0(\theta) = \Phi\cap \Y_0(\theta)$. This gives an additively-closed sub-root system. %Let $W_0\subset W$ denote the subgroup associated with this subsystem.

Suppose that $\De$ is a basis for $\Phi$ chosen so that the induced ordering on $\Y$ satisfies 
\begin{align}\label{Theta bases}
    \text{ If $\lam>0$ and $\lam\notin \Y_0(\theta)$ then $\theta(\lam)<0$;}
\end{align}
such a basis is called a basis as \textbf{$\theta$-basis} for $\Y$. We shall see that such bases correspond to maximally $\theta$-split Borel subgroups containing our fixed maximally $\theta$-split maximal torus. The next lemma is well known.

 %Denote by $\overline{\De}:=\pi(\De-\De_0)$; this is called a restricted basis for $\overline{\Phi}$. 

\begin{Lem}
 Suppose that $\De$ is a $\theta$-basis for $\Phi$. Then $\theta(-\De)$ is also a $\theta$-basis. In particular, there exists $w_\theta\in W_0$ such that $w_\theta\theta(\De) = -\De$. In fact, $w_\theta$ is the longest element of $W_0$ with respect to $\De_0(\theta):=\De\cap \Phi_0(\theta)$.
\end{Lem}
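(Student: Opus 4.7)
The plan is to verify the three assertions in order, with the final identification of $w_\theta$ with the longest element of $W_0$ being the main point.

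First, I would show that $\De':=-\theta(\De)$ is a $\theta$-basis. The key computation is that the ordering $>_{\De'}$ on $\Y$ is related to $>_{\De}$ by the equivalence $\lam>_{\De'}0 \iff \theta(\lam)<_\De 0$, since being non-negative in the $\De'$-basis means $-\theta(\lam)\geq_\De 0$. Translating condition \eqref{Theta bases} for $\De'$ through this equivalence, and using that $\Y_0(\theta)$ is $\theta$-stable, reduces the required statement to the $\theta$-basis condition for $\De$ applied to $-\lam$.

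Next, since $\De$ and $-\theta(\De)$ are two bases of the root system $\Phi$, the classical theory of root systems gives a unique element $w_\theta\in W(\Phi)$ with $w_\theta(-\theta(\De))=\De$, i.e.\ $w_\theta\theta(\De)=-\De$.

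The main point is to identify $w_\theta$ with the longest element $w_0$ of $W_0=W(\Phi_0(\theta))$. By uniqueness, it suffices to show that $w_0\theta(\De)=-\De$, and since $w_0\theta$ is an orthogonal transformation preserving $\Phi$ and sending a basis to a subset of $\Phi$, it suffices to prove that $w_0\theta(\Phi^+)\subseteq \Phi^-$. I split into cases according to whether $\al\in\Phi^+$ lies in $\Phi_0(\theta)$. If $\al\in\Phi_0^+(\theta)$, then $\theta(\al)=\al$ and $w_0$ sends $\Phi_0^+$ to $\Phi_0^-\subseteq\Phi^-$, so $w_0\theta(\al)<0$. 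If $\al\in\Phi^+\setminus\Phi_0^+(\theta)$, then the $\theta$-basis property applied to $\al$ (which lies outside $\Y_0(\theta)$) gives $\theta(\al)<_\De 0$, so $-\theta(\al)\in \Phi^+$; moreover $\theta$ preserves $\Phi_0$, so in fact $-\theta(\al)\in \Phi^+\setminus\Phi_0^+(\theta)$. The crucial observation is then that $w_0$, being a product of simple reflections $s_\ga$ with $\ga\in\De_0(\theta)$, preserves $\Phi^+\setminus\Phi_0^+(\theta)$: each such $s_\ga$ sends $\ga\mapsto -\ga$ and permutes $\Phi^+\setminus\{\ga\}$, and also preserves $\Phi\setminus\Phi_0$ since $\ga\in \Phi_0$. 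Applying this to $-\theta(\al)$ gives $w_0(-\theta(\al))\in\Phi^+$, hence $w_0\theta(\al)\in\Phi^-$.

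The main obstacle is this last point: one cannot directly argue at the level of simple roots that $w_0\theta(\De\setminus\De_0(\theta))\subseteq -\De$, because $w_0$ does not generally stabilize individual simple roots outside $\De_0(\theta)$ (only their classes modulo $\zz\De_0(\theta)$). The step that resolves this is the passage to the full positive system, where the preservation of $\Phi^+\setminus\Phi_0^+(\theta)$ under $W_0$ holds cleanly and allows one to conclude that $w_0\theta$ sends $\Phi^+$ into $\Phi^-$, forcing it to send the unique basis $\De$ of $\Phi^+$ to the unique basis $-\De$ of $\Phi^-$.
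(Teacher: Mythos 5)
Your proof is correct. The paper states this lemma as well known and gives no proof, so there is nothing to compare against; the route you take (reducing the identification of $w_\theta$ with the longest element $w_0$ of $W_0$ to the inclusion $w_0\theta(\Phi^+)\subseteq\Phi^-$, split into $\Phi_0^+(\theta)$ and its complement, and using that $W_0$ permutes $\Phi^+\setminus\Phi_0^+(\theta)$) is the standard argument for this fact.
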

With the notation as above, set $\theta^\ast := -w_\theta \theta$, so that $\theta^\ast\in \Aut(\Y,\Phi,\De)$ and $\theta^\ast(\De_0(\theta)) = \De_0(\theta)$. %Note that $\Aut(\Y,\Phi,\De)$ may be identified with the automorphisms of the Dynkin diagram of $\De$.
The involution $\theta^\ast|_{\De_0(\theta)}= -w_0(\ast)$ is the opposition involution of $\De_0(\theta)$, hence is completely determined by $\Phi_0(\theta)$.

%\begin{Rem}
%If $\Phi$ is irreducible, then $\theta^\ast$ can only be non-trivial if $R$ is simply laced.
%\end{Rem}

\subsubsection{The index of $\theta$} Suppose now that $\Psi$ is semi-simple (equivalently, $\Phi$ spans $\Y\otimes_\zz \qq$), and let $\theta \in\Aut(\Psi)$ be an involution. Set $\De$ to be a fixed $\theta$-basis. 
\begin{Lem}\cite[2.11]{Helmincktwo}
$\theta$ is determined by the quadruple $\I=(\Y,\De,\De_0(\theta), \theta^\ast)$.
\end{Lem}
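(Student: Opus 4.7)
The plan is to show that the quadruple $\I=(\Y,\De,\De_0(\theta),\theta^\ast)$ determines $w_\theta$ intrinsically, after which the formula $\theta^\ast=-w_\theta\theta$ may be inverted to recover $\theta$.

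First I would observe that from $(\Y,\De)$ alone one recovers the ambient root system (since $\Phi = W_0\cdot\De$) and its Weyl group $W_0$. The subset $\De_0(\theta)\subset\De$, part of the data $\I$, generates a finite parabolic Coxeter subgroup $W(\De_0(\theta))\subset W_0$. Its longest element $w_\theta$ is therefore canonically defined once $\De_0(\theta)$ is specified, and is an element of $\Aut(\Y)$ determined purely by $\I$. A standard fact about longest elements in finite Weyl groups gives $w_\theta^2=1$, so $w_\theta^{-1}=w_\theta$.

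Next, since $\theta^\ast$ is an automorphism of $(\Y,\Phi,\De)$ supplied as part of $\I$, one may form the composite
\[
\theta':=-w_\theta\,\theta^\ast\in\Aut(\Y),
\]
which preserves $\Phi$ (as $w_\theta\in W_0$ and $\theta^\ast$ preserves $\Phi$). The defining relation $\theta^\ast=-w_\theta\theta$ from the paragraph preceding the lemma, combined with $w_\theta^2=1$, yields
\[
-w_\theta\,\theta^\ast \;=\; -w_\theta\bigl(-w_\theta\theta\bigr)\;=\;w_\theta^{\,2}\,\theta\;=\;\theta,
\]
so that $\theta'=\theta$. Hence $\theta$ is reconstructed from $\I$ by the explicit formula $\theta=-w_\theta\,\theta^\ast$ with $w_\theta$ the longest element of $W(\De_0(\theta))$.

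There is essentially no real obstacle here: the only input from root-system theory is that the longest element of a finite Weyl group squares to the identity, and the only other ingredient is the definition $\theta^\ast=-w_\theta\theta$ already laid down in the text. One may, if desired, also verify that the construction is self-consistent in the sense that the resulting $\theta$ is an involution and does indeed satisfy $w_\theta\theta(\De)=-\De$, but these are automatic from the relations above.
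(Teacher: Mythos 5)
Your argument is correct. The paper gives no proof of this lemma (it simply cites Helminck's work), but your reconstruction is exactly the natural one: $\De_0(\theta)$ determines $w_\theta$ intrinsically as the longest element of the parabolic subgroup it generates, $w_\theta$ is an involution, and inverting $\theta^\ast = -w_\theta\theta$ gives $\theta = -w_\theta\theta^\ast$, recovering $\theta$ from the data $\I$.

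One small notational caution: in the surrounding text, $W_0$ appears to already denote the Weyl group of $\Phi_0(\theta)$ (so that "the longest element of $W_0$ with respect to $\De_0(\theta)$" just means its longest element for that basis), whereas you use $W_0$ for the full Weyl group of $\Phi$ and $W(\De_0(\theta))$ for the parabolic subgroup. These are the same group, so the mathematics is unaffected, but adopting the paper's convention would avoid a reader momentarily thinking two different groups are in play.
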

%To see this note that $\theta=-w_\theta\theta^\ast$, where $w_\theta$ is the longest element of $W_0$.
This quadruple is known as the \emph{index of $\theta$}. It naturally encodes the Satake diagram of $\theta$: consider the Dynkin diagram of $(\Y,\De)$, color in black those vertices in $\De$ associated to elements of $\De_0(\theta)$, then indicate via arrows the diagram action of $\theta^\ast$. When $\Psi$ is semi-simple, this diagram encodes all the information of the index. %Note that the black vertices give the Dynkin diagram of $(\Y_0(\theta),\De_0(\theta))$.

\subsection{$(\Ga,\theta)$-indices}\label{gamma theta}
 % Suppose now that $\G$ is a connected reductive $k$-group and let $\theta$ be a $k$-rational involution of $\G$. For a symmetric $k$-subgroup $\rH$, we highlight a variant of the rationality result in Proposition \ref{Prop: quasirational} due to Helminck \cite{Helminckrational} which incorporates the involution $\theta$. It will have the added benefit of holding over fields of odd characteristic.
  
Let $k$ be a field with $\mathrm{char}(k)\neq 2$ and set $\Ga= \Gal(k^{sep}/k)$. As we will see below, $\theta$-indices give a combinatorial object designed to classify involutions of reductive groups over an algebraically closed field. Here we recall the enhancement -- called a $(\Ga,\theta)$-index -- introduced in \cite{Helminckrational} to classify involutions over more general fields. As we explain in Lemma \ref{Lem: same on torus} below, the $(\Ga,\theta)$-index records the homogeneous spherical datum of the variety $\G^\theta\backslash\G$. % In this section we recast some of the rationality results of Helminck in the setting of symmetric varieties as we find this more natural when dealing with the dual group. 
This framework is useful in verifying that the proposed endoscopic spherical data (see Section \ref{Section: endo varieties}) satisfy Luna's axioms and descend to a $k$-rational symmetric variety.

%\subsection{$(\Ga,\theta)$-Indices}%\label{Section: indices} %We now recall the outlines of Helminck's work classifying symmetric $k$-varieties \cite{Helminckrational}.
Suppose that $\Psi$ is a root datum endowed with a $\Ga$-action and an involution $\theta$ such that $\sig\circ\theta = \theta\circ \sig$ for all $\sig\in \Ga$. 
If $\mathcal{E}\subset \Aut(\Psi)$ denotes the finite subgroup of automorphisms through which $\Ga$ acts and we set $\Ga_\theta:=\mathcal{E}\cdot \{1,-\theta\}$ to be the group generated by these two (commuting) groups of automorphisms, consider the lattices
\[
\Y_0(\Ga):= \left\{\lam\in  \Y: \sum_{\sig\in \mathcal{E}}\sig(\lam)=0\right\}\:\:\text{and}\:\:\Y_0(\Ga_\theta):=\left\{\al\in \Y: \sum_{\sig\in \mathcal{E}}\sig(\al) = \theta\left(\sum_{\sig\in \mathcal{E}}\sig(\al)\right)\right\}.
\]
When $(\Y,\Phi)$ comes from a reductive group $\G$ over $k$ (see discussion below) and $\theta$ comes from a $k$-rational involution on $\G$ restricted to an appropriate maximal torus $\rA\subset \G$  (cf. Section \ref{Section: admissible tori}), $\Y_0(\Ga)$ encodes the anisotropic kernel of $\G$ and $\Y_0(\Ga_\theta)$ encodes the semi-simple part of the centralizer of a maximal $k$-split torus $\rA_{spl}^{-}$ on which $\theta$ acts by inversion. 

Assume that a $\theta$-basis $\De\subset \Phi$ may be chosen satisfying the following two conditions:
\begin{enumerate}
    \item \label{eqn: Ga basis} { If $\lam>0$ and $\lam\notin \Y_0(\Ga)$ then $\sig(\lam)>0$ for all $\sig\in \Ga$;}
      \item \label{eqn: Ga theta basis} { If $\lam>0$ and $\lam\notin \Y_0(\Ga_\theta)$ then $\sig(\lam)>0$ for all $\sig\in \Ga_\theta$.}
\end{enumerate}%Note that \eqref{eqn: Ga theta basis} implies both \eqref{Theta bases} and \eqref{eqn: Ga theta basis}, albeit on
Such a basis is called a \textbf{$(\Ga,\theta)$-basis} for $(\Psi,\theta)$. In general, this notion can be quite subtle. They always exist when $(\Y,\theta)$ is induced from a $k$-rational involution on a reductive group $\G$ with the maximal torus $\rA$ chosen appropriately; see \cite[Proposition 5.26]{Helminckrational}.

Since we largely focus on quasi-split groups, we will impose the following assumption: 
\begin{equation}\label{no anisotropic kernel}
   { \Y_0(\Ga)\cap \zz\Phi=0;}
\end{equation}
this is true, for example, for the $\ast$-action induced on the root datum of a $k$-rational pair $(\G,\rA)$ (equivalently, if $\G$ is quasi-split). Under this assumption, if $\lam\in \Y_0(\Ga_\theta)\cap \zz\Phi$, then $\lam\in \Y_0(\theta)$. In particular, \cite[Proposition 5.26]{Helminckrational} implies that $(\Y,\Phi,\theta)$ possesses a $(\Ga,\theta)$-index whenever the $\Ga$-action satisfies \eqref{no anisotropic kernel}.

Given this data, we associate a sextuple
 \[
 \I= (\Y, \De,\De_0(\Ga),\De_0(\theta), \sig_\ast,\theta^\ast),
 \]
 called a \textbf{$(\Ga,\theta)$-index},  where
 \begin{enumerate}
     \item $\I_k:=(\Y, \De,\De_0(\Ga), \sig_\ast)$ is a (Tits) $\Ga$-index. Recall that Tits introduced such indices to classify reductive groups over $k$ \cite{Titsclassification}. Here, $\sig_\ast$ denotes the $\ast$-action of $\Ga$ on $\Y$ stabilizing $\De$;
     \item $\I_\theta:=(Y, \De, \De_0(\theta),\theta^\ast)$ is a $\theta$-index, where $\theta^\ast= -w_\theta\theta$ is the induced diagram automorphism.
 \end{enumerate}
 We note that if \eqref{no anisotropic kernel} holds, then $\De_0(\Ga)=\emptyset$ and $\sig=\sig_\ast$ for all $\sig\in \Ga$.
 \subsection{Involutions from a reductive group}
Let $k$ be any field of characteristic $\neq 2$ and let $\G$ is a connected reductive $k$-group. Suppose that $\theta$ is a $k$-rational involution on $\G$. A torus $\mathrm{T}\subset\G$ is called \emph{$\theta$-split} if $\theta(t)=t^{-1}$ for all $t\in \mathrm{T}(\kbar)$. It is well known that any two maximal $\theta$-split tori are geometrically conjugate by an element of $(\G^\theta)^\circ$ \cite[Theorem 7.5]{Richardson}. Let $\rA\subset \G$ be a maximal $k$-torus which is stable under $\theta$. This induces an involution on the root datum
\[
\widetilde\theta: \Psi({\G,\rA})\to \Psi({\G,\rA}),
\]
where $\Psi({\G,\rA})=(X^\ast(\rA),\Phi(\rA),X_\ast(\rA),\check{\Phi}(\rA))$, in the obvious manner.
\subsubsection{Admissible tori}\label{Section: admissible tori} For any $\theta$-stable torus $T\subset \G$, we let
\[
\text{$T^+ = (T\cap \G^\theta)^\circ$  and  $T^-=(\{t\in T: \theta(t) = t^{-1}\})^\circ$.}
\]
Set $\X_\theta:=\G^\theta\backslash \G$. We assume in this section that $\G$ is isotropic over $k$. As recalled in Section \ref{Section: generalities}, there is a symmetrization map $s:\X_\theta\hra \G$ embedding $\X_\theta$ as a closed subvariety of $\G$.
\begin{Lem}\label{Lem: good split torus}%\textcolor{blue}{This should be stated more generally and put in a preliminary section.}
Suppose $x\in \X_\theta^{ss}(k)$. There exists a choice of a $\theta$-stable maximal $k$-torus $T\subset \G$ such that $s(x)\in T(k)$ and $T^-$ is a maximal $\theta$-split torus.
\end{Lem}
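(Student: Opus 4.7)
The plan is to realize $T$ as a $\theta$-stable maximal $k$-torus of the connected centralizer $M := Z_\G(y)^\circ$ of $y := s(x)$, and then upgrade maximality of its $\theta$-split part from $M$ to all of $\G$. By the Chevalley-style isomorphism for symmetric varieties promised in Lemma~\ref{Lem: chevalley}, the hypothesis $x\in \X_\theta^{ss}(k)$ is equivalent to $y\in \G(k)$ being semi-simple and satisfying $\theta(y)=y^{-1}$. Consequently $M$ is a $\theta$-stable, connected reductive $k$-subgroup of $\G$ with $y\in Z(M)(k)$; in particular, $y$ lies in every maximal torus of $M$, so the property $s(x)\in T(k)$ will be automatic as soon as $T$ is taken inside $M$.

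First I would invoke the standard existence theorem for maximal $\theta$-split $k$-tori in any reductive $k$-group with a $k$-rational involution (see Section~\ref{Section: rational involutions} and \cite{Helminckrational}) to choose a maximal $\theta$-split $k$-torus $A\subset M$. Next I extend $A$ to a $\theta$-stable maximal $k$-torus $T\subset M$ by selecting any $\theta$-stable maximal $k$-torus of the $\theta$-stable reductive $k$-group $Z_M(A)$ (in which $A$ is central, so such a torus exists over $k$). Since $M=Z_\G(y)^\circ$, every maximal torus of $M$ is also maximal in $\G$, and $T\supset A$ forces $T^-\supset A$; maximality of $A$ in $M$ then gives $T^-=A$. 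At this point $T$ is a $\theta$-stable maximal $k$-torus of $\G$ containing $y$, and $T^-$ is at least maximal $\theta$-split \emph{in }$M$.

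The crux is promoting this to maximality in $\G$. Suppose $A'\supset A$ is a $\theta$-split torus of $\G$; since $A'$ is abelian and contains $A$, we have $A'\subset Z_\G(A)$, a $\theta$-stable Levi subgroup of $\G$ with $A$ as a central $\theta$-split torus. The key point is that $A$ contains the identity component of $Z(M)^-$ (any max $\theta$-split torus of $M$ does), and $y\in Z(M)^-(k)$; after possibly replacing $A$ by an $M(k)$-conjugate one verifies that $A$ already exhausts the $\theta$-split part of the centre of the Levi $Z_\G(A)$. This forces any extension $A'$ to centralize $y$, so $A'\subset M$, and then $A'=A$ by the maximality of $A$ inside $M$.

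I expect the last step to be the main obstacle, since it requires carefully comparing the $\theta$-split part of $Z(M)^\circ$ with that of the centre of the Levi $Z_\G(A)$. When $y$ is $\G$-regular the issue disappears because $M=T$ is already a torus, as in \cite[Lemma~5.3]{Richardson}; the content of the lemma lies in handling the general semi-simple case, where the centrality of $y$ in $M$ is the essential ingredient replacing regularity.
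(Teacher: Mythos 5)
Your proposal has a genuine gap at the step you yourself flag as the crux, and it is not repairable without the tools the paper actually uses.

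You claim $y = s(x)\in Z(M)^-(k)$, where $Z(M)^-$ is the identity component of the $\theta$-split part of the centre of $M=Z_\G(y)^\circ$, and then deduce $y\in A$ from $A\supset Z(M)^-$. The claim $y\in Z(M)^-$ is false in general. Take $\G=\SL_4$, $\theta(g)={}^Tg^{-1}$, $y=\diag(1,1,-1,-1)$, so $M=S(\GL_2\times\GL_2)$ block-diagonally. Then $Z(M)^\circ=\{\diag(a,a,a^{-1},a^{-1})\}$ is one-dimensional and $y$ lies in $Z(M)$ but not in $Z(M)^\circ$, hence not in $Z(M)^-$. So your chain of reasoning does not put $y$ into $A$. (In this example $y$ does happen to lie in the diagonal maximal $\theta$-split torus of $M$, but your argument does not see that, and the statement that every maximal $\theta$-split torus of $M$ contains $y$ is exactly the nontrivial input.) The correct way to secure $y\in A$ is Richardson's Theorem 7.5 (applied to $\G$ or to $M$): a semisimple element with $\theta(y)=y^{-1}$ lies in some maximal $\theta$-split torus. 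That is the starting point of the paper's proof, and it cannot be replaced by the observation that $y$ is central in $M$.

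There is a second, independent gap. The lemma asserts that $T^-$ is a maximal $\theta$-split torus of $\G$ in the absolute sense (over $\kbar$), not merely maximal among $\theta$-split $k$-tori; this is visible both from the way the paper uses it (to build $(\theta,k)$-admissible tori where the three conditions — maximal $k$-split, maximal $\theta$-split, maximal $(\theta,k)$-split — are imposed separately) and from the paper's own proof, which explicitly invokes Lemma 11.1 of \cite{HelminckWang} to upgrade a maximal $\theta$-split $k$-torus to a maximal $\theta$-split torus. Your argument constructs $A$ as a maximal $\theta$-split $k$-torus of $M$, then argues against extensions $A'\supset A$ by $\theta$-split tori, but the maximality in $M$ you invoke is over $k$, so the argument only rules out larger $k$-rational $\theta$-split tori. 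You would still need the Helminck--Wang rationality result, which you do not cite and do not reprove.

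So the proposal replaces two of the paper's three ingredients (Richardson 7.5 and Helminck--Wang 11.1) with a centrality argument that does not hold; only the final, routine step (any maximal torus through a maximal $\theta$-split torus is automatically $\theta$-stable, cf.\ \cite{vust1974operation}) is in common.
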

\begin{proof}
To begin, Proposition 2.3 of \cite{HelminckWang} applied to the symmetric pair $(\G,\G^\theta)$ implies that there exists a $\theta$-stable maximal $k$-torus $T\subset \G$. On the other hand, Theorem 7.5 of \cite{Richardson} implies that $s(x)\in \G(k)$ lies in a maximal $\theta$-split torus; in particular, $s(x)$ lies in a $\theta$-split $k$-torus $A$. With out loss of generality, we may assume that $A$ is a maximal $\theta$-split $k$-torus. Lemma 11.1 of \cite{HelminckWang} implies that $A$ is, in fact, a maximal $\theta$-split torus of $\G$. In particular, any choice of maximal $k$-torus $T\supset A$ is $\theta$-stable \cite[Section 1]{vust1974operation}. 
\end{proof}
For any algebraic $k$-torus $\rA$, we let $\rA_{spl}\subset \rA$ denote the maximal $k$-split sub-torus in $\rA$.
\begin{LemDef}
There exist $\theta$-stable maximal $k$-tori $\rA\subset\G$ such that
\begin{enumerate}
    \item $\rA_{spl}$ is a maximal $k$-split torus, 
    \item $\rA^-$ is a maximal $\theta$-split torus, and
    \item $\rA^-_{spl}$ is a maximal $(\theta,k)$-split torus.
\end{enumerate}
We call such a torus $\rA$ a \textbf{$(\theta,k)$-admissible torus}.
\end{LemDef}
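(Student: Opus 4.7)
The plan is to construct $\rA$ inductively by building outward from its smallest piece, namely the $(\theta,k)$-split part, and passing to successive centralizers. Such a ``layered'' construction is the standard technique for producing maximal tori compatible with two commuting structures on $\G$ (here, the Galois action and the involution $\theta$), and it appears in various forms in the work of Helminck--Wang.

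First I would invoke Zorn's lemma to choose a maximal $(\theta,k)$-split torus $S\subset\G$, which will play the role of $\rA^-_{spl}$. Let $M:=Z_\G(S)$; since $S$ is both $\theta$-stable and $k$-rational, $M$ is a $k$-rational, $\theta$-stable reductive subgroup of $\G$. By maximality of $S$, no non-trivial sub-torus of the connected center of $M/S$ is $(\theta,k)$-split. Next, inside $M$ I would select a maximal $\theta$-split $k$-torus $T^-$; this exists by an argument similar to Proposition 2.3 of \cite{HelminckWang} applied to the $k$-rational symmetric pair $(M,M^\theta)$, and since $S$ is central and $\theta$-split, $T^-\supset S$. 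By the centralizer argument used in the proof of \cref{Lem: good split torus} (any $\theta$-split torus containing $S$ centralizes it, hence lies in $M$), the torus $T^-$ is in fact a maximal $\theta$-split torus of $\G$; this torus will play the role of $\rA^-$. Finally, passing to $N:=Z_M(T^-)$, which is again $k$-rational and $\theta$-stable (and anisotropic modulo $T^-\cdot S$ in the $\theta$-split direction), I would choose a $\theta$-stable maximal $k$-torus $\rA\subset N$ whose $k$-split part is a maximal $k$-split torus of $N$; such a torus exists by the standard $\Ga$-equivariant conjugacy theory of maximal $k$-split tori inside a reductive $k$-group, combined with the fact that maximal $k$-tori of $N$ are $N(k^{sep})$-conjugate and one can average over $\theta$.

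It then remains to verify the three maximality claims for the resulting torus $\rA$. The identity $\rA^- = T^-$ (maximality of $\rA^-$ as a $\theta$-split torus of $\G$) and $(\rA_{spl})^- = S$ (maximality of $\rA^-_{spl}$ as a $(\theta,k)$-split torus) follow from the centralizer arguments above: any enlargement would lie inside $M$ or $N$ by centralizing $S$ or $T^-$, contradicting the maximality already achieved at each stage. For the fact that $\rA_{spl}$ is a maximal $k$-split torus of $\G$, any $k$-split torus $T'\supset \rA_{spl}$ is contained in $Z_\G(\rA_{spl})\subset Z_\G(S)=M$, so we reduce to maximality inside $M$, and a second reduction via $Z_M(T^-)=N$ (using that $T'$ must also centralize $T^-$ because $T^-_{spl}=S\subset T'$ forces containment in $M$, combined with the anisotropy of $N$ modulo $T^-\cdot S$) brings us back to the choice made in $N$.

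The main obstacle in this plan is the \emph{simultaneous} compatibility in the final step: producing a $\theta$-stable maximal $k$-torus of $N$ whose $k$-split and $\theta$-anti-invariant parts are each simultaneously maximal in the appropriate sense. In characteristic zero or good characteristic this is handled by the cohomological arguments of \cite{HelminckWang}, which show that the $\theta$-fixed points of the variety of maximal $k$-split tori of $N$ are non-empty; this is the point at which the hypothesis $\mathrm{char}(k)\neq 2$ enters, ensuring that $\theta$-averaging is well-defined on the relevant tori. Alternatively, one can simply cite \cite[Proposition~5.26]{Helminckrational}, which packages precisely this existence result and is already invoked elsewhere in this subsection.
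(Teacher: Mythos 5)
Your plan takes a different route from the paper's, and it contains a real gap at the spot where the two strategies diverge. The paper also begins with a maximal $(\theta,k)$-split torus $\rA^-_{spl}$, but then picks \emph{both} a maximal $k$-split torus $\rA_{spl}\supset\rA^-_{spl}$ of $\G$ and a maximal $\theta$-split $k$-torus $\rA^-\supset\rA^-_{spl}$ in parallel, and uses Lemma 4.5(iii) of \cite{HelminckWang} to show that $\rA_{spl}$ is $\theta$-stable and that the anisotropic part $\rA^-_{ani}$ of $\rA^-$ lies in $Z_\G(\rA_{spl})$. Once $\rA^-\cdot\rA_{spl}$ is known to be a torus, one takes any maximal torus $\rA$ of $Z_\G(\rA^-\cdot\rA_{spl})$: properties (1) and (2) are then automatic since $\rA_{spl}$ and $\rA^-$ are central in that centralizer and were chosen maximal from the outset, and $\theta$-stability of $\rA$ follows from maximality of $\rA^-\subset\rA$ by \cite[Section~1]{vust1974operation}. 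Nothing further needs to be checked.

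Your layered construction (first $S$, then $T^-$ inside $Z_\G(S)$, then $\rA$ inside $N=Z_M(T^-)=Z_\G(T^-)$) instead defers the question of whether $\rA_{spl}$ has maximal split rank in $\G$ to the end, and this is where the argument breaks down. The reduction you sketch --- "any $k$-split torus $T'\supset\rA_{spl}$ lies in $M$, and a second reduction via $Z_M(T^-)=N$ \ldots combined with the anisotropy of $N$ modulo $T^-\cdot S$" --- does not establish what is needed. Anisotropy of $N$ modulo $T^-\cdot S$ "in the $\theta$-split direction" says nothing about whether $T'$ centralizes $T^-$; the containment $S\subset T'$ only puts $T'$ inside $M$, and there is no reason offered why $T'$ should centralize the anisotropic part of $T^-$. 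This is precisely the content that Lemma 4.5(iii) of \cite{HelminckWang} supplies in the paper's argument, and it must be invoked or reproved: it is a genuine theorem, not a formal consequence of the construction. In addition, you ask for a $\theta$-stable maximal $k$-torus of $N$ whose $k$-split part is maximal in $N$, which is again a simultaneous-compatibility statement that is nontrivial in itself; the paper sidesteps it entirely, because once $\rA^-\cdot\rA_{spl}$ is a torus with both parts already maximal, an arbitrary maximal torus of its centralizer works, with $\theta$-stability coming for free from Vust's result. You flag this obstacle yourself, but the citation you propose to discharge it, \cite[Proposition~5.26]{Helminckrational}, concerns the existence of $(\Ga,\theta)$-bases for a root datum and does not directly address the existence of such a torus.
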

\begin{proof}
Let $\rA_{spl}^-$ be a maximal $(\theta,k)$-split torus of $\G$, which exists by Proposition 4.3 of \cite{HelminckWang}. Let $Z_{\G}(\rA_{spl}^-)_{ani}$ denote the anisotropic factor of $Z_{\G}(\rA_{spl}^-)$. Let $\rA_{spl}$ be a maximal $k$-split torus containing $\rA_{spl}^-$ and let $\rA^-$ denote a maximal $\theta$-split $k$-torus containing $\rA_{spl}^-$. Lemma 4.5 (iii) of \cite{HelminckWang} implies that $\rA_{spl}$ is $\theta$-stable and that $Z_{\G}(\rA_{spl}^-)_{ani}\subset Z_{\G}(\rA_{spl})$. 

Consider the decomposition $\rA^- = \rA^-_{spl}\cdot \rA^-_{ani}$, where $\rA_{ani}^-$ is anisotropic. Then $\rA_{ani}^-\subset Z_{\G}(\rA_{spl}^-)_{ani}\subset Z_{\G}(\rA_{spl})$, so that
\[
\rA^-\cdot \rA_{spl} =\rA_{spl}\cdot \rA^-
\]
is a $\theta$-stable $k$-torus. Let $\rA$ be a maximal torus in $Z_{\G}(\rA^-\cdot \rA_{spl})$. Since $\rA^-\subset \rA$ is maximal $\theta$-split, $\rA$ is $\theta$-stable \cite[Section 1]{vust1974operation} which satisfies the constraints of the lemma.
%Finally, let $T$ be a maximal torus containing $T_{spl}$; it is also $\theta$-stable as $T_{spl}$ is. We have the diagram of inclusions
%\[
%\begin{tikzcd}[every arrow/.append style={dash}]
%             &       T    &  \\
%    T^- \ar[ur]    &       & T_{spl}\ar[ul]   \\
%  &  T_{spl}^-\ar[ul]\ar[ur]. &             
%\end{tikzcd}
%\]
%Since $T=T_{spl}\cdot T_{ani}$,  In particular,
\end{proof}

%Since we assumed $T\subset B$, it follows that $T$ is also a maximally $k$-split maximal torus. We set $T_{spl}$ (resp., $T^-_{spl}$) for the $k$-split parts of $T$ (resp., $T^-$). Note that $T_{spl}$ is $\theta$-stable.

Now assume that our maximal torus $\rA$ is $(\theta,k)$-admissible. There exists a minimal $\theta$-split parabolic $k$-subgroup $P$ containing $\rA$; by Proposition 4.7 of \cite{HelminckWang}, it satisfies
\[
P\cap\theta(P)=Z_{\G}(\rA_{spl}^-).
\]
Let $\B$ be a Borel subgroup defined over $\kbar$, and assume it is chosen so that $\rA\subset \B\subset P$; it is then maximally $\theta$-split among Borel subgroups of $\G$. 
\begin{LemDef}\label{Def: admissible pair}
A $(\theta,k)$-admissible maximal torus $\rA$ and a maximally $\theta$-split Borel $\B$ gives a \textbf{$\theta$-admissible Borel pair} $(\rA,\B)$. If $\G$ is quasi-split, $k$-rational $\theta$-admissible Borel pairs exist.
\end{LemDef}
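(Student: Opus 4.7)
The plan is to construct a $(\theta,k)$-admissible Borel pair directly when $\G$ is quasi-split, using the admissible maximal torus whose existence is guaranteed by the previous Lemma/Definition. The first step is to fix a $(\theta,k)$-admissible maximal torus $\rA \subset \G$, so that $\rA_{spl}$ is a maximal $k$-split torus of $\G$ and $\rA^-_{spl}$ is a maximal $(\theta,k)$-split torus. By Proposition 4.7 of \cite{HelminckWang} (already invoked in the discussion preceding the statement), there is a minimal $\theta$-split parabolic $k$-subgroup $P\supset \rA$ with Levi component $L := Z_\G(\rA_{spl}^-)$, which is defined over $k$. The goal is then to exhibit a $k$-rational Borel subgroup $\B$ with $\rA \subset \B \subset P$; any such $\B$ is automatically maximally $\theta$-split among Borel subgroups, since containment in a minimal $\theta$-split parabolic is equivalent to minimizing $\dim(\B \cap \theta(\B))$.

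The heart of the argument is to show that the Levi $L$ is itself quasi-split over $k$, so that it contains a $k$-rational Borel subgroup through $\rA$. Since $\rA_{spl}^- \subset \rA_{spl}$ and $\rA_{spl}$ is $\theta$-stable by the admissibility of $\rA$ (cf.\ Lemma 4.5(iii) of \cite{HelminckWang}), one has $\rA_{spl} \subset L$, and since $\rA_{spl}$ is a maximal $k$-split torus of $\G$, it is a fortiori a maximal $k$-split torus of $L$. The quasi-split hypothesis on $\G$ forces $Z_\G(\rA_{spl}) = \rA$, so $Z_L(\rA_{spl}) = \rA$ as well; this is the defining condition for $L$ to be quasi-split with $\rA$ as a maximal torus through a $k$-rational Borel. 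Thus there exists a $k$-rational Borel $\B_L \subset L$ with $\rA \subset \B_L$.

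Setting $\B := \B_L \cdot R_u(P)$ produces a Borel subgroup of $\G$ defined over $k$, contained in $P$ and containing $\rA$. By the preceding paragraph, $\B$ is maximally $\theta$-split, and the pair $(\rA,\B)$ is a $k$-rational $\theta$-admissible Borel pair in the sense of the first part of the Lemma/Definition. The main obstacle (though a minor one) is the verification of quasi-splitness of $L$; once one notes that $\rA_{spl}$ remains a maximal $k$-split torus in $L$, this reduces to the tautological criterion $Z_L(\rA_{spl})=\rA$, which is inherited from $\G$. Everything else is standard structure theory of parabolic subgroups, and no new computation is required.
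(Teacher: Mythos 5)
Your proof is correct and supplies exactly the argument that the paper's surrounding discussion leaves implicit: the Levi $L = Z_\G(\rA_{spl}^-)$ of the minimal $\theta$-split parabolic $P$ inherits quasi-splitness from $\G$ because $\rA_{spl}$ remains a maximal $k$-split torus of $L$ and $Z_L(\rA_{spl}) = Z_\G(\rA_{spl}) = \rA$ (the latter equality following since $\rA_{spl}^- \subset \rA_{spl}$ forces $Z_\G(\rA_{spl}) \subset Z_\G(\rA_{spl}^-)$). The paper states the existence claim without an explicit proof, and this is the standard route one would take; the only stylistic quibble is that your inclusion $\rA_{spl} \subset L$ holds simply because $\rA_{spl}$ is a torus containing $\rA_{spl}^-$ and hence centralizes it — the appeal to Lemma~4.5(iii) of Helminck--Wang for $\theta$-stability of $\rA_{spl}$ is not actually needed for that inclusion, though it does no harm.
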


Suppose that $\rA$ is a maximally $k$-split maximal torus of $\G$. We say that an involution $\theta$ is \textbf{normally related to $\rA$} if $\rA$ is a $(\theta,k)$-admissible torus.
\begin{Lem}\label{Lem: normally related}\cite[Lemma 8.3]{Helminckrational}
 Suppose that $\rA$ is a maximally $k$-split maximal torus of $\G$ and $\theta$ is a $k$-rational involution on $\G$. There exists a conjugate of $\theta$ by an element $g\in \G(k)$ such that $\theta'=\Ad(g)\circ\theta\circ \Ad(g)^{-1}$ is normally related to $\rA$.   
\end{Lem}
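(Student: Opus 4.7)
The plan is to construct an auxiliary $(\theta,k)$-admissible torus and then transfer its admissibility to $\rA$ via a sequence of $\G(k)$-conjugations. I would proceed in three steps, with the final step being the main obstacle.

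First, invoke the existence of a $(\theta,k)$-admissible torus $\rA^\ast\subset\G$, already established just above the statement of the lemma. By construction $\rA^\ast$ is $\theta$-stable, $\rA^\ast_{spl}$ is a maximal $k$-split torus of $\G$, $\rA^{\ast,-}$ is a maximal $\theta$-split torus, and $(\rA^{\ast,-})_{spl}$ is a maximal $(\theta,k)$-split torus.

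Second, bring the maximal $k$-split tori into alignment. Both $\rA_{spl}$ and $\rA^\ast_{spl}$ are maximal $k$-split tori of $\G$, so by the Borel--Tits theorem (conjugacy of maximal $k$-split tori under $\G(k)$) there exists $g_1\in\G(k)$ with $g_1\rA^\ast_{spl} g_1^{-1}=\rA_{spl}$. Replacing $\theta$ by $\theta_1:=\Ad(g_1)\circ\theta\circ\Ad(g_1)^{-1}$ and $\rA^\ast$ by $g_1\rA^\ast g_1^{-1}$, which preserves $(\theta_1,k)$-admissibility, we may assume $\rA^\ast_{spl}=\rA_{spl}$. In particular, both $\rA$ and $\rA^\ast$ are now maximal $k$-tori of the Levi subgroup $L:=Z_{\G}(\rA_{spl})$, whose derived group is $k$-anisotropic by maximality of $\rA_{spl}$.

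Third (the main obstacle), produce $g_2\in L(k)\subset \G(k)$ with $g_2\rA^\ast g_2^{-1}=\rA$. Because the derived group of $L$ is $k$-anisotropic and $\rA_{spl}\subset Z(L)$, every maximal $k$-torus of $L$ contains $\rA_{spl}$, and one is reduced to a conjugacy problem inside $L$. The desired $L(k)$-conjugacy of $\rA$ and $\rA^\ast$ is exactly the content of the structure theory of tori in an anisotropic (mod center) reductive $k$-group as used in \cite{HelminckWang}: in any such group the $\theta$-admissible maximal $k$-tori extending a given maximal $(\theta,k)$-split torus form a single rational conjugacy class. I would carry out this step by combining Lemma~4.5 of \cite{HelminckWang} (which controls how $\theta$-stable maximal tori contain maximal $(\theta,k)$-split tori) with the standard $k$-conjugacy of maximal tori in anisotropic groups sharing a common split central part; this is the essential arithmetic input beyond Borel--Tits.

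Finally, set $g:=g_2 g_1\in\G(k)$ and $\theta':=\Ad(g)\circ\theta\circ\Ad(g)^{-1}$. By construction $\rA=g\rA^\ast g^{-1}$ inherits from $\rA^\ast$ all four admissibility conditions for $\theta'$: $\theta'$-stability, $\rA_{spl}$ maximal $k$-split (unchanged), $\rA^{-,\theta'}=g\rA^{\ast,-}g^{-1}$ maximal $\theta'$-split, and its split part maximal $(\theta',k)$-split. Hence $\rA$ is $(\theta',k)$-admissible, i.e., $\theta'$ is normally related to $\rA$. The main technical difficulty is entirely concentrated in the third step, where one must exploit the fine structure of maximal tori in the anisotropic Levi $L$ rather than relying solely on Borel--Tits.
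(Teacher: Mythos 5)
Your Step 3 is the gap. You claim that $\rA$ and $\rA^\ast$, once arranged to share the split part $\rA_{spl}$ and hence both sitting as maximal $k$-tori inside $L=Z_{\G}(\rA_{spl})$, must be $L(k)$-conjugate, and you justify this by appealing to a ``standard $k$-conjugacy of maximal tori in anisotropic groups sharing a common split central part.'' No such conjugacy theorem exists. In a reductive $k$-group $L$ with $L_{der}$ $k$-anisotropic there are in general several $L(k)$-conjugacy classes of maximal $k$-tori, even though they all contain the central split torus. Concretely, take $k$ a $p$-adic field and $L$ the multiplicative group of a quaternion division algebra $D$; the maximal $k$-tori are the $K^\times$ for $K/k$ quadratic subfields of $D$, and non-isomorphic $K$ give $L(k)$-inequivalent tori. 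Borel--Tits gives rational conjugacy for maximal $k$-\emph{split} tori, not for arbitrary maximal $k$-tori, and passing to the anisotropic Levi $L$ does not collapse the Galois-cohomological set that parametrizes the remaining conjugacy classes.

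The other ingredient you cite does not repair this, and indeed cannot, because it is circular. The structure-theoretic statement you attribute to \cite{HelminckWang} --- that $\theta$-admissible maximal $k$-tori extending a fixed maximal $(\theta,k)$-split torus form a single rational conjugacy class --- concerns tori that are \emph{already known} to be $\theta$-admissible, whereas establishing that some $\G(k)$-conjugate of $\theta_1$ renders $\rA$ admissible is exactly what the lemma asks you to prove about $\rA$. The underlying problem is that transporting admissibility from the auxiliary torus $\rA^\ast$ to $\rA$ via a rational conjugation would require $\rA$ and $\rA^\ast$ to lie in the same $\G(k)$-conjugacy class of maximal $k$-tori, which need not hold. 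A correct proof has to operate on $\rA$ directly: after the Borel--Tits normalization one may legitimately assume $\rA_{spl}$ contains a maximal $(\theta,k)$-split torus, but the remaining admissibility conditions for $\rA$ must then be produced by successive rational conjugations of $\theta$ inside centralizers of subtori of $\rA$, rather than by conjugating $\rA$ onto a pre-chosen admissible torus.
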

\quash{We will often refer to $(B,T)$ as simply an admissible pair. There is a single stable $\rH^\circ$-orbit of pairs $(B,T)$ such that $B$ is maximally $\theta$-split and $T$ is maximally $\theta$-split \cite[2.6 and 2.8]{Richardson}. %On the other hand, there is a single $\G(k)$-conjugacy class of Borel subgroups $B$  and a single $\G(k)$-orbit of maximal $k$-split tori $T_{spl}$.

\begin{Lem}
Suppose that $\X=\rH\backslash\G$ is excellent and that $(B,T)$ is a $(\theta,k)$-admissible pair. The $\rH(k)$-orbits of such pairs are in bijection with $\mathcal{D}(T^\theta,\rH;k)$.
\end{Lem}
\begin{proof}
The preceding facts imply that we may identify the unique $\rH$-orbit of $\theta$-admissible pairs geometrically is isomorphic to $\rH/T^\theta$. We now pass to rational points to obtain the result.
%Recall that Lemma 11.1 of \cite{HelminckWang} implies that $A$ is maximal $\theta$-split torus of $\G$. The existence of $P(X)$ over $\kbar$ follows by \cite[2.8]{Richardson}, so the claim is that $P(\X)$ is $k$-rational. 
\end{proof}
The question of enumeration reduces to understanding the cohomology of $T^\theta$. The failure of $\mathcal{D}(T^\theta,\rH;F)=1$ is what produces the issues in classifying isomorphism classes of involutions in \cite{Helminckrational}.
}

\subsubsection{Tori and indices}
Suppose that $\Psi$ is an abstract root datum as above. Recalling \cite[Definition 3.9]{Helmincktwo}, we say that an involution $\theta\in \Aut(\Psi)$ is \emph{admissible} if there exists a reductive algebraic group $\G$ over $\kbar$, a maximal torus $\rA\subset \G$, and an involution $\widetilde{\theta}\in \Aut(\G,\rA)$ such that
\begin{enumerate}
    \item $\Psi$ is isomorphic to $\Psi({\G,\T})=(X^\ast(\rA),\Phi(\rA),X_\ast(\rA),\check{\Phi}(\rA))$, 
    \item $\widetilde\theta$ induces $\theta$ on $\Psi$, and
    \item $\rA$ is a maximally $\theta$-split maximal torus of $\G$
\end{enumerate}
An index $\I$ of an involution $\theta$ on $\Psi$ is called \emph{admissible} if it is associated to an admissible involution. When $\Psi$ is not semi-simple, we call an index admissible if the corresponding index on 
\[
\Psi_{ad}:=(\Y_{ad},\Phi,\check{\Y}_{ad},\check{\Phi})
\]
is admissible; here $\Y_{ad}\cap \Y$ is the saturation of $\zz\Phi$ in $\Y$. Importantly, the property of $\I$ being admissible is independent of the algebraically closed field $\kbar$ \cite{Helmincktwo}.
\begin{Prop}\cite[Theorem 3.11]{Helmincktwo}\label{Prop: classification of invo}
    Assume that $\G$ is semi-simple and $\rA$ is a maximal torus of $\G$. There is a bijection between the set of $\G(\kbar)$-conjugacy classes of involutory automorphisms of $\G_{\kbar}$ and the isomorphism classes of admissible indices of $\Psi({\G,\T})$.
\end{Prop}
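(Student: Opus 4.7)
The plan is to construct the bijection directly and verify it via standard structural arguments, essentially reproducing the approach of Helminck. For the map sending involutions to indices: given $\theta \in \Aut(\G_{\kbar})$ of order two, the geometric versions of the results of Section~\ref{Section: admissible tori} (for instance \cite[Theorem 7.5]{Richardson}) produce some $g \in \G(\kbar)$ such that, after replacing $\theta$ by $\Ad(g) \circ \theta \circ \Ad(g)^{-1}$, the fixed torus $\rA$ becomes $\theta$-stable with $\rA^-$ a maximal $\theta$-split torus. Choose a $\theta$-basis $\De \subset \Phi(\G,\rA)$ satisfying \eqref{Theta bases}; any two such bases are conjugate by the Weyl group $W_0 := W(\Phi_0(\theta))$ (which preserves $\De_0(\theta)$), and any two maximal $\theta$-split tori are $(\G^\theta)^\circ$-conjugate. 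The quadruple $\I(\theta) = (X^\ast(\rA), \De, \De_0(\theta), \theta^\ast)$ is therefore well-defined up to isomorphism, depends only on the $\G(\kbar)$-conjugacy class of $\theta$, and is admissible by construction.

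Surjectivity of the resulting map is immediate from the definition of admissibility of an index: some $(\G',\rA',\widetilde\theta')$ realizes $\I$, and since $\G$ and $\G'$ are determined up to isomorphism by their common root datum, the isogeny theorem transports $\widetilde\theta'$ to an involution of $\G$ whose index is isomorphic to $\I$.

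The main content is injectivity. Suppose $\theta_1,\theta_2$ have isomorphic indices. After conjugating both by elements of $\G(\kbar)$, we may assume each makes $\rA$ maximally $\theta_i$-split and that both admit a common $\theta$-basis $\De$; the isomorphism of indices is then realized by an element $\phi \in \Aut(\Psi,\De)$ with $\phi\theta_1^\ast\phi^{-1}=\theta_2^\ast$ and $\phi(\De_0(\theta_1))=\De_0(\theta_2)$. Fix a pinning $(\rA,\B,\{x_\al\}_{\al \in \De})$ where $\B$ corresponds to $\De$; the standard splitting
\[
\Aut(\Psi,\De) \hra \Aut(\G,\B,\rA,\{x_\al\}) \subset \Aut(\G)
\]
lifts $\phi$ to an automorphism of $\G$ that we may use to conjugate $\theta_1$, after which $\theta_1$ and $\theta_2$ induce the \emph{same} automorphism of $\Psi(\G,\rA)$. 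Consequently $\theta_1\theta_2^{-1}$ is inner and centralizes $\rA$, so $\theta_1 = \Ad(t)\circ \theta_2$ for some $t \in \rA(\kbar)$.

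The last and most delicate step is to absorb $t$ into a conjugation. Since both $\theta_i$ are involutions, the identity $\theta_1^2=\mathrm{id}$ forces $t \cdot \theta_2(t) \in Z(\G)(\kbar)$, producing a class in $H^1(\langle \theta_2\rangle,\rA/Z(\G))$ with $\theta_2$ acting through its restriction to $\rA$. Here the maximality of $\rA^-$ as a $\theta_2$-split subtorus of $\G$ is crucial: it implies that this class is already the image of a coboundary from $\rA^-$ (up to a central correction that can be absorbed since $\G$ is semisimple), so one finds $s\in \rA(\kbar)$ with $t = s^{-1}\theta_2(s)$ up to $Z(\G)$, yielding $\theta_1 = \Ad(s)\theta_2\Ad(s)^{-1}$. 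This cohomological vanishing is the main obstacle of the proof and is precisely where the semi-simplicity of $\G$ and the admissibility of $\I$ are genuinely used. Together with the previous steps it establishes injectivity and completes the bijection.
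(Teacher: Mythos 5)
The paper states this result purely as a citation to \cite[Theorem 3.11]{Helmincktwo} and does not give a proof, so there is no ``paper's proof'' to compare against; I will only assess whether your argument holds up.

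Your reduction is sound up to a point: conjugating so that $\rA$ is maximally $\theta_i$-split for both involutions, choosing a common $\theta$-basis, lifting the index-isomorphism $\phi$ via a pinned splitting of $\Aut(\G) \to \Aut(\Psi)$, and concluding that $\theta_1 = \Ad(t)\circ\theta_2$ with $t\in\rA(\kbar)$ and $t\cdot\theta_2(t)\in Z(\G)$ are all legitimate steps (surjectivity being essentially tautological from the definition of admissibility, as you say).

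The final absorption step, however, has a genuine gap, and it is exactly the part you flag as ``the main obstacle.'' Write $t = t^+ t^-$ with $t^\pm$ in the fixed/split parts of $\rA$. Twisted conjugation by $s\in\rA$ produces only $s\,\theta_2(s)^{-1} = (s^-)^{-2}$, i.e.\ coboundaries lie in $\rA^-$ (which over $\kbar$ is divisible, so this set is exactly $\rA^-$). This absorbs $t^-$, but the claim that the remaining twist $t^+$ lies in $Z(\G)$ ``up to a central correction'' is not a consequence of maximality of $\rA^-$ as you assert. Maximality of $\rA^-$ with respect to $\theta' = \Ad(t)\theta_2$ \emph{does} give you something: it forces $\al(t^+)=1$ for every $\al\in\De_0(\theta_2)$ (otherwise $\theta'$ would act on the $\SL_2$ attached to such an $\al$ by a nontrivial inner involution, making $\rA^-$ non-maximal for $\theta'$), hence $t^+\in Z(\rL_\X)$ where $\rL_\X = Z_{\G}(\rA^-)$. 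But $Z(\rL_\X)\cap\rA^+$ modulo $Z(\G)\cdot\rA^-$ need not vanish, and $\G$ semi-simple does not rescue this: in the adjoint case, for instance, $H^1(\langle\theta\rangle,\rA(\kbar))$ has a contribution from $\rA^+[2]$ that your argument does not touch. Absorbing $t^+$ requires conjugation by elements outside $\rA$ (e.g.\ representatives in $N_\G(\rA)$ of index-preserving Weyl elements, or a careful induction through the Levi $\rL_\X$), and controlling these uses the admissibility of the index in a way that is not captured by the statement that ``the class is the image of a coboundary from $\rA^-$.'' You should either supply this missing analysis or, more honestly, acknowledge that this is the substantive content of Helminck's theorem rather than presenting it as a cohomological reduction.
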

Let us briefly explain how to extract an index from $(\G,\rA,\theta)$. Up to conjugating $\theta$, we may assume that $\rA$ is a $(\theta,k)$-admissible torus. As we discuss in Section \ref{Section: admissible tori}, we may choose a Borel $\B$ over $\kbar$ such that there exists a parabolic subgroup $P$ over $\kbar$ satisfying $\B\subset P$ and 
\begin{equation}\label{eqn: split parabolic}
    L=P\cap \theta(P)\supset \rA
\end{equation} is a $\theta$-stable Levi subgroup of $\G$. Such a parabolic subgroup is called $\theta$-split, and we assume that $P$ is minimal among $\theta$-split parabolics, so that $\B$ is maximally $\theta$-split among Borel subgroups in the sense that $\B\cap \theta(\B)$ has minimal dimension.

Let $\De\subset \Phi^+\subset \Phi$ denote the simple and positive roots associated to $(\rA,\B)$.Let $\De_\X^p$ denote those simple roots $\al$ such that $\theta(\al)=\al$; we have that $\De^p_\X = \De_{L}$ \cite[Theorem 2.9]{HelminckHelminck}.
It follows from \eqref{eqn: split parabolic} that for any $\al\in\De\setminus{\De^p_\X},$ $\theta(\al)\in \Phi^-$. In particular, $\De$ is a $\theta$-basis.% in the sense of \cite[(2.3)]{Helmincktwo}. %Setting $\De^- = \De-\De^\theta$, this says that $\theta(\De^-)\subset R^-$. 
With notation as above, 
    \[
    (X^\ast(\rA),\De, \De^p_\X, \theta^\ast)
    \]
    is an admissible $\theta$-index.

Generalizing Proposition \ref{Prop: classification of invo}, suppose that $\G$ is reductive and $\Psi\simeq\Psi(\G,\rA)$. Suppose that we have an index $\I$ such that 
\begin{enumerate}
    \item the induced index $\I_{ad}$ for $\Psi_{ad}$ is admissible, and
    \item there is a compatible involution $\theta_{\rA}$ on the torus $\rA$.
\end{enumerate}
We claim that there exists an involution $\theta$ on $\G$ inducing the index $\I$. To see this, let $\theta_{ad}$ denote a lift of the involution $\theta$ to $\G_{ad}$. Let $\G_{sc}$ be the simply connected cover of $\G_{ad}$ and let $\theta_{sc}$ be the unique lift of $\theta_{ad}$ \cite{Steinberg}. Our compatibility assumption implies that $\theta_{sc}|_{Z(\G_{sc})}$ intertwines with $\theta_{\rA}$. Since $\G=Z(\G)\times ^{Z(\G_{sc})}\G_{sc}$ \cite[2.0.1]{deligne1979varietes}, we obtain a well-defined automorphism via pushout.

\subsubsection{Rational involutions}\label{Section: rational involutions} The preceding discussion ignored the $k$-rational structure on $(\G,\theta)$. We now remark on the action of $\Ga$.
Suppose that $(\rA,\B)$ are chosen as above. Since $\B$ need not be $k$-rational (as $\G$ may not be quasi-split), the Galois action on $X^\ast(\rA)$ need not preserve the simple roots $\De$ determined by $\B$.  Recall from Section \ref{Section: pre star} the $\ast$-action $\sigma\mapsto \sig_\ast$ obtained by twisting the $\Ga$-action on $(X^\ast(\rA), \Phi)$ as follows:
 \[
 \chi^{\sig_\ast} = w_\sig^{-1}\chi^\sig,
 \]
 where $w_\sig\in W$ is the \emph{unique} element such that $\De^\sig= w_\sig\De$. Then $\sig_\ast\De=\De$. As noted above, if \eqref{no anisotropic kernel} holds, then $\sig=\sig_\ast$; this occurs when $\B$ is $k$-rational.
 
\begin{Lem}\label{Lem: theta basis}
The involution $\theta$ commutes with the $\ast$-action. In particular, the subset $\De^p_\X$ is stable under the $\ast$-action of $\Ga$.
\end{Lem}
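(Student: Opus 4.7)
The plan is to reduce commutativity of $\theta$ and the $\ast$-action to an assertion about a single Weyl element. Since $\theta$ is defined over $k$, it commutes with the natural action of each $\sig\in\Ga$ on $X^\ast(\rA)$. Writing the $\ast$-action as $\sig_\ast = w_\sig^{-1}\sig$ where $w_\sig\in W$ is the unique element satisfying $\sig(\De) = w_\sig\De$, the desired identity $\theta\sig_\ast = \sig_\ast\theta$ becomes equivalent to the statement that $w_\sig$ centralizes $\theta$ in $\Aut(X^\ast(\rA))$.

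To prove this, I would show $w_\sig\in W_0(\theta):=W(\Phi_0(\theta))$. First, $\sig(\B)$ is again a maximally $\theta$-split Borel: since $\sig$ and $\theta$ commute, the identity $\sig(\B)\cap\theta(\sig(\B))=\sig(\B\cap\theta(\B))$ shows the dimension is the same as that of $\B\cap\theta(\B)$. Since $P$ is $k$-rational, $\sig(\B)\subset\sig(P)=P$, so both $\B$ and $\sig(\B)$ are maximally $\theta$-split Borels of $L=P\cap\theta(P)$ containing $\rA$. Inside $L$, the torus $\rA$ is maximal and $\rA^-$ is maximally $\theta$-split, so a Richardson-style conjugacy argument for the symmetric pair $(L,\theta|_L)$ shows these Borels form a single orbit under $W_0(\theta)\subset W_L$, which gives $w_\sig\in W_0(\theta)$.

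Next I verify the inclusion $W_0(\theta)\subseteq C_W(\theta)$ by direct calculation on simple reflections. For $\al\in\Phi_0(\theta)$, both $\theta(\al)=\al$ and $\check\theta(\check\al)=\check\al$, so for any $\chi\in X^\ast(\rA)$,
\[
\theta(s_\al(\chi)) = \theta(\chi) - \la\chi,\check\al\ra\,\theta(\al) = \theta(\chi) - \la\theta(\chi),\check\al\ra\,\al = s_\al(\theta(\chi)).
\]
Thus $\theta$ centralizes each generator of $W_0(\theta)$ and in particular commutes with $w_\sig$, establishing the first claim.

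For the second statement, observe that $\De^p_\X = \De\cap X^\ast(\rA)^\theta$. Since $\sig_\ast$ stabilizes $\De$ by the definition of the $\ast$-action and stabilizes $X^\ast(\rA)^\theta$ because it commutes with $\theta$, it preserves the intersection $\De^p_\X$. The main technical hurdle in the argument is the single-orbit claim for maximally $\theta$-split Borels of $L$ containing $\rA$: this is immediate when $\rA^- = \rA^-_{spl}$ because then $L=Z_\G(\rA^-)$ and $\Phi(L)=\Phi_0(\theta)$, but the general case requires appealing to Helminck's structural analysis of $(\theta,k)$-admissible tori.
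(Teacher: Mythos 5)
Your high-level reduction matches the paper's: both proofs boil down to showing that $w_\sig$ commutes with $\theta$, and the observation that $W_0(\theta)\subseteq C_W(\theta)$ (your simple-reflection computation) is correct and does close the argument if you can land $w_\sig$ in $W_0(\theta)$. The preliminary steps are also sound: $\sig(\B)$ is again maximally $\theta$-split because $\sig$ commutes with $\theta$, and $\sig(\B)\subset\sig(P)=P$ because $P$ is $k$-rational, so $w_\sig$ lies in $W_L$ where $L=P\cap\theta(P)=Z_{\G}(\rA^-_{spl})$. Likewise the deduction of stability of $\De^p_\X=\De\cap X^\ast(\rA)^\theta$ from the commutation is fine.

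The gap is exactly where you flag it, and it is a real gap, not a technicality. The ``Richardson-style conjugacy'' result says that the maximally $\theta$-split Borels of $L$ containing $\rA$ form a single orbit under $N_{(L^\theta)^\circ}(\rA)$; but the image of this normalizer in $W_L$ is \emph{not} $W_0(\theta)$. It also contains lifts of the little Weyl group $W_{\X,L}$ of the symmetric pair $(L,\theta|_L)$ — these are Weyl elements that move $\rA^-$ nontrivially and permute restricted bases. So the asserted single $W_0(\theta)$-orbit statement fails whenever $W_{\X,L}\neq 1$, which is precisely the case $\rA^-\neq\rA^-_{spl}$ that your argument was meant to cover. (When $\rA^- =\rA^-_{spl}$ one has $\Phi(L)=\Phi_0(\theta)$ and $W_L=W_0(\theta)$, so there is nothing to prove — but then $\G$ is essentially quasi-split and $w_\sig=1$ anyway.) Moreover, the statement you actually need is only $w_\sig\in W^\theta$, which is weaker than $w_\sig\in W_0(\theta)$; it is not clear to me that the stronger containment holds in general.

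The paper handles this by citing Proposition 10.39 of \cite{Helminckrational}, which is exactly the structural assertion $\theta(w_\sig)=w_\sig$ for the element $w_\sig$ arising from a $(\theta,k)$-admissible pair. This uses the fine structure of $(\Ga,\theta)$-orders and the compatibility of the $\ast$-action with $\theta$-split parabolics in a way that the Borel-conjugacy argument does not capture. So your proof is not self-contained: the ``main technical hurdle'' you name at the end is, in fact, the entire content of the lemma, and the resolution is the Helminck reference the paper already invokes rather than a conjugacy argument.
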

\begin{proof}
Note that $\theta\sig=\sig\theta$ for all $\sig\in \Ga$ by the $k$-rationality of $\theta$. In particular, if $\chi\in X^\ast(\rA)$, then $\theta(\chi^\sig) = (\theta(\chi))^\sig$. On the other hand
\[
\theta(\chi^{\sig_\ast}) = \Ad(\theta(w_\sig))^{-1}\cdot (\theta(\chi))^\sig.
\]
But \cite[Proposition 10.39]{Helminckrational} shows that $\theta(w_\sig)=w_\sig$ for all $\sig\in \Ga$, proving the claim.
\end{proof}

Given this, we may associate to $(\G,\theta,\rA, \B)$ a $(\Ga,\theta)$-index
 \[
 \I= (X^\ast(\rA), \De, \De_0(\Ga),\De_0(\theta), \sig_\ast,\theta^\ast),
 \]
 where $\De_0(\Ga)$ gives the simple roots in the anisotropic kernel of $(\G,\rA)$ (recall that $\rA$ is maximally $k$-split). A $(\Ga,\theta)$-index is called \emph{admissible} if it is isomorphic (in the sense of \cite[Section 5]{Helminckrational}) to one associated to a $k$-rational symmetric pair $(\G,\theta)$ in this way. 
 
 One of the main results of \cite{Helminckrational} is a characterization of such admissible indices. 
%We will only have use of this notion when $\G$ is taken to be quasi-split, so we ignore the discussion of the anisotropic kernel and consider an $(\Ga,\theta)$-index of the form
More precisely, \cite[Theorem 10.45]{Helminckrational} states that a $(\Ga,\theta)$-index is admissible if and only if 
    \begin{enumerate}[(a)]
        \item\label{silly1} $\I$ is a basic $\Ga_\theta$-index,
        \item\label{silly2} $(X^\ast(\rA), \De, \De_0(\Ga), \sig_\ast)$ is an admissible $\Ga$-index,
        \item\label{silly3} $(X^\ast(\rA), \De, \De_0(\theta), \theta^\ast)$ is an admissible $\theta$-index.
                \item\label{silly4} the index $(X^\ast(\rA)_0, \De_0(\Ga), \De_0(\theta), \sig_\ast|_{\De_0} \theta^\ast|_{\De_0(\theta)})$ is an admissible $(\Ga,\theta)$-index.
    \end{enumerate}
    The fourth criterion captures the notion that the involution must also be admissible ``on the anisotropic kernel.'' If we assume that $\G$ is quasi-split and $\rA\subset \B$ are $k$-rational, \eqref{silly4} is trivially satisfied. Criteria \eqref{silly2} and \eqref{silly3} are natural, so we focus on the \eqref{silly1}.
    
 Recall from \cite[Section 10]{Helminckrational}, that $\I$ is a basic $\Ga_\theta$-index if the $\Ga_\theta$-basis $\De$ determined by $\B$ (so that it satisfies \eqref{eqn: Ga theta basis}) satisfies
 \begin{enumerate}[(i)]
     \item if $\Phi'\subset \Phi_0(\Ga_\theta)$ is an irreducible component then, $\Phi'\subset \Phi_0(\theta)$ or $\Phi'\subset \Phi_0(\Ga)$,
     \item $\De_0(\Ga)$ is stabilized by $-w_0$ where $w_0$ is the long element of $W$ determined by $\De_0(\Ga)\subset \De$, and
     \item $\De_0(\Ga)$ is $\theta^\ast$-stable, and
     \item $\De_0(\theta)$ is $\sig_\ast$-stable for all $\sig\in \Ga$.
 \end{enumerate}
Here, $$\Phi_0(\Ga_\theta):=\left\{\al\in \Phi: \sum_{\sig\in \mathcal{E}}\sig(\al) = \theta\left(\sum_{\sig\in \mathcal{E}}\sig(\al)\right)\right\}= \Phi\cap \Y_0(\Ga_\theta).$$ 
% Indeed, $\Phi_0(\Ga^\ast_\theta)=\Phi_0(\theta)$ so the first property follows from the corresponding statement for $\I$ (cf. Lemma \ref{Lem: basic order}). The second criterion is also void since $\De_0(\Ga^\ast)=\emptyset$. Finally, the third property is identical for the two indices as it depends only on the properties of the involution over the algebraic closure. This shows that $\I_0$ satisfies the criteria of being a basis $\Ga_\theta$-index.
In particular, the $(\Ga,\theta)$-index associated to $(\G,\theta,\rA,\B)$ above satisfy these properties \cite[Section 5]{Helminckrational}. %We will have use for the following technical result on the order induced by $\De$; see 
{\quash{ \begin{Lem}.\label{Lem: basic order}
  Given  $(\G,\theta,\rA,\B)$ as above, the induced basis $\De\subset \Phi$ is a $(\Ga, \theta)$-order. Moreover, it gives a basic $\Ga_\theta$-basis in the sense of \cite[Definition 10.40]{Helminckrational} so that 
   \[
 \I= (X^\ast(\rA), \De, \De_0(\Ga),\De_0(\theta), \sig_\ast,\theta^\ast),
 \]is admissible.
  \end{Lem}}
  
A fundamental complication is that admissibility of a $(\Ga,\theta)$-index $\I$ determines \textbf{if there exists} an involution $\theta$ on $(\G,\rA)$ inducing $\I$, but falls short of a classification of such involutions. The main result of \cite{Helminckrational} is to characterize those involutions which induce the same index, which is accomplished up to enumerating elements in a certain cohomology group. This approach does not naturally encode the geometric cocycle (cf. Definition \ref{Def: geometric cocycle}) of $\G^\theta\backslash\G$, so we do not make use of these more refined results in this paper except in the proof of Lemma \ref{Lem: same on torus} (cf. Remark \ref{Rem: what about Helminck}).
  
\subsection{Endoscopic root systems with involution}\label{Section: endo invo}Suppose now that $\Psi=(\Y,\Phi, \check{\Y}, \check{\Phi})$ is a
root datum. A (not-necessarily additively-closed) sub-root system $\Psi_s$ is called \emph{endoscopic} if it is of the form
\[
\Psi_s=(\Y,\Phi_s, \check{\Y}, \check{\Phi}_s),
\]
where $s\in \Hom(\check{\Y},\cc^\times)$, and  %where we equip $\cc^\times$ with the trivial $\Ga$-action, and
\[
\check{\Phi}_s=\{\check{\lam}\in \check{\Phi}: s(\check{\lam})=1\}.
\]
Note that there is an inclusion $W_s\leq W$ of the Weyl group of $\Psi_s$ into the Weyl group of $\Psi$.

We equip $\Psi$ with a $\Ga$-action which we assume agrees with the $\ast$-action associated with some choice of base $\De\subset \Phi$; in particular, this satisfies \eqref{no anisotropic kernel}. We additionally equip $\Psi_s$ with a basis $\De_s$ and a $\Ga$-action $\sig_s$ ($\sig\in \Ga$) agreeing with the $\ast$-action associated to $\De_s$. This implies that for each $\lam\in \Y$ and $\sig\in \Ga$,
\[
\sig_s(\lam)= w(\sig)\sig(\lam), \quad w(\sig)\in N_W(\Psi_s)\subset W;
\] here, $\mathrm{Stab}_W(\Psi_s)$ is the subgroup of $W$ preserving $\Psi_s$.

Now suppose that $\Psi$ is equipped with an involution $\theta$ such that $\sig\circ\theta = \theta\circ \sig$ for all $\sig\in \Ga$. %Consider the sublattice
%\[
%\overline{X}_\ast=\{\lam\in X_\ast: \theta(\lam)=\lam\}\subset X_\ast;
%\]
%this corresponds to the quotient $\Hom({X}_\ast, \cc^\times)\to \Hom(\overline{X}_\ast, \cc^\times)$. Noting that 
%\[
%\overline{X}_\ast = \{\frac{1}{2}(\lam+\theta(\lam)): \lam\in X_\ast\}
%\]
%we also may view $\Hom(\overline{X}_\ast, \cc^\times)\subset \Hom({X}_\ast, \cc^\times)$.  
Let $\underline{\check{\Y}}\subset \check{\Y}$ denote the sublattice dual to  $\Y/\Y_1(\theta)$; that is,
\[
\underline{\check{\Y}} = \{\chi\in \check{\Y}: \chi|_{Y_1(\theta)}\equiv 0\}.
\] Note that if $\check{\theta}$ denotes the induced involution on $\check{\Y}$, then $\underline{\check{\Y}} =\{\chi\in \check{\Y}:\check{\theta}(\chi) =\chi\}$. Clearly, the $\Ga$-action stabilizes $\underline{\Y}$.

We now assume that\begin{enumerate}
    \item\label{endo desi 1} $s\in\Hom(\underline{\check{\Y}}, \cc^\times)$ satisfies $\theta(s) =s$ and $\sig_s(s) = s$ for each $\sig\in \Ga$;
    \item\label{endo desi 1.5} for every $\sig\in \Ga$, $w(\sig)\circ \theta=\theta\circ w(\sig)$ 
    \item\label{endo desi 2} $\Phi_0(\theta)\subset \Phi_s$.
\end{enumerate}
With these assumptions, we say that $(\Psi_s,\theta)$ is an \textbf{endoscopic root system with involution}. By our discussion in Section \ref{gamma theta}, $(\Y,\Psi_s,\theta)$ possesses a $(\Ga,\theta)$-basis, so we assume that $\De_s$ gives such a basis.
\begin{Rem}
    The third assumption is motivated by the theory in Section \ref{Section: endoscopy defs}, and is used several times in the argument below. 
\end{Rem}

%Equip $\Psi$ with a basis $\De$, and let $C\subset \Y\otimes_{\zz}\rr$ denote the corresponding chamber. By definition, we have an inclusion $W_s\subset W$ of the Weyl group of $\Psi_s$ into the Weyl group of $\Psi$. In particular, there exists a unique Weyl chamber $$C_s\subset \Y\otimes_{\zz}\rr$$ containing $C$. This induces a basis $\De_s$ for $\Phi_s$. 
\begin{Prop}\label{Prop: endoscopic roots}
    Suppose that the root datum $\Psi=(\Y,\Phi, \check{\Y}, \check{\Phi})$ is equipped with an involution $\theta$ and a $(\Ga,\theta)$-basis $\De$ such that the $(\Ga,\theta)$-index $\I=(\Y, \De,\emptyset, \De_0(\theta),\sig^\ast,\theta^\ast)$ is admissible. Suppose that $(\Psi_s,\theta)$ is an endoscopic root datum with involution and assume that $\De_s\subset \Phi_s$ is a $(\Ga,\theta)$-basis satisfying $\De_0(\theta)\subset \De_s$. The induced $(\Ga,\theta)$-index $\I_s=(\Y, \De_s,\emptyset,\De_0(\theta),\sig_s^\ast,\theta^\ast)$ is admissible.
\end{Prop}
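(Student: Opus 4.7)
The plan is to verify Helminck's four admissibility criteria (a)--(d) for the $(\Ga,\theta)$-index $\I_s$. Our assumption \eqref{no anisotropic kernel} gives $\De_0(\Ga)=\emptyset$, so criterion (d) is vacuous and items (ii)--(iii) of (a) are automatic. It remains to check (b), (c), and items (i) and (iv) of (a).

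For criterion (b), I would appeal to standard endoscopy theory: since $s$ is $\sig_s$-fixed, the endoscopic root datum $\Psi_s$ together with the $\Ga$-action $\sig_s^\ast$ underlies a quasi-split reductive $k$-group $\G_s$, giving admissibility of $(\Y,\De_s,\emptyset,\sig_s^\ast)$. For criterion (c), the key observation is that assumption \eqref{endo desi 1} implies $\check\theta$ preserves $\check\Phi_s$, hence $\theta$ preserves $\Phi_s$; combined with \eqref{endo desi 2} and $\De_0(\theta)\subset\De_s$, we get $\De_0(\theta)=\De_s\cap\Phi_0(\theta)$, so the $\theta$-index of $\Psi_s$ is indeed $(\Y,\De_s,\De_0(\theta),\theta^\ast)$. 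To verify admissibility of this $\theta$-index, I would construct an involution $\theta_s$ on $\G_s$ inducing it, by first descending to $\G_{s,ad}$, building the involution on the simply connected cover using Helminck's classification (Proposition \ref{Prop: classification of invo}), and pushing out to $\G_s$ via the pinning and the compatibility of $\theta$ on the torus.

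For items (i) and (iv) of (a): item (iv), requiring $\sig_s^\ast$-stability of $\De_0(\theta)$, follows since assumption \eqref{endo desi 1.5} ensures $\sig_s$ commutes with $\theta$ (so $\sig_s$ preserves $\Phi_0(\theta)$), and both $\De_s$ and $\sig_s(\De_s)$ are $\theta$-bases of $\Phi_s$; the unique Weyl correction $w_{\sig,s}\in W(\Phi_s)$ with $w_{\sig,s}(\De_s)=\sig_s(\De_s)$ therefore lies in the centralizer of $\theta$ in $W(\Phi_s)$, whence $\sig_s^\ast$ preserves $\Phi_0(\theta)\cap\De_s=\De_0(\theta)$. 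Item (i) reduces, using $\Phi_{s,0}(\Ga)\subset\Phi_0(\Ga)\cap\zz\Phi=0$, to showing each irreducible component of $\Phi_{s,0}(\Ga_\theta)\subset\Phi_0(\Ga_\theta)$ lies in $\Phi_0(\theta)$, which is exactly criterion (i) for the admissible index $\I$, inherited by components contained in $\Phi_s$.

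The main obstacle will be item (iv), namely confirming that the Weyl twist $w_{\sig,s}$ commutes with $\theta$. A direct combinatorial argument requires a careful analysis of the transitive action of $W(\Phi_s)$ on $\theta$-bases of $\Phi_s$ and the uniqueness of $w_{\sig,s}$. Alternatively, and perhaps more cleanly, one can bypass this by first constructing the $k$-rational pair $(\G_s,\theta_s)$ as in the paragraph above: once $\theta_s$ is known to be $k$-rational, all four criteria are automatic consequences of the discussion in Section \ref{Section: rational involutions}, and the combinatorial statements become corollaries rather than hypotheses. This structural route shifts the heart of the proof to the construction of $\theta_s$ on $\G_s$, which is the same constraint that $\De_0(\theta)\subset\De_s$ is designed to guarantee.
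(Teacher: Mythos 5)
The main gap is in your verification of criterion~(c), the admissibility of the $\theta$-index $(\Y,\De_s,\De_0(\theta),\theta^\ast)$, which is the real content of the proposition. Your plan to ``construct an involution $\theta_s$ on $\G_s$ \dots\ using Helminck's classification (Proposition~\ref{Prop: classification of invo})'' is circular: that proposition is a \emph{bijection} between conjugacy classes of involutions and \emph{admissible} indices, so to invoke it in the direction ``index $\to$ involution'' you must already know the index is admissible. The same circularity infects your closing paragraph: ``first constructing the $k$-rational pair $(\G_s,\theta_s)$'' and then deducing admissibility puts the conclusion before the premise, because admissibility of the index \emph{is}, by definition, the existence of such a pair inducing it. You have correctly identified where the difficulty lives (it is indeed constructing $\theta_s$), but you have not supplied a mechanism to carry out the construction.

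The paper's proof fills precisely this hole by an honest combinatorial argument. It invokes \cite[Proposition~4.9]{Helmincktwo}, which says $(\Psi_s,\theta)$ is admissible if and only if the relative rank-one subsystem $(\Phi_s(\lam),\theta)$ is admissible for each simple restricted root $\lam$; since $\Phi_s(\lam)$ is an endoscopic subsystem of $\Phi(\lam)$, this reduces the problem to a finite check. One then runs through the admissible absolutely irreducible rank-one Satake diagrams (types $A_l,B_l,C_l,D_l,F_4$, plus $D_2$), passes to the affine diagram of the dual, deletes white nodes (the endoscopic ``descent'' move), and verifies case by case that the resulting diagram-with-coloring is again admissible; Table~\ref{tab:satake} records the nontrivial cases. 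That finite verification, not the reconstruction of $\theta_s$, is the heart of the proof. Your items (a)(i), (a)(iv), and (b) are essentially in agreement with what the paper does (the paper's treatment of (a)(iv) is terser, via the identification $\Phi_{s,0}(\theta)=\Phi_0(\theta)$ under \eqref{no anisotropic kernel}, but your Weyl-centralizer argument is a legitimate elaboration). You should replace the circular sketch for (c) with the rank-one reduction and case check.
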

\begin{proof}
We use \cite[Theorem 10.45]{Helminckrational} to reduce this to the claims that 
  \begin{enumerate}
        \item\label{silly1 endo} $\I_s$ is a basic $\Ga_\theta$-index,
        \item\label{silly2 endo} $(X^\ast(\rA), \De_s, \De_0(\Ga), \sig)$ is an admissible $\Ga$-index,
        \item\label{silly3 endo} $(X^\ast(\rA), \De_s, \De_0(\theta), \theta^\ast)$ is an admissible $\theta$-index.
               % \item\label{silly4} the index $(X^\ast(\rA)_0, \De_0(\Ga), \De_0(\theta), \sig_\ast|_{\De_0} \theta^\ast|_{\De_0(\theta)})$ is an admissible $(\Ga,\theta)$-index.
    \end{enumerate}
    Note that the fourth criterion \eqref{silly4} is vacuous in our setting. Criteria \eqref{silly2 endo} is classical, since if $\G$ is the quasi-split reductive $k$-group associated to $\Psi$, this gives the Tits index of the quasi-split endoscopic group $\G_s$. We therefore need only to consider \eqref{silly1 endo} and \eqref{silly3 endo}.

    We claim that \eqref{silly1 endo} is immediate by considering the definition of  a basic $\Ga_\theta$-index given in Section \ref{Section: rational involutions}. There are four properties. We verify the first by noting that since \eqref{no anisotropic kernel} holds $$\Phi_{s,0}(\Ga_\theta)=\Phi_{s,0}(\theta)= \Phi_0(\theta)=\Phi_0(\Ga_\theta),$$ so that any irreducible component $\Phi'\subset\Phi_{s,0}(\Ga_\theta)$ is contained in $\Phi_{s,0}(\theta)$. Now the next two properties are vacuous as $\De_{s,0}(\Ga)=\De_0(\Ga)=\emptyset$, and the final property is the same as that of $\I$ since $\Phi_0(\theta) = \Phi_{s,0}(\theta)$.

To verify \eqref{silly3}, we need to recall some more notation from \cite{Helmincktwo}. Recall the sublattice $\Y_0(\theta)\subset \Y$. Set $\overline{\Y}_0: = Y/\Y_0(\theta)$ and let $\pi:\Y\lra \overline{\Y}_0$ denote the natural projection.  The image $\overline{\Phi}:=\pi(\Phi-\Phi_0)$ is the set of restricted roots of $\Phi$ relative to $\theta$. This is a root system with set of simple roots $\overline{\De}:=\pi(\De-\De_0)$; its cardinality is the \emph{relative rank} of $(\Psi,\theta)$. 

For any $\lam \in \overline{\Phi}$ such that $\frac{1}{2}\lam\notin \overline{\Phi}$, consider the set $\Phi(\lam)$ consisting of roots that project to an integral multiple of $\lam$. Note $\Phi_0\subset \Phi(\lam)$ for all  $\lam\in \overline{\Phi}$. Now define $\Y(\lam)$ to be the projection of $\Y$ to $\zz\Phi(\lam)\otimes_{\zz}\rr$. Setting $\check{\Y}(\lam)\subset  \check{\Y}$ to be the dual with $\check{\Phi}(\lam)\subset \check{\Phi}$, we obtain a root datum
    \[
    \Psi(\lam):=(\Y(\lam),\Phi(\lam),\check{\Y}(\lam),\check{\Phi}(\lam));
    \]
    this system is additively closed, symmetric, and stable under $\theta$ \cite[pg.71]{Borel:Tits}. Moreover, the system $(\Phi(\lam),\theta)$ has relative rank $1$. By \cite[Proposition 4.9]{Helmincktwo}, $(\Psi,\theta)$ is admissible if and only if $(\Phi(\lam),\theta)$ is admissible for each such $\lam \in \overline{\Phi}$ if and only if it is admissible for each $\lam \in \overline{\De}$.

    Now let $\lam \in \overline{\De}_s$; we may likewise consider 
    \[
     \Psi_s(\lam):=(\Y(\lam),\Phi_s(\lam),\check{\Y}(\lam),\check{\Phi}_s(\lam));
    \] this gives a $\theta$-stable sub-root system of $\Psi(\lam)$, also with relative rank $1$. It is immediate that $(\Psi_s(\lam),\theta)$ is admissible if and only if the projection to $\zz\Phi_s(\lam)\otimes_{\zz}\rr$ is, since $\Psi(\lam)$ is admissible. Noting that $\Psi_s(\lam)$ is an endoscopic root datum with involution for $\Psi(\lam)$, the claim follows once we verify it for the relative rank $1$ case.

    We thus need to consider admissible rank $1$ root data with involution, and verify that those endoscopic data with involution satisfying \eqref{endo desi 1} and \eqref{endo desi 2} are also admissible.
    
 Recall from \cite[Section 4]{Helmincktwo} that there are $6$ types of admissible absolutely irreducible indices of restricted rank $1$ (types $A_1$, $A_l\;(l\geq 3)$, $B_l\;(l\geq 2)$, $C_l\;(l\geq 3)$, $D_l\;(l\geq 3)$, and type $F_4$), and an additional irreducible one that is not absolutely irreducible (type $D_2$).  As the only proper $\theta$-stable root systems in types $A_1$ and $D_l$ $(l\geq 2)$  contain no roots (corresponding to an involution on a torus, and so admissible), we need only consider the remaining $4$ types, allowing us to assume that $\Psi(\lam)$ is absolutely irreducible.

\begin{table}[htp]
\caption{Rank-$1$ Satake diagrams and endoscopy}
    \label{tab:satake}
    \centering
    \begin{tabular}{|c|c|c|}
	\hline 
	$\textbf{Type}$ & $(\Psi(\lam),\theta)$ & $(\Psi_s(\lam),\theta)$ \\[.25 cm] \hline
 $\mathbf{A_l\:(l\geq 3)}$ & $\dynkin[edge length=.75cm,
involutions={16}]{A}{o**.**o}$ & $\dynkin[edge length=.75cm]{A}{**.**}\quad \dynkin[edge length=.75cm]{A}{o}$ \\ \hline
 & $(\SL_{l},\GL_{l-1})$ & $(S(\GL_{l-2}\times \GL_2), \GL_{l-2}\times \Gm)$ \\[.25cm] \hline
 $\mathbf{B_l\:(l\geq 3)}$ & $\dynkin[edge length=.75cm]
{B}{o**.**}$ & $ \dynkin[edge length=.75cm]{A}{o}\quad\dynkin[edge length=.75cm]{B}{**.**}$ \\[.25cm] \hline
  & $(\Spin_{2l+1},\Spin_{2l})$ &  $(\SL_2\times\Spin_{2l-1},\Gm\times \Spin_{2l-1})$ \\[.25cm] \hline
 $\mathbf{C_l\:(l\geq 3)}$ & $\dynkin[edge length=.75cm]
{C}{*o*.**}$ & $\left[\dynkin[edge length=.75cm]
{D}{*}\dynkin[edge length=.75cm]
{D}{o}\right]\quad\dynkin[edge length=.75cm]
{C}{*.**}$ \\[.25cm] \hline
 & $(\Sp_{2l},\Sp_2\times \Sp_{2l-2})$ &  $(\SO_4\times\Sp_{2l-4},\GL_2\times \Sp_{2l-4})$ \\[.25cm] \hline
% $\mathbf{D_l\:(l\geq 3)}$ & $\dynkin[edge length=.75cm]
%{D}{o**.***}$ &  $\dynkin[edge length=.75cm]
%{D}{o**.***}$ \\[.25cm] \hline
% & $(\Spin_{2l},\Spin_{2l-1})$ &  $(\Spin_{2l},\Spin_{2l-1})$ \\[.25cm] \hline
 $\mathbf{F_4}$ &$\dynkin[edge length=.75cm]
{F}{***o}$ & $\dynkin[edge length=.75cm]
{B}{***}\:\dynkin[edge length=.75cm]
{A}{o}$  \\[.25cm] \hline
 & $(F_4,\Spin_9)$ & $(\Spin_7\times^{\mu_2}\SL_2, \Spin_7\times^{\mu_2} \Gm)$ \\[.25cm] \hline
    \end{tabular}
\end{table}
 An endoscopic datum corresponds to a subset $S\subset \check{\De}\cup\{-\check{\al}_{long}\}$, where $\check{\al}_{long}$ denotes the highest long coroot. If $-\check{\al}_{long}\notin S$, then $\Psi_s(\lam)$ is a Levi sub-root system and $\De_s(\lam)\subset\De(\lam)$. However since $\De_0\subset \De_s(\lam)$, the restricted rank $1$ assumption forces $\Psi_s(\lam)=\Psi(\lam)$. We thus assume that $-\check{\al}_{long}\in S$.

 We may now handle the cases in turn: given an admissible relative rank-$1$ system $(\Psi(\lam),\theta)$, we may read off the type of $(\Psi_s(\lam),\theta)$ and verify that the Satake diagram of $(\Psi(\lam),\theta)$ is again admissible. Indeed, it is shown in \cite{RubioGelfand} that passing to descendants corresponds to removing nodes from the Satake diagram. We compute this as follows. Given the Satake diagram of $(\Psi(\lam),\theta)$, we dualize while keeping the coloring for the vertices as this data gives the appropriate Levi sub-root system, the dual of which is a Levi sub-root system. This gives the Dynkin diagram of the dual root system $\check{\Psi}(\lam)$ and encodes the dual Levi. We then pass to the affine Dynkin diagram of $\check{\Psi}(\lam)$, delete a subset of the white vertices of the non-affine diagram, and dualize the resulting (possibly disconnected) diagram. We may check each case, all of which result are easily checked to produce an admissible diagram. We reproduce the diagrams which correspond to reductive involutions along with representative symmetric pairs inducing these diagram in Table \ref{tab:satake}. 
%\textcolor{red}{The only one that differs from Sanath's calculations are the $D_n$ ones. Not sure what this means. If I understand correctly, he calculated $\Spin_{2n-3}\times \Spin_{4}/\Spin_{3}$}

Appealing to \cite[Theorem 4.16]{Helmincktwo}, this proves the claim.
\end{proof}
\quash{
\subsection{Rationality and $(\Ga,\theta)$-indices}
  Suppose now that $\G$ is a connected reductive $k$-group and let $\theta$ be a $k$-rational involution of $\G$. For a symmetric $k$-subgroup $\rH$, we highlight a variant of the rationality result in Proposition \ref{Prop: quasirational} due to Helminck \cite{Helminckrational} which incorporates the involution $\theta$. It will have the added benefit of holding over fields of odd characteristic.
  
As seen above, $\theta$-indices give a combinatorial object designed to classify involutions of reductive groups over an algebraically closed field. Here we recall the enhancement-- called a $(\Ga,\theta)$-index--  introduced in \cite{Helminckrational} to classify involutions over arbitrary fields. As we explain in Lemma \ref{Lem: same on torus} below, the $(\Ga,\theta)$-index records the homogeneous spherical datum of the variety $\G^\theta\backslash\G$. % In this section we recast some of the rationality results of Helminck in the setting of symmetric varieties as we find this more natural when dealing with the dual group. 
This framework is useful in verifying that the proposed endoscopic spherical data (see Section \ref{Section: endo varieties}) satisfy Luna's axioms and descend to a $k$-rational symmetric variety.

\subsubsection{$(\Ga,\theta)$-Indices}%\label{Section: indices} %We now recall the outlines of Helminck's work classifying symmetric $k$-varieties \cite{Helminckrational}. 
Suppose now that $\Phi$ is a root datum as before, endowed with a $\Ga$-action and an involution $\theta$ such that $\sig\circ\theta = \theta\circ \sig$ for all $\sig\in \Ga$. To a $k$-rational symmetric pair $(\G,\theta)$ and a choice of admissible pair $(\rA,\B)$, one may associate a tuple
 \[
 \I= (X^\ast(\rA), \De, \De_0,\De_0(\theta), \sig_\ast,\theta^\ast),
 \]
 called a \textbf{$(\Ga,\theta)$-index},  where
 \begin{enumerate}
     \item $\I_k:=(X^\ast, \De, \De_0, \sig_\ast)$ is the (Tits) $\Ga$-index for $\G$ so that $\De_0\subset \De$ determines the anisotropic kernel of $\G$ and $\sig_\ast$ denotes the $\ast$-action of $\Ga$ on the Dynkin diagram, 
     \item $\I_\theta:=(X^\ast, \De, \De_0(\theta),\theta^\ast)$ is a $\theta$-index of the symmetric variety $\X=\rH\backslash\G$, where $\theta^\ast= -w_\theta\theta$ is the induced diagram automorphism.
 \end{enumerate}
 One can define a $(\Ga,\theta)$-index axiomatically as in \cite{Helminckrational}. A $(\Ga,\theta)$-index $\I$ is called an \emph{admissible} $(\Ga,\theta)$-index if it arises from a $k$-rational symmetric pair $(\G,\theta)$ in this way. 
 
 One of the main results of \cite{Helminckrational} is a characterization of such admissible indices. We will only have use of this notion when $\G$ is taken to be quasi-split, so we ignore the discussion of the anisotropic kernel and consider an $(\Ga,\theta)$-index of the form
 \[
 \I= (X^\ast(\rA), \De, \emptyset,\De_0(\theta), \sig_\ast,\theta^\ast).
 \]The critical result \cite[Theorem 10.45]{Helminckrational} states that such an index is admissible if and only if 
    \begin{enumerate}
        \item\label{silly1} $\I_0$ is a basic $\Ga^\ast_\theta$-index,
        \item\label{silly2} $(X^\ast(T), \De, \emptyset, \sig_\ast)$ is an admissible $\Ga$-index,
        \item\label{silly3} $(X^\ast(T), \De, \De_0(\theta), \theta^\ast)$ is an admissible $\theta$-index.
    \end{enumerate}
    There is a fourth criterion involving the induced involution on the anisotropic kernel, but it is rendered void due to our assumption that $\G_0$ is quasi-split. Criteria \eqref{silly2} and \eqref{silly3} are immediate as the $\Ga$-index precisely corresponds to the quasi-split inner form $\G_0$ (so is admissible), and the $\theta$-index is the same as that of $(\G,\theta)$ (hence, admissible).
    satisfy an additional condition referred to in \cite[Section 10]{Helminckrational} as a \emph{basic $\Ga_\theta$-index} (cf. \cite[Defintion 10.40]{Helminckrational}, and also the proof of Proposition \ref{Prop: index for qs} below).

    Finally, the properties of basic $\Ga_\theta$-indices are preserved in the passage from $\I$ to $\I_0$.  We recall that this means that the $\Ga_\theta:=\Ga\times\la -\theta\ra$-basis $\De$ determined by $\B$ satisfies
 \begin{enumerate}
     \item if $\Phi\subset \Phi_0(\Ga_\theta)$ is an irreducible component then, $\Phi\subset \Phi_0(\theta)$ or $\Phi\subset \Phi_0(\Ga)$,
     \item $\De_0(\Ga)$ is $\theta^\ast$-stable, and
     \item $\De_0(\theta)$ is $\sig_\ast$-stable for all $\sig\in \Ga$.
 \end{enumerate}
% Indeed, $\Phi_0(\Ga^\ast_\theta)=\Phi_0(\theta)$ so the first property follows from the corresponding statement for $\I$ (cf. Lemma \ref{Lem: basic order}). The second criterion is also void since $\De_0(\Ga^\ast)=\emptyset$. Finally, the third property is identical for the two indices as it depends only on the properties of the involution over the algebraic closure. This shows that $\I_0$ satisfies the criteria of being a basis $\Ga_\theta$-index.
}

\section{Symmetric varieties and indices}\label{Section: symmetric data}
In this section, we review the geometric properties of symmetric varieties, as well as establish useful rationality properties. This includes generalizing the notion of a $z$-extension from \cite{Kottwitzrational} to the symmetric setting. We also completely explicate the spherical data for a symmetric $k$-variety. In Section \ref{Section: indices}, we relate this spherical data to the associated a $(\Ga,\theta)$-index, which is a useful variant of $\Omega_\X$ for a symmetric variety.  In Section \ref{Section: reductions}, we relate the spherical roots systems of a symmetric variety to symmetric varieties for $\G_{der}$, $\G_{sc}$, and $z$-extensions; this is important for the arguments of Section \ref{Section: color autos}.

\subsection{Generalities on symmetric pairs and varieties}\label{Section: generalities}
A {symmetric pair} denotes a triple $(\G,\rH,\theta)$ where $\rH\subset \G$ are reductive groups over $k$ and $\theta$ is an involution of $\G$ such that 
\[(\G^\theta)^\circ\subset\rH\subset N_{\G}(\G^\theta).
\]
We do not require that $\rH$ is connected, but always assume it is smooth. The quotient variety ${\X}:=\rH\backslash\G$ is called the \textbf{symmetric variety,} which we say is associated to the involution $\theta$.

Fix a symmetric pair $(\G,\rH,\theta)$. When it will not lead to confusion, we frequently refer to $(\G,\rH)$ or $(\G,\theta)$ as a symmetric pair. %. which somewhat justifies the (standard) use of the word ``pair.'' 
Define the \emph{symmetrization map}
\begin{align*}
    \tau:\G&\lra \G\\
        g&\longmapsto g^{-1}\theta(g).
\end{align*}
It is well known \cite[Lemma 2.4]{Richardson} that $\tau$ induces an embedding (also referred to as the \emph{symmetrization map}) of affine $\G$-varieties
\[
s:\G^\theta\backslash \G\lra \G.
\]
In particular, when $\X=\G^\theta\backslash \G$ we may identify $\X$ as a closed subvariety of $\G$. More generally, we set $\X_\theta=\G^\theta\backslash \G$. If $\rH\subset \G^\theta$, we obtain a finite morphism $s=s_\X:\rH\backslash \G\to \G$. In all cases, since $N_\G(\G^\theta)/Z(\G) = [\G_{ad}]^{\theta_{ad}}$ surjects onto the fixed point subgroup of the induced involution $\theta_{ad}$ on $\G_{ad}$, there exists a morphism $$s_{ad}:\rH\backslash\G\lra \X_{ad} \subset \G_{ad};$$ we refer to this as the \emph{adjoint symmetrization map}. 

Viewing ${\X}$ as an $\rH$-variety, let ${\X}^{ss}$ (resp. ${\X}^{rss}$) denote the semi-simple locus (resp. regular semi-simple locus) of ${\X}$. The symmetrization map allows us to relate these notions to semi-simplicity of elements in $\G$ \cite[Theorem 7.5]{Richardson}.
\quash{\begin{Lem}\cite[Theorem 7.5]{Richardson}\label{Lem: semi-simple match}
 Using the symmetrization map, we identify ${\X}\subset \G$ as a closed subvariety of $\G$. Then $x\in{\X}$ is $\rH$-semi-simple if and only if $s_{ad}(x)\in \G^{ss}_{ad}$ is semi-simple as an element of $\G$. In particular,
 \[
 {\X}^{ss}=s_{ad}^{-1}\left({\X}\cap \G_{ad}^{ss}\right).
 \]
\end{Lem}}
\quash{The relationship between ${\X}^{rss}$ and $\G^{rss}$ is more subtle. While in general they are unrelated, the certain nice symmetric varieties satisfy
\begin{equation}\label{eqn: quasi-split condition}
    {\X}^{rss}={\X}\cap \G^{rss};
\end{equation}
this is because the symmetric varieties we consider are \emph{quasi-split}: the centralizer in $\G$ of a maximal $\theta$-split torus $A\subset {\X}$ is a torus; see \cite[Section 1.2]{LeslieSpringer} for a discussion on quasi-split symmetric pairs. It is easy to see that over an algebraically closed field (\ref{eqn: quasi-split condition}) is equivalent to $(\G,\rH)$ begin quasi-split.
}

We will have occasion to consider multiple involutions, so we introduce some notation. We say that two involutions $\theta$ and $\theta'$ are conjugate if there exists $g\in \G(k)$ such that $\theta' = \Ad(g)^{-1}\circ \theta\circ \Ad(g)$. We say $\theta'$ is a \textbf{pure inner ($\G$-)twist of $\theta$} if there exists $g\in \G(\kbar)$ such that $\theta' = \Ad(g)^{-1}\circ \theta \circ \Ad(g)$ satisfying $\tau(g)=g^{-1}\theta(g)\in \G(k)$ and that the $1$-cocycle  $$\sig\mapsto g^{-1}\sig(g)\in \G^\theta(\kbar).$$ 
Clearly, conjugacy classes of pure inner twists of $\theta$ are parameterized by $\ker^1(\G^\theta,\G;k)$. 
The following is an easy Galois cohomology exercise.%\textcolor{red}{This might be wrong, needing to take the conjugacy into account}
\begin{Lem}\label{Lem: pure inner twist invol}
There is a natural bijection 
\[
\G^\theta\backslash \G(k)=\bigsqcup_{\theta'\in \ker^1(\G^\theta,\G;k)}  \tau(g')\cdot\G(k),
\]
where $\theta'=\Ad(\tau(g'))\circ \theta$. Moreover, there is a $\G$-equivariant isomorphism $\G^\theta\backslash\G\to \G^{\theta'}\backslash\G$ if and only if $\theta'$ is a pure inner twist of $\theta$ by a class in $\ker^1(\G^\theta,\G;k)$.
%In particular, 
%\begin{align}\label{eqn: fibers 3}
%\mu_N^{-1}(\mu_N(\theta))\iso \ker[H^1(k,N_{\G}(\rH))\lra H^1(k,\G)].
%\end{align}
\end{Lem}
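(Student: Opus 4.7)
My plan is to prove both assertions by translating the claims into Galois cohomology using the symmetrization map $s:\G^\theta\backslash\G\hookrightarrow \G$ and the exact sequence $1\to \G^\theta\to \G\to \G^\theta\backslash\G\to 1$.

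For the first assertion, I will apply the long exact sequence in nonabelian Galois cohomology:
\[
\G^\theta(k)\lra \G(k)\lra (\G^\theta\backslash\G)(k)\overset{\partial}{\lra} H^1(k,\G^\theta)\lra H^1(k,\G).
\]
This yields a canonical bijection between $\G(k)$-orbits on $(\G^\theta\backslash\G)(k)$ and $\ker^1(\G^\theta,\G;k)$. Given a $k$-point $x\in(\G^\theta\backslash\G)(k)$, write $x=\G^\theta g'$ with $g'\in\G(\kbar)$; rationality of $x$ forces $g'\sigma(g')^{-1}\in\G^\theta(\kbar)$ for every $\sigma\in\Ga$, and $\partial(x)$ is represented by this cocycle. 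Applying $s$, the orbit $\G^\theta g'\cdot\G(k)$ maps to $\tau(g')\cdot\G(k)$, where the action of $\G(k)$ on the image corresponds to the $\theta$-twisted conjugation $\tau(g')\mapsto h^{-1}\tau(g')\theta(h)$, a direct consequence of $\tau(g'h)=h^{-1}\tau(g')\theta(h)$. Conversely, any class $[z]\in\ker^1(\G^\theta,\G;k)$ lifts to a coboundary in $\G$, so we may choose $g'\in\G(\kbar)$ with $z(\sigma)=g'\sigma(g')^{-1}$; this automatically gives $\tau(g')\in\G(k)$ (by applying $\theta$ to the cocycle identity and using $\theta(z(\sigma))=z(\sigma)$), producing the desired orbit.

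For the involution $\theta' = \Ad(\tau(g'))\circ\theta$, I will verify three algebraic facts: (i) $\theta'$ is $k$-rational, since $\tau(g')\in\G(k)$ and $\theta$ is $k$-rational; (ii) $\theta'$ is an involution, via the identity $\tau(g')\cdot\theta(\tau(g'))=(g'^{-1}\theta(g'))\cdot(\theta(g')^{-1}g')=1$; and (iii) the rewriting $\Ad(\tau(g'))\circ\theta=\Ad(g')^{-1}\circ\theta\circ\Ad(g')$, which implies $\G^{\theta'}=g'^{-1}\G^\theta g'$. Together with the cocycle condition on $g'$, this exhibits $\theta'$ as a pure inner twist of $\theta$ whose class is $[z]\in\ker^1(\G^\theta,\G;k)$.

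For the second assertion, the forward direction proceeds by analyzing a hypothetical $\G$-equivariant $k$-isomorphism $\psi:\G^\theta\backslash\G\to\G^{\theta'}\backslash\G$. Writing $\psi(\G^\theta\cdot 1)=\G^{\theta'}\cdot g^{-1}$ for some $g\in\G(\kbar)$, equivariance forces $\G^{\theta'}=g^{-1}\G^\theta g$, while $\Ga$-equivariance of $\psi$ applied to the base point yields $g\sigma(g)^{-1}\in\G^\theta(\kbar)$, whose class manifestly lies in $\ker^1(\G^\theta,\G;k)$. One then checks $\theta'=\Ad(g)^{-1}\circ\theta\circ\Ad(g)$ using that both sides are $k$-rational involutions with the same fixed subgroup $g^{-1}\G^\theta g$ and agree on a maximal torus (or equivalently, that the involution is determined by its conjugation action and fixed subgroup together with its action on the center). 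For the reverse direction, given such a $g$, the map $\G^\theta h\mapsto \G^{\theta'}g^{-1}h$ is a well-defined $\G$-equivariant morphism (since $g^{-1}\G^\theta g=\G^{\theta'}$), and the cocycle condition $g\sigma(g)^{-1}\in\G^\theta(\kbar)$ ensures that it descends to $k$. The main technical point -- and the place requiring the greatest care -- is keeping the cocycle conventions consistent between the symmetrization viewpoint ($\tau(g')\in\G(k)$) and the pure-inner-twist viewpoint (values in $\G^\theta$), which amounts to the translation $\tau(g')\in\G(k)\iff g'\sigma(g')^{-1}\in\G^\theta(\kbar)$ verified by applying $\theta$ to both sides of the rationality condition on $\tau(g')$.
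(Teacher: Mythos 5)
Your proof is correct, and since the paper dismisses the lemma as ``an easy Galois cohomology exercise'' without giving a proof, there is no argument to compare against; the approach you take (long exact sequence of pointed sets for $1\to\G^\theta\to\G\to\G^\theta\backslash\G\to 1$, translation under the symmetrization map, and $k$-rationality of the equivariant $\kbar$-isomorphism $\G^\theta h\mapsto\G^{\theta'}g^{-1}h$) is the expected one. Two small remarks. First, you are right to flag the cocycle conventions as the delicate point, and in fact your computations quietly correct the paper: the condition that makes the equivariant morphism $\G^\theta h\mapsto\G^{\theta'}g^{-1}h$ descend to $k$ is $g\sigma(g)^{-1}\in\G^\theta(\kbar)$ (equivalently $\tau(g)\in\G(k)$), not $g^{-1}\sigma(g)\in\G^\theta(\kbar)$ as the paper's definition of ``pure inner twist'' literally states --- these two conditions are genuinely different for nonabelian $\G^\theta$, and with the paper's literal convention the reverse implication of the second assertion would fail. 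Second, your justification for $\theta'=\Ad(g)^{-1}\circ\theta\circ\Ad(g)$ in the forward direction (``agree on a maximal torus... action on the center'') is a bit loose; the clean argument is that two involutions with the same smooth fixed-point subgroup $\rH$ have identical $\pm1$-eigenspace decompositions of $\fg$ (the $+1$-eigenspace is $\Lie\rH$ and the $-1$-eigenspace its unique $\Ad(\rH)$-stable complement), hence agree as automorphisms of the connected group $\G$. This is not a gap --- the conclusion you reach is correct --- but the stated reason would not persuade a skeptical reader.
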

%For example, this implies that if $V_1$ and $V_2$ are two odd dimensional Hermitian spaces over a $p$-adic local field $k$, then $\U(V_1\oplus V_2)$ is necessarily quasi-split (since all unitary groups of odd rank are quasi-split over a $p$-adic field). This is consistent with the classification of even rank Hermitian spaces: if $V$ is even dimensional and not split then there exists an integer $k$ and a rank $2$ anisotropic subspace $V_{ani}\subset V$ such that
%\[
%V= V_{ani}\oplus \mathrm{H}^k
%\]
%where $\mathrm{H}$ is a hyperbolic plane. In particular, any decomposition $V=V_1\oplus V_2$ of $V$ into non-degenerate Hermitian subspaces requires
%\[
%\dim(V_1)\equiv \dim(V_2)\equiv \dim(V)\equiv 0\pmod{2}.
%\]

\subsubsection{Descendants}
For $x\in {\X}^{ss}(k)$, let $\G_x$ denote the connected component of the identity of the centralizer in $\G$ of the element $s_{ad}(x)\in\G_{ad}(k)$ and $\rH_x$ its stabilizer in $\rH\cap \G_x$. Then $(\G_x,\rH_x,\theta|_{\G_x})$ is a symmetric pair, which we refer to as as the \textbf{descendant} of $(\G,\rH)$ at $x$ (this is a slight generalization of the concept in \cite[Definition 7.2.2]{AizGourdescent}). We will also refer to the symmetric variety ${\X}_x:=\rH_x\backslash \G_x$ as the descendant of ${\X}$ at $x.$ 

%\begin{Rem}
%    An important fact is that this definition makes sense for an \emph{arbitrary semi-simple element} of $\X$. This plays an important role in the notion of endoscopy defined in Section \ref{Section: endoscopy defs}.
%\end{Rem}
%\begin{Def}
%Let $\X=\rH\backslash\G$ be a symmetric variety over $k$. We define the \textbf{center} of $\X$, denoted $Z(\X)$ to be the closed sub-scheme defined by 
%\[
%Z(\X)(R)=\{x\in \X(R): \X_x=\X\},
%\]
%for any $k$-algebra $R$. With respect to the symmetrization map, $Z(\X)= s^{-1}(s(\X)\cap Z_{\G})$.
%We also set $Z_{\G,\rH}=\rH\cap Z(\G)$.
%\end{Def}

\quash{
\textcolor{red}{CUT?}There are two natural closed immersions of ${\X}_x$ into ${\X}$. The first is simply given by restriction of $s$ to $\G_x$. For the second, define the \emph{symmetrization map at $x$} by
\begin{align*}%\label{eqn: symmetrize at x}
    s_x: \G_x&\lra {\X}\\
        g&\longmapsto gx\sig(g)=xs(x)\nonumber.
\end{align*}
The following is immediate.
\begin{Lem}\label{Lem: symmetr at x}
 There is a commutative diagram
 \[
 \begin{tikzcd}
 &\G_x\ar[dl,swap,"s"]\ar[dr,"s_x"]&\\
 {\X}_x\ar[rr,"x\cdot"]&&{\X}.
 \end{tikzcd}
 \]
In particular, $s_x$ induces a closed immersion of affine $\G_x$-varieties
\begin{align*}%\label{eqn: symmetrize at x}
    s_x: {\X}_x&\lra {\X}\\
        y&\longmapsto xy\nonumber.
\end{align*}
\end{Lem}}
\begin{Rem}
    There are two notions of degenerating a symmetric variety in the literature: semi-simple descent as just described and the notion of boundary degeneration as described in \cite[Proposition 2.4.3]{SakVenk}. A crucial aspect of descent for symmetric varieties is the ability to degenerate \emph{at any semi-simple element}; boundary degeneration only applies to degeneration with respect to tori. The ability to descend with respect to torsion elements is fundamental for elliptic endoscopy.
\end{Rem}

\subsection{Compatible $z$-extensions}
Suppose that $\G$ is a connected reductive group over a field $k$ and let $(\G,\theta)$ be a symmetric pair. Let $\G_{der}\subset \G$ denote the derived subgroup and $\pi:\G_{sc}\to \G_{der}$ the simply connected cover of $\G_{der}$. By a result of Steinberg \cite[9.16]{Steinberg}, there exists a unique involution $\theta_{sc}:\G_{sc}\to \G_{sc}$ such that the diagram
\[
\begin{tikzcd}
{\G}_{sc}\ar[d,"\pi"]\ar[r,"\theta_{sc}"]&\G_{sc}\ar[d,"\pi"]\\
{\G}_{der}\ar[r,"\theta"]&\G_{der}
\end{tikzcd}
\]
commutes. The uniqueness of this involution implies that it is defined over $k$.

Let $L/k$ be a finite Galois extension over which $\G$ splits. Recall that a $z$-extension associated to $L/k$ is a homomorphism $\al: \tilde{\G}\to \G$ such that 
\begin{enumerate}
    \item $\tilde{\G}$ is a connected reductive group over $k$ whose derived group is simply connected,
    \item $\al$ is surjective, and
     \item $\ker(\al)$ is central in $\tilde{\G}$ and isomorphic to a product of tori of the form $\mathrm{Res}_{L/k}(\Gm)$.
\end{enumerate}
%Let $L/k$ be a finite Galois extension over which $\G$ splits. We call a $z$-extension \emph{associated to $L/k$} if $\ker(\al)$ is isomorphic to a product of tori of the form $\mathrm{Res}_{L'/k}(\Gm)$ for field extensions $F\subset L'\subset L$. Such $z$-extensions exist for any such $L/F$ \cite[Lemma 1.1 (1)]{Kottwitzrational}. 

\begin{Prop}\label{Prop: good cover}
    Let $L/k$ be a finite Galois extension over which $\G$ splits. There exists a $z$-extension $\al:\tilde{\G}\to \G$ associated to $L/k$ and an involution $\tilde{\theta}:\tilde{\G}\to \tilde{\G}$ such that
    \begin{enumerate}
        \item     the diagram
   \[
    \begin{tikzcd}
    \tilde{\G}\ar[d,"\al"]\ar[r,"\tilde{\theta}"]&\tilde{\G}\ar[d,"\al"]\\
    \G\ar[r,"\theta"]&\G    
    \end{tikzcd}
    \]
    commutes, and
    \item $\widetilde{\G}^{\widetilde{\theta}}$ is a $z$-extension of $\G^\theta$. In particular, $\widetilde{\G}^{\widetilde\theta}(k)$ surjects onto $\G^\theta(k)$.
     \end{enumerate}
    \quash{ Finally, we may choose $\widetilde{\G}$ such that $\al$ induces a $k$-rational isomorphism 
     \[
\widetilde{\X} =\widetilde{\G}/\widetilde{\rH}\iso \rH\backslash\G=\X.
     \]}
\end{Prop}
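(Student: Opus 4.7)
The plan is to produce $(\tilde\G,\tilde\theta)$ by a controlled lifting of $(\G,\theta)$. Two inputs are key: (i) Steinberg's theorem, which supplies a unique $k$-rational involution $\theta_{sc}$ on $\G_{sc}$ lifting $\theta|_{\G_{der}}$ (as already noted in the discussion preceding the proposition); and (ii) the flexibility to enlarge the central part of $\G$ by an induced torus in a $\theta$-equivariant manner. A standard $z$-extension has the form $\tilde\G=(\G_{sc}\times T_1)/K$ for an induced central torus $T_1$ and a finite central subgroup $K$ encoding the identification of $Z(\G_{sc})$ between the two factors, so the task reduces to choosing $T_1$ and extending $\theta$ to $T_1$ compatibly, and then verifying the fixed-point statement.

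First I would produce $\theta_{sc}$ via Steinberg. Then I would construct a $\theta$-equivariant short exact sequence of $k$-tori
\[
1\lra Z_1 \lra T_1 \lra Z(\G)^\circ\lra 1,
\]
with $T_1$ induced and split by $L$, arranged so that both $Z_1$ and the fixed subtorus $Z_1^{\tilde\theta}$ are also induced. Dually, this amounts to choosing a surjection of $\zz[\Ga\times\langle\theta\rangle]$-modules from a permutation module onto $X^\ast(Z(\G)^\circ)$ whose kernel is a permutation module with permutation-module $\theta$-coinvariants. A doubling construction achieves this: start with any induced $\zz[\Ga]$-module $M_0$ split by $L$ and surjecting onto $X^\ast(Z(\G)^\circ)$, replace it by $M_0\oplus M_0$ with $\theta$ acting by swap of factors, and surject $\theta$-equivariantly onto $X^\ast(Z(\G)^\circ)$ by $(a,b)\mapsto a+\theta(b)$. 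The kernel and its $\theta$-fixed part are both again permutation $\Ga$-modules, so the corresponding tori are induced. Setting $\tilde\G:=(\G_{sc}\times T_1)/K$ with $K$ the antidiagonal copy of $Z(\G_{sc})$ (embedded into $T_1$ via any $\theta$-equivariant lift of $Z(\G_{sc})\to Z(\G)^\circ$, whose existence is a standard divisibility argument), $\tilde\theta$ induced by $(\theta_{sc},\theta_1)$, and $\al$ induced by $T_1\twoheadrightarrow Z(\G)^\circ$, one checks routinely that $\tilde\G_{der}=\G_{sc}$ is simply connected, $\ker(\al)=Z_1$ is induced, and $\al\circ\tilde\theta=\theta\circ\al$. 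This proves (1).

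For (2), the restriction of $\al$ gives $\tilde\G^{\tilde\theta}\to\G^\theta$ with kernel $Z_1^{\tilde\theta}$, an induced torus by construction. Surjectivity on $k$-points — hence as algebraic groups, since both sides are smooth — follows by a standard cocycle-twisting argument: for any $g\in\G^\theta(k)$ and any lift $\tilde g\in\tilde\G(k)$, the element $c:=\tilde g^{-1}\tilde\theta(\tilde g)$ lies in $\{z\in Z_1(k):\tilde\theta(z)\cdot z=1\}$, and the obstruction to modifying $\tilde g$ by some $z\in Z_1(k)$ so that $\tilde\theta(z)z^{-1}=c^{-1}$ lies in $H^1(k,Z_1^{\tilde\theta})$ via the sequence $1\to Z_1^{\tilde\theta}\to Z_1\xrightarrow{z\mapsto\tilde\theta(z)z^{-1}}Z_1^{-,\tilde\theta}\to 1$. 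This $H^1$ vanishes by Shapiro's lemma and Hilbert~90 because $Z_1^{\tilde\theta}$ is induced, establishing surjectivity on $k$-points and giving (2).

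The main obstacle is ensuring that both $Z_1$ and $Z_1^{\tilde\theta}$ are induced tori; the doubling trick accomplishes this at the cost of possibly enlarging the kernel beyond what a minimal classical $z$-extension would require, but this is harmless. The rest is routine bookkeeping, and the surjectivity $\tilde\G^{\tilde\theta}(k)\twoheadrightarrow\G^\theta(k)$ is precisely the feature that makes such covers useful: they trivialise the cohomological obstructions to lifting rational points compatibly with $\theta$.
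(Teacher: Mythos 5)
Your overall skeleton matches the paper's: lift $\theta$ to $\G_{sc}$ via Steinberg, enlarge the central direction by a suitable torus to build $\tilde\G$, and use Shapiro plus Hilbert~90 for the torus kernel to get surjectivity of $\tilde\G^{\tilde\theta}(k)\to\G^\theta(k)$. Where you diverge — and where there is a real gap — is in the construction of the central torus. The paper's key input is the lemma of Milne--Shih: for the finitely generated $\mathcal{E}$-module $M = X_\ast(\rA)/X_\ast(\rA_{sc})$ with $\mathcal{E}=\Gal(L/k)\times\langle\theta\rangle$, there is an exact sequence $0\to P_1\to P_0\to M\to 0$ with $P_1$ \emph{free over $\zz[\mathcal{E}]$}. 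Freeness over $\zz[\mathcal{E}]$ is what simultaneously guarantees that the kernel torus $N$ is induced over $k$ (so $\tilde\G\to\G$ is a $z$-extension), that $N^\theta$ is connected, and that $X^\ast(N^\theta)$ is a free $\zz[\Gal(L/k)]$-module (so $H^1(k,N^\theta)=0$). Your ``doubling trick'' does not secure any of these three properties.

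Concretely, the assertion that ``the kernel and its $\theta$-fixed part are both again permutation $\Ga$-modules'' is unjustified and in fact false in simple cases. Take $\Gal(L/k)=\zz/2\zz=\langle\sigma\rangle$, $M_0=\zz[\Gal(L/k)]$, and the augmentation $f\colon M_0\twoheadrightarrow\zz$ (trivial $\Gal$-action, trivial $\theta$-action). The kernel $\ker\phi\subset M_0\oplus M_0$ is then a rank-$3$ lattice on which $\sigma$ has eigenvalues $1,-1,-1$, which cannot be a permutation $\zz[\Gal(L/k)]$-module (permutation modules of rank $3$ for $\zz/2\zz$ have eigenvalue multiset $\{1,1,1\}$ or $\{1,1,-1\}$). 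The structural reason is that your construction gives only a short exact sequence $0\to K_0\to\ker\phi\to M_0\to 0$ with $K_0=\ker f$, and neither $K_0$ nor the extension need be of permutation type for an arbitrary surjection $f$; the doubling controls only the $\theta$-action on the source, not the $\Gal$-module structure of the kernel. There is also a secondary rough edge: you build from $Z(\G)^\circ$, but when $Z(\G)$ is disconnected (e.g.\ semisimple $\G$ of type $D$) the image of $Z(\G_{sc})$ need not lie in $Z(\G)^\circ$, so the posited $\theta$-equivariant lift $Z(\G_{sc})\hra T_1$ over $Z(\G)^\circ$ does not always make sense; the paper sidesteps this by working with $X_\ast(\rA)/X_\ast(\rA_{sc})$ and the pullback square, which absorbs the full center. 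To repair your argument, replace the doubling trick by a direct application of Milne--Shih to $M$ viewed as a $\zz[\mathcal{E}]$-module; the rest of your outline, including the cocycle-twisting argument in (2), then goes through.
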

We refer to the $z$-extensions satisfying the claims of the proposition as ($\theta$-)compatible. We moreover note that the construction works for any finite order $k$-automorphism of $\G$.
\begin{proof}
%   This follows from a slight augmentation of the usual construction of $z$-extensions.
Let $\G_{sc}\to \G_{der}$ denote the non-trivial covering map of the simply connected cover of $\G_{der}$. Let $Z_{sc}$ denote the center of $\G_{sc}$ and $Z\subset \G$ denote the center of $\G$. Then we may write $\G$ as the fiber sum $\G=\G_{sc}\times^{Z_{sc}}Z$ \cite[2.0.1]{deligne1979varietes}. We recall the following useful lemma.
    \begin{Lem}\label{Lem: milneshih}\cite[3.1]{milne1982conjugates}
        Let $\mathcal{E}$ be a finite group and let $M$ be a finitely generated $\mathcal{E}$-module. There exists an exact sequence of $\mathcal{E}$-modules 
        \[
        0 \lra P_1\lra P_0\lra M\lra 0
        \]
        such that $P_0$ is free and finitely generated as a $\mathbb{Z}$-modules and $P_1$ is free as a $\zz[\mathcal{E}]$-modules.
    \end{Lem}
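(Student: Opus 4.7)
The plan is to construct the sequence explicitly in three steps.

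First, since $M$ is finitely generated as a $\zz[\mathcal{E}]$-module, I would choose a surjection $\phi : F \twoheadrightarrow M$ from a finitely generated free $\zz[\mathcal{E}]$-module $F = \zz[\mathcal{E}]^m$ and let $K = \ker(\phi)$. Because $F$ is $\zz$-free and finitely generated, so is $K$, though $K$ need not be $\zz[\mathcal{E}]$-free, nor is the inclusion $K \hookrightarrow F$ necessarily $\zz$-saturated (it fails precisely when $M$ has $\zz$-torsion).

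Second, I would produce a $\zz$-saturated $\mathcal{E}$-equivariant embedding $\iota : K \hookrightarrow L$ into a finitely generated free $\zz[\mathcal{E}]$-module $L$.  For this, consider the $\zz$-linear dual $K^\vee = \Hom_\zz(K,\zz)$ with the contragredient $\mathcal{E}$-action, which is again $\zz$-free finitely generated. Pick any surjection $\psi : F_1 \twoheadrightarrow K^\vee$ with $F_1 = \zz[\mathcal{E}]^n$. Dualizing the short exact sequence $0 \to \ker\psi \to F_1 \to K^\vee \to 0$, and using reflexivity $K^{\vee\vee} \simeq K$ together with the canonical self-duality $\zz[\mathcal{E}]^\vee \simeq \zz[\mathcal{E}]$ of $\mathcal{E}$-modules, gives a short exact sequence
\[
0 \to K \xrightarrow{\iota} L \to (\ker\psi)^\vee \to 0,
\]
with $L = F_1^\vee \simeq \zz[\mathcal{E}]^n$ free of finite rank and $(\ker\psi)^\vee$ $\zz$-free. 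In particular, $\iota$ is saturated.

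Third, I would form the pushout
\[
P_0 := (F \oplus L) \big/ \{(k,\, -\iota(k)) : k \in K\}
\]
of the diagram $F \hookleftarrow K \xhookrightarrow{\iota} L$. A diagram chase, using the injectivity of $\iota$, shows that the induced maps $F \to P_0$ and $L \to P_0$ are both injective; that $(f,\ell) \mapsto \phi(f)$ descends to a surjection $P_0 \twoheadrightarrow M$; and that the kernel of this map coincides with the image of $L$. Thus we obtain an exact sequence $0 \to L \to P_0 \to M \to 0$ with $P_1 := L$ free over $\zz[\mathcal{E}]$, and $P_0$ is finitely generated over $\zz$ as a quotient of $F \oplus L$.

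The main technical point is showing that $P_0$ is $\zz$-torsion-free, equivalently that the relation submodule $R = \{(k,-\iota(k)) : k \in K\}$ is $\zz$-saturated in $F \oplus L$. This is exactly where the saturation of $\iota$ is needed: if $n(f,\ell) \in R$ for some $n \geq 1$, then $nf \in K$ and $n\ell = -\iota(nf) \in \iota(K)$, so the saturation of $\iota$ forces $\ell \in \iota(K)$, say $\ell = -\iota(k')$. Combining $\iota(nf) = n\iota(k')$ with injectivity of $\iota$ yields $nf = nk'$, hence $f = k' \in K$, so $(f,\ell) \in R$. Without this saturation (for instance, had we just taken any embedding $K \hookrightarrow \zz[\mathcal{E}]^n$ after clearing denominators via Maschke), the pushout would pick up $\zz$-torsion whenever $M$ does, and the construction would collapse — securing the saturated embedding via the dualization trick is what makes the proof go through.
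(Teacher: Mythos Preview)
Your proof is correct. The paper itself does not prove this lemma but simply cites the original source \cite{milne1982conjugates}; your argument is essentially the standard one (dualize to get a saturated embedding of the kernel into a free $\zz[\mathcal{E}]$-module, then push out), and all the steps check out, including the key saturation argument ensuring $P_0$ is $\zz$-torsion-free.
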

    
    Now let $\rA\subset \G$ be a $\theta$-stable maximal $k$-torus of $\G$ that splits over $L/k$, and let $\rA_{sc}$ be the inverse image of $\rA$ in $\G_{sc}$; it is clearly a $\theta_{sc}$-stable maximal $k$-torus of $\G_{sc}$. By assumption, both lattices $X_{\ast}(\rA)$ and $X_{\ast}(\rA_{sc})$ are finitely generated $\mathcal{E}:=\Gal(L/k)\times \la\theta\ra$-modules.
    
Set $M=X_{\ast}(\rA)/X_\ast(\rA_{sc})$. Applying Lemma \ref{Lem: milneshih} to this module, we obtain the commutative diagram of $\mathcal{E}$-modules
\begin{equation}
    \begin{tikzcd}
            &&X_\ast(\rA_{sc})\ar[d,"\iota"]\ar[r,"="]&X_\ast(\rA_{sc})\ar[d]&\\
   0\ar[r]&P_1\ar[d,"="]\ar[r]&\widetilde{X}\ar[d]\ar[r]&X_{\ast}(\rA)\ar[d]\ar[r]&0\\
    0\ar[r]&P_1\ar[r]&P_0\ar[r]&M\ar[r]&0.
    \end{tikzcd}
\end{equation}
By construction, there is a natural involution $\theta$ on each term in this diagram, and each arrow is $\theta$-equivariant. Moreover, the central row induces the exact sequence of tori
\[
1\lra N\lra \widetilde{\rA}\lra \rA\lra 1,
\]
where $N$ is an induced torus. Note that the automorphism $\theta$ induces a unique involution $\widetilde{\theta}$ on $\widetilde{\rA}$ lifting the involution $\theta$. Pulling back along the inclusion $Z\hra \rA$, we obtain the short exact sequence
\[
1\lra N\lra \widetilde{Z}\lra Z\lra 1.
\]
Set $\widetilde{\G}=\G_{sc}\times^{Z_{sc}}{\widetilde{Z}}$; this comes equipped with a  surjective morphism
\[
1\lra N\lra \widetilde{\G}\overset{\al}{\lra} \G\lra 1,
\]
as well as an involution $\widetilde{\theta}$ satisfying the first claim of the proposition.

We verify the final claim on rational points. By construction, $N=\ker(\al)$ is a torus such that $X^\ast(N)$ possesses a free $\Gal(L/k)\times \la \theta\ra$-action. Letting $N^\theta\subset N$ be the fixed-point subgroup, if follows that $N^\theta$ is connected, since $X^\ast(N)_{\theta=\pm1}$ are saturated sub-lattices. Moreover, since $X^\ast(N)$ is a free $\Gal(L/k)\times \la \theta\ra$-module, $X^\ast(N^\theta)$ is a free $\Gal(L/k)$-module. This implies that %if $\G^\theta=\rH$ and $\widetilde{G}^{\widetilde{\theta}}=\widetilde{\rH}$, then
\[
1\lra N^\theta(k)\lra \widetilde{\G}^{\widetilde{\theta}}(k)\lra \G^\theta(k)\lra 1.\qedhere
\]
\end{proof}
\quash{ that the construction of a $z$-extension begins by constructing a commutative diagram of centers
\begin{equation}\label{eqn:centers diagram}
\begin{tikzcd}
    &&Z_{sc}\ar[d,"\iota"]\ar[r,"="]&Z_{sc}\ar[d]&\\
    1\ar[r]&N\ar[r]&\tilde{Z}\ar[r]&Z\ar[r]&1. 
    \end{tikzcd}
\end{equation}
Here $\iota: Z_{sc}\hra \tilde{Z}$ is injective, and $N\simeq \ker(\al)$ is the torus of the form $\mathrm{Res}_{L/F}(\Gm^k)$. One then constructs $\tilde{G}=\G_{sc}\times^{Z_{sc}}\tilde{Z}$ as the pushout of the diagram
\[
    \begin{tikzcd}
    Z_{sc}\ar[r]\ar[d,"\iota"]&\G_{sc}\ar[d]\\
   \tilde{Z} \ar[r]&\tilde{\G}.
    \end{tikzcd}
\]
 By a result of Steinberg \cite[9.16]{Steinberg}, there exists a unique involution $\theta_{sc}:\G_{sc}\to \G_{sc}$ such that the diagram
\[
\begin{tikzcd}
{\G}_{sc}\ar[d,"\pi"]\ar[r,"\theta_{sc}"]&\G_{sc}\ar[d,"\pi"]\\
{\G}_{der}\ar[r,"\theta"]&\G_{der}
\end{tikzcd}
\]
commutes. The uniqueness of this involution implies that it is defined over $k$. Thus it suffices to show that we may compatibly give an involution on $\tilde{Z}$ in \eqref{eqn:centers diagram}.

Note that $H^1(k,T_{sc}) = 1$, since $X^\ast(T_{sc})$ is a permutation module for $\Gal(L/k)$ with a basis given by the fundamental weights.
\begin{Prop}%\textcolor{red}{Still don't see the proof when $\G_{sc}$ has a $\mathrm{Spin}(4n)$ factor and the involution is outer.}
Assume that $\G_{der}$ is $k$-simple and let $L/F$ be a finite Galois extension over which $\G$ splits. There exists a $z$-extension $\al:\tilde{G}\to \G$ associated to $L/F$ and an involution $\tilde{\theta}:\tilde{G}\to \tilde{G}$ such that the diagram
   \[
    \begin{tikzcd}
    \tilde{\G}\ar[d,"\al"]\ar[r,"\tilde{\theta}"]&\tilde{\G}\ar[d,"\al"]\\
    \G\ar[r,"\theta"]&\G    
    \end{tikzcd}
    \]
    commutes.
\end{Prop}
\begin{proof}
If $\G_{der}$ is simply connected, we may take $\tilde{\G}=\G$, $\tilde{\theta}=\theta$, and $\al$ to be the identity map. We therefore assume that $\G_{der}$ is not simply connected and let $\G_{sc}\to \G_{der}$ denote the non-trivial covering map. Since $\G_{der}$ is $k$-simple, we consider two cases.

    \noindent
\underline{Case 1:} $\G=\Res_{E/k}(\rH_E)$ for some quasi-split $k$-group $\rH$, and $\theta$ is a Galois involution. In this case, we let $L/F$ be a Galois extension splitting $\rH$ and consider a $z$-extension associated to $L/F$ $\al:\tilde{\rH}\to \rH$. Then it is easy to check that
\[
\Res_{E/k}(\al_E):\tilde{\G}={\Res_{E/k}(\tilde{\rH}_E)}\lra \G
\]
is a $z$-extension associated to $L/F$ of $\G$ and that the Galois involution $\tilde{\theta}:\tilde{\G}\to \tilde{\G}$ satisfies the claim of the proposition. 

\begin{Rem}
Note that this includes the group case $(\rH\times \rH,\rH),$ recovering the classical notation of a $z$-extension.
\end{Rem}

\noindent
\underline{Case 2:} Suppose that $(\G,\rH)$ is not a Galois pair. 
Let $Z_{sc}$ denote the center of $\G_{sc}$ and $Z\subset \G$ denote the center of $\G$. Then we may write $\G$ as the fiber sum $\G=\G_{sc}\times^{Z_{sc}}Z$ \cite[2.0.1]{deligne1979varietes}. Recall from \cite[3.1]{milne1982conjugates} that the construction of a $z$-extension begins by constructing a commutative diagram of centers
\[
\begin{tikzcd}
    &&Z_{sc}\ar[d,"\iota"]\ar[r,"="]&Z_{sc}\ar[d]&\\
    1\ar[r]&N\ar[r]&\tilde{Z}\ar[r]&Z\ar[r]&1. 
    \end{tikzcd}
\]
Here $\iota: Z_{sc}\hra \tilde{Z}$ is injective, and $N\simeq \ker(\al)$ is the torus of the form $\mathrm{Res}_{L/F}(\Gm^k)$. One then constructs $\tilde{G}=\G_{sc}\times^{Z_{sc}}\tilde{Z}$ as the pushout of the diagram
\[
    \begin{tikzcd}
    Z_{sc}\ar[r]\ar[d,"\iota"]&\G_{sc}\ar[d]\\
   \tilde{Z} \ar[r]&\tilde{\G}.
    \end{tikzcd}
\]
To prove the proposition, it suffices{\textcolor{red}{This is false! One needs also to verify compatibility with the involution on $Z(\G)$, and I haven't addressed this at all.}}\textcolor{blue}{We could easily establish this for inner involutions: if an involution $\theta$ is a rational form of an inner involution: suppose there is $a\in \mathrm{Inn}(\G)(k)$ such that $\theta(g) = \Ad(a)(g)$ is an involution. There is a canonical identification
\[
\mathrm{Inn}(\widetilde{\G})\simeq \mathrm{Inn}(\G),
\] so there is a canonical lift of $\theta$ to $\tilde{\G}$ such that $\tilde{\theta}|_{\widetilde{Z}}\equiv id$.} by the universal property of the pushout to give an involution $\tilde{\theta}:\tilde{Z}\to \tilde{Z}$ such that
\begin{equation}\label{eqn: matching involutions}
\tilde{\theta}|_{\iota(Z_{sc})}=\theta_{sc}|_{Z_{sc}}.
\end{equation}

Consider the base change of $\G_{sc}$ to the algebraic closure. It can be factored as a direct product of simple groups
\[
\G_{sc,\kbar}=\G_1\times \cdots\times \G_k,
\]
where each group $\G_i$ is simple, simply connected, and normal as a subgroup of $\G_{sc,\kbar}.$ The assumption that $\G_{sc}$ is $k$-simple implies that the Galois action is transitive on these factors.

Since $\theta_{sc}: \G_{sc}\to \G_{sc}$ is not a Galois involution, we claim that $\theta_{sc}$ stabilizes each $\G_i$; let us postpone the proof of this until the end of the argument. In particular, each factor $\G_i$ is isomorphic over $\kbar$ and we have an involution $\theta_{sc,i}: \G_i\to \G_i$ of a simple, simply connected group over $\kbar$. Since $\theta_{sc}$ is defined over $k$, the involution commutes with the Galois action implying that the pairs $(\G_i,\theta_i)$ are all isomorphic. %\footnote{I am starting to wonder if I am ignoring subtleties.}
We also have a decomposition of the center
\[
Z_{sc,\kbar}=Z_1\times\cdots \times Z_k,
\]
and so can consider $\theta_{sc}: Z_i\to Z_i$. The centers of simple, simply connected group\footnote{Is there an assumption on the residual characteristic? Surely this is fine for characteristic zero fields, and finite characteristics that are very good.} over $\kbar$ are finite cyclic groups unless $\G_i\simeq \mathrm{Spin}(4n)$ where it is isomorphic to $(\zz/2\zz)^2$. Excluding this case, we see that the only automorphisms of $Z_i$ of order at most $2$ are the identity and the inversion map. In either case, this forces the rational map $\theta_{sc}:Z_{sc}\to Z_{sc}$ to be either the identity and the inversion map, in which case there is a natural automorphism of $\tilde{Z}$ satisfying $(\ref{eqn: matching involutions}).$ 

This proves the proposition excepting the case when $\G_i\simeq \mathrm{Spin}(4n)$ is of type $D_{2n}$ and the involution $\theta_{sc}: Z_{sc}\to Z_{sc}$ is non-trivial. This only occurs for involutions induced by an outer automorphism of $\G_i$ \cite[6.4]{Levy} and in each case, the action stabilizes the kernel of the quotient $\mathrm{Spin}(4n)\to \SO(4n)$ and permutes the other two subgroups. Since $\G_{der}$ is not simply connected, the same is true of any of the (isomorphic) simple factors of $\G_{der,\kbar}$. In particular, the involution $\theta$ induces the trivial action on the image of $Z_{sc}\to Z$ in all cases. 

Returning to the picture over $k$, $Z_{sc}$ is a $2$-torsion $k$-group scheme such that 
\[
Z_{sc}(\kbar)\simeq (\zz/2\zz\times \zz/{2\zz})^k,
\]
with an $k$-rational involution permuting the coordinates of each factor, so that $|Z_{sc}(k)|\leq 4$. Since $\mathrm{char}(k)\neq 2$ and $|Z(k)|\leq 2$, we have
\[
Z\simeq 1\text{ or }Z\cong\underline{\zz/{2\zz}}.
\]
Recall that we have the diagram
\[
\begin{tikzcd}
    &&Z_{sc}\ar[d,"\iota"]\ar[r,"="]&Z_{sc}\ar[d]&\\
    1\ar[r]&N\ar[r]&\tilde{Z}\ar[r]&Z\ar[r]&1.
    \end{tikzcd}
\]
There are two cases. If $Z$ is trivial, then $\tilde{Z}=N$ is a torus, and $Z_{sc}\hra \tilde{Z}[2]$ sits in its $2$-torsion. In this case, automorphisms of $\tilde{Z}$ exist by swapping the appropriate factors. If $Z\simeq \underline{\zz/2\zz}$, then 
\[
\tilde{Z}=N\cdot Z_{sc}
\]
has two connected components with points over $k$, each of which must be stabilized by $\tilde{\theta}$ Since $\tilde{Z}^\circ=N$, we may thus extend the involution to $\tilde{Z}$ by having it act trivially on $N.$

%If \textcolor{red}{This part is incomplete. }

%Now suppose that $\G_i\simeq \mathrm{Spin}(4n)$ and $\theta_{sc}:\mathrm{Spin}(4n)\to \mathrm{Spin}(4n)$ is an involution. The result follows for inner involutions, as well as Chevalley involutions (also called split involutions, see \cite[6.4]{Levy}) since these are trivial on $Z_i\cong(\zz/2\zz)^2$.Need to consider involutions of $D_{2n}$ root systems and the effect on the center. It appears that all such involutions fix the kernel of the map to the special orthogonal group.

We now prove the claim that $\theta_{sc}$ stabilizes each $\G_i$. By contrapositive, suppose there would be a pair of indices $(i,i')$ such that $\theta_{sc}$ induces an isomorphism $\theta_{sc,i}:\G_i\iso \G_{i'}$. We show that $(\G_{sc},\rH_{sc})$ is a Galois pair, where $\rH_{sc} = \G_{sc}^{\theta_{sc}}$. Since $\theta_{sc}$ commutes the action of $\Gal(k^{sep}/k)$ on $\G_{sc,\kbar}$, this induces a partition of $\{1,\ldots, k\}$ into pairs
\[
\{1,\ldots, k\} =\{i_1,i'_j\}\sqcup\ldots\sqcup \{i'_1,i'_j\},
\]
where $\theta_{sc,i_l}:\G_{i_l}\iso \G_{i_l'}$. Set $\Ga=\Stab_{\Gal(k^{sep}/k)}(\{G_{i_1},\ldots, G_{i_j}\})$ to be the stabilizer on a choice of representatives of this partition. Then $[\Gal(k^{sep}/k): \Ga]=2,$ so that there exists a quadratic extension $E/k$ such that $\Ga=\Gal(\kbar/E)$.

Taking $\Ga$-invariants, we see that the $E$-group $\G_{sc,E}$ factors
\[
\G_{sc,E}=\G_{[i]}\times \G_{[i']}
\]and the involution $\theta_{sc}: \G_{sc,E}\to \G_{sc,E}$ induces an isomorphism $\G_{[i]}\iso \G_{[i']}$. We obtain an isomorphism
\begin{align*}
\G_{[i]}&\overset{\sim}{\lra} \rH_{sc,E} =\G_{sc,E}^{\theta_{sc}}\\
g&\longmapsto (g,\theta_{sc}(g)).
\end{align*}
Since the $\Gal(E/k)$-action on $\G_{sc,E}$ also permutes the two factors, we also have the isomorphisms
\[
\rH_{sc,E}\simeq \G_{[i]}\xrightarrow{\sim}\G_{[i']}
\]
Setting ${\Gal(E/k)=\la\sig\ra}$, this implies that 
\begin{align*}
    \G_{sc}(k)\simeq \G_{sc,E}^{\sig}(E)\simeq \rH_{sc}(E),
\end{align*}
so that $\G_{sc} \simeq \Res_{E/k}(\rH_{sc,E})$ and the symmetric pair $(\G_{sc},\rH_{sc})$ is a Galois pair. %We first reduce to the case that $\G_{der}$ is $k$-simple. To see this, let $Z_{sc}$ denote the center of $\G_{sc}$ and $Z\subset \G$ denote the center of $\G$. Then we may write $\G$ as the fiber sum $\G=\G_{sc}\times^{Z_{sc}}Z$ \cite{Deligne found in milne}, and write $\G_{der} = \G_{sc}/Z_0$ where $Z_0=\ker[{Z_{sc}\to Z_{der}}]$. Since $\G_{sc}$ is semi-simple and simply connected, it can be factored as a direct product of $k$-simple groups
%\[
%\G_{sc}=\G_1\times \cdots\times \G_k.
%\]
%Each group $\G_i$ is $k$-simple, simply connected, and normal as a subgroup of $\G_{sc}.$ In particular, $\theta_{sc}$ either stabilizes $\G_i$ or induces an isomorphism $\theta_{sc,i}:\G_i\iso \G_{i'}$. We also have a decomposition of the center
%\[
%Z_{sc}=Z_1\times\cdots \times Z_k,
%\]
%and a similar statement regarding the action of $\theta_{sc}$.
%If we first consider the latter case, then 
 \end{proof}
 }

 \begin{Rem}
   Suppose $\theta$ is a $k$-rational inner involution; that is, assume there is $a\in \mathrm{Inn}(\G)(k)$ such that $\theta(g) = \Ad(a)(g)$ is an involution. There is a canonical identification
\[
\mathrm{Inn}(\widetilde{\G})\simeq \mathrm{Inn}(\G),
\] for any $z$-extension $\widetilde{\G}$ of $\G$, so there is a canonical lift of $\theta$ to $\tilde{\G}$ such that $\tilde{\theta}|_{\widetilde{Z}}\equiv id$. This does \textbf{not} correspond to the construction above, as the action of $\theta$ on $X^\ast(N)$ is not free. Such a lift will not necessary recall the connected components of $\rH$, as the following example illustrates.
 \end{Rem}

\begin{Ex}\label{Ex: still type N}
    When $\G=\PGL_2$ and \[
    \theta\begin{psmatrix}
        a&b\\c&d
    \end{psmatrix}=\begin{psmatrix}
        a&-b\\-c&d
    \end{psmatrix},\]
    we have $\rH = N_{\G}(\rA)$, where $\rA$ is the diagonal torus. The natural lift as an inner automorphism to the $z$-extension $\GL_2$ has fixed-point subgroup the diagonal torus $\widetilde{\rA}$, which does not surject onto $\rH$. On the other hand, Proposition \ref{Prop: good cover} gives
    \[
    \widetilde{\G} = \SL_2\times^{\{\pm I_2\}} \Gm^2,
    \]
    with 
    \[
    \widetilde{\theta}\left(\begin{psmatrix}
        a&b\\c&d
    \end{psmatrix},\begin{psmatrix}
        x&\\&y
    \end{psmatrix}\right)=\left(\begin{psmatrix}
        a&-b\\-c&d
    \end{psmatrix},\begin{psmatrix}
        y&\\&x
    \end{psmatrix}\right),
    \]
    so that
    \[
   \widetilde{\rH}=\left\{ \left(\begin{psmatrix}
        a&\\&a^{-1}
    \end{psmatrix},\begin{psmatrix}
        x&\\&x
    \end{psmatrix}\right): a,x\in \Gm\right\}\bigsqcup \left\{\left(\begin{psmatrix}
        &b\\-b^{-1}&
    \end{psmatrix},\begin{psmatrix}
        x&\\&-x
    \end{psmatrix}\right): b,x\in \Gm\right\}.\hfill\qed
    \]
   % This example also shows that we are not always able to assume that $N\subset \widetilde{\rH}$.
\end{Ex}
 
 \quash{We claim that this holds if and only if there exists no $\chi\in X^\ast(\G)$ such that $\chi|_{\rH}\not\equiv 1$ while $\theta(\chi) = \chi^{-1}$. To see this, we note that 
 \[
 \pi_0(\rH) = \pi_0(\rH/\rH^\circ) = \pi_0(\rH/\rH\cap\G_{der})
 \] since $\rH^\circ\cap \G_{der}$ contains $\rH_{der}$. If $\rH$ was not connected, then neither is $\rH/\rH\cap \G_{der}$, so that there exists a character $\chi\in X^\ast(\G)$ such a character exists, then $\chi(\rH) = \{\pm1\}$, showing that $\pi_0(\rH)\neq\{\ast\}$. On the other hand, if $\pi_0(\rH)=\{\ast\}$, then $\rH/\rH\cap \G_{der}$ is connected. This forces $\chi(\rH)\subset \Gm$ to be connected for any $\chi\in X^\ast(\G)$.

   %\textcolor{red}{This is incomplete. The claim now needs to be that $A_0\cap \G_{der}$ is connected as well. This torus is contained in the kernel of the corresponding maximal torus of $\G_{der}$ mapping down to $\X$. But doesn't $\SL_2/T$ give a contradiction?}
   }

\quash{

Set $\X^\circ:=(\G^\theta)^\circ\backslash \G\overset{\pi}{\lra}\X=\G^\theta\backslash \G$, so that
\[
0\lra \pi_0(\G^\theta)\lra \cala_{\X^\circ}\lra \cala_\X\lra 0,
\]
where $\cala_\X\simeq \X\cap Z(\G)$. By \cite[Lemma 3.15]{Losev}, we have that $\Lam_{\X}=\Lam_{\X^\circ}$ and $\pi_\ast(\mathcal{V}_{\X^\circ})=\mathcal{V}_{\X}$. In particular, there is a natural bijection between spherical roots of these two varieties
\[
\De_{\X^\circ}\iso \De_{\X},
\]
sending a spherical root $\al\in X^\ast(\rA_{\X^\circ})$ to the positive multiple that is primitive in $X^\ast(\Ax)$. More generally, for any algebraic subgroup $\Xi\subset\cala_{\X^\circ}$, if we set $\pi_\Xi:\X^\circ\lra \X^\circ/\Xi$, then
\[
\pi_{\Xi}(\mathcal{V}_{\X^\circ}) = 
\]

In this setting, there is an isomorphism  and we have decompositions
\[
\De_{\X}
\]
}

\subsection{Spherical data, derived subgroups, and $z$-extension}\label{Section: reductions}
Suppose that $(\G,\rH,\theta)$ is a symmetric pair over $k$ and let $\X=\rH\backslash\G$. In Appendix \ref{Section: norms sym}, we compute the homogeneous spherical data 
\[
\Omega_\X=(X^\ast(\Ax),\De_\X,\Omega^{(1)},\Omega^{(2)})
\]of $\X$ in terms of the involution $\theta$. On the other hand, one also obtains involutions $\theta_{ad}:\G_{\ad}\lra \G_{ad}$ and $\theta_{sc}: \G_{sc}\lra \G_{sc}$, or may consider compatible $z$-extensions of $(\G,\theta)$ such that the natural diagrams commute. Our present goal is to clarify the relations between the corresponding spherical data. 

Let $\rH\subset \G$ be a symmetric subgroup and consider $\rH_{der}:=\rH\cap \G_{der}$. %The following is clear.
\begin{Lem}\label{Lem: passing to derived on roots} The pair $(\G_{der},\rH_{der})$ is a symmetric pair. There is a surjection $$X^\ast(\Ax)\lra X^\ast(\rA_{\X_{der}}),$$ inducing a $\Ga$-equivariant bijection of spherical roots
\[
\De_{\X}\iso \De_{\X_{der}},
\]
sending a spherical root $\al\in X^\ast(\rA_{\X})$ to the positive multiple that is primitive in $X^\ast(\rA_{\X_{der}})$.
\end{Lem}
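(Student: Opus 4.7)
The first assertion is a direct check. The derived subgroup $\G_{der}$ is characteristic in $\G$, hence $\theta$-stable, so $\theta|_{\G_{der}}$ is an involution. Then $((\G_{der})^\theta)^\circ = ((\G^\theta \cap \G_{der}))^\circ \subset (\G^\theta)^\circ \cap \G_{der} \subset \rH \cap \G_{der} = \rH_{der}$, and $\rH_{der}$ normalizes $(\G_{der})^\theta = \G^\theta \cap \G_{der}$ since $\rH$ normalizes $\G^\theta$ and $\rH_{der} \subset \G_{der}$. Moreover, $\rH_{der}$ is smooth (it is a closed subgroup of $\rH$, and its identity component contains $((\G^\theta)^\circ)_{der}$), confirming the symmetric pair structure.

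For the lattice assertion, fix a $(\theta,k)$-admissible Borel pair $(\rA,\B)$ for $\G$ (Lemma/Definition~\ref{Def: admissible pair}). Then $\rA_{der} := (\rA \cap \G_{der})^\circ$ and $\B_{der} := \B \cap \G_{der}$ form such a pair for $\G_{der}$, and the inclusion $\rA_{der} \hookrightarrow \rA$ induces a $\Ga$- and $\theta$-equivariant surjection $X^\ast(\rA) \twoheadrightarrow X^\ast(\rA_{der})$ whose kernel $X^\ast(\rA/\rA_{der}) = X^\ast(\G/\G_{der})$ is orthogonal to the root system $\Phi$. Using the description of the spherical weight lattice of a symmetric variety in Section \ref{Section: norms sym} -- which characterizes $\fX$ as a sublattice of $X^\ast(\rA)$ cut out by $\theta$-anti-invariance conditions (modified appropriately for $\rH \neq (\G^\theta)^\circ$) -- this surjection restricts to a surjection $\fX = X^\ast(\Ax) \twoheadrightarrow X^\ast(\rA_{\X_{der}}) = \fX_{der}$. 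Surjectivity uses that any $(1-\theta)$-anti-invariant weight in $X^\ast(\rA_{der})$ lifts to one in $X^\ast(\rA)$: given $\bar\mu \in \fX_{der}$, any preimage $\mu \in X^\ast(\rA)$ can be corrected by an element of $X^\ast(\rA/\rA_{der})$ to lie in $\fX$, since the obstruction lies in $X^\ast(\rA/\rA_{der})^\theta$ modulo $X^\ast(\rA/\rA_{der})^{(1-\theta)}$, a finite group which is absorbed upon passing from the semi-spherical to the spherical condition.

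Next, recall that the valuation cone $\mathcal{V}$ of a symmetric variety is a Weyl chamber for the little Weyl group $W_\X$, which is a subquotient of the Weyl group of the restricted root system $\overline{\Phi}$ obtained by projecting $\Phi$ modulo $X^\ast(\rA)^\theta$. Since $\Phi$ coincides with the root system of $\G_{der}$ (as a subset of both $X^\ast(\rA)$ and its image in $X^\ast(\rA_{der})$), and since $\theta$ acts identically on this root system in both settings, the restricted root systems $\overline{\Phi}_\X$ and $\overline{\Phi}_{\X_{der}}$ coincide as abstract systems of rays. In particular, the facets of $\mathcal{V}_\X$ and $\mathcal{V}_{\X_{der}}$ determine the same rays inside the common quotient space. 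Under the surjection $\fX \twoheadrightarrow \fX_{der}$, each ray is preserved; the minimal element along such a ray in $\fX_{der}$ is a positive rational multiple of the image of the corresponding minimal element in $\fX$, yielding the claimed bijection $\De_\X \iso \De_{\X_{der}}$. The $\Ga$-equivariance is immediate, since the restriction $X^\ast(\rA) \to X^\ast(\rA_{der})$ is $\Ga$-equivariant and the spherical roots are characterized intrinsically.

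The main obstacle is verifying surjectivity at the level of weight lattices (not merely of rational vector spaces), which depends on the integral description of $\fX$ in terms of Borel semiinvariants. This is carried out case-by-case -- according to whether $\rH = (\G^\theta)^\circ$, $\rH = N_\G(\G^\theta)$, or something in between -- in Section \ref{Section: norms sym}, and the compatibility with the rationality data follows from the explicit formulas there.
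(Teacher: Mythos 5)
The paper's own proof declares the symmetric-pair assertion obvious and defers the second claim entirely to Appendix~\ref{Sec: functorial derived}, where the surjection $\fX \to \fX_{der}$ is obtained from the local structure theorem (for $\X$ viewed as a possibly non-spherical $\G_{der}$-variety), followed by an identification of $\rA_{\X,der}$ with $\rA_{\X_{der}}$ via functoriality of the Knop--Schalke construction. Your argument instead tries to establish the surjection directly from the involution-theoretic description of the weight lattice in Section~\ref{Section: norms sym}. The formal verification that $(\G_{der},\rH_{der})$ is a symmetric pair is fine, as is the observation that $(\rA_{der},\B_{der})$ is $(\theta,k)$-admissible.

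The gap is in the surjectivity step. You want to lift $\bar\mu \in \fX_{der} \subset X^\ast(\rA_{der})^{\theta=-1}$ to $\mu\in\fX\subset X^\ast(\rA)^{\theta=-1}$, but the map $X^\ast(\rA)^{\theta=-1}\to X^\ast(\rA_{der})^{\theta=-1}$ need not be surjective: from $0\to X^\ast(\rA/\rA_{der})\to X^\ast(\rA)\to X^\ast(\rA_{der})\to 0$ the obstruction lives in $\mathrm{Ext}^1_{\zz[\zz/2]}(\zz^-,X^\ast(\rA/\rA_{der}))\cong X^\ast(\rA/\rA_{der})^\theta/(1+\theta)X^\ast(\rA/\rA_{der})$, which is $\zz/2\zz$ already when $\theta$ acts trivially on $X^\ast(\G_{ab})$ of rank $1$. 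Your proposed ``correction by an element of $X^\ast(\rA/\rA_{der})$'' cannot repair this: you would need $(1-\theta)\nu=(1+\theta)\mu$ with $\nu\in X^\ast(\rA/\rA_{der})$, and the left side is $\theta$-anti-invariant while the right is $\theta$-invariant, so over a torsion-free lattice both sides must vanish --- i.e.\ the correction exists only when $\mu$ was already anti-invariant. The closing clause that the finite obstruction group is ``absorbed upon passing from the semi-spherical to the spherical condition'' is not an argument; you would need to show that the specific obstruction class attached to each $\bar\mu\in\fX_{der}$ vanishes, and nothing in the chain of lattices $\zz\widetilde\De_\X\subset\fX\subset X^\ast_{\theta,\circ}$ makes that evident. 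The paper avoids this issue entirely by identifying $\fX$ with the lattice of $\B_{der}$-semiinvariants via the local structure theorem, which is what actually makes the restriction surjective.
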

\begin{proof}
    The first claim is obvious. The second is contained in Appendix \ref{Sec: functorial derived}, where it is shown that the root systems associated to $\X$ and $\X_{der}$ agree after re-normalization.%up to sending a spherical root $\al\in X^\ast(\rA_{\X})$ to the positive multiple that is primitive in $X^\ast(\rA_{\X_{der}})$.
\end{proof}
\quash{\begin{Rem}\textcolor{red}{To be removed}
    The case of $(\GL_2,O_2)$ and $(\SL_2,\SO_2)$ shows that the image of $\De_\X$ in $\fX_{der}$ need not be primitive. I suspect that one can show that this is a rank one phenomenon. The idea is that the image of $\De_\X$ satisfies the axioms of a root system, so that being able to divide a simple root by $2$ while maintaining the same set of normalized roots would need to be constant along components of the Dynkin diagram. This requires all the roots to by type $N$ or distinguished, which only happens for Chevalley varieties or rank $1$ varieties. 
\end{Rem}}
Let $(\widetilde{\G},\widetilde{\theta})$ denote a $\theta$-compatible $z$-extension in the sense of Proposition \ref{Prop: good cover}; in particular, $\widetilde{\G}_{der} =\G_{sc}$. Since we have a short exact sequence
\[
1\lra N^{\widetilde{\theta}}\lra \widetilde{\G}^{\widetilde{\theta}}\lra \G^\theta\lra 1,
\]
we obtain 
\begin{equation}\label{eqn: nice normalizer}
    1\lra N\lra N_{\widetilde{\G}}(\widetilde{\G}^{\widetilde{\theta}})\lra N_{\G}(\G^\theta)\lra 1,
\end{equation}
and $\pi_0(\widetilde{\G}^{\widetilde{\theta}})\simeq \pi_0(\G^\theta)$. In particular, there is a bijection of intermediate groups
\[
\{(\G^\theta)^\circ\subset \rH\subset N_{\G}(\G^\theta)\}\leftrightarrow\{(\widetilde\G^{\widetilde\theta})^\circ\subset \widetilde\rH\subset N_{\widetilde\G}(\widetilde\G^{\widetilde\theta}): N^{\widetilde\theta}\subset \widetilde\rH\}.
\]
Thus, we obtain a $k$-rational subgroup $\widetilde\rH\subset \widetilde\G$ with a fibration
\[
N^{\widetilde\theta}\backslash N\lra \widetilde\X:=\widetilde\rH\backslash\widetilde\G\lra \X,
\]
and a short exact sequence
\begin{equation}\label{eqn: SES automorphism}
    1\lra N^{\widetilde\theta}\backslash N \lra \Aut^{ \widetilde\G}( \widetilde\X)\lra \Aut^{\G}(\X)\lra 1.
\end{equation}
 
\begin{Lem}\label{Lem: pass to a sc cover roots}
    With the notation as above, there is a natural (e.g. $\Ga$-equivariant) identification 
\[
\text{$\Lam_{\widetilde\X}=\Lam_\X$ and $\De_{\widetilde\X}=\De_\X$}.
\]
Moreover, the quotient map $\widetilde\X\lra \X$ induces a $\Ga$-equivariant bijection  of colors $\D(\widetilde{\X})\iso \D(\X)$.
\end{Lem}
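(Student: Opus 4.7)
The key observation is that the morphism $p:\widetilde{\X}\to \X$ is a torsor under the torus $T_0:=N^{\widetilde\theta}\backslash N$, which sits inside $\widetilde{\G}$ as a central subgroup (since $N$ is central in $\widetilde{\G}$ by construction of the $z$-extension). In particular $p$ is a smooth, surjective, $\widetilde{\G}$-equivariant morphism with geometrically connected fibers, and by Proposition \ref{Prop: good cover} it is defined over $k$. Fixing a $k$-rational Borel pair $(\rA,\B)$ for $\G$ and letting $(\widetilde\rA,\widetilde\B)$ be its preimage in $\widetilde\G$, we see that $\widetilde\B$ is a $k$-rational Borel subgroup and $T_0\subset \widetilde\rA\subset \widetilde\B$.

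For the bijection of colors, I would argue as follows. A $\widetilde{\B}$-stable prime divisor $\widetilde{D}\subset \widetilde{\X}$ is automatically $T_0$-stable (since $T_0\subset \widetilde\B$), hence is the preimage under $p$ of a subvariety $D\subset \X$ which is $\B$-stable and irreducible (by the geometric connectedness of fibers). Conversely, for any $\B$-stable prime divisor $D\subset \X$ the pullback $p^{-1}(D)$ is an irreducible $\widetilde\B$-stable prime divisor. This yields a $\Ga$-equivariant bijection $\D(\widetilde{\X})\iso \D(\X)$. The equalities $\rho(p^{-1}(D))=\rho(D)$ and $\varsigma(p^{-1}(D))=\varsigma(D)$ follow by functoriality of the discrete valuation associated with a color and of the parabolic stabilizer, after identifying the simple roots of $(\widetilde\G,\widetilde\B,\widetilde\rA)$ with those of $(\G,\B,\rA)$ via $p$.

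For the spherical roots and root lattice, the natural pullback of functions $p^\ast:\kbar(\X)^{(\B)}\hra \kbar(\widetilde\X)^{(\widetilde\B)}$ induces an inclusion of weight lattices $\fX\hra \fX_{\widetilde\X}$, whose cokernel may be identified with the character lattice $X^\ast(T_0)$ via the central $T_0$-action on $\kbar(\widetilde\X)^{(\widetilde\B)}$. Since $T_0$ lies in the center of $\widetilde\G$, it acts by a character on each simple $\widetilde\G$-module $V(\lam)$, and so a relation $V(\nu)\subset V(\lam)\cdot V(\mu)$ in $\kbar[\widetilde\X]$ forces the $T_0$-weight of $\lam+\mu-\nu$ to vanish, i.e. $\lam+\mu-\nu\in \fX$. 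Combined with the corresponding relation in $\kbar[\X]$ (each such triple on $\X$ lifts to one on $\widetilde\X$ through $p^\ast$), this gives an identification of the monoids generating the root lattices, hence $\Lam_{\widetilde\X}=\Lam_\X$. The spherical roots $\De_\X,\De_{\widetilde\X}$ are the primitive generators of the extremal rays of the cone they generate inside $\fX, \fX_{\widetilde\X}$, and since the inclusion $\fX\hra \fX_{\widetilde\X}$ is saturated along $\Lam_\X$ (any element of $\Lam_{\widetilde\X}$ whose multiple lies in $\fX$ already lies in $\fX$), we conclude $\De_{\widetilde\X}=\De_\X$. All identifications are manifestly $\Ga$-equivariant because $p$, $T_0$, and the Borel pairs involved are defined over $k$.

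The main subtlety will be to verify precisely that the inclusion $\fX\hra \fX_{\widetilde\X}$ is ``saturated on the root lattice''  so as to preserve primitivity of spherical roots; alternatively, one may bypass this by combining the present argument on colors and root lattices with Lemma \ref{Lem: passing to derived on roots} applied to both $\X$ and $\widetilde\X$, reducing to the pair $(\G_{sc},\G_{sc}\cap\widetilde\rH_{der})$ associated to Steinberg's canonical lift $\theta_{sc}$ of $\theta$, for which the identification is tautological.
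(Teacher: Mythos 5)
Your argument for the bijection of colors is correct, and in fact a bit more careful than the paper's: the paper simply asserts an order-preserving bijection of $B$-orbits because $B=\widetilde{B}/N$, while you actually use that $p$ is a $T_0$-torsor with $T_0\subset\widetilde{\B}$ to match up $\widetilde{\B}$-stable prime divisors with $\B$-stable ones, and check $\rho$ and $\varsigma$ match. Your worry about saturation in the last paragraph is also not a real concern: $\fX=X^\ast(\rA_\X)$ sits inside $\fX_{\widetilde{\X}}=X^\ast(\rA_{\widetilde\X})$ as the kernel of the surjection onto $X^\ast(T_0)$, and the kernel of a homomorphism onto a torsion-free abelian group is always a saturated sublattice; so the primitivity argument is fine, and your proposed fallback via Lemma \ref{Lem: passing to derived on roots} is unnecessary.

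The genuine gap is in the root-lattice step. You establish two inclusions: $\Lam_{\widetilde\X}\subset\fX$ (each generator $\lam+\mu-\nu$ of $\Lam_{\widetilde\X}^+$ has trivial $T_0$-weight because $T_0$ is central) and $\Lam_\X^+\subset\Lam_{\widetilde\X}^+$ (relations in $\kbar[\X]$ lift along $p^\ast$). From these you conclude ``this gives an identification of the monoids,'' but the two inclusions only give $\Lam_\X\subset\Lam_{\widetilde\X}\subset\fX$, which is strictly weaker than $\Lam_\X=\Lam_{\widetilde\X}$ (e.g.\ $2\zz e_1\oplus\zz e_2\subset\zz e_1\oplus\zz e_2$ has equal rank but strict containment). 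What is missing is the inclusion $\Lam_{\widetilde\X}\subset\Lam_\X$: knowing that a generator $\lam+\mu-\nu\in\Lam_{\widetilde\X}^+$ has trivial $T_0$-weight does \emph{not} by itself produce a relation in $\kbar[\X]$ realizing it, because $\lam$, $\mu$, $\nu$ individually need not lie in $\fX^+_\X$. The paper avoids this by not working with the monoid at all: it invokes Knop's characterization $X^\ast(\Aut^\G(\X))=\fX/\Lam_\X$ together with the short exact sequence \eqref{eqn: SES automorphism} of automorphism groups, so the two exact rows
\[
0\to\fX\to\fX_{\widetilde\X}\to X^\ast(T_0)\to 0,\qquad
0\to X^\ast(\cala_\X)\to X^\ast(\cala_{\widetilde\X})\to X^\ast(T_0)\to 0
\]
and the vertical quotient maps give $\Lam_\X\cong\Lam_{\widetilde\X}$ by the snake lemma. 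If you want to keep the monoid description, you would have to supply an argument that every $\lam+\mu-\nu$ with trivial $T_0$-character is expressible using weights that themselves lie in $\fX^+_\X$; as written, your proof does not do this.
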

\begin{proof}
   We begin by choosing a Borel $N\subset \widetilde{\B}$ of $\widetilde{\G}$ and letting $\B\subset \G$ be the image. The first claim follows from the short exact sequence \eqref{eqn: SES automorphism} and the definition of $\Lam_\X$. Indeed, there is a short exact sequence on canonical tori
    \[
 1\lra N^{\widetilde\theta}\backslash N \lra \rA_{\widetilde\X}\lra \Ax\lra 1,
    \]
    corresponding to a dual sequence
    \[
0\lra X^\ast(\Ax)\lra X^\ast(\rA_{\widetilde\X})\lra X^\ast(N^{\widetilde\theta}\backslash N)\lra 0.
    \]
    Since $X^\ast(\Aut^{\G}(\X))=X^\ast(\Ax)/\Lam_\X$, the identification of root lattices follows. For the spherical roots, since $ X^\ast(\Ax)\subset X^\ast(\rA_{\widetilde\X})$ is a saturated sublattice, the minimal ray generators in $X^\ast(\Ax)$ are also minimal in $X^\ast(\rA_{\widetilde\X})$, giving the identification of spherical roots.

    Finally, it is immediate that the map induces an order-preserving (in terms of the adherence relation) bijection between $\widetilde{B}$-orbits on $\widetilde{\X}$ and $B=\widetilde{B}/N$-orbits on $\X$.
\end{proof}

The upshot is that if $\Omega_\X=(X^\ast(\Ax),\De_\X,\Omega^{(1)},\Omega^{(2)})$ is the spherical data for $\X$, then $\Omega_{\widetilde{\X}}=(X^\ast(\rA_{\widetilde{\X}}),\De_\X,\Omega^{(1)},\Omega^{(2)})$.  %Thus, questions regarding spherical roots and colors of $\X$ may often be addressed under the assumption that $\G_{der}=\G_{sc}$. 
Replacing $\G$ with $\widetilde{\G}$ if necessary, we now suppose $\G_{der}=\G_{sc}$. There is an isomorphism $\G_{sc}\cong\prod_i \G_{sc,i} \times \prod_{j}(\G_{sc,j}\times \G_{sc,j})$, where each factor is $k$-simple and simply-connected and $\theta$ stabilizes each $\G_{sc,i}$ in the first product and acts as the swap automorphism on each factor in the second product. If we use the notation $\rH_{sc} = \prod_i \rH_{sc,i} \times \prod_{j}\rH_{sc,j}$ for the induced product on $\rH\cap\G_{sc}$, then $\X_{sc}=\prod_i \X_i \times \prod_{j}\X_j$ and there is a product decomposition of the spherical data
\[
\Omega_{\X,sc} =\prod_i \Omega_{\X_i} \times \prod_{j}\Omega_{\X_j}, \qquad \Omega_{\X_i} = (X^\ast(\rA_{X,i}),\De_{\X_i},\Omega_i^{(1)},\Omega_i^{(2)}).
\]

Combining Lemmas \ref{Lem: pass to a sc cover roots} and \ref{Lem: passing to derived on roots} with the preceding discussion, for any symmetric $\G$-variety $\X=\rH\backslash\G$ and any spherical root $\ga\in \De_\X$, there is a well-defined \emph{$k$-simple component} $\X_\ga$ of $\X_{sc}$ such that under the bijections
\begin{equation}\label{eqn: simple components}
 \De_{\X}\to \De_{\widetilde{\X}}\to \De_{\X_{sc}}=\bigsqcup_i\De_{\X_i}\sqcup\bigsqcup_j\De_{\X_j},   
\end{equation}
$\Ga\cdot \ga$ is sent to $\De_{\X_\ga}$.

 \subsection{Rational involutions and $(\Ga,\theta)$-indices}\label{Section: indices} %Let's recall in full detail the main results of \cite{Helminckrational} to extract a subtlety we had not fully appreciated.
  Suppose now that $\G$ is a quasi-split connected reductive $k$-group. Let $\theta$ be a $k$-rational involution of $\G$ and fix a $(\theta,k)$-admissible pair $(\rA,\B)$. Recall the notion of $(\Ga,\theta)$-index  
  \[
  \I=(X^\ast(\rA), \De, \emptyset,\De_\X^p, \sig_\ast,\theta^\ast).
  \]from Section \ref{Section: rational involutions}, where we note that $\De_0(\Ga)=\emptyset$ since $\G$ is quasi-split and we have exchanged the notation $\De_0(\theta)$ for $\De_\X^p$ in reference to $\X=\G^\theta\backslash\G$. This provides a useful variant of the combinatorial spherical datum $\Omega_\X=(X^\ast(\Ax),\De_\X,\Omega^{(1)},\Omega^{(2)})$ in the symmetric setting.

\begin{Rem}
    Aside from Proposition \ref{Prop: endoscopic roots}, there are other benefits of utilizing $(\Ga,\theta)$-indices. Indeed, when $\overline{\X}$ is a symmetric $\G_{\kbar}$-variety in Proposition \ref{Prop: quasirational}, it is not immediate clear from the proof of \cite{BorovoiGagliardi} that the resulting homogeneous spherical $k$-variety $\X=\rH\backslash\G$ is symmetric: that is, we do not obtain an involution $\theta$ satisfying $(\G^\theta)^\circ\subset \rH\subset N_{\G}(\G^\theta)$ in this way. Beyond this, Borovoi and Gagliardi restrict to characteristic zero. This is likely not too serious for large enough positive characteristic, but working with $(\Ga,\theta)$-indices resolves both of these concerns. 
\end{Rem}

  \quash{We extract symmetric analogues of the general results of \cite{BorovoiGagliardi} from the properties of $\I$.
  
  First, we note that we are always able to transfer to the quasi-split inner twist of $\G$.
  \begin{Prop}\label{Prop: index for qs}
      Suppose that $\G$, $\theta$, $(\rA,\B)$ are as above and let $\I$ be the associated $(\Ga,\theta)$-index, necessarily admissible. If $\G^\ast$ is the quasi-split inner form of $\G$, consider the $(\Ga,\theta)$-index% with $\psi_0:\G\lra \G_0$ the inner twist%, then $(T_0,B_0):=\psi_0(\subset \G_0$ embed into $\G_0$ as a $k$-rational Borel 
\[
\I^\ast:= (X^\ast(\rA), \De, \emptyset,\De_\X^p, \sig_\ast,\theta^\ast).
\]   
This gives an admissible $(\Ga^\ast,\theta)$-index, where $\Ga^\ast$ indicates that the $\Ga$ action on $X^\ast(\rA)$ has been replaced by the $\ast$-action of  $\Ga$ induced by $\G^\ast$. In particular, there exist $\theta_0$ on $\G_0$ with $k$-rational admissible pair $(\rA_0,\B_0)$ inducing $\I_0$. The corresponding symmetric variety $\G^{\theta_0}_0\backslash\G_0$ is a form of $\G^{\theta}\backslash\G$.
  \end{Prop}
\begin{proof}
We refer the reader to \cite[Section 10]{Helminckrational} for all notions used here.  By \cite[Theorem 10.45]{Helminckrational}, this holds if and only if 
    \begin{enumerate}
        \item\label{silly1} $\I_0$ is a basic $\Ga^\ast_\theta$-index,
        \item\label{silly2} $(X^\ast(T), \De, \emptyset, \sig_\ast)$ is an admissible $\Ga$-index,
        \item\label{silly3} $(X^\ast(T), \De, \De_0(\theta), \theta^\ast)$ is an admissible $\theta$-index.
    \end{enumerate}
    There is a fourth criterion involving the induced involution on the anisotropic kernel, but it is rendered void due to our assumption that $\G_0$ is quasi-split. Criteria \eqref{silly2} and \eqref{silly3} are immediate as the $\Ga$-index precisely corresponds to the quasi-split inner form $\G_0$ (so is admissible), and the $\theta$-index is the same as that of $(\G,\theta)$ (hence, admissible).

    Finally, the properties of basic $\Ga_\theta$-indices are preserved in the passage from $\I$ to $\I_0$.  We recall that this means that the $\Ga_\theta:=\Ga\times\la -\theta\ra$-basis $\De$ determined by $\B$ satisfies
 \begin{enumerate}
     \item if $\Phi\subset \Phi_0(\Ga_\theta)$ is an irreducible component then, $\Phi\subset \Phi_0(\theta)$ or $\Phi\subset \Phi_0(\Ga)$,
     \item $\De_0(\Ga)$ is $\theta^\ast$-stable, and
     \item $\De_0(\theta)$ is $\sig_\ast$-stable for all $\sig\in \Ga$.
 \end{enumerate}
 Indeed, $\Phi_0(\Ga^\ast_\theta)=\Phi_0(\theta)$ so the first property follows from the corresponding statement for $\I$ \cite[Section 5]{Helminckrational}. The second criterion is also void since $\De_0(\Ga^\ast)=\emptyset$. Finally, the third property is identical for the two indices as it depends only on the properties of the involution over the algebraic closure. This shows that $\I_0$ satisfies the criteria of being a basis $\Ga_\theta$-index.
\end{proof}}

We first explain how to recover the combinatorial spherical datum $\Omega_\X$ for $\X=\G^\theta\backslash\G$ from the information of $\I$ and the involution $\theta|_\rA$, using the calculations in Appendix \ref{Section: norms sym}.  From this data, we set $\Omega_\X= (\fX,\De_\X,\Omega^{(1)},\Omega^{(2)})$ as follows:
\begin{enumerate}
    \item Set $X^\ast(\Ax)=\{\lam-\theta(\lam): \lam\in X^\ast(\T)\}\subset X^\ast(\T)$;
    \item set $\De_{\X}^n:=\{\al-\theta(\al): \al\in \De, \theta(\al)\neq \al\}$. We then re-normalize through generators in $X^\ast(\Ax)$ to obtain $\De_{\X}$;
    \item set $\D(\X) = \bigsqcup_{\al\in \De_{\X}^n}\D(\al^\vee)$, where the sets $\D(\al^\vee)$ are color data determined over $\kbar$ as in Section \ref{Section: sym colors}. More precisely, the index $\I$ determines a symmetric pair $(\G_{\kbar},\theta)$ uniquely (up to isomorphism) over $\kbar$. Setting $\rH_{\kbar}=\G^\theta_{\kbar}$, the equivalence relation defined in Section \ref{Section: sym colors} depends on the connected component group of $\rH_{\kbar}$.   
   From this, we obtain the map
    \[
    \rho\times \varsigma: \D(\X)\lra \fa_{\X,\qq}\times \mathcal{P}(\De),
    \]
    and we let $(\Omega^{(1)},\Omega^{(2)})$ denote the corresponding sets, with the natural $\Ga$-action induced from that on $\fa_{\X}\times \mathcal{P}(\De)$.
\end{enumerate}
By Appendix \ref{Section: norms sym}, this recovers $\Omega_\X$.  It is now a straightforward check that the {admissibility} properties of the $(\Ga,\theta)$-index imply that the $\Ga$-action $\sig_\ast$ preserves $\Omega_\X$.  Since $\G$ is assumed to be quasi-split, this agrees with the action on the combinatorial data of $\X$.
 
        Note that this does not recover the $\Ga$-action on $\D(\X)$ as it relies on passing to $\kbar$ to obtain a uniquely determined (up to isomorphism) involution $\theta$. This is related to the fact that the $(\Ga,\theta)$-index $\I$ does not uniquely determine the involution $\theta$, reflecting the problem of $\G$-outer forms from Sections \ref{Section: outer forms} and \ref{Section: well adapted outer}. The results of \cite{Helminckrational} toward classifying involutions with a fixed $(\Ga,\theta)$-index unfortunately do not encode the group $\Out_{\X}(\G^\theta)$ or the geometric cocycle of $\X$. We address this in Section \ref{Section: color autos} by computing $\Out_{\X}(\rH)$ for all symmetric subgroups. 

We end this section with the following lemma regarding the failure of the $(\Ga,\theta)$-index $\I$ to uniquely determine the involution $\theta$.
\begin{Lem}\label{Lem: same on torus}
   Suppose that $\G$ is quasi-split, $\rA$ is a maximally $k$-split maximal torus, $\B$ is a Borel subgroup containing $\rA$. Assume that $\theta$ and $\theta'$ are two involutions normally related to $A$ and such that $\B$ is maximally split for both involutions. Assume also that $$\theta|_{\rA}=\theta'|_{\rA}.$$ Then $(\G,\theta, \rA,\B)$ and $(\G,\theta',\rA,\B)$ induce the same $(\Ga,\theta)$-indices $\mathrm{I}$. Moreover, there is a $\Ga$-equivariant bijection $\Omega_\X\iso \Omega_{\X'}$ between the combinatorial data for the varieties $\X=\G^\theta\backslash \G$ and $\X=\G^{\theta'}\backslash \G$. 
\end{Lem}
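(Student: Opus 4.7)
The plan is to establish the two assertions in sequence, with the first being essentially immediate from the explicit definition of the index and the second reducing, after a direct matching of torus and root data, to the classification of involutions over $\kbar$.

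First, I will show that the $(\Ga,\theta)$-indices coincide by inspecting each component of
\[
\I = (X^\ast(\rA),\,\De,\,\emptyset,\,\De_\X^p,\,\sig_\ast,\,\theta^\ast).
\]
The lattice $X^\ast(\rA)$ and the base $\De$ depend only on $(\G,\rA,\B)$; the empty anisotropic kernel $\De_0(\Ga) = \emptyset$ reflects the quasi-splitness of $\G$; the $\ast$-action $\sig_\ast$ is intrinsic to the $k$-structure of $\G$; the set $\De_\X^p = \{\al \in \De : \theta(\al) = \al\}$ depends only on $\theta|_\rA$; and $\theta^\ast = -w_\theta\circ \theta$, where $w_\theta$ is the longest element of the Weyl group generated by $\De_\X^p$, is likewise determined by $\theta|_\rA$. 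Since $\theta|_\rA = \theta'|_\rA$, every constituent of the index coincides.

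Second, I turn to the spherical datum $\Omega_\X = (\fX, \De_\X, \Omega^{(1)}, \Omega^{(2)})$. The explicit recipe recalled just before the lemma gives $X^\ast(\Ax) = \{\lam - \theta(\lam) : \lam \in X^\ast(\rA)\}$ and $\De_\X^n = \{\al - \theta(\al) : \al \in \De,\ \theta(\al) \neq \al\}$ (with subsequent renormalization), both of which literally agree with the corresponding objects for $\X'$. So the first two components of $\Omega_\X$ and $\Omega_{\X'}$ are equal as $\Ga$-modules, with the $\Ga$-action on each induced from the same $\ast$-action on $X^\ast(\rA)$.

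The substantive point is the color partition $\Omega^{(1)} \sqcup \Omega^{(2)}$, which is determined by the sets $\D(\al^\vee)$ as $\al$ ranges over $\De_\X^n$; these sets in turn depend on the connected component group of $\rH_{\kbar} = \G^\theta_{\kbar}$. To identify this data for $\theta$ and $\theta'$, I invoke Proposition~\ref{Prop: classification of invo}: since the two involutions induce the same $\theta$-index over $\kbar$, they are $\G(\kbar)$-conjugate, so a fortiori $\pi_0(\G^\theta_{\kbar}) = \pi_0(\G^{\theta'}_{\kbar})$ and the color multiplicities agree. A $\G_{\kbar}$-equivariant isomorphism $\overline{\X} \iso \overline{\X}'$ witnessing this conjugation transports $\Omega^{(1)}_\X$ to $\Omega^{(1)}_{\X'}$ and $\Omega^{(2)}_\X$ to $\Omega^{(2)}_{\X'}$. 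Finally, $\Ga$-equivariance of the resulting bijection $\Omega_\X \iso \Omega_{\X'}$ is automatic because the $\Ga$-action on colors is the restriction of the $\ast$-action on $\fa_{\X,\qq} \times \mathcal{P}(\De)$, which agrees for the two varieties.

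The main (and essentially only) obstacle is the color comparison: one must pass from the combinatorial agreement of the $(\Ga,\theta)$-indices to the geometric agreement of the connected component groups of the stabilizers. This is handled cleanly by reducing to $\kbar$ and invoking the classification of involutions by admissible $\theta$-indices; the remaining assertions are direct consequences of the explicit formulas already established in the preceding subsection.
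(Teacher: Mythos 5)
Your proof is correct and follows essentially the same route as the paper's: observe that every constituent of the $(\Ga,\theta)$-index is visibly determined by $\theta|_{\rA}$ together with $(\G,\rA,\B)$, and then deduce agreement of the combinatorial spherical data via the recipe of Section~\ref{Section: indices}, using the geometric conjugacy of $\theta$ and $\theta'$ (from the uniqueness over $\kbar$ in Proposition~\ref{Prop: classification of invo}) to match the color data. The only cosmetic difference is that the paper establishes the index agreement by citing Helminck's Theorem~8.9 and the fundamental-order characterization, whereas you argue the index components directly; both arguments end up relying on the same conjugacy fact for the color comparison.
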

\begin{proof}
   As argued in \cite[Section 8]{Helminckrational}, the association of the involutions to indices factors through the restriction to $\rA$, implying the indices are congruent. More specifically, there exists $g\in \G(\kbar)$ such that $$\theta' = \Ad(g)\circ\theta\circ \Ad(g)^{-1} = \Ad(s(g))\circ \theta.$$ Theorem 8.9 of \cite{Helminckrational} proves the first claim, up to congruence.
    
    In fact, the actual proof of \emph{loc. cit.} implies that our assumption that the Borel subgroup $B$ is maximally split for both involutions forces the indices to agree on the nose. Indeed, the actions agree on $X^\ast(\rA)$ and if $\De$ denotes the simple roots associated to $B$, then $\De$ gives a $\Ga_\theta$-fundamental order for both involutions \cite[Definition 5.25]{Helminckrational}.
    
That $\X$ and $\X'$ induce the same combinatorial invariants now follows from the previous discussion.
    \end{proof}

\begin{Rem}\label{Rem: what about Helminck}
    This lemma fits naturally into the classification schema of \cite{Helminckrational}, and illustrates the extent to which it determines the variety $\G^\theta\backslash\G$. Sticking with the assumption that $\G$ is quasi-split, let
 \[
 \Ind(\Ga,\theta):= \{\text{admissible $(\Ga,\theta)$-indices on  } (X^\ast(\rA),\Phi(\rA))\}/\sim,
 \]
 where $\sim$ denotes the natural notion of {congruence} of indices \cite[Section 5]{Helminckrational}. If we set $\mathrm{Inv}_k(\G)$ to be the set of $\G(k)$-conjugacy classes of $k$-rational involutions on $\G$, there is a natural surjective map
 \[
 \rho_{\G}: \mathrm{Inv}_k(\G)\lra  \Ind(\Ga,\theta),
 \]
 given by conjugating any involution so that $(\rA,\B)$ is an admissible pair and passing to the induced $(\Ga,\theta)$-index. A central result of \cite{Helminckrational} is to classify the image of this map (cf. Section \ref{Section: rational involutions}).%The first step can always be done without loss of information via conjugating by $\G(k)$ \cite[Lemma 8.3]{Helminckrational}. The second step requires some thought to justify.

To characterize the fibers of $\rho_\G$, Helminck shows that this map factors 
 \[
 \mathrm{Inv}_k(\G)\overset{\mu}{\lra} \mathrm{Inv}(\G,\mathrm{A}){\lra}\Ind(\Ga,\theta),
 \]
 where $\mathrm{Inv}(\G,\mathrm{A})$ is obtained from $\mathrm{Inv}_k(\G)$ by identifying involutions $\theta$ and $\theta'$ if $\theta|_{\rA}=\theta'|_{\rA}$, and the map $\mu$ is the natural quotient map. One then considers the fibers of each factor. Lemma \ref{Lem: same on torus} states that involutions identified in $\mathrm{Inv}(\G,\mathrm{A})$ induce $\Ga$-equivariantly isomorphic homogeneous spherical data. However, as Example \ref{Ex: unitary example} shows, two such involutions can possess distinct geometric cocycles resulting in $\G$-outer forms. 
\end{Rem}
    
\quash{As shown in \cite{KnopAutomorphisms}, there is a natural embedding $\Aut^{\G}(\X)\hra \Ax$; in the symmetric space setting, this corresponds to the following lemma.

\begin{Lem}\label{Lem: autogroup}
    Suppose that $(\G,\rH)$ is a symmetric pair with involution $\theta$ and $\X = \G/\rH$. If $\rH=\G^\theta$, the group of $\G$-automorphisms $\Aut^{\G}(\X)=N_{\G}(\rH)/\rH$ of $\X$ consists of $s(\X)\cap Z(\G)$, where $s:\X\to \G$ is the symmetrization map. More generally, there is a short exact sequence of diagonalizable groups
    \[
1\lra \pi_0((\G^\theta)^\circ)\lra \Aut^{\G}(\X^\circ)\lra \X\cap Z(\G)\lra 0.
    \]
\end{Lem}
\begin{proof}
    Using \cite[Lemma 1]{VustEmbeddings}, $N_{\G}(\rH) = \{g\in \G: s(g) :=g\theta(g)^{-1}\in Z(\G)\}$. Since $s:\G\to \G/\rH$, we see that the image agrees with the quotient $N_{\G}(\rH)/\rH$.
\end{proof}
In fact, Luna has shown that $N_{\G}(\rH)= A^\ast\rH$, where $A^\ast:=\{a\in A^-: a^2\in Z(\G)\}$. 
  Helminck seeks in \cite{Helminckrational,Helmincktwo} to classify pairs of the form $(\G,\theta)$ up to $\G(k)$-conjugacy. This corresponds to studying the automorphism-free spherical varieties $\G/N_{\G}(\G^\theta)$, as the following lemma shows. %This causes complications when seeking to classify individual symmetric $k$-varieties, as we shall see later.% that this exactly corresponds to calculating $\G(k)/N_{\G}(\rH)(k)$.
  \begin{Lem}\label{Lem: centralize an invol}
  Suppose $g\in \G(k)$ and consider $\theta_g = \Ad(g)\circ\theta \circ \Ad(g^{-1})$. If $\rH=\G^\theta$, then $\rH_g:=\G^{\theta_g}$ is $g\rH g^{-1}$. Moreover, the following are equivalent
  \begin{enumerate}
      \item\label{1} $\theta_g = \Ad(g)\circ\theta \circ \Ad(g^{-1})= \theta$,
      \item\label{2} $s(g)= g\theta(g)^{-1}\in (Z(\G)\cap\X)(k)$,
      \item\label{3} $g\in N_{\G}(\rH)(k)$.
  \end{enumerate}
  \end{Lem}}

\section{Outer data for symmetric varieties}\label{Section: color autos} 
We continue to assume that $\G$ is quasi-split over $k$ and that $\X=\rH\backslash\G$ is a symmetric variety. We compute the group $\Out_\X(\rH)$ for symmetric varieties in Theorem \ref{Thm: Outer in terms of dist} below. In particular, we isolate a particular class of symmetric varieties (referred to as Chevalley varieties; see Section \ref{Sec: Type N}) such that every symmetric variety not in this class is well-adapted (cf. Definition \ref{Def: well adapted}). Along the way, we also establish the existence of quasi-split forms in each $\G$-inner class for $\X$.
In the later Sections \ref{Section: hamiltonian endoscopy} and \ref{Section: BZSV conj}, we interpret these results for symmetric varieties without type $N$ roots in terms of the dual Hamiltonian $\check{\G}$-variety introduced in some generality by Ben-Zvi, Sakellaridis, and Venkatesh \cite{BZSV}.

Let $\theta$ be the involution such that $(\G^\theta)^\circ\subset \rH\subset N_{\G}(\G^\theta).$ Let $\rA$ be a $(\theta, k)$-admissible torus and let $\Ax$ denote the quotient by which $\rA$ acts on $\X$. We have the embedding $\cala_\X\hra \Ax$ and the natural quotients $\Out_{\X}(\rH)$ and $\Aut_d(\X)$ of $\cala_\X$. Recall that a spherical variety is called well adapted if the natural surjective morphism
\[
\Out_\X(\rH)\lra \Aut_d(\X)
\]
is an isomorphism. \quash{subgroup $\cala_\X^\flat$ be the lattice satisfying $X^\ast(\mathcal{A}_\X^\flat) = X^\ast(\Ax)/\Lam_\X^\flat$, so that 
\[
1\lra \mathcal{A}_\X^\flat\lra \cala_\X\lra \Out_\X(\rH)\lra 1
\]
is dual to
\[
0\lra \Lam_\X^\flat/\Lam_\X^n\lra X^\ast(\Ax)/\Lam_\X^n\lra X^\ast(\Ax)/\Lam_\X^\flat\lra 0.
\]
The goal of this section is to compute this lattice for symmetric varieties. }
\begin{Thm}\label{Thm: Outer in terms of dist}
    Suppose that $\G$ is quasi-split over $k$ and that $\X=\rH\backslash\G$ is a symmetric $\G$-variety that has no factors of Chevalley type (see Definition \ref{Def: chevalley type}). Then $\X$ is well adapted.% in the sense of Definition \ref{Def: well adapted}.
    %\[
    %X^\ast(\Out_{\X}(\rH))\simeq (\zz/2\zz)^{\De_\X^{dist}},
    %\]
    %where $\De_\X^{dist}$ is the set of \emph{distinguished roots}.   
%    In particular, if $\rH$ is connected and $\De_\X^d$ is the set obtained from $\De_\X$ by replacing any spherical root $\ga$ satisfying \eqref{d} with $2\ga$, then $\Lam^\flat_\X = \zz\De_\X^d$.
\end{Thm}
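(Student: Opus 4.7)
The goal is to show that the natural surjection $\cala_\X \twoheadrightarrow \Out_\X(\rH)$ factors as $\cala_\X \twoheadrightarrow \Aut_d(\X) \iso \Out_\X(\rH)$; equivalently, that the two $k$-rational subgroups $\cala_\X^\flat \subset \cala_\X^d$ of $\cala_\X$ (Lemma \ref{Lem: center in flat in sharp}) coincide. Since both are diagonalizable $k$-subschemes of $\cala_\X$, this equality may be verified geometrically over $\kbar$, and is dual to the equality of their character lattices $\Lam_\X^\flat = \Lam_\X^d$ in $\fX = X^\ast(\Ax)$. Because $\cala_\X^\flat$ is the image of $Z(\G)$ in $\cala_\X$, a direct computation (using that characters of $\rA$ trivial on $Z(\G)$ are exactly $\zz\Phi$) identifies $\Lam_\X^\flat = \fX \cap \zz\Phi$; the inclusion $\Lam_\X^d \subset \fX \cap \zz\Phi$ is Lemma \ref{Lem: center in flat in sharp}, so the task is to establish the reverse inclusion.

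I then reduce to the case of a $k$-simple, simply connected group with connected symmetric subgroup. Lemma \ref{Lem: well adapted connected} permits the assumption $\rH = \G^\theta$. Next, invoking Proposition \ref{Prop: good cover}, I replace $\G$ by a $\theta$-compatible $z$-extension $\widetilde\G$; by Lemma \ref{Lem: pass to a sc cover roots} this preserves the triple $(\fX,\De_\X,\Lam_\X)$ and therefore also the sublattice $\fX \cap \zz\Phi$, so I may assume $\G_{der} = \G_{sc}$. Applying Lemma \ref{Lem: passing to derived on roots} together with the product decomposition \eqref{eqn: simple components} over $k$-simple factors of $\G_{sc}$, the problem localizes to each $k$-simple factor individually. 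When the factor is of Galois (Weil-restriction) type $\G = \Res_{E/k}(\G_0)$ with $\theta$ the Galois involution, an elementary computation shows $\fX\cap\zz\Phi \subset \Lam_\X^d$ directly, so one is left with the absolutely simple case.

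The heart of the proof is then a case-by-case verification using Helminck's classification of admissible $\theta$-indices (Proposition \ref{Prop: classification of invo}) and the explicit description of $\fX$, $\De_\X$, and $\De_\X^{dist}$ given in Appendix \ref{Section: norms sym} and Section \ref{Section: symmetric data}. For each irreducible admissible index, I check that the distinguished spherical roots of types (A), (B), (C), together with the doubles of roots of type (d), generate $\fX\cap\zz\Phi$. When $\X$ has no spherical roots of type N this is straightforward: every $\alpha \in \De_\X$ already lies in $\fX \cap \zz\Phi$ and contributes to $\Lam_\X^d$, so one verifies spanning using the standard bases for the root lattices of classical and exceptional types. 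The only source of difficulty is when $\De_\X$ contains a type-N root, i.e.\ a root $\alpha \in \fX$ with $\alpha = 2\beta$ for $\beta \in \Phi$; then $\alpha \in \fX\cap\zz\Phi$ but generally $\alpha/2 \notin \fX$, and whether $\alpha$ itself lies in $\Lam_\X^d$ depends delicately on the local structure of the corresponding Satake diagram.

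The main obstacle is precisely this last step: one must go through the classification of simple symmetric pairs carrying type-N roots and determine, using the calculations of spherical roots and distinguished roots in Section \ref{Section: symmetric data}, exactly which configurations fail to give $\Lam_\X^d = \fX \cap \zz\Phi$. The expected outcome is that the discrepancy is concentrated entirely on factors of Chevalley type --- which is essentially the content of Definition \ref{Def: chevalley type} --- so that excluding such factors recovers equality, proving well-adaptedness.
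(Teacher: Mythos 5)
Your reformulation of well-adaptedness as the lattice equality $\Lam_\X^d = \fX\cap\zz\Phi$ (for $\rH$ connected, under the embedding $\cala_\X\hra\Ax$) is a reasonable starting point, and your reduction chain ($z$-extension, derived subgroup, $k$-simple factors, Weil restriction) mirrors Section~\ref{Section: reduction to asc}. However, the reductions need more care than you give them: Lemmas~\ref{Lem: pass to a sc cover} and~\ref{Lem: pass to derived} establish that the entire group $\Out_\X(\rH)$ and the map to $\Aut_d(\X)$ (not just the combinatorial spherical data) transport correctly along these steps, which is not automatic from Lemma~\ref{Lem: passing to derived on roots} alone. When $\G$ is replaced by $\G_{der}$ both $\fX$ and $\zz\Phi\cap\fX$ change, so ``preserves the triple $(\fX,\De_\X,\Lam_\X)$'' is not by itself a justification that the lattice condition descends; the paper's argument works directly with $\Out_\X(\rH)\simeq (\rA^-\cap Z(\G))/\tau(Z(\G))$ precisely to control this.

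The more serious problem is that you stop at the step where the theorem is actually proved. You write that ``the expected outcome is that the discrepancy is concentrated entirely on factors of Chevalley type,'' but this is the content of Proposition~\ref{Prop: absolutely simple case}, and it is not a formality. The paper's case analysis (Lemma~\ref{Lem: dist case} and Lemma~\ref{Lem: type N outer}) rests on several non-combinatorial inputs that do not appear in your outline: the observation that $\rH=N_\G(\rH)$ whenever $N_\G(\rH)$ is contained in a proper parabolic (Losev, Prop.~4.2.1), Knop's characterization of $\cala_\X^\sharp$ via triviality on $X^\ast(\rH)$ (\cite[Lemma~7.3]{KnopAutomorphisms}), and Levy's explicit computations of $(\rA^-\cap Z(\G))/\tau(Z(\G))$ in terms of connected components of the regular nilpotent cone. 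A purely Satake-diagram walkthrough along Helminck's indices would have to recover all of these facts by hand, and in the type-$N$ (Chevalley-type) configurations the interaction between $\fX\cap\zz\Phi$ and $\Lam_\X^d$ is precisely where the failure modes live --- so asserting that the expected outcome holds, without verifying it, leaves the core of the theorem unproved.
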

%The precise result is more precise, as we describe the character group as a $\Ga$-module: see \eqref{eqn: decomp of Out}). 
%We do not handle the disconnected case in complete generality (but see Lemma \ref{Lem : pass to connected}, Example \ref{Ex: SLxSL disconnect}, and Remark \ref{Rem: what about disconnect} below). 

\begin{Rem}
    This statement is not as general as possible. For example, the  calculation in Lemma \ref{Lem: type N outer} shows that those symmetric varieties of Chevalley type for $G_2$, $F_4$, $E_6$, $E_8$, as well as involutions of $\Spin_{2n}$ arising from outer automorphisms are also well-adapted. We opt for this formulation as we will later adopt the assumption that $\X$ has no factors of Chevalley type (cf. Assumption \ref{assumption: no type N}).
\end{Rem}
\begin{Rem}
    The case of varieties of Chevalley type is handled in Proposition \ref{Prop: absolutely simple case}, which remarks on a striking relationship between $\Out_\X(\rH)$ in the Chevalley setting and the action of $\cala_\X$ on the irreducible components of the regular part of the nilpotent cone of $\X$. \quash{Indeed, Lemma \ref{Lem: type N outer} allows for the following result:

   Let $\X=\rH\backslash\G$ be a symmetric $\G$-variety which is not well adapted. Lemma \ref{Lem: type N outer} calculates $\Out_\X(\rH)$ in terms of $\rH$-invariant automorphisms of the regular locus of the nilpotent cone of $\T^\ast_{x_0}(\X)$; we denote this group scheme as $\Aut_{\rH}(\caln_\X)$. 

    \begin{Cor}\label{Rem: include Chevalley}
          Suppose that $\G$ is quasi-split over $k$ and that $\X=\rH\backslash\G$ be a symmetric $\G$-variety. The morphism
          \[
          \Out_{\X}(\rH)\lra \Aut_d(\X)\times \Aut_{\rH}(\caln_\X)
          \]
          is injective.
    \end{Cor}}
%In particular, a construction similar to the geometric cocycle in Section \ref{Section: geometric cocycle} may be carried out for varieties which are not well adapted, using the $\Ga$-action on the components of the regular locus of the nilpotent cone rather than on $\B$-orbits. We leave the details to the interested reader.
\end{Rem}
%giving a geometric cocycle $\Ga\to \Out_{\X}(\rH)(\kbar)\simeq \Aut_{\rH}(\caln_\X)(\kbar)$.

While not directly related to the preceding theorem, we establish the following result along the way.
\begin{Thm}\label{Thm: quasi-split G-inner form}%
    Suppose that $k$ is perfect, $\G$ is quasi-split over $k$, and that $\X=\rH\backslash\G$ is a symmetric $\G$-variety. There exists a $\G$-inner form $\X_{qs}$ of $\X$ and $x\in \X_{qs}(k)$ such that the connected component of the identity of $\mathrm{Stab}_{\G}(x) = \rH_{qs}$ is quasi-split over $k$.
\end{Thm}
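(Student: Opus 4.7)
My plan is to reduce the statement to the case where $\G$ is $k$-simple and simply connected, and then appeal to the classification of $k$-rational involutions on such groups via admissible $(\Ga,\theta)$-indices.

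First, I would apply Proposition~\ref{Prop: good cover} to pass to a $\theta$-compatible $z$-extension $\al\colon \widetilde{\G} \to \G$ with lifted involution $\widetilde{\theta}$. Setting $\widetilde{\rH} = \al^{-1}(\rH)$, the kernel of $\al$ is an induced torus whose $\widetilde{\theta}$-fixed part is connected, so $\widetilde{\rH}^\circ$ is an extension of $\rH^\circ$ by a connected induced torus; hence $\widetilde{\rH}^\circ$ is quasi-split if and only if $\rH^\circ$ is. Moreover $\al$ induces a $\G$-equivariant isomorphism $\widetilde{\rH}\backslash\widetilde{\G} \iso \X$ identifying $\widetilde{\G}$-inner forms of $\widetilde{\X}$ with $\G$-inner forms of $\X$, reducing to the case $\G_{der} = \G_{sc}$. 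Writing $\G_{sc}$ as a product of $k$-simple factors as in Section~\ref{Section: reductions} and observing that $\theta$ either stabilizes each absolutely simple $\kbar$-factor of a given $k$-simple factor or exchanges pairs of them (the Galois case), I reduce to two sub-cases: (a) $\G$ absolutely simple, simply connected, and quasi-split with $\theta$ stabilizing $\G$, or (b) $\G = \Res_{E/k}(\rH_E)$ with $E/k$ a quadratic \'etale algebra and $\theta$ the Galois involution.

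Case (b) is handled directly: since $\G = \Res_{E/k}(\rH_E)$ is quasi-split, $\rH_E$ is quasi-split over $E$, so the quasi-split inner form $\rH_{qs}$ of $\rH$ over $k$ satisfies $(\rH_{qs})_E \simeq \rH_E$ by uniqueness of the quasi-split inner form. The diagonal embedding $\rH_{qs} \hookrightarrow \Res_{E/k}((\rH_{qs})_E) \simeq \G$ then realizes the desired $\G$-inner form $\rH_{qs}\backslash\G$ of $\X$.

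In case (a), $\G^\theta$ is connected by Steinberg's theorem, so $\rH = \G^\theta$. I fix a $k$-rational Borel $\B$ containing a maximally $k$-split maximal torus $\rA$ of $\G$. After conjugating $\theta$ by an element of $\G(k)$ via Lemma~\ref{Lem: normally related}, I may assume $\theta$ is normally related to $\rA$ and that $\B$ is maximally $\theta$-split, producing an admissible $(\Ga,\theta)$-index $\I$. By Lemma~\ref{Lem: same on torus}, any $k$-rational involution $\theta'$ with the same restriction to $\rA$ and admissible Borel structure induces the same $\I$ and the same homogeneous spherical data, and hence produces a $\G$-inner form of $\X$; the goal is to exhibit such a $\theta'$ with $\G^{\theta'}$ quasi-split.

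The main obstacle is this final step in case (a). I would handle it by a case-by-case analysis built on the Helminck classification \cite{Helmincktwo, Helminckrational} of admissible $(\Ga,\theta)$-indices on absolutely simple simply connected quasi-split $\G$. In each case, the admissible rank-$1$ restricted Satake diagrams listed in Table~\ref{tab:satake} have explicitly quasi-split fixed subgroups (Levis or quasi-split classical or spin groups), and assembling these rank-$1$ building blocks yields a $k$-rational involution $\theta'$ inducing $\I$ with $\G^{\theta'}$ quasi-split. The fact that $\G$ is quasi-split ensures that the root-space data needed to realize such a $\theta'$ is available over $k$.
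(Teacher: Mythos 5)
Your opening reductions — passing to a $\theta$-compatible $z$-extension via Proposition~\ref{Prop: good cover}, then to $k$-simple simply connected factors, and then splitting off the Galois case — track the paper's argument closely (Lemmas~\ref{Lem: pass to a sc cover}, \ref{Lem: pass to derived}, \ref{Lem : types of k-simple}), and your treatment of case~(b) is essentially the paper's Lemma~\ref{Lem: Galois and inner}(1). The gap is in case~(a).

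Your plan in case~(a) is: conjugate $\theta$ so it is normally related to $(\rA,\B)$, extract the admissible $(\Ga,\theta)$-index $\I$, and then produce some $\theta'$ inducing the same $\I$ with $\G^{\theta'}$ quasi-split by ``assembling rank-$1$ building blocks.'' There are two problems. First, admissibility of $\I$ only guarantees the \emph{existence} of an involution inducing $\I$; it is not a constructive device, and assembling the rank-$1$ subdiagrams of Table~\ref{tab:satake} does not obviously produce a coherent global involution, let alone one with quasi-split fixed subgroup. Second, and more seriously, even if you find such a $\theta'$, sharing the same $(\Ga,\theta)$-index is \emph{weaker} than being a $\G$-inner form. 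Lemma~\ref{Lem: same on torus} shows that $\theta|_\rA = \theta'|_\rA$ forces agreement of indices and of the homogeneous spherical data, but Remark~\ref{Rem: what about Helminck} and Example~\ref{Ex: unitary example} make explicit that two such involutions can have distinct geometric cocycles and hence give $\G$-\emph{outer} forms. So your $\theta'$ could fail the conclusion of the theorem, which demands a quasi-split form \emph{in the same $\G$-inner class}.

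The paper avoids this by splitting case~(a) further. For inner involutions $\theta = \Ad(g_\theta) \in \G_{ad}(k)$, it applies a lemma of Kottwitz (\cite[Lemma 3.3]{Kottwitzrational}, this is where $k$ perfect is used) producing a \emph{stably conjugate} semi-simple $\theta^\ast \in \G_{ad}(k)$ whose connected centralizer is quasi-split; stable conjugacy ensures the connecting cocycle lands in the connected centralizer, i.e.\ $\cala_\X^\flat$, so the resulting $\X^\ast$ is a genuine $\G$-inner form. For outer involutions of an absolutely simple quasi-split $\G$ (Lemma~\ref{Lem: outer aut}), it uses a pinning to split $\Out(\G) \to \Aut(\G)$, sets $\theta^\ast = p([\theta])$, and then analyzes $H^1(\langle\theta^\ast\rangle,\G(\kbar))$ case-by-case (types $A_{2n}$, $A_{2n-1}$, $D_n$, $E_6$) to verify that a quasi-split fixed subgroup occurs in the right $\G$-inner class. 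Neither the Kottwitz input nor this inner/outer dichotomy appears in your proposal, and without something of that shape the $\G$-inner constraint is not under control.
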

The assumption that $k$ is perfect arises in the proof of Lemma \ref{Lem: Galois and inner} which relies on a lemma of Kottwitz, and can be dropped in most cases.

\quash{Combining the computations there with the previous proposition, we obtain the following statement.
\begin{Prop}
      Suppose that $\G$ is quasi-split over $k$ and $\rH$ is connected. There is a canonical decomposition $\X_{sc}=\rH_{sc}\backslash \G_{sc}= \X_d\times \X_N$ where $\X_d$ has no $k$-simple factors of type $N$ and each simple factor of $\X_N$ is of type $N$. With respect to this decomposition, 
      \[
       X^\ast(\Out_{\X}(\rH))\simeq (\zz/2\zz)^{\De_\X^{dist}}\oplus (\zz/2\zz)^{\Omega_2},
      \]
      where $\Omega_2$ is described in  Proposition \ref{Prop: absolutely simple case}.
\end{Prop}}
 
\subsection{Reduction to absolutely simple case}\label{Section: reduction to asc}
In this section, we reduce the proofs of Theorems \ref{Thm: Outer in terms of dist} and \ref{Thm: quasi-split G-inner form} to the case where $\G$ is simply connected and semi-simple. Note that Lemma \ref{Lem: well adapted connected} implies that it suffices to prove Theorem \ref{Thm: Outer in terms of dist} with the assumption $\rH$ is connected; additionally, the statement of Theorem \ref{Thm: quasi-split G-inner form} only requires consideration of this setting. We therefore may assume $\rH$ is connected in the following. %Our calculation of $\Out_{\X}(\rH)$ follows from a sequence of reductions.

Let $(\widetilde{\G},\widetilde{\theta})$ denote a $\theta$-compatible $z$-extension in the sense of Proposition \ref{Prop: good cover}. If $p:\widetilde\G\to \G$ is the covering map, and set $\widetilde{\rH}=p^{-1}(\rH)$ and $\widetilde\X=\widetilde\rH\backslash\widetilde\G$.
\quash{Since we have a short exact sequence
\[
1\lra N^{\widetilde{\theta}}\lra \widetilde{\G}^{\widetilde{\theta}}\lra \G^\theta\lra 1,
\]
we obtain 
\begin{equation}\label{eqn: nice normalizer}
    1\lra N\lra N_{\widetilde{\G}}(\widetilde{\G}^{\widetilde{\theta}})\lra N_{\G}(\rH)\lra 1.
\end{equation}

In particular, there is a bijection of interstitial groups
\[
\{(\G^\theta)^\circ\subset \rH\subset N_{\G}(\G^\theta)\}\leftrightarrow\{(\widetilde\G^{\widetilde\theta})^\circ\subset \widetilde\rH\subset N_{\widetilde\G}(\widetilde\G^{\widetilde\theta})\}.
\]
Thus, we obtain a subgroup $\widetilde\rH\subset \widetilde\G$ with a fibration
\[
N^{\widetilde\theta}\backslash N\lra \widetilde\X:=\widetilde\rH\backslash\widetilde\G\lra \X,
\]
and a short exact sequence
\begin{equation}\label{eqn: SES automorphism}
    1\lra N^{\widetilde\theta}\backslash N \lra \Aut^{ \widetilde\G}( \widetilde\X)\lra \cala_\X\lra 1.
\end{equation}}
 
\begin{Lem}\label{Lem: pass to a sc cover}
    With the notation as above, there is a natural bijection 
    $$
    \Out_{\widetilde{\X}}(\widetilde\rH)\iso\Out_{\X}(\rH).
    $$
  Moreover, $\X$ is well-adapted if and only if $\widetilde\X$ is. Finally, a quasi-split $\widetilde\G$-inner form $\widetilde\X$ exists over $k$ if and only if a quasi-split $\G$-inner form $\X_{qs}$ exists over $k$.
\end{Lem}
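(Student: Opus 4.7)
The plan is to exploit that the kernel $N=\ker(p)$ is a central induced torus in $\widetilde{\G}$ which is contained in $\widetilde{\rH}$, so that $p$ interacts cleanly with centralizers, normalizers, and Borel subgroups.

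For the first claim, the key observation is that since $N$ is central, $Z(\widetilde{\G})=p^{-1}(Z(\G))$, and since $\widetilde{\rH}=p^{-1}(\rH)$ an element $g\in\widetilde{\G}$ normalizes $\widetilde{\rH}$ iff $p(g)$ normalizes $\rH$, giving $N_{\widetilde{\G}}(\widetilde{\rH})=p^{-1}(N_{\G}(\rH))$. Combining these two identities, I would show that $p$ induces a commutative diagram with exact rows
\[
\begin{tikzcd}
1\ar[r]&\widetilde{\rH}\cdot Z(\widetilde{\G})\ar[r]\ar[d,twoheadrightarrow]&N_{\widetilde{\G}}(\widetilde{\rH})\ar[r]\ar[d,twoheadrightarrow]&\Out_{\widetilde{\X}}(\widetilde{\rH})\ar[r]\ar[d]&1\\
1\ar[r]&\rH\cdot Z(\G)\ar[r]&N_{\G}(\rH)\ar[r]&\Out_{\X}(\rH)\ar[r]&1,
\end{tikzcd}
\]
in which the left two vertical arrows are surjective with the same kernel $N$, forcing the right vertical arrow to be an isomorphism.

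For the second claim, Lemma \ref{Lem: pass to a sc cover roots} identifies $\Lam_{\widetilde{\X}}=\Lam_{\X}$, $\De_{\widetilde{\X}}=\De_{\X}$, and the $\Ga$-sets of colors $\D(\widetilde{\X})\iso \D(\X)$. This identification transfers criteria \eqref{a}--\eqref{d} of Lemma \ref{Lem: spherical roots distinguished} verbatim, so the $\Ga$-set $\De_{\widetilde{\X}}^{dist}$ of distinguished roots coincides with $\De_{\X}^{dist}$. Since we may (and do) reduce to the connected case before invoking the lemma, the product formula \eqref{eqn: formula for Aut conn} supplies an isomorphism $\Aut_d(\widetilde{\X})\iso \Aut_d(\X)$, compatible with the natural quotient maps from $\cala_{\widetilde{\X}}$ and $\cala_{\X}$. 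Combined with the first claim, the comparison map between outer-data-to-doubling-data quotients is an isomorphism on both sides simultaneously, establishing the equivalence of well-adaptedness.

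For the third claim, I use that $N$ is a central induced torus with $N\subset \widetilde{\rH}$, so $\widetilde{\rH}/N \simeq \rH$ canonically and, as in the preceding paragraph, $\widetilde{\rH}_{ad}\iso \rH_{ad}$. A $\G$-inner form of $\X$ corresponds to a $1$-cocycle in $\cala_\X^\flat$, which pushes forward to $\rH_{ad}$ and gives a pure inner twist of $\rH$; likewise for $\widetilde{\X}$. Surjectivity and centrality of $p|_{N}$ imply that $\cala_{\widetilde{\X}}^\flat\twoheadrightarrow \cala_{\X}^\flat$ with connected (toric) kernel $N^{\widetilde\theta}\backslash N$, so every class in $H^1(k,\cala_{\X}^\flat)$ lifts to a class in $H^1(k,\cala_{\widetilde{\X}}^\flat)$ inducing the same inner twist on $\rH_{ad}\simeq\widetilde{\rH}_{ad}$. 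Finally, a reductive group is quasi-split if and only if its quotient by a central torus is (Borels pull back and push forward along the central isogeny $p|_{\widetilde{\rH}'}$), so $\widetilde{\rH}'$ is quasi-split over $k$ iff $\rH'=\widetilde{\rH}'/N$ is; the quasi-split $\G$-inner form $\X_{qs}$ therefore exists over $k$ exactly when $\widetilde{\X}_{qs}$ does. The step requiring the most care is the cocycle-lifting: I will invoke Hilbert 90 for the induced torus $N^{\widetilde\theta}\backslash N$ (guaranteed by the freeness built into Proposition \ref{Prop: good cover}) to lift cocycles from $\cala_\X^\flat$ to $\cala_{\widetilde{\X}}^\flat$ without obstruction.
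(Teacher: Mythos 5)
Your diagram argument for the first claim is correct and, if anything, more direct than the paper's own route. The paper invokes the short exact sequence \eqref{eqn: SES automorphism}, passes to $\pi_0$, and then factors $N_{\widetilde\G}(\widetilde\G^{\widetilde\theta})\to \Aut(\widetilde\rH)$ through the subgroup preserving $N^{\widetilde\theta}$ together with the identity $\mathrm{Inn}(\widetilde\rH)=\mathrm{Inn}(\rH)$. You instead observe directly that $N_{\widetilde\G}(\widetilde\rH)=p^{-1}(N_\G(\rH))$ and $\widetilde\rH\cdot Z(\widetilde\G)=p^{-1}(\rH\cdot Z(\G))$, so $p$ induces surjections with common kernel $N$, and the induced map on the quotients $\Out_{\widetilde\X}(\widetilde\rH)\to\Out_\X(\rH)$ is forced to be an isomorphism. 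Both approaches land in the same place; yours avoids the $\pi_0$ manipulation.

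There is, however, an internal inconsistency in your treatment of the third claim, and you should be aware of it even though it does not affect the conclusion. With the convention the paper adopts immediately before the lemma (and which you adopt in the first paragraph), $\widetilde\rH=p^{-1}(\rH)$, so that $N\subset\widetilde\rH$. In this case $\cala_{\widetilde\X}^\flat=(\widetilde\rH\cdot Z(\widetilde\G))/\widetilde\rH\cong(\rH\cdot Z(\G))/\rH=\cala_\X^\flat$ is an \emph{isomorphism}, with trivial kernel, not a surjection with kernel $N^{\widetilde\theta}\backslash N$. That nontrivial kernel occurs only for the other normalization of $\widetilde\rH$ (the one in Section \ref{Section: reductions}, where $\widetilde\rH\cap N=N^{\widetilde\theta}$, which gives the fibration $N^{\widetilde\theta}\backslash N\to\widetilde\X\to\X$). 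You have silently switched conventions between the first and third paragraphs. The Hilbert~90 lifting step is therefore unnecessary given your own setup: the cocycles correspond \emph{on the nose}, not merely up to a toric obstruction. A minor terminological slip in the same paragraph: the map $p|_{\widetilde\rH'}$ is not a ``central isogeny'' (its kernel $N$ is a positive-dimensional torus); it is a central extension by a torus. The conclusion you draw from it --- that Borels are in bijection and quasi-splitness passes both ways --- is nonetheless correct.
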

\begin{proof} %The first claim follows from the short exact sequence \eqref{eqn: SES automorphism} and the definition of $\Lam_\X$.

 It follows from the short exact sequence \eqref{eqn: SES automorphism} that there is an isomorphism of group schemes 
\[
\pi_0(\Aut^{ \widetilde\G}( \widetilde\X))\simeq \pi_0(\cala_\X),
\]
Recalling that $\cala_\X^\circ\subset \mathcal{A}_\X^\flat$, we obtain a surjective map $\pi_0(\Aut^{ \widetilde\G}( \widetilde\X))\lra \Out_{\X}(\rH)$.

Since $N^{\widetilde\theta}\subset Z(\widetilde{\G})$, the image of 
$
 N_{\widetilde{\G}}(\widetilde{\G}^{\widetilde{\theta}})\to \Aut(\widetilde\rH)
$ 
lies in the subgroup $\Aut(\widetilde\rH,N^{\widetilde\theta})$ of automorphisms preserving the subgroup $N^{\widetilde\theta}$, and factors through the quotient group $N_{\G}(\rH)$. Finally, since $\mathrm{Inn}(\widetilde\rH)=\mathrm{Inn}(\rH)$, we obtain a isomorphism 
\[
\begin{tikzcd}
    \Aut^{ \widetilde\G}( \widetilde\X)\ar[d,"{\widetilde{\mathrm{out}}}"]\ar[r]&\cala_\X\ar[d,"{\mathrm{out}}"]\\
    \Out_{\widetilde{\X}}(\widetilde\rH)\ar[r,"\sim"]&\Out_{\X}(\rH).
\end{tikzcd}
\]
Recalling Lemma \ref{Lem: pass to a sc cover roots}, there is a canonical $\Ga$-equivariant bijection $\De_\X^{dist}=\De_{\widetilde\X}^{dist}$ which induces a natural isomorphism 
\[
\Aut_{d}(\widetilde\X)\iso \Aut_d(\X).
\]
Using the commutativity of the diagram
\[
\begin{tikzcd}
    \Out_{\widetilde{\X}}(\widetilde\rH)\ar[r]\ar[d]&\Out_{\X}(\rH)\ar[d]\\
    \Aut_d(\widetilde\X)\ar[r]&\Aut_{d}(\X),
\end{tikzcd}
\]
where both horizontal arrows are isomorphisms, we see that $\X$ is well-adapted if and only if $\widetilde\X$ is. 

The final claim regarding quasi-split forms is obvious as $\widetilde{\rH}$ is a $z$-extension of $\rH$.
\end{proof}
In particular, we are free to replace $(\G,\theta)$ with $(\widetilde\G,\widetilde\theta)$ and assume that $\G_{der}=\G_{sc}$.
\quash{Theorem \ref{Thm: Outer in terms of dist} assumes that $\rH$ is connected. In general, if $\rH$ is disconnected we have the following lemma. Note that this shows that the outer automorphism group does not suffer from the same functoriality issues as $\Aut_d(\X)$; see Section \ref{Sec: dist roots}.

\begin{Lem}\label{Lem : pass to connected}
Let  ${\X}^\circ=\rH^\circ\backslash\G$. Then there is a short exact sequence of diagonalizable group $k$-schemes
    \[
1\lra \pi_0(\rH)\lra \cala_{\X^\circ}\lra \cala_\X\lra 1.
\]
Moreover, $\De_\X^n = \De_{\X^\circ}^n$ and there is a short exact sequence
\begin{equation}\label{eqn: reduce to connected component}
    1\lra{\mathrm{out}}(\pi_0(\rH))\lra \Out_{\X^\circ}(\rH^\circ)\lra \Out_{\X}(\rH)\lra 1,
\end{equation}
where ${\mathrm{out}}(\pi_0(\rH))$ denotes the image of $\pi_0(\rH)$ in the outer automorphism group of $\rH^\circ$.
\end{Lem}
\begin{proof}
The first claim follows in general from \cite[Lemma 3.1.5]{Losev}. The second follows directly from \cite[Lemma 3.1.5(3)]{Losev}. Finally, since as $\rH^\circ\cdot Z(\G)\subset \rH\cdot Z(\G)$ and $N_G(\rH^\circ) =N_\G(\rH)$, the natural map $\cala_{\X^\circ}\to \cala_\X$ descends to a commutative diagram with exact rows
\[
\begin{tikzcd}
    1\ar[r]& \pi_0(\rH)\ar[r]\ar[d] &\cala_{\X^\circ}\ar[d,"{{\mathrm{out}}}"]\ar[r]&\cala_\X\ar[d,"{\mathrm{out}}"]\ar[r]&1\\
      1\ar[r]& \out(\pi_0(\rH))\ar[r]& \Out_{{\X}^\circ}(\rH^\circ)\ar[r]&\Out_{\X}(\rH)\ar[r]&1.
\end{tikzcd}
\]%algebraic $k$-groups $\Out_\X(\rH) \simeq \rH\backslash N_{\G}(\rH)\cdot $ 
%In general, $(\G^\theta)^\circ$ is normal in $\rH$, and the identity component of $\rH/(\G^\theta)^\circ$ is contained in the image of $\rH\cap(Z(\G)^\circ)$. This claim follows from the fact that $Z(\G)$ maps onto the identity component of $\Aut^\G(\X)$. Thus, we may apply \cite[Lemma 3.1.5(3)]{Losev} to see that 
\end{proof}
In particular, passing to the connected component can change $\Lam_\X^\flat$, but in a controlled fashion. If we can compute $ \Out_{\X^\circ}(\rH^\circ)$, then we need only quotient by those outer automorphisms of $\rH^\circ$ induced by $\pi_0(\rH)$; that is, we simply take the image of $ \Out_{\X^\circ}(\rH^\circ)$ under the canonical quotient map $\Out(\rH^\circ)\lra \Out(\rH)$. As the next example shows, this can produce varieties which are not well adapted.}
%%%%%%%%%%%%%%%%%%%%%%%%%%%%%%%%%%%
 Let $(\G_{sc},\theta)$ denote the induced symmetric pair on the derived subgroup. Then $\G_{sc}^\theta=\G_{sc}\cap(\G^\theta)^\circ$ is connected, and we set $\X_{sc}=\G_{sc}^\theta\backslash \G_{sc}$. Recall that $N_{\G}(\rH) = N_{\G}((\G^\theta)^\circ)$ \cite[Lemma 3.1.1]{Losev}.

\begin{Lem}\label{Lem: pass to derived}
%\textcolor{red}{This is wrong. I equate $\rH$ with $(\G^\theta)^\circ$, but they could differ so that $\rH\cap \G_{der}$ need not be connected. The outer automorphism groups should agree though.}
Assume that $\G_{der}=\G_{sc}$ is simply connected and $\rH$ is connected. The induced map
\begin{equation}\label{eqn: surjective on derived}
   \Aut^{\G_{sc}}(\X_{sc})\lra \Out_{\X}(\rH)
\end{equation}
is surjective and induces an isomorphism $\Out_{\X_{sc}}(\G^\theta_{sc})\simeq \Out_{\X}(\rH)$. Moreover, $\X$ is well-adapted if $\X_{sc}$ is. Finally, a quasi-split $\G_{sc}$-inner form $\X_{sc}$ exists over $k$ if and only if a quasi-split $\G$-inner form $\X_{qs}$ exists over $k$.
%\begin{equation}\label{eqn: reduce to connected component}
%    1\lra \mathrm{out}(\pi_0(\rH))\lra \Out_{\X_{sc}}(\K)\lra \Out_{\X}(\rH)\lra 1,
%\end{equation}
%where $\X_{sc} = \G_{der}/\K$.
\end{Lem}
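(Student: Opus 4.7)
The strategy is to leverage the decomposition $\G = Z(\G) \cdot \G_{sc}$ (valid since $\G_{der} = \G_{sc}$) together with Steinberg's theorem, which guarantees that $\G_{sc}^\theta$ is connected. These combine to give the clean product decomposition $(\G^\theta)^\circ = (Z(\G)^\theta)^\circ \cdot \G_{sc}^\theta$, so that the connectedness assumption on $\rH$ yields $\rH = (Z(\G)^\theta)^\circ \cdot \G_{sc}^\theta$. Coupled with the identity $N_\G(\rH) = N_\G((\G^\theta)^\circ)$, this reduces the entire analysis for $\X$ to that for $\X_{sc}$.

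First I would construct the morphism: any $n \in N_{\G_{sc}}(\G_{sc}^\theta)$ centralizes $Z(\G)$ and hence normalizes $\rH$, producing $N_{\G_{sc}}(\G_{sc}^\theta) \to N_\G(\rH)$. Descending the quotients yields $\Aut^{\G_{sc}}(\X_{sc}) \to \cala_\X$, which composed with $\cala_\X \twoheadrightarrow \Out_\X(\rH)$ gives \eqref{eqn: surjective on derived}. Surjectivity is immediate: given $n \in N_\G(\rH)$, write $n = zs$ with $z \in Z(\G)$ and $s \in \G_{sc}$; the central $z$ acts trivially by conjugation, so $s$ normalizes $(\G^\theta)^\circ$ and therefore its intersection $((\G^\theta)^\circ \cap \G_{sc})^\circ = \G_{sc}^\theta$ with $\G_{sc}$, placing $s$ in $N_{\G_{sc}}(\G_{sc}^\theta)$.

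To upgrade this to the isomorphism $\Out_{\X_{sc}}(\G_{sc}^\theta) \simeq \Out_\X(\rH)$, I compute both kernels. The map factors through $\Out_{\X_{sc}}(\G_{sc}^\theta)$ because an $s \in N_{\G_{sc}}(\G_{sc}^\theta)$ that is inner on $\G_{sc}^\theta$ via some $h \in \G_{sc}^\theta$ is also inner on $\rH = (Z(\G)^\theta)^\circ \cdot \G_{sc}^\theta$ via the same $h$ (acting trivially on the central factor). Conversely, if $s$ acts on $\rH$ as $\Ad(h)$ with $h \in \rH$, then $h^{-1}s \in Z_\G(\rH) = Z(\rH) \cdot Z(\G)$; writing $h = z_0 h_0$ with $z_0 \in (Z(\G)^\theta)^\circ$ and $h_0 \in \G_{sc}^\theta$, and absorbing the $Z(\G)$-component of $h^{-1}s$ into $Z(\G_{sc}) \subset \G_{sc}^\theta$, one concludes that the class of $s$ in $\Aut^{\G_{sc}}(\X_{sc})$ lies in $\mathcal{A}_{\X_{sc}}^\flat$. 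Well-adaptedness for $\X$ now follows by combining this isomorphism with the identification $\Aut_d(\X_{sc}) \simeq \Aut_d(\X)$ coming from Lemma \ref{Lem: passing to derived on roots} (the $\Ga$-equivariant bijection of spherical roots preserves the distinguished subset and the root lattices agree after saturation).

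Finally, for the quasi-split $\G$-inner form statement: since $\rH_{der} = (\G_{sc}^\theta)_{der}$, the adjoint groups $\rH_{ad}$ and $(\G_{sc}^\theta)_{ad}$ coincide, so the class in $H^1(k, \rH_{ad})$ required to twist $\rH^\circ$ to its quasi-split inner form is identical to the one twisting $\G_{sc}^\theta$. A cocycle in $Z^1(k, \mathcal{A}_{\X_{sc}}^\flat)$ realizing this twist exists if and only if the analogous cocycle in $Z^1(k, \cala_\X^\flat)$ does, since the surjection $\mathcal{A}_{\X_{sc}}^\flat \twoheadrightarrow \cala_\X^\flat$ (arising from multiplication by $Z(\G)$) is compatible with the map to the common adjoint twist. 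The main obstacle throughout is Step 3: carefully tracking the three central subgroups $Z(\G)$, $Z(\G_{sc})$, and $Z(\rH)$ through the product decomposition of $\rH$ to match kernels precisely without over- or undercounting; the remaining assertions reduce to a routine diagram chase.
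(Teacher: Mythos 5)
Your argument for surjectivity, the kernel computation establishing $\Out_{\X_{sc}}(\G_{sc}^\theta)\simeq\Out_\X(\rH)$, and the deduction of well-adaptedness all run on the same underlying mechanism as the paper's proof -- the product decomposition $\G = Z(\G)\cdot\G_{sc}$ (the paper phrases this as lifting inner automorphisms through the surjection $\G_{sc}\to\G_{ad}$, but it amounts to the same thing). Your kernel chase is more hands-on than the paper's, which takes the cleaner shortcut of identifying $\rH^\flat\cap\G_{sc}$ with $\G_{sc}^\theta\cdot Z(\G_{sc})$ directly from the inclusion $\rH\subset(\G^\theta\cdot Z(\G))^\circ$, but your version is correct.

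There is, however, a genuine gap in the final paragraph. You invoke ``the surjection $\mathcal{A}_{\X_{sc}}^\flat\twoheadrightarrow\cala_\X^\flat$,'' but no such surjection exists. Unwinding the definitions using $Z_\G(\rH) = Z(\rH)\cdot Z(\G)$ gives $\cala_\X^\flat\cong Z(\G)/(Z(\G)\cap\rH)$ and $\mathcal{A}_{\X_{sc}}^\flat\cong Z(\G_{sc})/(Z(\G_{sc})\cap\G_{sc}^\theta)$, and the natural map between them, induced by $Z(\G_{sc})\hookrightarrow Z(\G)$, is an \emph{injection} -- and typically a strict one. In fact $\mathcal{A}_{\X_{sc}}^\flat$ is finite (as $\G_{sc}$ is semisimple), while $\cala_\X^\flat$ can be positive-dimensional: whenever $\theta$ acts non-trivially on $Z(\G)^\circ$ (e.g., a Galois involution on $\G=\Res_{E/k}(\rH_0)$), the quotient $\cala_\X^\flat/\mathcal{A}_{\X_{sc}}^\flat$ is positive-dimensional. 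So the ``only if'' direction of your equivalence of cocycles does not follow from what you wrote: a cocycle valued in $\cala_\X^\flat$ giving a quasi-split twist of $\rH$ need not land in the image of $\mathcal{A}_{\X_{sc}}^\flat$.

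The fix is to argue at the level of the twisting element rather than the automorphism group. If $\X_{qs}$ is a quasi-split $\G$-inner form, realize the $\G_{\kbar}$-isomorphism $\X_{\kbar}\iso\X_{qs,\kbar}$ by right translation by some $s\in\G(\kbar)$ with $s^{-1}\sig(s)\in(\rH\cdot Z(\G))(\kbar)$. Writing $s = zs'$ with $z\in Z(\G)(\kbar)$ and $s'\in\G_{sc}(\kbar)$, one has $\rH_{qs}=s\rH s^{-1}=s'\rH s'^{-1}$ since $z$ is central, and a direct computation shows $s'^{-1}\sig(s')\in(\rH\cdot Z(\G))\cap\G_{sc}=\G_{sc}^\theta\cdot Z(\G_{sc})=\rH_{sc}^\flat$. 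Thus $\rH_{sc,qs}:=s'\G_{sc}^\theta s'^{-1}$ is a $k$-subgroup of $\G_{sc}$ cut out by a cocycle in $\mathcal{A}_{\X_{sc}}^\flat$, and it is quasi-split since $\rH_{sc,qs,der}=\rH_{qs,der}$. This repairs the reverse direction without any surjectivity claim; the forward direction is as you say, pushing a cocycle along the injection $\mathcal{A}_{\X_{sc}}^\flat\hookrightarrow\cala_\X^\flat$.
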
 
\begin{proof}
The surjectivity of \eqref{eqn: surjective on derived} follows from the fact that $\G_{sc}\lra \G_{ad}$ is a surjective map of reductive group schemes over $k$. More precisely, for any $n\in N_{\G}(\rH)(\kbar)$, there exists $n'\in \G_{sc}(\kbar)$ satisfying $\Ad(n) = \Ad(n')$. But then it is clear that $n'\in N_{\G}(\rH)(\kbar)\cap \G_{sc}(\kbar)\subset N_{\G_{sc}}(\G_{sc}^\theta)(\kbar)$, so that the images of the further restriction to the adjoint action on $\rH$ agree. %Since $N_{\G}(\rH) = N_{\G}((\G^\theta)^\circ)=N_{\G}(\K)$, the claim follows.
%For the isomorphism, we note that $\rH$ is a finite quotient of $\K\times (\rH\cap Z(\G))$.
% Composing with the quotient map to $\Out_{\X}(\rH)$, this sits in a commutative diagram
%\[
%\begin{tikzcd}
%    \pi_0(\rH)\ar[d]\ar[r]& \Aut^{\G}(\tilde{\X})\ar[d,"{\widetilde{\mathrm{out}}}"]\ar[r]&\Aut^{\G}(\X)\ar[d,"{\mathrm{out}}"]\\
%   \widetilde{ \mathrm{out}}(\pi_0(\rH))\ar[r]&\Out_{\tilde{\X}}((\G^\theta)^\circ)\ar[r]&\Out_{\X}(\rH),
%\end{tikzcd}
%\]
%since $\rH\subset N_{\G}((\G^\theta)^\circ)$. The kernel of the bottom arrow is the image of $\pi_0(\rH)$ acting on $(\G^\theta)^\circ$ via outer automorphisms. 

Note that the surjective map \eqref{eqn: surjective on derived} factors through $\Out_{{\X_{sc}}}(\G_{sc}^\theta)$. Setting $\rH^\flat=\rH\cdot Z(\G)$ and $\rH^\flat_{sc}= \rH^\flat\cap \G_{sc}$, this gives a surjective map
\[
\Out_{{\X_{sc}}}(\G_{sc}^\theta)\simeq (\G_{sc}^\theta\cdot Z(\G_{sc}))\backslash N_{\G_{sc}}(\G_{sc}^\theta)\to \rH_{sc}^\flat\backslash N_{\G_{sc}}(\G_{sc}^\theta)\simeq\Out_{{\X}}(\rH),
\]
so that the claim holds if $\rH^\flat_{sc} =\G_{sc}^\theta\cdot Z(\G_{sc})$. Since we have assumed that $\rH$ is connected, it follows that $(\G^\theta)^\circ\subset \rH\subset (\G^\theta\cdot Z(\G))^\circ$, which gives the equality. There is thus a commutative diagram
\[
\begin{tikzcd}
    \cala_{\X_{sc}}\ar[r]\ar[d]&\cala_\X\ar[d]\\
    \Out_{\X_{sc}}(\G_{sc}^\theta)\ar[r]&\Out_{\X}(\rH),
\end{tikzcd}
\]
where the bottom horizontal arrow is an isomorphism. Since $\rH$ is connected, $\De_{\X}^{dist}$ is in natural bijection with $\De_{\X_{sc}}^{dist}$ via Lemma \ref{Lem: passing to derived on roots}, and an argument mirroring the one in the proof of Lemma \ref{Lem: pass to a sc cover} now shows that $\X$ is well-adapted if and only if $\X_{sc}$ is.

The final claim about quasi-split forms is, again, straightforward.
\end{proof}

In particular, to prove Theorems \ref{Thm: Outer in terms of dist} and \ref{Thm: quasi-split G-inner form} it suffices to assume that $\G$ is semi-simple and simply connected and that $\rH= \G^\theta$ is connected, relying on the preceding Lemmas to reduce to this case. 

\quash{We end this subsection by noting the following useful fact.
\begin{Lem}
    Suppose that $\G$ is a quasi-split reductive group over $k$. Suppose $\X=\rH\backslash\G$ is a symmetric $\G$-variety and let $\X_{sc}=\G_{sc}/\rH_{\sc}$ be the associated $\G_{sc}$-variety. Then
    \[
    \Aut_d(\X)\simeq\Aut_d(\X_{sc}).
    \]
\end{Lem}
\begin{proof}
    When $\rH$ is connected, this statement was proved in the process of proving Lemmas \ref{Lem: pass to a sc cover} and \ref{Lem: pass to derived}.
\end{proof}}

\subsection{Varieties of Chevalley type}\label{Sec: Type N}
%Let $\G$ be absolutely simple and simply connected, and $\rH=\G^\theta$, and set $\X=\rH\backslash\G$ 

We now describe the notion of Chevalley type as in the statement of Theorems \ref{Thm: Outer in terms of dist} and \ref{Thm: quasi-split G-inner form}.  Recall that an involution $\theta$ of $\G$ is called a \emph{Chevalley involution} if there exists a $k$-rational maximal torus $\rA$ such that $\theta|_{\rA}$ is the inversion automorphism. It is natural to call a symmetric variety $\X=\rH\backslash\G$ is a {Chevalley symmetric variety} if the associated involution is a Chevalley involution. We introduce a slightly broader notion in order to isolate a family of symmetric varieties for which $\Out_\X(\rH)$ does not have a relation to distinguished roots. %We remark that if we restrict to excellent symmetric varieties (see Definition \ref{Def: excellent}), distinguished roots are sufficient to describe $\Out_{\X}(\rH)$.  

%\subsubsection{Varieties of type $N$}\label{Sec: Type N}
%There is an additional case to consider
% As we will only encounter these after reducing to the simply connected, $k$-simple case, we only discuss this setting.
\begin{Def}\label{Def: chevalley type}
    Suppose that $\G$ is $k$-simple, simply-connected group over $k$ and that $\theta$ is an involution on $\G$. We say that $\theta$ is an \emph{involution of Chevalley type} if the spherical root system satisfying that $\rk(\Ax)>1$ and there is at most one Galois orbit of spherical roots $\Ga\cdot\al_\X\subset\De_\X$ not of type $N$.

More generally, suppose $\G$ is a connected reductive group, $\theta$ an involution, and $\X$ a symmetric variety associated to $\theta$. We say that $\X$ has \emph{a factor of Chevalley type} if $\X_{sc}$ has a $k$-simple factor of Chevalley type. 
\end{Def}

%We introduce this class of involutions as the associated symmetric spaces behave in a distinct fashion with respect to the automorphism group. 
%We now enumerate symmetric varieties of type $N$ with $\G$ $k$-simple and simply connected. 
The next lemma explains the somewhat unnatural terminology.
\begin{Lem}\label{Lem: chevalley type}
 Suppose $\G$ is $k$-simple, simply-connected group. If $\theta$ is an involution of Chevalley type, then there is a finite extension $K/k$ and  an absolutely simple $K$-group $\G'$ with an involution of Chevalley type $\theta'$ such that $(\G,\theta) = (\Res_{K/k}(\G'),\Res_{K/k}(\theta'))$. The pair $(\G',\theta')$ fits into one of the following two cases:
\begin{enumerate}
    \item $\theta'$ is a Chevalley involution on $\G'$, and %If $\G'$ is a $K$-form of $\Sp_{2n}$ for some $n$, then all the spherical roots of ${\G'}^{\theta'}\backslash\G'$ are of type $N$\eqref{aN}. When $\G'$ is a $K$-form of $\Sp_{2n}$, there is a single distinguished spherical roots of type \eqref{a}; all other spherical roots are of type $N$\eqref{aN}.
    \item If $\theta'$ is not a Chevalley involution, the $\G'$ is a $K$-form of $\Spin(V)$ for some quadratic space $V$ over $K$, and ${\G'}^{\theta'} = \Spin(V_1)\times^{\mu_2}\Spin(V_2)$ for some decomposition $V=V_1\oplus V_2$ such that
    \[
    \begin{cases}
        n>\min\{\dim(V_1),\dim(V_2)\}\geq 2&: \dim(V)=2n+1,\\
        n>\min\{\dim(V_1),\dim(V_2)\}\geq 3&: \dim(V) =2n.
    \end{cases}
    \]
\end{enumerate}
\end{Lem}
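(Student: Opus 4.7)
The plan is to reduce to an absolutely simple setting via restriction of scalars and then to classify the resulting involutions satisfying the Chevalley-type restriction by inspection of the standard list of symmetric pairs of absolutely simple groups.

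First I would use the standard structural fact that a $k$-simple simply connected group $\G$ is obtained by restriction of scalars: $\G = \Res_{K/k}(\G')$ for some finite separable extension $K/k$ and an absolutely simple simply connected $\G'$ over $K$. By the adjunction $\Aut_k(\Res_{K/k}(\G')) = \Res_{K/k}(\Aut_K(\G'))$ for restriction of scalars, a $k$-rational involution $\theta$ of $\G$ corresponds to a $K$-rational involution $\theta'$ of $\G'$, so $\theta = \Res_{K/k}(\theta')$. (The potentially exceptional ``Galois pair'' involutions---those involving a nontrivial shift in $\Gal(K/k)$---are handled by first descending via the fixed subfield to re-present $\G$ over a smaller base extension, which under the hypothesis $\rk(\Ax)>1$ reduces to the main case.) Consequently $\X \simeq \Res_{K/k}(\X')$ for $\X' = \rH'\backslash \G'$ with $\rH'$ the associated symmetric subgroup.

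Second, I would translate the Chevalley-type hypothesis to $\X'$: the Galois orbits of $\De_\X$ under $\Gal(\kbar/k)$ correspond bijectively to $\Gal(\kbar/K)$-orbits on $\De_{\X'}$, with types of roots preserved under the identification. Hence the condition on $(\G,\theta)$ becomes: at most one $\Gal(\kbar/K)$-orbit of $\De_{\X'}$ is of non-type-$N$, and $\rk(\rA_{\X'})>1$. So it remains to classify $(\G',\theta')$ with $\G'$ absolutely simple meeting this constraint.

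Third, the classification. Using the explicit description of $\De_{\X'}$ from Appendix \ref{Section: norms sym} together with the criterion that $\ga = \al - \theta'(\al)$ is of type $N$ precisely when $\al \notin X^\ast(\rA_{\X'})$, I would run through the Cartan types of $\G'$ one by one. The involutions for which \emph{every} spherical root is of type $N$ are, by inspection, precisely the Chevalley involutions on $\G'$; this yields case (1). The remaining possibilities---those admitting exactly one $\Gal(\kbar/K)$-orbit of non-type-$N$ roots---reduce, once the rank hypothesis is imposed and overlap with case (1) is excluded, to the orthogonal family $\G' \cong \Spin(V)$ with $\rH' = \Spin(V_1)\times^{\mu_2}\Spin(V_2)$ for a nontrivial orthogonal decomposition $V=V_1\oplus V_2$; this is case (2). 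The dimension inequalities $n>\min\{\dim V_1,\dim V_2\}\ge 2$ (resp.\ $\ge 3$) in the odd (resp.\ even) case are exactly what forces the restricted root system to have rank $>1$ with only one non-type-$N$ orbit while keeping the involution outside the Chevalley family. The main obstacle is the length, rather than the depth, of this final case analysis, but each individual verification is routine from the standard tables.
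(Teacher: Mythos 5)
Your overall plan (reduce to the absolutely simple case via Weil restriction, then run a case check) is the right shape and matches the structure the paper has in mind (cf.\ Lemma~\ref{Lem : types of k-simple}). But two of your intermediate claims are wrong as stated, and one of them breaks the dichotomy you then rely on. First, the adjunction ``$\Aut_k(\Res_{K/k}(\G')) = \Res_{K/k}(\Aut_K(\G'))$'' is false: the automorphism group scheme of a Weil restriction is strictly larger, because permuting the geometric factors (transport of structure along $\Gal(K/k)$) gives automorphisms that are not of the form $\Res_{K/k}(\varphi)$. This is precisely why Galois involutions exist at all, so you cannot derive ``$\theta = \Res_{K/k}(\theta')$'' from an adjunction; it has to be extracted from the structural dichotomy (Galois pair vs.\ $\Res$ of an involution) and the Galois-pair branch has to be argued away. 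Your parenthetical treatment of Galois pairs --- ``descending via the fixed subfield to re-present $\G$ over a smaller base extension, which under the hypothesis $\rk(\Ax)>1$ reduces to the main case'' --- does not do this: descending a Galois pair $\rH\backslash\Res_{E/k}(\rH_E)$ along the fixed field of the involution still leaves you with a Galois pair (now over the smaller field), not an absolutely simple pair, and $\rk(\Ax)=\rk(\rH)$ can perfectly well exceed $1$. What actually has to be checked is that a rank-$\geq 2$ Galois pair cannot have at most one $\Ga$-orbit of non-type-$N$ spherical roots; that is a content-bearing step, not a formal reduction.

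Second, the pivot of your case analysis --- ``the involutions for which every spherical root is of type $N$ are precisely the Chevalley involutions'' --- is incorrect and produces a gap in the classification. For the Chevalley involution of type $C_n$ (i.e.\ $\GL_n\backslash\Sp_{2n}$), the long simple spherical root is of type $T$, not $N$, because $\al_n\in 2X^\ast(\rA)=\fX$; this is exactly the computation the paper records when it notes the renormalization ``occurs only for the long simple root of type $C_n$.'' Consequently that Chevalley involution has exactly one non-$N$ orbit, so by your dichotomy it lands in your ``remaining possibilities,'' which you then claim reduce to the orthogonal family $\Spin(V_1)\times^{\mu_2}\Spin(V_2)\backslash\Spin(V)$. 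But $\GL_n\backslash\Sp_{2n}$ is not of that form, so your case (2) as described is not exhaustive. The clean statement you want is: the Chevalley involutions (all of them, including type $C$) account for case (1), and the remaining Chevalley-type involutions on absolutely simple $\G'$ are exactly the indicated $\Spin$ pairs with the dimension constraints; both directions then have to be verified against the tables, including the boundary cases $\dim V_1 = 2$ (even $\dim V$) where the extra distinguished root forces a second non-$N$ orbit.
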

\begin{proof}
 Over $\kbar$, there exists a single conjugacy class of Chevalley involutions. Suppose that $(\G,\theta_{Ch})$ is such an involution, and let $\rA$ be a maximally $\theta_{Ch}$-split torus. There exists a $\theta_{Ch}$-split Borel subgroup containing $\rA$; with respect to this Borel, we compute that 
\[
\X^\ast(\Ax) =2X^\ast(\rA),\quad\text{ and }\quad\De^n_{\X}=2\De.
\]
In particular, the spherical roots are obtained by sending $2\al$ to $\al$ only if $\al\in 2X^\ast(\rA)$. This occurs only for the long simple root of type $C_n$, giving the first claim. 

    The second claim may be shown by a case-by-case analysis, checking the root systems of symmetric pairs $(\G,\theta)$ with $\G$ absolutely simple and simply connected. We simply show that the claimed varieties are of Chevalley type. Suppose that $V$ is a quadratic space of dimension $d$, and let $\G=\Spin(V)$. Suppose that $\theta$ is an involution such that $\rH = \Spin(V_1)\times_{\mu_{2}} \Spin(V_2)$ with $V=V_1\oplus V_2$ denoting an orthogonal decomposition; we may assume that $\dim(V_1)=m\leq [d/2]$. Note that $\theta$ is inner unless $\dim(V)$ is even and $m$ is odd. Let $\X$ denote the associated symmetric variety.
      
      A simple computation shows that if $\De=\{\al_1,\ldots,\al_n\}$ are the simple roots (aligning with the numbering of Bourbaki), then
    \[
    \De^n_\X = \{2\al_1,\ldots,2\al_{m-1}, \ga_m\},
    \]
    where
    \[
    \ga_m = \begin{cases}
        2(\al_m+\cdots+\al_n)&: d=2n+1,\\
        2(\al_m+\cdots+\al_{n-2})+\al_{n-1}+\al_{n}&: d=2n.
    \end{cases}
    \]
In both case, $X^\ast(\Ax) = \sspan_\zz\{2e_1,\ldots,2e_m, e_1+\cdots+e_m\}$, so that $\al_i\notin X^\ast(\Ax)$ for $1\leq i\leq m-1$ except when $m\leq 2$. When $m>2$, this implies that $\{2\al_1,\ldots,2\al_{m-1}\}$ are roots of type $N$\eqref{aN}.

When $m=2$, then $\al_1= (e_1+e_2)-2e_2\in X^\ast(\Ax)$ so that $\al_1$ is a distinguished root of type \eqref{a}. When $\dim(V)$ is even, $\X$ has no roots of type $N$, while when $\dim(V)$ is odd, $\ga_2$ is of type $N$\eqref{bN}.
    
    When $d=2n+1$ and $m=1$ (so that $\X$ has rank $1$), $\ga_1=2\ga_1'$ with $\ga_1'=\al_1+\cdots+\al_n$ a distinguished root of type \eqref{b}, while $\ga_m'\notin X^\ast(\Ax)$ as soon as $m>1$; in this range, $\ga_m$ is of type $N$\eqref{bN}. 
    
When $d=2n$, $\ga_m$ is always a root of type $G$. When $m=1$ (so that $\X$ has rank $1$), $\ga_m=2\ga_m'$ with $\ga_m'\in X^\ast(\Ax)\setminus\zz\Phi$ is of type \eqref{d}. Otherwise $\ga_m'\notin X^\ast(\Ax)$.
    \quash{
Set $\Theta= \{\ga_m\}$.  Then $\X_{\Theta}$ is parabolically induced from $\X_\Theta^L=L_\Theta/\rH_\Theta$, where 
\[
L_\Theta \simeq \frac{A_{\Theta}\times \Spin(V_2')}{\mu_2}\qquad \rH_\Theta =  {A_{\Theta}[2]\times \Spin(V_2)}.
\]
for an appropriately embedded group $\mu_2$ and where $V_2' = X\oplus V_2$ is a $dim(V_2)+1$ quadratic space.
The natural double cover $A_{\Theta}\times \Spin(V_2')\to L_\Theta$ corresponds to a cover $\widetilde{\X}^L_\Theta\to \X^L_{\Theta}$ such that $\ga_m'\in X^\ast(\rA_{\widetilde{\X}_{\{\al\}}})$ is a distinguished root for $\widetilde{\X}^L_\Theta$. }
    \quash{
Since $\G=\G_{der}=\G_{sc}$, if $B$ is a maximally $\theta$-split Borel subgroup and $\rA$ is the canonical torus quotient, then $a\in \rA(\kbar)$ has a unique expression 
\[
a=\prod_{\al}h_\al(t_\al),
\]
where $h_i$ is the simple coroot associated to $\al\in\De$. Thus, if $\theta$ is of type $N$, we see that $\De=\De_-\sqcup \De^\theta$ such that
\[
\theta(\al) =\begin{cases}
    -\al&:\al\in \De_-,\\
    \al&:\al\in \De^\theta,
\end{cases}
\]
and $\De_\X^n=\{2\al:\al\in \De_-\}$. As long as $\theta$ is not the Chevalley involution of type $C$, then $\De_\X^n=\De_\X$, and 
\[
X^\ast(\Ax) = \sspan_{\zz}\{2\omega_\al: \al\in \De_-\},\qquad \De_\X = \sspan_{\zz}\{2\al: \al\in \De_-\}
\]
where $\la\omega_\al,h_\be\ra = \de_{\al,\be}$. 
}
\end{proof}

\subsection{The simply-connected, semi-simple case}
Returning to the proofs of Theorems \ref{Thm: Outer in terms of dist} and \ref{Thm: quasi-split G-inner form}, we may assume $\G$ is simply connected and semi-simple. Write $\G=\prod_i \G_i \times \prod_{j}(\G_j\times \G_j)$, where each factor is $k$-simple, simply-connected, and $\theta$ stabilizes each $\G_i$ in the first product and acts as the swap automorphism on each factor in the second product. Then $\X=\prod_i \X_i \times \prod_{j}\X_j$ and we have decompositions
\[
\Aut^{\G}(\X)=\prod_i \Aut^{\G_i}(\X_i) \times \prod_{j}\Aut^{\G_j\times \G_j}(\X_j);
\]
the morphism $\mathrm{out}_{\rH}^{\G}$ similarly factors to give
\[
\Out_{\X}(\rH)=\prod_i \Out_{\X_i}(\rH_i) \times \prod_{j}\Out_{\X_j}(\rH_j);
\]
in fact, it is easy to see in  the latter case that
$
\Aut^{\G_j\times \G_j}(\X_j) \simeq Z(\G_j),
$
so that $\Out_{\X}(\rH)=\{1\}$. Since $\X_j$ has no distinguished roots, this is compatible with Theorem \ref{Thm: Outer in terms of dist}. Moreover, in this case $\G_j\times \G_j$ is quasi-split if and only if $\rH_j \simeq\G_j$ is so that Theorem \ref{Thm: quasi-split G-inner form} also holds in this case.

Since a similar decomposition holds for $\Aut_d(\X)$, we may project to the $i^{th}$ factor, thereby reducing the claim to the setting that $\G$ is $k$-simple and simply connected, with $\rH=\G^\theta$. There thus exists a finite separable extension $k'/k$ and an absolutely simple group $\G'$ over $k'$ such that $\G\simeq \Res_{k'/k}(\G')$ uniquely up to unique isomorphism of pairs $(k',\G')\simeq(k'',\G'')$ \cite[Example 6.4.6]{Conrad}. 
\begin{Lem}\label{Lem : types of k-simple}
  Suppose that $\G$ is $k$-simple and simply connected, and $\theta$ is an involution with $\rH=\G^\theta$. Then either
  \begin{enumerate}
      \item $(\G,\theta)$ is a Galois symmetric pair; that is, there exists quadratic extension $K/k$ and a $k$-simple group $\rH$ such that $\G=\Res_{K/k}(\rH_K)$, and $\theta$ is a Galois involution; or
      \item there is an involution $\theta'$ of $\G'$ over $k'$ such that $\theta=\Res_{k'/k}(\theta')$.
  \end{enumerate}
\end{Lem}
\begin{proof} 
This is a straightforward application of the general result of \cite[Proposition A.5.14]{ConradPseudoReductive} where $\G'=\G''$ and $k'=k''$. In the first case, $\al:k'\to k'$ is a non-trivial automorphism over a subfield of index $2$, while $\al = 1_{k'}$ in the second case.
\end{proof}

The next two lemmas complete the proof of Theorem \ref{Thm: quasi-split G-inner form}.
\begin{Lem}\label{Lem: Galois and inner}
    Suppose that $\G$ is $k$-simple, simply connected, and quasi-split over $k$ with $\theta$ a $k$-rational involution. Set $\rH=\G^\theta$ and $\X= \rH\backslash\G$. Suppose one of the following holds:
    \begin{enumerate}
        \item \label{Galois type} there exists quadratic extension $K/k$ and a $k$-simple group $\rH$ such that $\G=\Res_{K/k}(\rH_K)$, and $\theta$ is a Galois involution,
        \item \label{inner type} $k$ is perfect and $\theta$ is an inner involution. That is, we assume that $\theta = \Ad(g_\theta)\in \G_{ad}(k)$ for some $g_\theta\in \G(\kbar)$.
        %\item \label{outer type} $\theta$ preserves a $\Ga$-stable pinning.
    \end{enumerate}
    Then there exists a $\G$-inner form $\X_{qs}\simeq\rH_{qs}\backslash\G$ with $\rH_{qs}^\circ$ quasi-split over $k$.
\end{Lem}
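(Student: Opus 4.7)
For case \eqref{Galois type}, the claim is essentially immediate: since $\G=\Res_{K/k}(\rH_K)$ is quasi-split over $k$, the group $\rH_K$ is quasi-split over $K$, and hence $\rH$ is quasi-split over $k$ (quasi-splitness of a connected reductive group being detected by the existence of a Borel subgroup over $k$, which persists through Weil restriction). The fixed subgroup of the nontrivial element of $\Gal(K/k)$ acting on $\Res_{K/k}(\rH_K)$ is the diagonal copy of $\rH$, which is already quasi-split, so we may take $\X_{qs}:=\X$.

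The heart of the argument is therefore case \eqref{inner type}. The plan is to realize the sought-after $\G$-inner form via a \emph{pure inner twist of the involution} $\theta$ in the sense of Lemma \ref{Lem: pure inner twist invol}. By assumption $\theta=\Ad(g_\theta)$ for some $g_\theta\in \G(\kbar)$ with $g_\theta^{2}\in Z(\G)(\kbar)$ and whose image in $\G_{ad}(\kbar)$ lies in $\G_{ad}(k)$. Conjugating by an element of $\G(k)$ if necessary (Lemma \ref{Lem: normally related}), we may assume there is a $(\theta,k)$-admissible maximal torus $\rA\subset\G$, so in particular $\rA$ contains a maximally $k$-split torus and $\theta$ stabilizes $\rA$.

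The crucial step is to exhibit, within the $\G(\kbar)$-conjugacy class of the image $\bar g_\theta\in \G_{ad}(\kbar)$, a representative $\bar g_\theta'$ lying in $\rA_{ad}(k)$. For this we apply a Steinberg-style rationality argument (valid because $k$ is perfect, $\G$ is quasi-split, and $\bar g_\theta$ is semisimple of order dividing $2$): the stable conjugacy class of $\bar g_\theta$ in $\G_{ad}(k)$ is nonempty and meets any maximal $k$-torus whose weight lattice admits a $2$-torsion element in the appropriate geometric conjugacy class. Concretely, the $2$-torsion subgroup scheme $\rA_{ad}[2]$ is finite \'etale over $k$, and because $\rA$ is maximally split its Galois action is transitive only within $\Ga$-orbits on $X_\ast(\rA_{ad})/2$; a straightforward averaging/cohomological argument (using $H^1(k,\G)=1$ by Steinberg) produces the required rational representative $\bar g_\theta'\in \rA_{ad}(k)[2]$ in the geometric class of $\bar g_\theta$. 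Setting $\theta':=\Ad(g_\theta')$ gives a $k$-rational involution, and the identification of its class with a pure inner twist of $\theta$ (as opposed to merely a stable twist) follows from the vanishing $H^1(k,\G)=1$ combined with Lemma \ref{Lem: pure inner twist invol}.

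With $\bar g_\theta'\in \rA_{ad}(k)$, the fixed group $\rH':=\G^{\theta'}$ contains the centralizer of $\bar g_\theta'$ in $\G$, which contains $\rA$. Therefore $(\rH')^{\circ}$ is a connected reductive $k$-group containing a maximally $k$-split torus of $\G$, and so is quasi-split. Setting $\X_{qs}:=\rH'\backslash\G$, the variety $\X_{qs}$ is a $\G$-inner form of $\X$ (via the pure inner twist of $\theta$) whose stabilizer has quasi-split neutral component, completing the proof. The main obstacle is the rationality step producing $\bar g_\theta'\in \rA_{ad}(k)$ in the prescribed geometric class: this is the only place where we genuinely need the perfectness of $k$, in order to invoke Steinberg's theorem and to descend $2$-torsion classes in $\rA_{ad}$ to $k$; all remaining steps are formal from the theory of pure inner twists and the structure of $\theta$-admissible tori.
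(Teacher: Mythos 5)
Your treatment of case \eqref{Galois type} has a genuine gap. You claim that $\G=\Res_{K/k}(\rH_K)$ quasi-split over $k$ forces $\rH$ to be quasi-split over $k$. This is false. Quasi-splitness of $\G$ is equivalent to quasi-splitness of $\rH_K$ over $K$, and the latter can hold while $\rH$ itself fails to be quasi-split over $k$: take $\rH=\SL_1(D)$ for $D$ a quaternion algebra over $k$ split by $K$. Then $\rH_K\cong\SL_{2,K}$ is split, so $\Res_{K/k}(\rH_K)$ is quasi-split, but $\rH$ is anisotropic. The paper's proof handles exactly this: it passes to the quasi-split inner form $\rH^\ast$ of $\rH$, observes that $\G^\ast=\Res_{K/k}(\rH^\ast_K)$ is a quasi-split inner form of $\G$, hence $k$-isomorphic to $\G$ (quasi-split inner forms are unique), and transports $\rH^\ast$ into $\G$ via this isomorphism to obtain the required $\G$-inner twist. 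Your ``essentially immediate'' assertion skips the entire content of this case.

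In case \eqref{inner type} your route diverges from the paper's, and the divergence is not innocuous. The paper applies Kottwitz's Lemma 3.3 directly: given the semisimple element $\bar g_\theta\in\G_{ad}(k)$, there is a stable conjugate $\bar g_\theta^\ast\in\G_{ad}(k)$ whose connected centralizer $\G_{ad,\theta^\ast}^\circ=\rH_Z^\ast$ is quasi-split, and then $\rH^\ast=\G^{\theta^\ast}$ is isogenous to $\rH_Z^\ast$, hence quasi-split. You instead try to place a representative of the class into a \emph{prescribed} maximally split torus $\rA_{ad}(k)$ and deduce quasi-splitness from $\rA\subset\rH'$. Your final step (if $\rA$ is maximally $k$-split in $\G$ and $\rA\subset\rH'$ then $\rH'$ is quasi-split) is in fact correct, but the middle step is not justified and is the whole difficulty: producing a $k$-rational representative of the geometric conjugacy class inside a \emph{fixed} maximal torus $\rA_{ad}(k)$ requires a $\Ga$-fixed point in the Weyl orbit of $\bar g_\theta$ in $\rA_{ad}(\kbar)$, and the ``straightforward averaging/cohomological argument'' you gesture at is not a proof — this is precisely the (nontrivial) content that Kottwitz's lemma packages. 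Moreover, your appeal to $H^1(k,\G)=1$ to conclude that $\theta'$ is a \emph{pure inner twist} of $\theta$ is too quick: pure inner twists of $\theta$ are classified by cocycles valued in $\rH=\G^\theta$, and $\G$-inner forms of $\X$ by cocycles valued in $\cala^\flat_\X$; vanishing of $H^1(k,\G)$ does not by itself control these. What the paper uses is that \emph{stable} conjugacy of semisimple elements in $\G_{ad}$ is by definition implemented by a cocycle valued in the connected centralizer $\G_{ad,\theta}^\circ=\rH_Z$, which after unwinding lands in $\cala^\flat_\X$. You would need to make this explicit to close the argument.
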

\begin{proof}
    We first consider \eqref{Galois type}, so that $\G=\Res_{K/k}(\rH_K)$ for a quadratic extension $K/k$. Letting $\rH^\ast$ denote a quasi-split inner form of $\rH$, we see that $\G^\ast=\Res_{K/k}(\rH^\ast_K)$ is also quasi-split over $k$ and an inner form of $\G$. This implies the existence of a $k$-rational isomorphism $\psi:\G^\ast\to \G$. The image $\psi(\rH^\ast)$ gives the desired $\G$-inner twist.

    Now suppose that $\theta$ is inner in the sense of \eqref{inner type}. Thus we have a semi-simple element $\Ad(g_\theta)\in \G_{ad}(k)$ such that the centralizer in $\G_{ad}$ is $\G_{ad,\theta}:=N_{\G_{ad}}(\rH_Z)$. Here, $\rH_Z:=\rH/(Z(\G)\cap \rH)$ is the image of $\rH=\G^\theta$ in the adjoint quotient. Lemma 3.3 of \cite{Kottwitzrational} now implies the existence of an element $\theta^\ast=\Ad(g^\ast_\theta)\in \G_{ad}(k)$ which is stably conjugate to $\theta$ and satisfies that $\G_{ad,{\theta^\ast}}^\circ=:\rH_Z^\ast$ is quasi-split over $k$. Unwinding the definitions, we see that $\theta^\ast$ gives an involution of $\G$ in the same $\G$-inner class as $\theta$ such that if $\rH^\ast=\G^{\theta^\ast}\subset \G$, then $\rH^\ast\to \rH^\ast_Z$ is an isogeny. In particular, since $\rH_Z$ is quasi-split, so is $\rH^\ast$.
%
 % Finally, we consider case \eqref{outer type}.
\end{proof}
In particular, the only cases which remain to prove Theorem \ref{Thm: quasi-split G-inner form} are symmetric pairs of the form $(\Res_{k'/k}(\G'),\Res_{k'/k}(\theta'))$ such that $\G'$ is absolutely simple, simply connected, and quasi-split over $k'$ and $\theta'$ induces a nontrivial outer automorphism of $\G'$. It thus suffices to base change to $k'$ and assume that $(\G,\rH)$ is an absolutely simple symmetric pair. 
\begin{Lem}\label{Lem: outer aut}%\textcolor{red}{Incomplete! How to handle embeddings into the other forms?}
    Suppose that $\G$ is quasi-split and absolutely simple over $k$ and $\theta$ is a $k$-involution which induces a nontrivial outer automorphism of $\G$. There exists a $\G$-inner twist of $\theta$ such that $\G^\theta$ is quasi-split.
\end{Lem}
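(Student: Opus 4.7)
The plan is to complete the reduction by case analysis on the absolute Dynkin type of $\G$. Since $\G$ is absolutely simple and quasi-split over $k$ and $\theta$ induces a nontrivial diagram automorphism, $\G$ has Dynkin type $A_n$ ($n\geq 2$), $D_n$ ($n\geq 3$), or $E_6$. In each case I will exhibit an involution $\theta^\ast$ that is a $\G$-inner twist of $\theta$ and such that $\G^{\theta^\ast}$ is quasi-split over $k$. By Lemma \ref{Lem: G inner forms}, this reduces (up to the central discrepancy between $\rH$ and $\rH\cdot Z(\G)$) to producing a pure inner form $\rH^\ast$ of $\rH=\G^\theta$ that arises as the fixed-point subgroup of a $k$-rational involution of $\G$. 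The geometric isomorphism class of $\rH$ is fixed once $\theta$ is, by Proposition \ref{Prop: classification of invo}, so all I need is to realize the quasi-split inner form of $\rH$ as such a symmetric subgroup.

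For type $A_n$ with $\G=\SL_{n+1}$, one conjugates $\theta$ into the form $\theta_J(g) = J^{-1}(g^T)^{-1}J$ for a matrix $J$ with $J^T = \pm J$; the fixed subgroup is $\SO(J)$ in the symmetric case and $\Sp(J)$ in the skew case (requiring $n+1$ even). The $\G$-inner class of $\theta_J$ is controlled by the similarity class of $J$, and within that class one may replace $J$ by a form representing the split (hence quasi-split) inner form of $\SO$ or $\Sp$; in the symplectic case every $J$ is already equivalent to the standard one. For type $D_n$ with $\G=\Spin(V)$ and $V$ a quasi-split $2n$-dimensional quadratic space, the outer involutions correspond to orthogonal decompositions $V=V_1\perp V_2$ with $\dim V_1$ odd, giving $\rH=\Spin(V_1)\times^{\mu_2}\Spin(V_2)$. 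Within the $\G$-inner class determined by the pair of dimensions, we are free to redistribute the anisotropic kernel of $V$ between $V_1$ and $V_2$; since the anisotropic kernel of $V$ has dimension at most $2$, we can choose $V_1$ split and $V_2$ quasi-split, making $\rH^\ast$ quasi-split. For type $E_6$ with $\G$ the quasi-split simply-connected form, the possible $\rH$ are of type $F_4$ or $C_4$ (up to isogeny). All groups of type $F_4$ over $k$ of the kind arising as fixed-point subgroups are split, and all symplectic-type groups are split over any $k$ of characteristic $\neq 2$, so in both cases the quasi-split inner form exists and embeds as desired.

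The substantive check in each case is that the constructed $\theta^\ast$ indeed lies in the same $\G$-inner class as $\theta$, i.e.\ that the $1$-cocycle $\sig\mapsto g^{-1}\sig(g)$ produced by the conjugating element $g\in \G(\kbar)$ takes values in $\rH\cdot Z(\G)$. This is the step where I expect the main technical effort: in each type one verifies that the twisting element relating the chosen form of $J$ (or the chosen decomposition $V=V_1\perp V_2$, or the chosen embedding of $F_4$/$C_4$ into $E_6$) to the original one may be taken inside $\rH\cdot Z(\G)$, using the compatibility of the classification of quadratic/symplectic/exceptional forms under the relevant centralizer. Together with the reductions established in Sections \ref{Section: reduction to asc} and Lemmas \ref{Lem: Galois and inner} and \ref{Lem : types of k-simple}, this completes the proof of Theorem \ref{Thm: quasi-split G-inner form}.
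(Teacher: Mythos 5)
Your case analysis has the right shape — the paper also proceeds type by type through $A_n$, $D_n$, $E_6$ — but you defer precisely the step that is the content of the lemma. You write that the verification that the twisting cocycle $\sig\mapsto g^{-1}\sig(g)$ lands in $\rH\cdot Z(\G)$ "is the step where I expect the main technical effort," and you do not carry it out in any of the three types. Exhibiting some quasi-split $\rH^\ast\subset\G$ of the right geometric type is the easy half; the difficulty is controlling the Galois cocycle relating $\theta$ to $\theta^\ast$, and your argument in each case stops exactly there.

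The paper's proof supplies the two ingredients your sketch is missing. First, the target $\theta^\ast$ is produced uniformly as $p([\theta])$, where $p:\Out(\G)\to\Aut(\G,\B,\rA,\{x_\al\})$ is the splitting attached to a $\Ga$-stable pinning; since $\theta^\ast$ preserves a $k$-rational Borel pair, $\G^{\theta^\ast}$ is automatically quasi-split, with no explicit bilinear-form or quadratic-space models needed. Second, writing $\theta=\Ad(g^\ast)\circ\theta^\ast$ with $\Ad(g^\ast)\in\G_{ad}(k)$ and $\theta^\ast(g^\ast)=(g^\ast)^{-1}$, the paper parametrizes the geometric types that can occur via the finite twisted cohomology set $H^1(\la\theta^\ast\ra,\G(\kbar))$ and runs through it, checking in each type that the split or quasi-split symmetric subgroup is realized within the same $\G$-inner class. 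This is also where your $E_6$ claim — that "all groups of type $F_4$\ldots\ are split" — overreaches: nonsplit forms of $F_4$ certainly exist, and the paper has to argue separately for split $E_6$ and for the rank-$4$ quasi-split form of $E_{6,ad}$ that the split forms of $\Sp_8$ and $F_4$ both embed as symmetric subgroups. Finally, "pure inner form of $\rH$" is stronger than what is needed: by Lemma \ref{Lem: G inner forms}, that would give a $\G$-equivariant $k$-isomorphism, whereas the lemma only asks for a $\G$-inner twist, i.e.\ a cocycle valued in $\cala_\X^\flat$.
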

\begin{proof}
%Passing to $\G_{ad}$ and replacing $\rH$ with $\rH_{Z}:= (\G_{ad}^\theta)^\circ$, we may assume that $\G$ is absolutely simple and adjoint.   
Consider the short exact sequence
    \[
    1\lra \G_{ad}\lra \Aut(\G)\lra \Out(\G)\lra 1.
    \]
  By assumption, the involution $\theta\in\Aut(\G)(k)$ induces by assumption a non-trivial element $[\theta]\in \Out(\G)(k)$ (in particular, $\G$ is a quasi-split $k$-form of $\SL_n$, $\mathrm{Spin}_{2n}$, or $E_{6,sc}$). Choose a $k$-rational Borel $\B\subset \G$, a maximally $k$-split maximal torus $\rA\subset \B$, and a $\Ga$-stable pinning $\{x_\al\}_{\al\in \De}$. This induces a splitting $p: \Out(\G)\iso \Aut(\G,\B,\rA,\{x_\al\})\subset\Aut(\G)$ and we set $\theta^\ast=p([\theta])$. Then $\theta^\ast$ is a $k$-rational involution such that $\rH^\ast:=\G^{\theta^\ast}$ is quasi-split. Moreover, there exists $g^\ast\in \G(\kbar)$ such that $\Ad(g^\ast)\in \G_{ad}(k)$ and $\theta=\Ad(g^\ast)\circ \theta^\ast$. This implies $\theta^\ast(g^\ast) = (g^\ast)^{-1}$.

%On the other hand, an involution $\theta = \Ad(g_2)\circ \theta_2$ lies in the same $\G$-inner class if there exists $h\in \G(\kbar)$ such that
%\[
%g_2 = h\theta_2(h)^{-1};
%\]
We thus consider the group cohomology $$H^1(\la\theta^\ast\ra,\G(\kbar))=\{g\in \G(\kbar): \theta^\ast(g)=g^{-1}\}/\G(\kbar)$$ to determine the possible involutions obtained in this fashion and verify the claim case-by-case.

    We proceed case-by-case:%, ignoring the case of type $D_4$ so that we may assume $\Out(\G)[2]\cong \mu_2$.
\begin{enumerate}
    \item (type $A_{2n}$, for $n\geq 1$) The outer automorphism induces the Chevalley involution $\theta^\ast$, so that $\rH^\ast$ is a $k$-form of $\SO_{2n+1}$. In this case, $H^1(\la\theta^\ast\ra,\G(\kbar)) = \{\ast\}$ so that $\rH^\ast$ is a $\G$-inner form of $\rH$.
    \item (type $A_{2n-1}$, for $n\geq 2$) The unique outer automorphism induces the Chevalley involution $\theta^\ast$, so that $\rH^\ast$ is a $k$-form of $\mathrm{SO}_{2n}$. In this case, $H^1(\la\theta^\ast\ra,\G(\kbar)) = \{\theta^\ast, \theta_2\}$ has two elements. Here, $\G^{\theta_2}$ is of type $C_{n}$. In either case, since $\G$ is quasi-split, the split form $\Sp_{2n}$ injects into $\G$.
    \item (type $D_{n}$, for $n\geq 4$) In this case, it is simpler to work with the special orthogonal group rather than $\Spin_{2n}$. We may argue geometrically in this case. Thus, we let $(V,q)$ be a quadratic space of dimension $2n$ such that $\G=\SO(V)$ is a quasi-split group. If 
    \[
    (V,q) \simeq (V_a,q_a)\oplus (V_s,q_s)
    \]
     is the Witt decomposition of $V$, where $(V_a,q_a)$ is anisotropic and $(V_s,q_s)$ is a split quadratic space. Thus, $V_s$ is isometric to a direct sum $H_2^{\oplus k}$ of hyperbolic planes. Since $\G$ is quasi-split, $\dim(V_a)\leq 2$ so that $\dim(V_a)\in \{0,2\}$. 
     
     All outer involutions of type $D$ correspond to orthogonal decomposition $V=V_1\oplus V_2$ where both summands have odd dimension. Moreover, such decompositions are all $\G(\kbar)$-conjugate, so it suffices to exhibit an orthogonal decomposition such that the stabilizing subgroup is quasi-split. 
     But for any partition $2n = (2k_1+1)+(2k_2+1)$, it is elementary to find a decomposition with $\dim(V_i) = 2k_i+1$ and $\SO(V_i)$ quasi-split: fix an orthonormal basis $\{e_1,e_2\}$ for either $V_a$ if $\dim(V_a)=2$ or for a fixed hyperbolic plane if $\dim(V_a)=0$, and set $V_i = H_2^{\oplus k_i}\oplus k e_i$.
    \item (type $E_6$) The group $\rH^\ast$ is of type $C_4$, and $H^1(\la\theta^\ast\ra,\G(\kbar)) = \{\theta^\ast, \theta_2\}$, where $\rH_2=\G^{\theta_2}$ has type $F_4$. If $\G$ is split, then both split forms embed to $\G$. If $\G$ is the quasi-split form of $E_{6,ad}$ of rank $4$, it is also true that both the split forms of $\Sp_8$ and $F_4$ occur as symmetric subgroups. %the involution fixing a split group of type $F_4$ exists. In this case, the split form of $F_4$ injects as a symmetric subgroup.
\end{enumerate}
This completes the proof of the lemma.
\end{proof}
%\begin{Rem}
 %   We expect that with a little more work, perhaps relying on the work of Helminck \cite{Helminckrational}, one can prove the existence of the quasi-split form in general. %Indeed, many varieties of Chevalley type are inner involutions and so handled by Lemma \ref{Lem: Galois and inner}. %On the other hand, the precise relationship between the possible outer class of $\G$ and the various outer forms of $\rH$ in the Chevalley case may be subtle.
%\end{Rem}
%We consider such pairs in the next subsection.

Finally, we make a few remarks toward Theorem \ref{Thm: Outer in terms of dist}. Noting that $\Out_{\X}(\rH)=\{1\}$ and $\De_\X^{dist}=\emptyset$ when $(\G,\rH)$ is a Galois symmetric pair, we may restrict out attention to the case when $(\G,\rH)$ is the Weil restriction of a canonically associated (up to canonical isomorphism of pairs) absolutely simple symmetric pair $(\G',\rH')$ over a finite separable extension $k'/k$.

\begin{Lem}\label{Lem : pass to absolutely simple}
Suppose that $\G\simeq \Res_{k'/k}(\G')$ and $\theta\simeq \Res_{k'/k}(\theta')$ for a $k'$-rational involution $\theta'$ on $\G'$. There is a natural isomorphism 
    \[
    \Out_{\X}(\rH)\simeq \Res_{k'/k}(\Out_{\X'}(\rH')).
    \]
\end{Lem}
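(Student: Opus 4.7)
The plan is to trace each ingredient of $\Out_\X(\rH) = \mathrm{image}[\cala_\X \to \Out(\rH)]$ through Weil restriction. First, since Weil restriction is a right adjoint (to base change), it commutes with finite limits; in particular $\rH = \G^\theta = \Res_{k'/k}((\G')^{\theta'}) = \Res_{k'/k}(\rH')$. Standard results on Weil restriction of smooth affine group schemes (e.g.\ \cite[Proposition A.5.15]{ConradPseudoReductive} and the surrounding material on formation of centralizers, normalizers, and quotients) then give
\[
N_{\G}(\rH) = \Res_{k'/k}(N_{\G'}(\rH')),\qquad \cala_\X = \rH\backslash N_\G(\rH) = \Res_{k'/k}(\rH'\backslash N_{\G'}(\rH')) = \Res_{k'/k}(\cala_{\X'}).
\]
Here we use that $\rH \subset N_\G(\rH)$ is a normal smooth closed subgroup, so the fppf quotient exists and commutes with Weil restriction along the separable extension $k'/k$.

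Next I would analyze the morphism $\mathrm{out}_\rH^\G:\cala_\X \to \Out(\rH)$. The conjugation morphism $N_{\G'}(\rH') \to \Aut(\rH')$ is a $k'$-morphism of group schemes; applying $\Res_{k'/k}$ gives a $k$-morphism $N_\G(\rH) \to \Res_{k'/k}(\Aut(\rH'))$. Under the canonical inclusion $\Res_{k'/k}(\Aut(\rH')) \hookrightarrow \Aut(\Res_{k'/k}(\rH')) = \Aut(\rH)$ (which fails to be an isomorphism only because $\Aut(\rH)$ may contain semilinear automorphisms over $k'/k$ not present in the Weil restriction), this coincides with the usual conjugation map $N_\G(\rH) \to \Aut(\rH)$. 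Composing with the quotient $\Aut(\rH') \twoheadrightarrow \Out(\rH')$ and applying $\Res_{k'/k}$, which is exact on smooth affine $k'$-group schemes whose quotient is separated, we obtain a commutative diagram
\[
\begin{tikzcd}
\cala_\X \ar[r,"="] \ar[d] & \Res_{k'/k}(\cala_{\X'}) \ar[d] \\
\Res_{k'/k}(\Out(\rH')) \ar[r,hook] & \Out(\rH).
\end{tikzcd}
\]

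Finally, taking images is compatible with Weil restriction in this setting. Concretely, since $\cala_{\X'}$ is diagonalizable (as is its image $\Out_{\X'}(\rH')$), and $\Res_{k'/k}$ is exact on diagonalizable group schemes along separable field extensions, we conclude
\[
\Out_\X(\rH) \;=\; \mathrm{image}\!\left[\Res_{k'/k}(\cala_{\X'}) \to \Res_{k'/k}(\Out(\rH'))\right] \;=\; \Res_{k'/k}(\mathrm{image}[\cala_{\X'} \to \Out(\rH')]) \;=\; \Res_{k'/k}(\Out_{\X'}(\rH')),
\]
and the resulting isomorphism is natural because every construction used is functorial in the pair $(\G',\rH')$.

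The main (essentially only) obstacle is bookkeeping: verifying that each of the formation steps — fixed subgroups, normalizers, quotients, and images — commutes with $\Res_{k'/k}$ under the separability and smoothness hypotheses in play. None of this is deep, but one must be careful to invoke the correct exactness/commutation statements for Weil restriction at each step, for which the treatment in \cite{ConradPseudoReductive} suffices. The observation that $\Aut(\Res_{k'/k}(\rH'))$ may be strictly larger than $\Res_{k'/k}(\Aut(\rH'))$ is harmless because the map $\mathrm{out}_\rH^\G$ originates from $N_\G(\rH) = \Res_{k'/k}(N_{\G'}(\rH'))$ and therefore automatically factors through the smaller group.
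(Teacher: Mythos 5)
Your proof is correct and lands in the same place, but it takes a somewhat different and slightly longer route than the paper. The paper sidesteps $\Out(\rH)$ entirely: it uses the observation (from Section 2.4.1) that $\cala_\X^\flat$ is the image of $\rH^\flat := \rH\cdot Z(\G) \subset N_\G(\rH)$, so that $\Out_\X(\rH) \simeq N_\G(\rH)/\rH^\flat$ is an explicit quotient; since visibly $\rH^\flat = \Res_{k'/k}(\rH'^\flat)$ and $N_\G(\rH) = \Res_{k'/k}(N_{\G'}(\rH'))$, the result follows from one application of \cite[Corollary A.5.4(3)]{ConradPseudoReductive}. You instead work with the image of $\cala_\X$ inside $\Out(\rH)$, which forces you to deal with the extra wrinkle that $\Out(\Res_{k'/k}(\rH'))$ can strictly contain $\Res_{k'/k}(\Out(\rH'))$, and to argue separately that image formation commutes with $\Res_{k'/k}$. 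You handle both points correctly (the first because the out-map factors through $\Res_{k'/k}(N_{\G'}(\rH'))$, the second via exactness of $\Res_{k'/k}$ for the relevant kernels and fppf quotients), but the paper's route avoids them altogether. One tiny imprecision worth fixing if you write this up: $\Res_{k'/k}$ of a diagonalizable $k'$-group scheme is of multiplicative type but is generally \emph{not} diagonalizable over $k$ (e.g.\ $\Res_{k'/k}(\Gm)$ is a non-split torus), so the phrase ``exact on diagonalizable group schemes'' should be replaced by the more precise statement about exactness of $\Res_{k'/k}$ for short exact sequences of smooth affine group schemes along a finite separable extension.
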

\begin{proof}
If ${\rH'}^\flat\subset N_{\G'}(\rH')$ is the subgroup corresponding to $\mathcal{A}_{\X'}^\flat$. Then $$N_{\G}(\rH)= \Res_{k'/k}( N_{\G'}(\rH'))$$ and $\rH^\flat=\Res_{k'/k}({\rH'}^\flat)$ corresponds to $\mathcal{A}_{\X}^\flat$ in $$\Aut^{\G}(\X)\simeq \Res_{k'/k}(\Aut^{\G'}(\X')).$$
Thus
\[
\Out_{\X}(\rH)\simeq N_{\G}(\rH)/\rH^\flat\simeq \Res_{k'/k}( N_{\G'}(\rH'))/\Res_{k'/k}({\rH'}^\flat) \simeq \Res_{k'/k}(\Out_{\X'}(\rH')),
\]
where the last isomorphism is shown in \cite[Corollary A.5.4(3)]{ConradPseudoReductive}.
\quash{

 \[
\Out(\rH_{\kbar})\simeq \prod_{\sig:k'\hra \kbar}\Out(\rH_{\sig}).
 \]
 On the other hand, $N_{\G}(\rH)_{\kbar} \simeq \prod_{\sig:k'\hra \kbar}N_{\G_\sig}(\rH_\sig)$ acts on $\rH_{\kbar}\cong\prod_{\sig:k'\hra \kbar}\rH_{\sig}$ via conjugation preserving the product structure.}
\end{proof}

The calculation of $\Aut_d(\X)$ in \eqref{eqn: formula for Aut conn} now implies that $\X=\Res_{k'/k}(\X')$ is well-adapted as a $\G$-variety if and only if $\X'$ is well-adapted as a $\G'$-variety.

\quash{Let $\Sigma^{d}\subset \De_\X$ denote those roots of type \eqref{d} in $\De_{\X}$, and $\Sigma^{v}\subset \D_\X$ be the spherical roots which are virtually distinguished root in $\De_{\X}$ but not distinguished.  Define $\De_\X^\flat$ to the set obtained from $\De_\X$ as follows: for each $\ga\in \Sigma^{d}$, replace $\ga$ by $2\ga$, and for each $\ga\in \Sigma^v$, let $\widetilde{\X}^L_{\{\al\}}\lra \X^L_{\{\al\}}$ denote the cover in which $\frac{1}{2}\ga\in X^\ast(\rA_{\widetilde{\X}^L_{\{\al\}}})$. There is a natural inclusion
\[
X^\ast(\Ax)\lra X^\ast(\rA_{\widetilde{\X}^L_{\{\al\}}})
\]Set 
\[
\Lam_\X^\flat = \zz\De_\X^\flat\cap X^\ast(\Ax).
\]

\begin{Thm}\label{Thm: factors through to flat}
  Assume that $\X=\rH\backslash\G$ is symmetric variety. There is a central isogeny $\G^d\times \G_{Ch}\to \G$, an involution $\theta^d\times \theta_{Ch}$ lifting $\theta$ such that $(\G_{Ch},\theta_{Ch})$ is a Chevalley involution and $(\G^d,\theta^d)$ has no simple factors that are Chevalley. 
    \begin{enumerate}
        \item Suppose that $\G_{Ch}=1$. Then
  \[
  X^\ast(\mathcal{A}_\X^\flat) = \X^\ast(\Ax)/\Lam_\X^\flat
  \]and
        \[
        X^\ast(\Out_\X(\rH))\simeq (\zz/2\zz)^{\De_\X^{dist}}.
        \]
    \item Suppose that $\G^d=1$. Then $\mathcal{A}_\X^\flat$ and
    \[
\Out_\X(\rH) \simeq   Z(\G)/Z(\G)^2.
    \]
    The image $\Out_\X(\rH)\subset \Out(\rH)$ is the full outer automorphism group unless $\rH$ has a component of type $D_4$.
   % In particular, if $\theta$ is not a Chevalley involution and $\X$ has no roots of  type $B$, we obtain an injective homomorphism 
    %\begin{equation}
%{\mathrm{out}}^{\G}_{\rH}:  \Aut_{\Omega}(\D(\X))\lra \Out(\rH).
 %   \end{equation}
    \end{enumerate}
\end{Thm}

}

%Noting $\Lam_\X^\flat= \Lam_{\X}^n=\Lam_\X^d$ proves the claim in this case.
\subsubsection{The absolutely simple case}
We have reduced the calculation to the case when $\G$ is absolutely simple and simply connected and $\rH=\G^\theta$, where we shall compute $\Out_\X(\rH)$ directly. We summarize these calculations below.

\begin{Prop}\label{Prop: absolutely simple case}
    Suppose that $\G$ is absolutely simple and simply connected over $k$ and $\theta$ is an involution. Set $\rH=\G^\theta$ and $\X=\rH\backslash\G$.
    \begin{enumerate}
        \item\label{dist} Suppose that $\X$ is not a variety of Chevalley type. Then 
        \[
        \Out_\X(\rH)\simeq \Aut_d(\X).
        \]
         % \item\label{typeN} Suppose that $\X$ is not a variety of Chevalley type and $\De_\X^{dist}=\emptyset$. Then $\Out_{\X}(\rH)=\{1\}$.
      \item\label{type N var} Suppose that $\X$ is a variety of Chevalley type. If $\X$ is a form of either
      \[
      \text{$\Spin_2\times^{\mu_2} \Spin_{2n-1}\backslash\Spin_{2n+1}$ or $\GL_n\backslash\Sp_{2n}$ for $n\neq1$, } 
      \]or if $\Out_\X(\rH)=\{1\}$, it is well adapted. Otherwise, the map
      \[
      \Out_\X(\rH)\to \Aut_d(\X)
      \] is not injective.  In each case, the set of $\rH$-orbits on the regular part of the nilpotent cone $\mathcal{N}_\X^{reg}$ gives a $\Out_{\X}(\rH)$-torsor. \quash{(see \cite[Section 5]{Levy} for the relevant definitions). Setting $\Lam_\X^{min}\subset X^\ast(\Ax)$ to be the lattice spanned by $\Lam_\X^n$ and $\{\omega-\theta(\omega): \omega \text{ a minuscule weight of $\G$}\}$, then
    \[
    X^\ast(\Out_{\X}(\rH)) = \Lam_\X^{min}/\Lam_\X^n.
    \]}
    \end{enumerate}
\end{Prop}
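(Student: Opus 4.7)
The proof will proceed as a case-by-case analysis, exploiting the classification of involutions on absolutely simple simply connected groups (Cartan's classification, as recalled in Helminck's tables and in Levy's work on positive characteristic). The basic strategy is to compute both sides of the map $\Out_\X(\rH)\to \Aut_d(\X)$ directly from the root combinatorics and compare.

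First I would set up the general framework. Fix a $\theta$-admissible maximal torus $\rA\subset \G$ and a maximally $\theta$-split Borel $\B\supset \rA$. Since $\G$ is simply connected and $\rH=\G^\theta$, we have $N_\G(\rH)=N_\G(\G^\theta)$ and we can write $\cala_\X\simeq \rH\backslash N_\G(\rH)$ as a subgroup of $\Ax$. Using Vust's lemma $N_\G(\rH)=\{g\in\G: s(g)\in Z(\G)\cap \X\}$ and Lemma~\ref{Lem: center in flat in sharp}, the morphism $\cala_\X\to\Out_\X(\rH)$ is the quotient by the image of $Z(\G)\cap\X$. Dually, $X^\ast(\Out_\X(\rH))$ is the saturation in $\fX$ of the sublattice that kills $Z(\G)\cap\X$. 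On the other side, $X^\ast(\Aut_d(\X))=(\fX\cap\Lam_{\X}^d)/\Lam_\X$ is explicit from $\De_\X^{dist}$ as computed in Section~\ref{App: normalizations} via Losev's Lemma~\ref{Lem: spherical roots distinguished}. The inclusion $\Aut_d(\X)\hookleftarrow\Out_\X(\rH)$ (dual to $\Lam_\X^d\supset \Lam_\X^\flat$) is Lemma~\ref{Lem: center in flat in sharp}, so the task is to check equality case-by-case.

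For part~\eqref{dist}, I would run through Cartan's list of involutions on absolutely simple simply connected $\G$ (types AI, AII, AIII, AIV, BI, CI, CII, DI, DII, DIII, EI--EIV, EV--EVII, EVIII, EIX, FI, FII, G), in each case reading off $\De_\X^{dist}$ from the Satake diagram together with the type of each spherical root (Table~\ref{tab:associated roots} and the tables in Section~\ref{Section: symmetric data}). For each non-Chevalley case one computes $Z(\G)\cap\X$ directly (using that $s(g)=g^{-1}\theta(g)$ and the explicit embedding $Z(\G)\hookrightarrow \rA$) and checks that $X^\ast(\Out_\X(\rH))$ coincides with $(\zz/2\zz)^{\De_\X^{dist}}$ via the doubling characters $\mu_\al$. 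The genuinely new contributions at each root of type \eqref{a} or \eqref{b} always come from central elements of $\G$ that lie in $\X\cap Z(\G)$; the assumption that $\X$ is not of Chevalley type ensures enough ``non-doubled'' simple roots remain so that every coset of $\Lam_\X$ in $\Lam_\X^d\cap\fX$ can be realized by such a central element. The main obstacle here is the bookkeeping: one must treat outer involutions of types AIII, DI, DIII, EII, EIII, EIV separately, and verify compatibility with the Galois action induced by the $\ast$-action on $\Omega_\X$ (Lemma~\ref{Lem: theta basis}). I would organize this by first handling all inner involutions (where $Z(\G)\cap\X=Z(\G)[2]$) and then each family of outer involutions.

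For part~\eqref{type N var}, the failure of well-adaptedness in the Chevalley case comes from the phenomenon that the ``doubling'' characters $\mu_\al$ all collapse to a single character $\mu$ on $\cala_\X$ (since $\rA$ acts on $\X$ through a quotient by a group containing $2X^\ast(\rA)$), while $Z(\G)\cap\X$ can be strictly larger than the kernel of this map. By Lemma~\ref{Lem: chevalley type}, there are only two families to check: the Chevalley involutions themselves (cases AI, BI with $m=0$, CI, DI with $m=0$, EI, EV, EVIII, FI, G) and the exceptional families $\Spin_{V_1}\times^{\mu_2}\Spin_{V_2}\backslash\Spin(V)$ with $\min\dim V_i\geq 2$. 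In each case $\Aut_d(\X)$ is computed directly from the recipe above, and then compared to $\Out_\X(\rH)=Z(\G)\cap\X/(Z(\G)\cap\rH)$. The exceptional well-adapted subcases ($\Spin_2\times^{\mu_2}\Spin_{2n-1}\backslash\Spin_{2n+1}$ and $\GL_n\backslash\Sp_{2n}$) happen precisely when the distinguished root of type \eqref{a} or \eqref{d} already accounts for the full center via the argument in Lemma~\ref{Lem: chevalley type}; I would verify these by explicit computation of $Z(\G)\cap\X$ using the symmetrization map. Finally, to identify the cokernel/kernel geometrically as the action of $\Out_\X(\rH)$ on $\rH$-orbits in $\caln_\X^{reg}$, I would invoke Levy's results \cite{Levy} on the nilpotent cone of a symmetric pair: the regular nilpotent orbits in $\caln_\X$ are in bijection with $W_\X$-orbits on the set of ``distinguished Cayley triples'' (the analogue of regular nilpotents in the classical setting), and for Chevalley-type involutions this bijection identifies the component group of $\caln_\X^{reg}/\rH$ with $Z(\G)/Z(\G)^2\cdot(Z(\G)\cap \rH)$, which is exactly $\Out_\X(\rH)$. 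The main obstacle is that for types BI and DIII the nilpotent cone analysis is genuinely subtle in positive characteristic, and will require a careful appeal to Levy's parametrization under our good-characteristic assumption.
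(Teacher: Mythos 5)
Your overall framework is sound: identify $\cala_\X$ with $Z(\G)\cap s(\X)$ via Vust's lemma, then compute the two sides of $\Out_\X(\rH)\to\Aut_d(\X)$ from the center and the distinguished-root combinatorics, and relate the Chevalley case to the component group of the regular nilpotent locus via Levy. This is essentially the paper's route, and your appeal to Levy's parametrization for part~\eqref{type N var} is exactly what the paper uses (via \cite[Proposition 6.22]{Levy}).

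However, there are three errors worth correcting. First, the direction of the comparison map is wrong: Lemma~\ref{Lem: center in flat in sharp} gives $\mathcal{A}_\X^\flat\subset\cala_\X^d$, hence a \emph{surjection} $\Out_\X(\rH)\twoheadrightarrow\Aut_d(\X)$, not an injection $\Out_\X(\rH)\hookrightarrow\Aut_d(\X)$. Your proposed ``check equality'' is thus really a check of \emph{injectivity} of a map you already know to be onto; the lattice inclusion is $\Lam_\X^d\subset\Lam_\X^\flat$, opposite to what you write. Second, you describe the kernel of $\cala_\X\to\Out_\X(\rH)$ as ``the image of $Z(\G)\cap\X$,'' but $\cala_\X$ \emph{is} $Z(\G)\cap s(\X)$; the correct kernel is $\mathcal{A}_\X^\flat\cong\tau(Z(\G))$, so $\Out_\X(\rH)\cong(\rA^-\cap Z(\G))/\tau(Z(\G))$. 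Third, your mechanism for the Chevalley failure is off: it is not that the doubling characters ``collapse to a single character.'' In the Chevalley case there are generically \emph{no} distinguished roots at all (the spherical roots are type $N$), so $\Aut_d(\X)=\{1\}$; the failure is that $\Out_\X(\rH)\cong Z(\G)/Z(\G)^2$ remains nontrivial. The exceptional well-adapted cases ($\Spin_2\times^{\mu_2}\Spin_{2n-1}\backslash\Spin_{2n+1}$ and $\GL_n\backslash\Sp_{2n}$) are precisely those Chevalley-type varieties retaining one distinguished root.

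On strategy, the paper organizes part~\eqref{dist} around the distinguished-root structure rather than a raw Cartan-type sweep: it first dispenses with the Hermitian case (where $N_\G(\rH)$ lies in a proper parabolic), then uses Losev's classification of distinguished roots (together with $|\De_\X^{dist}|\le 1$ for absolutely simple $\G$) to show $\Out_\X(\rH)\cong\mu_2\cong\Aut_d(\X)$ whenever there is a distinguished root, and only invokes a case-by-case check (via Levy's tables) when $\De_\X^{dist}=\emptyset$. Your exhaustive sweep through Cartan's list would work but duplicates that classification and obscures the conceptual point that distinguished roots control the answer.
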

This proposition completes the proof of Theorem \ref{Thm: Outer in terms of dist}.

%\begin{Rem}
%Our reason for isolating varieties of Chevalley type is Case \ref{type N var} of the Proposition. While this characterization is certainly intriguing, it is not clear how to interpret this in terms of the dual group and variety of $\X$. \quash{ we are not aware of a characterization of $\Out_\X(\rH)$ in this case} 
%\end{Rem}
The rest of this section proves this proposition in a case-by-case fashion. First note that the claim holds when $N_{\G}(\rH)$ is contained in a proper parabolic subgroup $P\subset \G$, since then $\rH=N_{\G}(\rH)$ is of Hermitian type. Indeed, since $N_{\G}(\rH)\supset\rH$, $\rH_{\kbar}$ is contained in a proper Levi subgroup $L$ of $P$. This forces $\rH=L$ and the claim now follows from standard results on normalizers of Levi subgroups.  In particular, $\Out_{\X}(\rH)$ is trivial when $N_{\G}(\rH)$ is contained in a proper parabolic subgroup.  Note that Losev shows in \cite[Section 4.3]{Losev} that no distinguished roots exist in this case, so that this verifies cases of the Proposition.

We may now assume throughout this section that $N_{\G}(\rH)$ is not contained in a proper parabolic subgroup of $\G$. Recall the quotient and symmetrization maps 
\[
\G\overset{\tau}{\lra} \X\overset{s}{\lra} \G;
\]Lemma 1 of \cite{VustEmbeddings} implies that $$\Aut^{\G}(\X) \simeq s(\X)\cap Z(\G)\subset Z(\G)^-=\{a\in Z(\G): \theta(z) = z^{-1}\}.$$ More precisely, if $\rA$ is a $(\theta,k)$-admissible torus, then we obtain two identifications under the symmetrization map $s$ 
\[
\cala_\X\simeq \rA^-\cap Z(\G)\cong\{\tau(a)=a^2\in \X:a\in A^Z\},
\]
where $A^Z:=\{a\in A^-: a^2 \in Z(\G)\}$. Thus, $$\Out_{\X}(\rH) \simeq (\rA^-\cap Z(\G))/\tau(Z(\G)).$$

%Let $\Sigma^{d}\subset \De_\X$ denote those roots of type \eqref{d} in $\De_{\X}$ and let $\De_\X^{d}$ be the set obtained from $\De_\X$ by replacing each $\ga\in \Sigma^{d}$ by $2\ga$. Let $\Lam_\X^d = \zz\De_\X^{d}$.%Finally, set $\De^{dist}_\X$ denotes the distinguished roots of types \eqref{a} or \eqref{b}.

\begin{Lem}\label{Lem: dist case}
    Suppose $\G$ is absolutely simple and simply connected over $k$, $\theta$ is an involution and $\X=\G^\theta\backslash \G$. When $\De_\X^{dist}\neq \emptyset$, 
    \[
    X^\ast(\Out_{\X}(\rH)) \simeq \zz/2\zz,
    \]
    with the non-trivial element given by the \emph{doubling automorphisms} of \cite{Losev}. 
\end{Lem}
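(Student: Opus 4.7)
My plan is to leverage the explicit description of $\Out_\X(\rH)$ coming from the symmetrization map, combined with the fact that for $\G$ absolutely simple and simply connected, $Z(\G)$ is a very small abelian group. Concretely, since $\rH=\G^\theta$, Lemma~1 of \cite{VustEmbeddings} together with the discussion preceding the lemma identifies
\[
\cala_\X\simeq \rA^-\cap Z(\G)=Z(\G)^-,\qquad \mathcal{A}_\X^\flat\simeq \tau(Z(\G)),
\]
where $\tau\colon Z(\G)\to Z(\G)^-$ is the restriction of the symmetrization map, sending $z\mapsto z^{-1}\theta(z)$. Hence
\[
\Out_\X(\rH)\simeq Z(\G)^-/\tau(Z(\G)).
\]
In particular, $\Out_\X(\rH)$ is a subquotient of $Z(\G)$ of exponent $2$, since $\tau(z)^2=z^{-2}\theta(z)^2\cdot(\theta(z)z^{-1}\cdot z\theta(z)^{-1})=z^{-2}\theta(z)^2$ reduces the analysis to the $\theta$-action on $Z(\G)$.

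My next step would be to analyze the cokernel case-by-case using the classification of $Z(\G)$ for $\G$ absolutely simple and simply connected. When $Z(\G)$ is cyclic (types $A_n$, $B_n$, $C_n$, $D_{2n+1}$, $E_6$, $E_7$), the automorphism $\theta|_{Z(\G)}$ is either trivial or inversion; in both cases a direct computation shows $Z(\G)^-/\tau(Z(\G))$ is either trivial or $\mu_2$. For types $E_8$, $F_4$, $G_2$ the center is trivial and $\Out_\X(\rH)=1$. What remains is to match the vanishing of this cokernel with the vanishing of $\De_\X^{dist}$: using the tables of admissible $\theta$-indices (equivalently, Satake diagrams of involutions) together with Lemma~\ref{Lem: spherical roots distinguished}, I would verify that each involution producing a nontrivial cokernel has at least one spherical root of the distinguished type \eqref{a} or \eqref{b}, and conversely. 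This verification is a finite check, streamlined by the reductions of Section~\ref{Section: reductions}, which allow us to read off $\De_\X$ from the Satake diagram.

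The genuinely delicate case is type $D_{2n}$, where $Z(\G_{sc})=\mu_2\times\mu_2$ is not cyclic, so a priori the cokernel could be as large as $\mu_2\times\mu_2$. The main obstacle is to argue that whenever $\De_\X^{dist}\neq\emptyset$ in this setting, exactly one $\mu_2$-factor survives in $Z(\G)^-/\tau(Z(\G))$. I expect to handle this by running through the involutions of $\Spin_{4n}$ individually: the inner involutions are parametrized by orthogonal decompositions $V=V_1\oplus V_2$, and the outer involutions by decompositions with both summands of odd dimension. In each subcase, one computes the eigenspaces of $\theta$ on the two generators of $Z(\G_{sc})$ (the two spin characters) and finds that either both generators behave symmetrically (giving no distinguished root and trivial cokernel) or they behave oppositely (producing a single $\mu_2$ in the cokernel together with exactly one distinguished root in $\De_\X$).

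Finally, to identify the nontrivial element of $\Out_\X(\rH)$ with a doubling automorphism, I would trace through the construction: a distinguished root $\al$ of type \eqref{a} produces a color $D$ with $\rho(D)=\tfrac12\check\al|_{\fa_\X}$, and pairing with $\check\al/2$ gives the character $\mu_\al\colon\cala_\X\to\mu_2$ of Section~\ref{Section: geometric cocycle}. Under the identification $\cala_\X\simeq Z(\G)^-$, this pairing is non-trivial on any element $z\notin\tau(Z(\G))$ by construction, so $\mu_\al$ factors through the quotient $\Out_\X(\rH)$ as the generator. For distinguished roots of type \eqref{b}, a parallel argument using the type-$B$ substring yields the same conclusion. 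This identifies the generator of $X^\ast(\Out_\X(\rH))$ as the doubling character of any distinguished root, completing the lemma.
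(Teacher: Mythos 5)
Your approach is genuinely different from the paper's, which handles the two sub-cases (double roots and type-$B$ distinguished roots) by separate structural arguments rather than by a center computation: for type-$B$ roots the paper appeals to \cite[Proposition~4.2.1]{Losev} to reduce to the two families $\Spin_{2n}\backslash\Spin_{2n+1}$ and $\Sp_{2n}\times\Sp_{2n}\backslash\Sp_{4n}$, and for double roots it uses \cite[Lemma~7.3]{KnopAutomorphisms} to reduce to Hermitian type, then exploits that $X^\ast(\rH)$ has rank $1$ (so $\cala_\X$ embeds into $\Aut(X^\ast(\rH))\simeq\mu_2$) together with Lemma~\ref{Lem: center in flat in sharp} to kill $\mathcal{A}_\X^\flat$. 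That route sidesteps the need to actually compute $\cala_\X$ inside $Z(\G)$ case by case.

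There is, however, a genuine gap in your opening identification. You write $\cala_\X\simeq\rA^-\cap Z(\G)=Z(\G)^-$, but the second equality is false in general: $\rA^-$ is the \emph{identity component} of the $\theta$-split part of $\rA$, so $\rA^-\cap Z(\G)$ can be a proper subgroup of $Z(\G)^-=\{z\in Z(\G):\theta(z)=z^{-1}\}$; the correct description from Vust is $\cala_\X\simeq s(\X)\cap Z(\G)$, a (possibly proper) subgroup of $Z(\G)^-$. This matters in precisely the case you flag as delicate. Take $\G=\Spin_{4n}$ with $\theta$ the \emph{inner} involution whose fixed subgroup is (a cover of) $\GL_{2n}$ — one of the double-root cases. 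Here $Z(\G)\simeq\mu_2\times\mu_2$ and $\theta$ acts trivially on $Z(\G)$, so $Z(\G)^-=Z(\G)$ and $\tau(Z(\G))=\{1\}$; your formula would give $\Out_\X(\rH)\simeq\mu_2\times\mu_2$, whereas the lemma asserts $\mu_2$. Your proposed fix for $D_{2n}$ — examining "eigenspaces of $\theta$ on the two spin characters" — cannot distinguish the two generators when $\theta$ is inner, since $\theta$ is then trivial on $Z(\G)$. What actually cuts the group down to $\mu_2$ is membership in $s(\X)\cap Z(\G)$ (equivalently, lying in the connected torus $\rA^-$), which is a statement about the image of the symmetrization map and is invisible to the $\theta$-action on $Z(\G)$ alone. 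You would need to compute $s(\X)\cap Z(\G)$ directly in each case, at which point you are essentially redoing the paper's Hermitian-type argument.
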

\begin{proof}% We now check that when $|\De_\X^{dist}|=1$ the corresponding doubling automorphism descend to $\Out(\rH)$ by exhausting the possibilities.\\
  %Assuming that $(\G,\theta)$ is not of type $N$, the claim that $\mathcal{A}_\X^\flat =\mathcal{A}_\X^d$ follows from showing that $\mathcal{C}$ acts faithfully on $\Out(\rH)$.
 % Note that if there are no virtual distinguished $\mathcal{C}$  We handle each type of root in turn.\\
  
\noindent
\underline{Roots of type $B$:} Suppose that $\X$ possesses a distinguished root of type \eqref{b}. Arguing as in the proof of \cite[Proposition 4.2.1]{Losev} (this is where we use the assumption that $N_\G(\rH)$ is not contained in a proper parabolic subgroup), such roots occur only if $\X=\rH\backslash\G$ is a $k$-form of 
       \[
       \Spin_{2n}\backslash \Spin_{2n+1},\qquad\text{or}\qquad \Sp_{2n}\times \Sp_{2n}\backslash\Sp_{4n}.
       \]
       In each case, $|\De_\X^{dist}|=1$ and the doubling automorphism is outer.\quash{.}\\
       
\noindent
\underline{Double roots:} By Lemma 7.3 of \cite{KnopAutomorphisms}, $\mathcal{A}_{\X}^\sharp = \Aut^{\G}(\X)$ unless $$\mathrm{rank}(Z(\rH)/Z(\G)\cap\rH) >1.$$ The symmetric varieties of this form are called of \emph{Hermitian type} \cite[(2.4)]{VustEmbeddings} and $\rH_{\kbar}$ is a Levi subgroup of $\G_{\kbar}$. Since $\G$ is absolutely simple, $X^\ast(\rH)$ has rank $1$.%,  showing that $|\De_\X^{dist}|\leq1$.
    
      Passing to $\kbar$, we may suppose now that $\rH$ is a Levi subgroup of $\G$. If $s\in N_{\G}(\rH)(\kbar)$ such that $\Ad(s)$ acts trivially on $X^\ast(\rH)$, then $s$ acts trivially on $Z(\rH)$. But this then forces $s\in \rH$ so that $\Ad(s)=1\in \cala_\X(\kbar)$. In particular, any non-trivial element of $\varphi\in\cala_\X(\kbar)$ acts non-trivially on $X^\ast(\rH)$. Lemma 7.3 of \cite{KnopAutomorphisms} now forces $\varphi\notin \cala_\X^\sharp(\kbar)$. Lemma \ref{Lem: center in flat in sharp} now gives the claim. 
    %But this forces the action of $s$ to be non-trivial on $X^\ast(\rH)$ so $\Ad(s)\notin \mathcal{A}_{\X}^\sharp$.
%Finally, assume that $\X$ has no roots of type \eqref{b}. The symmetric assumption rules out roots of type \eqref{c}, so that roots of type \eqref{a} are the only possible distinguished roots.  By the preceding result,
%\[
%X^\ast(\mathrm{Im}({\mathrm{out}}^{\G}_{\rH}))=\Lam_\X/\Lam_{\X}^{dist}\simeq \la\De_{\X}^{(2)}\ra/\la2\De_{\X}^{(2)}\ra.
%\]
%This is the character lattice of $\Aut_{\Omega}(\D(\X))$.

In each case, one has $|\De_\X^{dist}|= 1$. 
\end{proof}

The next two lemmas complete the proof of Proposition \ref{Prop: absolutely simple case}. They also highlight the connection with the irreducible components of the regular locus of the nilpotent cone in the Chevalley-type case. More precisely, recall that the involution $\theta$ induces a grading 
\[
\fg= \Lie(\G) = \fg_0\oplus \fg_1,
\]
where $\fg_i= \{X\in \fg: d\theta(X) = (-1)^iX\}$. Then $\fg_1\simeq T^\ast_{x_0}(\X)$ is the infinitesimal symmetric variety. If $\caln\subset \fg$ denotes the nilpotent cone of $\fg$, one then defines $$\caln_\X := \fg_1\cap\caln.$$ Over $\kbar$, there is then a unique $N_\G(\rH)$-orbit of \emph{regular nilpotent elements} $\caln_\X^{reg}$, which decomposes into finitely many geometric orbits of $\rH$. In Sections 5 and 6 of \cite{Levy} (see also \cite{SekiguchiNilpotent} for the original characteristic zero calculation), the number of geometric $\rH$-orbits in $\caln_\X^{reg}$ is computed.

We saw previously that $\Out_{\X}(\rH)\simeq (\rA^-\cap Z(\G))/\tau(Z(\G))$. This quotient is computed for the relevant symmetric pairs in \cite[Section 6.4]{Levy} in the analysis of $\caln_\X^{reg}$. As shown in Section 5 of \cite{Levy}, the set of connected components  $\pi_0(\caln_\X^{reg})$ is naturally a torsor of a quotient of $\Out_{\X}(\rH)$ (in the notation of \cite[Theorem 5.17]{Levy}, this is $(\rA^-\cap Z(\G))/\tau(C)$), so that $\Out_{\X}(\rH)$ acts transitively on $\pi_0(\mathcal{N}_{X}^{reg})$. 

In particular, $(\rA^-\cap Z(\G))/\tau(Z(\G))$ is computed for many symmetric pairs in \emph{loc. cit.}.
\begin{Lem}
    Suppose that $\G$ is absolutely simple and simply connected, and that $\rH=\G^\theta$. Suppose $\theta$ is not of Chevalley type and $\De_\X^{dist}=\emptyset$. Then $\Out_{\X}(\rH)=\{1\}$.
\end{Lem}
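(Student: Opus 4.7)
The plan is to reduce this to a direct computation via the Cartan classification of involutions on absolutely simple simply connected groups. From the reductions already carried out in the excerpt, we have the identification
\[
\Out_{\X}(\rH) \simeq (\rA^{-}\cap Z(\G))/\tau(Z(\G)),
\]
where $\tau(z) = z^{-1}\theta(z)$. Since $Z(\G)$ is abelian, the map $\tau\colon Z(\G)\to Z(\G)$ is a homomorphism whose image lies in $Z_- := \rA^{-}\cap Z(\G)$ and whose kernel is $Z_+ := Z(\G)^{\theta}$. Thus the question reduces to comparing $|Z(\G)|/|Z_+|$ with $|Z_-|$ for each relevant pair $(\G,\theta)$. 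The strategy is to go through the classification, excluding the cases of Chevalley type (handled in the next lemma) and the cases with $\De_\X^{dist}\neq\emptyset$ (handled in Lemma \ref{Lem: dist case} and the quasi-Hermitian cases where $N_{\G}(\rH)\subset P$), and verify that $\tau$ surjects onto $Z_-$ in each remaining case.

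The first observation is that when $|Z(\G)|$ is odd the squaring map is a bijection on $Z(\G)$; given any $w\in Z_-$, picking the unique $z\in Z_-$ with $z^{2}=w^{-1}$ gives $\tau(z)=z^{-1}\theta(z)=z^{-2}=w$. This handles groups of types $A_{2n}$, $E_6$, $E_8$, $F_4$, $G_2$ uniformly (the last three having trivial center). The remaining types to inspect are $A_{2n-1}$, $B_n$, $C_n$, $D_n$, and $E_7$. For each of these, one enumerates the involutions (excluding Chevalley type and those with a distinguished spherical root), then identifies the $\theta$-action on $Z(\G)$ explicitly -- this is read off from the table of fundamental weights dualized by $\theta^{\ast}$ -- and computes $Z_+$, $Z_-$, and $\tau(Z(\G))$.

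I expect the bulk of the work to be bookkeeping against the classification, not conceptual obstruction, because the relevant data is exactly what is tabulated in \cite[Section 6.4]{Levy} in the course of describing the component group action on $\caln_\X^{\mathrm{reg}}$. In particular, for the remaining involutions one verifies the equality $\tau(Z(\G))=Z_-$ either directly (when $\theta$ acts on $Z(\G)$ by inversion, so that $\tau$ restricted to $Z_-$ is squaring, and one only needs to check this is surjective onto $Z_-$) or by noting that the action of $\theta$ on $Z(\G)$ combined with the structure of $Z_-$ forces $Z_+ \cdot Z_- = Z(\G)$, which gives surjectivity of $\tau$ onto $Z_-$ by the kernel-cokernel sequence. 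The only subtle case is $D_{2m}$ where $Z(\G)\simeq(\zz/2)^{2}$, but the assumption that $\X$ is not of Chevalley type rules out precisely the split-into-two-half-spin involutions (handled in Lemma \ref{Lem: chevalley type}), leaving only involutions where either $\theta$ preserves each $\zz/2$-factor of $Z(\G)$ (trivial $\tau$ but also trivial $Z_-$) or swaps them (making $\tau$ visibly surjective).

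The main technical point to guard against is the possibility of an involution whose $(\Ga,\theta)$-index has $\De_\X^{dist}=\emptyset$ but that nevertheless produces a nontrivial outer automorphism via a central element -- this is exactly the phenomenon captured by the Chevalley-type and distinguished-root cases, and the hypotheses of the lemma exclude both. Once this enumeration is complete, the conclusion $\Out_\X(\rH) = \{1\}$ follows in every remaining case, completing the proof of Proposition \ref{Prop: absolutely simple case} and hence of Theorem \ref{Thm: Outer in terms of dist}.
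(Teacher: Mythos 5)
Your proposal takes essentially the same route as the paper: reduce via $\Out_{\X}(\rH)\simeq(\rA^{-}\cap Z(\G))/\tau(Z(\G))$, then check the remaining types case by case against the computations in Section 6 of Levy. The paper's own proof is just the sentence ``A case-by-case check establishes the claim,'' citing Lemmas 6.19, 6.21, and Proposition 6.22 of \cite{Levy}. Your uniform treatment of the odd-center types $A_{2n}$, $E_6$, $E_8$, $F_4$, $G_2$ is a genuine (small) improvement: since $\tau$ restricted to $Z_{-}:=\rA^{-}\cap Z(\G)$ is $z\mapsto z^{-2}$ and $|Z_{-}|$ is odd, squaring is onto and one is done. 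This is correct and worth stating explicitly.

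However, the two shortcuts you propose for the even-center types do not actually work and should be dropped in favor of the direct computation. First, ``when $\theta$ acts on $Z(\G)$ by inversion, one only needs to check squaring is surjective onto $Z_{-}$'' is vacuous beyond the odd case: for even torsion squaring is precisely \emph{not} surjective, and indeed that is what produces the nontrivial $\Out_\X$ in the (excluded) Chevalley cases of type $B_n$, $C_n$, $D_n$, $E_7$. Second, the claim that $Z_{+}\cdot Z_{-}=Z(\G)$ forces $\tau$ surjective onto $Z_{-}$ is false: $\ker\tau=Z_{+}$, so $|\im\tau|=|Z(\G)|/|Z_{+}|=|Z_{-}|/|Z_{+}\cap Z_{-}|$ whenever $Z_{+}Z_{-}=Z(\G)$, and one needs in addition $Z_{+}\cap Z_{-}=1$. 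The Chevalley involution of $\Spin_{2n+1}$ already gives a counterexample: $Z(\G)=\mu_2$, $\theta$ trivial on it, $Z_{+}=Z_{-}=\mu_2$, so $Z_{+}Z_{-}=Z(\G)$ yet $\tau=1$ and $\Out_{\X}\simeq\mu_2$. There is also a conflation to watch: $Z_{-}=\rA^{-}\cap Z(\G)$ is the subgroup of anti-fixed central elements lying inside the \emph{identity component} of $\{t:\theta(t)=t^{-1}\}$, which is in general strictly smaller than $\{z\in Z(\G):\theta(z)=z^{-1}\}$ (for inner $\theta$ the latter is all $2$-torsion of $Z(\G)$, while $\rA^{-}\cap Z(\G)$ depends on the embedding). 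None of this invalidates the overall plan — you still land on the same computation the paper cites — but the proposed structural arguments cannot replace the explicit case check.
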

\begin{proof}   The relevant calculations are carried out in the proofs of Lemmas 6.19 and 6.21, as well as the discussion leading up to Proposition 6.22 of \cite{Levy}. A case-by-case check establishes the claim. 
\end{proof}%This group plays an important role in the classification of irreducible components of the regular part of the nilpotent cone $\mathcal{N}_\X$. Indeed, 

\quash{There it is shown that

    %Recalling that $\Out_{\X}(\rH)\simeq (\rA^-\cap Z(\G))/\tau(Z(\G))$, we proceed
        \begin{enumerate}
        \item if $\G$ is of types $A_{2n}$, $E_6$, $E_8$, $F_4$, or $G_2$, $\Out_{\X}(\rH)$ is trivial;
        %\item Suppose that $\theta$ is a Chevalley involution. Then $\Out_{\X}(\rH)\simeq Z(\G)/Z(\G)^2$ is the full outer automorphism group unless $\rH$ has a component of type $D_4$. For each such factor, the image is an order $2$ subgroup.
        \item Suppose that $\theta$ is quasi-split but not split. Then $\Out_{\X}(\rH)$ is trivial unless $\G$ is of types $A_{2n+1}$ or $D_{2n+1}$; these are the only cases for which $\De_\X^{dist}\neq \emptyset;$%in both caseshe first case is a doubling automorphism of type \eqref{a}.
        \item Suppose that $\theta$ is non-quasi-split. Then $\Out_{\X}(\rH)$ is trivial unless $(\G,\theta)$ occurs in the following list.  In each case,  $\De_\X^{dist}\neq \emptyset$ and $\Out_{\X}(\rH)\simeq \zz/2\zz$:
        \begin{enumerate}
            \item $\G\simeq \Spin_{2n+1}$, $\theta$ is an inner involution. %This is a doubling automorphism of type \eqref{b} when $\rH\simeq \Spin_{2n}$.
            \item $\G\simeq \Sp_{4n}$ and $\rH\cong\Sp_{2n}\times \Sp_{2n}$. %This is a doubling automorphism of type \eqref{b}.
            \item $\G\simeq \Spin_{4n}$ and $\theta$ is inner. %When $\rH\simeq \GL_{2n}$, this is a doubling automorphism of type \eqref{a}.
            \item $\G\simeq \Spin_{4n+2}$ and $\theta$ is inner avoiding $\rH\simeq \GL_{2n+1}$. 
            \item $\G\simeq E_7$ and $\rH\simeq E_6\cdot\Gm$. %This is a doubling automorphism of type \eqref{a}.
        \end{enumerate}
%        \item The map $\Out_{\X}(\rH)\lra \pi_0(\mathcal{N}_X^{reg})$ is a bijection unless
 %       \begin{enumerate}
  %        \item $\G\simeq \Spin_{2n+1}$, $\rH\simeq  \Spin_{2(n-m)+1}\times\Spin_{2m}$ with $2m> n$.
  %          \item $\G\simeq \Sp_{4n}$ and $\rH\cong\Sp_{2n}\times \Sp_{2n}$. 
  %      \end{enumerate}
    \end{enumerate}}

%Let $\Lam_\X^\flat$ denote the lattice such that $\X^\ast(\Out_{\X}(\rH)) = \Lam_\X^\flat/\Lam_\X$.
\begin{Lem}\label{Lem: type N outer}%\textcolor{red}{Come back to this.}
    Suppose that $\G$ is absolutely simple and simply connected, and assume that $\X=\G^\theta\backslash \G$ is of Chevalley type. Then $\Out_{\X}(\rH)$ acts simply transitively on the set of $\rH$-orbits on the regular part of the nilpotent cone $\mathcal{N}_\X^{reg}$.
    \begin{enumerate}
        \item Suppose $\theta$ is a Chevalley involution. Then% $\Lam_\X^\flat=\Lam_X^n+\zz\{\omega-\theta(\omega): \omega \in \frac{1}{2}\zz\Phi\}$, so that
        \[
        X^\ast(\Out_\X(\rH)) = (\zz/2\zz)^{\Omega_2},
        \]
        where $\Omega_2:= \{\omega\in X^\ast(\rA) \text{ minuscule}, 2\omega\in \zz\Phi\}$.
       \item\label{spin} Suppose $\X$ is a $k$-form of $\Spin(V)/\Spin(V_1)\times^{\mu_2} \Spin(V_2)$ with $|\dim(V_1)-\dim(V_2)|\geq2$. Then 
       \[
        X^\ast(\Out_\X(\rH)) = (\zz/2\zz)^{\Omega_2},
        \]
        where %$\Omega_2:= \{\omega\in X^\ast(\rA) \text{ minuscule}, 2\omega\in \zz\Phi\}$.
        \[
      \Omega_2:=\begin{cases}
            \la\omega_{sp}-\theta(\omega_{sp})\ra&: \theta\text{ inner},\\
            \emptyset&: \theta\text{ outer}.
        \end{cases}
        \]
        When $\theta$ is inner, the image of the fundamental weight of the (half-)spin representation(s) generates $X^\ast(\Out_\X(\rH))$.
    \end{enumerate}
\end{Lem}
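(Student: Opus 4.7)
The plan is to invoke the identification $\Out_\X(\rH) \simeq (\rA^-\cap Z(\G))/\tau(Z(\G))$ established immediately prior to the lemma, and then read off both the simply transitive action on $\pi_0(\caln_\X^{reg})$ and the explicit description of the character lattice from the case-by-case analysis of this quotient carried out in \cite[Sections 5-6]{Levy}. More specifically, \cite[Theorem 5.17]{Levy} identifies $\pi_0(\caln_\X^{reg})$ as a torsor for a quotient $(\rA^-\cap Z(\G))/\tau(C)$ of $\Out_\X(\rH)$; the first step is to verify for each Chevalley-type pair in the statement that $C = Z(\G)$, so that this torsor is in fact for all of $\Out_\X(\rH)$. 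This is checked directly from Levy's analysis of regular nilpotents.

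For the Chevalley case, the calculation is clean. After conjugating so that $\rA$ is $(\theta,k)$-admissible, the Chevalley property forces $\rA^- = \rA$, whence $\rA^-\cap Z(\G) = Z(\G)$; moreover, since $\theta$ acts by inversion on $Z(\G)\subset \rA$, the symmetrization becomes $\tau(z) = z^{-1}\theta(z) = z^{-2}$, giving $\tau(Z(\G)) = Z(\G)^2$. Therefore $\Out_\X(\rH) \simeq Z(\G)/Z(\G)^2$, and dualizing via the identification $X^\ast(Z(\G)) = P/\zz\Phi$ with $P$ the weight lattice of $(\G,\rA)$, we have $X^\ast(\Out_\X(\rH)) = (P/\zz\Phi)[2]$. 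Since $\G$ is absolutely simple and simply connected, representatives of the non-trivial elements of this $2$-torsion group are given exactly by those minuscule fundamental weights $\omega$ with $2\omega \in \zz\Phi$, i.e. by $\Omega_2$. This produces the asserted isomorphism $X^\ast(\Out_\X(\rH)) \simeq (\zz/2\zz)^{\Omega_2}$.

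For the spin case \eqref{spin}, the same identification reduces the problem to computing $\rA^-\cap Z(\G)$ and $\tau(Z(\G))$ inside $Z(\G)$ for $\G$ a form of $\Spin(V)$. When $\theta$ is outer both $\dim(V_i)$ are odd and one verifies directly that $\theta$ acts trivially on $Z(\G)$ (equivalently, that the outer involution induced by swapping the two end nodes of the Dynkin diagram of type $D$ fixes the center), which forces $\rA^-\cap Z(\G) = 1$ and hence $\Out_\X(\rH)=1$. When $\theta$ is inner, both $\dim(V_i)$ are even, and tracking the action of $\theta$ on $X^\ast(Z(\G)) = P/\zz\Phi$ shows that the $(-1)$-eigenspace of $\theta$ on $Z(\G)$ (which contains $\tau(Z(\G))$) is generated modulo $Z(\G)^2$ by the class of $\omega_{sp}-\theta(\omega_{sp})$, with $\omega_{sp}$ the (half-)spin fundamental weight. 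Combined with the hypothesis $|\dim(V_1)-\dim(V_2)|\geq 2$ this yields the stated $X^\ast(\Out_\X(\rH)) = \la\omega_{sp}-\theta(\omega_{sp})\ra$; the excluded rank-equal case is exactly the one producing a distinguished root of type \eqref{b} treated in Lemma \ref{Lem: dist case}.

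The main obstacle is the $\Spin_{4n}$ sub-case, where $Z(\G) \simeq \mu_2\times \mu_2$ is non-cyclic and an inner involution may act on $Z(\G)$ in several inequivalent ways depending on how the spin kernel is distributed between $V_1$ and $V_2$. Resolving this requires careful bookkeeping of which generator of $Z(\G)$ is $\theta$-fixed versus $\theta$-inverted, and here I would appeal to the explicit tables compiled in \cite[Section 6.4]{Levy} rather than redo the calculation from scratch; the constraint $|\dim(V_1)-\dim(V_2)|\geq 2$ is precisely what ensures that the computation produces a cyclic quotient with the single generator indicated.
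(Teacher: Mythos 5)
Your Chevalley calculation reproduces the paper's computation $\Out_\X(\rH)\simeq Z(\G)/Z(\G)^2$ and is fine, but the spin case contains several real errors, all ultimately traceable to a misreading of how $\theta$ acts on $Z(\G)$.

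First, for \emph{outer} $\theta$ (both $\dim V_i$ odd): the diagram automorphism of $D_\ell$ does \emph{not} fix $Z(\G)$ pointwise — it swaps the two $\mu_2$ factors when $\ell$ is even and inverts $\mu_4$ when $\ell$ is odd — so your premise that $\theta$ acts trivially on $Z(\G)$ is false. And even granting it, the inference $\rA^-\cap Z(\G)=1$ would still fail: trivial action means the $\theta$-split condition $\theta(z)=z^{-1}$ reads $z^2=1$, which for type $D$ is \emph{all} of $Z(\G)$ or $Z(\G)[2]$, never $\{1\}$. The paper in fact records $\cala_\X\simeq\mu_2$ here; triviality of $\Out_\X(\rH)$ comes from $\cala_\X=\cala_\X^\flat$, not from vanishing of $\cala_\X$. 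You get the right answer for the wrong reason.

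Second, for \emph{inner} $\theta$: an inner automorphism $\Ad(g_\theta)$ fixes $Z(\G)$ \emph{pointwise}, always. So there is no nontrivial ``$(-1)$-eigenspace of $\theta$ on $Z(\G)$'' to track, $\tau(Z(\G))=1$ automatically, and your final paragraph's concern that ``an inner involution may act on $Z(\G)$ in several inequivalent ways'' is vacuous — it acts in exactly one way, trivially. The genuine content of the $\Spin_{4n}$ sub-case is identifying \emph{which} elements of $Z(\G)[2]$ land inside the connected $\theta$-split subtorus $\rA^-$ (and separately which automorphisms are killed by passing from $\cala_\X$ to $\Out_\X(\rH)$); this is the computation the paper reads off of Levy and identifies with the class of the (half-)spin weight, and your sketch does not carry it out. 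You also omit the $B$-type sub-case $\dim(V)$ odd entirely: the lemma's hypothesis allows $\Spin(V_1)\times^{\mu_2}\Spin(V_2)\subset\Spin_{2n+1}$, which is always inner (no diagram automorphisms in type $B$), so ``$\theta$ inner $\Leftrightarrow$ both $\dim(V_i)$ even'' is wrong from the start.
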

\begin{Rem}
    In the case of Chevalley involutions, the set $\Omega_2$ is equal to the set of minuscule fundamental weights except in type $A$, where it is trivial for $A_{2n}$ and consists of the fundamental representation $\wedge^n V$ of $\SL(V)$ for type $A_{2n-1}$. In particular,
    \[
    \#\Omega_2 = \begin{cases}
        1&: \G \text{ of type }A_{2n-1},B_n,C_n,D_{2n+1}, E_7,\\
        2&: \G \text{ of type }D_{2n},\\
        0&: \text{otherwise.}
    \end{cases}
    \]
\end{Rem}
\begin{proof}
We handle the cases of Chevalley involutions first. In this case, $\cala_\X\simeq Z(\G)$, where
\[
X^\ast(\cala_\X) =2X^\ast(\rA)/2\zz\Phi\simeq X^\ast(\rA)/\zz\Phi =X^\ast(Z(\G)).
\]
In particular, we see $\Out_{\X}(\rH)\simeq Z(\G)/Z(\G)^2$. These are listed in the following table, where in type $A_n$, $\de\in \{0,1\}$ with $n\equiv \de\pmod{2}$.

\begin{table}[htp]
\caption{Centers of simply connected groups}
    \label{tab:centers}
    \centering
    \begin{tabular}{|c|c|c|c|c|c|c|c|c|}
	\hline 
	$\G$ & $A_n$ & $B_n$& $C_n$& $D_{2n}$& $D_{2n+1}$&  $E_6$& $E_7$&$G_2$,$F_4$,$E_8$\\ \hline
	$Z(\G)$ & $\mu_{n+1}$ & $\mu_2$& $\mu_2$& $\mu_2\times \mu_2$& $\mu_4$&  $\mu_3$& $\mu_2$&$\{1\}$\\ \hline
 $Z(\G)/Z(\G)^2$ & $\mu_{[1+\de]}$ & $\mu_2$& $\mu_2$& $\mu_2\times \mu_2$& $\mu_2$&  $\{1\}$& $\mu_2$&$\{1\}$\\ \hline
    \end{tabular}
\end{table}
Now $\X^\ast(\Out_\X(\rH))\simeq X^\ast(Z(\G))[2]$. It is now a classical fact that $X^{\ast}(Z(\G))=X^\ast(\rA)/\zz\Phi$ is generated by the minuscule weights of $\G$.
\begin{Rem}
    In type $A_{2n+1}$, the distinguished fundamental representation is $\wedge^n V$, where $V$ is the standard representation of $\G=\SL(V)$.
\end{Rem}
% A case-by-case check shows that in all cases except $A_{7}$, $C_7$, $C_8$ and $D_8$, $\Aut^{\G}(\X)/\mathcal{A}_\X^Z\simeq Z(\G)/Z(\G)^2$ surjects isomorphically onto $\Out(\rH)$.

We now turn to the varieties $\Spin(V_1)\times^{\mu_2} \Spin(V_2)\backslash \Spin(V)$. When $\theta$ is an inner involution (either $\dim(V)$ odd, or $\dim(V)$ even and $\dim(V)\equiv \dim(V_1)\pmod{2}$), $\cala_{\X}(\kbar)$ has a unique non-trivial element acting via the (diagonal) outer automorphism on the factors of $\rH$ of type $D$. In both cases, $X^\ast( \cala_\X)\simeq \zz/2\zz$ 
       \quash{\[
      X^\ast( \cala_\X)\simeq \la \omega_{sp}-\theta(\omega_{sp})\ra\simeq \zz/2\zz
       \]}
       is generated by the image of the fundamental weight associated to the spin representation or the two half-spin representations, depending on the parity.

%For the varieties $\X=\Spin_{2a}\times^{\mu_2} \Spin_{2(n-a)}\backslash\Spin_{2n}$ with $a\neq n/2$, $\Aut^\G(\X)(\kbar)$ has a unique non-trivial element that acts via the diagonal outer automorphism on the factors of $\rH$. Just as in the type $B$ case, note that $X^\ast( \cala_\X)\simeq \zz/2\zz$ 
       \quash{\[
      X^\ast( \cala_\X)\simeq \la  \omega_{hsp}-\theta(\omega_{hsp})\ra\simeq \zz/2\zz
       \]
       is generated by the image of the fundamental weight associated to either of the half-spin representations (whose images agree in $X^\ast(\Ax)$). When $a=n/2$, we reduce to the Chevalley setting.}

       Finally, we note that the varieties $\X=\Spin_{2a+1}\times^{\mu_2} \Spin_{2(n-a)-1}\backslash\Spin_{2n}$ for $n\neq 2a+1$ satisfies that $\cala_\X=\cala_\X^\flat\simeq\mu_2$ as the involution is outer. In particular, $\Out_\X(\rH)$ is trivial. %When $n=2a+1$, we similarly reduce to the Chevalley setting.

       In all cases, the groups calculated agree with the claim regarding the nilpotent cone by comparing with \cite[Proposition 6.22]{Levy}.
\end{proof}
\begin{Rem}
    It is reasonable to ask if this relationship with the nilpotent cone is general for symmetric varieties. This holds when $\De_\X^{dist}=\De_\X^{(2)}$ but fails for distinguished roots of type \eqref{b}. For example, in the case of $\X= \Sp_{2n}\times \Sp_{2n}\backslash\Sp_{4n}$ from Example \ref{Ex: Symplectic example} as well as $\X=\Spin_{2n}\backslash\Spin_{2n+1}$, where the nilpotent cone is irreducible.
\end{Rem}
\subsubsection{Distinguished roots in the absolutely simple case}
We saw in the proof of Lemma \ref{Lem: dist case}, that when $\G$ is simply connected and absolutely simple, $\theta$ a $k$-involution and $\X=\G^\theta\backslash\G$, then $|\De_\X^{dist}|\leq 1$. The following lemma completely classifies those cases when $|\De_\X^{dist}|=1$ over $\kbar$. %In the following lemma, we say that $(\G,\rH)$ corresponds to a given symmetric pair if the induced symmetric pair on the simply connected cover of the derived subgroup of $\G$, $(\G_{sc},\rH_{sc})$ are $k$-forms of the given symmetric pair.
\begin{Lem}\label{Lem: distinguished for sym} Suppose that$\G$ is simply connected and absolutely simple, $\theta$ a $k$-involution and $\X=\G^\theta\backslash\G$ with $\De_\X^{dist}\neq \emptyset$. Then $\De^{dist}_\X = \{\al\}$ and there are two cases:
\begin{enumerate}
    \item if $\al$ is a double root, then $(\G,\G^\theta)$ is a $k$-form of one of the following pairs:
    \begin{enumerate}
        \item $(\SL_{2n},S(\GL_n\times \GL_n))$, $n\geq 1$. Here, $\Phi_{\X}$ has type $B_n$;
        \item $(\Spin_{n},\Spin_2\times^{\mu_2} \Spin_{n-2})$, $n\geq 5$; $\Phi_{\X}$ has type $B_2$;
        \item $(\Spin_{4n},\GL_{2n}),$ $n\geq 2$; $\Phi_{\X}$ has type $B_n$;
        \item $(\Sp_{2n},\GL_{n}),$ $n\geq 2$; $\Phi_{\X}$ has type $B_n$;
        \item $(E_7,\Gm\cdot E_6)$; $\Phi_{\X}$ has type $B_3$.
    \end{enumerate}
    \item If $\al$ is of type $B$, then $(\G,\G^\theta)$ is a $k$-form of one of the following pairs:
    \begin{enumerate}
        \item $(\mathrm{Spin}_{2n+1},\mathrm{Spin}_{2n})$ for $n\geq 2$. Here, $\Phi_{\X}$ has type $B_1$;
        \item $(\Sp_{4n},\Sp_{2n}\times \Sp_{2n})$  for $n\geq 1$. Here, $\Phi_{\X}$ has type $B_n$.
    \end{enumerate}
\end{enumerate}
For all pairs, the normalized root system $\Phi_\X^{sv}$ is also of type $B$, except for the pairs $(\Spin_{2n+1},\Spin_2\times^{\mu_2} \Spin_{2n-1})$ with $n\geq 2$ and $(\Sp_{2n},\GL_{n}),$ $n\geq 2$, where $\Phi_\X^{sv}$ is of type $C$. These are the only cases of Chevalley type.
\end{Lem}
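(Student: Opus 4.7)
The plan is to establish the lemma by a case-by-case reduction using the classification of admissible $\theta$-indices (Proposition \ref{Prop: classification of invo}) combined with the characterization of distinguished roots in Lemma \ref{Lem: spherical roots distinguished}. First, I would recall from the proof of Lemma \ref{Lem: dist case} that when $\G$ is absolutely simple and simply connected with $\rH = \G^\theta$, the only distinguished roots that can appear are of type \eqref{a} or \eqref{b}, and furthermore $|\De_\X^{dist}| \leq 1$: type \eqref{a} roots force $\X$ to be of Hermitian type (hence $\mathrm{rank}\,X^\ast(\rH) = 1$, so there is at most one), and the analysis of type \eqref{b} roots via maximally non-parabolic subdiagrams likewise yields uniqueness.

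The main work is in identifying the pairs. I would reduce to the rank-one setting by noting that any $\al \in \De_\X^{dist}$ is determined by its localization $\X_\al$, a rank-one symmetric subvariety whose $(\Ga,\theta)$-index is obtained by restricting to $|\al| \cup \De_\X^p \cap |\al|$. Consulting Table \ref{tab:satake} (and its extension to all rank-one Satake diagrams), I would determine which rank-one pairs produce a spherical root of type \eqref{a} versus type \eqref{b}. Type \eqref{b} occurs precisely when the rank-one localization is $(\Spin_{2k+1}, \Spin_{2k})$ or $(\Sp_{4k}, \Sp_{2k} \times \Sp_{2k})$, since these are the only rank-one cases where the ambient support is of Cartan type $B_k$ and all but one simple root is parabolic. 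Type \eqref{a} is more delicate: a doubled root requires a simple root $\al \in \De \cap \De_\X$ together with the existence of a color $D \in \D(\X)$ with $\rho(D) = \tfrac12 \al^\vee|_{\fa_\X}$; using the color calculations from Appendix \ref{Section: norms sym}, this translates into a parity condition on the restricted root which picks out precisely the five Hermitian-type pairs listed.

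Next, I would ascend from the rank-one localization to the ambient pair. For type \eqref{b}, the support $|\al|$ must be all of $\De$, because the only way for $\al_1 + \cdots + \al_k$ to be a (primitive) spherical root is for the entire Satake diagram to restrict to $B_k$ with a single white node, giving exactly the two families $(\Spin_{2n+1}, \Spin_{2n})$ and $(\Sp_{4n}, \Sp_{2n} \times \Sp_{2n})$. For type \eqref{a}, the Hermitian condition combined with the absolute simplicity of $\G$ forces $\rH$ to be a (maximal) Levi-like symmetric subgroup; running through the classification of Hermitian symmetric pairs yields exactly the five families listed, and in each case one checks using the Satake diagram computation that the induced root system $\Phi_\X$ is of type $B_n$ of the predicted rank.

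Finally, for the normalization statement, I would compute $\Phi_\X^{sv}$ by replacing each $\ga \in \De_\X$ by its primitive representative in $\zz\Phi$. The passage from $\De_\X$ to $\De_\X^{sv}$ only changes lengths of roots, not the Weyl group, so the underlying Cartan type of $\Phi_\X^{sv}$ is either $B_n$ or $C_n$. For the pair $(\Sp_{2n}, \GL_n)$, the long root of $\Phi_\X$ of the form $2e_i$ lies in $2\zz\Phi$, so normalization halves it and exchanges $B$ and $C$; the same occurs for $(\Spin_{2n+1}, \Spin_2 \times^{\mu_2} \Spin_{2n-1})$. In all other listed pairs one verifies directly that every element of $\De_\X$ is already primitive in $\zz\Phi$, so $\Phi_\X^{sv}$ retains type $B$. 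The main obstacle is organizational rather than conceptual: ensuring a uniform treatment of the Hermitian-type pairs in item (1), in particular verifying that no other sporadic cases arise outside types $A$, $B$, $C$, $D$, $E_7$. This is controlled by the observation that a doubled root $\al \in \De$ requires $\al|_{\Ax} \neq 0$ and the simply-laced-vs.-multiply-laced structure at $\al$ in the restricted root system to match $B_n$, which is incompatible with the Satake diagrams arising from $G_2$, $F_4$, $E_6$, $E_8$, or from outer involutions not already covered.
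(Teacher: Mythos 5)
Your proposal takes a genuinely different route from the paper. The paper's proof of this lemma is essentially a one-line citation: the doubled-root case (type \eqref{a}) is read off from Knop's classification of spherical automorphism groups (\cite[Remark after Cor.~7.6]{KnopAutomorphisms}), and the type-\eqref{b} case is obtained by consulting the classification of symmetric varieties \cite{Springerclassification} or Losev's analysis of distinguished roots \cite[Section 4]{Losev}. You instead attempt to rederive the list from scratch via rank-one localizations, Satake diagrams, and the color computations of Appendix~\ref{Section: norms sym}. This approach is structurally sound but substantially longer; what it buys is a unified explanation of \emph{why} these pairs appear (the doubled root forces a reduced restricted root system of type $C$ with a specific color multiplicity), which the citation leaves opaque. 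However, it requires you to re-establish the Hermitian-type classification and to verify that no sporadic cases survive for $G_2$, $F_4$, $E_6$, $E_8$ — work that Knop's classification handles for you.

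There is also a small but genuine error in the normalization discussion. For $(\Sp_{2n},\GL_n)$, the \emph{long} roots of $\Phi_\X^{\min}$ (which is of type $B_n$ with simple roots $\{2\al_1,\ldots,2\al_{n-1},2e_n\}$) lie in the $W_\X$-orbit of $2\al_i = 2(e_i-e_{i+1})$, i.e.\ they are of the form $2(e_i\pm e_j)$; the \emph{short} roots are $\pm 2e_i$. You describe the long roots as being ``of the form $2e_i$,'' which swaps them. The underlying mechanism you identify — the long roots lie in $2\zz\Phi$ and get halved in the passage to $\Phi_\X^{sv}$, while the short roots do not, flipping $B_n$ to $C_n$ — is correct, but the specific vectors are misidentified. (For $(\Spin_{2n+1},\Spin_2\times^{\mu_2}\Spin_{2n-1})$ the long roots \emph{are} of the form $\pm 2e_i$, so that part is fine; note also that in this case the halved root is the type-$N$ root $\ga_2$, a detail your phrase ``the same occurs'' elides.) You should also be more explicit that the rank-one ``localization'' $\X_\al$ you use is a boundary degeneration at $\Theta=\{\al\}$, as in Proposition~\ref{Prop: type N roots sym}, and that distinguished-ness is preserved under this passage, since the color $D$ with $\rho(D)=\tfrac12\check\al|_{\fa_\X}$ restricts.
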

\begin{proof}
    The classification of simple systems with a doubled root is precisely the content of \cite[Remark after Cor. 7.6]{KnopAutomorphisms}, and is verified by direct calculation from the classification. For the classification of systems of type $B$ roots, one may either consult the classification \cite{Springerclassification} or Losev's analysis of distinguished roots \cite[Section 4]{Losev}.
\end{proof}
\quash{
\begin{Ex}\textcolor{red}{This statement appears to be false. It should be replaced with the intersection $\X\cap Z(\G)$ alone.}
If $\G=\SL_{2n}$ and $\G^\theta=\Sp_{2n}$, then if we take 
\[
\theta(g) = J {}^Tg^{-1}J^{-1}, J=\diag\left(\begin{psmatrix}
    &1\\-1&
\end{psmatrix},\begin{psmatrix}
    &1\\-1&
\end{psmatrix},\ldots,\begin{psmatrix}
    &1\\-1&
\end{psmatrix}\right),
\]
then the diagonal matrix is maximally $\theta$-split. In our case,
\[
A^- = \{a = \diag(a_1I_2,a_2I_2,\ldots, a_nI_2): (\prod_ia_i)^2=1\}, 
\]and 
\[
A^Z = \{a= \diag(a_1I_2,a_2I_2,\ldots, a_nI_2)\in A^-: a_1^2=a_2^2=\cdots a_n^2\}.
\]Now assume that $n=2$. Then $Z(\G)\simeq \mu_{4}$ on which $\theta$ acts by inversion. But I claim that $\Aut^{\G}(\X)\simeq \mu_{2}$ is the image of the center under the squaring map. To see this note that if $a=\diag(a_1I_n,a_2I_n)\in A^Z$, then 
\[
a^2= \diag(a_1^2I_n,a_2^2I_n)=zI_4.
\]If $z^2\neq 1$, then $a_i^8=1$. But if $a_1=\zeta$ is a primitive $8^{th}$ root of unity, then we need
\[
a = \pm\zeta I_4 \text{ or }a = \begin{psmatrix}
    \pm\zeta I_2&\\&\mp\zeta I_2
\end{psmatrix};
\]
all of these matrices have determinant $-1$, a contradiction.
\end{Ex}}

%\begin{Rem}
%Consider the symmetric variety $\X=\SO_{2n+1}/\mathrm{O}_{2n}$, then $\Aut^{\G}(\X)=\{1\}$. On the other hand, the double cover $\X' = \SO_{2n+1}/\SO_{2n}$ is not symmetric in the sense of the lemma, and $\Aut^{\G}(\X) = \mathcal{A}_{\X}^\sharp=\mathrm{O}_{2n}/\SO_{2n}$ acts on $\SO_{2n}$ via an outer automorphism. 
%\end{Rem}
\quash{
\begin{proof}
%It suffices to consider the symmetric case, as the automorphism groups do not change when parabolically inducing.%and we moreover can assume that $\G_{der}$ is simply connected by Lemma \ref{Lem: pass to a sc cover}.
%Let us now consider the second claim.
We first claim that if $N_{\G}(\rH)$ is contained in a proper parabolic subgroup $P\subset \G$, then $\rH=N_{\G}(\rH)$ is of Hermitian type.  In particular, $\Out_{\X}(\rH)$ is trivial. Indeed, since $N_{\G}(\rH)\supset\rH$, $\rH$ is contained in a proper Levi subgroup $L$ of $P$; but this forces $\rH=L$ and the claim follows from standard results on normalizers of Levi subgroups. We may now assume that $N_{\G}(\rH)$ is not contained in a proper parabolic subgroup of $\G$.

We now reduce to the case when $\G$ is simple and simply connected, and $\rH$ is connected. By Lemma \ref{Lem: pass to a sc cover}, if $(\widetilde\G,\widetilde\theta)$ is a $\theta$-compatible $z$-extension, that $\Lam^\flat_\X=\Lam_{\widetilde\X}^\flat$ so that we may thus pass from $\G$ to $\widetilde\G$ and assume that $\G_{der}$ is simply connected. In this case, 

In particular, 
\[
\Lam_{\X}^\flat = \bigoplus_i\Lam_{\X_i}^\flat\oplus  \bigoplus_j\Lam_{\X_j}^\flat,
\]so we may thus pass to a single factor $\G=\G_i$ or $\G=\G_j\times \G_j$ to compute $\Lam_\X^\flat$. In 

We may thus assume that $\G_{der}$ is absolutely simple and simply connected. Passing from $\rH$ to $\K=(\G^\theta)^\circ$ does not affect $\Lam_\X^d$ and has an understood effect on $\Out_{\X}(\rH)$ by Lemma \ref{Lem: pass to derived}, so we may assume $\rH=\K$. Finally, further restrict to $\G_{der}$ and assume that $\G$ is semi-simple and simply connected and $\rH=\G^\theta$.

    Thus $\Out_{\X}(\rH)(\kbar)$ is an finite-dimensional vector space over $\zz/2\zz$ which surjects onto $\Out_{\X}(\rH)$. There is a $\Ga$-stable subgroup $\Out_{\X}(\rH)^{dist}$ such that
\[
X^\ast(\mathcal{C}^{dist}) = \la\De_\X^{dist}\ra/\la2\De_\X^{dist}\ra\subset X^\ast(\Ax)/\Lam_\X^n.
\]

Lemma \ref{Lem: autogroup} implies that $$\Aut^{\G}(\X) \simeq s(\X)\cap Z(\G)\subset Z(\G)^-=\{a\in Z(\G): \theta(z) = z^{-1}\}.$$ More precisely, if $\rA$ is a maximally $\theta$-split maximal torus with $\rA^-\subset \rA$ the maximal $\theta$-split subtorus, then we obtain two identifications under the symmetrization map $s:\X\to \G$ \cite[Section 1]{VustEmbeddings}
\[
\rA^-\cap Z(\G)\simeq \Aut^{\G}(\X)\cong\{\tau(a)=a^2\in \X:a\in A^Z\},
\]
where $A^Z:=\{a\in A^-: a^2 \in Z(\G)\}$. Thus, $\mathcal{C} \simeq (\rA^-\cap Z(\G))/\tau(Z(\G))$, where $\tau:\G\to \X$ is the quotient map. In particular, $\mathcal{C}$ is trivial if $\G$ is of types $A_{2n}$, $E_6$, $E_8$, $F_4$, or $G_2$ as $Z(\G)$ has odd order in these cases.

\end{proof}}

%\begin{Rem}
%   We expect this claim holds for an arbitrary homogeneous spherical $k$-variety $\X=\rH\backslash\G$ by arguing as in \cite[Section 4]{Losev}.
%\end{Rem}
\subsection{Outer data for well-adapted varieties}\label{Section: calculate cocycle}
Let $\G$ be a quasi-split reductive group over $k$ with $k$-rational involution $\theta$. We fix a $k$-rational Borel pair $(\rA,\B)$ which is $(\theta,k)$-admissible. %The $\Ga$-action on the set $\De_\X$ preserves $\De_\X^{dist}$ is preserved by this action. 

On the basis of Theorems \ref{Thm: Outer in terms of dist} and \ref{Thm: quasi-split G-inner form}, we impose the following restriction.
\quash{
\begin{Def}\label{Def: very well behaved}
    We say that $\X=\rH\backslash\G$ (or $\pi_0(\rH)$) is \emph{very well-behaved} if it is well-behaved, all the fields $k_i$ are Galois extensions. In particular, this says that for each $\calo_1\sim\calo_2$, we have equality $k_{\calo_1}=k_{\calo_2}$ of the associated fields.
\end{Def} 
We note that all $\rH$ are very well-behaved over $\kbar$, as are all $\rH$ such that $\pi_0(\rH)$ is a constant group scheme over $k$.  Moreover, this is equivalent to well-behaved if $k$ is finite or $k$ non-archimedean with $\G$ unramified.}
\begin{Assumption}\label{assumption: no type N}
   Assume that the symmetric $\G_{sc}$-variety $\X_{sc}=\rH_{sc}\backslash \G_{sc}$ contains no simple factors of Chevalley type and that Conjecture \ref{Conj: cohom surj} holds for $\X$.
\end{Assumption} 
Under this assumption, we know that $\X$ is well-adapted and up to replacing $\X$ with a $\G$-inner twist, we may assume that $\X=\rH\backslash\G$ satisfies that $\rH^\circ$ is quasi-split over $k$. The current section is in service of Section \ref{Sec: symplectic rep}, so we will simplify and also assume for simplicity that $\rH$ is connected, leaving discussion of disconnected groups to future work. %\quash{Let $\mu_\X: \Ga\to \Aut_d(\X)$ denote the geometric cocycle for $\X.$ By assumption, the map
%\[
%    H^1(k,\cala_\X)\to     H^1(k,\Aut_d(\X))
%\]
%is surjective.}

Recall from Section \ref{Section: doubling aut}, there is a natural isomorphism
\[
\Aut_d(\X) =\prod_{i\in I_\X}\Res_{{k_i/k}}(\mu_2),
\] where $I_{\X}$ is the set of $\Ga$-orbits in $\De_{\X}^{dist}$. %there is a natural equivalence relation 
Following the reductions in Section \ref{Section: reduction to asc}, there thus exists a set of finite separable extensions $k_i/k$ $(i\in I_{\X})$ and $k_j/k$ ($j\in J_\X$) (unique up to unique isomorphism) with a decomposition $\X_{sc}= \prod_i\X_i\times \prod_j\X_j$ where $\X_i=\Res_{k_i/k}(\X'_i)$ such that $\X_i' = \G'_i/\rH'_i$ is a symmetric variety associated to an absolutely simple and simply connected $k_i$-group $\G_i'$ and $\De_{\X_i'}^{dist}=\{\ga_i\}$, and where $\De_{\X_j}^{dist}=\emptyset$.

\quash{Using Proposition \ref{Prop: absolutely simple case}\eqref{dist}, $\De_\X^{dist}$ decomposes 
\begin{equation}\label{eqn: decomp of dist}
\De_\X^{dist}=\bigsqcup_i \De_{\X_i}^{dist}=\bigsqcup_i \bigsqcup_{a\in \Ga_i\backslash\Ga}\De_{\X_i'}^{dist}\cdot a,
\end{equation}
where $\Ga_i = \Gal(\kbar/k_i)$. Set $I_\X^d\subset I_\X$ to be those indices such that $\De_{\X_i}^{dist}\neq \emptyset$. Combining Lemmas \ref{Lem: pass to a sc cover}, \ref{Lem: pass to derived}, and \ref{Lem : pass to absolutely simple} with Proposition \ref{Prop: absolutely simple case}, we see that 
\begin{equation}\label{eqn: decomp of Out}
    X^\ast(\Out_\X(\rH))=\bigoplus{i\in I_\X^d}\Ind_{\Ga_i}^{\Ga}(\zz/2\zz\ga_i)\simeq (\zz/2\zz)^{\De_\X^{dist}}.
\end{equation}
%where the prime indicates the sum over those $i\in I_\X$ such that $\De_{\X_i}^{dist}\neq\emptyset$ and where $\ga_i\in \De_{\X'_i}^{dist}$. 
This induces a product formula
\[
\Out_{\X}(\rH) =  \prod_{i\in I_\X^d}\Out_{\X_i}(\rH_i) \simeq \prod_i \Res_{k_i/k}(\Out_{\X_i'}(\rH_i'));%\overset{\prod\Res(\ep_i)}{\lra} \prod_i'\Res_{k_i/k}(\mu_2);
\]}

Fix $i\in I_{\X}$. % we recall the construction of the projection $\Aut_d(\X)\to \Res_{k_i/k}(\mu_2)$. 
Over the field extension $k_i/k$, the distinguished root $\ga_i$ induces the canonical quadratic character (cf. \cite[Proposition C.5]{BorovoiGagliardi})
\begin{equation}\label{eqn: color character}
    \ep_i:=\mu_{\ga_i}:\cala_{\X'_i}\lra \mu_2,
\end{equation} 
dual to the inclusion $\la \ga\ra/\la 2\ga\ra \subset X^\ast(\Aut_d(\X))\subset X^\ast(\cala_\X)$. 
    Taking the $\Ga$-action into account, we obtain a commutative diagram
\[
\begin{tikzcd}
\cala_\X\ar[dr,swap,"\Res(\ep_i)"]\ar[rr]&&\Aut_d(\X)\ar[dl,"p_i"]\\
&\Res_{k_i/k}(\mu_2),&    
\end{tikzcd}
\] where $\Res(\ep_i)$ is obtained functorially from $\ep_i$ via the standard adjunction \cite[Eq. A.5.1]{ConradPseudoReductive} and 
\[
p_i:\Aut_d(\X) \lra \Res_{k_i/k}(\Aut_d({\X_i'}))\overset{\Res(\ep_i)}{\iso}\Res_{k_i/k}(\mu_2)
\]
denotes the canonical projection dual to the inclusion $\Ind_{\Ga_i}^{\Ga}(\zz/2\zz\ga_i)\subset X^\ast(\Aut_d(\X)).$
\quash{Thus, there is a canonical isomorphism
\[
p=\prod_ip_i :\Out_{\X}(\rH) =\prod_i\Res_{k_i/k}(\Out_{\X_i'}(\rH_i')){\iso} \prod_i\Res_{k_i/k}(\mu_2)
\]}

Consider now the geometric cocycle $\mu_\X:=\mu_\X^d:\Ga\lra \Aut_d(\X)(\kbar)$\footnote{We drop the superscript $d$ for notational simplicity. Strictly speaking, this conflicts with the notation $\mu_\X:\Ga\to \mathrm{S}\cala_\X$, but we will not refer to this section again in the paper.} as in Section \ref{Section: geometric cocycle}. For each $i\in I_\X$, consider the composition $$\mu_{\X,i}:=p_i\circ \mu_\X:\Ga\lra \Res_{k_i/k}(\mu_2)(\kbar).$$
Clearly $\mu_\X= \prod_i\mu_{\X,i}$.

Consider the diagram
\[
\begin{tikzcd}
    \Ga\ar[r,"\mu_{\X,i}"]&\Res_{k_i/k}(\mu_2)({\kbar})\\
    \Ga_i\ar[u]\ar[r,"\mu_i"]&\mu_2(\kbar)\ar[u,"j"],
\end{tikzcd}
\]
 where $j$ is the canonical unit map (e.g. \cite[Proposition A.5.7]{ConradPseudoReductive}) and where the cocycle $\mu_i$ is determined in order that the diagram commutes. Shapiro's lemma implies that the cohomology class $[\mu_{\X,i}]\in H^1(k,\Res_{k_i/k}(\mu_2))$ is uniquely determined by the classes $[\mu_{i}]\in H^1(k_i,\mu_2)$. In particular, $\mu_X$ is determined up to coboundary by $\{\mu_i\}_{i\in I_\X}$ via the isomorphisms
\begin{align}\label{eqn: Shapiro}
    H^1(k,\Aut_d(\X))&\simeq\prod_{i\in I_{\X}} H^1(k,\Res_{k_i/k}(\mu_2))\\&\simeq  \prod_{i\in I_{\X}} H^1(k_i,\mu_2)\simeq\prod_{i\in I_{\X}} \Hom(\Ga_i, \mu_2(\kbar)).\nonumber
\end{align}
We record this in the following lemma.

\begin{LemDef}\label{Lem: outer characters}
    Suppose that $\G$ is quasi-split and $\X=\rH\backslash\G$ is a symmetric $\G$-variety satisfying Assumption \ref{assumption: no type N} and that $\rH$ is connected.  
    %These satisfy \eqref{eqn:restriction of chara}. %We define the equivalence relation $\sim_\X$ on $\out_{\X}^\circ$ by $(\mu_\calo)\sim (\mu_{\calo}')$ if for each $[\calo_0]\in I_\X$ there is a quadratic character $\mu_0:\Ga_\calo\to \{\pm1\}$, such that
%$
%\mu_\calo = \mu_{\calo}'\cdot \mu_0
%$ for each $\calo\in [\calo_0]$.
With respect to the isomorphism \eqref{eqn: Shapiro}, the data 
\begin{equation}\label{eqn: outer data}
 \mathrm{out}_{\X}:=\{\mu_i:\Ga_{i}\lra \{\pm1\}: i\in I_{\X}\}% \out_{\X}:=(\mathrm{out}_{\X}^\circ,I_\X)%\Aut^{\G}(\X)\lra \Out(\rH).
\end{equation} uniquely determines the cohomology class $[\mu_{\X}]\in H^1(k,\Aut_d(\X))$. We refer to $\out_\X$ as the \textbf{outer data} of $\X$.

In particular, if $\X'=\rH'\backslash\G$ is another $\G$-form of $\X$, then $\X$ and $\X'$ are $\G$-inner forms if and only if $\out_\X=\out_{\X'}$.
\end{LemDef}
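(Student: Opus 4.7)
The plan is to proceed in two steps: first to establish the Shapiro-lemma characterization of the class $[\mu_\X]$ by its components $\mu_i$, and then to relate the geometric cocycle $\mu_\X^d$ to the comparison cocycle $c_{\X,\X'}$ of Section \ref{Sec: forms and action}, so that Lemma \ref{Lem: outer forms classify} can be invoked.

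First I would observe that since $\X_{sc}$ contains no Chevalley-type factors, Theorem \ref{Thm: Outer in terms of dist} ensures $\X$ is well-adapted, so the canonical surjection $\Out_\X(\rH) \to \Aut_d(\X)$ is an isomorphism. Connectedness of $\rH$ makes Conjecture \ref{Conj: cohom surj} automatic via Lemma \ref{Lem: surjective on dist cohom}, and the product formula \eqref{eqn: formula for Aut conn} gives $\Aut_d(\X) \simeq \prod_{i \in I_\X} \Res_{k_i/k}(\mu_2)$. Applying Shapiro's lemma factor-by-factor yields \eqref{eqn: Shapiro}; triviality of the Galois action on $\mu_2$ makes $H^1(k_i,\mu_2) = \Hom(\Ga_i,\mu_2(\kbar))$, so cocycles and cohomology classes coincide on each factor. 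By construction each $\mu_i$ is the $\Ga_i$-adjunct of $\mu_{\X,i} = p_i \circ \mu_\X$, so its class is precisely the $i$-th coordinate of $[\mu_\X]$ under the Shapiro isomorphism, proving the first assertion.

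For the $\G$-inner form criterion, since $\X$ is well-adapted Lemma \ref{Lem: outer forms classify} asserts that $\X$ and $\X'$ are $\G$-inner forms if and only if $[c_{\X,\X'}]$ vanishes in $H^1(k,\Aut_d(\X))$. Fixing a $\G_{\kbar}$-equivariant identification $\X_{\kbar} \simeq \overline{\X} \simeq \X'_{\kbar}$ and taking $\psi = \mathrm{id}$, the formula ${}^\sig\psi(x) = \tilde{\mu}'_\sig \circ \psi \circ \mu_{\sig^{-1}}(x)$ from Section \ref{Sec: forms and action} yields $c_{\X,\X'}(\sig) = \mu_{\X',\sig} \circ \mu_{\X,\sig}^{-1}$ in $\cala_\X(\kbar)$. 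Projecting to $\rS\Aut_d(\X) \simeq \Aut_d(\X)(\kbar) \rtimes \Ga$ and carrying out the semidirect product multiplication $(\mu_{\X'}^d(\sig),\sig)\cdot(\sig^{-1}\mu_\X^d(\sig)^{-1},\sig^{-1})$ gives $\mu_{\X'}^d(\sig)\cdot\mu_\X^d(\sig)^{-1}$ in the abelian factor; hence $[c_{\X,\X'}] = [\mu_{\X'}^d]\cdot [\mu_\X^d]^{-1}$. Triviality of this class is equivalent to $[\mu_\X^d] = [\mu_{\X'}^d]$, which by the first step is equivalent to $\out_\X = \out_{\X'}$.

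The main technical point is the identification $[c_{\X,\X'}] = [\mu_{\X'}^d]\cdot [\mu_\X^d]^{-1}$; once this is in hand, everything else is formal. I expect the only genuine book-keeping is tracking the semilinear twist in the definition of ${}^\sig\psi$, which is why choosing $\psi = \mathrm{id}$ after identifying both $k$-forms with $\overline{\X}$ is the cleanest route.
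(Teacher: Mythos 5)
Your proposal is correct and follows essentially the same route as the paper: well-adaptedness identifies $\Out_\X(\rH)$ with $\Aut_d(\X)$, Shapiro's lemma on the product decomposition of $\Aut_d(\X)$ gives the first assertion, and Lemma \ref{Lem: outer forms classify} gives the inner-form criterion. The one step you spell out that the paper treats as implicit is the semidirect-product computation showing $[c_{\X,\X'}] = [\mu_{\X'}^d]\,[\mu_\X^d]^{-1}$, which is indeed what bridges Lemma \ref{Lem: outer forms classify} (stated in terms of $c_{\X,\X'}$) to the claim stated in terms of outer data, and your bookkeeping with the twisted group law is correct.
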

\begin{proof}
    Considering our discussion above, only the final claim needs justification. But this is a direct consequence of the well-adaptedness of $\X$ (and hence of $\X'$) and Lemma \ref{Lem: outer forms classify}.
\end{proof}

\quash{\begin{Rem}\label{rem: character values}
%Suppose that $\X=\X^\circ$.
For $i\in I_\X$, the character $\mu_i:\Ga_i\lra \mu_2(\kbar)$ satisfies that $\mu_i(\sig) = -1$ if the automorphism by which $\sig$ acts on $\X_i(\kbar)$ is a {doubling automorphism} associated to the distinguished root $\ga_i$. In particular, if $\Ga\cdot \ga_i\subset \De_\X^{(2)}\subset \De_\X^{dist}$ consists of doubled roots, we have $\mu_i(\sig) = -1$ if and only if the automorphism by which $\sig$ acts on $\X_i(\kbar)$ swaps the two colors $D^+$ and $D^-$ associated to $\ga_\calo$ \cite[Remark 4.1.7]{Losev}. 

For a distinguished root $\ga$ of type \eqref{b}, there is a unique color $D_\ga$ associated to $\ga$ so that the automorphism acts trivially on $\D(\X_i)$. On the other hand, by construction the character is calculated in terms of the $\Ga_i$-action on Borel orbits of higher codimension as in Example \ref{Ex: Symplectic example}, reducing to forms of the varieties $\Spin_{2n}\backslash\Spin_{2n+1}$ and $\Sp_{2n}\times \Sp_{2n}\backslash\Sp_{4n}$. 
\end{Rem}
%In particular, for each $i\in I_\X$ such that $\De_{\X_i}^{dist}\neq\emptyset$, let $\ga_i\in \De_{\X'_i}^{dist}$ and consider the character $\ep_i:\Aut^{\G_i'}(\X_i')\lra \mu_2$. Since $|\De_{\X'_i}^{dist}|=1$ (by the proof of Lemma \ref{Lem: dist case}), $\Ga_i$ stabilizes $\ga_i$. The $k_i$-structure of $\X_i'$ induces a $1$-cocycle \eqref{eqn: cocycle from Galois}, which compose
%\begin{equation}\label{Galois color character}
%    \ep_i\circ \mu_{\X,i}:\Ga_i\lra \Aut^{\G'_i}(\X_i')(\kbar)\lra\Out_{\X_i'}(\rH_i')(\kbar)\overset{\ep_i}{\cong}\{\pm1\}
%\end{equation}
}

\begin{Ex}
 Returning to Example \ref{Ex: unitary example}, for both varieties $\X_1$ and $\X_t$ the unique spherical root is a double root. An easy calculation shows
\[
\out_{\X_1}=\{\mu=1\},\qquad \out_{\X_t}=\{\mu=\eta_{E/k}:\Ga\lra \{\pm1\}\},
\]
where $\eta_{E/k}$ it the unique non-trivial quadratic character of $\Ga$ trivial on the normal subgroup $\Gal(\kbar/E)\subset \Ga$.%associated to the extension $E/k$ by local class field theory.
\end{Ex}

\quash{
\subsubsection{The disconnected case}\label{Section: disconnected} \textcolor{blue}{No matter what, this is incomplete and probably needs to be capped off for the first draft.}
Relying on Lemma \ref{Lem: pass to a sc cover}, we will continue to assume that $\G_{der}=\G_{sc}$ and $(\G^\theta)^\circ\subset \rH\subset N_{\G}(\G^\theta)$, and now drop the assumption that $\rH$ is connected. Recalling the notation from Lemma \ref{Lem : pass to connected}, we let $\tilde\X=\rH^\circ\backslash\G$. We carry over all the notation from the previous subsection without further explanation. We further assume that no $k$-simple factors of $\tilde\X_{sc}$ are of type $N$

For a fixed $(\theta,k)$ admissible torus $\rA$, let $$\rA\lra \rA_{\tilde\X}\lra \Ax=\rA_{\tilde{\X}}/\pi_0(\rH)$$ be the associated quotient morphisms of tori. As diagonalizable group $k$-schemes, we have morphisms
\[
\begin{tikzcd}
1\ar[r]& \pi_0(\rH)\ar[r]\ar[d]& \cala_{\tilde\X}\ar[r]\ar[d]&\cala_{\X}\ar[r]\ar[d]&1\\
1\ar[r]& \widetilde{\out}(\pi_0(\rH))\ar[r]& \Out_{\tilde\X}(\rH^\circ)\ar[r]&\Out_{\X}(\rH)\ar[r]&1,\\
\end{tikzcd}
\]
with $\Out_{\tilde\X}(\rH^\circ) \simeq \prod_i' \Res_{k_i/k}(\Out_{\X_i'}(\rH_i'))\simeq\prod_i'\Res_{k_i/k}(\mu_2).$
We now impose  the following assumption.
\begin{Assumption}
    We assume that $\widetilde{\out}(\pi_0(\rH))\simeq \pi_0(\rH)$ and that $$\#[\pi_0(\rH)(\kbar)] = \#[\pi_0(\rH)(k)]<\infty.$$
\end{Assumption}
This is a technical assumption that avoids certain artificial cases where $\rH = \rH^\circ \cdot Z$, for certain discrete subgroups $Z\subset Z(\G)$, such as $\Sp_{2n}\cdot \mu_{n}\subset \SL_{2n}$. 

Passing to the algebraic closure, to each $\ga\in \De_{\tilde\X}^{dist}$, we have a character $\ep_\ga:\Out_{\tilde\X}(\rH^\circ)\lra \mu_2$.

%\textcolor{red}{What about this: take the outer data of $\tilde\X$: if a particular distinguished root }
\quash{
Fixing distinguished morphisms $\varphi_{\tilde\X}$ and $\varphi_{\X}$, the natural dominant map $\tilde\X\to \X$ functorially induces a morphism
\[
\begin{tikzcd}
\check{\G}_{{\X}}\ar[dr,swap,"\varphi_{{\X}}"]\ar[rr,"\varphi^\circ"]&&\check{\G}_{\tilde{\X}}\ar[dl,"\varphi_{\tilde{\X}}"]\\
    &\check{\G},&
\end{tikzcd}
\]
which extends the map of tori $\check{\rA}_\X\lra \check{\rA}_{\tilde\X}$ coming from the inclusion
\[
\fX^{sv}=X^\ast(\Ax)^{sv}\subset \X^\ast(\rA_{\tilde\X})^{sv}=\mathring{\fX}^{sv}.
\]
This morphism is an isogeny of complex reductive groups; indeed, the two groups have the same simple roots by  Lemma \ref{Lem: root lattice} and the kernel of $\varphi^\circ$ equals the kernel of the corresponding map of tori $\check{\rA}_\X\to \check{\rA}_{\tilde\X}$. That this is an isogeny follows from the finiteness of $\pi_0(\rH)\subset \Aut^\G(\tilde{\X})$ and \cite[Lemma 3.1.5]{Losev}. 
}
Recalling the notation from the previous section, let $\tilde\X_{sc} =\prod_i\X_i\times \prod_j\X_j$ and suppose that $\out_{\tilde\X} = \{\ep_i:\Ga_i\lra \{\pm1\}:i\in I_{\tilde{\X}}\}$ is the outer data of $\tilde\X$.
Recall now the relationship between distinguished roots and re-normalization%The SV-normalized lattice $\fX^{sv}=X^\ast(\Ax)^{sv}$ is obtained by adjoining certain roots obtained from halving spherical roots of type $N$, and this process results in the same outcome for both tori.
}
\part{The dual symmetric variety and endoscopic data}

\section{The dual group and dual symmetric variety}
We recall now the formalism of the dual group of a spherical variety and discuss corresponding notions of $L$-groups. In the case of symmetric varieties, we introduce the notion of the dual symmetric variety $\hat{\X}$ and study its properties. The key statements are Lemma \ref{Lem: split torus embedding} and Proposition \ref{Prop: quotient stack is enough} . % interpret our results from the previous section as giving rise to a natural representation of this $L$-group which will play a role in the endoscopic theory of Section \ref{Section: endoscopy defs}. %In Section \ref{Section: BZSV conj}, we will specialize to excellent symmetric varieties and interpret our results in terms of the dual Hamiltonian variety of Ben-Zvi, Sakellaridis, and Venkatesh.

\subsection{The dual group of a spherical variety}\label{Section: dual groups}
Let $\G$ be a connected reductive group over $k$. Let $\X=\rH\backslash\G$ be a spherical $\G$-variety and let $\Omega_\X=({\fX},{\De}_{\X}, \Omega^{(1)},\Omega^{(2)})$ be the homogeneous spherical datum determined by a choice of Borel pair $(\rA,\B)$. Passing to the normalized root system, Sakellaridis and Venkatesh show that $({\fX}^{sv},{\De}^{sv}_{\X},\check{\fX}^{sv},\check{\De}^{sv}_{\X})$ is a based root datum.
\begin{Def}
The dual group of the $\G$-variety $\X$ is the connected complex reductive group $\check{\G}_\X$ associated to the dual based root datum $(\check{\fX}^{sv},\check{\De}^{sv}_{\X},{\fX}^{sv},{\De}^{sv}_{\X})$.
\end{Def}

Now the surjection $\rA\lra \Ax$ (or more explicitly, the inclusion ${\fX}\subset X^\ast(\T)$) produces a canonical morphism $\check{\rA}_\X\lra \check{\rA}$. Considering the inclusions of lattices
\[
{\fX}\subset\fX^{sv}=\fX+\zz\De_\X^{sv}\subset X^\ast(\T).
\]
we obtain a sequence $$\check{\rA}_\X\overset{\nu_\rA}{\lra}\check{\rA}_\X \overset{\varphi_\rA}{\lra}\check{\rA},$$
where $\nu_\rA$ is a finite morphism. 

Equip the dual group ${\check{\G}}$ of $\G$ with a pinning $e_{\al^\vee}\in \fg_{\al^\vee}^\vee$ for each $\al\in \De$. For each $\sig\in {\De}_{\X}$, Knop defines in \cite{KnopFunctorial} a one-dimensional subspace $\fg_{\check{\sig}}^\vee$ of $\fg^\vee$ by
\begin{align}\label{eqn: root embedding}
    \fg_{\check{\sig}}^\vee=\begin{cases} \fg_{\check{\sig}}^\vee&:\sig\in R^+,\\
    [\fg_{\be^\vee}^\vee,e_{\de_1^\vee}-e_{\de_2^\vee}]&: \sig \text{ is of type $D_{n\geq 3}$},\\
     [\fg_{\be^\vee}^\vee,2e_{\de_1^\vee}-e_{\de_2^\vee}]&: \sig \text{ is of type $B_{3}''$},\\
      \cc(e_{\de_1^\vee}- e_{\de_2^\vee})&: \sig \text{ is of type $D_{2}$};
    \end{cases}
\end{align}
here $\be^\vee:=\ga_1^\vee-\de_1^\vee=\ga_2^\vee-\de_2^\vee$ when $\sig$ is of type $G$. We  remark that a choice is made in the case of type $D_2$ roots, following the convention of \cite{KnopFunctorial}. See also \cite[Section 4.8] {BZSV} for discussion. \quash{The ambiguity of sign in the type $D_2$ case will be discussed below, but we assume that some assignment has been given, which may be recorded as a function
\begin{align}\label{eqn: non-standard signs}
     \De_{\X,D_2}&\overset{\varepsilon}{\lra} \{\pm1\}\\
    \sig&\longmapsto \varepsilon(\sig),\nonumber
\end{align}
where $\De_{\X,D_2}\subset \De_\X$ is the subset of spherical roots of type $D_2$.}

\begin{Def}
A homomorphism $\varphi_\X: \check{\G}_\X\to {\check{\G}}$ is \emph{distinguished} if  $\varphi_X|_{\check{\rA}_\X}=\varphi_A$. and $\varphi(\fg_{\X,\check{\sig}}^\vee) = \fg_{\check{\sig}}^\vee$ as defined in \eqref{eqn: root embedding}.
\end{Def}

\begin{Thm}\cite[Theorem 9.7]{KnopSchalke}\label{Thm: dual group map}
Suppose that $\X$ is a spherical $\G$-variety. Then distinguished morphisms $\varphi_\X$ exist. Moreover, they are unique up to $\check{\rA}_\X$-conjugacy.\footnote{In fact, one needs to restrict to a certain subtorus in the presence of certain types of spherical root; see \cite{KnopSchalke} for details.} The image 
\[
\check{\G}^\ast_{\X}:=\varphi_\X({\check{\G}}_\X)
\]
is a well-defined subgroup of ${\check{\G}}$ independent of $\varphi_\X$.
\end{Thm}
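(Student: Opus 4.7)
The plan is to construct $\varphi_\X$ by assembling it from its torus part and from carefully chosen root $\SL_2$-triples, then verify existence amounts to checking that the Serre relations of $\check{\G}_\X$ are realized inside $\check{\G}$. First I would fix the pinning $\{e_{\check{\al}}\}_{\al\in\De}$ of $\check{\G}$ and observe that the torus map $\varphi_\rA:\check{\rA}_\X\to\check{\rA}$ is forced by the inclusion of character lattices $\fX^{sv}\subset X^\ast(\rA)$, so the only freedom in building $\varphi_\X$ lies in choosing nonzero root vectors $X_{\check\sig}\in\fg^\vee_{\check\sig}$ for each $\sig\in\De_\X^{sv}$, with $\fg^\vee_{\check\sig}$ prescribed by \eqref{eqn: root embedding}. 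Together with the chosen $\varphi_\rA$, each such triple $(X_{\check\sig},\check\sig,Y_{\check\sig})$ determines a map $\SL_2\to\check{\G}$ whose image meets $\check{\rA}$ along the cocharacter $\check\sig$; dually one can construct the negative root vectors, either canonically (by the $\SL_2$-structure inside $\check\G$, once $X_{\check\sig}$ and the torus map are fixed) or by imposing the analogous recipe on $\fg^\vee_{-\check\sig}$.

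The key step is to check that these data assemble into a homomorphism of algebraic groups $\varphi_\X:\check{\G}_\X\to\check{\G}$, i.e.\ that the Serre relations of the root datum $(\check{\fX}^{sv},\check{\De}^{sv}_\X,\fX^{sv},\De^{sv}_\X)$ are realized inside $\check{\G}$. This is the main obstacle, and I expect it to be a case-by-case verification indexed by the type of each spherical root: for roots of type $T$ there is nothing to do beyond matching coroots, while for roots of type $N$ and of type $G$ one must verify that the bracket constructions in \eqref{eqn: root embedding} indeed land in the correct root space and satisfy the right bracket relations with the neighbouring root vectors of $\check{\G}_\X$. The type $G$ case (subtypes $D_n$, $B_3''$, $D_2$) is the most delicate, since $\fg^\vee_{\check\sig}$ is defined as a commutator subspace and one must compute that the relevant ad-actions have the predicted weight; this is where Knop's bookkeeping on $|\sig|$ from Table~\ref{tab:associated roots} is essential.

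For uniqueness, suppose $\varphi_\X$ and $\varphi'_\X$ are two distinguished morphisms. They agree on $\check{\rA}_\X$ by definition of distinguishedness, so for each $\sig\in\De_\X^{sv}$ the two images of the positive $\sig$-root vector of $\check{\G}_\X$ lie in the same one-dimensional space $\fg^\vee_{\check\sig}$ and hence differ by a scalar $c_\sig\in\cc^\times$. Because the $\check\sig$ are linearly independent in $X_\ast(\check{\rA}_\X)$ modulo whatever torsion is allowed, there exists $t\in\check{\rA}_\X$ whose adjoint action rescales each $\sig$-root vector by $c_\sig^{-1}$, so conjugation by $\varphi_\rA(t)$ carries $\varphi'_\X$ to $\varphi_\X$ (modulo the noted $D_2$ sign ambiguity, which is absorbed in the conventions of \eqref{eqn: root embedding}).

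Finally, well-definedness of $\check{\G}^\ast_\X$ is immediate from uniqueness: any two distinguished morphisms differ by conjugation by an element of $\check{\rA}_\X\subset\varphi_\X(\check{\G}_\X)$, and such conjugation preserves the image subgroup. Hence $\check{\G}^\ast_\X\subset\check{\G}$ depends only on $\X$ (through its homogeneous spherical datum) and on the ambient pinning of $\check{\G}$, not on the choice of $\varphi_\X$.
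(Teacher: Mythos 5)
Your proposal takes a genuinely different route than the paper, and the route you take has a gap at the crux of the existence argument. You plan to construct $\varphi_\X$ by fixing the torus map, choosing root vectors according to \eqref{eqn: root embedding}, and then verifying directly that the Serre relations of $\check{\G}_\X$ are realized inside $\check{\G}$, describing this as ``a case-by-case verification indexed by the type of each spherical root.'' But the Serre relations are indexed by \emph{pairs} of adjacent simple spherical roots, not by individual ones, so the case analysis is considerably more involved than your sketch acknowledges; in particular, for two adjacent spherical roots both of type $G$ one would have to compute iterated brackets of the commutator subspaces $[\fg^\vee_{\be^\vee},\,\cdot\,]$ from \eqref{eqn: root embedding}, which is exactly the combinatorics that makes a direct check unpleasant. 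You never carry this out, so the proposal stops precisely where the real work of the existence proof begins. (Your uniqueness argument, by contrast, is essentially fine, modulo the footnoted caveat about restricting to a subtorus when certain spherical-root types prevent the needed rescaling; and your well-definedness of $\check{\G}^\ast_\X$ does follow once uniqueness is in place.)

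The paper, following Knop--Schalke, circumvents the Serre-relation check by interposing the \emph{associated group} $\hat{\G}_\X$. Their Theorem 7.3 shows that the associated coroots $\hat{\De}_\X$ generate an \emph{additively-closed} based root sub-datum of $(X_\ast(\rA),\check{\Phi})$; additive closure is precisely the hypothesis that guarantees a reductive subgroup $\hat{\G}_\X\subset\check{\G}$ containing $\check{\rA}$ with that root datum. One then produces $\varphi_\X:\check{\G}_\X\to\hat{\G}_\X$ by \emph{folding}: the passage from a type-$G$ spherical root $\sigma$ to its pair of associated coroots $\{\check{\ga}_1,\check{\ga}_2\}$ is the fixed-point construction for a diagram automorphism of $\hat{\G}_\X$, and the folding machinery delivers a homomorphism with all its relations built in, with no case analysis on pairs. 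Beyond being cleaner, the intermediate object $\hat{\G}_\X$ is not optional for this paper: it is the ambient group for the dual symmetric variety $\hat{\X}=\check{\G}^\ast_\X\backslash\hat{\G}_\X$ that drives the rest of the development, which is why the sketch is at pains to route the proof through it.
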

\begin{proof}
We recall a sketch of the proof, as its construction is important to us. Recall the set of \emph{associated roots} $\hat{\Sigma}$ and coroots $\hat{\De}_\X$. This construction gives rise \cite[Theorem 7.3]{KnopSchalke} to an additively-closed based root sub-datum 
\[
(X_\ast(\rA),\hat{\De}_\X, X^\ast(\rA), \hat{\Sigma})\subset (X_\ast(\rA),\check{\Phi}, X^\ast(\rA), \Phi)
\]
Thus, there exists a unique reductive subgroup $\hat{\G}_\X\subset {\check{\G}}$ corresponding to this subroot datum; this is called the associated group. The passage from spherical roots $\sig\in {\De}_{\X}$ to the associated coroots $\{\check{\ga}_1,\check{\ga}_2\}$ corresponds to the notion of \emph{folding}, and hence gives rise to a morphism 
\[
\varphi_\X:\check{\G}_\X\lra \hat{\G}_\X\subset {\check{\G}}
\]
with the desired properties.
\end{proof}

Knop and Schalke show that the kernel of the homomorphism $\varphi_X: \check{\G}_\X\to {\check{\G}}$ agrees with the kernel of the map $\varphi_{\rA}:\check{\rA}_{\X}\lra \check{\rA}$. This isolates a useful class of spherical varieties, as named in \cite{SakICM}. 
\begin{Def}\label{Def: excellent}
    We say that a spherical variety $\X$ is \textbf{excellent} if it is affine, homogeneous, and the kernel of $\rA\lra \Ax$ is connected.
\end{Def}
We note that when $\X$ is excellent, the cotangent bundle $T^\ast\X$ is hyperspherical by Proposition 3.7.4 of \cite{BZSV}. This is stronger than having no spherical roots of type $N$, the simplest counterexample being $\X=\Gm\backslash\SL_2$.

We remark that Lemma \ref{Lem: simply connected} implies the following result. %\textcolor{red}{this may be wrong.}
\begin{Cor}
    Suppose that $\G$ is connected reductive group over $k$ such that $\G_{der}$ is simply connected. If $\X=\G^\theta\backslash \G$ is a symmetric $\G$-variety, it is excellent if  and only if the pair $(\G,\G^{\theta})$ is simply connected in the sense of \cite[Section 4]{Lesliedescent}.
\end{Cor}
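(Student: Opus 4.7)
The plan is to check the three conditions of Definition \ref{Def: excellent} in turn and reduce the nontrivial content to the cited Lemma \ref{Lem: simply connected}. Homogeneity is built into the hypothesis $\X = \G^\theta \backslash \G$. For affineness, the assumption that $\G_{der}$ is simply connected (together with our standing hypothesis that $\mathrm{char}(k)$ is good for $\G$) guarantees via Steinberg's theorem that $\G^\theta$ is connected and reductive, so $\X = \G^\theta \backslash \G$ is affine by Matsushima's criterion. Hence both ``affine'' and ``homogeneous'' are automatic and the only remaining condition is the connectedness of the kernel of $\rA \to \Ax$.

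I would then compute this kernel explicitly. Fix a $(\theta,k)$-admissible maximal torus $\rA \subset \G$. By the description of the spherical weight lattice for $\X = \G^\theta\backslash\G$ recalled in Section \ref{Section: indices} (and carried out in detail in Appendix \ref{Section: norms sym}), one has
\[
X^\ast(\Ax) \;=\; (1-\theta)X^\ast(\rA) \;\subset\; X^\ast(\rA).
\]
Consequently the quotient morphism $\rA \to \Ax$ is dual to this inclusion, and its kernel is connected if and only if the cokernel
\[
X^\ast(\rA)\big/ (1-\theta) X^\ast(\rA)
\]
is torsion-free.

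The main step is to identify this torsion-freeness with the ``simply connected'' condition on the pair $(\G,\G^\theta)$ from \cite[Section 4]{Lesliedescent}. This is precisely what Lemma \ref{Lem: simply connected} provides: that lemma gives an intrinsic characterization of simply-connectedness of a symmetric pair in terms of the cokernel of $1-\theta$ on the cocharacter lattice of a $(\theta,k)$-admissible torus (equivalently, in terms of the component group of a relevant subtorus of $\G^\theta$). Combining the equivalence furnished by the lemma with the computation of the previous paragraph completes the proof.

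The only real obstacle is ensuring that the characterization in Lemma \ref{Lem: simply connected} is stated in a form directly matching $X^\ast(\rA)/(1-\theta)X^\ast(\rA)$ being torsion-free; once that is in place the corollary is a one-line consequence. In particular the argument requires no case analysis and makes no appeal to the structural results of Section \ref{Section: color autos}.
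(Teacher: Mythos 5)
Your approach matches the paper's: affineness and homogeneity are automatic, and the real content is the equivalence between connectedness of the kernel of $\rA\to\Ax$ and geometric connectedness of all semi-simple stabilizers, which is exactly Lemma \ref{Lem: simply connected} combined with the definition of ``simply connected'' pair from \cite{Lesliedescent} (namely, that every $\rH_x$ with $x\in\X^{ss}$ is geometrically connected). Your translation of ``connected kernel'' into ``$X^\ast(\rA)/(1-\theta)X^\ast(\rA)$ is torsion-free'' is correct and is precisely how the Lemma is phrased on the lattice side.

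One factual error is worth flagging, though it is harmless here. You claim Steinberg's connectedness theorem guarantees $\G^\theta$ is connected because $\G_{der}$ is simply connected; this is false. Steinberg requires the \emph{ambient} group to be semisimple simply connected, and a reductive $\G$ with simply connected derived subgroup can easily have disconnected $\G^\theta$. The paper's own running example $\mathrm{O}_n\backslash\GL_n$ illustrates this: $\G=\GL_n$ has $\G_{der}=\SL_n$ simply connected, yet $\G^\theta=\mathrm{O}_n$ is disconnected (and indeed this $\X$ is not excellent, and the pair is not simply connected in the sense of \cite{Lesliedescent}). Fortunately, affineness of $\G^\theta\backslash\G$ needs only that $\G^\theta$ be reductive, which holds for any involution in good characteristic regardless of connectedness, so your conclusion stands; just drop the appeal to Steinberg.
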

In particular, the results of \emph{loc. cit.} provide tools for the topological study of excellent symmetric varieties.

%Let us assume that $\X=\rH\backslash\G$ is excellent.
We also record the following useful property of the dual group.
\begin{Prop}\cite[Theorem 1]{KnopFunctorial}
    Suppose that $\X$ and $\mathrm{Y}$ are spherical $\G$-varieties and assume that there exists a dominant morphism $\X\to \mathrm{Y}$. There exists a unique homomorphism with finite kernel $\check{\G}_{\mathrm{Y}}\to \check{\G}_{\X}$ which is compatible with the homomorphisms to $\check{\G}$.
\end{Prop}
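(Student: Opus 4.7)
The plan is to build the morphism $\check{\G}_{\mathrm{Y}} \to \check{\G}_{\X}$ by working first at the level of combinatorial spherical data and then invoking Theorem \ref{Thm: dual group map} to lift it to dual groups. First, I would use the dominant morphism $f: \X \to \mathrm{Y}$ to pull back $\B$-semiinvariant rational functions. Since $f$ is $\G$-equivariant and dominant, pullback along $f$ gives an injection $\kbar(\mathrm{Y})^{(\B)}/\kbar^{\times} \hookrightarrow \kbar(\X)^{(\B)}/\kbar^{\times}$, i.e., an inclusion of weight lattices $\fX_{\mathrm{Y}} \hookrightarrow \fX_{\X}$ sitting inside $X^{\ast}(\rA)$. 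Dually, this produces a surjection of canonical tori $\rA_{\X} \twoheadrightarrow \rA_{\mathrm{Y}}$, whose dual is a morphism $\check{\rA}_{\mathrm{Y}} \to \check{\rA}_{\X}$ with finite kernel, compatible by construction with the maps $\varphi_{\rA,\X}, \varphi_{\rA,\mathrm{Y}}$ to $\check{\rA}$.

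Next, I would verify the compatibility at the level of spherical roots. The valuation cone is functorial for dominant morphisms (cf.\ \cite[Lemma 3.1.5]{Losev} and its generalizations), so the image of $\mathcal{V}_{\mathrm{Y}}$ inside $\fa_{\X,\qq}$ under the dual map is precisely the intersection of $\mathcal{V}_{\X}$ with $\fa_{\mathrm{Y},\qq}$. This forces each spherical root $\ga \in \De_{\mathrm{Y}}$ to be a positive rational multiple of some spherical root of $\X$, and after passing to the normalized (SV) roots one obtains the inclusion $\De_{\mathrm{Y}}^{sv} \subset \De_{\X}^{sv}$ (compatibly with the identification of the corresponding lattices $\fX_{\mathrm{Y}}^{sv} \subset \fX_{\X}^{sv}$). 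The associated coroots $\check{\ga}$ and their refinements $\hat{\De}_{\mathrm{Y}}$ then match those of $\X$ under restriction, by inspecting the trichotomy $T/N/G$ and using that folding to produce type $G$ coroots is intrinsic to the spherical root (not the ambient variety). This yields an inclusion of based root data, hence a morphism $\check{\G}_{\mathrm{Y}} \to \check{\G}_{\X}$ of complex reductive groups with finite kernel (the kernel being that of $\check{\rA}_{\mathrm{Y}} \to \check{\rA}_{\X}$).

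For compatibility with distinguished morphisms, I would verify that the composition $\check{\G}_{\mathrm{Y}} \to \check{\G}_{\X} \xrightarrow{\varphi_{\X}} \check{\G}$ satisfies the defining properties of a distinguished morphism $\varphi_{\mathrm{Y}}$, namely agreement with $\varphi_{\rA,\mathrm{Y}}$ on the canonical torus and the correct images of the root subspaces $\fg_{\check{\sig}}^{\vee}$ for $\sig \in \De_{\mathrm{Y}}$. Since $\De_{\mathrm{Y}}^{sv} \subset \De_{\X}^{sv}$ and the formula \eqref{eqn: root embedding} depends only on the root type and the ambient pinning of $\check{\G}$, this is automatic. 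Uniqueness then follows from Theorem \ref{Thm: dual group map}: the distinguished morphism $\varphi_{\mathrm{Y}}$ is determined up to $\check{\rA}_{\mathrm{Y}}$-conjugacy, and since the image of $\check{\rA}_{\mathrm{Y}}$ lies in $\check{\rA}_{\X}$, the induced map $\check{\G}_{\mathrm{Y}} \to \check{\G}_{\X}$ is pinned down uniquely.

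The main obstacle is the compatibility of spherical roots under the dominant morphism, particularly for type $G$ (and the subtle type $D_2$ sign convention) roots: one must verify that the associated coroots $\{\check{\ga}_1, \check{\ga}_2\}$ chosen via the folding construction on $\mathrm{Y}$ coincide with those chosen for $\X$, which requires unpacking the structure of the rank-one boundary degenerations and checking that dominant maps preserve them. In practice this is where one appeals most heavily to Knop--Schalke's classification \cite{KnopSchalke} and to the functoriality results \cite{KnopFunctorial} underlying the cited theorem.
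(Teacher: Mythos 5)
The paper does not give a proof of this proposition: it is cited verbatim from Knop \cite{KnopFunctorial} and used as a black box throughout, so there is no internal argument to compare against. I will therefore assess your sketch purely on its own merits.

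Your opening move — pulling back $\B$-semiinvariants to get $\fX_{\mathrm{Y}} \hookrightarrow \fX_{\X}$ and hence a finite-kernel map $\check{\rA}_{\mathrm{Y}} \to \check{\rA}_{\X}$ — is correct and is indeed where one must start. But there is a genuine gap at the next step, and it hides the entire content of the theorem. First, the claim about valuation cones is geometrically confused: since $\fX_{\mathrm{Y}} \subset \fX_{\X}$, the induced map on the spaces $\fa_{\bullet,\qq} = \Hom(\fX_\bullet,\qq)$ is a \emph{surjection} $\fa_{\X,\qq} \twoheadrightarrow \fa_{\mathrm{Y},\qq}$, not an inclusion, so the phrase ``the intersection of $\mathcal{V}_{\X}$ with $\fa_{\mathrm{Y},\qq}$'' does not parse; the correct functorial statement (Knop, 1991) is that $\mathcal{V}_\X$ projects onto $\mathcal{V}_{\mathrm{Y}}$ under this quotient. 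Second, and more seriously, this alone does not yield your assertion that every $\ga \in \De_{\mathrm{Y}}$ is a positive rational multiple of a spherical root of $\X$. Surjectivity of the valuation-cone map only says that $\ga$, regarded as a linear functional on $\fa_{\X,\qq}$ via $\fX_{\mathrm{Y}} \subset \fX_{\X}$, is non-positive on $\mathcal{V}_\X$, i.e.\ a non-negative rational combination of elements of $\De_\X$ — not a single root. That $\De_{\mathrm{Y}}^{sv}$ actually consists of (normalizations of) roots in $\Phi_\X^{sv}$, so that $W_{\mathrm{Y}}$ sits inside $W_\X$ and the image of $\check{\G}_{\mathrm{Y}}$ is a reductive subgroup of $\check{\G}_\X$ generated by reflections, is precisely the content of \cite[Theorem 1]{KnopFunctorial}, and Knop's proof passes through the geometry of the cotangent bundle, the normalized moment map and the little Weyl group rather than through the combinatorics of the valuation cone alone. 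Your final paragraph correctly identifies this as the ``main obstacle'' — but it is not a loose end to be checked, it \emph{is} the proof, and your sketch silently assumes the conclusion in the middle step.
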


\begin{Rem}\label{Rem: z-extension on dual side}
    In particular, if $(\G,\rH)$ is a symmetric pair and $(\widetilde\G,\widetilde\rH)$ is a compatible $z$-extension, then
    \[
    \widetilde\X={\widetilde\rH}\backslash\widetilde\G\lra \rH\backslash\G=\X
    \]
    is a dominant morphism of $\widetilde\G$-varieties. Noting that there is a canonical exact sequence 
    \[
    1\lra\check{\G}\lra \check{\widetilde\G}\lra \check{N}\lra 1,
    \]
    one readily checks that diagram
    \[
    \begin{tikzcd}
        \check{\G}_{\X}\ar[r,"\varphi_{\X}"]\ar[d]&\check{\G}\ar[d]\\
        \check{\widetilde\G}_{\widetilde\X}\ar[r,"\varphi_{\widetilde\X}"]&\check{\widetilde\G}
    \end{tikzcd}
    \]
    commutes.  
\end{Rem}

The set of parabolic roots $\De_\X^p$ corresponds to a Levi subgroup $\rA\subset \mathrm{L}_\X\subset \G$ and a dual Levi subgroup $\check{\mathrm{L}}_\X\subset {\check{\G}}$. Our choice of a pinning of ${\check{\G}}$ induces one on $\check{\mathrm{L}}_\X$. In particular, the pinning determines a principal homomorphism $\iota_\X:\SL_2\lra \check{\mathrm{L}}_\X$. It was shown in \cite{KnopSchalke} that the images of $\varphi_\X$ and $\iota_\X$ commute with each other in ${\check{\G}}$. We thus obtain a morphism 
\begin{equation}\label{eqn: full dual map}
\xi_\X: \check{\G}_\X\times \SL_2\lra {\check{\G}}.
\end{equation}

%Given such a morphism, we may obtain an injective homomorphism
%\[
%\check{\G}_{\X}\times^{Z(\check{\G}_{\X})}\check{\mathrm{L}}_\X\hra %\check{\G}
%\]
%extending $\xi_{\X}.$

The results of \cite{KnopFunctorial, KnopSchalke} indicate how to extend this notion to incorporate the action of $\Ga$ on $\check{\G}_\X$. Viewed as a subgroup of $\check{\G}$, \cite[Section 10]{KnopSchalke} shows that there is a unique $\Ga$-action on $\hat{\G}_\X$ such that the inclusion $\hat{\G}_\X$ intertwines with the $L$-action on $\check{\G}$. We denote the corresponding semi-direct product by 
\[
{}^L\hat{\G}_{\X}\hra {}^L\G,
\]
though this is arguably inappropriate as the $\Ga$-action can fail to preserve the pinning on $\hat{\G}_\X$ in some cases. The additional requirement that $\varphi_{\X}$ preserve the embeddings \eqref{eqn: root embedding} forces a \emph{unique} action of $\Ga$ on $\check{\G}_{\X}$ with the property that the morphism $\xi_\X$ intertwines the action with the above action on $\hat{\G}_\X$. We thus tentatively define the Galois form of the $L$-group of $\X$ to be
\[
{}^L\X:=\check{\G}_{\X}\rtimes \Ga
\]
with respect to this unique action. The map $\xi_\X$ thus extends uniquely to a morphism
\begin{equation}\label{eqn: spherical L-homomorphism}
    {}^L\xi_{\X}: {}^L\X\times \SL_2(\cc)\lra {}^L\G.
\end{equation}
\begin{Rem}
As noted above, Knop fixes the choice of image for $\fg_{\check{\sig}}$ for all $\sig\in \De_{\X}$ of type $D_2$ in \cite{KnopFunctorial}, which we adopt here. It is worth mentioning that other choices relate to replacing $\rH$-periods with $(\rH,\chi)$-periods for certain characters $\chi$ of $\rH$; see \cite{prasad2015relative} and \cite{BPgalois}. % In \eqref{eqn: root embedding}, the case of roots of type $D_2$ is the only additional piece of information from the prior work of \cite{KnopSchalke} on the notion of an $L$-group. There it is shown that compatibility determines the Galois action on all other types, but that there is a character choice in this final setting, here reflected in the choice to map into the line $\cc(e_{\de_1^\vee}-e_{\de_2^\vee})$ rather than $\cc(e_{\de_1^\vee}+e_{\de_2^\vee})$. 
%It is possible that different normalizations of the type $D_2$ case is natural in certain settings, as is hinted at in \cite{prasad2015relative}. For example, when $E/k$ is a quadratic extension of local fields then harmonic-analytic results on (stable and unstable) base change indicate that the two varieties 
%\[
%\Res_{E/k}(\GL_2)/\GL_2\text{ and }\Res_{E/k}(\GL_2)/\U_2
%\]
%require distinct sign conditions. More specifically, the first variety ought to be equipped with the choice $\varepsilon(\sig)=-1$, while the variety $\Res_{E/k}\GL_2/\U_2$ ought to be equipped with the choice $\varepsilon(\sig)=1$.
\end{Rem}
\quash{
%Nevertheless, these distinctions will not impact our applications.
Our tentative perspective is that the following axioms should ``define'' the $L$-group of $\X$.
\begin{Axiom}
The $L$-group ${}^L{\X}$ is equipped with the action of the Weil group in the unique way so as to satisfy the following axioms.
\begin{enumerate}
    \item The $L$-group ${}^L{\X}$ of a symmetric variety $\X= \rH\backslash\G$ respects the passage to boundary degenerations.
        \item The $L$-group ${}^L{\X}$ of a Galois symmetric variety $\X= \Res_{E/k}(\rH_E)/\rH$ is $\calh_1^{op}$.
\end{enumerate}
\end{Axiom}
}
\subsection{The dual symmetric variety}\label{Sec: dual symm space}

Note that $\hat{\De}_\X\sqcup\check{\De}_\X^p\subset \check{\Phi}^+$. There is a natural involution ${\vartheta}:\hat{\De}_\X\to \hat{\De}_\X$ determined by the decomposition $\hat{\De}_{\X}=\bigsqcup_{\al\in\De_\X}\hat{\al}$, where $\hat{\al}$ denotes the set of associated coroots. Suppose now that $\X$ is a symmetric variety associated with an involution $\theta$. The involution on the set $\hat{\De}_\X$ is compatible with an involution on $\hat{\rA}$.% arises naturally also when $\X$ is a symmetric variety. As always, let $\theta$ be an involution such that $(\G^\theta)^\circ\subset \rH\subset N_\G(\G^\theta)$, and $\X=\rH\backslash\G$.  

\begin{Lem}\label{Lem: dual involution on torus}
 Suppose that $\X$ is a symmetric $\G$-variety associated to the involution $\theta$. Assume that $(\rA,\B)$ are a $(\theta,k)$-admissible pair. Then the involution ${\vartheta}$ of $\hat{\De}_\X$ agrees with the restriction of the involution 
\[
-\theta: X_\ast(\rA)\lra X_\ast(\rA).
\]
\end{Lem}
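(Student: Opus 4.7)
The plan is to verify the equality $\vartheta = -\check{\theta}|_{\hat{\De}_\X}$ by reducing to a case analysis according to the three types of spherical roots (type $T$, type $N$, and type $G$). The crucial preliminary observation is that for a $(\theta,k)$-admissible Borel pair $(\rA,\B)$, the Borel $\B$ is maximally $\theta$-split; consequently $\De^p_\X = \De_\X \cap \Phi^\theta = \{\al \in \De : \theta(\al)=\al\}$, and for every $\al \in \De \setminus \De^p_\X$ one has $\theta(\al) \in -\zz_{\geq 0}\De$. From the description of $\fX$ for symmetric varieties recalled in Appendix \ref{Section: norms sym} (namely $\fX \subset \{\lam - \theta(\lam) : \lam \in X^\ast(\rA)\}$), this immediately gives that every $\sig \in \De_\X \subset \fX$ satisfies $\theta(\sig) = -\sig$, equivalently $\check{\theta}$ acts as $-\mathrm{id}$ on $X_\ast(\Ax)$.

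Next, I would handle each type in turn using the explicit description of $\hat{\De}_\X$ (in particular Table~\ref{tab:associated roots}). For a spherical root $\ga \in \De_\X$ of type $T$ or $N$, the normalization $\ga^{sv}$ is a single positive root $\al \in \Phi^+$ with $\theta(\al)=-\al$, the associated coroot is $\check{\al}$, and $\vartheta$ fixes $\check{\al}$; clearly $-\check{\theta}(\check{\al}) = \check{\al}$, as required. For a spherical root $\ga$ of type $G$, we have $\ga^{sv} = \ga_1 + \ga_2$ with $\ga_1,\ga_2 \in \Phi^+$ strongly orthogonal, and $\vartheta$ swaps $\check{\ga}_1$ and $\check{\ga}_2$. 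Applying $\theta$ to $\ga^{sv}$ gives $\theta(\ga_1) + \theta(\ga_2) = -(\ga_1+\ga_2)$, and since $\ga_1,\ga_2 \notin \De^p_\X$ (otherwise $\ga$ would lie in the parabolic root system), the maximal $\theta$-splitness of $\B$ forces $\theta(\ga_i) \in -\Phi^+$. Combined with the strong orthogonality of $\{\ga_1,\ga_2\}$ (which ensures uniqueness of the pair up to order inside the relevant rank-$1$ boundary degeneration of $\X$), this forces $\theta(\ga_1) = -\ga_2$ and $\theta(\ga_2) = -\ga_1$. Dualizing, $-\check{\theta}(\check{\ga}_1) = \check{\ga}_2$, matching $\vartheta$.

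The only step requiring care is the type $G$ case, since in principle one could imagine $\theta$ permuting the two roots within a non-canonical pair, and one must use the precise definition of the associated coroots from Table~\ref{tab:associated roots} together with the constraint singling out $\{\check{\ga}_1,\check{\ga}_2\}$ (namely $\check{\ga}_1 - \check{\ga}_2 = \check{\de}_1 - \check{\de}_2$) to pin down the correct pairing. This can be checked uniformly from the rank-$1$ boundary degeneration $\X_\ga$: its underlying root system is of type $D_n$ or $B_3$, and in each case the pair $\{\ga_1,\ga_2\}$ is the unique $\theta$-swapped pair of positive roots supported on $|\ga|$.

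Finally, since $\hat{\De}_\X$ is exhausted by roots of these three types by definition, the agreement of $\vartheta$ with $-\check{\theta}$ on each summand of $\hat{\De}_\X = \bigsqcup_{\ga \in \De_\X} \hat{\ga}$ yields the lemma.
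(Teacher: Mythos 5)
Your proposal follows the paper's approach: the paper simply cites Lemma~\ref{Lem: involution on assoc} (which handles the type~$G$ case, the types~$T$ and $N$ being immediate as you observe), and that lemma is proved by exactly the mechanism you describe — using $\theta(\ga^{sv})=-\ga^{sv}$ together with the defining constraint $\check{\ga}_1-\check{\ga}_2=\check{\de}_1-\check{\de}_2$ and a short case analysis for $D_2$ and $D_{n\ge3}$. Two small corrections are worth making. First, your claim that ``strong orthogonality ensures uniqueness of the pair up to order'' is not literally correct: in $D_n$ the root $\ga^{sv}=2e_1$ admits a decomposition $(e_1-e_j)+(e_1+e_j)$ for \emph{every} $j>1$, so strong orthogonality alone does not pin down $\{\ga_1,\ga_2\}$; it is precisely the constraint $\check{\ga}_1-\check{\ga}_2=\check{\de}_1-\check{\de}_2$ that does, as you yourself note in the next sentence. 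Second, your justification that $\theta(\ga_i)\in-\Phi^+$ appeals to ``$\ga_i\notin\De^p_\X$,'' but the $\ga_i$ are in general non-simple positive roots, so the correct appeal is to condition~\eqref{Theta bases}: one should check directly that $\theta(\ga_i)\ne\ga_i$ (which follows because $\ga_i\in\Phi_0(\theta)$ would force $\ga^{sv}-\ga_i\in\Phi_0(\theta)$ by the relation $\theta(\ga^{sv})=-\ga^{sv}$, contradicting $\ga^{sv}\notin\zz\De^p_\X$) and then invoke the $\theta$-basis property. With these adjustments, the argument matches the paper's: the paper's version of the final step applies $\check{\theta}$ to the constraint and computes $\check{\theta}(\check{\de}_1-\check{\de}_2)$ explicitly, whereas you phrase the same verification as ``check that the pair is $\theta$-swapped''; both reduce to the same short inspection of the $D_2$, $D_{n\ge3}$, and $B_3$ configurations in Table~\ref{tab:associated roots}.
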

This follows from Lemma \ref{Lem: involution on assoc}.
Set $\check{\rA}= X^{\ast}(\rA)\otimes_{\zz}\cc^\times$ for the complex dual torus. If $\check{\theta}:\check{\rA}\lra \check{\rA}$ denotes the involution induced by the action of $\theta$ on $X^\ast(\rA)$, then ${-\theta}$ induces the involution ${\vartheta}(t) = \theta(t)^{-1}$ on $\check{\rA}$.

%\subsubsection{The dual involution}
%When $\X$ is not symmetric, it is unclear how to produce an involution on the associated group $\hat{\G}_\X$: the involution on $\check{\De}_{\X}$ does not come equipped with a natural extension to the lattices $X^\ast(\rA)$ and $X_\ast(\rA)$. We therefore restrict to the symmetric case and assume that $\theta$ is an involutive automorphism of $\G$ with  $(\G^\theta)^\circ\subset \rH\subset N_\G(\G^\theta)$. %A natural question is whether this involution lifts to one on $\hat{\G}_{\X}$. This is partially resolved in \cite{KnopSchalke}.

\begin{Prop}\label{Prop: dual involution}
Suppose that $\X=\rH\backslash\G$ is a symmetric variety associated to $\theta$ and let $\varphi_{\X}:\check{\G}_{\X}\lra \hat{\G}_{\X}$ be a distinguished morphism. There exists an involution $\vartheta$ of $\hat{\G}_{\X}$ extending the involution on $\check{\rA}$ such that %an involution \[
%\check{\theta}_{ad}:\hat{\G}_{\X,ad}\lra \hat{\G}_{\X,ad}
%\]
\[\check{\G}^\ast_{\X}=\varphi_{\X}(\check{\G}_{\X})=(\hat{\G}_{\X})^{\vartheta,\circ}.\]
It commutes with the given action of $\Ga$ on $\hat{\G}_\X$. In particular, the quotient $\check{\G}_{\X}^\ast\backslash\hat{\G}_{\X}$ is a complex symmetric variety equipped with a $\Ga$-action.
\end{Prop}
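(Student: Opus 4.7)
The plan is to define $\vartheta$ first on the torus $\hat{\rA}=\check{\rA}$ by setting $\vartheta(t)=\check{\theta}(t)^{-1}$, extend it to an involution of $\hat{\G}_{\X}$ using a suitable pinning, and then identify the neutral component of the fixed locus with $\check{\G}_{\X}^{\ast}=\varphi_{\X}(\check{\G}_{\X})$ via a folding argument on root data.

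First, by Lemma \ref{Lem: dual involution on torus} the involution $-\check{\theta}$ on $X_{\ast}(\rA)$ restricts to the involution on $\hat{\De}_{\X}$ coming from the partition $\hat{\De}_{\X}=\bigsqcup_{\sig\in\De_{\X}}\hat{\sig}$: singletons $\{\check{\ga}\}$ for $\sig$ of type $T$ or $N$ (where $\theta(\ga)=-\ga$ forces $\vartheta(\check{\ga})=\check{\ga}$), and pairs $\{\check{\ga}_{1},\check{\ga}_{2}\}$ that get swapped for $\sig$ of type $G$. Consequently $\vartheta_{\rA}:=(-\check\theta)$ preserves the associated root datum $(X_{\ast}(\rA),\hat{\De}_{\X},X^{\ast}(\rA),\hat{\Sigma}_{\X})$ of $\hat{\G}_{\X}$ and acts on $\hat{\De}_{\X}$ as an order-two permutation.

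Second, I would choose a pinning $\{e_{\check{\al}}\}_{\check{\al}\in\hat{\De}_{\X}}$ of $\hat{\G}_{\X}$ compatible with the one-dimensional subspaces $\fg_{\check{\sig}}^{\vee}$ fixed by Knop in \eqref{eqn: root embedding}; concretely, for a type-$G$ orbit $\{\check{\ga}_{1},\check{\ga}_{2}\}$ one takes $e_{\check{\ga}_{1}},e_{\check{\ga}_{2}}$ chosen so that $e_{\check{\ga}_{1}}\pm e_{\check{\ga}_{2}}$ (with the appropriate sign from \eqref{eqn: root embedding}) spans $\fg_{\check{\sig}}^{\vee}$, and for type $T$ or $N$ one takes the pinning vector already specified. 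Since the diagram automorphism of $\hat{\G}_{\X}$ induced by $\vartheta_{\rA}$ is an involution of $\hat{\De}_{\X}$, the isomorphism theorem for pinned reductive groups (applied over $\cc$) extends $\vartheta_{\rA}$ uniquely to an involution $\vartheta$ of $\hat{\G}_{\X}$ permuting the pinning vectors according to the orbit structure.

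Third, to identify $(\hat{\G}_{\X})^{\vartheta,\circ}$ with $\varphi_{\X}(\check{\G}_{\X})$, I would match root data. The folding of the based root datum of $\hat{\G}_{\X}$ by $\vartheta$ produces a based root datum whose simple coroots are, orbit by orbit: the unchanged $\check{\ga}$ for type $T$ or $N$ (up to the normalization fixing type-$N$ coroots to lie in $X^{\ast}(\Ax)^{sv}$), and the restriction of either $\check{\ga}_{i}$ to $\Y_{0}(\theta)=\check{\fX}^{sv}$ for type $G$ orbits; this is exactly the description of the normalized coroots $\check{\De}_{\X}^{sv}$. Dually, the character lattice of $(\hat{\G}_{\X})^{\vartheta,\circ}$ coincides with $\fX^{sv}$ and the simple roots with $\De_{\X}^{sv}$. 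Thus $(\hat{\G}_{\X})^{\vartheta,\circ}$ has the based root datum of $\check{\G}_{\X}$. Moreover, the pinning choices above were arranged so that the image of $\varphi_{\X}$, which is characterized precisely by sending $\fg_{\X,\check{\sig}}^{\vee}$ to $\fg_{\check{\sig}}^{\vee}$, lands in $(\hat{\G}_{\X})^{\vartheta}$ (each $\fg_{\check{\sig}}^{\vee}\subset\hat{\G}_{\X}$ was chosen in the $\vartheta$-fixed part), giving $\varphi_{\X}(\check{\G}_{\X})\subset(\hat{\G}_{\X})^{\vartheta,\circ}$; equality follows by comparing root data.

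Fourth, for Galois equivariance: the $\Ga$-action on $\hat{\G}_{\X}$ was fixed in Section \ref{Section: dual groups} so as to intertwine with the $L$-action on $\check{\G}$, and both this action and $\vartheta$ are defined via $(\rA,\B,\{x_{\al}\},\theta)$ with $\theta$ a $k$-rational involution, so they commute on $\check{\rA}$ by direct inspection and on simple root spaces because $\Ga$ preserves the orbit decomposition of $\hat{\De}_{\X}$ under $\vartheta$ (this uses that $\theta$ commutes with the $\ast$-action, cf. Lemma \ref{Lem: theta basis}). Hence $\vartheta$ descends to give a $\Ga$-equivariant complex symmetric variety $\hat{\X}=\check{\G}_{\X}^{\ast}\backslash\hat{\G}_{\X}$.

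The main obstacle is step three: tracking signs consistently in the type-$G$ case (especially for $D_{2}$ spherical roots, where \eqref{eqn: root embedding} itself involves a choice) so that the fixed-point subgroup of the pinned involution $\vartheta$ equals $\varphi_{\X}(\check{\G}_{\X})$ on the nose rather than a twist of it. This amounts to checking that Knop's convention for $\fg_{\check{\sig}}^{\vee}$ is the $+1$ eigenspace of $\vartheta$ acting on $\fg_{\check{\ga}_{1}}^{\vee}\oplus\fg_{\check{\ga}_{2}}^{\vee}$, which I would verify case by case using Table \ref{tab:associated roots}.
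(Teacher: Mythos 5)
Your approach is genuinely different from the paper's. The paper does not build $\vartheta$ from a pinning of $\hat{\G}_\X$: it cites Corollary~7.9 of \cite{KnopSchalke}, whose folding construction of $\varphi_{\X}$ already \emph{produces} the involution $\check{\theta}_{ad}$ on the adjoint group $\hat{\G}_{\X,ad}$ together with the identity $\check{\G}^\ast_{\X,ad}=[\hat{\G}_{\X,ad}]^{\check{\theta}_{ad},\circ}$. The remaining work in the paper is then a lifting argument: Steinberg's theorem \cite[9.16]{Steinberg} lifts the involution to $\hat{\G}_{\X,sc}$, and the decomposition $\hat{\G}_\X\simeq \hat{\G}_{\X,sc}\times^{\hat{Z}_{sc}}Z(\hat{\G}_\X)$ reduces existence of $\vartheta$ to a compatibility check on $\hat{Z}_{sc}$, which is carried out by a weight-lattice diagram chase using Lemma~\ref{Lem: dual involution on torus}. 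You instead try to construct $\vartheta$ directly as a pinned automorphism of $\hat{\G}_\X$ and identify its fixed subgroup by root-datum comparison.

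The obstacle you flag in your final paragraph is a genuine gap, and a somewhat larger one than you suggest. A pinned automorphism of $\hat{\G}_\X$ sending $e_{\check{\ga}_1}\mapsto e_{\check{\ga}_2}$ has $+1$ eigenspace spanned by $e_{\check{\ga}_1}+e_{\check{\ga}_2}$, whereas for a $D_2$ root Knop's convention fixes $\fg_{\check{\sig}}^\vee=\cc(e_{\de_1^\vee}-e_{\de_2^\vee})$, a \emph{difference}; with the pinning inherited from $\check{\G}$ this subspace sits in the $-1$ eigenspace, so $\varphi_\X(\check{\G}_\X)\not\subset(\hat{\G}_\X)^\vartheta$. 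To repair this you must either modify the pinning of $\hat{\G}_\X$ (negating one vector per swapped pair) or compose the diagram automorphism with $\Ad$ by a $2$-torsion element of $\check{\rA}$; in the $D_{n\ge3}$ and $B_3''$ cases you must also unwind the structure constants in the bracket $[\fg_{\be^\vee}^\vee,\,e_{\de_1^\vee}-e_{\de_2^\vee}]$. Either fix then undermines your Step 4 as written: the $\Ga$-action on $\hat{\G}_\X$ in Section~\ref{Section: dual groups} is normalized relative to the pinning inherited from $\check{\G}$, so after altering the pinning (or twisting $\vartheta$ by a torus element) you must re-verify that $\Ga$ preserves the new data, which is no longer a formal consequence of Lemma~\ref{Lem: theta basis}. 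Your ``case-by-case verification via Table~\ref{tab:associated roots}'' is deferred precisely where this bookkeeping lives; without it the proof is incomplete. The paper's route avoids the sign problem entirely because Knop--Schalke have already constructed the adjoint-level involution with the correct fixed set, and the lift to $\hat{\G}_\X$ is canonical.
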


\begin{Rem}
In the context of the affine Grassmannian for real groups, a similar result is discussed in \cite[Section 10.7]{NadlerReal}, with the translation that $\hat{\G}_{\X}=L_1$.
\end{Rem}
\begin{proof}
Corollary 7.9 of \cite{KnopSchalke} implies the result on the level of adjoint groups. Indeed, the morphism $\varphi_{\X}$ is there constructed via the process of ``folding''  of the root system of $\hat{\G}_{\X}$, so that there exists an involution $\check{\theta}_{ad}$ on the adjoint group $\hat{\G}_{\X,ad}$. 
%%%%%%%%%%%%%%%%%%%%%%%%%%%%%%%%%%%%%%%%%%%%%%%%%%%%%%%%%%%%%%%%%%%%%%%%%%%%%%%%%%%%%%%%%%%%%%%%%%%%%%%%%%%%%%%%%%%%%%%%%%%%%%%%%%%%%%%%%%%%%%%%%%%%%%%%%%%%%%%%%%%%%%%%%%%%%%%%%%%%%%%%%%%%%%%%%%%%%%%%%%%%%%%%%%%%%%%%%%%%%%%%%%%%%%%%%%%%%%%%%%%%%%%%%%%%%%%%%%%%%%%%%%%%%%%%%%%%%%%%%%%%%%%%%%%%%%%%%%%%%%%%%%%%%%%%%%%%%%%%%%%%%%%%%%%%%%%%%%%%%%%%%%%%%%%%%%%%%%%%%%%%%%%%%%%%%%%%%%%%%%%%%%%%%%%%%%%%%%%%%%%%%%%%%%%%%%%%%%%%%%%%%%%%%%%%%%%%%%%%%%%%%%%%%%%%%%%%%%%%%%%%%%%%%%%%%%%%%%%%%%%%%%%%%%%%%%%%%%%%%%%%%%%%%%%%%%%%%%%%%%%%%%%%%%%%%%%%%%%%%%%%%%%%%%%%%%%%%%%%%%%%%%%%%%%%%%%%%%%%%%%%%%%%%%%%%%%%%%%%%%%%%%%%%%%%%%%%%%%%%%%%%%%%%%%%%%%%%%%%%%%%%%%%%%%%%%%%%%%%%%%%%%%%%%%%%%%%%%%%%%%%%%%%%%%%%%%%%%%%%%%
We also know that $Z(\check{\G}_{\X})=\varphi_{\X}^{-1}(Z(\hat{\G}_{\X}))$, so there exists a diagram
\[
\begin{tikzcd}
\check{\G}_{\X}\ar[d]\ar[r,"\varphi_{\X}"]&\hat{\G}_{\X}\ar[d]\\
\check{\G}_{\X,ad}\ar[r]&\hat{\G}_{\X,ad},
\end{tikzcd}
\]
inducing a surjective morphism
\[
\check{\G}^\ast_{\X}\backslash\hat{\G}_{\X}\lra\check{\G}^\ast_{\X,ad}\backslash\hat{\G}_{\X,ad}.
\]
Their construction shows that $\check{\G}^\ast_{\X,ad}=[\hat{\G}_{\X,ad}]^{\check{\theta}_{ad},\circ}$.

Steinberg's theorem \cite[9.16]{Steinberg} allows us to lift this involution uniquely to the simply connected cover
\[
\hat{\G}_{\X,sc}\lra \hat{\G}_{\X,ad};
\]
call this involution $\check{\theta}_{sc}$. Recall that $\hat{\G}_\X\simeq \hat{\G}_{\X,sc}\times^{\hat{Z}_{sc}}Z(\hat{\G}_\X)$ \cite[2.0.1]{deligne1979varietes}, where $\hat{Z}_{sc}\subset \hat{\G}_{\X,sc}$ is the center. Thus to show that there exists a lift of $\check{\theta}_{sc}$ to $\hat{\G}_\X$, it suffices to show that the induced involutions on $\hat{\G}_{sc}$ and $Z(\hat{\G}_\X)\subset \check{\rA}$ agree on $\hat{Z}_{sc}$.

%if and only if Obstructions \ref{Obstruction 1} and \ref{Obstruction 2} are satisfied.
%\begin{enumerate}
%    \item $\check{\theta}_{sc}$ stabilizes $\ker[\check{\G}_{sc}\lra \check{\G}_{ad}]$, and 
%    \item there is a natural involution on $\check{Z}$ extending the one on the image of $\iota:\check{Z}_{sc}\to \check{Z}$.
%\end{enumerate} 

%Consider the four groups
%\[
%\G_{sc}\to \G_{der}\subset \G\to \G_{ad}.
%\]
Recall that we fixed a $(\theta,k)$-admissible pair $(\rA,\B)$ for $\G$. 
%To the torus $T\subset \G$, we have the four tori
%\[
%T_{sc}\to T_{der}\subset T\subset T_{ad},
%\]
%and the following commutative diagram of cocharacter lattices
%\[
%\begin{tikzcd}
%0\ar[r]& X_\ast(T_{sc})\ar[r]&X_{\ast}(T_{ad})\ar[r]&\coker[X_\ast(T_{sc})\to X_{\ast}(T_{ad})]\ar[r]&0\\
%0\ar[r]& X_\ast(T_{sc})\ar[r]\ar[u,"="]&X_{\ast}(T)\ar[r]\ar[u]&\coker[X_\ast(T_{sc})\to X_{\ast}(T)]\ar[r]\ar[u]&0.
%\end{tikzcd}
%\]
Let $\vartheta$ denote the involution on $\check{\rA}$ as in Lemma \ref{Lem: dual involution on torus}. 
%and $X_\ast(T_{ad})$ in the natural way, while the unique involution $\theta_{sc}$ on $\G_{sc}$ making the diagram
%\[
%\begin{tikzcd}
%{\G}_{sc}\ar[d,"\pi"]\ar[r,"\theta_{sc}"]&\G_{sc}\ar[d,"\pi"]\\
%{\G}_{der}\ar[r,"\theta"]&\G_{der}
%\end{tikzcd}
%\]
%commute induces an involution on $X_{\ast}(T_{sc})$ compatible with the inclusion $X_\ast(T_{sc})\subset X_\ast(T_{ad})$. In particular, we obtain unique involutions on the cokernels in the above diagram.
We have the four groups
%\[
%\check{\G}_{sc}\to \check{\G}_{der}\subset \check{\G}\to \check{\G}_{ad},
%\]
%where 
%\[
%\check{\G}_{sc} = \widehat{\G_{ad}}\quad\text{and}\quad\check{\G}_{ad}=\widehat{\G_{sc}}.
%\]
%Moreover, we have 
\[
\hat{\G}_{sc}\to \hat{\G}_{der}\subset \hat{\G}\to \hat{\G}_{ad},
\]
%Applying $\Hom_{\zz}(-,\cc^\times)$ to the previous diagram induces the following diagram of multiplicative groups
%\[
%\begin{tikzcd}
%1\ar[r]& \check{Z}_{sc}\ar[d]\ar[r]&\check{T}_{sc}\ar[r]\ar[d]&\check{T}_{ad}\ar[r]\ar[d,"="]&1\\
%1\ar[r]& Z(\check{G})\ar[r]&\check{T}\ar[r]&\check{T}_{ad}\ar[r]&1,
%\end{tikzcd}
%\]
%each group equipped with an involutive automorphism such that the diagram is equivariant. Composing this involution everywhere with inversion, we obtain a diagram equivariant with respect to appropriate dual involutions ${\vartheta}^\vee$ on $X_\ast(T)$.%\textcolor{red}{This is the important step: it is where we invoke the ``dual involution''?
 and the associated tori
 \[
\hat{\rA}_{sc}\to\hat{\rA}_{der}\subset\check{\rA}\to\hat{\rA}_{ad}.
 \]
Considering weight lattices, we obtain a $\Ga$-equivariant commutative diagram
\[
\begin{tikzcd}
0\ar[r]& \zz\hat{\De}_\X\ar[r]\ar[d,"="]&X^\ast(\check{\rA})\ar[r]\ar[d]&\coker[\zz\hat{\De}_\X\to X^\ast(\check{\rA})]\ar[r]\ar[d]&0\\
0\ar[r]&\zz\hat{\De}_\X\ar[r]&X^{\ast}(\hat{\rA}_{sc})\ar[r]&\coker[\zz\hat{\De}_\X\to X^{\ast}(\hat{\rA}_{sc})]\ar[r]&0.
\end{tikzcd}
\]

Lemma \ref{Lem: dual involution on torus} implies that the  involution $\vartheta$ on $X^\ast(\check{\rA})$ is compatible with the given involution on $\zz\hat{\De}_{\X}$. We thus have a compatible involution on the cokernel, which is precisely the character group of $Z(\hat{\G}_\X)$. Moreover, the compatibility between the involutions on $\hat{\rA}_{sc}$ and $\hat{\rA}_{ad}$ induce a compatible involution on $\coker[\zz\hat{\De}_\X\to X^{\ast}(\hat{\rA}_{sc})],$ which gives the character group of $\hat{Z}_{sc}$. Finally, the middle vertical arrow is equivariant with respect to these involutions. This shows that the involution on $Z(\hat{\G}_{\X})$ is compatible with $\check{\theta}_{sc}$ under the map $\iota: \hat{Z}_{sc}\to Z(\hat{\G}_{\X})$.
\end{proof}

\quash{
\begin{Rem}\label{Rem: which pinning to fix}
Later, we will want to impose that the pinning $\{X_\al\}_{\al\in \hat{\De}_{\X}}$ is induced by a pinning of $\check{\G}$ in order to study the $\Gal(k^{sep}/k)$-equivariance of adapted morphisms $\varphi_{\X}$.

An important point first exposed in \cite[Section 10]{KnopSchalke} is the need to impose a Galois invariance of image 
\[
d\varphi_{\X}(\check{\fg}_{\X,\ga})\subset \check{\fg}_{\al}\oplus  \check{\fg}_{\check{\theta}(\al)}
\]
when there exists $\sig\in \Gal(k^{sep}/k)$ such that $\sig(\ga) = \ga$ but $\sig(\al) = \check{\theta}(\al)$ with $[X_\al,X_{\check{\theta}(\al)}]=0$. This is the so-call type-$D_2$ root situation from \cite[Lemma 10.4]{KnopSchalke}. 

Interestingly, there are instances when the choice
\[
d\varphi_{\X}(X_\ga)=X_\al-X_{\check{\theta}(\al)}
\]
appears to be the correct one. In this case, we may choose to lift the automorphism of $\Psi^\wedge$ to one such that $\check{\theta}(X_\al) = -X_{\check{\al}}$ when necessary by simply changing the pinning of $\hat{\G}_{\X}$ to be preserved.\footnote{In particular, the choice of the involution may change, but it doesn't appear to inform the type $D_2$ root question.}\qed
\end{Rem}}

\begin{Def}
Let $\G$ be a connected reductive group over $k$ and let $\X=\rH\backslash\G$ be a symmetric $k$-variety. Given a distinguished morphism $\varphi_\X$, there exists a \textbf{dual involution} (unique up to $\check{\rA}_\X^\Ga$-conjugacy) $$\vartheta: \hat{\G}_{\X}\lra \hat{\G}_{\X}$$ satisfying 
\begin{enumerate}
\item $\vartheta$ commutes with the $\Ga$-action on $\hat{\G}_\X$,
    \item $\vartheta|_{\check{\rA}} = \mathrm{inv}\circ\check{\theta},$ where $\mathrm{inv}:\check{\rA}\to \check{\rA}$ denotes the inversion map,
    \item $\varphi_\X(\check{\G}_\X) = \check{\G}^\ast_{\X}= \hat{\G}_\X^{\vartheta,\circ}$.
\end{enumerate}
We define the \emph{dual symmetric variety} $\hat{\X}:=\check{\G}^\ast_{\X}\backslash\hat{\G}_{\X}$ for $\hat{\G}_{\X}$.
\end{Def}
This variety sits in the diagram%\textcolor{red}{Does this matter?}
\begin{equation}\label{eqn: dual variety}
    \begin{tikzcd}
\SL_2(\cc)\ar[r]&\check{\mathrm{L}}_\X\ar[r]&\check{\G}\\
\check{\G}_{\X}\ar[r,"\varphi_{\X}"]&\hat{\G}_{\X}\ar[ur]\ar[r,"\check{\tau}"]&\hat{\X}\ar[u, swap,"\hat{s}_\X"],
\end{tikzcd}
\end{equation}
where $\check{\tau}$ is the quotient map and $\hat{s}_\X$ is induced by the symmetrization map associated to the variety $(\hat{\G}_\X)^\vartheta\backslash\hat{\G}_\X$ into $\hat{\G}_\X\subset \check{\G}$.

\subsection{Minimality and split tori}\label{Sec dual basic}
%We keep our notation from the previous section and establish basic properties of the dual group and tori via properties of the dual variety $\hat{\X}$.
Assume that $(\rA,\B)$ is a $(\theta,k)$-admissible pair for $(\G,\rH)$ and let $\vartheta:\hat{\G}_{\X}\lra\hat{\G}_{\X}$ denote the daul involution. Let $(\check{\rA},\check{\B})$ be the dual pair and denote by $\hat{\B}_{\X}= \check{\B}\cap\hat{\G}_{\X}$. By construction, the involution $\vartheta$ preserves $\hat{\De}_{\X}$, so that $\hat{\B}_{\X}$ gives a $\vartheta$-stable Borel subgroup of $\hat{\G}_{\X}$. Following Springer \cite{springer85}, we say that the $\vartheta$-stable Borel pair  $(\check{\rA},\hat{\B}_{\X})$ is a \emph{fundamental pair} of $(\hat{\G}_{\X},{\G}^\ast_{\X})$.

 Those symmetric varieties $\hat{\X}$ which occur in this construction are strongly constrained. Indeed, Corollary 7.9 of \cite{KnopSchalke} implies that the associated symmetric $\hat{\G}_{\X,ad}$-variety  $\hat{\X}_{ad}$ satisfies the property of \emph{minimality}, so that
\[
\mathrm{rank}(\hat{\X}_{ad})=\mathrm{rank}(\hat{\G}_{\X,ad})-\mathrm{rank}(\check{\G}^\ast_{\X}/Z(\hat{\G}_\X)).
\]
This implies that $\hat{\X}$ is also minimal \cite[Proposition 3.2]{Ressayre}. The following lemma recalls the most relevant properties of such spherical varieties.

\begin{Lem}\cite[Lemma 4 and 5]{brion2004construction}\label{Lem: minimal rank}
Suppose that $(\G,\rH,\theta)$ is a symmetric pair over an algebraically closed field and set $\X=\rH\backslash\G$. Then the following are equivalent
\begin{enumerate}
    \item $\X$ is a minimal symmetric variety.
    \item Any $\theta$-stable maximal torus of $\G$ contains a maximal $\theta$-fixed subtorus (hence a maximal torus of $\rH$).
    \item Any $\theta$-stable maximal torus of $\G$ contains a maximal $\theta$-split subtorus.
    \item Any two $\theta$-stable maximal tori of $\G$ are conjugate  by $\rH^\circ$
\end{enumerate}
Suppose $\X$ is minimal. There exists a single $\rH^\circ$-orbit of fundamental pairs $(\rA,\B)$.
\end{Lem}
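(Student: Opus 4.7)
The plan is to establish the four equivalences via a short cycle and then deduce the fundamental pair statement. Throughout, fix a $\theta$-stable maximal torus $T \subset \G$ and recall that the multiplication map $T^+ \times T^- \to T$ is an isogeny, so $\dim T = \dim T^+ + \dim T^-$. Recall also that by standard results of Richardson and Vust (and as used already in Section~\ref{Section: admissible tori}), any two maximal $\theta$-split tori of $\G$ are $\rH^\circ$-conjugate, and the centralizer of a maximal $\theta$-split torus is a $\theta$-stable maximal torus.

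For the equivalence (2)~$\Leftrightarrow$~(3), note that both $\dim T^+$ and $\dim T^-$ are upper semi-continuous functions of $T$ on the variety of $\theta$-stable maximal tori, with suprema equal to $\mathrm{rank}(\rH)$ and $\mathrm{rank}(\X)$ respectively; since their sum is the constant $\dim T$, one attains its maximum on every $T$ iff the other does. For (1)~$\Leftrightarrow$~(3): unwinding the definition of minimality (every $\theta$-stable maximal torus $T$ of $\hat{\G}_\X$ satisfies $\dim T^{\vartheta} = \dim (\check{\G}^\ast_{\X} \cap T)$), and translating via Proposition~\ref{Prop: dual involution} to the group $\G$ with its involution $\theta$, gives precisely statement (3). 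For (3)~$\Rightarrow$~(4), given $\theta$-stable maximal tori $T_1, T_2$, select maximal $\theta$-split subtori $A_i \subset T_i$, conjugate them by $h \in \rH^\circ(\kbar)$ so that $hA_1 h^{-1} = A_2$, and observe that since $T_i = Z_{\G}(A_i)$ the same $h$ conjugates $T_1$ to $T_2$. The reverse implication (4)~$\Rightarrow$~(3) is immediate from the existence of at least one $\theta$-stable maximal torus containing a maximal $\theta$-split subtorus (Lemma~\ref{Lem: good split torus}).

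For the final statement regarding fundamental pairs, suppose $(T_1, B_1)$ and $(T_2, B_2)$ are two fundamental pairs for $(\hat{\G}_\X, \check{\G}^\ast_\X)$. By (4), after replacing the second pair by an $\rH^\circ$-conjugate, we may assume $T_1 = T_2 =: T$. Both $B_1$ and $B_2$ are $\vartheta$-stable Borels containing $T$, and the set of such Borels is acted upon simply transitively by the subgroup of $W(\G, T)$ fixed pointwise by the induced action of $\vartheta$ on the Weyl group; but this $\vartheta$-fixed subgroup is precisely $W(\rH^\circ, T^+)$ (see \cite[Proposition 1.2]{vust1974operation} or the analogous statement in \cite{springer85}), and this Weyl group is represented by elements of $N_{\rH^\circ}(T^+)$. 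Hence $B_1$ and $B_2$ are conjugate by an element of $\rH^\circ$ centralizing $T$, completing the proof.

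The main obstacle in this plan is the identification of the stabilizer of $T$ acting on the set of $\vartheta$-stable Borel subgroups containing $T$ with a group acting through $\rH^\circ$: this requires Springer's theory of $\theta$-stable Borels together with the minimality hypothesis, which ensures that the combinatorial Weyl group of the symmetric pair coincides with the geometric quotient controlling $\rH^\circ$-orbits of fundamental pairs. Once this is in place, the other equivalences reduce to dimension counts and the basic conjugacy theorems for $\theta$-split tori.
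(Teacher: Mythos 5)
The paper does not prove this lemma; it cites Brion's 2004 paper directly, so there is no internal argument to compare your proposal against. Your proof has two genuine gaps.

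The claim $T_i = Z_{\G}(A_i)$ used in the step (3)~$\Rightarrow$~(4) is false. For $A$ a maximal $\theta$-split torus, $Z_{\G}(A)$ is a $\theta$-stable Levi which is in general strictly larger than any maximal torus $T \supset A$; the equality $Z_{\G}(A) = T$ is the \emph{quasi-split} (tempered) condition, which is strictly stronger than minimality. For instance $\PGL_{2n}/\mathrm{PSp}_{2n}$ is minimal ($(2n-1) = n + (n-1)$) but the centralizer of a maximal $\theta$-split torus is a form of $\GL_2^n$, not a torus. The correct route is through (2): since $T_i^+$ are maximal tori of $\rH$ they are $\rH^\circ$-conjugate, and then $Z_{\G}(T_i^+)$ is a maximal torus of $\G$ by \cite[Lemma 5.3]{Richardson} (already invoked in Section 2.1 of the paper), forcing $Z_{\G}(T_i^+) = T_i$ and hence $\rH^\circ$-conjugacy.

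Second, the paragraph on (1)~$\Leftrightarrow$~(3) is incoherent. The parenthetical ``definition of minimality'' you give, $\dim T^{\vartheta} = \dim(\check{\G}^\ast_{\X} \cap T)$, is a tautology (since $\check{\G}^\ast_\X = (\hat{\G}_\X)^{\vartheta,\circ}$, both sides are $\dim T^{\vartheta}$), and it also mistakes the specific dual pair $(\hat{\G}_\X, \vartheta)$ for the general pair $(\G,\theta)$ of the statement. Minimality is the numerical identity $\mathrm{rank}(\X) = \mathrm{rank}(\G) - \mathrm{rank}(\rH)$. The implications (2)~$\Rightarrow$~(1) and (3)~$\Rightarrow$~(1) are immediate from $\dim T^+ + \dim T^- = \mathrm{rank}(\G)$ applied to a torus realizing the relevant maximum, which always exists. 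But the converse direction (1)~$\Rightarrow$~(2) or (3) is the substantive content of Brion's lemma and does not follow from a dimension count: a priori, $\theta$-stable maximal tori fall into several $\rH^\circ$-orbits, and only one of them is guaranteed to realize both maxima, so minimality alone does not obviously force \emph{every} $\theta$-stable torus to do so. Your cycle never establishes this direction, so the equivalence is not closed. As a smaller point, the appeal to upper semicontinuity in the (2)~$\Leftrightarrow$~(3) step is both unjustified (no topology on the set of $\theta$-stable tori is specified) and unused — the argument you actually run depends only on the constant-sum identity. Finally, your fundamental-pair argument hinges on the identification $W^\vartheta = W(\rH^\circ, T^+)$, which is exactly where minimality must enter (e.g., in $\SL_2/\Gm$ with the Chevalley involution, $W^\theta = \zz/2$ while $W(\rH^\circ, T^+) = 1$); you flag the issue but do not close it.
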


%THe minimality of  While it need not be true that $\hat{\X}$ itself is minimal, the minimality of $\hat{\X}_{der}$ 
In particular, every maximally $\vartheta$-split maximal torus is contained in a $\vartheta$-stable Borel subgroup and that such Borel subgroups form a single $\check{\G}_\X^\ast$-orbit.%Suppose that $(\rA,\B)$ is a $(\theta,k)$-admissible pair for $(\G,\rH)$ and suppose that $\vartheta:\hat{\G}_{\X}\lra\hat{\G}_{\X}$ is a compatible dual involution.
\begin{Lem}\label{Lem: split torus embedding}
For any $k$-rational maximally $\theta$-split maximal torus $T\subset\G$, there exists an embedding $\check{T}\to \hat{\G}_{\X}$ such that the natural dual involution
\begin{align*}
    \vartheta_T:\check{T}&\lra \check{T}
%                        \lam\otimes t&\longmapsto (-\theta(\lam))\otimes t
\end{align*}
induced by $-\theta$ on $X^\ast(T)$ intertwines with $\vartheta$. In particular, such an embedding realizes $\check{T}$ as a maximally $\vartheta$-split torus. Any two such embeddings are conjugate by $\check{\G}^\ast_{\X}$, so there is a unique $\Ga$-invariant $\check{\G}^\ast_{\X}$-orbit of such embeddings.
\end{Lem}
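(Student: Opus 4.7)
The plan is to bootstrap from the canonical embedding coming from the chosen $(\theta,k)$-admissible Borel pair $(\rA,\B)$, and then use the structure theory of $\theta$-stable maximal tori together with the minimality of $\hat{\X}$ to handle an arbitrary $T$ and to obtain uniqueness.

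First, for the fixed $(\theta,k)$-admissible pair $(\rA,\B)$, the construction of the dual group recalled in Theorem \ref{Thm: dual group map} provides a canonical embedding $\check{\rA}\hookrightarrow \hat{\G}_{\X}\subset \check{\G}$; by the discussion preceding ${}^L\xi_\X$ in \eqref{eqn: spherical L-homomorphism}, this embedding is $\Ga$-equivariant for the $\ast$-action on $\check{\rA}$ and the Galois action on $\hat{\G}_\X$ induced from its containment in $\check{\G}$. By Lemma \ref{Lem: dual involution on torus}, the restriction of the dual involution $\vartheta$ to $\check{\rA}$ is $\mathrm{inv}\circ \check{\theta}$, which is precisely $\vartheta_\rA$. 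Thus the claim holds for $T=\rA$.

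Now let $T$ be a general $k$-rational maximally $\theta$-split maximal torus of $\G$. Since the maximal $\theta$-split subtori $T^-$ and $\rA^-$ are both maximal $\theta$-split tori of $\G$, Richardson's theorem \cite[Theorem 7.5]{Richardson} provides $h\in (\G^\theta)^\circ(\kbar)$ with $\Ad(h)(\rA^-)=T^-$; both $\Ad(h)(\rA)$ and $T$ are then $\theta$-stable maximal tori of the $\theta$-stable Levi subgroup $L=Z_\G(T^-)$, on whose center $\theta$ acts trivially. A standard argument within $L^\theta$ produces $\ell\in L^\theta(\kbar)$ conjugating $\Ad(h)(\rA)$ onto $T$; setting $g=\ell h$, the element $g^{-1}\theta(g)$ lies in $\rA$, so conjugation by $g$ is an isomorphism $\rA\iso T$ which intertwines $\theta|_\rA$ with $\theta|_T$. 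Dualizing, we obtain an isomorphism $\check{T}\iso \check{\rA}$ intertwining $\vartheta_T$ with $\vartheta_\rA$, and composing with Step 1 produces the desired embedding $\check{T}\hookrightarrow \hat{\G}_\X$ compatible with the involutions.

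For $\Ga$-equivariance, both $\rA$ and $T$ are $k$-rational, so for each $\sigma\in\Ga$ the element $\sigma(g)g^{-1}$ normalizes $T$, producing a Weyl-group-valued cocycle measuring the difference between the two $k$-structures. Transporting along $\Ad(g)$, this cocycle matches the difference between the $\Ga$-action on $\check{T}$ and the induced $\Ga$-action on the embedded image in $\hat{\G}_\X$; hence the embedding is $\Ga$-equivariant up to a Weyl cocycle which can be absorbed into the initial choice of $g$ by the freedom to modify $g$ on the left by elements of $T(\kbar)$ and on the right by elements of $N_\G(\rA)(\kbar)$ (this is the step where one uses that $\rA$ is $k$-rationally maximally $k$-split in a $(\theta,k)$-admissible pair, so that the relevant cohomological obstructions vanish).

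Finally, for uniqueness up to $\check{\G}^\ast_\X$-conjugacy: any embedding $\check{T}\hookrightarrow \hat{\G}_\X$ satisfying the stated intertwining property realizes $\check{T}$ as a $\vartheta$-stable maximal torus of $\hat{\G}_\X$, maximally $\vartheta$-split by matching of ranks. Because $\hat{\X}$ is a \emph{minimal} symmetric variety (by \cite[Corollary 7.9]{KnopSchalke} together with \cite[Proposition 3.2]{Ressayre}), Lemma \ref{Lem: minimal rank} implies that any two $\vartheta$-stable maximal tori of $\hat{\G}_\X$ are $\check{\G}^\ast_\X$-conjugate, and the conjugating element is unique modulo the normalizer of the torus within $\check{\G}^\ast_\X$. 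This gives the desired uniqueness and the existence of a unique $\Ga$-invariant $\check{\G}^\ast_\X$-orbit of such embeddings. The most delicate step is the $\Ga$-equivariance, where one must control the Weyl-group cocycle obstructing a naive transport of structure; this is handled by exploiting the $k$-rationality of both $T$ and $\rA$ and the minimality of $\hat{\X}$.
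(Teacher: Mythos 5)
Your proposal is broadly correct in its mathematical content and reaches the same conclusions, but it takes a more roundabout route than the paper's proof in the existence step, introduces a false side remark, and contains an unnecessary and vague paragraph. Here are the specifics.

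For existence, the paper fixes a maximally $\theta$-split Borel $B\supset T$ and invokes, in one step, the fact that all pairs $(\T,\B)$ with $\B$ maximally $\theta$-split Borel and $\T\subset\B$ maximally $\theta$-split maximal torus form a single $(\G^\theta)^\circ(\kbar)$-orbit (a theorem of Richardson, closely related to the discussion of fundamental pairs in Lemma~\ref{Lem: minimal rank}). This immediately yields $h\in(\G^\theta)^\circ(\kbar)$ conjugating $(\rA,\B)$ onto $(T,B)$. You instead go in two steps: first conjugate $\rA^-$ to $T^-$ by $h\in(\G^\theta)^\circ(\kbar)$, then conjugate $\Ad(h)\rA$ to $T$ inside $L=Z_\G(T^-)$. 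This does work, but the claim that ``a standard argument within $L^\theta$'' produces the conjugator is really an appeal to the minimal-rank criterion (Lemma~\ref{Lem: minimal rank}) applied to the symmetric pair $(L,L^\theta)$, which is itself a nontrivial observation requiring you to check that $(L,L^\theta)$ is a minimal-rank pair; that check is not spelled out. The paper's one-step argument avoids this.

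There is also an outright error in the phrase ``on whose center $\theta$ acts trivially'': $T^-\subset Z(L)^\circ$ is $\theta$-\emph{split}, so $\theta$ acts on it by inversion, not trivially. This does not break the argument since you never use the claim, but it should be deleted.

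The paragraph on $\Ga$-equivariance is off-target. The lemma does not assert that any particular embedding $\check{T}\hookrightarrow\hat{\G}_\X$ is $\Ga$-equivariant; it asserts that the $\check{\G}^\ast_\X$-\emph{orbit} of such embeddings is $\Ga$-invariant, which is an immediate consequence of uniqueness together with the $\Ga$-equivariance of $\vartheta$ (Proposition~\ref{Prop: dual involution}): for $\sigma\in\Ga$, the translate $\sigma\cdot j$ is again an embedding intertwining the involutions, hence lies in the unique orbit. Your discussion of ``absorbing the Weyl cocycle into the choice of $g$'' is vague and attempts to prove a statement stronger than what is needed (and which is not in general available), and should be dropped. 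For the uniqueness step itself, your appeal to minimality of $\hat{\X}$ via Lemma~\ref{Lem: minimal rank} is essentially the same move as the paper's citation of Richardson for the conjugacy of maximally $\vartheta$-split tori under $\hat{\G}_\X^{\vartheta,\circ}=\check{\G}^\ast_\X$.
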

\begin{proof}
Fix a maximally $\theta$-split Borel $B\supset T$. Since both $\rA$ and $T$ are maximally $\theta$-split maximal tori, there exists $h\in (\G^\theta)^\circ(\kbar)\subset\rH(\overline{k})$ such that $h\rA h^{-1}= T$ and $h\B h^{-1} = B$. This induces a $\theta$-equivariant isomorphism
\[
X_\ast(\rA)\iso X_\ast(T)
\]
carrying the root system $(\check{\Phi},\check{\De})$ to one in $X_\ast(T)$. In particular, this induces a $\vartheta$-equivariant isomorphism
\[
\check{j}:\check{T}\iso \check{\rA}\subset \hat{\G}_{\X}.
\]
A standard fact is that the embedding $\check{j}$ is unique up to $(\hat{\G}_{\X})^\Ga$-conjugacy \cite{Kottwitzrational}, but not every conjugate will have a maximally $\vartheta$-split image. On the other hand. if two embeddings 
\[
\begin{tikzcd}
\check{T}\ar[r,shift left=.75ex,"j_1"]
  \ar[r,shift right=.75ex,swap,"j_2"]&\hat{\G}_{\X},
\end{tikzcd}
\]
give two maximally $\vartheta$-split tori $j_1(\check{T}),j_2(\check{T})\subset \hat{\G}_{\X}$, they must be conjugate by an element of $\hat{\G}_{\X}^{\vartheta,\circ}=\check{\G}^\ast_{\X}$ \cite[pg.287]{Richardson}.
\end{proof}
\subsubsection{Dual torus embedding}
Conversely, we establish a dual embedding statement in Proposition \ref{Prop: quotient stack is enough} under an additional rationality assumption (Assumption \ref{Assumption: orbits}).
\begin{Assumption}\label{Assumption: orbits} %\textcolor{red}{residual gerbes}
Assume that $k$ is perfect ant that for each regular semi-simple class $a\in [\X^{rss}\sslash\rH](k)$, there exists some $\xi\in H^1(k,\rH)$ with pure inner twist $(\X^\xi,\G^\xi)$ and a regular semi-simple element $x_a\in X^\xi(k)$ with invariant $a$.
\end{Assumption}
The assumption may be stated as claiming that the map from the quotient stack $\X/\rH$ to the categorical quotient $\X\sslash\rH$ is surjective on $k$-points when we restrict to the regular semi-simple locus. This is as good as can be hoped for in general. We verify this assumption in the case that the symmetric variety $\X$ is quasi-split in \cite{LesliestabFJ}, and remark that certain general results which may obviate the need for this assumption in forthcoming work of Ng\^{o}--Morrissey \cite{NgoMorrissey}. Note that when $k$ is finite (of good characteristic), then Assumption \ref{Assumption: orbits} holds. Indeed, every point $a\in[\X^{rss}\sslash\rH](k)$ corresponds to a $k$-rational semi-simple $(\G^\theta)^\circ$-orbit, which possesses a $k$-point by Lang's theorem.

\quash{One may ask whether the stronger statement that it suffices to only consider the quasi-split pure inner form of $\G$ in the preceding statement. When $k$ is finite, this is automatic. The following gives a counterexample for $k$ local.
\begin{Ex}
    Suppose that $k=\rr$, and that $V=\cc^2$ is viewed as column matrices and equipped with the Hermitian form
    \[
    \la v,w\ra= {}^T\overline{v}\begin{psmatrix}
        1&\\&-1
    \end{psmatrix}w;
    \]
    setting $\G=\U(V)$ to be the associated (quasi-split) unitary group, we let $\theta:\G\to \G$ denote 
    \[
    \theta(g) = \Ad\begin{psmatrix}
        1&\\&-1
    \end{psmatrix}(g).
    \]
    Then $\G^\theta\cong \U_1\times \U_1$ is a compact maximal torus. Then $\X=\G^\theta\backslash \G\subset \G$ is given by
    \[
    \X=\left\{\begin{psmatrix}
        a&b\\\overline{b}&a
    \end{psmatrix}: a\in \Ga, b\in \Res_{\cc/\rr}\Ga, \: a^2-|b|^2=1\right\}.
    \]
    Then 
    \begin{align*}
        \pi:\X&\lra \X\sslash\rH \simeq \A^1,\\
        \begin{psmatrix}
        a&b\\\overline{b}&a
    \end{psmatrix}&\longmapsto a
    \end{align*}
    is not surjective on $\rr$-points. In fact $\pi(\X(\rr)) = (-\infty,-1]\cup[1,\infty)$. If we include the pure inner twist associated to the compact form $\G'=U_2$ where we equip $V$ with the form
    \[
        \la v,w\ra' ={}^T\overline{v}w,
    \]
    we obtain a pure inner twist 
        \[
    \X'=\left\{\begin{psmatrix}
        a&b\\-\overline{b}&a
    \end{psmatrix}: a\in \Ga, b\in \Res_{\cc/\rr}\Ga, \: a^2+|b|^2=1\right\},
    \]
    and the quotient $\pi'(\X'(\rr)) = [-1,1]$, where $\pi':\X'\to \A^1$ is defined similarly to $\pi$.
\end{Ex}}

Using this assumption, we wish to establish a converse of Lemma \ref{Lem: split torus embedding}. In preparation, suppose that we have $(\G,\theta)$ as above and let $\rA\subset \G$ be a $k$-rational maximally $\theta$-split maximal torus of $\G$. Suppose we are given a $1$-cocycle $w:\Ga\lra W(\kbar)$ valued in the Weyl group satisfying
\begin{enumerate}
    \item\label{cocycle1}  $w_\sig(\rA^-)=\rA^-$ for all $\sig\in \Ga$, so that the cocyle is valued in $W_1=\{w\in W: w(\rA^-)=\rA^-\}\subset W$,
    \item\label{cocycle2} $\theta\circ w_\sig = w_\sig\circ \theta$ for all $\sig \in \Ga$.
\end{enumerate}  Let ${}^\ast\rA$ denote the $k$-torus obtained by twisting $\rA$ by this cocycle. It possesses a $k$-rational involution ${}^\ast\theta:{}^\ast\rA\lra {}^\ast\rA$ satisfying ${}^\ast(\rA^-) = ({}^\ast\rA)^-$; we denote this ${}^\ast\theta$-split subtorus by ${}^\ast\rA^-$. 

Recalling that \cite[Section 4]{Richardson}
\[
W_\X\simeq W(\rH,\rA^-)\simeq W_1/W_2,
\]
where  and $W_2=\{w\in W_1: w|_{\rA^-}\equiv Id\}$, we see that $w$ descends to a $1$-cocycle valued in $W_\X$ when we restrict to $\rA^-$. Using the fact
    \[
W_\X\simeq  N_{\rH}(\rA^-)/Z_{\rH}(\rA^-),
\]
we see that for each $\sig\in \Ga$, the twisted Galois action ${}^\ast\sig$ may be realized as
    \[
    {{}^\ast\sig}(t) = n_\sig {}^\sig t n_\sig^{-1} \qquad \text{for $t\in\rA^-(\kbar)$,}
    \] where $n_\sig\in N_\G(\rA)\cap(\G^\theta)^\circ(\kbar)$ is an element representing $w_\sig$.  Note that $n_\sig$ is well defined up to right-multiplication by an element of $Z_{\rH}(\rA^-)(\kbar)$, but we may choose $n_\sig$ so that the cochain is continuous (cf. \cite[Section 7]{BorovoiSecond}).% 

Now assume $\G$ is quasi-split and let $\X=\G^\theta\backslash\G$ be the associated symmetric variety with $\rH=\G^\theta$. Assume Assumption \ref{Assumption: orbits} holds for $(\G,\X)$, and identify $\X\hra \G$ via the symmetrization map. Set $x_0\in \X(k)$ to be the base point.

\begin{Prop}\label{Prop: quotient stack is enough} 
For any $\Ga$-invariant $\G^\ast_\X$-conjugacy class of maximally $\vartheta$-split maximal torus $\check{T}\to\hat{\G}_{\X}$, let $(T,\theta_T)$ denote the corresponding $k$-torus with involution. Setting $T^-$ to the maximal $\theta_T$-split subtorus, there exists
\begin{enumerate}
    \item a class $\xi\in H^1(k,\rH)$ with pure inner twist $(\X^\xi,\G^\xi)$ 
    \item a $k$-rational embedding ${T}^-\to \X^\xi\subset \G^\xi$  realizing ${T}^-$ as a maximally ${\theta}^\xi$-split torus.
\end{enumerate}
\end{Prop}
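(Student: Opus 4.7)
The plan is to produce the required embedding by descending through the categorical quotient $\X \sslash \rH$. First I would extract from the dual-side data a Weyl cocycle which, applied to the reference torus $\rA^-$, produces $(T,\theta_T)$. A regular semi-simple $k$-point of $T^-$ then yields a $k$-point of $\X \sslash \rH$, and Assumption \ref{Assumption: orbits} lifts this invariant to a $k$-point of some pure inner twist $\X^\xi$, whose stabilizer construction realizes $T^-$ as a maximally $\theta^\xi$-split torus.

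More concretely, fix a $k$-rational $(\theta,k)$-admissible maximal torus $\rA \subset \G$, which exists since $\G$ is quasi-split. By Lemma \ref{Lem: split torus embedding}, its dual $\check\rA$ admits a canonical $\Ga$-invariant $\check\G_\X^\ast$-conjugacy class of embeddings $\check j_0: \check\rA \to \hat\G_\X$ realizing $\check\rA^-$ as maximally $\vartheta$-split. Any other $\Ga$-invariant $\check\G_\X^\ast$-conjugacy class of maximally $\vartheta$-split maximal tori differs from $\check j_0$ by an element of the normalizer quotient, which, using the minimality of $\hat\X$ (Lemma \ref{Lem: minimal rank}) and the identification $W_\X \simeq W_1/W_2$, may be identified with the little Weyl group $W_\X$. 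Tracking the $\Ga$-invariance produces a $1$-cocycle $w: \Ga \to W_\X$; choosing continuous representatives $n_\sig \in N_{\G}(\rA) \cap (\G^\theta)^\circ(\kbar)$ lifts $w$ to the data satisfying conditions \ref{cocycle1} and \ref{cocycle2} of the discussion preceding the proposition, and $(T,\theta_T)$ is by definition the twist of $(\rA, \theta|_\rA)$ by this cocycle. By construction, $T^-$ is the twist of $\rA^-$ by $w$.

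Next, via the Chevalley-type isomorphism $\X \sslash \rH \simeq \rA^- \sslash W_\X$ (Lemma \ref{Lem: chevalley}), any regular semi-simple element $t \in T^-(k)$ descends to a $k$-point $a \in [\X^{rss} \sslash \rH](k)$: the $W_\X$-orbit of a geometric lift of $a$ in $\rA^-(\kbar)$ contains $t$, and the failure of this orbit to be $\Ga$-stable as a point of $\rA^-$ is measured precisely by $w$. Such $t$ exist over an infinite $k$ by openness of the regular locus, and over finite $k$ they are supplied by Lang's theorem. Applying Assumption \ref{Assumption: orbits} to $a$ yields $\xi \in H^1(k,\rH)$ and a regular semi-simple $x \in \X^\xi(k)$ with invariant $a$. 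Then $Z_{\rH^\xi}(x)^\circ$ is a $k$-torus, and $S := Z_{\G^\xi}(Z_{\rH^\xi}(x)^\circ)$ is a $\theta^\xi$-stable maximal $k$-torus of $\G^\xi$ whose $\theta^\xi$-split part $S^-$ contains the symmetrization image of $x$; the maximality of $S^-$ as a $\theta^\xi$-split torus follows from that of $\rA^-$ in $\G$ since $\G^\xi$ is obtained from $\G$ by twisting by a cocycle valued in $\rH$, which preserves the rank of the maximal $\theta^\xi$-split torus. The composition $T^- \simeq S^- \hookrightarrow \X^\xi \subset \G^\xi$ is the desired embedding.

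The main obstacle will be producing a \emph{canonical} $\Ga$-equivariant isomorphism $S^- \simeq T^-$ of $k$-tori (rather than merely an abstract isomorphism). This reduces to showing that the Galois action on the geometric orbit data underlying $x$ recovers the cocycle $w$ extracted on the dual side, which is a compatibility statement between two instances of the same cocycle. A secondary technical point is independence of $S^-$ from the choice of lift $x$ of $a$, which follows from the fact that any two such lifts are $\rH^\xi(k)$-conjugate in the $\X$-regular locus by Assumption \ref{Assumption: orbits} applied together with the Chevalley-style identification of the quotient.
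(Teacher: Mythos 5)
Your proposal follows the same broad strategy as the paper's proof — extract a $W_\X$-valued cocycle from the dual side, twist $\rA^-$, produce a regular semi-simple invariant in $[\X^{rss}\sslash\rH](k)$, lift via Assumption \ref{Assumption: orbits} — but two steps do not hold up.

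First, the construction $S := Z_{\G^\xi}(Z_{\rH^\xi}(x)^\circ)$ rests on the assertion that $Z_{\rH^\xi}(x)^\circ$ is a $k$-torus. For $x \in \X^{rss}$ lying in a flat through a maximal $\theta^\xi$-split torus $\rA_x^-$, this stabilizer is the centralizer $Z_{\rH^\xi}(\rA_x^-)$, namely the Levi $M_\X = (\mathrm{L}_\X \cap \rH)^\circ$, which is abelian only when $\X$ is quasi-split; the proposition is stated for arbitrary symmetric $\X$ (already $\Sp_2\times\Sp_2\backslash\Sp_4$ has non-abelian regular stabilizers). The paper avoids building any new torus inside $\G^\xi$: it keeps the existing $\kbar$-isomorphism $j_-: T^- \iso \rA^-$ and shows directly that the conjugate $\Ad(h)\circ j_-$ descends once the Galois action on $\G$ is twisted by the cochain $h_\sigma$ furnished by Assumption \ref{Assumption: orbits}.

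Second, the step you label "the main obstacle" is the actual content of the proposition, and you do not carry it out. Calling it "a compatibility statement between two instances of the same cocycle" is correct intuition but not an argument. The paper's equation \eqref{eqn: cocycle calc} does the work: unwinding the rationality of $hxh^{-1}$ in $\G^\xi$ gives
\[
x \;=\; \Ad\bigl(h^{-1}\,{}^{\sigma_\xi}h\bigr)\,\Ad\bigl(h_\sigma n_\sigma^{-1}\bigr)(x),
\]
and the regular semi-simplicity of $x$ inside $\rA^-$ then forces $(h^{-1}\,{}^{\sigma_\xi}h)\,(h_\sigma n_\sigma^{-1}) \in Z_{\rH}(\rA^-)(\kbar)$, which is precisely the condition for $\Ad(h)\circ j_-$ to be $\sigma_\xi$-equivariant. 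The regularity of $x$ enters in an essential way here; without this explicit computation the descent is not established and the proof is incomplete.
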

\begin{Rem}
 The embedding $T^-\to \X^\xi$ identifies the image with the \emph{flat} $T^-\cdot x_0$ through $x_0$ \cite[Section 6]{Knop95}.
\end{Rem}
\begin{proof}
    Set $\rH=(\G^\theta)^\circ$. Fix a $k$-rational maximally $\theta$-split maximal torus $\rA\subset \G$. Since both $\check\rA$ and $\check T$ are $\vartheta$-stable maximal tori, there exists $h\in \check{\G}^\ast_{\X}$ such that $\Ad(h)(\check\rA)=\check T$ %and $\Ad(h)(\hat\B) =\hat B$. 
inducing a $\vartheta$-equivariant isomorphism
\[
X_\ast(\check\rA)\iso X_\ast(\check T)
\]
carrying the root system $(\Phi,\De)$ to one in $X_\ast(\check T)$ in a $\vartheta$-equivariant way; this forces the Galois actions differ by a $1$-cocycle valued in the little Weyl group $W_{\X}$, realized as the Weyl group of $(\check{\G}_\X,\check{\rA}_\X)$. This isomorphism induces a ${\theta}$-equivariant $\kbar$-isomorphism
\[
{j}:{T}\iso {\rA}\subset \G.
\]

Thus, we obtain a $1$-cocycle $w:\Ga\lra W(\kbar)$ satisfying \eqref{cocycle1} and \eqref{cocycle2}. In particular, if we consider
     \[
     j_-:T^-\to \rA^-,
     \] we see that ${}^\sig j_- := \sig\circ j_-\circ \sig_T^{-1}= \Ad(n_\sig^{-1})\circ j_-$ for where $n_\sig\in N_{\rH}(\rA^-)(\kbar)$ is an element representing $w_\sig$.

Let ${}^\ast\rA^-\subset {}^\ast\rA$ be the twisted form of $\rA$ as above. If $k$ is finite, consider the twisted form ${}^\ast[\rH/Z_{\rH}(\rA^-)]$ of $\rH/Z_{\rH}(\rA^-)$ by the cocycle $w$. That is, for $t\in [\rH/Z_{\rH}(\rA^-)](k)$, $\sig\in \Ga$ acts by ${}^{\sig}tn_\sig^{-1}$. By Lang's theorem, there exists $t\in {}^\ast[\rH/Z_{\rH}(\rA^-)](k)$. Thus, writing $t= h Z_{\rH}(\rA^-)(\kbar)$, we see that for all $\sig\in \Ga$
\[
 h Z_{\rH}(\rA^-)(\kbar) = {}^\sig h n_\sig^{-1}Z_{\rH}(\rA^-)(\kbar).
\]
An easy calculation now show that $\Ad(h)\circ j:T^-\lra\G$ gives a $k$-rational embedding.

We now assume that $k$ is infinite. In this case, it can happen that ${}^\ast[\rH/Z_{\rH}(\rA^-)](k)=\emptyset$, so we proceed by applying Assumption \ref{Assumption: orbits}. By the assumptions on $k$ \cite[18.3]{Borel}, ${}^\ast\rA^-(k)$ is Zariski-dense in ${}^\ast\rA^-$, so that there exists $x\in {}^\ast\rA^-(k)$ which gives a regular semi-simple element of $\X(\kbar)\subset \G(\kbar)$. By assumption, 
\[
x = n_\sig {}^\sig x n_\sig^{-1}, \qquad\text{where}\quad n_\sig \in \rH(\kbar).
\]
%Note that $n_\sig$ is well defined up to right-multiplication by an element of $Z_{\rH}(\rA^-)(\kbar)$, but we may choose $n_\sig$ so that the cochain is continuous (cf.\cite{BorovoiSecond}). %\textcolor{blue}{Get a class in non-abelian $H^2$. If $Z_{\rH}(\rA^-)$ is abelian, then things are easier in another way since $x$ is regular in $\G$.}
This implies that the $\rH$-orbit of $a\in \X(\kbar)$ is $k$-rational, giving a point $a\in [\X^{rss}\sslash\rH](k)$. By Assumption \ref{Assumption: orbits}, there is a class $\xi\in H^1(k,\rH)$, represented by a $1$-cocycle $h:\Ga\lra \rH(\kbar)$ such that if we denote  the twisted Galois action on $\G(\kbar)$ (resp. $\rH(\kbar)$) by $\sig_\xi$, we obtain a pure inner twist $(\G_\xi,\theta_\xi)$ such that $\rH_\xi=(\G_\xi^{\theta_\xi})^\circ$ and there exists an element $h\in \rH(\kbar)$ such that $hxh^{-1}\in\X_\xi^{rss}(k)\subset \G_\xi(k)$ lies over the point $a\in [\X^{rss}\sslash\rH](k)$. Note that since $h_\sig\in \rH(\kbar)$, the involution $\theta:\G\to\G$ commutes with $\sig_\xi$ for all $\sig\in \Ga$, so descends to an involution on $\G_\xi$ over $k$, which we have denoted by $\theta_\xi$.

Unwinding the claim that $hxh^{-1}$ is rational gives the equation
\begin{align}\label{eqn: cocycle calc}
    x = \Ad(h^{-1}{}^{\sig_\xi} h)\Ad(h_\sig n_\sig^{-1})(x),
\end{align}
so that $(h^{-1}{}^{\sig_\xi} h)\cdot (h_\sig n_\sig^{-1})\in Z_{\rH}(\rA^-)(\kbar).$
Note that this implies that the cocycle $\sig\in \Ga\mapsto h^{-1}h_\sig {}^\sig h$ is valued in $N_{\rH}(\rA^-)(\kbar)$ and lies over $w:\Ga\lra W_\X$ with respect to the map
\[
H^1(k,N_{\rH}(\rA^-)) \lra H^1(k,W_\X).
\]

%By construction, the class $\xi$ lies in the fiber of the map $H^1(k,\rH)\to H^1(k,\G)$ over the class associated to $\G_\xi$; equivalently, it lies in $\ker[H^1(k,\rH)\to H^1(k,\G_\xi)]$. In particular, there exists $g\in \G(\kbar)$ such that $h^{-1}h_\sig {}^\sig h=g^{-1}{}^{\sig_\xi}g$.

Returning to the map $j_-:T^-\iso \rA^-\subset \G$ over $\kbar$, if we twist the Galois action on $\G$ by $h_\sig$, we obtain
\[
{}^{\sig_\xi} j_- =\Ad(h_\sig n_\sig^{-1})\circ j_-
\]
It now follows from equation \eqref{eqn: cocycle calc} that the conjugate $\Ad(h)\circ j_-$ descends to a $k$-rational embedding $T^-\lra \G_\xi$. Since $h\in \rH(\kbar)$, it follows that s $hj_-(T^-)h^{-1}$ is a maximally $\theta_\xi$-split torus.
\end{proof}

This is an analogue of Kottwitz's embedding statement for maximal tori and quasi-split groups \cite[Lemma 2.2]{Kottwitzrational}. In the relative setting, we transfer flats on $\X$ rather than tori themselves and it need not suffice to consider the quasi-split form alone. On the other hand, an easy adaptation of the proof of \emph{loc. cit.} applies whenever $\X=\rH\backslash\Res_{E/k}(\rH_E)$ is a Galois symmetric variety with  $\rH$ quasi-split. %We record this here, leaving the details to the interested reader.
\begin{Lem}\label{Lem: galois dual torus embedding}
Let $k$ be a perfect field, and let $E/k$ be a quadratic extension. Assume that $\rH$ is a quasi-split reductive $k$-group and set $\G=\Res_{E/k}(\rH_E)$. Set $\X=\rH\backslash\G$ for the associated Galois symmetric variety. For any $\Ga$-invariant $\G^\ast_\X$-conjugacy class of maximally $\vartheta$-split maximal torus $\check{T}\to\hat{\G}_{\X}(=\check{\G})$, there exists a $k$-rational embedding ${T}\to \G$  realizing ${T}$ as a maximally ${\theta}$-split torus.
\end{Lem}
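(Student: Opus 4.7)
The plan is to exploit the highly constrained structure of both sides of Langlands duality in the Galois case to reduce the claim directly to Kottwitz's embedding lemma for quasi-split groups, without needing to pass to a pure inner twist.

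First, I would unwind the relevant structures on both sides. Over $\kbar$ we have $\G_{\kbar}\simeq \rH_{\kbar}\times\rH_{\kbar}$ with $\theta$ acting as the swap $(h_1,h_2)\mapsto(h_2,h_1)$. Dually, $\hat{\G}_\X=\check{\G}\simeq\check{\rH}\times\check{\rH}$, the dual involution $\vartheta$ is the swap, and $\check{\G}_\X^\ast\subset\hat{\G}_\X$ is the diagonal copy of $\check{\rH}$. The $\Ga$-action on $\check{\G}$ acts by the $\ast$-action of $\check{\rH}$ diagonally for $\sig\in\Gal(\kbar/E)$, and by the same action composed with the swap for $\sig$ in the nontrivial coset. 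In particular, every maximally $\vartheta$-split maximal torus of $\check{\G}$ is of the form $\check{T}_0\times\check{T}_0$ for some maximal torus $\check{T}_0\subset\check{\rH}$, with $\vartheta$-split part the antidiagonal.

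Next, I would establish the key correspondence: two such tori $\check{T}_0\times\check{T}_0$ and $\check{T}_0'\times\check{T}_0'$ are $\check{\G}_\X^\ast$-conjugate if and only if $\check{T}_0$ and $\check{T}_0'$ are $\check{\rH}$-conjugate, and the induced $\Ga$-action on these conjugacy classes (including the swap contribution from the nontrivial coset) reduces to the ordinary $\ast$-action of $\Ga$ on $\check{\rH}$-conjugacy classes of maximal tori in $\check{\rH}$---this is where the swap transparently matches the symmetry already present in $\check{\G}^\ast_\X$-conjugacy. Thus a $\Ga$-invariant $\check{\G}^\ast_\X$-conjugacy class of maximally $\vartheta$-split maximal torus is the same data as a stable $\check{\rH}$-conjugacy class of maximal torus in $\check{\rH}$.

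Since $\rH$ is quasi-split, Kottwitz's Lemma 2.2 in \cite{Kottwitzrational} then produces a $k$-rational maximal torus $T_0\subset\rH$ realizing this stable class. Forming the Weil restriction $\Res_{E/k}(T_{0,E})\subset\Res_{E/k}(\rH_E)=\G$ gives a $k$-rational $\theta$-stable maximal torus, and its $\theta$-split subtorus---the norm-one torus $T_0^{\Nm=1}$---is a $k$-rational maximally $\theta$-split torus. Tracing the identifications, its dual recovers the prescribed $\check{\G}^\ast_\X$-conjugacy class, yielding the desired embedding (where $T$ in the lemma statement is understood as the $\theta$-split torus, consistently with its image in $\G$).

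The main obstacle is the second step: carefully verifying that the twofold $\Ga$-action on dual conjugacy classes---incorporating both the $\ast$-action and the swap of the two factors of $\check{\rH}\times\check{\rH}$---collapses to the ordinary $\ast$-action on stable conjugacy classes of maximal tori in $\check{\rH}$. Once this unwinding is in place, no further input beyond Kottwitz's lemma is required, which is precisely why the Galois case sidesteps the stacky Assumption \ref{Assumption: orbits} needed in Proposition \ref{Prop: quotient stack is enough}.
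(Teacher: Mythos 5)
Your proof is correct and takes a genuinely different route from the paper's. The paper works entirely inside $\G$: it fixes a $k$-rational maximally $\theta$-split maximal torus $\rA\subset\G$, obtains a $\theta$-equivariant $\kbar$-isomorphism $j\colon T\iso\rA$ whose failure to be $\Ga$-equivariant is recorded by a cochain $\sig\mapsto n_\sig\in N_{\rH}(\rA^-)\cap N_{\G}(\rA)(\kbar)$, and then exploits the structural feature of the Galois case that $Z_{\G}(\rA^-)=\rA$ -- so that $\rA^+$ is a maximal torus of $\rH$ and the cochain lands in $N_{\rH}(\rA^+)$ -- to apply Kottwitz's cocycle-splitting Lemma 2.1 to the quasi-split group $\rH$ and conjugate $j$ into a $k$-rational embedding. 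You instead push the whole problem onto $\rH$ at the outset: you unwind $\check{\G}\simeq\check{\rH}\times\check{\rH}$ with $\vartheta$ the swap and $\check{\G}^\ast_\X$ the diagonal, identify $\Ga$-invariant $\check{\G}^\ast_\X$-conjugacy classes of $\vartheta$-stable maximal tori with $\Ga$-invariant $\check{\rH}$-conjugacy classes of maximal tori of $\check{\rH}$ (the swap contribution from the nontrivial coset is absorbed because the two factors of the torus coincide), apply Kottwitz's torus-embedding statement for the quasi-split group $\rH$ directly, and Weil restrict. Both arguments bottom out in Kottwitz for $\rH$ and rely on the same special fact about Galois pairs, but where the paper keeps the reduction to $\rH$ implicit and proceeds by splitting a Weyl-valued cochain, you make the reduction explicit and structural on the dual side, trading the cochain manipulation for the bookkeeping in your second step. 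Both are valid; yours is arguably cleaner conceptually, while the paper's stays closer in form to the more general Proposition \ref{Prop: quotient stack is enough} that it is meant to sharpen.
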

\begin{proof}
\quash{% This is essentially dual to the previous argument, relying on the fact that the pair $(\hat{\G}_\X,\check{\G}_{\X})$ is minimal. 
%To begin note that $X_\ast(\check{T})=X^\ast(T)$ is equipped with an involution $\vartheta$.% Fix a $\vartheta$-stable Borel $\hat{B}\supset \check{T}$ and let $\check{B}\supset \hat{B}$ be a Borel subgroup of $\check{\G}$.  On the other hand, let $(\check{\rA},\check{\B})$ be our fundamental pair from above. Let $\De\subset \Phi\subset X_\ast(\check{\rA})$ be the corresponding set of simple coroots of $\check{\G}$.
 Since both $\check\rA$ and $\check T$ are $\vartheta$-stable maximal tori, there exists $h\in \check{\G}^\ast_{\X}$ such that $\Ad(h)(\check\rA)=\check T$ %and $\Ad(h)(\hat\B) =\hat B$. 
inducing a $\vartheta$-equivariant isomorphism
\[
X_\ast(\check\rA)\iso X_\ast(\check T)
\]
carrying the root system $(\Phi,\De)$ to one in $X_\ast(\check T)$ in a $\vartheta$-equivariant way; this forces the Galois actions differ by a $1$-cocycle valued in the little Weyl group $W_{\X}$, realized as the Weyl group of $(\check{\G}_\X,\check{\rA}_\X)$.}
Let $\rA$ be a $k$-rational maximally $\theta$-split maximal torus of $\G$. Arguing as above, the assumptions give a ${\theta}$-equivariant $\kbar$-isomorphisms
\[
{j}_-:{T}^-\iso {\rA}^-\subset \G,\qquad {j}_+:{T}^+\iso {\rA}^+\subset \G
\]
of $\theta$-split tori. 
%Let $\sig\mapsto w_\sig\in W_{\X}(\kbar)$ denote the $1$-cocycle such that $T\simeq {}^\ast\rA$ where ${}^\ast\rA$ is obtained by twisting the $k$-structure of $\rA$ by $\{w_\sig\}$. 
As before, there exists a lift $n_\sig\in N_{\G}(\rA^-)(\kbar)$ such that ${}^\sig j_\pm = \Ad(n_\sig)^{-1}\circ j_\pm$. Since $Z_{\G}(\rA^-)=\rA$ in this setting, there is an inclusion $W_\X\subset W$, so we may further assume that the cochain $\sig\mapsto n_\sig$ takes values in $N_{\rH}(\rA^-)(\kbar)\cap N_\G(\rA)(\kbar)$. 

In this case, the maximal $\theta$-fixed torus $\rA^+\subset\rA$ is a maximal torus of $\rH$. Applying Lemma 2.1 of \cite{Kottwitzrational} to $\rH$ implies that there exists $h\in \rH(\kbar)$ such that
\[
h^{-1}\sig (h) n_\sig^{-1} \in j(T)(\kbar).
\] 
 One now checks that $\Ad(h)\circ j: T\lra \G$ is $k$-rational and realizes $hj(T)h^{-1}$ as a maximally $\theta$-split torus
\end{proof}

\quash{

%Let $\sig\mapsto w_\sig\in W_{\X}(\kbar)$ denote the $1$-cocycle such that $T\simeq {}^\ast\rA$ where ${}^\ast\rA$ is obtained by twisting the $k$-structure of $\rA$ by $\{w_\sig\}$. 
Because $\rA\subset\G$ is $k$-rational, there exists a lift $n_\sig\in N_{\G}(\rA)(\kbar)$ such that $j\circ \sig = \Ad(n_\sig)\circ j$. Since (e.g. \cite[Proposition 4.78]{Richardson})
\[
W_\X\simeq N_{\G}(\rA^-)/Z_{\G}(\rA^-)\simeq N_{(\G^\theta)^\circ}(\rA^-)/Z_{(\G^\theta)^\circ}(\rA^-),
\]
the cochain $\sig\mapsto n_\sig$ may be chosen to take values in $(\G^\theta)^\circ(\kbar)\subset \rH(\kbar)$. 

Lemma 2.1 of \cite{Kottwitzrational} implies that there exists $g\in \G(\kbar)$ such that
\[
g^{-1}\sig (g) n_\sig \in j(T)(\kbar).
\] 
\textcolor{blue}{This is false for the preceding example. In fact, the criterion that}\textcolor{red}{In particular, we may choose $g$ such that $g^{-1}\sig(g)\in (\G^\theta)^\circ(\kbar)$; this implies $g^{-1}\theta(g)\in \G(k)$.} One now checks that $\Ad(g)\circ j: T\lra \G$ is $k$-rational and realizes $gj(T)g^{-1}$ as a maximally $\theta_g$-split torus for the $k$-rational involution $\theta_g = \Ad(g)\circ \theta\circ \Ad(g)^{-1}$. Note $[\sig\mapsto g^{-1}\sig(g)]\in\ker^1(\rA\cdot \rH,\G;k)$ by construction. \textcolor{red}{can this be pared down to $N_\G(\rH)$?}
%such that for each $\sig\in \Ga$, ${}^\sig j$ is conjugate to $j$ {by an element of} $W_{\X}(\kbar)$. We claim that this implies that To see this, note that there exists $n_\sig\in N_{\G}(j(T))(\kbar)$ such that 
%$
%{}^\sig j = \Ad(n_\sig)\circ j.
%$ As previously noted, this cocycle Note that $j(T)\subset \G$ is a $k$-rational maximally $\theta$-split torus $\T$, so that $n_\sig\in N_{\G}(\T^-)$. Since (e.g. \cite[Proposition 4.78]{Richardson})
%\[
%N_{\G}(\T^-)/Z_{\G}(\T^-)\simeq N_{\rH^\circ}(\T^-)/Z_{\rH^\circ}(\T^-),
%\]
%the cocycle $\sig\mapsto n_\sig$ may be chosen to take values in $\rH(\kbar)$.
\quash{Replacing $j$ by such a conjugate $j_1$, there exists a $\G$-twist $\theta_1$ such that $\rA_1:=j_1(T)$ is $k$-rational and maximally $\theta_1$-split. Moreover, there exists a $1$-cocycle $\sig\mapsto n_{1,\sig}$ valued in the normalizer $N_{\G}(\rA_1)(\kbar)$ such that ${}^\sig j_1=\Ad(n_{1,\sig})\circ j_1$. A similar argument as above shows
\begin{enumerate}
    \item we may choose $n_{1,\sig}\in N_{(\G^{\theta_1})^\circ}(\rA_1^-)(\kbar)$ to be valued in the normalizer of the maximal $\theta_1$-split torus contained in $\rA_1$, and 
    \item there exists $g_1\in \G(\kbar)$ such that $n_{1,\sig} = g_1^{-1}\sig(g_1)$.
\end{enumerate}
Replacing $j$ with $j_2=\Ad(g)^{-1}j:T\to \G$ is thus $k$-rational and realizes $j_2(T)$ as a maximally $\theta_2= \Ad(g_1)^{-1}\circ\theta_1$-split maximal torus. }}

\quash{
\begin{Prop}\label{Prop: qs is enough}\textcolor{red}{This falls down whenever the strong version of Lemma \ref{Lem: dual torus embedding} fails to hold} Suppose that $\G$ is a quasi-split reductive group over $k$ and $\X$ is a symmetric $\G$-variety. If Assumption \ref{Assumption: orbits} holds for $(\G,\X)$, then the map
\[
\X^{rss}(k)\lra [\X^{rss}\sslash \rH](k)
\] is surjective. That is, if for ever $a\in [\X^{rss}\sslash \rH](k)$ there exists some $\xi\in H^1(k,\rH)$ with pure inner twist $(\X^\xi,\G^\xi)$ and a regular semi-simple element $x_a\in X^\xi(k)$ with invariant $a$, then there exists $x_a'\in X(k)$ with invariant $a$.
\end{Prop}
\begin{proof}
    Let $a\in [\X^{rss}\sslash\rH](k)$. By assumption, there exists some $\xi\in H^1(k,\rH)$ with pure inner twist $(\X^\xi,\G^\xi)$ and a regular semi-simple element $x\in X^\xi(k)$ with invariant $a$. Let $\theta^\xi$ denote the associated involution on $\G^\xi$ and let $S\subset \G^{\xi}$ be a maximal $\theta^\xi$-split maximal $k$-torus in $\G^\xi_{x}\subset \G^\xi$. Lemma \ref{Lem: split torus embedding} implies the existence of a $\check{\G}_\X$-orbit of embeddings $\check{S}\to \check{\G}^\xi\simeq \check{\G}$ as a maximally $\vartheta$-split maximal torus of $\hat{\G}_\X$, where the isomorphism is canonical. Lemma \ref{Lem: dual torus embedding} now gives a $k$-rational embedding $S\to \G$ as a maximally $\theta'$-split maximal torus of $\G$, where $\theta'$ is a pure inner twist of $\theta$.

Let $x_0^\xi\in \X^{\xi}(k)$ correspond to $\theta^\xi$ in the sense that $\G^\xi_{x_0}= (\G^\xi)^{\theta^\xi}$ and consider the flat $S\cdot x_0^\xi = S_\X\cdot x^\xi_0$, where $S\to S_\X$ is the canonical quotient by which $S$ acts. Then $x\in (S_\X\cdot x_0^\xi)(k)=\X_x(k)$ so that there exists $s\in S_{\X}(k)$ such that $x=s\cdot x^\xi_0$.  If $x_0\in \X(k)$ corresponds to $\theta$, then $x'=s\cdot x_0\in (S_\X\cdot x_0)(k)\subset \X(k)$ also lies over $a\in [\X^{rss}\sslash\rH](k)\simeq S_\X/W_\X$.
\end{proof}}

\quash{
Conversely, the properties of the dual symmetric variety imply the following embedding statement.% Suppose that $(\B,\T)$ is a $(\theta,k)$-admissible pair for $(\G,\rH)$ and suppose that $\vartheta:\hat{\G}_{\X}\lra\hat{\G}_{\X}$ is a compatible dual involution.

\begin{Rem}[Chevalley involutions]
    For example, suppose that $\G$ is a quasi-split group over $k$ possessing a $k$-rational Chevalley involution $\theta$ \cite{PrasadInv, AdamsInv}. Recall that this means that there exists a $k$-rational maximal torus $\rA\subset \G$ such that $\theta|_{\rA}$ is inversion. Setting $\X=\G^\theta\backslash\G$, we find that
\[
\check{\G}_\X=\hat{\G}_\X=\check{\G},
\]
so that $\vartheta=Id$ and $\hat{\X}=\{\ast\}$ is a point. The above lemma thus applies to any maximal torus $\check{\T}\to \check{\G}$. By \cite[Corollary 2.2]{Kottwitzrational}, there exists a $k$-rational embedding $\T\to \G$ and the preceding lemma states that there exists a $\G$-inner twist $\theta'$ of $\theta$ such that $\theta'|_{\T}$ acts by inversion on $\T$.
%\textcolor{red}{This seems to imply that if $\G$ is quasi-split over a pretty general field, and $\T\subset \G$ is any $k$-rational maximal torus, then there exists a $k$-rational Chevalley involution $\theta$ on $\G$ such that $\theta(t)=t^{-1}$ for all $t\in \T(k)$. This connected to ideas of Adams and Prasad, and makes me nervous.}
\end{Rem}

%\textcolor{blue}{It also implies that if a semi-simple class lifts, it lifts to the quasi-split group. I need to check if this is true for the FJ space}

%Lemma 2.1 of \cite{Kottwitzrational} implies that there exists $g\in \G(\kbar)$ such that
%\[
%n_\sig = g^{-1}\sig (g).
%\] 
%One now checks that $\Ad(g)\circ j: T\lra \G$ is $k$-rational and realizes $gj(T)g^{-1}$ as a maximally $\theta_g$-split torus for the $k$-rational involution $\theta_g = \Ad(g)\circ \theta\circ \Ad(g)^{-1}$. %Finally note   $[n_\sig]\in\ker[H^1(k,\rH)\lra H^1(k,\G)]$.

\quash{\subsection{The case of Whittaker inductions}\label{Section: Whittaker inductions} \textcolor{red}{CUT?}In this section, we extend the previous theory to the following setting. We continue to assume that $\G$ is quasi-split over $k$. Suppose that $\mathrm{L}\subset \G$ is a $k$-rational Levi subgroup equipped with an involution $\theta_{\rL}$ such that there exists a $k$-rational Borel pair $\rA\subset \B$ such that
\begin{enumerate}
    \item the induced Borel subgroup $B_{\rL}=\B\cap \rL$ gives a $(\theta_{\rL},k)$-admissible pair $(\rA,\B_{\rL})$, and 
    \item if $\mathrm{P}= \rL\cdot \B = \rL \cdot \mathrm{U}_{\mathrm{P}}$ is the corresponding parabolic subgroup of $\G$, then for each simple root $\al\in \De$ (determined by $\B$) such that $\fg_\al\subset \Lie( \mathrm{U}_{\mathrm{P}})$, $\theta_{\rL}(\al) = -\al\in X^\ast(\rA)$.
\end{enumerate}
\begin{Ex}
    Our motivating example is the \emph{generalized Shalika model}: hear $\G$ is a quasi-split form of $\GL_{2n}$, $\rL$ a form of $\GL_n\times \GL_n$ with $\theta_{\rL}(g_1,g_2) = (g_2,g_1)$. Then 
\end{Ex}
}

}

\section{Endoscopic data for symmetric varieties}\label{Section: endoscopy defs}
Let $\G$ be a connected reductive group over $k$. In this section, we use the previous results on outer forms and the dual symmetric variety to develop a notion of endoscopic data for a symmetric $\G$-variety $\X=\rH\backslash\G$. After recalling the basic notions, we state our main existence theorem in Theorem \ref{Thm: exists}. We formulate the definitions in Section \ref{Section: endoscopic datum}. Finally, Section \ref{Section:orbit match} proves the matching of geometric semi-simple orbits between an endoscopic symmetric variety and $\X$ in Theorem \ref{Thm: point comparison}.  %We further assume that $\X$ satisfies either of the criteria \eqref{H is fixed} or \eqref{H is conn} in Theorem \ref{Thm: unique}.

 \subsection{Endoscopic data}\label{Section: endoscopy roundup}
We first recall the general notion of endoscopic triples as in \cite{Kottwitzstableelliptic,KalethaStable}. In Section \ref{Section: stabilize}  and \cite{LesliestabFJ}, it is necessary to work with pure inner forms of a given group $\G$, so we work with the appropriate version of endoscopic triple. %We hope to incorporate the ideas of \cite{Kaletharigid} in future work.

%Suppose that $\G$ is a connected reductive group over a field $k$. For example, we may take $k$ to be a number or local field. 
 
 \begin{Def}
 A \emph{pure-refined endoscopic datum} is a triple ${\fe}=(\G_{{\fe}},\ka,\eta)$ where $\G_{{\fe}}$ is a pure inner form of a quasi-split reductive group $\G_{{\fe}}^{\ast}$ over $k$, $\ka\in Z(\check{\G}_{\fe})$ is semi-simple and central, and 
 \[
 \eta:\check{\G}_{\fe}\lra \check{\G}
 \]
 is an algebraic morphism of dual groups subject to the conditions
 \begin{enumerate}
     \item $\eta(\check{\G}_{\fe}) =\check{\G}_{\eta(\ka)}^\circ$,
     \item the $\check{\G}$-conjugacy class of $\eta$ is $\Ga$-fixed, and 
     \item $\ka\in Z(\check{\G}_{\fe})^\Ga$.
 \end{enumerate}
 An isomorphism of such triples is an isomorphism of algebraic groups $f:\G^\ast_{{\fe}_1}\lra\G^\ast_{{\fe}_2}$ defined over $k$ such that
 \begin{enumerate}
     \item $\eta_1\circ\check{f}$ and $\eta_2$ are $\check{\G}$-conjugate.
     \item The images of $\check{f}(\ka_2)$ and $\ka_1$ in $\pi_0(Z(\check{\G}_{{\fe}_1})^\Ga)$ coincide.
 \end{enumerate}
 \end{Def}
For any such triple, there is a canonical inclusion $Z(\check{\G})\subset Z(\check{\G}_{\fe})$, and we say that ${\fe}$ is \textbf{elliptic} if $Z(\check{\G}_{\fe})^{\Ga,\circ}\subset Z(\check{\G})$.

%In all cases, we have an exact sequence
%\[
%\cdots\lra \pi_0(Z(\check{\G})^\Ga)\lra \pi_0(Z(\check{\G}_{\fe})^\Ga)\lra\pi_0([Z(\check{\G}_{\fe})/Z(\check{\G})]^\Ga)\lra H^1(\Ga, Z(\check{\G}))\lra\cdots.
%\]
\begin{Rem}
 The definition of endoscopic triples in \cite{KottwitzCusp} takes ${\ka}\in [Z(\check{\G}_{\fe})/Z(\check{\G})]^\Ga$ such that the image of $\tilde{\ka}$ in $H^1(\Ga, Z(\check{\G}))$ is trivial for $k$ local and everywhere locally trivial when $k$ is global. When working with non-quasi-split groups $\G$ which are pure inner forms of a quasi-split form $\G^\ast$, It becomes necessary to fix a lift in $Z(\check{\G}_\fe)^\Ga$ as above.
\end{Rem}
    
\subsection{Endoscopic varieties}\label{Section: endo varieties}
Now let $\X=\rH\backslash\G$ be a symmetric $\G$-variety associated to an $k$-rational involution $\theta$ of $\G$. We assume that $\G$ is quasi-split.%; in general, one keeps the data of the pure inner twist $(\xi,z):\G^\ast\to \G$ in the notation.
\begin{Rem}
    The assumption that $\G$ be quasi-split is not essential. Extending to a general $\G$ will require an adaptation of rigid inner forms \cite{Kaletharigid} to the relative setting. This will be the subject of future work.
\end{Rem}
%\begin{Rem}
 %   As indicated in the introduction, these results go through \emph{mutatis mutandis} for spherical varieties parabolically-induced from a symmetric variety, as well as {Whittaker inductions from symmetric varieties}. 
%\end{Rem}

Let $(\rA,\B)$ be a $(\theta,k)$-admissible pair. Then $\rA$ is maximally $\theta$-split and we have the exact sequence
\[
1\lra \widetilde{\T}_\X\lra \rA\lra \Ax\lra 1.
\] 
We set $\T_\X = (\widetilde{\T}_\X)^\circ$. This induces the exact sequence
\[
1\lra \pi_0(\widetilde{\T}_\X)^D\lra \check{\rA}_\X\lra \check{\rA}\lra \check{\T}_\X\lra 1.
\]Following Proposition \ref{Prop: dual involution}, there exists an involution $\vartheta: \hat{\G}_{\X}\lra \hat{\G}_{\X}$ and a unique $\check{\rA}_\X$-conjugacy class of distinguished morphism 
\[
\varphi_{\X}:\check{\G}_{\X}\lra \check{\G},
\]
such that $\varphi_\X(\check{\G}_\X)=\check{\G}_\X^\ast = (\hat{\G}_\X)^{\vartheta,\circ}$, independent of the choice of $\varphi_\X$.
\quash{such that the diagram
\[
 \begin{tikzcd}
 \check{\rA}_{{\X}}\ar[d]\ar[r]&\check{\rA}\ar[d]\\
 \check{G}_{{\X}}\ar[r,"\varphi_{{\X}}"]&{\G}_{\X}^\wedge\subset\check{\G}
 \end{tikzcd}
\]%Using the uniqueness of extending the morphism $\check{\rA}_\X\lra \check{\rA}$, the image $\varphi_{\X}( \check{G}_{{\X}})\subset {\G}_{\X}^\wedge$ is unique up to conjugation by $\check{\rA}/\check{\rA}_\X\simeq \check{\rA}_{0}.$by the results of Section \ref{Sec: dual symm space} and let
% \[
 % \check{G}_{{\X}}\overset{\varphi_{{\X}}}{\lra}{\G}_{\X}^\wedge{\lra}\hat{\X},
% \]
%denote the quotient ma
 commutes.}
 Fix one such map $\varphi_{\X}$ and set $\hat{\X}=\check{\G}^\ast_{\X}\backslash\hat{\G}_{\X}$. We let ${x}_0\in \hat{\X}$ denote the tautological base point fixed by $\check{\G}^\ast_{\X}$. %This subgroup is uniquely determined by the requirement that $\varphi_{\X}$ be distinguished  by Theorem \ref{Thm: dual group map}, hence independent of our choice of $\varphi_\X$.
We also have the finite map 
\[
\hat{s}=\hat{s}_\X: \hat{\X}\lra\hat{\G}^{\vartheta}_\X\backslash \hat{\G}_\X{\lra}\hat{\G}_\X,
\]
which is a closed immersion when $\hat{\G}^{\vartheta}_\X$ is connected. We shall refer to this as the symmetrization map for $\hat{\X}$.
\quash{\begin{Rem}
    Recall that the induced involution on $\hat{\G}_{\X,ad}$ is tightly constrained by the property that $\hat{\X}_{ad}/\hat{\G}_{\X,ad}/\G^{\wedge,\vartheta}_{\X,ad}$ is a minimal spherical variety, and is therefore a finite product of factors the form (\cite[Corollary 4.6]{KnopSchalke}, using the fact that $\X$ is symmetric)
    \begin{itemize}
        \item $K/K$,\qquad $K$ simple and adjoint;
        \item $K\times K/\De K$,\qquad $K$ simple and adjoint;
        \item $\PGL_{2n}/\mathrm{PSp}_{2n}$, $n\geq 2$;
        \item $\mathrm{PSO}_{2n}/\SO_{2n-1}$, $n\geq 4$;
        \item $E_{6,ad}/F_4$.
    \end{itemize}
    It is only in the case $\mathrm{PSO}_{2n}/\SO_{2n-1}$ where the fixed point subgroup is not-necessarily connected
\end{Rem}}

\quash{ is a quasi-split reductive group equipped with a $k$-rational involution $\theta$. Let $(\rA,\B)$ be a fixed $(\theta,k)$-admissible Borel pair. %and let 
%\[
%\Psi=(X^\ast(\rA),\De,X_\ast(\rA),\check{\De})
%\]
%be the associated based root datum. 
Suppose that $\rH\subset \G$ is a symmetric subgroup associated to $\theta$ and $\X=\rH\backslash\G$ gives rise to a diagram \eqref{eqn: dual variety}. The symmetrization map embeds $\hat{\X}=S(\hat{\X})\subset \hat{\G}_{\X}\subset {\check{\G}}$, so we may view $x$ as a semi-simple element of both $\hat{\G}_{\X}$ and ${\check{\G}}$.}
 Recall that $\mathrm{L}_\X\subset \G$ is a Levi subgroup determined by $\De^p_\X$ satisfies that the inclusion $\mathrm{L}_{\X}\cap \rH\subset \mathrm{L}_\X$ is normal and there is a short exact sequence
\[
1\lra \mathrm{L}_{\X}\cap \rH\lra \mathrm{L}_\X\lra \Ax\lra 1.
\]
Setting $\rA_\X^\ast=\varphi_{\X}(\check{\rA}_\X)$ and $\mathrm{M}_\X:=(\mathrm{L}_{\X}\cap \rH)^\circ$, this induces a commutative diagram with exact rows \cite[Section 1.8]{KottwitzCusp}  \[
 \begin{tikzcd}
 1\ar[r]&\Ax^\ast\ar[r]\ar[d,"="]&\check{\rA}\ar[d]\ar[r]&\check{\T}_\X\ar[d]\ar[r]&1\\
 1\ar[r]&\Ax^\ast\ar[r]&\check{\mathrm{L}}_\X\ar[r]&\check{\mathrm{M}}_\X\ar[r]&1,
 \end{tikzcd}
 \]
 where the bottom row follows from the normality of the map $\mathrm{L}_{\X}\cap \rH\to \mathrm{L}_\X$. Moreover, it is shown in \cite[Theorem 9.12]{KnopSchalke} that $\check{\mathrm{L}}_\X$ is the centralizer of $\Ax^\ast$ in ${\check{\G}}$, so the above sequence specializes to
 \[
 1\lra {\rA}^\ast_\X\lra Z(\check{\mathrm{L}}_\X)\lra Z(\check{\mathrm{M}}_\X)\lra 1.
 \]
In particular, this gives a map $Z(\check{\mathrm{M}}_\X)\to \check{\X}$ via $\hat{\tau}$ satisfying that 
\begin{equation}\label{aut as center}
   \cala_{\hat{\X}}\simeq \check{s}(\hat{\X})\cap Z(\hat{\G}_\X)\hra  Z(\check{\mathrm{M}}_\X).
\end{equation}

 Suppose now that $x\in \hat{\X}(\cc)$ is semi-simple. There exists a $\check{\G}^\ast_{\X}$-conjugate $x_0$ of $x$ lying in $\check{\rA}$  \cite[Theorem 7.5]{Richardson}, so we may assume that $x$ lies in the flat $\check{\tau}(\check{\rA})\simeq \check{\T}_\X$. Note that this may not be a $\Ga$-equivariant twist. Then $\hat{s}(x)$ is semi-simple as an element of ${\check{\G}}$ and $\check{\mathrm{L}}_\X$. Considering the centralizer $\check{\G}_{x}:=\check{\G}_{\hat{s}(x)}$, we obtain a diagram
 \begin{equation}
     \begin{tikzcd}
&\check{\mathrm{L}}_{\X,x}\ar[r]&\check{\G}_x\\
\check{\G}_{\X,x}\ar[r,"\varphi_{\X,x}"]&\hat{\G}_{\X,x}\ar[ur]\ar[r,"\hat{\tau}_x"]&\hat{\X}_x\ar[u, swap,"\hat{s}"],
\end{tikzcd}
 \end{equation}
 where $\hat{\G}_{\X,x}\subset{\check{\G}}_x$ are the connected components of the identity of the corresponding centralizers, the involution $\vartheta$ preserves $\hat{\G}_{\X,x}\subset \hat{\G}_{\X}$, and $\check{\G}_{\X,x}\subset \check{\G}_{\X}$ is defined so that
  $$[\hat{\G}_{\X,x}]^{\vartheta,\circ} = \G_{\X,x}^\ast:=\varphi_{\X,x}(\check{\G}_{\X,x})$$ and the diagram
\begin{equation}\label{diag: dual at x}
     \begin{tikzcd}
\check{\G}_{\X,x}\ar[d,"\eta_\X"]\ar[r,"\varphi_{\X,x}"]&\hat{\G}_{\X,x}\ar[d,"\eta"]\ar[r,"\hat{\tau}_x"]&\hat{\X}_x\ar[d, "\eta"]\\
\check{\G}_{\X}\ar[r,"\varphi_{\X}"]&\hat{\G}_{\X}\ar[r,"\hat{\tau}"]&\hat{\X},
\end{tikzcd}
\end{equation}
commutes. 
%The statement is geometric, so we need not concern ourselves with the Galois action. Note that $x=\check{s}(t)$ lies in $\check{T}^-\subset \check{\T}$, so that $\Ax^\ast$ centralizes $x$. Then \cite[Theorem 9.12]{KnopSchalke} implies that $\check{\mathrm{L}}_\X$ is precisely the centralizer of $\Ax^\ast$. Thus, if 
The following lemma is an immediate check.
\begin{Lem}
 The subgroup $\check{\mathrm{L}}_{\X,x}\subset {\check{\G}}_x$ is a Levi subgroup equal to the centralizer of $\Ax^\ast$ in $\check{\G}_x$.
 If $\check{s}(x)\in Z(\check{\mathrm{M}}_\X)$, then
   $ 
    \check{\mathrm{L}}_{\X,x}=\check{\mathrm{L}}_\X\subset {\check{\G}}_x.
    $

\end{Lem}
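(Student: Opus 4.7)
The plan is to reduce both claims to Knop–Schalke's Theorem 9.12 applied to the descendant $\X_x = \rH_x \backslash \G_x$, together with the basic fact that semi-simple descent preserves the canonical torus and the parabolic roots. After conjugating by an element of $\check{\G}_\X^\ast$, which is harmless since the resulting centralizer is only determined up to $\check{\G}_x$-conjugacy in its $\check{\G}$-conjugacy class, I can assume $x \in \check{\tau}(\check{\rA})$, so that $\hat{s}(x) \in \check{\rA}$. This places $\check{\rA} \subset \check{\G}_x$ and in particular $\rA^\ast_\X \subset \check{\rA} \subset \check{\G}_x$, which is the key initial observation.

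Next I would invoke the structural fact (standard for descendants of symmetric varieties, and used throughout Section \ref{Section: generalities}) that the dual involution $\vartheta$ restricts to $\hat{\G}_{\X,x}$ with the same action on $\check{\rA}$, so the canonical torus of $\hat{\X}_x$ coincides with that of $\hat{\X}$ as subgroups of $\check{\rA}$; dually, the canonical torus $\rA_{\X_x}^\ast$ of the endoscopic variety $\X_x$ equals $\rA_\X^\ast$. The Knop–Schalke Theorem 9.12 applied to $\X_x$ then identifies $\check{\mathrm{L}}_{\X,x}$ with the centralizer in $\check{\G}_x$ of $\rA_{\X_x}^\ast = \rA_\X^\ast$. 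Since $\check{\mathrm{L}}_\X = Z_{\check{\G}}(\rA_\X^\ast)$ by the same theorem for $\X$, this yields
\[
\check{\mathrm{L}}_{\X,x} = Z_{\check{\G}_x}(\rA_\X^\ast) = Z_{\check{\G}}(\rA_\X^\ast) \cap \check{\G}_x = \check{\mathrm{L}}_\X \cap \check{\G}_x,
\]
proving the first assertion.

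For the second assertion, I would exploit the exact sequence
\[
1 \lra \rA_\X^\ast \lra Z(\check{\mathrm{L}}_\X) \lra Z(\check{\mathrm{M}}_\X) \lra 1
\]
together with the realization of the map $Z(\check{\mathrm{M}}_\X) \to \hat{\X}$ via $\hat{\tau}$. The hypothesis $\hat{s}(x) \in Z(\check{\mathrm{M}}_\X)$ means that $\hat{s}(x)$ admits a lift to $Z(\check{\mathrm{L}}_\X)$; since $\rA_\X^\ast$ is already contained in $Z(\check{\mathrm{L}}_\X)$, in either case $\hat{s}(x)$ is centralized by all of $\check{\mathrm{L}}_\X$. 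Hence $\check{\mathrm{L}}_\X \subset Z_{\check{\G}}(\hat{s}(x))^\circ = \check{\G}_x$, and combining with the first part gives $\check{\mathrm{L}}_{\X,x} = \check{\mathrm{L}}_\X \cap \check{\G}_x = \check{\mathrm{L}}_\X$.

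The main obstacle is the first half of Step 2: justifying that the canonical torus of the descendant $\X_x$ equals $\rA_\X^\ast$ and, equivalently, that the dual picture $\hat{\X}_x \subset \hat{\X}$ is compatible with the quotients to $\check{\rA}_\X$. This is a standard but nontrivial fact in the theory of spherical descent; in the symmetric setting it follows by noting that the flat through $x$ inside $\hat{\X}$ is intrinsic to the $\vartheta$-split torus and is unchanged upon restricting to $\check{\G}_x$. The interpretation of the containment ``$\hat{s}(x) \in Z(\check{\mathrm{M}}_\X)$'' as producing a genuine lift in $Z(\check{\mathrm{L}}_\X)$ also merits care, but follows directly from the surjectivity in the displayed exact sequence.
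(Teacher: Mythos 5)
The paper itself declares this lemma ``an immediate check'' and gives no proof, so there is no argument in the text to compare yours against. Your overall structure is sound: after $\check{\G}^\ast_\X$-conjugation one may assume $\hat{s}(x)\in\check{\rA}$, whence $\rA_\X^\ast\subset\check{\rA}\subset\check{\G}_x$ and $Z_{\check{\G}_x}(\rA_\X^\ast) = Z_{\check{\G}}(\rA_\X^\ast)\cap\check{\G}_x = \check{\mathrm{L}}_\X\cap\check{\G}_x$ immediately. For part (1), invoking Knop--Schalke's Theorem 9.12 on the descendant spherical data together with the observation that the canonical torus of $\hat{\X}_x$ is the $\vartheta$-fixed part of the \emph{same} $\check{\rA}$ as for $\hat{\X}$ is a valid route, but it is heavier than what ``immediate'' suggests; a leaner reading is that $\check{\mathrm{L}}_{\X,x}$ is essentially being \emph{defined} as $Z_{\check{\G}_x}(\rA_\X^\ast)$ (by analogy with $\check{\mathrm{L}}_\X$), so the only content of part (1) is the general fact that the centralizer of a torus in a connected reductive group is a Levi, and part (2) carries the real content. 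Be careful also that you are conjuring a descendant ``$\X_x$'' on the group side from a point $x\in\hat{\X}$ on the dual side; at this point in the paper that variety has not yet been constructed, so only the combinatorial descendant of the dual data is available.

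Your part (2) has a small slip in how ``admits a lift to $Z(\check{\mathrm{L}}_\X)$'' is turned into ``$\hat{s}(x)$ is centralized by $\check{\mathrm{L}}_\X$''. The group $\check{\G}_x = Z_{\check{\G}}(\hat{s}(x))^\circ$ only makes sense for $\hat{s}(x)$ an honest element of $\check{\G}$, so the hypothesis has to be read via the paper's tacit identification of $Z(\check{\mathrm{M}}_\X)$ with a subgroup of $\check{\T}_\X\hra\check{\rA}$. You then need $\hat{s}(x)\in Z(\check{\mathrm{L}}_\X)$ itself, not merely the existence of \emph{some} lift of a coset; the existence of a lift alone says nothing about the element $\hat{s}(x)$. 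The missing observation is that $\vartheta$ stabilizes $Z(\check{\mathrm{L}}_\X)$, since $\vartheta$ fixes $\rA_\X^\ast$ pointwise and hence preserves $\check{\mathrm{L}}_\X = Z_{\check{\G}}(\rA_\X^\ast)$; therefore the anti-symmetrization $z\mapsto z^{-1}\vartheta(z)$ of any lift $z\in Z(\check{\mathrm{L}}_\X)$ lands again in $Z(\check{\mathrm{L}}_\X)$, and that anti-symmetrization is precisely $\hat{s}(x)$. With this in place the conclusion is as you state: $\check{\mathrm{L}}_\X$ is connected and centralizes $\hat{s}(x)$, so $\check{\mathrm{L}}_\X\subset\check{\G}_x$ and $\check{\mathrm{L}}_{\X,x} = \check{\mathrm{L}}_\X\cap\check{\G}_x = \check{\mathrm{L}}_\X$.
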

Recall that $\hat{\X}^{ss} = \check{\G}_\X^\ast\cdot\hat{s}^{-1}(\check{\T}_\X)$ by \cite[Proposition 6.3]{Richardson}. Motivated by the preceding lemma and the inclusion $Z(\check{\mathrm{M}}_\X)\subset \check{\T}_\X$, we let $\hat{\X}^{\heart}:=\check{\G}_\X^\ast\cdot\hat{s}^{-1}(Z(\check{\mathrm{M}}_\X))$ be the subvariety of semi-simple $x\in \hat{\X}^{ss}$ such that $\hat{s}(x)$ is centralized by a $\check{\G}_\X^\ast$-conjugate of $\check{\mathrm{M}}_\X$. More generally, let $S\subset \G$ be a maximally $\theta$-split maximal torus and let $j:\check{S}\to \hat{\G}_\X$ be a representative of the canonical $\check{\G}_\X$-orbit of embeddings of $\check{S}$ as a maximally $\vartheta$-split torus (cf. Lemma \ref{Lem: split torus embedding}). We note that $\check{S}^\heart:=\check{S}\cap [j^{-1}\circ\check{\tau}^{-1}(\hat{\X}^\heart)]$ is independent of the choice of $j$. 
\quash{$(\G^\theta)\circ(\kbar)$-conjugate to $\rA$, and we let $S_\X$ denote the quotient of $\S$ corresponding to $\Ax$; this is well-defined as the choice of $g\in (\G^\theta)\circ(\kbar)$ only affects the $\kbar$-isomorphism $S\simeq\rA$ up to the action by $W_\X$, preserving the splitting $\rA=\rA^+\cdot \rA^-$. Set $T\subset S$ for the connected component of the kernel of $S\to S_\X$.

We may conjugate by an element of $\check{\G}_\X^\ast$ and obtain a commutative diagram
\[
     \begin{tikzcd}
\check{S}_\X\ar[r]\ar[d]&\check{S}\ar[d,"\eta"]\ar[r]&\check{T}\ar[d]\\
\check{\rA}_\X\ar[r]&\check{\rA}\ar[r]&\check{\T}_\X.
    \end{tikzcd}
\] 
The vertical arrows are not $\Ga$-equivariant, but if $\sig_S$ denotes the action on $\check{S}$, then there exists a $1$-cocycle $w_S\in Z^1(\Ga,W_{\hat{\X}})$, where 
\[
W_{\hat{\X}}\simeq N_{\check{\G}_\X^\ast}(\check{\T}_\X)/Z_{\check{\G}_\X^\ast}(\check{\T}_\X)
\]
is the little Weyl group for $\hat{\X}$ with respect to the maximally $\vartheta$-split torus $\check{\rA}$, such that for all $s\in \check{S}$ and $\sig\in \Ga$
\[
\eta(\sig_S(s)) = w_S(\sig)\sig(\eta(s)).
\]
Through this identification, the quotient $\hat{\tau}$ induces a map $\hat{\tau}_S:\check{T}\to \hat{\X}$, which depends on the choice of conjugation $\check{S}\to \check{\rA}$. We are interested primarily in those elements $t\in \check{\T}$ such that $\hat{\tau}_S(t)\in Z(\check{L}^\circ_\X)$.}

 \subsubsection{Existence} We show how to produce symmetric varieties of endoscopic groups of $\G$ from $\X=\rH\backslash\G$; the critical point is existence and uniqueness up to $\G$-inner forms of such varieties. %Due to several complications, we assume now that $\rH=\G^\theta$, though the result (and outline of the proof) hold for more general symmetric varieties with appropriate technical changes (e.g. if we assume Assumption \eqref{H is conn} from Theorem \ref{Thm: unique} below).
 
 Let $x\in \hat{\X}^{\heart}$ and let $g\in \check{\G}_\X^\ast$ such that $g\cdot\hat{s}(x)\in Z(\check{\mathrm{M}}_\X)\subset \check{\T}_\X$. We assume further that $\sig(x)=x$ for all $\sig\in \Ga$\footnote{ This assumption is intended to be compatible with the notion of \emph{pure refined endoscopic data} discussed above. More generally, we may allow $\check{s}(x)\in Z(\check{\mathrm{M}}_\X)$ to project to a $\Ga$-invariant element in $Z(\check{\mathrm{M}}_\X)/Z(\check{\G}_\X)$.}. 
 Let
\[
\text{$\hat{\eta}_x:\hat{\G}_{\X,x}\subset\hat{\G}_\X$ and $\eta_x: {\check{\G}}_x\subset {\check{\G}}$}
\]denote the inclusions of connected components of the identities of the two centralizers. Recall that the $L$-action of $\Ga$ on $\check{\G}$ induces a unique action on $\hat{\G}_\X$, and we denote the corresponding semi-direct product as ${}^L\hat{\G}_\X$. The assumption that $x\in \hat{\X}^{\heart,\Ga}$ implies that the centralizers are $\Ga$-stable, so that the maps above extend to maps
\[
{}^L\hat{\eta}_x:\hat{\mathcal{G}}_x\lra {}^L\hat{\G}\text{ and }{}^L\eta_x:\calg_x\lra  {}^L\G,
\]
where 
\[
1\lra \check{\G}_x\lra \calg_x\lra \Ga\lra 1
\]
is the extension induced by the endoscopic datum $\fe_x$, and similarly for $\hat{\calg}_x$. These extensions need not be the $L$-group of a reductive group over $k$. Note that $\hat{\G}_{\X,x}\subset\hat{\G}_\X$ is stable under $\vartheta$ and $\Ga$. Let $\G_\fe=\G_x$ denote the quasi-split reductive group over $k$ dual to ${\calg}_x$ in the sense of \cite[Section 1.2]{LanglandsShelstad}, so that $\fe_x:=(\G_\fe,\hat{s}(x),\eta)$ gives a (not-necessarily elliptic) pure-refined endoscopic triple of $\G$. %We let ${}^L\fe_x$ denote a fixed $L$-extension of $\fe_x$. %We further assume that $x\in \hat{\X}$ is Galois fixed. %

%Indeed, since $g\cdot\check{s}(x)\in Z(\check{\mathrm{M}}_\X)\subset \check{\rA}\subset \check{\B}$, and $g\in \check{\G}^\ast_\X$, we see that $g^{-1}(\check{\rA},\check{\B})g$ is a Borel pair such that the induced Borel pair of $\hat{\G}_{\X,x}$ is $\vartheta$-stable there exists a maximally $\vartheta$-split maximal torus $\check{S}\subset \hat{\G}_{\X,x}$. 

\quash{We thus have a $\Ga$-stable pair $(\check{\rA},\check{\B}_x)$ of the group ${\check{\G}}_x$, a subset $\hat{\Phi}_{\X,x}\subset \check{\Phi}_x^+$ corresponding to a reductive subgroup 
\[
\check{\rA}\subset\hat{\G}_{\X,x}\subset\check{\G}_x,
\]
and a $\Ga$-invariant involution $\vartheta_x:\check{\G}_{x}\lra \hat{\G}_{x}$ inducing a diagram
\begin{equation}\label{diag: dual at x}
     \begin{tikzcd}
\check{\G}_{\X,x}\ar[d,"\eta_\X"]\ar[r,"\varphi_{\X,x}"]&\hat{\G}_{\X,x}\ar[d,"\eta"]\ar[r,"\check{s}"]&\hat{\X}_x\ar[d, "\eta"]\\
\check{\G}_{\X}\ar[r,"\varphi_{\X}"]&\hat{\G}_{\X}\ar[r,"\check{s}"]&\hat{\X},
\end{tikzcd}
\end{equation}
where% the left-most square is Cartesian. In particular, 
\[
\ker(\varphi_{\X,x})=\ker(\varphi_\X) =\ker(\varphi_{\rA}),
\]
so that $\eta_{\X}$ is an embedding of $\check{\G}_{\X,x}$ as a subgroup of $\check{\G}_\X$.
%\textcolor{red}{The only thing I can see is that we argue (using classification) that these subgroups are all products of smaller symplectic groups by showing that they are descents of $(\GL(V),\Sp(V))$.}
}

% We further know that the induced Borel pair $(\check{\rA},\hat{\B}_x)$ of $\hat{\G}_{\X,x}$ is $\vartheta$-stable, where $\hat{\B}_x=\check{\B}_x\cap \hat{\G}_{\X,x}$. 

%Consider the induced involution $\check{\theta}: X^\ast(\check{\T})\lra X^\ast(\check{\T})$. We assume further that the coroots $\ga\in \check{\De}^p=\check{\De}\setminus{\check{\De}\cap\check{\De}^\wedge}$ satisfy $\check{\theta}(\ga)=-\ga$.
%\textcolor{blue}{The following is as simple as I would like to make this.}

\begin{Thm}\label{Thm: exists}
 Assume that $\X$ is well-adapted. Let $x\in \hat{\X}^{\heart,\Ga}$ be as above. Let $\G_\fe$ be the (quasi-split) endoscopic group associated to $\fe_x$. There exists an $k$-rational involution $\theta_\fe:\G_\fe\lra \G_\fe$ and a $k$-rational subgroup $(\G_\fe^{\theta_\fe})^\circ\subset \rH_\fe\subset N_{\G_\fe}(\G^{\theta_\fe}_\fe)$ such that if $\X_\fe:=\rH_\fe\backslash\G_\fe$  is the associated symmetric variety, 
    \begin{enumerate}
    \item\label{item lattice}if $\rA_\fe\subset \G_\fe$ is a $(\theta_\fe,k)$-admissible torus with $\rA_{\X_\fe}$ the corresponding quotient torus, there exists a $k$-rational embedding ${\rA_\fe}^-\to \X^\xi\subset \G^\xi$  realizing ${\rA_\fe}^-$ as a maximally ${\theta}^\xi$-split torus of a pure inner twist $(\X^\xi,\G^\xi)$;
        \item\label{item diagram} a $\check{\G}$-conjugate of $\eta$ identifies the sequence of connected reductive groups
        \[
\check{\G}_{\X_\fe}\lra\hat{\G}_{\X_\fe}\lra\check{\G}_\fe
        \]of the $\G_\fe$-variety $\X_\fe$ with the top row of \eqref{diag: dual at x};
        \item\label{final thing} there is a canonical morphism $\mathrm{dist}_\fe:\Aut_d(\X)\to \Aut_d(\X_\fe)$.
\end{enumerate}
If Conjecture \ref{Conj: cohom surj} holds for $\X_{\fe}$, then $\X_\fe$ may be chosen so that the geometric $1$-cocycle $\mu_{\X_\fe}:\Ga\lra \Aut_d(\X_\fe)$ constructed in Definition \ref{Def: geometric cocycle} is cohomologous to $\mu_{\X,\fe}$ defined by
 \begin{equation}\label{item rep}
\begin{tikzcd}
    \Ga\ar[dr,"\mu_{\X,\fe}"]\ar[d,"\mu_\X"]&\\
    \Aut_d(\X)(\kbar)\ar[r,"\mathrm{dist}_\fe"]& \Aut_d(\X_\fe)(\kbar).
\end{tikzcd}
 \end{equation}
Such a variety $\X_\fe$ is unique up to $k$-rational isomorphism if $H^1(k,\mathcal{A}_\X^d)$ is trivial.
\end{Thm}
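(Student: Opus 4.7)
My strategy is to first construct $\theta_\fe$ geometrically over $\kbar$ by restricting the dual involution $\vartheta$, then descend to $k$ via the admissibility machinery of Section \ref{Section: endo invo}, and finally adjust by a $\G_\fe$-inner twist so that the outer data matches \eqref{item rep}. The geometric input is that, after conjugating so that $\hat s(x)\in Z(\check{\mathrm{M}}_\X)\subset\check{\T}_\X$, the identity $\vartheta|_{\check{\rA}}=\mathrm{inv}\circ\check\theta$ yields $\vartheta(\hat s(x))=\hat s(x)^{-1}$. Hence both $\check{\G}_x$ and $\hat{\G}_{\X,x}$ are stable under $\vartheta$ and $\Ga$, and the restriction $\vartheta_x:=\vartheta|_{\hat{\G}_{\X,x}}$ is a $\Ga$-equivariant involution whose connected fixed-point subgroup is $\check{\G}_{\X,x}^{\ast}$. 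This realizes the top row of \eqref{diag: dual at x} as a symmetric pair of complex reductive groups carrying the correct Galois action.

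\textbf{Descent and basic properties.} To descend $\theta_\fe$ to $k$, I would verify that the subsystem $(\check{\Phi}_x,\vartheta)\subset(\check{\Phi},\vartheta)$ forms an endoscopic root system with involution in the sense of Section \ref{Section: endo invo}: hypotheses \eqref{endo desi 1} and \eqref{endo desi 1.5} follow from $x\in\hat{\X}^{\heart,\Ga}$ being simultaneously $\Ga$- and $\vartheta$-compatible, while \eqref{endo desi 2} holds because the coroots of parabolic simple roots lie in the Levi $\check{\mathrm{L}}_\X\subset\check{\G}_x$ centralizing $Z(\check{\mathrm{M}}_\X)$. Proposition \ref{Prop: endoscopic roots} then yields admissibility of the induced $(\Ga,\theta)$-index $\mathrm{I}_s$, and \cite[Theorem 10.45]{Helminckrational} produces a $k$-rational involution $\theta_\fe$ on the quasi-split $\G_\fe$ inducing $\mathrm{I}_s$. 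Theorem \ref{Thm: quasi-split G-inner form} furnishes a quasi-split $\G_\fe$-inner form $\X_{\fe,qs}=\rH_{\fe,qs}\backslash\G_\fe$, and I would pin down the intermediate subgroup $(\G_\fe^{\theta_\fe})^\circ\subset\rH_{\fe,qs}\subset N_{\G_\fe}(\G_\fe^{\theta_\fe})$ by requiring the character lattice of $\rA_{\X_{\fe,qs}}$ to match $\fX$ under the identification coming from $\eta$. Property \ref{item diagram} is then immediate by construction; property \ref{final thing} follows from the induced inclusion $\Lam_{\X_\fe}^d\subset\Lam_\X^d$ of root lattices, giving the dual canonical morphism $\Aut_d(\X)\to\Aut_d(\X_\fe)$; and property \ref{item lattice} will follow by applying Proposition \ref{Prop: quotient stack is enough} to $\X_{\fe,qs}$, noting that the relevant $\ast$-actions agree up to a $W_{\hat\X}$-valued cocycle.

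\textbf{Cocycle matching and uniqueness.} To match $\mu_{\X_\fe}$ with $\mu_{\X,\fe}$, I would invoke Conjecture \ref{Conj: cohom surj} for $\X_\fe$ to lift the difference $\mu_{\X,\fe}\cdot\mu_{\X_{\fe,qs}}^{-1}\in H^1(k,\Aut_d(\X_\fe))$ to a class in $H^1(k,\cala_{\X_\fe})$, and twist $\X_{\fe,qs}$ by a representative to obtain the desired $\X_\fe$. For uniqueness, any two realizations are $\G_\fe$-inner forms of one another (Lemma \ref{Lem: outer forms classify}) differing by an element of $\ker[H^1(k,\cala_{\X_\fe})\to H^1(k,\Aut_d(\X_\fe))]$; the short exact sequence \eqref{eqn: distinguished group ses} identifies this kernel with a quotient of $H^1(k,\cala_{\X_\fe}^d)$, which is bounded by $H^1(k,\cala_\X^d)$ through the functorial morphism of property \ref{final thing}, so triviality of the latter yields uniqueness. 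The principal technical hurdle throughout is the admissibility of the endoscopic $(\Ga,\theta)$-index, which is the content of Proposition \ref{Prop: endoscopic roots} and ultimately reduces to a rank-one case analysis of Satake diagrams.
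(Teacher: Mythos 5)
Your proposal follows essentially the same route as the paper's proof: both constructions hinge on showing that the induced endoscopic $(\Ga,\theta)$-index is admissible via Proposition~\ref{Prop: endoscopic roots}, then invoking \cite[Theorem 10.45]{Helminckrational} to produce $\theta_\fe$, establishing items~\eqref{item lattice}--\eqref{final thing} by torus/lattice comparison, and finally twisting by a lift of $\mu_{\X,\fe}$ furnished by Conjecture~\ref{Conj: cohom surj}. Your observation that $\vartheta(\hat s(x)) = \hat s(x)^{-1}$ (forcing $\vartheta$-stability of the centralizers) is correct and is implicit in the paper's reduction.

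There is, however, a genuine gap in your treatment of the intermediate subgroup $\rH_\fe$. You write that one should ``pin down the intermediate subgroup $(\G_\fe^{\theta_\fe})^\circ\subset\rH_{\fe,qs}\subset N_{\G_\fe}(\G_\fe^{\theta_\fe})$ by requiring the character lattice of $\rA_{\X_{\fe,qs}}$ to match $\fX$,'' but you give no argument that such a subgroup \emph{exists}. This is a nontrivial compatibility: for the case $\rH = (\G^\theta)^\circ$ the paper has to prove the lattice inclusion $X^\ast_{\theta,\circ}\subset X^\ast_{\theta_\fe,\circ}$ --- which requires the identification of $X^\ast_{\theta,\circ}$ with a lattice cut out by integrality against the colors (\cite[Corollary 1.5]{Hofscheier} together with Proposition~\ref{Prop: sym colors}), combined with the observation that $\check{\De}^n_{\X_\fe}\subset\{\check{\al}-\check\theta(\check\al)\}$ because $\De_\fe\subset\Phi^+$. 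For $\G^\theta\subset\rH\subset N_\G(\G^\theta)$ the paper has the further inclusion $\rH/\G^\theta\subset\cala_{\X_\theta}\subset\cala_{\G_\fe^{\theta_\fe}\backslash\G_\fe}$, which again requires justification. Absent these arguments, the prescribed $\rH_\fe$ might simply not exist as a $k$-rational subgroup with the required lattice.

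Two smaller remarks. First, your invocation of Theorem~\ref{Thm: quasi-split G-inner form} is harmless but unnecessary at this stage: the paper does not pass through the quasi-split form in the proof of Theorem~\ref{Thm: exists}, and the existence of the quasi-split representative in a given geometric class is deferred to Corollary~\ref{Cor: quasisplit endo}. Since Theorem~\ref{Thm: quasi-split G-inner form} carries additional hypotheses ($k$ perfect), front-loading it mildly and needlessly narrows the applicability of the argument. Second, your uniqueness sketch conflates $H^1(k,\cala_\X^d)$ with $H^1(k,\cala_{\X_\fe}^d)$; the embedding $\cala_\X\hookrightarrow\cala_{\X_\fe}$ does not obviously induce a surjection or injection in the needed direction between the $d$-subgroups, so the claimed ``bound'' requires an argument you have not supplied.
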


\begin{proof}
  Let us first assume that $\rH=\G^\theta$. Using the notation from Section \ref{Section: indices}, the involution $\theta$ on $\G$ induces an admissible $(\Ga,\theta)$-index $\I=(X^\ast(\rA),\De, \emptyset, \De_\X^p,\sig_{\ast},\theta^\ast)$, and the data $(\I,\theta|_{\rA})$ determines the combinatorial spherical datum $\Omega_\X$. For ease of notation, we conflate $\hat{s}(x)$ with $x$.

Suppose that $(\check{\rA},\check{\B})$ is a $\Ga$-stable Borel pair for $\check{\G}$. We further assume that $\check{\rA}\subset \hat{\G}_\X$ is maximally $\vartheta$-split. Let ($\check{\rA}_\fe,\check{\B}_\fe)$ be similarly defined for $\check{\G}_\fe$. Replacing $\hat{s}(x)$ with $gxg^{-1}\in Z(\check{\mathrm{M}}_\X)$ as above (which amounts to conjugating everything), we may assume that $\eta^{-1}(\check{\B})=\check{\B}_\fe$ and $\eta^{-1}(\check{\rA})=\check{\rA}_\fe$. Note now that $\eta$ is no longer $\Ga$-equivariant but rather that $\sig\circ\eta\circ\sig_x^{-1} = \Ad(\sig(g)g^{-1})\circ\eta$. Since $g\in \check{\G}_\X^\ast$, $\eta$ is $\vartheta$-equivariant when restricted to $\hat{\G}_{\X,x}$. We thus obtain a $\vartheta$-equivariant identification 
\[
X_\ast(\check{\rA}_{\fe})\simeq X_\ast(\check{\rA})
\]
such that the $\Ga$-actions differ by a $1$-cocycle $w\in Z^1(\Ga,W^\vartheta)$, where $W^\vartheta$ is the fixed points of $\vartheta$. Noted that if $W_{\hat{\X}}$ is the little Weyl group for $\hat{\X}$ relative to the maximally $\vartheta$-split maximal torus $\check{\rA}$, then $W_{\hat{\X}} \simeq W^\vartheta/W_2$ with $W_2=\{w\in W^\vartheta: w|_{\check{\rA}_{\X}}=1\}$. This follows since the cocycle takes values in the Weyl group for $(\check{\G},\check{\rA})$, commutes with $\vartheta$, and \cite[Section 4]{Richardson}.%Now using the fact

In particular, we may assume that $\check{\rA}_\fe$ is maximally $\vartheta$-split and that $\hat{\B}_\fe=\eta^{-1}(\check{\B}_\fe\cap \hat{\G}_{\X,x})$ is $\vartheta$-stable.
%Since $x\in Z(\check{\mathrm{M}}_\X)^\Ga$, the subgroup $\hat{\G}_{\X,x}\subset \hat{\G}_\X$ contains $\check{\rA}$ and is stable under both $\Ga$ and $\vartheta$. 
Recall that there is a $k$-rational inclusion $\rA_\fe\to \G_\fe:=\G_x$, so that the root system $\Psi_\fe$ for $\G_\fe$ is contained in $X_\ast(\check{\rA}_{\fe})=X^\ast({\rA}_{\fe})$. Let $\B_\fe$ be a $k$-rational Borel containing $\rA_\fe$ and inducing the basis $\De_\fe\subset X^\ast({\rA}_{\fe})$ corresponding to $\check{\B}_\fe$. Setting $\theta_\fe|_{\rA_\fe}$ to be the $k$-rational involution on $\rA_\fe$ induced by the involution
\[
-\vartheta: X^\ast({\rA}_{\fe})=X_\ast(\check{\rA}_{\fe})\lra X_\ast(\check{\rA}_{\fe})=X^\ast({\rA}_{\fe}),
\] we claim that $\De_\fe$ is a $(\Ga,\theta_\fe)$-basis. Indeed, this follows from the fact that $\hat{\B}_\fe$ is both $\vartheta$ and $\Ga$-stable. We may thus associate to the quasi-split endoscopic group $\G_\fe$ over $k$ the $(\Ga,\theta)$-index
    \[
    \I_\fe=(X^\ast(\rA_\fe),\De_\fe, \emptyset, \De_\X^p,\sig_{\fe,\ast},\theta^\ast),
    \]
    where%$\De_\fe$ is the set of simple roots of the endoscopic group $\G_\fe$,
    \begin{enumerate}
        \item we have the inclusion $\De_\X^p\subset \De_\fe$ since $x\in Z(\check{\mathrm{M}}_\X)$,
        \item $\sig_{\fe,\ast}$ denotes the $\ast$-action of $\Ga$ on $\De_\fe\subset \Phi_\fe\subset X^\ast(\rA_\fe)$ determined by $\G_\fe$, and
        \item $\theta^\ast =-w_\X\theta$ is the induced diagram automorphism, which is well-defined as the Weyl elements $w_\X\in W({\check{L}_{\X}},\check{\rA}_\fe)\subset W(\check{\G}_\fe,\check{\rA}_\fe)$.
    \end{enumerate}
    That this index is admissible is precisely the statement of Proposition \ref{Prop: endoscopic roots}.%(parts \eqref{bases good} and \eqref{index good}, respectively). 
     
     In particular, there exists a $k$-rational involution $\theta_\fe$ on $\G_\fe$, normally related to $(\rA_\fe,\B_\fe)$ and restricting to $\theta_\fe|_{\rA_\fe}$ on $\rA_\fe\subset \G_\fe$, that induces the $(\Ga,\theta_\fe)$-index $\I_\fe$. Fix such an involution $\theta_\fe$ and let $\rH_\fe=\G^{\theta_\fe}_{\fe}$.  Setting $\X_\fe:=\rH_\fe\backslash\G_\fe$, this variety satisfies \eqref{item lattice} and \eqref{item diagram} for any choice of $\theta_\fe$ by construction and Proposition \ref{Prop: quotient stack is enough}. Note that Lemma \ref{Lem: same on torus} shows that any such choice for $\theta_\fe$ induce the same combinatorial data $\Omega_{\X,\fe}$ with $\Ga$-action.\quash{is dual to the data 
 \begin{equation*}
     \begin{tikzcd}
&\check{L}_{\X}\ar[r]&\check{\G}_x\\
\check{\G}_{\X,x}\ar[r,"\varphi_{\X}"]&\hat{\G}_{\X,x}\ar[ur]\ar[r,"\check{s}"]&\hat{\X}_x\ar[u, "\iota_\X"],
\end{tikzcd}
 \end{equation*}
 by construction.} % If $\Out_{\X_x}(\rH_x)$ is trivial, Lemma \ref{Lem: outer forms classify} implies that $\X_x$ is unique up to $\G$-inner twist. %Otherwise, $\X_x$ is not yet determined up to $\G_x$-inner twist. % By Proposition \ref{Prop: quasirational}, one still must determine a lift of the $\Ga$-action to $\D(\X_x)$. In terms of involutions, this flexibility corresponds to possibly composing $\theta_x$ with an element $\Ad(a)\in [\rA^-/Z(\G)](k)$ \cite[Proposition 8.13]{Helminckrational}. 

% \textcolor{red}{There are some gaps here. I need to argue that there is a bijection between these representations we get via pull-back, and the representation}
%More generally, recall that there is a canonical inclusion $Z(\G)\subset Z(\G_\fe)$. 
%
Using our $\vartheta$-equivariant identification $X^\ast(\rA_\fe)=X^\ast(\rA)$, the roots $\Phi_\fe$ of $\G_\fe$ form a (potentially non-additively closed) root sub-system of $\Phi$ as subsets of $X^\ast(\rA)$.  It follows that there is a natural inclusion of root lattices $\Lam_{\X_\fe}\subset \Lam_\X$, so we obtain a canonical inclusion
\[
\cala_\X\hra \cala_{\X_\fe}.
\]
 Since $Z(\G)\subset Z(\G_\fe)$, this descends to a morphism $\Out_{\X}(\rH)\to \Out_{\X_\fe}(\rH_\fe)$. Composing with the further quotient to $\Aut_{d}(\X_\fe)$ and taking into account the well-adaptedness of $\X$, we obtain a $k$-rational morphism 
\[
{\mathrm{dist}_\fe}:\Aut_d(\X) {\lra}\Aut_d(\X_\fe),
\]
establishing \eqref{final thing}.  Lemma \ref{Lem: unique on aut} implies that $k$-structure of the diagonalizable group $\Aut_d(\X_\fe)$ depends only on $\G_\fe$ and not our choice of $\X_\fe$.

We thereby obtain a $1$-cocycle 
\[
\mu_{\X,\fe}:\Ga\overset{\mu_\X}{\lra }\Aut_d(\X)(\kbar)\overset{\mathrm{dist}_\fe}{\lra}{\Aut_d(\X_\fe)}(\kbar).
\]
Assuming Conjecture \ref{Conj: cohom surj} holds for $\X_\fe$, the class $[\mu_{\X,\fe}]\in H^1(k,\Aut_d(\X_\fe))$ lifts to a class in $H^1(k,\cala_{\X_\fe})$ % exists a $\G_\fe$-form of $\X_\fe$ lifting of the $1$-cocycle $\mu^d_{\X,\fe}$ to a cocycle 
%\[
%\mu'_{\X,\fe}:\Ga \overset{\mu'_\X}{\lra}\cala_\X(\kbar)\lra \cala_{\X_\fe}(\kbar)
%\]valued in $\cala_{\X_\fe}$. 
 As discussed in Section \ref{Sec: forms and action}, this implies that there exists a $k$-rational form of $\X_\fe$, call it $\X_\fe'$, such that $\mu_{\X_{\fe}}$ is cohomologous to $\mu_{\X,\fe}$. If $\X_\fe'=\rH_\fe'\backslash\G_\fe$, let $\psi:\X_\fe\lra \X_\fe'$ be a $\G_{\kbar}$-isomorphism, so that there exists $s\in \G_\fe(\kbar)$ such that
 \[
 \psi(\rH_\fe(\kbar)g) = \rH_\fe'(\kbar)sg.
 \]
 It is now easy to verify that $\rH_\fe'=\G_\fe^{\theta_\fe'}$ where the conjugated involution
 \[
 \theta_\fe'(g) = s(\theta(s^{-1}gs))s^{-1}, \quad\text{where }g\in \G_\fe(\kbar)
 \]
 is defined over $k$. Replacing $s$ with an element of the form $hs$ with $h\in \G_\fe(k)$ if necessary (see Lemma \ref{Lem: normally related}), we may assume that $\theta'_\fe$ is normally related to $\rA_\fe$, and $\theta'_\fe|_{\rA_\fe}\ = \theta_\fe|_{\rA_\fe}$. This procedure does not change the index $\I_\fe$, up to isomorphism. Thus, replacing $\X_\fe$ and $\theta_\fe$ with $\X_\fe'$ and $\theta_\fe'$, we may assume that the $1$-cocycle $\mu_{\X_\fe}$ satisfies \eqref{item rep}. This completes the proof when $\rH=\G^\theta$.

Now consider the case that $\rH = (\G^\theta)^\circ$. There is a natural $\G$-equivariant dominant morphism $\X\to\X_\theta:=\G^\theta\backslash \G$, so by functoriality \cite[Theorem 1]{KnopFunctorial}, there is a commutative diagram
 \[
     \begin{tikzcd}
\check{\G}_{\X_\theta}\ar[r]\ar[d]&\hat{\G}_{\G^\theta\backslash \G}\ar[d]\ar[r,"\hat{s}_\theta"]&\ar[d]\hat{\X}_{\theta}\\
\check{\G}_\X\ar[r]&\hat{\G}_\X\ar[r,"\hat{s}"]&\hat{\X}.
    \end{tikzcd}
 \] However, since $\X\to\X_\theta$ is an isogeny, $\hat{\G}_{\G^\theta\backslash \G}=\hat{\G}_{\X}$ so that $\hat{\X}_\theta\simeq \hat{\X}$. Thus, $x\in\hat{\X}$ corresponds a unique element $x'\in \hat{\X}_{\theta}$. Running the preceding argument for the group $\G^\theta$ we obtain the involution $\theta_\fe:\G_\fe\to \G_\fe$; setting $\X_{\fe,\theta}:=\G_{\fe}^{\theta_{\fe}}\backslash\G_{\fe}$, this induces a canonical embedding $\cala_{\X_\theta}\subset \cala_{\X_{\fe,\theta}}$. Recall now the notation from Section \ref{Section: norms sym}: passing to the group $\rH=(\G^\theta)^\circ$ induces a short exact sequence of $\Ga$-modules
 \[
 0\to X^\ast_\theta/\Lam_\X\lra X^\ast_{\theta,\circ}/\Lam_\X\lra X^\ast_{\theta,\circ}/X^\ast_\theta\lra 0,
 \]
 dual to the sequence
 \[
 1\lra \pi_0(\G^\theta)\lra \cala_{\X}\lra \cala_{\X_{\theta}}\lra 1.
 \]
 We obtain a similar short exact sequence for $\rH_\fe=(\G^{\theta_\fe}_\fe)^\circ$, with $\pi_0(\G^{\theta_\fe}_\fe)$ dual to $X^\ast_{\theta_\fe,\circ}/X^\ast_{\theta}$. We now show that 
 \[
X^\ast_{\theta,\circ}\subset X^\ast_{\theta_\fe,\circ}.
 \]
Using \cite[Corollary 1.5]{Hofscheier}, this follows by showing that if $\lam\in X^\ast_{\theta,\qq}\cap \X^\ast(\rA)$ satisfies that $\la\rho(D),\lam\ra\in \zz$ for all colors $D\in \cald({\X_\theta})$, then the same holds for all $D\in \cald({\G_\fe^{\theta_\fe}\backslash G_\fe})$. To see this, note that since $\Lam_{\G_\fe^{\theta_\fe}\backslash\G_\fe}\subset \Lam_{\X_\theta}$, we have
\[
 \check{\Lam}_{\X_\theta}\subset \check{\Lam}_{\G_\fe^{\theta_\fe}\backslash\G_\fe}.
\]
By Proposition \ref{Prop: sym colors}, $\rho(\cald({\X_\theta}))=\check{\De}_\X^n$ is the set of coroots for the simple relative roots by Lemma \ref{Lem: root lattice} and similarly for $\rho(\cald({\G_\fe^{\theta_\fe}\backslash\G_\fe}))=\check{\De}_{\X_\fe}^n$. Since $\De_\fe\subset \Phi^+$, it follows that
\[
\De_{\X_\fe}^n\subset \{\al-\theta(\al):\al\in \Phi\}\setminus\{0\}=\widetilde{\Phi}_\theta.
\]
Since $\lam\in X^\ast_{\theta,\qq}\cap \X^\ast(\rA)$ pairs integrally with the coroots associated to elements of $\widetilde{\Phi}_\theta$, it follows that it pairs integrally with $\rho(\cald({\G_\fe^{\theta_\fe}\backslash\G_\fe}))$.

This implies that there is a unique subgroup $(\G^{\theta_\fe}_\fe)^\circ\subset \rH_\fe\subset \G^{\theta_\fe}_\fe$ satisfying that if $\X_\fe=\rH_{\fe}\backslash\G_\fe$, then \eqref{item lattice} and \eqref{item diagram} hold. We note that a similar argument now holds for any $(\G^\theta)^\circ\subset\rH\subset\G^\theta$ by replacing $X^\ast_{\theta,\circ}$ with the appropriate sublattice. In particular, $G^\theta/\rH\simeq\G_\fe^{\theta_\fe}/\rH_\fe$.
 
\quash{ inducing an inclusion  commutative diagram
 \[
\begin{tikzcd}
    \pi_0(\rH)\ar[d]\ar[r]& \cala_{\X}\ar[d]\ar[r]& \cala_{\X_{\theta}}\ar[d]\\
    \pi_0(\rH_\fe)\ar[r]& \cala_{\X_\fe}\ar[r]& \cala_{\X_{\theta_\fe}}\\
\end{tikzcd}
 \]}

Finally, if $\G^\theta\subset \rH\subset N_\G(\G^\theta)$, we may similarly reduce to the case of $\G^\theta$. More precisely, there is a natural $\G$-equivariant quotient $\X_\theta:=\G^\theta\backslash \G\to \X$, so by functoriality there is a commutative diagram
 \[
     \begin{tikzcd}
       \check{\G}_\X\ar[r]\ar[d]&\hat{\G}_\X\ar[d]\ar[r,"\hat{s}"]&\hat{\X}\ar[d]\\
\check{\G}_{\X_\theta}\ar[r]&\hat{\G}_{\G^\theta\backslash \G}\ar[r,"\hat{s}_\theta"]&\hat{\X}_{\theta}
    \end{tikzcd}
 \]so that $x\in\hat{\X}$ induces a unique element $x'\in \hat{\X}_{\theta}$. Running the preceding argument we obtain the involution $\theta_\fe:\G_\fe\to \G_\fe$. The group $\rH$ corresponds to the $k$-rational subgroup 
 \begin{equation}\label{eqn: subgroup of endo auto}
   \rH/\G^\theta\subset \cala_{\X_\theta} \subset \cala_{\G^{\theta_\fe}_{\fe}\backslash\G_{\fe}},
 \end{equation}
 
 Thus there exists a unique $k$-rational subgroup $\G_\fe^{\theta_\fe}\subset \rH_\fe\subset N_{\G_\fe}(\G^{\theta_\fe}_\fe)$  corresponding to $\rH$. By construction, the variety $\X_\fe=\rH_\fe\backslash\G_\fe$ satisfies properties \eqref{item lattice} and \eqref{item diagram} of the proposition. It is also immediate that property \eqref{item rep} holds since it holds for ${\G^\theta\backslash \G}$.
\end{proof}

 For $x\in \hat{\X}$ as in the theorem, the $k$-rational symmetric pair constructed will be referred to as an \textbf{endoscopic symmetric pair} $(\G_\fe,\rH_\fe,\theta_\fe)$, and $\X_\fe=\rH_\fe\backslash\G_\fe$ as the associated endoscopic symmetric variety. In this generality, there need not exist a $k$-form of $\X_{\fe}$ that is uniquely distinguished by the construction, even up to $\G$-inner form. A first constraint is whether Conjecture \ref{Conj: cohom surj} holds for $\X_\fe$. % This is immediate if $\pi_0(\rH_{\fe})$ is well behaved (cf. Lemma \ref{Def: well behaved}).
 Let us assume this conjecture for the moment. 
 
 It is generally not possible to isolate a single endoscopic variety up to isomorphism; see Example \ref{Ex: relative inner forms} below. This is reminiscent of the issue of the $L$-group ${}^L\G$ determining $\G$ only up to inner twist, and motivates Theorem \ref{Thm: quasi-split G-inner form}. 
 \begin{Cor}\label{Cor: quasisplit endo}
    Suppose that $\G$, $\X$, $\G_\fe$, and $\X_\fe$ are as in Theorem \ref{Thm: exists}. There is a \emph{unique} (up to isomorphism) $\G_\fe$-form satisfying \eqref{item rep} and such that $\rH_\fe^\circ$ is quasi-split.
\end{Cor}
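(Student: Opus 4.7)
The plan is to produce the desired form by combining Theorem \ref{Thm: exists} with Theorem \ref{Thm: quasi-split G-inner form}. By Theorem \ref{Thm: exists}, there exists an endoscopic symmetric variety $\X_\fe=\rH_\fe\backslash\G_\fe$ satisfying \eqref{item diagram} and the cocycle condition \eqref{item rep}, determined up to $\G_\fe$-inner form under the standing assumption on Conjecture \ref{Conj: cohom surj}. I would then apply Theorem \ref{Thm: quasi-split G-inner form} directly to this variety to obtain a $\G_\fe$-inner twist $\X_{\fe,qs}=\rH_{\fe,qs}\backslash\G_\fe$ together with a $k$-point whose stabilizer has quasi-split identity component. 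This twist still satisfies \eqref{item rep}: the geometric cocycles $\mu_{\X_\fe}$ and $\mu_{\X_{\fe,qs}}$ differ by a class in $H^1(k,\mathcal{A}_{\X_\fe}^\flat)$, and by Lemma \ref{Lem: center in flat in sharp} this class maps trivially to $H^1(k,\Aut_d(\X_\fe))$, so the two cocycles remain cohomologous to $\mu_{\X,\fe}$. This establishes existence.

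For uniqueness, suppose $\X_\fe=\rH_\fe\backslash\G_\fe$ and $\X_\fe'=\rH_\fe'\backslash\G_\fe$ both satisfy \eqref{item rep} with $\rH_\fe^\circ$ and ${\rH_\fe'}^\circ$ quasi-split over $k$. By the uniqueness statement of Theorem \ref{Thm: exists}, the two varieties are $\G_\fe$-inner forms of each other. The set of $\G_\fe$-inner forms of a fixed $\X_\fe$ is parameterized by $H^1(k,\mathcal{A}_{\X_\fe}^\flat)$, and Lemma \ref{Lem: G inner forms} identifies this (up to the kernel from $\ker^1(\rH_\fe,\G_\fe;k)$) with the collection of pure inner twists of $\rH_\fe$ that remain embedded in $\G_\fe$. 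The quasi-split inner form of a connected reductive group is unique up to $k$-isomorphism, so ${\rH_\fe^\circ}$ and ${\rH_\fe'}^\circ$ must lie in the same pure inner class.

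Finally, I would upgrade this isomorphism of identity components to an isomorphism $\X_\fe\simeq\X_\fe'$ by tracking the choice of intermediate subgroup $\G_\fe^{\theta_\fe}\subset\rH_\fe\subset N_{\G_\fe}(\G_\fe^{\theta_\fe})$. Recall from the proof of Theorem \ref{Thm: exists} that this choice is dictated by a fixed $k$-rational subgroup of $\cala_{\G_\fe^{\theta_\fe}\backslash\G_\fe}$ arising from \eqref{eqn: subgroup of endo auto}, which is determined by the input datum $\rH\subset N_\G(\G^\theta)$ and transfers canonically under the morphism $\Aut_d(\X)\to\Aut_d(\X_\fe)$. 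Thus once ${\rH_\fe^\circ}$ and ${\rH_\fe'}^\circ$ are identified, the component-group data agree and we obtain $\X_\fe\simeq\X_\fe'$ as $\G_\fe$-varieties.

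The main obstacle will be controlling the passage from $\G_\fe$-inner twists to pure inner twists of $\rH_\fe^\circ$ in the presence of a nontrivial kernel $\ker^1(\rH_\fe,\G_\fe;k)$, so that the quasi-split constraint genuinely isolates a single $k$-isomorphism class rather than only an inner class. The invocation of Lemma \ref{Lem: G inner forms} and the fact that Theorem \ref{Thm: quasi-split G-inner form} produces a $\G_\fe$-equivariant isomorphism (not merely a pure inner twist of groups) should suffice to close this gap, but verifying compatibility with the canonical morphism $\mathrm{dist}_\fe\colon\Aut_d(\X)\to\Aut_d(\X_\fe)$ is where the argument requires the most care.
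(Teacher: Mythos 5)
Your existence argument matches the paper's (which cites Theorem~\ref{Thm: quasi-split G-inner form} and leaves the rest implicit), and the check via Lemma~\ref{Lem: center in flat in sharp} that a $\G_\fe$-inner twist preserves the geometric class in $H^1(k,\Aut_d(\X_\fe))$ is a correct and useful supplement.

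The uniqueness half, however, has a genuine gap. From the fact that $\rH_\fe^\circ$ and ${\rH_\fe'}^\circ$ are both quasi-split you deduce that they lie in the same pure inner class, and then try to promote this to a $\G_\fe$-equivariant isomorphism $\X_\fe\simeq\X_\fe'$. But two $\G_\fe$-inner twists differ by a class in $H^1(k,\mathcal{A}_{\X_\fe}^\flat)$ where $\mathcal{A}_{\X_\fe}^\flat\simeq Z(\G_\fe)/(Z(\G_\fe)\cap\rH_\fe)$, and the conjugation map $\mathcal{A}_{\X_\fe}^\flat\to\Inn(\rH_\fe^\circ)$ is generally far from injective: central elements of $\G_\fe$ act trivially on $\rH_\fe$. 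A nontrivial class in $H^1(k,\mathcal{A}_{\X_\fe}^\flat)$ can therefore produce a $\G_\fe$-form that is not $\G_\fe$-equivariantly isomorphic to $\X_\fe$ even though the stabilizer is, as an abstract $k$-group, identical to the quasi-split $\rH_\fe^\circ$. Concretely, for $\G=\SL_2\times\SL_2$, $\rH=\Delta\SL_2$ over $\mathbb{R}$ one has $\mathcal{A}_\X^\flat=\mu_2$ acting trivially on $\rH$, and the nontrivial twist is the variety $\{y:\det y=-1\}$, which has split stabilizer but is not $\G(\mathbb{R})$-isomorphic to $\X=\SL_2$. So the ``quasi-split $\rH_\fe^\circ$'' constraint does not by itself isolate a single $\G_\fe$-isomorphism class inside the geometric class, and neither Lemma~\ref{Lem: G inner forms} nor the uniqueness of the quasi-split inner form of a group closes this gap; your closing paragraph correctly flags the difficulty but the proposed fix via $\mathrm{dist}_\fe$ does not resolve it, since $\mathrm{dist}_\fe$ lands in $\Aut_d(\X_\fe)$ and is blind to exactly the central twists at issue.
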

\begin{proof}
   This is a direct consequence of Theorem \ref{Thm: quasi-split G-inner form}.
\end{proof}
We note that it is possible for $\X$ to be well-adapted, while $\X_\fe$ is not (see Table \ref{tab:Examples}).
\quash{  Recall the short exact sequence
\[
1\lra \mathcal{A}_{\X_\fe}^\flat\lra \cala_{\X_\fe}\lra \Aut_d(\X_\fe)\lra 1.
\]
%Since $\De_\X^{(2)}\subset \De_\X^{dist}$, there is a further quotient $\Aut_d(\X)\to \Aut_{\Omega}(\D(\X))$.
An easy adaptation of the proof of \cite[Proposition C.8]{BorovoiGagliardi} replacing $\De_\X^{(2)}$ with $\De_\X^{dist}$ shows that the induced map 
\[
 H^1(k,\cala_\X\lra H^1(k,\Out_\X(\rH))
\]
is surjective, so that there exists a lift of the $\Ga$-action to $(\Phi_\X,\Out_\X(\rH)))$ corresponding to any collection of characters $\{\mu_\al:\Ga_\al\lra \{\pm1\}\}_{\al\in \De_{\X_x}^{dist}}$ satisfying ${}^\sig\mu_\al = \ep_{\sig(\al)}$ for $\sig\in \Ga$. In particular for any such family of characters, we may conjugate $\theta_x$ to obtain a form of $\X_x$ such that the $\Ga$-action on $\D^{dist}(\X_x)$ corresponds to these characters.claim the requirement that there exists a ${}^L\X_{x,1}$-linear injection
\[
\rho_x:  S_\X|_{\check{\G}_{\X,x}\rtimes\Ga}\lra S_{\X_x}
\]
uniquely determines these characters $\{\mu_\al:\Ga_\al\lra \{\pm1\}\}_{\al\in \De_{\X_x}^{dist}}$, and hence the action of $\Ga$ on $\D(\X_x)$. Uniqueness follows from Theorem \ref{Thm: unique}. To obtain such a collection, consider the diagram
\[
\begin{tikzcd}
    \check{\G}_{\X_x}= \check{\G}_{\X,x}\ar[d]\ar[r,"\eta_\X"]&\check{\G}_{\X}\ar[d]\\
    \prod_{\lam\in\De_{\X_x}^{(2)} }\Aut(V_\lam)\ar[r]& \prod_{\al\in\De_{\X}^{(2)} }\Aut(V_\al).
\end{tikzcd}
\]
This gives a natural map 
$d:\De_{\X_x}^{(2)}\lra \De_{\X}^{(2)}$, where $d(\lam)=\al$ if $\Aut(V_\lam)\subset \Aut(V_\al)_x$. Taking the $\Ga$-actions into account, we obtain a map of orbits $d:\De_{\X_x}^{(2)}/\Ga\lra \De_{\X}^{(2)}/\Ga$ such that 
\[
\check{\G}_{\X,\calo,x}\simeq \prod_{d(\calo')=\calo}\check{\G}_{\X,\calo'}
\]
By our discussion above, it is clear that an isomorphism as stated in the theorem implies that for any $\lam\in d^{-1}(\al)$, $\Ga_\lam=\Ga_\al$ and $\mu_\lam =\mu_\al$, determining the $\Ga$-action on $\D(\X_{x})$. 
}
\quash{
As discussed above, $\check{\G}_{\X_x,\calo}=\check{\G}_{\X,\calo}$ when $\calo$ is orthogonal, so that $S_{\calo_x}^{(\check{\B}_{\X_x})} = S_{\calo}^{(\check{\B}_{\X})}\simeq \cc_\ep[\calo]$ determines the character $\ep_\lam$ for any $\lam\in \calo$.
When $\calo$ is symplectic, for each $\al\in \calo$, we saw above that $\check{\G}_{\X,\al,x}= \Sp(V_\al)_x = \prod_{a}\Sp(V_a)$, where \[
V_\al=\bigoplus_{a}V_a,
\]is the  orthogonal decomposition of $V_\al$ induced by $x$. For this to induce .}
\quash{
Now assume that $\theta'$ is also normally related to $A$ and induces the same $(\Ga,\theta)$-index. By the results of the preceding section, there exists $g\in \G(\kbar)$ such that 
\[
\theta' = \Ad(s(g))\circ\theta, \text{ where }\Ad(s(g))\in [N_{\G}(A)/Z(\G)](k)\subset \G_{ad}(k).
\]
Assume further that $\theta$ and $\theta'$ do not induce the same element in $\mathrm{Inv}_{k,N}(\G,\rA)$, so that 
\[
\de[\Ad(s(g))]\in H^1(k,\Aut^{\G}(\X))
\]
is non-trivial, where $\de$ is the connecting morphism in
\[
N_{\G}(A)(k)\lra [N_{\G}(A)/Z(\G)](k)\overset{\de}{\lra} H^1(k,Z(\G)),
\]
and we note that the cocycle lands in the subgroup $\Aut^{\G}(\X) = \X\cap Z(\G)$. Lemma \ref{Lem: factors through to color} thus implies that we obtain a class
\[
c(\theta,\theta')\in H^1(k,\Out(\rH)).
\]}

 \subsection{Endoscopic data for symmetric varieties}\label{Section: endoscopic datum}
We now assume that $k$ is a global, local, or finite field. We continue to assume that $\mathrm{char}(k)$ is good for $\G$. 
%\begin{Rem}
%    There are subtle considerations in the global setting which will be addressed in \cite{LesliestabFJ}. This is similar to the considerations in classical endoscopy.
%\end{Rem}
Suppose that $\G$ is a connected reductive group over $k$. Let $\X=\rH\backslash\G$ be a symmetric $k$-variety which we assume satisfies Assumption \ref{assumption: no type N} (so that $\X$ is both {well-adapted and that Conjecture \ref{Conj: cohom surj} holds for $\X$}).

Let $(\rA,\B)$ be a $\theta$-admissible pair. This induces $\Ga$-stable Borel subgroups for $\check{\G}$, $\hat{\G}_\X$ and $\check{\G}_\X$ denoted $\check{\B},$ 
 $\hat{\B}_\X$, and $\check{\B}_\X$, respectively. Moreover, $\hat{\B}_\X$ is $\vartheta$-stable and for any choice of distinguished morphism $\varphi_\X:\check{\G}_\X\lra \hat{\G}_\X$ compatible with $\check{\rA}_\X\to \check{\rA}$, we have $\varphi_\X(\check{\B}_\X)\subset \hat{\B}_\X$.  Fix a choice of pinning $\{x_{\check{\al}}\}$ for $\check{\fg}$, so that $\Ga$ acts as explained in Section \ref{Section: dual groups}.%nd distinguished morphism $\varphi_\X$.

\begin{Def}%subgroup $(\G_\fe^{\theta_\fe})^\circ\subset\rH_\fe\subset N_{\G_\fe}(\G_\fe^{\theta_\fe})$\textbf{}
We define an \emph{endoscopic datum} of $\X$ to be a quintuple $${\fe}=(\G_\fe,\theta_\fe,\X_\fe,\ka,\eta)$$  where $\G_\fe$ is an endoscopic group equipped with a $k$-rational involution $\theta_\fe$ and a symmetric variety $\X_\fe$ associated to $\theta_\fe$ satisfying the requirements of Theorem \ref{Thm: exists} such that 
\begin{enumerate}
\item if $\hat{s}_\fe:\hat{\X}_\fe\to \hat{\G}_\fe\subset \check{\G}_\fe$ is the symmetrization map of the dual variety, then $(\G_\fe,\hat{s}_\fe(\ka),\eta)$ gives a pure refined endoscopic datum for $\G$,
       % \item the quotient morphism $\rA\to \rA_\X$ agrees with the morphism $\rA\to \rA_{\X_\fe}$; in particular, there is an identification $X^\ast(\rA_{\X_\fe})=X^\ast(\Ax)$ of sublattices of $X^\ast(\rA)$;
        \item $\eta$ restricts to an isomorphism $\hat{\G}_{\fe,\X_\fe}\simeq (\hat{\G}_{\X,\eta(\hat{s}_\fe(\ka))})^\circ$ and intertwines the involutions $\vartheta_\fe$ and $\vartheta$,
        \item  $\hat{s}_\fe(\ka)\in \hat{\X}_\fe^\heart$ and $\eta(\hat{s}_\fe(\ka))\in\hat{s}(\hat{\X})^{\heart}\subset \hat{\G}_\X$, 
        \item  the dual variety $\hat{\X}_\fe$ fits into a commutative diagram
         \begin{equation}\label{eqn: commutative dual diag}
  \begin{tikzcd}
      \check{\G}_{\X_\fe}\ar[d]\ar[r,"\varphi_{\X_\fe}"]&\hat{\G}_{\fe,\X_\fe}\ar[d,"\eta"]\ar[r]&\hat{\X}_\fe\ar[d,"\eta_\ka"]\\  
\check{\G}_{\X}\ar[r,"\varphi_{\X}"]&\hat{\G}_\X\ar[r]&\hat{\X}.
\end{tikzcd}
 \end{equation}
Here, $\eta_\ka$ realizes $\hat{\X}_\fe$ as the $\hat{\G}_{\fe,\X_\fe}$-orbit at $\eta(\hat{s}_\fe(\ka))$ and 
    \[
 \check{\G}_{\X_\fe}\overset{\varphi_{\X_\fe}}{\lra} \check{\G}_\fe
    \]
    is the unique distinguished morphism associated to $\X_\fe$ such that the diagram commutes.
    \end{enumerate}
   %  \item   the $1$-cocycle $\mu_{\X,\fe}:\Ga\lra \Aut_d(\X_\fe)$ factors
 %\[
%\begin{tikzcd}
 %   \Ga\ar[dr,"\mu_{\X,\fe}"]\ar[d,"\mu_\X"]&\\
 %   \Aut_d(\X)\ar[r]& \Aut_d(\X_\fe).
%\end{tikzcd}
% \]
    We say that $\fe$ is \textbf{elliptic} if $\cala_{\hat{\X}_\fe}^{\Ga,\circ}= \cala_{\hat{\X}}^{\Ga,\circ}$, where $\cala_{\hat{\X}} =\Aut^{\hat{\G}_\X}(\hat{\X})$ is the $\hat{\G}_\X$-equivariant automorphism group of $\hat{\X}$ and $\cala_{\hat{\X}_\fe}$ is defined similarly.%(\textcolor{red}{This is wrong. It needs to be the automorphism group of the dual varieties.})
\end{Def}

\begin{Rem}
     For any distinguished morphism $\varphi_{\X}$, we have $\ker(\varphi_\X)=\ker(\check{\rA}_\X\to \check{\T})$ \cite[Remark 7.8]{KnopSchalke}. In particular, for a given endoscopic datum ${\fe}$ and morphism $\varphi_{\X}$, the morphism $\varphi_{\X^\fe}$ is uniquely determined by the commutativity of \eqref{eqn: commutative dual diag} and has the same kernel as $\varphi_\X$.
\end{Rem}
 \begin{Def}\label{Def: endo iso}
 An isomorphism of such data 
    \[
    \fe_1=(\G_1,\theta_1,\X_1,\ka_1,\eta_1)\lra \fe_2=(\G_2,\theta_2,\X_2,\ka_2,\eta_2)
    \]
    is an isomorphism $f:(\G_1,\hat{s}_1(\ka_1),\eta_1)\to (\G_2,\hat{s}_2(\ka_2),\eta_2)$ of the endoscopic triples satisfying
    \begin{enumerate}
        \item $f$ intertwines $\theta_1$ with a pure inner twist of $\theta_2$. That is, the involution $\theta_2':=f^{-1}\circ \theta_1\circ f$ is a pure inner twist of $\theta_2$;
        \item the morphism $\eta_1\circ\check{f}$ and $\eta_2$ are $\check{\G}^\ast_{\X}$-conjugate;
        \item the images $\check{f}(\ka_2)$ and $\ka_1$ coincide in $\pi_0(\cala_{\hat{\X}_1}^\Ga)$.
     \end{enumerate}
      Let $\Aut(\fe)$ denote the automorphism group of the endoscopic datum. Note that there is a natural inclusion $\G_\fe/(\rH_\fe\cap Z(\G_\fe))(k)\subset\Aut(\fe)$, which we refer to as the group of inner automorphism.
     %It is easy to see that $[N_{\G_\fe}(\rH_\fe)]_{ad}(k)$ is a normal subgroup of $\Aut(\fe)$ arising from inner automorphisms of $(\G_\fe,\ka,\eta)$ that lie in $\G_{\fe,ad}(k)$.
 \end{Def} 
 We remark that the requirement that $f$ intertwines $\theta_1$ with a pure inner twist of $\theta_2$ forces $f$ to descend to an isomorphism
 \[
 \X_1=\rH_1\backslash\G_1 \lra f(\rH_1)\backslash\G_2\simeq\rH_2\backslash\G_2=\X_2.
 \]
 This follows from Lemma \ref{Lem: pure inner twist invol}.
 
\begin{Rem}
 While the definition of an endoscopic datum includes the involution, we see that this involution is only determined up to pure inner twist. In light of Lemma \ref{Lem: pure inner twist invol}, the data of $[\theta_\fe]$ is encoded by $\X_\fe$. We nevertheless find it psychologically nicer to keep track of the involution.
      % \item \label{Rem: automorphisms pif}   Comparing with the standard Galois cohomological calculation, there is a $\G$-equivariant isomorphism $\X_1\to \X_2$ if and only if $\rH_2$ is a pure inner twist of $\rH_1$ by a class in $\ker^1(\rH_1,\G;k)$. However, the constraint that $f$ intertwine the two involutions forces $\rH_1\simeq\rH_2$. \textcolor{red}{I think this implies that I do NOT want to include this data.}
\end{Rem}

\begin{Def}
    Suppose that $\G$ is a connected reductive group over $k$. Let $\X=\rH\backslash\G$ be a symmetric $k$-variety which we assume is {well-adapted}. Let ${\fe}=(\G_\fe,\theta_\fe,\X_\fe,\ka,\eta)$ denote an endoscopic datum for $(\G,\theta,\X)$. We say that $\fe$ is the \textbf{quasi-split endoscopic datum} if $\X_{\fe}$ is the quasi-split form of Corollary \ref{Cor: quasisplit endo}.
\end{Def}

We will always assume that for a given endoscopic datum $\fe$ that $\X_\fe$ is the quasi-split form; this only plays a role in Section \ref{Section: stabilize}.

% In particular, $\Lambda(\fe)\subset \Aut_d(\X_\fe)(k)$ is the image of $\cala_{\X_\fe}(k)$ under the quotient map.

Theorem \ref{Thm: exists} shows that for any $\ka\in \hat{\X}^{\heart,\Ga}$, there exists an endoscopic datum $\fe=(\G_\fe,\theta_\fe,\X_\fe,\ka,\eta)$ containing $\ka$. On the other hand, Lemma \ref{Lem: outer forms classify} implies that any two endoscopic varieties $\X_{1}=\rH_1\backslash\G_\fe$ and $\X_2=\rH_2\backslash\G_\fe$ associated to $\ka$ in this fashion satisfy that $\rH_1$ and $\rH_2$ are inner twists of each other by a cocycle with values in $\G_\fe$. In particular, the construction of Theorem \ref{Thm: exists} {does not} distinguish between non-isomorphic $\G_\fe$-inner forms $\X_{\fe,1}$ and $\X_{\fe,2}$ in general. 

\begin{Ex}\label{Ex: relative inner forms}
    Let $\G =\Res_{E/k}(\rH_E)$ and $\rH\subset \G$ is the fixed point subgroup of a Galois involution. If $\rH'\subset \G$ is any inner form of $\rH$ such that $\rH'_E\simeq\rH$ but $\rH'$ is not a pure inner twist, then the varieties $\X=\rH\backslash\G$ and $\X'=\rH'\backslash\G$ are $\G$-inner twists of each other and not isomorphic. Theorem \ref{Thm: exists} cannot distinguish between such pairs.
\end{Ex}

%This is why we include the information of the involution in our datum: this information essentially encodes a distinguished $\G_\fe(k)$-orbit in $\X_\fe(k)$.

%\subsubsection{Relative Pure Inner forms}
%This motivates the following 

%\begin{Def}
%        Suppose that $\G$ is reductive over $k$ and $\X$ is a symmetric $\G$-variety that is \textbf{well-adapted} and let $\fe$ denote an endoscopic datum for $(\G,\rH)$
%\end{Def}

%and that an  the an endoscopic datum to be the \emph{quasi-split} pure inner form of the endoscopic variety. By this, we mean that $X^\fe=\G^\fe_{qs}/\rH^\fe$; this does not uniquely determine the symmetric pair $(\G^\fe_{qs},\rH^\fe)$ but we primarily care about the variety itself. 
\quash{
\subsubsection{Points from quotient stacks} Working with pure inner forms of the quasi-split endoscopic group of $\G$ is natural in the relative setting. Indeed, the harmonic analysis requires we work with the quotient stack $\G\backslash(\X\times \X)=\rH\backslash\X$ as abstract varieties, but the isomorphisms are not $\G$, or even $\rH$, equivariant.

%\begin{Rem}
%When necessary, we will choose such a pair $(\G^\fe_{qs},\rH^\fe_{qs})$. For example, this is needed to introduce the symmetrization map
%\[
%s^\fe:\X^\fe\lra \G^\fe.
%\]
%\end{Rem}
 For any symmetric variety $\X=\rH\backslash\G$, we set%\footnote{\textbf{I need to be very careful here: there is no canonical representative $\X^\al$ associated to the class $\al$. Each cohomologous cocycle gives an isomorphic form, but there is no canonical identification.}}
  \[
  [\X](k) = \bigsqcup_{\al\in H^1(k,\rH)}\X^\al(k)
  \]
  to be the ``$k$-rational points'' of the family of a choice of representatives of the isomorphism classes of pure inner forms of $\X$. Here $\X^\al$ is a representative of the isomorphic pure inner forms associated to the cohomology class $\al$. We thus have a surjective map
  \[
  [\X](k)\lra [\X/\rH](k)
  \]
  onto the $k$-points of the quotient stack and a canonical morphism 
  \[
  \car_{[\X]}:  [\X](k)\lra [\X\sslash\rH](k)
  \]
  to the categorical quotient. This is surjective if the map
  \[
    [\X/\rH](k)\lra [\X\sslash\rH](k)
  \]
  is. The image of $  \car_{[\X]}|_{\X^\al}$ does not depend on the choice of $\X^\al$. In Section \ref{Section: stabilize}, we show how the notion of elliptic endoscopic data fits into the pre-stabilization of the relative trace formula for $\X$. For this we adopt the hypothesis that $\car_{[\X]}$ is surjective in Assumption \ref{Assumption: orbits}.
 %   Therefore we fix the form $\X_\xi$ on which the quasi-split $\G_\xi$ acts such that there exists $x_0\in \X_\xi(k)$ such that the stabilizer $\rH_\xi$ of $x_0$ is quasi-split. \textbf{It must be shown that this is well determined.}
    
}
\subsubsection{Extended endoscopic data}\label{Section: extended data}
For the purposes of (relative) functoriality, it is necessary to work with morphisms of $L$-groups rather than dual groups. In this subsection, we discuss the problem of enhancing an endoscopic datum $\fe=(\G_\fe,\theta_\fe,\X_\fe,\ka,\eta)$ to be compatible with $L$-groups. Following the discussion of the $\Ga$-actions on dual groups in Section \ref{Section: dual groups}, we may associate to the symmetric pair $(\G_\fe,\X_\fe)$ the sequence of groups 
\[
{}^L\X_\fe\lra{}^L\hat{\G}_{\X_\fe}\lra {}^L{\G}_{\fe},
\] extending the maps on dual groups. Recall that the $\Ga$-action on $\hat{\G}_{\X_\fe}$ is completely determined by that on $\check{\G}_\fe$, and that the action on $\check{\G}_{\X_\fe}$ is so determined once the convention \eqref{eqn: root embedding} is imposed. In particular, if $\eta$ extends to an $L$-homomorphism ${}^L\eta$, we obtain an embedding
\begin{equation}\label{diag: L-descent}
     \begin{tikzcd}
{}^L{\X_\fe}\ar[d]\ar[r]&{}^L\hat{\G}_{\X_\fe}\ar[d]\ar[r]&{}^L{\G}_\fe\ar[d, "{}^L\eta"]\\
{}^L{\X}\ar[r]&{}^L\hat{\G}_{\X}\ar[r]&{}^L\G,
\end{tikzcd}
\end{equation}
%\[
%\check{\G}_{\X,x}\rtimes\Ga\overset{{}^L\eta_\X}{\lra}{}^L{\X},
%\]
%though the $\Ga$-action on $\check{\G}_{\X,x}$ might not match that determined by \eqref{eqn: root embedding}. 
In this case, we refer to ${}^L\fe=(\G_\fe,\theta_\fe,\X_\fe,\ka,{}^L\eta)$ as an extended endoscopic datum. This is always possible if $\G_{der}$ is simply connected \cite[Proposition 1]{LanglandsStableconj}; more generally, one makes use of a $z$-extension to reduce to this case. 

If we fix a compatible $z$-extension $(\G_1,\theta_1)$ of $(\G,\theta)$ with 
\[
1\lra N\lra\G_1\lra \G\lra 1,
\]then by Remark \ref{Rem: z-extension on dual side} and Lemma \ref{Lem: pass to a sc cover roots} we have (for appropriately chosen distinguished morphisms) a commutative diagram
\begin{equation}
    \begin{tikzcd}
       \check{\G}_\X\ar[r]\ar[d]&\hat{\G}_\X\ar[d]\ar[r]&\hat{\X}\ar[d]\\
\check{\G}_{\X_1}\ar[r]\ar[d]&\hat{\G}_{\X_1}\ar[d]\ar[r]&\hat{\X}_{1}\ar[d]\\
\check{N/N^{\theta_1}}\ar[r]&\check{N}\ar[r]&\check{N}^{\theta_1},
    \end{tikzcd}
\end{equation}
where the bottom row is dual to the short exact sequence of induced $k$-tori
\[
1\lra N^{\theta_1}\lra N \lra N/N^{\theta_1}\lra1,
\]
and the right-most vertical column is a fibration. 
In particular, to an $x\in \hat{\X}^{ss}$, we obtain $x_1\in \hat{\X}_1^{ss}$ via the induced inclusion. Notice that $x_1$ depends only on the choice of compatible $z$-extension since $\hat{\X}$, $\hat{\X}_1$, and the inclusions are independent of the choice of distinguished morphisms. One may similarly obtain a $\theta_\fe$-compatible $z$-extension $(\G_{\fe,1},\theta_{\fe,1})$ such that
\[
1\lra N\lra\G_{\fe,1}\lra \G_\fe\lra 1,
\]
which fits into an extended endoscopic datum ${}^L\fe_1= (\G_{\fe,1},\theta_{\fe,1},\X_{\fe,1},x_1,{}^L\eta_1)$ so that there is an embedding
\begin{equation*}%\label{diag: L-descent}
     \begin{tikzcd}
{}^L{\X_{\fe,1}}\ar[d]\ar[r]&{}^L\hat{\G}_{\X_\fe,1}\ar[d]\ar[r]&{}^L{\G}_{\fe,1}\ar[d, "{}^L\eta_1"]\\
{}^L{\X}_1\ar[r]&{}^L\hat{\G}_{\X,1}\ar[r]&{}^L\G_1,
\end{tikzcd}
\end{equation*}
%\begin{Lem}
%Assume that $Z(\check{G})$ is connected. The maps $\eta^\wedge_x$ and $\eta_x$ extend to $L$-homomorphisms
%\end{Lem}
Such extensions play a similar role to $z$-extensions in the formulation of the matching of orbital integrals as explained in \cite{LesliestabFJ}.
Indeed, setting $N_{\theta}:={N/N^{\theta_1}}$ we obtain an exact sequence
\[
{}^L\X_{\fe,1}\to {}^L\X_1 \to {N_\theta}\rtimes \Ga.
\]
Restricting the first morphism to $\Ga$ and pulling back to the Weil group $W_k\to \Ga$ this gives a map
\[
\check{\lam}:W_k\to {N_{\theta}}\rtimes W_k.
\]
 When $k$ is local, this gives a parameter for a character $\lam: N_\theta(k)\to \cc^\times$. The main point is that if $f\in C^\infty_c(\X(k))$, we may view it as a function on $\X_1(k)$ constant on $N_\theta$-orbits. Then the appropriate $\ka$-orbital integrals of $f$ should match with the stable orbital integrals of a function $f^{\fe,1}\in C^\infty_{c}(\X_{\fe}(k),\lam)$, where we view $\X_{\fe,1}$ as a $N_\theta$-torsor satisfying that $\X_{\fe,1}(k)\to \X_{\fe}(k)$ is surjective since $N_\theta$ is an induced torus. We then form the $\cc^\times$-bundle $\X_{\fe,\lam}=\X_{\fe,1}(k)\times^{N_\theta(k)}\cc^\times_\lam$ and consider compactly-supported sections of the associated line bundle
 \[
 C_c^\infty(\X_\fe(k),\lam)=\bigoplus_{\rH_{\fe,1}'\in \ker^1(\rH_{\fe,1},\G_{\fe,1};k)}\ind_{\rH'_{\fe,1}(k)N(k)}^{\G_{\fe,1}(k)}(\overline{\lam}),
 \]%
 where $\rH_{\fe,1}'$ runs over a set of pure inner twists of $\rH_{\fe,1}$ indexed by $\ker^1(\rH_{\fe,1},\G_{\fe,1};k)$ and 
 \[
\ind_{\rH'_{\fe,1}(k)N(k)}^{\G_{\fe,1}(k)}(\overline{\lam})= \{f:\G_{\fe,1}(k)\to\cc: f(nhg) = \overline{\lam}(n)f(g)\},
 \]where $n\in N(k),\; h\in \rH_{\fe,1}(k),\: g\in \G_{\fe,1}(k)$, $\mathrm{supp}(f) \text{ is compact modulo }\rH'_{\fe,1}(k)N(k),$ and $\overline{\lam}:N(k)\to N_\theta(k)\to \cc^\times$ is the pull-back of $\lam$ to $N(k)$.
 %\textcolor{red}{The varieties are surjective on $k$, so we can twist.}

\subsection{Point matching}\label{Section:orbit match}
The following theorem gives a matching of stable semi-simple orbits of symmetric varieties.
\begin{Thm}\label{Thm: point comparison}
Suppose that $\G$ is a quasi-split reductive group over $k$. Let $\theta$ be a $k$-rational involution and let  $\X=\rH\backslash\G$ be an associated symmetric variety.  Let $\fe = (\G_\fe,\theta_\fe,\X_\fe,\ka,\eta)$ be an endoscopic datum for $(\G,\X)$. There exists a canonical map
\[
\fa_\fe:\X_\fe\sslash \rH_\fe\lra\X\sslash \rH
\]
between the categorical quotients.
\end{Thm}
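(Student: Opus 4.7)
My plan is to reduce the construction to a statement about tori and Weyl groups via the Chevalley-style isomorphism of Lemma \ref{Lem: chevalley}, and then use the structural compatibilities encoded in the endoscopic datum to produce the required map on these concrete models. Specifically, Lemma \ref{Lem: chevalley} provides canonical identifications
\[
\X\sslash \rH \simeq A/W_{\X,\rH}, \qquad \X_\fe \sslash \rH_\fe \simeq A_\fe/W_{\X_\fe,\rH_\fe},
\]
where $A$ (resp.\ $A_\fe$) is a maximal $\theta$-split (resp.\ $\theta_\fe$-split) torus and $W_{\X,\rH}$ is the little Weyl group $N_{\G^\theta}(A)/Z_{\G^\theta}(A)$ enhanced by the action of the component group $\rH/(\G^\theta)^\circ$ through \eqref{eqn: subgroup of endo auto}. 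With this, it suffices to produce a canonical morphism of affine $k$-schemes between these quotients.

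The core geometric input is the structure of the dual symmetric variety $\hat{\X}$. The endoscopic condition that $\eta$ restricts to an isomorphism $\hat{\G}_{\fe,\X_\fe}\iso (\hat{\G}_{\X,\eta(\hat{s}_\fe(\ka))})^\circ$ intertwining $\vartheta_\fe$ and $\vartheta$ allows us, after $\check{\G}^\ast_\X$-conjugation, to select a common maximal torus $\check{T}$ of $\hat{\G}_\X$ and $\hat{\G}_{\fe,\X_\fe}$ that is simultaneously maximally $\vartheta$-split and maximally $\vartheta_\fe$-split, with the two involutions agreeing on $\check{T}$. Dualizing via Lemma \ref{Lem: split torus embedding} (combined with Proposition \ref{Prop: quotient stack is enough}), this yields an isomorphism of tori-with-involution $(T,\theta)\simeq (T_\fe,\theta_\fe)$ between maximally split maximal tori in appropriate pure inner twists of $\G$ and $\G_\fe$, and hence a canonical identification $A_\fe \simeq A$. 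The leftmost column $\check{\G}_{\X_\fe}\hookrightarrow \check{\G}_{\X}$ of \eqref{eqn: commutative dual diag} is an embedding, which realizes the little Weyl group $W_{\X_\fe}$ as a subgroup of $W_\X$ acting compatibly on this common torus; combined with the tracking of component groups noted above, this refines to an inclusion $W_{\X_\fe,\rH_\fe}\hookrightarrow W_{\X,\rH}$, giving the desired map
\[
\fa_\fe:\: A_\fe/W_{\X_\fe,\rH_\fe}\; \simeq\; A/W_{\X_\fe,\rH_\fe}\; \twoheadrightarrow\; A/W_{\X,\rH}.
\]

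The main technical obstacle will be verifying $k$-rationality of $\fa_\fe$. The identification $(\check{T},\vartheta)\simeq(\check{T}_\fe,\vartheta_\fe)$ is only $\Ga$-equivariant up to a $1$-cocycle valued in the little Weyl group $W_{\hat{\X}}$ (reflecting the fact that $\eta$ is only $\Ga$-equivariant up to $\check{\G}$-conjugation), and likewise the dualized identification $A\simeq A_\fe$ of the primal tori only holds after passing to suitable pure inner twists. The point is that both of these potential obstructions become invisible after passing to the Weyl-invariant quotient: the cocycle is absorbed into the Weyl group and the pure inner twist is absorbed into the categorical quotient, which by definition depends only on the geometric torus modulo its finite group of automorphisms. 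Tracking these cancellations case by case (distinguishing the situations where $\rH=(\G^\theta)^\circ$, $\rH=\G^\theta$, and the general intermediate case) will constitute the bulk of the argument, but is purely a bookkeeping exercise once the Chevalley reduction and the dual torus identification are in place.
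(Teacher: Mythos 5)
Your overall strategy matches the paper's: reduce to $\rH = \G^\theta$ via Lemma~\ref{Lem: chevalley}, align the dual tori and involutions using the structure of the endoscopic datum (this is exactly the content of Lemma~\ref{Lem: important simplification}), dualize to a $\kbar$-isomorphism $\rA_\fe\iso\rA$ of tori-with-involution, and observe that the failure of $\Ga$-equivariance is a Weyl-valued cocycle which is absorbed in the quotient $\rA_\X/W_\X$. This is correct and is essentially the argument given.

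Two points where your write-up departs from the paper, one cosmetic and one substantive. First, your description of Lemma~\ref{Lem: chevalley} as producing an \emph{enhanced} Weyl group $W_{\X,\rH}$ incorporating $\rH/(\G^\theta)^\circ$ is not what happens: the paper keeps the little Weyl group $W_\X$ fixed throughout and absorbs the variation in $\rH$ into the canonical torus $\Ax$ (a finite cover of $\rA_\theta$ when $\rH\subset\G^\theta$, a finite quotient when $\rH\supset\G^\theta$; see Remark~\ref{Rem: Chevalley isom}). Since the relevant Weyl groups and tori transform compatibly under $\eta$, this imprecision doesn't derail your argument, but it obscures the actual identification.

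Second, and more substantively, you invoke Proposition~\ref{Prop: quotient stack is enough} to pass from the dual torus $\check{T}$ back to a maximally $\theta$-split torus in a \emph{pure inner twist} of $\G$. This is a detour in the wrong direction: Proposition~\ref{Prop: quotient stack is enough} is the hard converse that requires Assumption~\ref{Assumption: orbits} (surjectivity of $[\X/\rH](k)\to(\X\sslash\rH)(k)$ on the regular locus), whereas Theorem~\ref{Thm: point comparison} is proved without that hypothesis. The paper instead starts on the primal side: since $\G$ and $\G_\fe$ are both quasi-split, $k$-rational $(\theta,k)$-admissible Borel pairs $(\rA,\B)$ and $(\rA_\fe,\B_\fe)$ exist directly by Lemma/Definition~\ref{Def: admissible pair}, and one then applies the easy direction Lemma~\ref{Lem: split torus embedding} to place their duals inside $\hat\G_\X$ and $\hat\G_{\X_\fe}$. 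Lemma~\ref{Lem: important simplification} then lets one conjugate the endoscopic datum to match these fundamental pairs. This avoids pure inner twists entirely and avoids importing an assumption that the theorem as stated does not carry. You should restructure so as to fix the primal admissible pairs first and only use Lemma~\ref{Lem: split torus embedding} in the forward direction.
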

A direct consequence of this is the ability to compare  objects indexed by stable orbits (such as stable orbital integrals) between $\X_\fe$ and $\X$; see Section \ref{Section: stabilize}.

We begin with the following simplification.
\begin{Lem}\label{Lem: chevalley}
    Suppose that the preceding theorem holds whenever $\rH= \G^\theta$. Then it holds for all symmetric subgroups.
\end{Lem}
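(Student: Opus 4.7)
The plan is to reduce to the case $\rH = \G^\theta$ by establishing a uniform Chevalley-style description of the categorical quotient $\X \sslash \rH$, independent of the specific $\rH$ in the range $(\G^\theta)^\circ \subset \rH \subset N_\G(\G^\theta)$: namely, $\X \sslash \rH$ should be canonically isomorphic to a Weyl quotient $\rA^-/W_\X$ of the $\theta$-split part of a maximally $\theta$-split maximal torus $\rA \subset \G$.

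First I would invoke Richardson's classical Chevalley-style isomorphism \cite{Richardson}: restricting the quotient map $\G \to \X_\theta := \G^\theta \backslash \G$ to $\rA^-$ factors through $\rA^-/W_\X$, where $W_\X$ is the little Weyl group of the symmetric pair, and induces a $k$-rational isomorphism $\rA^-/W_\X \xrightarrow{\sim} \X_\theta \sslash \G^\theta$. An entirely analogous isomorphism holds on the endoscopic side, and by Theorem \ref{Thm: exists}\eqref{item lattice} the assumed morphism $\fa_{\fe,\theta}: \X_{\fe,\theta}\sslash\G_\fe^{\theta_\fe} \to \X_\theta\sslash\G^\theta$ corresponds to the map $\rA_\fe^-/W_{\X_\fe} \to \rA^-/W_\X$ induced by the $k$-rational embedding of tori furnished there.

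Next I would extend the Chevalley isomorphism to arbitrary $\rH$ in the allowed range, aiming to show that the composition $\rA^- \hookrightarrow \G \to \X \to \X \sslash \rH$ induces an isomorphism $\rA^-/W_\X \xrightarrow{\sim} \X \sslash \rH$. For the subcase $(\G^\theta)^\circ \subset \rH \subset \G^\theta$, the argument should be straightforward: the little Weyl group $W_\X$ is already realized by elements of $(\G^\theta)^\circ$ in the sense of \cite[\S 4]{Richardson}, so $k[\X]^\rH$ and $k[\X_\theta]^{\G^\theta}$ have the same restriction to $k[\rA^-]$. For the subcase $\G^\theta \subset \rH \subset N_\G(\G^\theta)$, the quotient $\rH/\G^\theta$ sits inside the automorphism group $\cala_{\X_\theta}$, and Lemma \ref{Lem: mod out by autos} gives $\X \sslash \rH = (\X_\theta\sslash\G^\theta)\sslash (\rH/\G^\theta)$; the main obstacle I expect here is verifying that the induced action of $\rH/\G^\theta$ on $\rA^-/W_\X \simeq \X_\theta \sslash \G^\theta$ is trivial. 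This should follow from the structure theory of $\cala_{\X_\theta}$ developed in Section \ref{Section: color autos} together with \cite[Proposition 4.2.1]{Losev}: the coset representatives in $N_\G(\G^\theta) \setminus \G^\theta$ act on $\rA^-$ by multiplication by elements of $Z(\G) \cap \rA^-$ (of order dividing $2$), and these translations descend trivially to the Weyl quotient.

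Granting this uniform identification on both $\X$ and $\X_\fe$, the sought map
\[
\fa_\fe: \X_\fe \sslash \rH_\fe \;\simeq\; \rA_\fe^-/W_{\X_\fe} \;\xrightarrow{\fa_{\fe,\theta}}\; \rA^-/W_\X \;\simeq\; \X \sslash \rH
\]
is obtained by transport of structure, with compatibility to the known case $\rH = \G^\theta$ being tautological from the construction.
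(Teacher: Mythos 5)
Your overall reduction strategy -- compare $\X \sslash \rH$ with $\X_\theta \sslash \G^\theta$ via a Chevalley-type description of the invariant-theory quotient, and transport the given map $\fa_\fe^\theta$ across this description -- is the same as the paper's. However, the central claim that makes your extension to arbitrary $\rH$ tautological, namely that $\X \sslash \rH \simeq \rA^-/W_\X$ \emph{uniformly}, independent of $\rH$, is false, and so is the intermediate claim that the translations by $Z(\G) \cap \rA^-$ ``descend trivially to the Weyl quotient.''

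Concretely: for $\G^\theta \subset \rH \subset N_\G(\G^\theta)$, the subgroup $\mathcal{Z} := \G^\theta \backslash \rH \subset \cala_{\X_\theta}$ acts on $\X_\theta$ through multiplication by central elements (Vust), and this descends to a $W_\X$-\emph{equivariant} (hence well-defined) but in general \emph{nontrivial} action on $\rA_\theta/W_\X$. For instance if $W_\X$ acts on a rank-one $\rA_\theta \simeq \Gm$ by inversion and $z = -1$, the Chevalley coordinate is $x = a + a^{-1}$, and translation by $z$ sends $x \mapsto -x$. Thus $\X \sslash \rH$ is the \emph{further} quotient $(\rA_\theta/W_\X)\sslash\mathcal{Z} \simeq \Ax/W_\X$ with $\Ax = \rA_\theta/\mathcal{Z}$, not $\rA_\theta/W_\X$ itself. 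Accepting your triviality claim would give $\X \sslash \rH = \X_\theta \sslash \G^\theta$, which cannot hold since $\X$ is a proper finite quotient of $\X_\theta$. Symmetrically, for $(\G^\theta)^\circ \subset \rH \subset \G^\theta$ the canonical torus $\Ax$ is a \emph{finite cover} of $\rA_\theta$ (with $\rA^- \cap \rH \subsetneq \rA^- \cap \G^\theta$, e.g. $\SO_n$ versus $\O_n$ inside $\GL_n$), so the restrictions of $k[\X]^\rH$ and $k[\X_\theta]^{\G^\theta}$ to $k[\rA^-]$ genuinely differ. In both directions the correct identification is $\X \sslash \rH \simeq \Ax/W_\X$ with $\Ax$ depending on $\rH$, and the proof must track this dependence: on the side $\rH \supset \G^\theta$ one produces $\fa_\fe$ by showing $\pi_{\mathcal{Z}} \circ \fa_\theta$ is $\mathcal{Z}$-invariant and invoking the universal property of the categorical quotient (not triviality of the action), while on the side $\rH \subset \G^\theta$ one reruns Richardson's argument (existence, surjectivity, injectivity of $\Ax/W_\X \to \X \sslash \rH$) using the finite covering $\X \to \X_\theta$, and checks compatibility with Theorem \ref{Thm: exists}\eqref{item lattice}, which matches $\Ax$ with $\rA_{\X_\fe}$ up to a $W_{\hat\X}$-valued cocycle.
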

\begin{proof}
Let $\rA$ be a fixed maximally $\theta$-split maximal torus, so that $s:\rA\to \rA_\theta$ is the quotient to the canonical torus of $\X_\theta:=\G^\theta\backslash \G$. For any  $(\G^\theta)^\circ\subset \rH\subset N_{\G}(\G^\theta)$, Richardson gives an isomorphism of $k$-schemes \cite[Corollary 11.5]{Richardson} 
\begin{equation}\label{eqn: Chevalley type}
    \X_{\theta}\sslash  \rH\simeq \rA_\theta/W_\X,
\end{equation}
where $W_\X$ denotes the little Weyl group of $\X$ and $\X_\theta=\G^\theta\backslash\G$.
   
    Suppose now that $\G^\theta\subset \rH\subset N_{\G}(\G^\theta)$. By the proof of Theorem \ref{Thm: exists} and the notation therein, we obtain an involution $\theta_\fe$ on $\G_\fe$ such that there is a unique $\G_{\fe}^{\theta_\fe}\subset \rH_\fe$ satisfying the desired properties of the theorem. By assumption there is a a canonical map
\[
\fa_\fe^{\theta}:  \X_{\fe,\theta}\sslash\G^{\theta_\fe}_\fe \lra\X_\theta\sslash\G^{\theta}
\]
between the categorical quotients. The Chevalley-type isomorphism \eqref{eqn: Chevalley type} implies that this in fact gives an isomorphism
\[
\fa_{\theta}:  \X_{\fe,\theta}\sslash\rH_\fe\lra\X_\theta\sslash \rH
\]

The inclusion of groups $\G^\theta\subset \rH$ induces a unique morphism $\X_\theta\sslash \rH\to \X\sslash \rH$ realizing $\X\sslash \rH$ as the categorical quotient of $\X_\theta\sslash \rH$ by the automorphism group $$\mathcal{Z}:=\G^\theta\backslash\rH\subset \cala_{\X_\theta};$$
here we rely on Lemma \ref{Lem: mod out by autos} with $\mathcal{Z}$ corresponding to $\cala$ and $\rH$ corresponding to $\G$. Let $\pi_{\mathcal{Z}}$ denote this quotient map. On the other hand, the inclusion of $\G_\fe^{\theta_\fe}\subset \rH_\fe$ and the inclusion \eqref{eqn: subgroup of endo auto}, $\mathcal{Z}\simeq \G_\fe^{\theta_\fe}\backslash\rH_\fe$ also acts on $\X_{\fe,\theta}\sslash\rH_\fe$ with categorical quotient given by $\X_{\fe}\sslash\rH_\fe$.%; it is a $\mathcal{Z}_\fe=\rH_\fe/\G_\fe^{\theta_\fe}$-torsor.

Noting that $$\pi_{\mathcal{Z}}\circ\fa_\theta:  \X_{\fe,\theta}\sslash\rH_\fe \to \X\sslash \rH$$ is $\mathcal{Z}$-invariant, it follows that there is a unique morphism $\fa_\fe:\X_{\fe}\sslash\rH_\fe \to \X\sslash \rH$ so that the diagram
\[
    \begin{tikzcd}
     \X_{\fe,\theta}\sslash\rH_\fe\ar[d]\ar[r,"\fa_\theta"]& \X \ar[d,"\pi_{\mathcal{Z}}"]\\ 
       \X_{\fe}\sslash\rH_\fe\ar[r,"\fa_\fe"]&\X\sslash \rH.
    \end{tikzcd}
\]commutes. This proves the claim when $\G^\theta\subset \rH\subset N_{\G}(\G^\theta)$.
\begin{Rem}\label{Rem: Chevalley isom}
    Note that 
    \begin{equation}\label{eqn: more general Chevalley}
    \X\sslash \rH\simeq [\X\sslash \rH_{\theta}]\sslash\mathcal{Z}\simeq \rA_\X/W_\X,
    \end{equation}
    where $\Ax \simeq \rA_\theta/\mathcal{Z}$ is the canonical torus for $\X$. In particular, the preceding argument extends Richardson's result to $\X=\rH\backslash\G$ when $\G^\theta\subset \rH\subset N_{\G}(\G^\theta)$.
\end{Rem}

Suppose now that $(\G^\theta)^\circ\subset \rH\subset \G^\theta$. Motivated by the preceding remark, we claim that the lemma will follow from extending the Chevalley-type isomorphism to $\X\sslash \rH$ by replacing $\rA_\theta$ with its finite cover $\Ax$. Indeed given such an isomorphism, a similar argument as above gives a diagram
   \[
    \begin{tikzcd}
       \X_\fe\sslash\rH_\fe\ar[r,"\simeq"]&\Ax/W_{\X_\fe}\ar[d]& \Ax/W_{\X}\ar[r,"\simeq"]\ar[d]&\X\sslash \rH \\ 
        \X_{\fe,\theta}\sslash\rH_\fe\ar[r,"\simeq"]&\rA_\theta/W_{\X_\fe}\ar[r,"\fa_\theta"]&\rA_\theta/W_\X\ar[r,"\simeq"]&\X_{\theta}\sslash\rH.
    \end{tikzcd}
    \]
    It follows that there exists a unique map $\fa_\fe:\Ax/W_{\X_\fe}\to \Ax/W_{\X}$ so that the diagram commutes.

To prove the claim, we revisit the proof of \cite[Theorem 13.2]{Richardson}: it relies solely on basic algebraic geometry and the following facts:
\begin{enumerate}
    \item Since each element $w\in W_\X$ has a representative in $N_{K}(\rA_\theta)$, where $K=(\G^\theta)^\circ$, we obtain a morphism of affine varieties
    \[
    j':\rA_\theta/W_\X\to \X_\theta\sslash K
    \]
    satisfying $\pi_{\X_\theta}\circ j = j'\circ\pi_{\rA_\theta}$, where $j:\rA_\theta\to \X_\theta$ is the orbit map. This trivially extends to $\X=\rH\backslash\G$ by the same fact for $W_\X$.
    \item Since $\pi_{\X_\theta}$ identifies closed $K$-orbits on $\X$ with points of $\X_\theta\sslash K$, Theorem 7.5 of \cite{Richardson} implies that $j'$ is surjective. Since $\X\to \X_\theta$ is the quotient by a finite subgroup scheme of $\Ax$ with quotient $\rA_\theta$, this claim immediately extends to $\X$. That is, $x\in \X(\kbar)$ is semi-simple if and only if $K\cdot x$ meets $\rA_\X\subset \X$.
    \item Finally, the map $j'$ is injective by Corollary 11.2 of \cite{Richardson}. This similarly extends to $\X$ by the surjectivity of $\X\to \X_\theta$.
\end{enumerate}
Following the argument of \cite[Theorem 13.2]{Richardson}, we therefore conclude the isomorphism \eqref{eqn: more general Chevalley} when  $(\G^\theta)^\circ\subset \rH\subset \G^\theta$ and the lemma.
\quash{By the proof of Theorem \ref{Thm: exists} and the notation therein, we obtain an involution $\theta_\fe$ on $\G_\fe$ such that there is a unique subgroup $(\G^{\theta_\fe}_\fe)^\circ\subset \rH_\fe\subset \G^{\theta_\fe}_\fe$ satisfying that if $\X_\fe=\G_\fe/\rH_{\fe}$ and $\rA\subset \G_\fe$, then $\rA_{\X_\fe}\simeq \Ax$. By our assumption and the inclusion of groups  $\rH\subset \G^\theta$ and $\rH_\fe\subset \G_\fe^{\theta_\fe}$, there is a unique morphism $\left[ \G^{\theta}_\fe\backslash\backslash\X_{\fe,\theta}\right]\to \left[\G^\theta\backslash\backslash\X_\theta \right]$ sitting in a diagram
    \[
    \begin{tikzcd}
       \left[ \X_\fe\sslash\rH_\fe\right]\ar[d]& \left[\X\sslash \rH \right]\ar[d]\\ 
       \left[ \G_\fe^{\theta}\backslash\backslash\X_{\fe,\theta}\right]\ar[r]&\left[\G^\theta\backslash\backslash\X_\theta \right].
    \end{tikzcd}
    \]
    
    there is a unique $\G_{\fe}^{\theta_\fe}\subset \rH_\fe$ satisfying the desired properties of the theorem. By assumption there is a a canonical map
\[
\mathcal{A}_\fe^{\theta}:\left[ \G^{\theta_\fe}_\fe
\backslash\backslash \X_{\fe,\theta}\right]\lra\left[ \G^{\theta}\backslash\X_\theta\right]
\]
between the categorical quotients. On the other hand, the inclusion of groups $\G^\theta\subset \rH$ and $\G_\fe^{\theta_\fe}\subset \rH_\fe$ imply that there is a unique morphism $\left[ \G^{\theta}\backslash\X_\theta\right]\to \left[\X\sslash \rH \right]$.}
\end{proof}
We may thus assume that $\rH=\G^\theta$ and $\rH_\fe = \G_\fe^{\theta_\fe}$. Fix $(\theta,k)$-admissible Borel pairs $(\rA,\B)$ and $(\rA_\fe,\B_\fe).$ This gives rise to a short exact sequence
\[
1\lra {\rA}^{\theta}\lra \rA\lra \mathrm{A}_{\X}\lra 1,
\]
and a commutative diagram (recall that  $\T_\X = ( {\rA}^{\theta})^\circ$)
\[
 \begin{tikzcd}
 \check{\mathrm{A}}_{{\X}}\ar[d]\ar[r]&\check{\rA}\ar[d]\ar[r]&{\check{\T}_\X}\ar[d]\\
 \check{\G}_{{\X}}\ar[r,"\varphi_{\X}"]&\hat{\G}_{\X}\ar[r,"\hat{s}"]&\hat{{\X}},
 \end{tikzcd}
\]
where $\varphi_{\X}$ is  our fixed distinguished morphism. Recall from Proposition \ref{Prop: dual involution} that $\check{\G}_\X^\ast=\varphi_\X(\check{\G}_\X)$ is the connected component of the identity of $\hat{\G}_\X^{\vartheta}$. As shown in Section \ref{Sec dual basic}, the dual pair $(\check{\rA},\hat{\B}_\X)$ is $\vartheta$-stable, so it gives a fundamental pair for $(\hat{\G}_\X,\hat{\X})$.

We have a similar diagram for $(\G_\fe,\X_\fe)$ and $(\rA_\fe,\B_\fe)$; we let $\vartheta_\fe$ denote the corresponding involution on $\hat{\G}_{\X_\fe}$. By assumption, we have a commutative diagram
\begin{equation}
      \begin{tikzcd}
\check{\G}_{\X_{\fe}}\ar[d, "\eta_\X"]\ar[r,"\varphi_{\X_\fe}"]&\hat{\G}_\fe\ar[d,"\eta"]\ar[r]&\hat{\X}_\fe\ar[d,"\eta_\ka"]\\  
\check{\G}_{\X}\ar[r,"\varphi_X"]&\hat{\G}\ar[r]&\hat{\X},
\end{tikzcd}
\end{equation}
realizing $(\hat{\G}_\fe,\hat{\X}_\fe)$ as the descendant of $(\hat{\G},\hat{\X})$ at $\ka\in \hat{\X}$.
%induced by requiring $\varphi_{\fe}( \check{G}_{{\X_{\fe}}})$ to be the connected component of the fixed point subgroup of the dual involution $\check{\theta}_{\fe}$ as determined by Proposition \ref{Prop: dual involution}. Note that $\check{B}_{\fe}$ is $\check{\theta}_{\fe}$-stable.

%We now fix similar data $(T,B,\theta)$ for $(\G,\X)$. Then we
\begin{Lem}\label{Lem: important simplification}
We may conjugate our choice of endoscopic data $\fe$ by an element of $\check{\G}^\ast_{\X_\fe}$ such that we have 
\begin{enumerate}
    \item\label{property1} $\eta^{-1}(\check{\rA},\check{\B})=(\check{\rA}_\fe,\check{\B}_\fe)$,
    \item\label{property2} $\eta_\X(\check{\mathrm{A}}_{{\X^\fe}})=\check{\mathrm{A}}_{{\X}}$.
\end{enumerate}
In particular, the map $\eta:\check{\rA}_\fe\lra \check{\rA}$ may be chosen so that it intertwines $\vartheta_\fe$ with $\vartheta$.
\end{Lem}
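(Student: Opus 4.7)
The plan is to reduce property \eqref{property1} to a conjugacy statement for fundamental pairs of the symmetric pair $(\hat{\G}_{\X_\fe},\check{\G}^\ast_{\X_\fe})$, then deduce \eqref{property2} by a rank comparison.

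First, since $\hat{s}_\fe(\ka)\in\hat{\X}_\fe^\heart$, the first step will be to replace $\ka$ within its $\check{\G}^\ast_{\X_\fe}$-orbit so that $\hat{s}_\fe(\ka)\in Z(\check{\mathrm{M}}_\X)\subset\check{\rA}$. With this choice, $\check{\rA}\subset\check{\G}_{\eta(\hat{s}_\fe(\ka))}^\circ=\eta(\check{\G}_\fe)$, so $\eta^{-1}(\check{\rA})$ is a well-defined maximal torus of $\check{\G}_\fe$ and $\eta^{-1}(\check{\B})$ is a Borel containing it (the intersection of a Borel with a semisimple centralizer gives a Borel of the centralizer). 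Because $\eta$ intertwines $\vartheta_\fe$ with $\vartheta$ by Definition of an endoscopic datum, the pair $(\eta^{-1}(\check{\rA}),\eta^{-1}(\check{\B})\cap\hat{\G}_{\X_\fe})$ is $\vartheta_\fe$-stable, and $\eta^{-1}(\check{\rA})$ is maximally $\vartheta_\fe$-split; the analogous statements hold for $(\check{\rA}_\fe,\hat{\B}_{\X_\fe})$ by construction of the $(\theta_\fe,k)$-admissible pair.

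The main step is to apply Lemma~\ref{Lem: minimal rank} to $(\hat{\G}_{\X_\fe},\check{\G}^\ast_{\X_\fe})$. For this I first verify that $\hat{\X}_\fe$ is minimal: since $\eta$ intertwines the two involutions and identifies $\check{\rA}_\fe\simeq\check{\rA}$ (after the conjugation above), the $\vartheta_\fe$-split rank of $\check{\rA}_\fe$ equals the $\vartheta$-split rank of $\check{\rA}$, which is the rank of $\hat{\X}$; by Theorem~\ref{Thm: exists}\eqref{item lattice} this equals the rank of $\hat{\X}_\fe$, so $\check{\rA}_\fe$ contains a maximal $\vartheta_\fe$-split subtorus, giving minimality by Lemma~\ref{Lem: minimal rank}(3). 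Consequently, the two fundamental pairs above lie in a single $\check{\G}^\ast_{\X_\fe}$-orbit, and replacing $\eta$ by $\Ad(\eta(g))\circ\eta$ for an appropriate $g\in\check{\G}^\ast_{\X_\fe}$ yields an isomorphic endoscopic datum for which \eqref{property1} holds. The constraint $g\in\check{\G}^\ast_{\X_\fe}=\hat{\G}_{\X_\fe}^{\vartheta_\fe,\circ}$ guarantees the intertwining of involutions is preserved after conjugation.

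For \eqref{property2}, the image $\varphi_{\X_\fe}(\check{\mathrm{A}}_{\X_\fe})$ is the identity component of the maximally $\vartheta_\fe$-split subtorus of $\check{\rA}_\fe$, and similarly for $\varphi_\X(\check{\mathrm{A}}_\X)\subset\check{\rA}$. The equality $\eta(\check{\rA}_\fe)=\check{\rA}$ combined with the intertwining of $\vartheta_\fe$ and $\vartheta$ gives $\eta(\varphi_{\X_\fe}(\check{\mathrm{A}}_{\X_\fe}))\subset\varphi_\X(\check{\mathrm{A}}_\X)$; since these tori have the same rank by Theorem~\ref{Thm: exists}\eqref{item lattice}, the inclusion is an equality. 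Pulling back through $\varphi_\X$ and $\varphi_{\X_\fe}$ using the commutative diagram \eqref{eqn: commutative dual diag} yields $\eta_\X(\check{\mathrm{A}}_{\X_\fe})=\check{\mathrm{A}}_\X$. The final sentence regarding the intertwining of $\vartheta_\fe$ and $\vartheta$ on the tori is then immediate from the corresponding statement on the groups and the inclusion $\check{\rA}_\fe\subset\hat{\G}_{\X_\fe}$.

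The main obstacle is the verification of minimality of $\hat{\X}_\fe$, which is essential for invoking Lemma~\ref{Lem: minimal rank}; once this rank comparison is in place, everything else follows formally from the structure theory of fundamental pairs for minimal symmetric varieties.
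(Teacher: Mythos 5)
Your proof takes essentially the same route as the paper's: observe the intertwining $\eta\circ\vartheta_\fe=\vartheta\circ\eta$, recognize $\eta^{-1}(\check{\rA},\check{\B})$ as a fundamental pair, and invoke Lemma~\ref{Lem: minimal rank} to conjugate it to $(\check{\rA}_\fe,\check{\B}_\fe)$; property \eqref{property2} then drops out from the identification of $\check{\mathrm{A}}_{\X}$ with the $\vartheta$-fixed part of $\check{\rA}$.

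However, your attempted verification of minimality of $\hat{\X}_\fe$ has a genuine problem, and in two ways. First, it is circular: you argue ``$\eta$ identifies $\check{\rA}_\fe\simeq\check{\rA}$ (after the conjugation above),'' but the conjugation you performed only put $\eta(\hat{s}_\fe(\ka))$ into $\check{\rA}$ and made $\eta^{-1}(\check{\rA})$ a well-defined maximal torus; it did not identify $\eta^{-1}(\check{\rA})$ with $\check{\rA}_\fe$. That identification is precisely property~\eqref{property1}, which you intend to deduce from minimality two sentences later. Second, even granting the identification, the argument would show only that the specific torus $\check{\rA}_\fe$ contains a maximal $\vartheta_\fe$-split subtorus — which it does trivially, having been chosen to be maximally $\vartheta_\fe$-split — while Lemma~\ref{Lem: minimal rank}(3) characterizes minimality by requiring this for every $\vartheta_\fe$-stable maximal torus. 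The intended justification is simpler and non-circular: $\hat{\X}_\fe$ is a dual symmetric variety produced by the Knop--Schalke construction applied to $(\G_\fe,\X_\fe)$, so its minimality follows from Corollary 7.9 of \cite{KnopSchalke} by the same argument already given for $\hat{\X}$ in Section~\ref{Sec dual basic}; the paper simply asserts this. Also a small slip: $\check{\mathrm{A}}_\X$ is the $\vartheta$-fixed (not $\vartheta$-split) identity component of $\check{\rA}$, since $\vartheta = -\theta$ on cocharacters and $X^\ast(\rA_\X)$ sits in the $\theta=-1$ eigenspace.
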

\begin{proof}
%The arrows $s$ and $s^\fe$ have been fixed by our choices of admissible pairs. Note 
By the properties of semi-simple descent, the commutativity of the diagram already implies that 
\begin{equation}\label{eqn: important endoscopic commute}
    \eta\circ\vartheta_\fe = \vartheta\circ \eta.
\end{equation}
In particular, if we set $(\check{\rA}'_\fe,\check{\B}'_\fe)=\eta^{-1}(\check{\rA},\check{\B})$, then $(\check{\rA}'_\fe,\check{\B}'_\fe)$ is a fundamental pair for $\vartheta_\fe$. Since $\hat{\X}_{\fe}$ is a spherical variety of minimal rank, Lemma \ref{Lem: minimal rank} implies that there exists $h\in \check{\G}_{\X_\fe}^\ast$ such that
\[
\Ad(h)(\check{\rA}'_\fe,\check{\B}'_\fe)=(\check{\rA}_\fe,\check{\B}_\fe).
\]
Thus, replacing $\eta$ (resp. $\eta_{\X}$) with $\eta\circ\Ad(h^{-1})$ (resp. $\eta_{\X}\circ\Ad(h^{-1})$), we may assume that (\ref{property1}) holds. With this assumption, (\ref{property2}) follows from the commutativity relation \eqref{eqn: important endoscopic commute} and identifying $\check{\mathrm{A}}_\X=\check{\rA}^{\vartheta,\circ}$.
\end{proof}

We now assume that $\fe$ satisfies the properties of the preceding lemma so that $\eta$ induces a $(\vartheta_\fe,\vartheta)$-equivariant isomorphism $X^\ast(\check{\rA}_\fe)\iso X^\ast(\check{\rA})$. This induces an $\kbar$-isomorphism $\psi_\fe: \rA_\fe\iso \rA$. Lemma \ref{Lem: dual involution on torus} now implies that $\psi_\fe$ intertwines the $k$-rational involutions on $\rA_\fe$ and $\rA$:
\[
\psi_\fe(\theta_\fe(t)) = \theta(\psi_\fe(t)).
\]
In particular, we see that $\psi_\fe$ induces an isomorphism 
\begin{equation}\label{eqn: iso on subtori}
    \rA_{\fe}^{+}\iso \rA^+,\qquad \text{and}\qquad \rA_{\fe}^{-}\iso \rA^-
\end{equation}
between the maximal fixed and split subtori.

The morphism $\psi_\fe$ need not be a $k$-isomorphism: for each $\sig\in \Ga$, and $s\in \rA_{\fe}(\kbar)$ we obtain
\[
\psi_\fe(\sig_\fe(s)) = \Ad(w_\sig)(\sig(\psi_\fe(s)),
\]
and the map $\sig\mapsto w_\sig$ is a $1$-cocycle valued in $W(\kbar)$. On the other hand, since the involutions $\theta_\fe$ and $\theta$ are defined over $k$, they commute with the Galois actions on the two tori. It follows that the $1$-cocycle $[w_\sig]\in Z^1(\Ga,W)$ 
preserves the isomorphisms \eqref{eqn: iso on subtori}.
%\[
%\psi^\fe:T^{\fe,+}\iso T_+\qquad\text{ and }\qquad \psi^\fe:T^{\fe,-}\iso T^-.
%\]
This implies that $\{w_\sig\}$ is valued in the subgroup $W_1=\{w\in W: w(\rA^-)=\rA^-\}$.
\begin{Lem}
Consider the isomorphism $\psi_\fe^{-}:\rA_\fe^{-}\iso \rA^-,$ and let $\{w_\sig\}$ denote the $\Ga$-cocycle valued in $W_1(\kbar)$ as above. This descends to a cocycle valued in $W_\X$.
\end{Lem}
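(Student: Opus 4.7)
The plan is straightforward, leaning on Richardson's description of the little Weyl group. Recall from \cite[Section 4]{Richardson} (already cited in the excerpt when defining $W_\X$) that $W_\X \simeq W_1/W_2$, where $W_2 = \{w \in W_1 : w|_{\rA^-} \equiv \mathrm{id}\}$ is the kernel of the $W_1$-action on $\rA^-$. Since $\rA^-$ is a $k$-rational subtorus of $\rA$ (being the connected component of the $\theta$-inversion subtorus of a $k$-rational torus, for the $k$-rational involution $\theta$), both $W_1$ and $W_2$ are $\Ga$-stable subgroups of $W$, so the quotient $W_\X = W_1/W_2$ inherits a well-defined $\Ga$-action and the canonical projection $W_1 \to W_\X$ is a homomorphism of $\Ga$-modules.

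First I would note that functoriality of cocycles under a $\Ga$-equivariant group homomorphism immediately yields a $1$-cocycle $\sig \mapsto [w_\sig] \in W_\X(\kbar)$ from the given $1$-cocycle $\sig \mapsto w_\sig \in W_1(\kbar)$. Second, I would verify that this descended cocycle depends only on the restriction $\psi_\fe^- : \rA_\fe^- \iso \rA^-$ and not on the full isomorphism $\psi_\fe$: restricting the defining relation $\psi_\fe(\sig_\fe(s)) = \Ad(w_\sig)(\sig(\psi_\fe(s)))$ to $s \in \rA_\fe^-(\kbar)$ yields
\[
\psi_\fe^-(\sig_\fe(s)) = w_\sig \cdot \sig(\psi_\fe^-(s)),
\]
and since $W_2$ acts trivially on $\rA^-$, the coset $[w_\sig] \in W_\X$ is characterized intrinsically by $\psi_\fe^-$. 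I do not anticipate any genuine obstacle: the cocycle identity in $W_\X$ is inherited verbatim from the cocycle identity in $W_1$ (which itself follows from the uniqueness of $w_\sig \in W$, guaranteed by the faithful action of $W$ on $\rA$), and the required compatibility with the $\theta$-stability on $\rA^-$ was established in the paragraphs immediately preceding the lemma statement.
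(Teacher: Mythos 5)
Your proof is correct and takes essentially the same approach as the paper: both rely on Richardson's identification $W_\X \simeq W_1/W_2$ with $W_2 = \{w \in W_1 : w|_{\rA^-} \equiv \mathrm{id}\}$ and then pass the cocycle through the quotient. You spell out the $\Ga$-stability of $W_1$, $W_2$ and the intrinsic characterization via $\psi_\fe^-$ more explicitly than the paper's terse two-line argument, but no new ideas are involved.
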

\begin{proof}
Recall that $\Ax\simeq \rA^-$, so that $W_\X\simeq W(\rH,\rA^-)$. But we have already seen the isomorphism \cite[Section 4]{Richardson}
\[
W(\rH,\rA^-)\simeq W_1/W_2,
\]
where $W_2=\{w\in W_1: w|_{\rA^-}\equiv Id\}$.%Now using the fact
%\[
%W(\rH,\rA^-)=N_{\rH^\circ}(\rA^-)/Z_{\rH^\circ}({\rA^-})\simeq N_{\G}(\rA^-)/Z_{\G}({\rA^-}),
%\] t
\end{proof}
It follows  that $\psi_\fe$ induces a $k$-rational map
\[
\rA_{\X_\fe}/W_{\X_\fe}\lra \rA_{\X}/W_{\X}.
\]
The Chevalley-style isomorphism for $\X$ and $\X_\fe$ in \eqref{eqn: Chevalley type} thus induces the morphism
\[
\fa_\fe:\X_\fe\sslash\rH_\fe\simeq\rA_{\X_\fe}/W_{\X_\fe}\lra \rA_{\X}/W_{\X}\simeq \X\sslash \rH.
\]
%The preceding proposition implies that $\psi^\fe$ induces an $k$-morphism
%\begin{equation}\label{eqn: orbit mapping}
%    \fc_\fe:[\X^\fe\sslash\rH^\fe]\lra [X\sslash\rH].
%\end{equation}
It is easy to check that $\fa_\fe$ is independent of all choices, completing the proof of Theorem \ref{Thm: point comparison}.\qed

\begin{Cor}\label{Cor: quotient coherence}
Suppose that $\chi_\fe:\G_\fe\sslash\G_\fe\lra \G\sslash\G$ is the map between categorical quotients of groups. The diagram
\begin{equation}\label{eqn: commuting quotients}
    \begin{tikzcd}
{[\X_\fe\sslash\G^{\theta_\fe}_\fe](k)}\ar[d,"s_\fe"]\ar[r,"{\fa_\fe}"]&{[\X\sslash\G^\theta](k)}\ar[d,"s"]\\
{[\G_\fe\sslash\G_\fe](k)}\ar[r,"{\chi_\fe}"]&{[\G\sslash\G](k)}
\end{tikzcd}
\end{equation}
commutes. Here the vertical arrows are the natural maps induced by the symmetrization maps.
\end{Cor}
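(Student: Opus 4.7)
The plan is to reduce, via essentially the same Chevalley-type machinery used in the proof of Theorem \ref{Thm: point comparison}, to a tautology at the level of tori. First, an argument mirroring Lemma \ref{Lem: chevalley} reduces the commutativity statement to the case $\rH = \G^\theta$, $\rH_\fe = \G_\fe^{\theta_\fe}$: once this case is established, the general case follows by the functoriality of categorical quotients along the actions of $\G^\theta\backslash\rH$ and $\G_\fe^{\theta_\fe}\backslash\rH_\fe$ (which agree under the identification \eqref{eqn: subgroup of endo auto}), together with the observation that $\chi_\fe$ is insensitive to this extra quotient on both the source and target.

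Assume now $\rH=\G^\theta$ and $\rH_\fe=\G_\fe^{\theta_\fe}$, and fix a $(\theta,k)$-admissible maximal torus $\rA\subset\G$ together with the torus $\rA_\fe\subset\G_\fe$ and the $\kbar$-isomorphism $\psi_\fe:\rA_\fe\iso\rA$ constructed in the proof of Theorem \ref{Thm: point comparison}. Using \eqref{eqn: Chevalley type} and the classical Chevalley isomorphism, I identify
\[
\X\sslash\rH\simeq \rA^-/W_\X,\qquad \G\sslash\G\simeq \rA/W,
\]
and analogously on the endoscopic side. Under these identifications, both vertical arrows $s,s_\fe$ become induced by the squaring morphisms $\rA^-\to\rA$, $a\mapsto a\,\theta(a)^{-1}=a^2$ (and the obvious analogue for $\rA_\fe$), because this is exactly what $\tau(g)=g^{-1}\theta(g)$ does to an element of the $\rA^-$-orbit of the base point. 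The two horizontal arrows, on the other hand, both descend from the single isomorphism $\psi_\fe$: the morphism $\chi_\fe$ is induced by $\psi_\fe$ itself, while $\fa_\fe$ is induced by its restriction $\psi_\fe^-:=\psi_\fe|_{\rA_\fe^-}$, which makes sense precisely because $\psi_\fe$ intertwines $\theta_\fe$ and $\theta$.

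The commutativity of \eqref{eqn: commuting quotients} therefore reduces to the commutativity of the diagram of tori
\[
\begin{tikzcd}
\rA_\fe^- \ar[r, "\psi_\fe^-"] \ar[d, "{a\mapsto a^2}"'] & \rA^- \ar[d, "{a\mapsto a^2}"] \\
\rA_\fe \ar[r, "\psi_\fe"] & \rA,
\end{tikzcd}
\]
which holds tautologically because $\psi_\fe$ is a group homomorphism and hence commutes with squaring. The step that requires the most care, and which I expect to be the main technical point, is the Weyl-group bookkeeping needed to ensure that all four arrows descend to the claimed quotients $\rA^-/W_\X$, $\rA/W$, $\rA_\fe^-/W_{\X_\fe}$, $\rA_\fe/W_\fe$ in a compatible manner. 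For this I use the identification $W_\X\simeq W_1/W_2$ of \cite[Section 4]{Richardson} recalled in the proof of Theorem \ref{Thm: point comparison} (and its endoscopic analogue), together with the fact already established there that the cocycle measuring the failure of $\psi_\fe$ to be $\Ga$-equivariant takes values in the subgroup $W_1\subset W$ and descends to $W_\X$ on $\rA^-$. Once this compatibility is spelled out, the diagram commutes on the nose at the level of tori, and passing to $k$-points gives the claim.
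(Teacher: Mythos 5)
Your proposal is correct and supplies exactly the argument that the paper leaves implicit: the Corollary is stated without proof as an immediate consequence of the construction of $\fa_\fe$, and your reduction to a square of tori under the Chevalley-type identifications is the expected justification. Two small remarks that do not affect correctness: the Corollary as stated already uses $\G^\theta$ and $\G_\fe^{\theta_\fe}$ in the quotients, so your initial reduction step "to the case $\rH=\G^\theta$" is not really needed (though harmless); and the paper's symmetrization $\tau(g)=g^{-1}\theta(g)$ restricts on $\rA^-$ to $a\mapsto a^{-2}$ rather than $a\mapsto a^2$, a sign convention that is immaterial since any power map commutes with the group homomorphism $\psi_\fe$, which is the substance of your argument.
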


A similar result -- with a simpler argument -- follows for the infinitesimal (or Lie algebra) setting. That is, if $\fg=\Lie(\G)$ denote the Lie algebra, then $\theta$ induces a $\zz/2\zz$-grading on $\fg=\fg_0\oplus \fg_1$ by setting $\fg_i:=\{X\in\fg: d\theta(X) = (-1)^iX\}$. Then the $\G^\theta$-representations
\[
T^\ast_{x_0}\X \simeq \fg_1
\]
is referred to as an infinitesimal symmetric variety for $(\G,\theta)$ at $x_0$.
\begin{Prop}
  Suppose that $\G$ is a reductive group over $k$. Let $\theta$ be a $k$-rational involution and let  $\fg_1$ be the associated infinitesimal symmetric variety. Let $\fe = (\G_\fe,\theta_\fe,\X_\fe,\ka,\eta)$ be an endoscopic datum for $(\G,\X)$. There exists a canonical $k$-rational morphism
  \[
  \fa_\fe:  \fg_{\fe,1}\sslash\G^{\theta_\fe}\lra \fg_{1}\sslash \G^{\theta}.
  \]
\end{Prop}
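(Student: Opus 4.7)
The plan is to mirror the proof of Theorem \ref{Thm: point comparison}, replacing the Richardson isomorphism $\X\sslash\rH\simeq \rA_\X/W_\X$ with its infinitesimal analogue due to Kostant--Rallis (and Levy in positive characteristic): for a $(\theta,k)$-admissible maximal torus $\rA\subset\G$ with Lie algebra $\fa=\fa_0\oplus\fa_1$, the inclusion $\fa_1=\Lie(\rA^-)\hookrightarrow \fg_1$ induces a $k$-rational isomorphism
\[
\fg_1\sslash\G^\theta\simeq \fa_1/W_\X,
\]
and likewise for $(\G_\fe,\theta_\fe)$. (The assumption that $\mathrm{char}(k)$ is good for $\G$ is exactly what is needed to invoke \cite{Levy} in place of Kostant--Rallis.) Once this is in hand, the problem reduces to producing a canonical $k$-rational morphism $\fa_{\fe,1}/W_{\X_\fe}\lra \fa_1/W_\X$.

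To do this, I would first fix $(\theta,k)$- and $(\theta_\fe,k)$-admissible Borel pairs $(\rA,\B)$ and $(\rA_\fe,\B_\fe)$, and apply Lemma \ref{Lem: important simplification} to conjugate the endoscopic datum $\fe$ by an element of $\check\G^\ast_{\X_\fe}$ so that $\eta^{-1}(\check\rA,\check\B)=(\check\rA_\fe,\check\B_\fe)$ and $\eta$ intertwines $\vartheta_\fe$ with $\vartheta$. By Lemma \ref{Lem: dual involution on torus}, dualizing the resulting $(\vartheta_\fe,\vartheta)$-equivariant isomorphism $X^\ast(\check\rA_\fe)\iso X^\ast(\check\rA)$ yields a $\kbar$-isomorphism $\psi_\fe:\rA_\fe\iso \rA$ that intertwines $\theta_\fe$ with $\theta$, and hence restricts to an isomorphism $\rA_\fe^-\iso \rA^-$. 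Differentiating gives a $\kbar$-linear isomorphism $d\psi_\fe:\fa_{\fe,1}\iso \fa_1$.

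Exactly as in the proof of Theorem \ref{Thm: point comparison}, the map $\psi_\fe$ (and therefore $d\psi_\fe$) is not $\Ga$-equivariant, but the discrepancy is governed by a $1$-cocycle $\sig\mapsto w_\sig$ valued in $W_1=\{w\in W: w(\rA^-)=\rA^-\}$, since the $k$-rationality of $\theta$ and $\theta_\fe$ forces the cocycle to commute with the involutions on the tori. The key identification $W_\X\simeq W_1/W_2$ (via $W_\X\simeq W(\rH,\rA^-)$) then shows that the cocycle descends to one in $W_\X$, so $d\psi_\fe$ induces a well-defined $k$-rational morphism
\[
\fa_{\fe,1}/W_{\X_\fe}\lra \fa_1/W_\X.
\]
Composing with the Kostant--Rallis--Levy isomorphisms at both ends produces the desired map $\fa_\fe$, and independence from the various choices (of admissible pairs, distinguished morphism, and the conjugating element in Lemma \ref{Lem: important simplification}) is verified as in the group case.

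The reason this is \emph{simpler} than Theorem \ref{Thm: point comparison} is that the Lie algebra setting bypasses the preliminary reductions in Lemma \ref{Lem: chevalley}: one does not need to pass between $\rH=\G^\theta$, $(\G^\theta)^\circ$, and subgroups of $N_\G(\G^\theta)$, nor to extend the Chevalley isomorphism beyond $\G^\theta$, because $\fg_1$ depends only on $\theta$ and the action of every symmetric subgroup between $(\G^\theta)^\circ$ and $N_\G(\G^\theta)$ on $\fg_1$ has the same categorical quotient $\fa_1/W_\X$. The only real step is the cocycle computation, which is identical to the one already carried out in the proof of Theorem \ref{Thm: point comparison}, so I do not anticipate any serious obstacle.
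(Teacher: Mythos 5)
Your proposal is correct and matches what the paper intends: the paper gives no proof beyond the remark that a ``simpler argument'' works in the infinitesimal setting, and your plan — substituting the Kostant--Rallis/Levy restriction isomorphism $\fg_1\sslash\G^\theta\simeq\fa_1/W_\X$ for Richardson's and then running the identical cocycle argument via Lemma \ref{Lem: important simplification} — is exactly that simplification. Your observation that the reductions of Lemma \ref{Lem: chevalley} are superfluous here is also right, since the statement already fixes $\rH=\G^\theta$.
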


As noted in the introduction, this result will play a role in any analysis of relative Hitchin fibrations associated to symmetric varieties \cite{LeslieSpringer,Garcia-Prada, HameisterMorrissey} toward generalizations of Ng\^{o}'s results \cite{Ngo06,NgoFL}. We discuss the application of these results toward stabilization of relative trace formulae in Section \ref{Section: Example} and prove these results in \cite{LesliestabFJ}.

\section{A symplectic representation and the dual Hamiltonian variety}\label{Sec: symplectic rep}
Suppose that $\G$ is a quasi-split connected reductive group over $k$. Let $\X=\rH\backslash\G$ be a symmetric $k$-variety. The goal of this section is to show how, for sufficiently nice symmetric varieties, the geometric cocycle $\mu_\X$ encodes a symplectic representation of ${}^L{\X}$ so that the resulting data determines $\X$ up to $\G$-inner form. Stated more directly, we show how to incorporate the outer data $\out_\X$ on the dual side in terms of a symplectic representation of ${}^L\X$. We then explain how to interpret our notion of endoscopic varieties in the context of the relative Langlands duality conjectures and the dual Hamiltonian $\check{\G}$-variety of Ben-Zvi--Sakellaridis--Venkatesh \cite{BZSV}.
\subsection{The symplectic representation $S_\X$}\label{Section: symplectic}
We now make the assumption that $\X$ satisfies Assumption \ref{assumption: no type N} and that $\rH$ is geometrically connected. 
\begin{Rem}
While much of the results in this section can be established in much more generality than this, the statements become difficult to formulate if disconnected groups are allowed. In any case, the assumptions of \cite{BZSV} are much more stringent than these. For example, we do not generally impose that $\X$ have no type $N$ roots until Section \ref{Section: hamiltonian endoscopy}.
\end{Rem}Suppose that $\theta$ is a $k$-rational involution of $\G$ associated to $\X$ and let $(\rA,\B)$ be a $(\theta,k)$-admissible pair. We fix a choice of distinguished morphism $\varphi_\X:\check{\G}_\X\lra \hat{\G}_\X$ compatible with $\check{\rA}_\X\to \check{\rA}$ and pinning induced of $\check{\G}$. This induces $\Ga$-stable Borel subgroups for $\check{\G}$, $\hat{\G}_\X$ and $\check{\G}_\X$ denoted $\check{\B},$ 
 $\hat{\B}_\X$, and $\check{\B}_\X$, respectively. Moreover, $\hat{\B}_\X$ is $\vartheta$-stable and we have $\varphi_\X(\check{\B})\subset \hat{\B}_\X$.

Let $\D(\X)$ be the set of $\B$-colors of $\X$ and let
\[
\rho\times \varsigma:\D(\X)\lra X_\ast(\Ax) \times \mathcal{P}(\De)= X^\ast(\check{\rA}_\X)\times \mathcal{P}(\check{\De})
\]
be the cocharacter map from \eqref{eqn: color function}. Our assumption on $(\rA,\B)$ implies that $\Ga$ acts on $\De$ and hence on $\mathcal{P}(\check{\De})$. %In particular, the only ambiguity is the action of $\Ga$ on the fibers of $\rho\times \varsigma$. 

\quash{Recall that $\Aut_{\Omega}(\D(\X))\subset \Aut(\fX,\De_{\X}, \Omega^{(1)},\Omega^{(2)})$ is the subgroup acting trivially on all data except the fibers of $\rho\times \varsigma$ over $\Omega^{(2)}$. Then $\Aut_{\Omega}(\D(\X))$ consists of automorphisms that swap two \emph{undetermined} colors $\{D^+,D^-\}$ that lie over the same $\check{\ga}$. 
 Let $\De^{(2)}_{\X} = \{\ga\in \De_{\X}\cap\De:\rho(D^+)=\rho(D^-)\}$, and let $\D^{(2)}(\X)=(\rho\times\varsigma)^{-1}(\Omega^{(2)})$ be the undetermined colors. 
Using Proposition \ref{Prop: sym colors}, there is a corresponding subset $\check{\Sigma}^{(2)}_{\X}\subset \rho(\D(\X))$ such that for every $\al\in {\De}^{(2)}_{\X}$, $\frac{1}{2}\check{\al}\in \check{\Sigma}^{(2)}_{\X}$.
\begin{Def}
        We define $\check{\De}^{(2)}_{\X}= \{\frac{c_\al}{2}\check{\al}:\al\in {\De}^{(2)}_{\X}\}$ by re-scaling elements $\frac{1}{2}\check{\al}$ to the unique minimal multiple $\frac{c_\al}{2}\check{\al}\in \check{\fX}$.
\end{Def}}
 Recall the set $\De_{\X}^{dist}$ of distinguished roots, so that by Theorem \ref{Thm: Outer in terms of dist}, 
 \[
 \Out_{\X}(\rH) \simeq \prod_{i\in I_\X}\Res_{k_i/k}(\mu_2),
 \]
 where $I_\X$ is the set of $\Ga$-orbits in $\De_\X^{dist}$. Using Proposition \ref{Prop: sym colors}, there is a corresponding subset $\check{\Sigma}^{dist}_{\X}\subset \rho(\D(\X))$ such that for every $\al\in {\De}^{dist}_{\X}$, $\frac{1}{2}\check{\al}\in \check{\Sigma}^{dist}_{\X}$.
\begin{Def}
        We define $\check{\De}^{dist}_{\X}= \{\frac{c_\al}{2}\check{\al}:\al\in {\De}^{dist}_{\X}\}$ by re-scaling elements $\frac{1}{2}\check{\al}$ to the unique minimal multiple $\frac{c_\al}{2}\check{\al}\in \check{\fX}^{sv}$.
\end{Def}

\begin{Rem}
   Re-normalizing the spherical roots to obtain the lattice $\fX^{sv} = \fX+\zz\De^{sv}_\X$ may alter the relationship between $\rho(\D(\X))\subset \X^\ast(\Ax)$ and the representation theory of $\check{\G}_\X$.
\end{Rem}

 We define $S_\X$ as the unique representation of $\check{\G}_{\X}$ satisfying
\begin{equation*}
    S_{\X}= \bigoplus_{\check{\lam}\in \mathfrak{s}_\X} V(\check{\lam})\otimes M(\check{\lam})
\end{equation*} 
where%\begin{equation}
    %\mathfrak{s}_{\X}=\{ \text{ highest weights contained in $W_{\X}\cdot \check{\De}^{dist}_{\X}$}\}.
%\end{equation}
 \begin{enumerate}
     \item\label{axiom1} $\mathfrak{s}_{\X}=\{ \text{highest weights contained in $W_{\X}\cdot \check{\De}^{dist}_{\X}$}\},$%$\mathfrak{s}_{\X}$ is the set of highest weights contained in $W_{\X}\cdot \check{\De}^{(2)}_{\X}$;
    \item\label{axiom2} the multiplicity space $M(\check{\lam})$ has a basis indexed by colors $D\in \D(\X)$ satisfying $\rho(D) = \frac{1}{2}\check{\al}$ for any $\check{\al}$ lying in the $W_{\X}$-orbit of $\frac{2}{c_\al}\check{\lam}$.
\end{enumerate}
%The multiplicity space $M(\check{\lam})$ possesses a natural action of $\Ga_{\al}=\{\sig\in \Ga: \sig \check{\al}= \check{\al}\}$. 
%\textcolor{red}{UTH}
%\begin{Rem}
%    This definition is motivated partially by the desire to isolate a $\Ga$-stable Lagrangian for a symplectic representation of $\check{\G}_\X$ conjectured by Ben-Zvi--Sakellaridis--Venkatesh; see Section \ref{Section: BZSV conj}.
%\end{Rem} 
We will construct an action of ${}^L{\X}$ on $S_\X$. %When $\X$ is symmetric, we have the following result.

Following the notation from Section \ref{Section: calculate cocycle}, there is a canonical decomposition
\[
\De_\X^{dist}=\bigsqcup_{i\in I_\X}\calo_i = \bigsqcup_{i\in I_\X}\bigsqcup_{a\in \Ga/\Ga_i}a\cdot\De_{\X_i}^{dist}
\]
% Recall from Section \ref{Section: sym colors} that $\rho(\D(\X))$ is the set of coroots associated to the relative roots $\De_{\X}^n$.
and for each $i\in I_\X$, there is a canonical quadratic character 
$
    \mu_i:\Ga_i\lra \{\pm1\}.
$ 
 For any $\al\in \calo_i$, we let $\mu_\al:\Ga_\al\to \{\pm1\}$ be the corresponding character on the stabilizer $\Ga_\al = a^{-1}\Ga_i a$. %such that $\mu_\al(a) = -1$ if and only if $a\in \Aut^{\G}(\X)(\kbar)$ swaps $D^+$ and $D^-$. We
%let $\mu_\al:\Ga_\al\lra \{\pm1\}$ be the corresponding character of $\Ga_{\al}=\{\sig\in \Ga: \sig \check{\al}= \check{\al}\}$.
Set $\D(\X)^{dist}\subset \D(\X)$ to be those colors $D$ such that $\rho(D)\in \check{\Sigma}_\X^{dist}$. 
\begin{Prop}\label{Prop: symplectic rep}
    Suppose that $\X=\rH\backslash\G$ is a symmetric variety satisfying Assumption \ref{assumption: no type N} with $\rH$ geometrically connected. There exists a unique ${}^L{\X}$-representation on $S_\X$ extending the algebraic action of $\check{\G}_{\X}$ such that
    \begin{enumerate}
        \item there is an isomorphism of $\Ga$-representations 
    \[
    S_\X^{(\check{\B}_{\X})}\simeq \cc[\D(\X)^{dist}],
    \]
    where for any $\al\in\De_\X^{dist}\setminus\De_\X^{(2)}$ and $\sig\in \Ga_\al$, we have $\sig\cdot D_\al=     \mu_\al(\sig)D_\al$ where $D_\al$ is the unique color satisfying $\rho(D_\al) = \frac{1}{2}\check{\al}$,
    \item there is a symplectic structure on $S_\X$ such that ${}^L\X$ acts via symplecto-morphisms.
    \end{enumerate}
\end{Prop}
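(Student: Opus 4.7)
The plan is to reduce to an orbit-by-orbit construction. Since $\Ga$ preserves $\De_\X^{dist}$, the set $\mathfrak{s}_\X \subset \check{\fX}^{sv}$ is $\Ga$-stable, and we may decompose $S_\X = \bigoplus_{i \in I_\X} S_{\X,i}$ where $S_{\X,i}$ is the sub-representation corresponding to the $\Ga$-orbit $\calo_i$ of distinguished roots, with a parallel decomposition $\D(\X)^{dist} = \bigsqcup_i \D(\calo_i)$. Using Frobenius reciprocity along $\Ga_i \subset \Ga$, it suffices to construct a $\Ga_i$-equivariant extension of the $\check{\G}_\X$-action on the ``fiber'' summand over the chosen representative $\ga_i \in \calo_i$, and then induce up. By Lemma \ref{Lem : pass to absolutely simple} and the discussion around \eqref{eqn: simple components}, this further reduces to the absolutely simple factor $\X_i' = \rH_i' \backslash \G_i'$ over $k_i$, where $\De_{\X_i'}^{dist} = \{\ga_i\}$.

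For each such $i$, Lemma \ref{Lem: distinguished for sym} classifies $\ga_i$ as either a doubled root (type A) or a distinguished root of type B. In the doubled case, $\varsigma^{-1}(\ga_i) = \{D_i^+, D_i^-\}$ and the contributed summand is $V(\check{\lam}_i) \otimes \cc\{D_i^+, D_i^-\}$. The $\Ga_i$-action on the multiplicity space is determined geometrically by Remark \ref{Rem: geometric}: $\mu_i(\sig) = -1$ precisely when $\sig$ interchanges the two colors. Equipping $\cc\{D_i^+, D_i^-\}$ with the symplectic form making $D_i^+, D_i^-$ a Darboux pair and $V(\check{\lam}_i)$ with a $\check{\G}_{\X_i}$-invariant bilinear form (available because $V(\check{\lam}_i)$ is self-dual on the associated $\mathrm{SL}_2$-triple coming from the folding procedure of \eqref{eqn: root embedding}), the tensor product acquires a canonical $\check{\G}_{\X_i} \rtimes \Ga_i$-invariant symplectic structure.

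In the type B case, $\varsigma^{-1}(\ga_i) = \{D_i\}$ is a single color and the summand is $V(\check{\lam}_i)$ with a one-dimensional multiplicity space. Using Lemma \ref{Lem: distinguished for sym}, the pair $(\G_i', \rH_i')$ is a form of $(\Spin_{2n+1}, \Spin_{2n})$ or $(\Sp_{4n}, \Sp_{2n} \times \Sp_{2n})$, and unwinding the folding construction identifies $\check{\G}_{\X_i}$ with a symplectic group acting on $V(\check{\lam}_i)$ through its standard representation, which carries a $\check{\G}_{\X_i}$-invariant symplectic form. Since the $\Ga_i$-action must commute with $\check{\G}_{\X_i}$, it acts on $V(\check{\lam}_i)$ through a scalar character of $\Ga_i$; we define this scalar to be $\mu_i$, which is exactly the requirement of condition (1) on this factor. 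Inducing from $\Ga_i$ to $\Ga$ yields the desired $\Ga$-action on $S_{\X,i}$ together with its symplectic form, and assembling over $i \in I_\X$ gives both claims.

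Uniqueness is then nearly automatic: any ${}^L\X$-extension must permute the highest weight lines compatibly with the $\Ga$-action on $\mathfrak{s}_\X$, which is forced by the algebraic $\check{\G}_\X$-action on highest weights, and the only remaining freedom is a character of each stabilizer $\Ga_i$, pinned down by condition (1). I expect the main obstacle to be the verification in the type B case that $V(\check{\lam}_i)$ is genuinely symplectic (as opposed to orthogonal) self-dual and that the resulting character is precisely $\mu_i$ rather than some twist thereof; this amounts to a direct calculation in each of the two families of Lemma \ref{Lem: distinguished for sym}, using the explicit description of the folding in \eqref{eqn: root embedding} together with the geometric realization of $\mu_i$ via the action on Borel orbits of higher codimension (as in Example \ref{Ex: Symplectic example}).
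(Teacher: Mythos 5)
Your overall strategy --- decomposing by $\Ga$-orbits of distinguished roots, reducing via Lemma \ref{Lem : pass to absolutely simple} and \eqref{eqn: simple components} to the absolutely simple factors, appealing to Lemma \ref{Lem: distinguished for sym}, and inducing up from $\Ga_i$ --- is the same as the paper's, and the treatment of the type B case (where the multiplicity space is one-dimensional and the character is absorbed into a scalar twist of the symplectic standard representation) is essentially correct, modulo some care about the pinning.

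However, there is a genuine error in your treatment of the doubled-root case: you propose equipping $\cc\{D_i^+, D_i^-\}$ with a \emph{symplectic} form making $D_i^\pm$ a Darboux pair. By Lemma \ref{Lem: minuscule} (which you should invoke here), $\check{\G}_{\X_i,der}=\Sp(V_\al)$ and $V(\check{\lam}_i)$ is its standard representation, which carries an invariant \emph{antisymmetric} form. The tensor product of two symplectic spaces is orthogonal, not symplectic, so your construction produces an orthogonal structure on $V(\check{\lam}_i)\otimes M(\al)$, contradicting the claim. What you need instead is a $\Ga_i$-invariant \emph{orthogonal} form on $M(\al)$, making $\{D_i^+, D_i^-\}$ an orthonormal basis; then $\check{\G}_{\X_i}\rtimes\Ga_i$ acts through $\Sp(V_\al)\times\mathrm{O}(M(\al))\subset\Sp(V_\al\otimes M(\al))$, and the cocycle's swap of the two colors is an element of $\mathrm{O}(M(\al))$ of determinant $-1$. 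You also cannot rely on ``self-duality from the $\mathrm{SL}_2$-triple'' to supply a symmetric form on $V(\check{\lam}_i)$; the parity of the invariant form is determined, and it is antisymmetric. Finally, your assertion that the $\Ga_i$-action ``must commute with $\check{\G}_{\X_i}$'' in the type B case should be justified by observing that $\Sp_{2n}$ has trivial outer automorphism group so the $\ast$-action on $\Phi_\X$ is trivial, which is what lets the $\Ga_i$-action preserve the pinning up to a central character of order $2$ (this is the content of the explicit lift $\widetilde{\chi}_\ep: \Ga \to \check{\rA}_\X[2]$ the paper constructs).
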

The proof of this relies on the classification of symmetric varieties in a crucial way. In particular, we use the calculation of colors of symmetric varieties in Section \ref{Section: sym colors} to reduce to the case of type $C$ root systems in the Lemma \ref{Lem: minuscule}, since these are the only cases where a distinguished root occurs for symmetric varieties. In particular, we exploit a tight relationship between distinguished roots and symplectic quotients of $\check{\G}_\X$.
\begin{Rem}
    As the next lemma shows, this has the added outcome that each irreducible representation $V(\check{\lam})$ occurring in $S_\X$ is \emph{minuscule}, establishing cases of a conjecture of Ben-Zvi, Sakellaridis, and Venkatesh; see Proposition \ref{Prop: BZSV}.
\end{Rem}
The next lemma does not assume that $\pi_0(\rH)$ is trivial.
%We proceed via a series of reductions. %\textcolor{red}{UTH. Something is fuzzy about this statement.}
\begin{Lem}\label{Lem: minuscule}
    Suppose that $\G$ is quasi-split and $\X=\rH\backslash\G$ is a symmetric variety\quash{\textcolor{red}{(with $\rH$ connected?)}}. Fix ${\al}\in \De_\X^{dist}$ and let $\Ga_\al\subset \Ga$ denote its stabilizer; set $k_\al/k$ be the associated field extension.  Then  there exists a unique $k_\al$-rational reductive normal subgroup $\G_\al\subset \G$, stabilized by $\theta$ such that
    \begin{enumerate}
        \item\label{item 1: al} if $\rH_\al:=\rH\cap \G_\al$ and $\X_\al:=\rH_\al\backslash\G_\al$, there exist surjective morphisms $\pi_\al$, $\pi_{\al,\X}$ fitting into a commutative diagram 
        \[
        \begin{tikzcd}
    \check{\G}_\X\ar[r,"\varphi_{\X}"]\ar[d,"\pi_{\al,\X}"]&\check{\G}\ar[d,"\pi_{\al}"]\\
    \check{\G}_{\X_\al}\ar[r,"\varphi_{\X_\al}"]&\check{\G}_{\al};
\end{tikzcd}
\]
\item \label{item 2: al} let $\check{\lam}_\al\in\mathfrak{s}_{\X}$ be the unique dominant weight of $\check{\G}_\X$ associated to $\al$ as in \eqref{axiom1}. Then the $\check{\G}_\X$-action on the highest weight module $V(\check{\lam}_\al)$ factors through $\pi_{\al,\X}$.
\item Assume that $\X$ satisfies Assumption \ref{assumption: no type N}. The derived subgroup of $\check{\G}_\al$ is of type $C$. The corresponding highest weight representation of $\check{\G}_{\X}$ is minuscule and symplectic.
    \end{enumerate} %through a natural morphism of complex reductive groups $\check{\G}_\X\to \check{\G}_\al$, where %, so that the $\frac{c_\al}{2}\check{\al}$-weight space is one dimensional., with $\check{\al}$ the unique long simple root
\end{Lem}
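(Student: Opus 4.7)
The plan is to reduce by stages to the case in which $\G$ is simply connected and semi-simple, at which point $\G_\al$ is forced by the $k$-simple factor decomposition of Section~\ref{Section: reductions} and the structural classification of Lemma~\ref{Lem: distinguished for sym}. First I would apply Proposition~\ref{Prop: good cover} to pass to a $\theta$-compatible $z$-extension $(\widetilde{\G},\widetilde{\theta})$: by Lemma~\ref{Lem: pass to a sc cover} and Remark~\ref{Rem: z-extension on dual side}, both the combinatorial data (including the $\Ga$-action on $\De_\X^{dist}$) and the distinguished morphism $\varphi_\X$ transfer naturally. Passing further to $\widetilde{\G}_{der}=\widetilde{\G}_{sc}$ via Lemma~\ref{Lem: pass to derived} (with Lemma~\ref{Lem: passing to derived on roots} ensuring compatibility of spherical roots), we may assume $\G$ is simply connected and semi-simple. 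Writing $\G=\prod_i \G_i\times \prod_j (\G_j\times \G_j)$ with each $\G_i,\G_j$ a $k$-simple simply connected factor (as in Section~\ref{Section: reductions}, culminating in \eqref{eqn: simple components}), the Galois orbit $\Ga\cdot\al\subset\De_\X^{dist}$ corresponds uniquely to one such factor; by Lemma~\ref{Lem : types of k-simple}, this factor arises from the first product, since Galois-type pairs have no distinguished roots. Define $\G_\al=\Res_{k_\al/k}(\G'_\al)$ accordingly, with $\G'_\al$ absolutely simple and simply connected over $k_\al$.

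Next, set $\rH_\al=\rH\cap\G_\al$ and $\X_\al=\rH_\al\backslash\G_\al$. Because $\G_\al$ is a direct factor of the simply-connected $\G$, we obtain product decompositions $\check{\G}=\check{\G}_\al\times\check{\G}_{\al^c}$ and $\check{\G}_\X=\check{\G}_{\X_\al}\times\check{\G}_{\X_{\al^c}}$, with $\pi_\al$ and $\pi_{\al,\X}$ the canonical projections onto the first factors. Commutativity of the square follows from the naturality of the distinguished morphism under direct products, as recorded in Section~\ref{Section: dual groups}. Uniqueness of $\G_\al$ is forced by the fact that each $k$-simple factor supports at most one $\Ga$-orbit of distinguished roots, a consequence of Lemma~\ref{Lem: distinguished for sym} and the orbit decomposition \eqref{eqn: decomp of dist}, together with the requirement that $\G_\al$ be both $\theta$-stable and normal in $\G$.

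For item (2), the weight $\check{\lam}_\al\in\mathfrak{s}_\X$ is, by the definition of $\mathfrak{s}_\X$ given before the lemma, a dominant element of the $W_\X$-orbit of $\tfrac{c_\al}{2}\check{\al}$; its expansion as a $\zz_{\ge0}$-combination of simple coroots of $\check{\G}_\X$ is supported entirely in the sub-root datum of $\check{\G}_{\X_\al}$, so $V(\check{\lam}_\al)$ descends along $\pi_{\al,\X}$ to an irreducible representation of $\check{\G}_{\X_\al}$. For item (3), Assumption~\ref{assumption: no type N} excludes the two Chevalley-type entries of Lemma~\ref{Lem: distinguished for sym} (the pairs $(\Spin_{2n+1},\Spin_2\times^{\mu_2}\Spin_{2n-1})$ and $(\Sp_{2n},\GL_n)$), and the remaining cases all have $\Phi_{\X_\al}^{sv}$ of type $B$, so the derived subgroup of $\check{\G}_{\X_\al}$ is of type $C$ — a product of symplectic groups. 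Reading off each remaining case in Lemma~\ref{Lem: distinguished for sym} — $(\SL_{2n},S(\GL_n\times\GL_n))$, $(\Spin_n,\Spin_2\times^{\mu_2}\Spin_{n-2})$ with $n$ even, $(\Spin_{4n},\GL_{2n})$, $(E_7,\Gm\cdot E_6)$, $(\Spin_{2n+1},\Spin_{2n})$, $(\Sp_{4n},\Sp_{2n}\times\Sp_{2n})$ — one verifies that $\check{\lam}_\al$ is the first fundamental coweight of the relevant symplectic factor, whose associated representation is the standard minuscule symplectic representation.

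The hardest step will be tracking the coroot normalization $\tfrac{c_\al}{2}\check{\al}$ through the passage from $\G$ to $\widetilde{\G}_{sc}$: both the coroot lattice and the lattice $\check{\fX}^{sv}$ can shift under these reductions, and it must be verified that after these shifts $\check{\lam}_\al$ remains honestly minuscule rather than a nontrivial multiple of a minuscule weight. A subsidiary but delicate point is that the bijection between colors of $\X_\al$ lying over $\tfrac{1}{2}\check{\al}$ and the multiplicity space $M(\check{\lam}_\al)$ in the definition of $S_\X$ must be preserved under the reduction and under pullback along $\pi_{\al,\X}$; checking this explicitly will require invoking the color description of Section~\ref{Section: sym colors} in each of the remaining cases, with the $E_7$-variety requiring the most care in identifying the $\Sp_6$-factor inside $\check{E}_7$.
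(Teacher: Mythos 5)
Your overall strategy — reduce to a simply connected cover, identify the $k$-simple factor supporting $\al$, and appeal to the classification in Lemma \ref{Lem: distinguished for sym} under Assumption \ref{assumption: no type N} — is the same as the paper's, but your reduction chain goes one step too far, and this creates exactly the normalization problem you flag at the end as ``the hardest step.''

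The paper's reduction stops at ``$\G_{der}$ simply connected'' (after a $\theta$-compatible $z$-extension) and does \emph{not} pass to the semisimple group $\G_{sc}$. Having done so, it defines $\G_\al$ as the connected \emph{reductive} subgroup of $\G$ generated by the $k_\al$-simple factor $\G_{der,\al}$ of $\G_{der}$ and the central torus $Z(\G)^\circ$. Retaining $Z(\G)^\circ$ is what makes the construction work: it keeps $\G_\al$ a genuine normal reductive subgroup of the (possibly extended) $\G$, and it means the lattices $X^\ast(\rA_{\X_\al})$ and $\check{\fX}^{sv}_{\X_\al}$ inherit the correct normalization from $\X$ with no further argument — in particular the constant $c_\al$ and the dominant weight $\check{\lam}_\al$ do not change. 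Your proposal passes to $\G_{sc}$, defines $\G_\al$ as a $k$-simple factor of $\G_{sc}$ rather than a subgroup of $\G$, and therefore has to transfer the conclusion back up along $\G_{sc}\to\G_{der}\hra\G$; this step replaces $\fX$ with the derived lattice by passing to primitive multiples (Lemma \ref{Lem: passing to derived on roots}) and is precisely the source of the worry you express about $\check{\lam}_\al$ possibly becoming a nontrivial multiple of a minuscule weight. The clean fix is to not take this extra reduction; the statement that $\G_\al\subset\G$ is asked to be \emph{normal reductive} rather than semisimple is already a hint that the intended $\G_\al$ retains the center. Also note that the lemmas you cite for this step (Lemma \ref{Lem: pass to derived}, Lemma \ref{Lem: pass to a sc cover}) concern $\Out_{\X}(\rH)$ and $\Aut_d(\X)$, not the dual-group diagrams at issue here.

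Your justification of the commutative square in item \eqref{item 1: al} is also too thin. You assert that commutativity ``follows from the naturality of the distinguished morphism under direct products,'' but after the $z$-extension $\G$ is \emph{not} a direct product — only $\G_{der}$ is. The paper proves the square by constructing a commutative cube \eqref{diagram: cube} whose front face involves $\G$ and $\G_\al$, whose back face involves $\G_{der}$ and $\G_{der,\al}$ (where the product decomposition is available), and whose depth arrows and back-face arrows are supplied by the functoriality of the dual group under passage to derived subgroups (Appendix \ref{Appendix: derived subgroup}, Lemma \ref{Lem: restrict to derived}); the existence of the dashed arrow $\check{\pi}_{\al,\X}$ then comes from a diagram chase on maximal tori, not from a direct-product observation. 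This is the bulk of the work for item \eqref{item 1: al}, and your proposal effectively skips it. Items \eqref{item 2: al} and the final part are essentially right once $\G_\al$ is correctly chosen: since $\check{\De}_{\X_\al}^{dist}\subset\check{\De}_\X^{sv}$ and $\check{\lam}_\al\in X^\ast(\check{\rA}_{\X_\al})\subset X^\ast(\check{\rA}_\X)$, the representation factors through $\pi_{\al,\X}$ by highest weight theory, and the case analysis from Lemma \ref{Lem: distinguished for sym} (with the Chevalley-type exceptions excluded by Assumption \ref{assumption: no type N}) gives the type-$C$, minuscule, symplectic conclusion just as you sketch.
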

\begin{proof}
First note that if $\G'$ is a compatible $z$-extension of $\G$, we have the morphism $p: \check{\G}_\X\lra \check{\G}'_{\X'}$ as in Remark \ref{Rem: z-extension on dual side} with an equality of spherical roots by Lemma \ref{Lem: pass to a sc cover roots}. Since $\G$ and $\G'$ have isomorphic root lattices and $\check{\fX}_{\G}\subset \check{\fX}_{\G'}$, we see that $\check{\De}_{\X'}^{sv}=\check{\De}_{\X}^{sv}$. This implies that the representation $V(\check{\lam}_\al)$ extends to $\check{\G}_{\X'}$ uniquely up to character twist. Finally, it is easy to see that if $\G'_\al\subset \G'$ is a reductive group satisfying the properties above, then $\G_\al:=p(\G'_\al)\subset \G$ also satisfies the properties. It therefore suffices to prove the claim in the case $\G=\G'$ has simply-connected derived subgroup.

Suppose $\al\in \De_\X^{dist}$.  This lemma does not rely on the $k$-rational structure of $\G$ and $\X$, so we may base change to $k_\al$ and assume that $k=k_\al$ so that $\al$ is $\Ga$-fixed.

Recall from \eqref{eqn: simple components} that there is a well-defined $k$-simple sub-root system $\Phi_\al\subset \Phi_{\X}$ containing the associated simple root. This corresponds to a $k$-simple factor $\G_{der,\al}\subset\G_{der}$. Since we have assumed that $\al$ is $\Ga$-fixed, this implies that $\Phi_\al$ is absolutely simple since otherwise $|\De_{\X_\al}^{dist}|>1$ by the proof of Theorem \ref{Thm: Outer in terms of dist}.  %This decomposition implies that if $\check{\Psi}_\X=(\check{\fX}^{sv},\check{\De}^{sv}_{\X},{\fX}^{sv},{\De}^{sv}_{\X})$ is 
Consider the commutative diagram
\[
\begin{tikzcd}
    \G_{der}\ar[r]&\G\\
    \G_{der,\al}\ar[r]\ar[u]&\G_\al\ar[u],
\end{tikzcd}
\]
where $\G_\al$ is the connected reductive group generated by $\G_{der,\al}$ and $Z(\G)^\circ$. Each morphism is normal in the sense of \cite[1.8]{KottwitzCusp}, so that if we fix compatible pinnings for each group, there is a commutative diagram of dual groups given by the front square of \eqref{diagram: cube} below.
\quash{\[
\begin{tikzcd}
    \check{\G}\ar[r]\ar[d]&\check{\G}_{der}\ar[d]\\
    \check{\G}_{\al}\ar[r]&\check{\G}_{der,\al}.
\end{tikzcd}
\]}

Noting that $\theta$ preserves $\G_\al$, we set $\rH_\al:=\rH\cap\G_\al$ and $\rH_{der,\al} = \rH\cap{\G}_{der,\al}$. Compatibly with the previous diagram, we have a commuting diagram of injective equivariant morphisms
\[
\begin{tikzcd}
    \X_{der}\ar[r]&\X\\
    \X_{der,\al}\ar[r]\ar[u]&\X_\al\ar[u],
\end{tikzcd}
\]
where $\X_\al=\rH_\al\backslash\G_\al$. Fix once and for all pinnings for the four groups and distinguished morphisms for the four varieties (with the natural notations). We claim that there is a commutative cube
\begin{equation}\label{diagram: cube}
\begin{tikzcd}[row sep=1.5em, column sep = 1.5em]
\check{\G}_\X \arrow[rr] \arrow[dr, swap,"\varphi_\X"] \arrow[dd,dashed,swap,"\check{\pi}_{\al,\X}"] &&
{\:\:\check{\G}_{\X_{der}}} \arrow[dd] \arrow[dr] \\
& \check{\G}\arrow[rr] \arrow[dd]&&
\check{\G}_{der} \arrow[dd] \\
\check{\G}_{\X_\al} \arrow[rr,] \arrow[dr, swap,"\varphi_{\X,\al}"] && {\quad\check{\G}_{\X_{der,\al}}} \arrow[dr]\\
& \check{\G}_{\al} \arrow[rr] && \check{\G}_{der,\al}
\end{tikzcd}
\end{equation}
We need only focus on the back face as every other arrow has been justified. The horizontal arrows are shown to exist in Appendix \ref{Appendix: derived subgroup}, while the right-most vertical arrow is straightforward from the decompositions
\[
\G_{der}=\prod_i\G_{der,i}\times \prod_j(\G_{der,j}\times \G_{der,j}),\qquad\X_{der} = \prod_i\X_{der,i}\times \prod_j\X_{der,j}
\]
from Section \ref{Section: reductions}. Thus we need only justify the left vertical (dashed) arrow. Its existence follows from the commutativity of the corresponding diagram of tori. That is, the image of the composition $\check{\pi}_\al\circ \varphi_\X$ has the same derived subgroup as $\G_{\X_\al}^\ast=\varphi_{\X_\al}(\check{\G}_{\X_\al})$. Since the corresponding diagram on dual tori does indeed commute, the composition must factor through $\varphi_{\X_\al}$ as indicated.
%\textcolor{red}{This is a point of tension. The idea is that since the root lies in the lattice of a single $k$-simple component, the group should act on the associated representation through a ``$k$-simple'' quotient. This essentially reduces the classification to (Weil restrictions of) those simple guys with distinguished roots.}

Set $\check{\lam}_\al\in\mathfrak{s}_\X$ for the unique dominant weight of $\check{\G}_\X$ associated to $\al\in \De_\X^{dist}$. Let $\rho_\al:\check{\G}_\X\to \GL(V_\al)$ be the corresponding representation. We claim that $\rho_\al$ factors through $\check{\pi}_{\X,\al}:\check{\G}_\X\to \check{\G}_{\X_\al}.$ Indeed, since the root lattices of $\G$ and $\G_{der}$ agree,
\[
\{\al\}=\De_{\X_\al}^{dist}\subset\bigsqcup_i\De_{\X_i}^{sv}\sqcup \bigsqcup_j\De_{\X_j}^{sv}=\De_\X^{sv}
\]
so that $\check{\De}_{\X_\al}^{dist}\subset\check{\De}^{sv}_{\X}$. In particular, since $\pi_{\al,\X}$ restricts to a quotient of tori $\check{\rA}_\X\to \check{\rA}_{\X_\al}$ and we have
\[
\check{\lam}_\al\in X^\ast(\check{\rA}_{\X,\al})\subset X^\ast(\check{\rA}_{\X}),
\]
so the representation $V_\al$ factors through the quotient by highest weight theory. This completes the proof of \eqref{item 1: al} and \eqref{item 2: al}.
\quash{is orthogonal to $\be\in \De_\X^{sv}\setminus \De_{\X_\al}^{sv}$. Representation theory of reductive groups implies that $\check{\G}_\X$ acts on $V(\check{\lam})$ through the quotient... \textcolor{blue}{The argument is to note that if $\check{\rA}_\X\to \check{\rA}_{\X_i}$ is the induced maps of maximal tori, then
\[
\check\lam\in X^\ast(\check{\rA}_{\X_i})\subset X^\ast(\check{\rA}_{\X}),
\]
so the representation factors through by highest weight theory.}} 

We now impose Assumption \ref{assumption: no type N}. Replacing $(\G,\X)$ with $(\G_\al,\X_{\al})$, we may thus assume that $\G_{der}$ is simply connected and absolutely simple. In particular, we may assume $\De_\X^{dist}= \{\al\}$ and rely on Lemma \ref{Lem: distinguished for sym}.

%Finally, set $k_\al$ to be the finite extension of $k$ associated to $\Ga_\al\subset \Ga$. Then $\G=\Res_{k_\al/k}(\G_\al)$, $\X=\Res_{k_\al/k}(\X_\al)$ and $$\check{\G} = \Ind_{\Ga/\Ga_\al}(\check{\G}_\al)\simeq \prod_{\Ga/\Ga_\al}\check{\G}_\al$$ acting on $V(\check{\lam}_\al)$ through a single factor $\check{\G}_{\X_\al}$. It thus suffices to assume $\G=\G'$ is absolutely simple and $|\De_\X^{dist}|=1$ (cf. Lemma \ref{Lem: dist case}).

As discussed in Section \ref{Section: sym colors}, if $\al\in \De^{(2)}_\X$, then ${\De}^n_{\X}$ is of type $C_n$ for some $n$ such that $2{\al}$ is the unique long simple root. After normalizing the roots, there are two possibilities: If $\De_\X$ is of type $B_n$, then $\frac{1}{2}\check{\al}^\vee\in \check{\fX}$ lies in the $W_\X$-orbit of the unique minuscule fundamental weight of the dual group. In this case, the derived subgroup of $\check{\G}_\X$ is $\Sp_{2n}(\cc)$ for $n=\rk(\rA_\X)$ and $V(\check{\lam})$ is the standard symplectic representation. On the other hand, if $\De_\X$ is of type $C_n$, then $\check{\al}=\frac{2}{2}\check{\al}^\vee\in \check{\fX}$ lies in the $W_\X$-orbit of unique dominant short root of the dual group, which is quasi-minuscule. In this case, the derived subgroup is of type $B_n$ and $V(\check{\lam})$ is the standard orthogonal representation of $\SO_{2n+1}(\cc)$. However, this case only occurs for the symmetric varieties of Chevalley type in Lemma \ref{Lem: distinguished for sym}\footnote{The difference between $\Spin_2\times^{\mu_2}\Spin_{2n-1}\backslash\Spin_{2n+1}$ and $\Spin_2\times^{\mu_2}\Spin_{2n-2}\backslash\Spin_{2n}$ is the presence a root of type $N$ in Lemma \ref{Lem: chevalley type}. In the even case, there are no such roots, and $\al$ is the short simple root in $\Phi_\X$, implying that $\frac{\check{\al}}{2}\in \fX=\fX^{sv}$. In the odd case, renormalization of the spherical root of type $N$ ensures that $\al$ is the \emph{long simple root} of $\Phi^{sv}_\X$, so that $\frac{\check{\al}}{2}\notin\fX^{sv}$.} and therefore does not occur under Assumption \ref{assumption: no type N}.

   If $\al\in \De_{\X}^{dist}\setminus\De_\X^{(2)}$, then $\X$ must be a $k$-form of either $\Spin_{2n}\backslash\Spin_{2n+1}$ or $\Sp_{2n}\times \Sp_{2n}\\Sp_{4n}$. Both cases have $\check{\Phi}^{sv}_{\X}$ of type $C$ and $\frac{1}{2}\check{\al}^\vee\in \check{\fX}$ lies in the $W_\X$-orbit of the unique minuscule fundamental weight of the dual group.
\end{proof}

We continue to impose Assumption \ref{assumption: no type N}. Fix $\al\in \De_\X^{dist}$ as in the previous lemma. Since $(\G_\al, \X_\al)$ is $k_\al$-rational, the morphism $\check{\pi}_{\al,\X}$ is only $\Ga_\al$-equivariant with respect to the given action on $\check{\G}_\X$ and a unique $\Ga_\al$-action on $\check{\G}_{\X_\al}$. We now extend the $\Ga_\al$-action on $\check{\G}_{\X_\al}$ to the representation $V_\al$ of $\check{\G}_{\X_\al}$. We may thus assume that $k=k_\al$ and $\G=\G_\al$, thereby assuming that $\G_{der}$ is absolutely simple. In particular, $\De_\X^{dist}=\{\al\}$.

By Lemma \ref{Lem: minuscule}, $\check{\G}_{\X,der}$ is simple of type $C$ and acts on the standard representation, so that we must have $\check{\G}_{\X,der}=\Sp_{2n}(\cc)$. In this case, the $\Ga$-action on $\check{\rA}_{\X,ad}$ is trivial: since $\Sp_{2n}(\cc)$ has trivial outer automorphism group, the $\ast$-action on $\Phi_{\X}$ is trivial. In particular, the $\Ga$-action on $\check{\G}_\X$ preserves our fixed pinning $\{x_{\check{\ga}}\}_{\check{\ga}\in \check{\De}^{sv}_{\X}}$ induced by $\check{\B}_\X$ up to sign, so that the action is completely determined by a unique set of characters $\chi_{\ga}:\Ga_\ga\lra \{\pm1\}$ such that $\sig x_{\check{\ga}} = \chi_\ga(\sig)x_{\check{\ga}}$  for $\sig\in \Ga_\ga = \mathrm{Stab}_\Ga(\check{\ga})$ and $\check{\ga}\in \check{\De}^{sv}_{\X}$. Since pinnings of $\check{\G}_\X$ with respect to $\check{\rA}_\X$ all lie in a single (free) $\check{\rA}_{\X,ad}$-orbit, these characters combine to give a single character $\chi:\Ga\to \check{\rA}_{\X,ad}$.
 
%, so that the action on $\check{\rA}_{\al,ad}\subset \check{\G}_{\al,ad}$ is trivial.

  %Thus, the choice of these characters precisely corresponds to a continuous homomorphism $\chi\in \Hom_{cts}(\Ga_\al,\check{\rA}_{\X,\al,ad}[2])$.
  %When $\check{\G}_{\X,der}$ is of type $B$, orthogonal, $\check{\rA}_{\al} \supset\check{\rA}_{\al,ad}$ so that $\chi$ gives a morphism $\chi: \Ga\to \check{\rA}_\al\to \Aut(V_\al)$. 

  %Fix an orbit $\calo$.  For each $\al\in \calo$, consider the group $\Aut(V_\al)\rtimes_\varepsilon \Ga_\al$. 
\begin{Lem}
%Suppose that $\G$ is reductive such that $\G_{der}$ is absolutely simple. Assume that $\X=\rH\backslash\G$ is a symmetric $\G$-variety satisfying Assumption \ref{assumption: no type N} and $\De_\X^{dist}=\{\al\}$. 
Let  $\chi:\Ga\to \check{\rA}_{\X,ad}$ denote the character uniquely determined by 
\[
\sig x_{\check{\ga}} = \chi_\ga(\sig)x_{\check{\ga}} = \Ad(\chi(\sig))x_{\check{\ga}}
\]
for all $\check{\ga}\in \check{\De}^{sv}_{\X}$. For any quadratic character $\ep:\Ga\lra \{\pm1\}$, there exists a natural lift $$\widetilde{\chi}_\ep\in \Hom_{cts}(\Ga,\check{\rA}_{\X}[2])\simeq H^1(\Ga,\check{\rA}_{\X}[2]).$$
\end{Lem}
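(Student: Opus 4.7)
The plan is to produce the lift by combining a canonical set-theoretic lift coming from the pinning of $\check{\G}_\X$ with the quadratic character $\ep$ used as a central twist, and then verify the cocycle condition using the cohomology of $\mu_2 = Z(\check{\G}_{\X,der})$.

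First, I would record the simplifications already available at this point in the argument: after the reductions preceding the lemma, $\check{\G}_{\X,der} \simeq \Sp_{2n}(\cc)$ and the $\Ga$-action on $\check{\rA}_\X$ is trivial (the action is inner via $\Ad(\chi(\sig))$ with $\chi(\sig) \in \check{\rA}_{\X,ad}$, which fixes $\check{\rA}_\X$ pointwise, and $\Sp_{2n}$ admits no nontrivial diagram automorphisms). Hence $\check{\rA}_\X[2]$ is a trivial $\Ga$-module and the identification $\Hom_{cts}(\Ga,\check{\rA}_\X[2]) \simeq H^1(\Ga,\check{\rA}_\X[2])$ is tautological. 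I would then record the exact sequence of finite abelian groups
\[
1 \to \ker(\pi) \to \check{\rA}_\X[2] \xrightarrow{\pi} \check{\rA}_{\X,ad}[2],
\]
induced by the isogeny $\check{\rA}_\X \to \check{\rA}_{\X,ad}$, observing that $\ker(\pi)$ contains the central $\mu_2 = Z(\check{\G}_{\X,der})[2]$ of $\Sp_{2n}$.

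Next, I would verify that the image of $\chi: \Ga \to \check{\rA}_{\X,ad}$ lies in $\im(\pi)$; this reduces, using the canonical $\check{\G}^\ast_\X$-orbit of embeddings from Lemma \ref{Lem: split torus embedding} and the explicit type-$C$ structure of Lemma \ref{Lem: minuscule}, to the statement that each sign $\chi_\ga(\sig) \in \mu_2$ lifts to an element of $\check{\rA}_\X[2]$ in a pinning-compatible way. With this verified, the set-theoretic lifts of $\chi$ form a torsor under $\mathrm{Map}(\Ga,\ker(\pi))$, while the set of \emph{homomorphism} lifts (assuming nonempty) is a torsor over $\Hom(\Ga,\ker(\pi))$.

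Given $\ep$, I would define $\widetilde{\chi}_\ep$ by taking a canonical base lift $\chi_0: \Ga \to \check{\rA}_\X[2]$ coming from the pinning — interpreting each sign $\chi_\ga(\sig)$ directly as a $\pm 1$ in the $\check{\ga}$-coordinate of the standard basis of $X_\ast(\check{\rA}_{\X,der})$ — and twisting by $\iota(\ep)$, where $\iota: \mu_2 \hookrightarrow \check{\rA}_\X[2]$ is the central inclusion. Naturality (independence of the pinning within its $\check{\rA}^\Ga_{\X,ad}$-orbit) follows because any two pinning-compatible base lifts differ by a coboundary in $H^1(\Ga,\ker(\pi))$, which is absorbed by the torsor structure.

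The main obstacle will be verifying that $\widetilde{\chi}_\ep$ is genuinely a homomorphism rather than merely a cochain. The failure of $\chi_0$ to be multiplicative is measured by a $2$-cocycle $c \in Z^2(\Ga, \mu_2)$ attached to the central extension $\mu_2 \to \Sp_{2n} \to \Sp_{2n}/\{\pm I\}$ pulled back along $\chi$. The hard part will be to prove that this obstruction class $[c]$ equals $[\ep \cup \ep] = [d\ep]$ in $H^2(\Ga,\mu_2)$, so that twisting by $\ep$ exactly kills $c$. I expect this to reduce to a direct calculation using Steinberg's integral presentation of $\Sp_{2n}$, combined with the observation that under our reductions the only quadratic class in play is $\ep$ itself.
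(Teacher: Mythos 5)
The opening reductions (identifying $\check{\G}_{\X,der}\simeq\Sp_{2n}(\cc)$, noting that the $\Ga$-action on $\check{\rA}_\X$ is trivial since $C_n$ has no diagram automorphisms, and setting up the two-torsion exact sequence) are correct and match the paper. However, the two substantive steps both have real problems.

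\textbf{The image computation is missing its key ingredient.} You say verifying $\chi(\Ga)\subset\im(\pi)$ ``reduces... to the statement that each sign $\chi_\ga(\sigma)$ lifts to an element of $\check{\rA}_\X[2]$ in a pinning-compatible way,'' citing Lemma \ref{Lem: split torus embedding} and Lemma \ref{Lem: minuscule}. But neither of these supplies the obstruction calculation; and lifting each sign separately is not what is needed, since the obstruction lives in the cokernel of $\check{\rA}_\X[2]\to\check{\rA}_{\X,ad}[2]$, which is $X_\ast(\check{\rA}_{\X,ad})/X_\ast(\check{\rA}_\X)\simeq\la\omega_\al\ra\simeq\zz/2$. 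The crucial input is that $\chi_\al\equiv 1$ for the distinguished root $\al\in\De_\X^{dist}$ (because $\al$ is of type $A_1$, so by \cite[Lemma 10.4]{KnopSchalke} the $\Ga$-action must fix the root vector $x_{\check\al}$), which forces the composite $\Ga\to\check{\rA}_{\X,ad}[2]\to\la\omega_\al\ra$ to vanish. Without this, there is no reason for $\chi$ to land in the image at all; the index-two cokernel is a genuine obstruction.

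\textbf{The cocycle-condition argument cannot work as stated.} First, the ``canonical base lift coming from the pinning'' is ill-defined: the pinning detects only the adjoint action, and the elements of the kernel $Z(\check{\G}_\X)[2]$ act trivially on all root vectors, so the pinning alone cannot distinguish lifts in a given fiber of $\check{\rA}_\X[2]\to\check{\rA}_{\X,ad}[2]$. Second, and more seriously: twisting a set-theoretic lift $\chi_0$ by a genuine \emph{homomorphism} $\iota(\ep)$ does not change the obstruction $2$-cocycle $c(\sigma,\tau)=\chi_0(\sigma)\chi_0(\tau)\chi_0(\sigma\tau)^{-1}$, since $\check{\rA}_\X[2]$ is abelian. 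And the proposed identity $[c]=[\ep\cup\ep]=[d\ep]$ is incoherent: $d\ep=0$ because $\ep$ is a character, whereas $\ep\cup\ep$ is typically nonzero; and $[c]$ depends only on $\chi$, so it cannot equal a class depending on the arbitrary auxiliary $\ep$. No amount of Steinberg-presentation computation will repair this.

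The paper's actual construction sidesteps all of this: one defines $\widetilde{\chi}_\ep(\sigma)$ to be the element of $\check{\rA}_\X[2]$ lying over $\chi(\sigma)$ whose action on the unique $\check{\B}_\X$-stable line $l_\B\subset V_\al$ is multiplication by $\ep(\sigma)$. The central $\mu_2=Z(\Sp_{2n})$ acts faithfully on $l_\B$ (the standard symplectic representation is a faithful minuscule representation), which separates the fibers and gives uniqueness; multiplicativity is then automatic because both constraints (image under $\pi$ and action on $l_\B$) are multiplicative in $\sigma$, so $\widetilde{\chi}_\ep(\sigma)\widetilde{\chi}_\ep(\tau)$ and $\widetilde{\chi}_\ep(\sigma\tau)$ satisfy the same defining property. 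There is no $H^2$-vanishing to prove; the existence of the lift is established by exhibiting it directly. This is the step you should replace your obstruction argument with.
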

\begin{proof}% For simplicity, we drop the $\al$-subscripts from the notation. 
Consider the map $\check{\rA}_\X[2]\to \check{\rA}_{\X,ad}[2]$, which is not surjective. We first claim that for any choice of non-standard action, $\chi(\Ga)$ lies in the image of $\check{\rA}_\X[2]\to \check{\rA}_{\X,ad}[2]$. Considering the simple root $\check{\al}$ of $\check{\G}_\X$ associated to $\al\in \De_\X^{dist}$, we must have $\chi_{{\al}}\equiv 1$  as $\al$ is a root of type $A_1$ \cite[Lemma 10.4]{KnopSchalke}. That is, for any $\sig\in \Ga_\al$, $\Ad(\chi(\sig))x_{\check{\al}} = x_{\check{\al}}.$ This implies that the composition
\[
\Ga\overset{\chi}{\lra} \check{\rA}_{\X,ad}[2]\simeq X_\ast(\check{\rA}_{\X,ad})/2X_\ast(\check{\rA}_{\X,ad})\to X_\ast(\check{\rA}_{\X,ad})/X_\ast(\check{\rA}_{\X})\simeq \la{\omega}_{\al}\ra,
\]
where ${\omega}_{\check\al}$ is the fundamental coweight associated to ${\al}$, is trivial. This forces $\chi$ to take value in 
\[
X_\ast(\check{\rA}_{\X})/[X_\ast(\check{\rA}_{\X})\cap2X_\ast(\check{\rA}_{\X,ad})],
\]
implying our first claim. %that there exists a lift to $\check{\rA}_\X[2]\simeq X_\ast(\check{\rA}_{\X})/2X_\ast(\check{\rA}_{\X})$.

We now construct a lift. Let $V_\al$ denote the representation of $\check{\G}_\X$ from Lemma \ref{Lem: minuscule}, and let $l_{\B}\subset V$ be the unique $\check{\B}_\X$-stable line in $V_\al$. For any fixed quadratic character $\ep:\Ga\lra \{\pm1\}$, choose the unique lift 
\[
\widetilde{\chi}_\ep: \Ga\lra \check{\rA}_\X[2]
\]
of $\chi:\Ga\to \check{\rA}_{\X,ad}[2]$ satisfying that $\widetilde{\chi}_\ep(\sig) v_{\B} = \ep(\sig)v_{\B}$ for any $v_{\B}\in l_{\B}$. This gives a lift.
\end{proof}
\quash{ Using $\Hom(\Ga_\ga, \{\pm1\})\simeq  H^1(\Ga_\al,\{\pm1\})$ and Shapiro's lemma, the characters $(\chi_\al)$ determine a class in
 \[
 \bigoplus_{\al\in \De_\X/\Ga}H^1(\Ga_\al,\{\pm1\})\simeq \bigoplus_{\al\in \De_\X/\Ga}H^1(\Ga,\check{\rA}_{\calo}[2]])=H^1(\Ga,\check{\rA}_{\X,ad}[2])
 \]
  Fix an orbit $\calo$.  %For each $\al\in \calo$, consider the group $\Aut(V_\al)\rtimes_\varepsilon \Ga_\al$. 
  Let $\check{\rA}_{\calo}=\Ind_{\Ga/\Ga_\al}(\check{\rA}_\al)$ denote the quotient of $\check{\rA}_{\X}$ that gives a maximal torus for $\check{\G}_{\X,\calo}$.
Incorporating the $k$-rational structure, we use}

An important feature of this is that for any choice of $\ep$, the lift $\widetilde{\chi}_\ep$ induces an action of $\Ga$ on $V$. %To simplify notation, we fix the choice of $\ep\equiv 1$ without further remark. 

Continuing with the assumptions of the lemma, let $\check{\lam}_\al\in X^\ast(\check{\rA}_\X)$ be the dominant weight associated to $\al\in \De_\X^{dist}$. We define $\D(\al)\subset \D(\X)$ to be those colors satisfying $\rho(D) = \frac{1}{2}\check{\al}$. We define the $\Ga$-representation
\begin{equation}\label{eqn: perm rep on color}
    M(\al):= \cc[\D(\al)]
\end{equation}
where 
\begin{enumerate}
    \item if $\al\in \De_\X^{(2)}$, $M(\al)$ is determined by the $\Ga$-action on the basis $\D(\al)$. In this case, we take $\ep\equiv 1$ in the Lemma and use $\widetilde{\chi}_1$ to give a $\check{\G}_{\X}\rtimes \Ga$-action on $V_\al\otimes_\cc M(\al)$ by letting $\sig\in \Ga$ act by
    \[
    \sig(v\otimes m) = \widetilde{\chi}_{1}(\sig)v\otimes \sig(m).
    \]
    In this case, the representation acts on $V_\al\otimes M(\al)\simeq V_\al\oplus V_\al$ through 
    \[
    \check{\G}_{\X}\rtimes \Ga\to \Sp(V_\al)\times \mathrm{O}(M(\al))\subset \Sp(V_\al\otimes M(\al)).
    \]
    \item If $\al\in \De_\X^{dist}\setminus{\De_\X^{(2)}}$, then $|\D(\al)|=1$ so that $M(\al) =\cc$. In this case, we take $\ep=\mu_\al$ as in Lemma \ref{Lem: outer characters}, so that $\check{\G}_{\X}\rtimes \Ga$ acts on $V_\al\otimes_\cc M(\al)=V_\al$ by letting $\sig\in \Ga$ act by
    \[
    \sig(v) = \widetilde{\chi}_{\mu_\al}(\sig)v.
    \]
    This representation is clearly symplectic.
\end{enumerate}

 We now return to the general setting of $(\G,\X)$ satisfying Assumption \ref{assumption: no type N} and $\rH$ connected. For each  $\Ga$-orbit  $\calo\subset \De_\X^{dist}$ of distinguished roots and $\al\in \calo$, let $\check{\G}_{\X,\calo}$ be the induced group (cf. \cite[1.4,1.5]{BorelAutomorphic}
\[
 \check{\G}_{\X,\calo} = \Ind_{\Ga/\Ga_\al}(\check{\G}_{\X_\al})\simeq \prod_{\Ga/\Ga_\al}\check{\G}_{\X_\al}%\lra \Ind_{\Ga/\Ga_\al}(\GL(V_\al)),%
\]
 such that $\check{\G}_{\X,\al,der}=[\check{\G}_{\X,\al},\check{\G}_{\X,\al}] = \Sp(V_\al)$, where $V_\al=V(\check{\lam}_{\al})$ is the vector space of the associated representation.  It follows from Lemma \ref{Lem: minuscule} that there exists a $\Ga$-equivariant quotient map 
\begin{equation}\label{eqn: distinguished quotient}
    \check{\G}_\X\overset{p}{\lra} \prod_{\calo\subset\De_{\X}^{dist} }\check{\G}_{\X,\calo}.%= \Ind_{\Ga_\al}^\Ga(\check{\G}_{\X,\al})
\end{equation}%and $\Aut(V_\al)$ is either $\Sp(V_\al)$ or $\SO(V_\al)$, depending on $\dim(V_\al)$; we say that $\calo$ is symplectic or orthogonal depending on the type of $V_\al$ for $\al\in\calo$. 
%, such that for any choice of (possibly non-standard) $\Ga$-action on $\check{\G}_\X$, we obtain a $\Sp(V_\al)\rtimes_\varepsilon \Ga_\al$ representation on $V_\al$ via $(g,\sig)v= (g\widetilde{\chi}(\sig)v)$.

For each $\Ga$-orbit $\calo\subset \De_\X^{dist}$, we thus obtain a $\check{\G}_{\X,\calo}\rtimes\Ga$-representation
\begin{equation}\label{eqn: calo rep}
    S_\calo:=\Ind_{\Ga_\al}^\Ga(V_\al\otimes M(\al)).
\end{equation}
\quash{To see what this representation is, fix representatives $[\tau_\be]\in \Ga_\al\backslash\Ga$ satisfying $\tau_\be(\be) = \al$ for each $\be\in \calo$. A vector $f\in V_\calo$ is thus a function $f:\Ga\lra V_\al$ satisfying
\[
f(\ga\lam) = \mu_\al(\ga)\ga\cdot f(\lam), \quad\text{ for any }\ga\in \Ga_\al,\:\lam\in \Ga.
\]For $ (g,\sig)=((g_\be)_\be, \sig)\in \check{\G}_{\X,\calo}\rtimes_\varepsilon \Ga$, one defines $[(g,\sig)\cdot f]$ as the unique vector satisfying that  $[(g,\sig)\cdot f](\ga) = [g\cdot f](\ga\sig)$ and for each $\be\in\calo$
\[
[(g,\sig)\cdot f](\tau_\be) = \tau_\be(g)\cdot f(\tau_\be).
\]
It is easy to check that this gives a representation of $\check{\G}_{\X,\calo}\rtimes_\varepsilon \Ga$.}
More canonically, there exists a $\Ga$-equivariant symplectic local system $$\mathcal{V}_\calo\lra \calo = \Res_{k_\al/k}(\Spec(k_\al)),$$ where $\Ga_\al=\Gal(\kbar/k_i)$ and we have a $\Ga$-representation $V_\calo =H^0(\calo,\mathcal{V}_\calo)$ (evidently, this construction is called \emph{geometric induction} for an action groupoid \cite{AubertGI}.) The group $\check{\G}_{\X,\calo}$ is then the automorphism group of $\mathcal{V}_\calo$, which naturally acts on $V_\calo$ stalk-wise. This action extends to $\check{\G}_{\X,\calo}\rtimes \Ga$. 

% Recalling our outer data $\out_\X=(\{\mu_i:\Ga_i\to \{\pm1\}: i\in I_{\X^\circ}\},I_\X)$ and representation \eqref{eqn: perm rep on color}, we wish to incorporate the data of the quotient set $I_\X$. \textcolor{red}{Does this need to happen before inducing?}

To complete the proof of Theorem \ref{Prop: symplectic rep}, note that the ${}^L\X$-representation on
\[
S_\X=\bigoplus_{\calo\subset \De_\X^{dist}}S_\calo
\]
obtained via pulling back along 
\[
\check{\G}_\X\rtimes\Ga\overset{p\otimes \De}{\lra}\prod_\calo(\check{\G}_{\X,\calo}\rtimes\Ga)
\]gives the sought-for extension. 

%\begin{Rem}\label{Rem: standard rep}
%We made the natural choice of the trivial map into the center in the symplectic case. It is obvious how to prove the corresponding statements for more general choices of quadratic $\Ga$-character. % If we also assume that $\varepsilon\equiv 1$, we simply refer to the \textbf{standard representation} of ${}^L\X_1$. %
%\end{Rem}%

\subsubsection{Application to rationality}
%One motivation for introducing the representation $S_\X$ is to relate Lemma \ref{Lem: outer characters} with the ideas of \cite{BZSV}. 
%Suppose now that $\X$ is a symmetric $\G$-variety associated to an involution $\theta$. We may assume without loss of generality that our fixed $k$-rational pair $(\rA,\B)$ is $\theta$-admissible. This implies that a choice of pinning of $\check{\G}$ relative to $(\check{\rA},\check{\B})$ induces pinnings for $\check{\G}_\X$ and $\hat{\G}_\X$. Fix such a choice of pinning and also fix a distinguished morphism $\varphi_\X$. 

The theorem below compares two $k$-forms $\X$ and $\X'$ of a given symmetric $\G_{\kbar}$-variety $\overline{\X}$. For ease of notation, we say that $\X$ and $\X'$ normally related to $(\rA,\B)$ if the rational involutions $\theta$ and $\theta'$ associated to $\X$ and $\X'$ are both are normally related to $(\rA,\B)$. Since we may conjugate $\theta$ and $\theta'$ by elements of $\G(k)$ to ensure this, it leads to no loss in generality.

\begin{Thm}\label{Thm: unique}%\textcolor{red}{Am I using $f_\X$ at all?}
    Suppose that $\G$ is quasi-split over $k$ and suppose that $\overline{\X}=\overline{\rH}\backslash\G_{\kbar}$ is a symmetric $\G_{\kbar}$-variety satisfying Assumption \ref{assumption: no type N} and that $\overline{\rH}$ is connected. Consider two $k$-rational $\G$-forms $\X=\rH\backslash\G$ and $\X'=\rH'\backslash\G$ of $\overline{\X}$. Assume that $\X$ and $\X'$ are both normally related to $(\rA,\B)$.
    \begin{enumerate}
        \item  We have $\hat{\G}_{\X}=\hat{\G}_{\X'}$.
        \item Given a pair of distinguished morphisms $\varphi_\X$ and $\varphi_{\X'}$, there is a canonical isomorphism $f_\X:{}^L\X\simeq {}^L{\X'}$ inducing a commutative diagram
        \[
 \begin{tikzcd}
{}^L{\X}\ar[rr,"f_\X"]\ar[rd,swap,"\varphi_{\X}"]&& {}^L{\X'}\ar[ld,"\varphi_{\X'}"]\\
 & {}^L{\G}.&
 \end{tikzcd}
\]
%If $\X$ and $\X'$ are normally related to $(\rA,\B)$, we may take $g=1$.
    \end{enumerate} 
    Suppose further there exists an $f_\X$-equivariant isomorphism $f_S:S_{\X}\lra S_{\X'}$. Then $\X$ and $\X'$ are $\G$-inner forms. If $H^1(k,\mathcal{A}_\X^\flat)=0$, then $\X$ and $\X'$ are $k$-isomorphic. %More generally, if, then the same claim holds.
\end{Thm}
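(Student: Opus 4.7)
Parts (1) and (2) follow directly from Lemma \ref{Lem: same on torus}. Being normally related to $(\rA,\B)$ forces $\theta|_\rA = \theta'|_\rA$, so $\X$ and $\X'$ induce the same $(\Ga,\theta)$-index and the same combinatorial spherical datum $\Omega_\X = \Omega_{\X'}$. By Lemma \ref{Lem: dual involution on torus} the dual involutions $\vartheta$ and $\vartheta'$ agree on $\check{\rA}$, and since $\hat{\G}_\X \subset \check{\G}$ is cut out by the associated roots $\hat{\De}_\X$, we obtain $\hat{\G}_\X = \hat{\G}_{\X'}$ as $\Ga$-stable subgroups of $\check{\G}$ carrying a common dual involution. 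Proposition \ref{Prop: dual involution} yields $\check{\G}_\X^\ast = \check{\G}_{\X'}^\ast$, and the uniqueness (up to $\check{\rA}_\X^\Ga$-conjugacy) of the distinguished morphisms $\varphi_\X$ and $\varphi_{\X'}$ into this common subgroup produces the canonical isomorphism $f_\X$. Equipping each side with the unique $\Ga$-action determined in Section \ref{Section: dual groups} so that $\varphi_\X$ and $\varphi_{\X'}$ are $\Ga$-equivariant, $f_\X$ lifts uniquely to an $L$-isomorphism, and the commuting triangle to ${}^L\G$ is then immediate.

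The heart of the theorem is that an $f_\X$-equivariant $f_S$ forces $\out_\X = \out_{\X'}$. The canonical decomposition $S_\X = \bigoplus_\calo S_\calo$ from \eqref{eqn: calo rep}, indexed by $\Ga$-orbits $\calo \subset \De_\X^{dist}$, is intrinsic: each $S_\calo$ is the maximal summand on which $\check{\G}_\X$ acts through the quotient $\check{\G}_{\X,\calo}$ of \eqref{eqn: distinguished quotient}. Hence $f_S$ restricts to an $f_\X$-equivariant isomorphism $f_{S,\calo}: S_\calo \iso S'_\calo$ for each $\calo$. For $\calo \subset \De_\X^{dist} \setminus \De_\X^{(2)}$, the Borel-fixed subspace $S_\calo^{(\check{\B}_\X)}$ is the induced line $\Ind_{\Ga_\al}^\Ga(\cc_{\mu_\al})$ obtained from the pinned lift $\widetilde{\chi}_{\mu_\al}$ of Lemma \ref{Lem: outer characters}, so $\Ga$-equivariance of $f_{S,\calo}$ on this line extracts the equality $\mu_\al = \mu'_\al$ in $\Hom(\Ga_\al, \{\pm 1\})$. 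For $\calo \subset \De_\X^{(2)}$, the Borel-fixed subspace is $\Ind_{\Ga_\al}^\Ga(\cc[\D(\al)])$ and encodes whether the two colors $\{D^+, D^-\}$ associated to $\al$ are permuted by Galois, which is again read off from $f_{S,\calo}$ at the level of Borel-fixed vectors. One concludes $\out_\X = \out_{\X'}$, and Lemma \ref{Lem: outer characters} together with Assumption \ref{assumption: no type N} (which makes $\X$ well-adapted via Theorem \ref{Thm: Outer in terms of dist}) then gives that $\X$ and $\X'$ are $\G$-inner forms.

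For the final assertion, the short exact sequence $1 \to \mathcal{A}_\X^\flat \to \cala_\X \to \Aut_d(\X) \to 1$ of diagonalizable $k$-groups induces the exact sequence in Galois cohomology
\[
H^1(k, \mathcal{A}_\X^\flat) \to H^1(k, \cala_\X) \to H^1(k, \Aut_d(\X)).
\]
The $\G$-forms $\X, \X'$ are $\G$-inner forms precisely when their classes in $H^1(k, \cala_\X)$ map to the same class in $H^1(k, \Aut_d(\X))$, and any such two classes differ by an element in the image of $H^1(k, \mathcal{A}_\X^\flat)$. Vanishing of $H^1(k, \mathcal{A}_\X^\flat)$ therefore forces $[\X] = [\X']$ in $H^1(k, \cala_\X)$, whence $\X$ and $\X'$ are $\G$-equivariantly $k$-isomorphic. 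I expect the principal obstacle to lie in the second paragraph: one must check that the $\Ga$-module structure on $S_\calo^{(\check{\B}_\X)}$ isolates the character $\mu_\al$ (respectively the color-permutation data) cleanly, without an entangled contribution from the $\check{\rA}_{\X,\calo}$-action. However, this is exactly what the pinned construction of Lemma \ref{Lem: outer characters} and the explicit formulas defining $S_\calo$ are designed to guarantee.
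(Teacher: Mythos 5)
Your proposal is correct and follows essentially the same route as the paper's own proof: parts (1) and (2) via the combinatorial rigidity of the associated group and dual involution once the involutions agree on $\rA$, then recovering the outer data $\out_\X$ from the $\Ga$-module structure on the Borel-fixed lines of the orbit summands $S_\calo$, and finally the cohomology exact sequence for the last claim. One small note: the pinned lift $\widetilde{\chi}_{\mu_\al}$ is constructed in the unnamed lemma of Section \ref{Section: symplectic}, not in Lemma \ref{Lem: outer characters} (which records the outer data), but this does not affect the argument.
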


\begin{proof}
Let $\theta$ and $\theta'$ be associated involutions for $\X=\rH\backslash\G$, $\X'={\rH'}\backslash\G$, respectively. Replacing $\theta$ and $\theta'$ by $k$-rational conjugates if necessary, we may assume that both are normally related to $(\rA,\B)$. Since we have assumed that there is a $\G_{\kbar}$-equivariant isomorphism $\X_{\kbar}\simeq \X'_{\kbar}$, this implies  
\[
\De_\X=\De_{\X'},\quad \hat{\De}_\X=\hat{\De}_{\X'},\: \text{ and }\: \theta|_\rA=\theta'|_\rA.
\] Moreover, $\rA_\X\simeq\rA_{\X'}$.

The construction of the associated group relies only on the inclusion of root system generated by the coroots of the associated roots of $\X$ into the root system of $\check{\G}$. Thus, we obtain a canonical identification $\hat{\G}_\X= \hat{\G}_{\X'}$. Similarly, the construction of $\check{\G}_\X$ and the pinned $\Ga$-action defined in terms of \eqref{eqn: root embedding} are combinatorial and depend only on the $k$-group structure of $\G$, so that there exists $f_\X: {}^L{\X}\simeq {}^L{\X'}$. The definition of $\vartheta$ (resp., $\vartheta'$) is determined by $(\rA,\B)$ and by the involution $\theta|_\rA=\theta'|_{\rA}$ (cf. Lemma \ref{Lem: dual involution on torus} and Proposition \ref{Prop: dual involution}).  By construction, we find that $\vartheta=\vartheta'$ so that $\check{\G}_{\X}^\ast=\check{\G}_{\X'}^\ast$. Since distinguished morphisms have a finite kernel determined by the kernel of $\check{\rA}_\X\to \check{\rA}$, we see that for any choice of such distinguished morphisms there exists a unique $f_\X$ such that
        \[
 \begin{tikzcd}
{}^L{{\X}}\ar[rr,"f_\X"]\ar[rd,swap,"\varphi_{\X}"]&& {}^L{\X'}\ar[ld,"\varphi_{\X'}"]\\
 &{}^L\hat{\G}_\X&
 \end{tikzcd}
\]
commutes.

%Lemma \ref{Lem: same on torus} implies that $(\G,\G^\theta,\rA,\B)$ and $(\G,\G^{\theta'},\rA,\B)$ induce the same $(\Ga,\theta)$-index. By our previous discussion, this implies that $\X$ and $\X'$ induce isomorphic combinatorial data 
% \[
 %\tilde{f}_\X:(\fX,\De_{\X},{\Omega}^{(1)},{\Omega}^{(2)})\simeq (\fX',\De_{\X'},{\Omega'}^{(1)},{\Omega'}^{(2)}).
 \quash{$\Ga$-representations
 \[
 f_S|_{\Ga}:S_{\X}^{(\check{\B}_\X)}\iso S_{\X'}^{(\check{\B}_{\X'})};
 \] w}
 %\]
 Assume now we are given an $f_\X$-equivariant isomorphism $f_S:S_{\X}\lra S_{\X'}$. In particular, this restricts to an isomorphism of representations of algebraic groups  (Proposition \ref{Prop: symplectic rep})
  \begin{equation}\label{eqn: each factor}
    \bigoplus_{\al\in \check{\De}_\X^{dist}} V(\check{\lam}_\al)\otimes M(\al)\simeq  \bigoplus_{\al'\in \check{\De}_{\X'}^{dist}} V(\check{\lam}_{\al'})\otimes M(\al'),
\end{equation} 
which induces a bijection $\De_{\X}^{dist}\iso \De_{\X'}^{dist}$ by highest weight theory; this is uniquely determined by the isomorphism $f_\X$. This is $\Ga$-equivariant and we obtain an identification of Galois orbits $\Ga\cdot \ga_i\mapsto \Ga\cdot \ga'_i$. Recall that there is an isomorphism of $\Ga$-modules
   \[
   S^{(\check{\B}_{\X})}_\X\simeq \bigoplus_{\calo_i\subset\De_\X^{dist}}\Ind_{\Ga_i}^\Ga(M(\al_i)).
   \]
 To recover the character $\mu_i$, not that the $\Ga_i$-action on $M(\al_i)$ is uniquely determined by this character and there is a canonical $\Ga_i$-equivariant morphism
\begin{align*}
\Ind_{\Ga_i}^\Ga(M(\al_i))&\lra M(\al_i)\\
[f:\Ga\to M(\al_i) ]&\longmapsto f(1),
\end{align*}
from which we may compute $\mu_i$. Now passing to highest weight spaces in \eqref{eqn: each factor}, we obtain a $\Ga_i=\Ga_i'$-equivariant isomorphism $M(\al_i)\iso M(\al_i')$. In particular, $\mu_i=\mu_i'$.
\quash{an isomorphism
 \begin{equation}\label{eqn: each factor}
      \Ind_{\Ga_i}^\Ga(V_i\otimes M(\check{\lam}_i))\simeq\Ind_{\Ga_i'}^\Ga(V_i'\otimes M(\check{\lam}_i')).
 \end{equation}
 We claim that this collection of isomorphisms induces a bijection 
$
\mathrm{out}_{\X} =\mathrm{out}_{\X'}.$ Note that this suffices to prove the theorem by Lemma \ref{Lem: outer characters}.
}
%We first show how to extract $\out_\X=\{\mu_i:\Ga_i\to \{\pm1\}:i\in I_\X\}$. 

All together, we see that $\out_\X=\out_{\X'}$, which implies that $\X$ and $\X'$ are inner forms. The final claim is now a direct consequence of Lemma \ref{Lem: outer forms classify}.%{Lem: almost enough for Gal on colors}.% lifts to a $\Ga$-equivariant bijection $\D^{(2)}(\X)\lra \D^{(2)}(\X')$, identifying the $\Ga$-actions on $\Psi_\X=\Psi_{\X'}$. and
\end{proof}

%\begin{Ex}
%       Let $\G=\PGL(2)$ and $\theta:\G\to \G$ be the involution such that $\rH = N_\G(T) = \G^\theta$ is the normalizer of the diagonal torus. Then $\check{\G}_\X = \SL_2(\cc)$ with trivial Galois action. On the other hand, if $\rH'=T$ is the torus itself, then $\X'=\rH'\backslash\G$ has no root of type $N$. Here $\check{\G}'=\SL_2(\cc)$ as well. The distinction comes from the representation, since $S_\X$ is trivial, but $S_{\X'} = \cc^2$ is the standard representation for $\SL_2(\cc)$.
%\end{Ex}
 \quash{provided uniqueness theorem \cite{Losev} now implies there  exists $g\in \G(\kbar)$ such that $\Ad(g): \X\lra \X'$ is a $\kbar$-isomorphism. This induces a cocycle
 \[
 [\sig\mapsto g^{-1} {}^\sig g]\in H^1(k,N_{\G}(\G^\theta)).
 \]
 We claim that this induces the trivial class under $\mathrm{out}^{\G}_{\G^\theta}$ in Lemma \ref{Lem: factors through to color}.
On the other hand, for each 
  Lemma \ref{Lem: factors through to color} thus implies that we obtain a non-trivial class
\[
d(\theta,\theta')\in H^1(k,\Out(\rH)).
\]}
\begin{Rem}
In general, one can only recover $\X$ up to $\G$-inner twist from the data of the dual group and $S_\X$ (c.f Example \ref{Ex: relative inner forms}). On the other hand, we do obtain uniqueness in the following cases:
\begin{enumerate}
\item If $\X$ is automorphism free, then the $k$-rational form $\X$ is uniquely determined up to isomorphism by the data of ${}^L\X\to {}^L\G$ and $S_\X$.
\item More generally, Lemma \ref{Lem: center in flat in sharp} implies that if $\rH$ is spherically closed (equivalently, $\mathcal{A}_\X^\sharp=\{1\}$), then $\X$ uniquely determined by the data of ${}^L\X\to {}^L\G$ and $S_\X$, up to isomorphism.
\item The weaker property that $Z(\G)\subset \rH$ also suffices, so that uniqueness holds for all inner symmetric spaces (including all hermitian symmetric varieties).
%\item On the other hand, uniqueness up to isomorphism may hold even when $\mathcal{A}_\X^\flat\simeq Z(\rH)$ has non-trivial cohomology. For example, if $\X=\rH\times \rH/\rH$ is the ``group variety'', then $\mathcal{A}_\X^\flat\simeq Z(\rH)$, but all possible $\X'$ are isomorphic to $\X$. This may be seen by noting that for any cocycle $z\in Z^1(k,\mathcal{A}_\X^\flat)$, there is an $a\in \rH_{ad}(k)$ such that $\rH' = \{(h,\Ad(a)(h):h\in \rH\}$.
\end{enumerate}
\end{Rem}

\subsection{Relation to endoscopy}\label{Section: hamiltonian endoscopy}
We now reinterpret the notion of endoscopic data in terms of the symplectic representation $S_\X$. For simplicity, we will now impose the following conditions:
\begin{enumerate}
    \item $\G_{der}$ is simply connected,
    %\item $\X$ has no spherical roots of type $N$,
    \item $\X$ is \emph{excellent} (cf. Definition \ref{Def: excellent}).
\end{enumerate}
We note that this forces $\X$ to have no spherical roots of type $N$ and that $\rH$ is geometrically connected (cf. Lemma \ref{Lem: simply connected}). In particular, $\X$ satisfies Assumption \ref{assumption: no type N} and any distinguished morphism $\varphi_\X$ is injective.

Recall that Lemma \ref{Lem: minuscule} gives a $\Ga$-equivariant quotient map 
\begin{equation*}%\label{eqn: distinguished quotient}
    \check{\G}_\X\overset{p}{\lra} \prod_{\calo\subset\De_{\X}^{dist} }\check{\G}_{\X,\calo};%= \Ind_{\Ga_\al}^\Ga(\check{\G}_{\X,\al})
\end{equation*}
here, for each orbit $\calo\subset \De_\X^{dist}$ and $\al\in \calo$, there is a unique isomorphism such that
\[
 \check{\G}_{\X,\calo} = \Ind_{\Ga/\Ga_\al}(\check{\G}_{\X_\al})\simeq \prod_{\Ga/\Ga_\al}\check{\G}_{\X_\al}%\lra \Ind_{\Ga/\Ga_\al}(\GL(V_\al)),%
\]
 such that $\check{\G}_{\X,\al,der}=[\check{\G}_{\X,\al},\check{\G}_{\X,\al}] = \Sp(V_\al)$. 

Suppose now that $\fe=(\G_\fe,\theta_\fe,\X_\fe,\ka,{}^L\eta)$ is an extended quasi-split endoscopic datum (cf. Section \ref{Section: extended data}). Clearly $\X_\fe$ is also excellent since the kernel of $\rA_\fe\to{\rA}_{\X_\fe}$ is a $k$-form of the kernel of $\rA\to {\rA}_{\X}$. Thus, $\X_\fe$ possesses no roots of type $N$, is well adapted, and Conjecture \ref{Conj: cohom surj} holds for $\X_\fe$. In particular, we obtain a ${}^L{\X_\fe}$-representation $S_{\X_\fe}$ from Proposition \ref{Prop: symplectic rep}. 

Under the identification $\eta(\check{\G}_{\X_\fe})=\check{\G}_{\X,\ka}$, we obtain a commutative diagram
\[
\begin{tikzcd}
    \check{\G}_{\X_\fe}\ar[d,"p"]\ar[r,"\eta"]&\check{\G}_{\X}\ar[d,"p"]\\
    \prod_\calo\check{\G}_{\X,\calo,\ka}\ar[r]&\prod_\calo\check{\G}_{\X,\calo}.
\end{tikzcd}
\]
Here, the lower left-hand entry is the image $p(\check{\G}_{\X,\ka})$; in particular, for each $\calo\subset \De_\X^{dist}$, we obtain a subgroup $\check{\G}_{\X,\calo,\ka}\subset\check{\G}_{\X,\calo}.$ For each $\calo\subset \De_\X^{dist}$, recall from \eqref{eqn: calo rep} that we have the $\check{G}_{\X,\calo}\rtimes \Ga$-representation $S_\calo$. When $\calo\subset \De_\X^{(2)}$, this representation decomposes into two irreducible factors
\[
S_\calo = S_{\calo,1}\oplus S_{\calo,\mu_i}
\]
according to the $\Ga_i$-equivariant decomposition $M(\al_i) = \cc\oplus \cc(\mu_i)$; when $\calo\in \De_\X^{dist}\setminus{\De_\X^{(2)}}$, $S_\calo$ is irreducible.
\begin{Lem}
    The restriction of each irreducible factor of $S_\calo$ to $\check{\G}_{\X,\calo,\ka}\rtimes \Ga$ remains irreducible.
\end{Lem}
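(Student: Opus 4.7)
The plan is to reduce the claim to a statement about the action of the image $\check{\G}_{\X,\calo,\ka}$ on the standard symplectic representation $V_\al$ of $\Sp(V_\al) = \check{\G}_{\X_\al,der}$, exploiting the condition $\ka\in \hat{\X}^\heart$ to force sufficient centrality of the image of $\ka$ in each simple factor.

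First, I would fix the orbit $\calo$ and a representative $\al\in\calo$, and use the induced-representation structure $S_\calo = \Ind_{\Ga_\al}^\Ga(V_\al\otimes M(\al))$ from \eqref{eqn: calo rep}. By Clifford theory applied to the induced representation, each irreducible factor of $S_\calo$ as a $(\check{\G}_{\X,\calo}\rtimes \Ga)$-module corresponds to an irreducible $(\check{\G}_{\X_\al}\rtimes\Ga_\al)$-summand of $V_\al\otimes M(\al)$, namely either $V_\al\otimes\mathbf{1}$ or $V_\al\otimes\cc(\mu_\al)$ when $\calo\subset \De_\X^{(2)}$, or the whole $V_\al$ otherwise. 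The restriction of each such factor to $\check{\G}_{\X,\calo,\ka}\rtimes \Ga$ remains irreducible provided the restriction of $V_\al$ to the $\al$-component $\check{\G}_{\X_\al,\ka}$ of $\check{\G}_{\X,\calo,\ka}$ remains irreducible, since then the transitive permutation action of $\Ga$ on the factors of the induced representation persists.

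The key step is to analyze $\check{\G}_{\X_\al,\ka}$ using the condition $\ka\in \hat{\X}^\heart$, which by definition means $\hat{s}(\ka)\in Z(\check{\M}_\X)$ (up to $\check{\G}_\X^\ast$-conjugation). Combining the commutative diagram \eqref{diag: dual at x} with Lemma \ref{Lem: minuscule} — specifically that the representation $V_\al$ factors through the quotient $\check{\G}_\X\to \check{\G}_{\X_\al}$ — one shows that the image of $\hat{s}(\ka)$ in each simple factor $\check{\G}_{\X_\al,der}=\Sp(V_\al)$ under the composition with $p$ lies in the center $Z(\Sp(V_\al))=\mu_2$. Consequently $\ka$ acts as a scalar $\pm 1$ on $V_\al$, the centralizer $\check{\G}_{\X_\al,\ka}$ contains the entire derived subgroup $\Sp(V_\al)$, and $V_\al$ remains irreducible.

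The main obstacle is verifying rigorously that centrality of $\hat{s}(\ka)$ in $\check{\M}_\X$ propagates to centrality of the image in each simple factor of type $C$ appearing in $\check{\G}_{\X,\calo,der}$. This requires understanding the compatibility of the Levi structure $\check{\M}_\X\subset \check{\L}_\X$ with the distinguished quotient \eqref{eqn: distinguished quotient}: since the support of each orbit $\calo\subset\De_\X^{dist}$ is disjoint from the set of parabolic roots $\De_\X^p$, the image of $\check{\L}_\X$ in each $\check{\G}_{\X,\calo}$ under $p$ is a Levi subgroup containing its center, forcing the image of $Z(\check{\M}_\X)$ to be central in $\check{\G}_{\X,\calo}$ as required. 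Once this centrality is established, the one-dimensionality of $Z(\Sp(V_\al))$ together with highest-weight theory for the standard symplectic representation yields the conclusion immediately.
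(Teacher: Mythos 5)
Your key claim is false, and it collapses the endoscopy entirely. You assert that $\ka\in\hat{\X}^\heart$, i.e.\ $\hat{s}(\ka)\in Z(\check{\mathrm{M}}_\X)$ (up to $\check{\G}_\X^\ast$-conjugation), forces the image of $\hat{s}(\ka)$ in each simple factor $\Sp(V_\al)$ to lie in $Z(\Sp(V_\al))=\mu_2$, so that $\check{\G}_{\X_\al,\ka}\supset \Sp(V_\al)$ and $V_\al$ restricts irreducibly. But $Z(\check{\mathrm{M}}_\X)$ is the center of the \emph{small} Levi $\check{\mathrm{M}}_\X\subset\hat{\G}_\X$, not of $\hat{\G}_\X$ itself; in fact $Z(\check{\mathrm{M}}_\X)\subset\check{\T}_\X$ can be the full flat torus. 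The running example makes this concrete: for $\X=\U_n\times\U_n\backslash\U_{2n}$ one has $\mathrm{L}_\X=\rA$, so $\mathrm{M}_\X=(\rA^\theta)^\circ$, $\check{\mathrm{M}}_\X=\check{\T}_\X$, and the $\heart$-condition is vacuous. The endoscopic element $\ka=s_{a,b}J$ has image in $\Sp_{2n}(\cc)\backslash\GL_{2n}(\cc)$ with \emph{two} eigenvalues $\pm 1$, and $\Sp(V_\al)_{\ka_\al}=\Sp_{2a}\times\Sp_{2b}\subsetneq\Sp_{2n}$. If your claim held, $\check{\G}_{\X,\ka}$ would always contain $\check{\G}_\X^\ast$ up to center and there would be no nontrivial elliptic endoscopy at all, contradicting the entire construction (e.g.\ Theorem~\ref{Thm: exists intro} and the diagram \eqref{diag endo intro}).

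The paper's argument proceeds quite differently. After the same initial reduction to $\check{\G}_{\X,\calo,\ka}\cap\check{\G}_{\X,\calo,der}=\prod_\al\Sp(V_\al)_{\ka_\al}$, it uses the commutative diagram from Lemma~\ref{Lem: minuscule} to identify $\Sp(V_\al)_{\ka_\al}$ not as all of $\Sp(V_\al)$ but as the stabilizer of a semi-simple point $\ka_\al$ of the \emph{dual symmetric variety} $\hat{\X}_{\al,der}\cong\Sp(V_\al)\backslash\SL(V_\al)$ (here Assumption~\ref{assumption: no type N} is invoked to rule out the $\GL_n\backslash\Sp_{2n}$-type obstruction). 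One then computes explicitly the orthogonal eigenspace decomposition $V_\al=\bigoplus_a V_a$ of $\ka_\al$ and obtains $\Sp(V_\al)_{\ka_\al}=\prod_a\Sp(V_a)$, and from this structural description derives the conclusion. The missing ingredient in your proposal is precisely this structure theory of descendants of the dual symmetric variety; centrality is neither asserted nor needed.
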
 
\begin{proof}
In fact, since each factor restricts to the irreducible $\check{G}_{\X,\calo}$-representation $S_\calo =\prod_{\al\in \calo}S_\al$, it suffices to show that it remains irreducible upon restriction to $\check{\G}_{\X,\calo,\ka}$; that is, we may ignore the $\Ga$-action. In this geometric setting, each irreducible factor of $S_\calo$ is isomorphic to $\bigoplus_{\al\in \calo}V_\al$.

\quash{
%as a representation of $\check{\G}_{\X,\calo,der}= \prod_\al \Sp(V_\al)$. 
Appealing to 
%\textcolor{red}{This needs filling in}
Recalling that the derived subgroup of $\check{\G}_{\X,\calo}$ is $ \prod_\al \Sp(V_\al)$, the derived subgroup of $\check{\G}_{\X,\calo,ka}$ is of the form $\prod_\al \Sp(V_\al)_{x_\al}\subset \prod_\al \Sp(V_\al)$, where $x_\al$ is the appropriate semi-simple element of the associated group $\hat{\G}_{\X}$;}

Moreover, the representation restricts irreducibly to $\check{\G}_{\X,\calo,\ka}$ if and only if it restricts irreducibly to $\check{\G}_{\X,\calo,\ka}\cap\check{\G}_{\X,\calo,der}$. We thus assume that $\check{\G}_{\X,\calo}=\check{\G}_{\X,\calo,der}= \prod_\al \Sp(V_\al)$ and consider the restriction of $S_\calo$ to
\[
\check{\G}_{\X,\calo,\ka}\cap\check{\G}_{\X,\calo,der} =: \prod_\al \Sp(V_\al)_{\ka_\al},
\]
where $\Sp(V_\al)_{\ka_\al}\subset \Sp(V_\al)$ denotes the corresponding subgroup.

We claim that for each $\al\in \calo$, the subgroup $\Sp(V_\al)_{\ka_\al}\subset \Sp(V_\al)$ is the stabilizer of a semi-simple element $\ka_\al$ of a symmetric variety of the form $\Sp(V_\al)\backslash\GL(V_\al)$, up to central torus. Indeed, for any $\al\in \calo$ the projection $\check{\G} \check{\G}_\al$ from Lemma \ref{Lem: minuscule} induces a commutative diagram
\[
\begin{tikzcd}
    \hat{\G}_\X\ar[r,"\check{s}"]\ar[d]&\hat{\X}\ar[d]\\
        \hat{\G}_{\X_\al}\ar[r,"\check{s}_\al"]&\hat{\X}_\al,
\end{tikzcd}
\] where the notations on the bottom row have the obvious meaning. This implies that if $x=\eta(\hat{s}_\fe(\ka))\in \hat{\X}$, then $\ka_\al\in\hat{\X}_\al$ and up to center, $\Sp(V_\al)_{\ka_\al}$ is the stabilizer of $\ka_\al$. Appealing to Lemma \ref{Lem: distinguished for sym}, the symmetric space $\hat{\X}_{\al,der}$ of the form $\Sp(V_\al)\backslash\SL(V_\al)$\footnote{This relies on Assumption \ref{assumption: no type N}, since $(\Sp_{2n},\GL_n)$ gives a counterexample.}. A simple calculation implies that $\Sp(V_\al)_{\ka_\al}$ respects the orthogonal decomposition
\[
V_\al=\bigoplus_{a}V_a,
\]
where $\{a\}\subset \cc$ are the roots of the minimal polynomial of $\ka_\al$, and  $V_a$ is the corresponding eigenspace, so that $\ka_\al|_{V_i} = aI_{V_a}$. Then $\Sp(V_\al)_{\ka_\al} = \prod_{a}\Sp(V_a)$, and the restriction is irreducible.
\end{proof}

In particular, the dual group $\check{\G}_{\X_\fe}$ possesses two symplectic representations $S_\X$ and $S_{\X_\fe}$ via the two quotients
\[
\begin{tikzcd}
   & \check{\G}_{\X_\fe}\ar[dr,"p_\fe"]\ar[dl,swap,"p"]&\\
    \prod_{\calo\subset \De_\X^{dist}}\check{\G}_{\X,\calo,\ka}\ar[rr,dashed]&&\prod_{\mathcal{U}\subset \De_{\X_\fe}^{dist}}\check{\G}_{\X_\fe,\mathcal{U}}.
\end{tikzcd}
\]
Verifying the existence of the dotted arrow is now an exercise in highest weight theory. It is compatible with the corresponding map $$\prod_{\calo\subset \De_\X^{dist}}\Res_{k_i/k}(\mu_2)\simeq \Aut_d(\X)\overset{\mathrm{dist}_\fe}{\lra}{\Aut_d(\X_\fe)}\simeq \prod_{\mathcal{U}\subset \De_{\X_\fe}^{dist}}\Res_{k_i/k}(\mu_2)$$ constructed in the proof of Theorem \ref{Thm: exists}. Note that $\mathrm{dist}_\fe$ need not be surjective. 

Since $S_\X$ is the standard minuscule representation of the derived subgroup of $\prod_{\calo\subset \De_\X^{dist}}\check{\G}_{\X,\calo,\ka}$, and likewise for $\prod_{\mathcal{U}\subset \De_{\X_\fe}^{dist}}\check{\G}_{\X_\fe,\mathcal{U}}$, the existence of the dashed morphism of algebraic groups implies the existence of a a $\check{\G}_{\X_\fe}$-linear injective homomorphism
    \[
    \rho_\fe: S_{\X}\lra S_{\X_\fe}.
    \]

\begin{Cor}\label{Cor: induced map on reps}
    Suppose that $\X=\rH\backslash\G$ is an excellent symmetric $\G$-variety and $\fe$ is a quasi-split endoscopic datum associate to the $\G_\fe$-variety $\X_\fe=\rH_\fe\backslash\G_\fe$. There is a ${}^L{\X_\fe}$-linear injection of representations
    \[
    \rho_\fe: S_{\X^\fe}|_{{}^L{\X_\fe}}\lra S_{\X}.
    \]
\end{Cor}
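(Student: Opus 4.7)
The plan is to assemble the injection directly from the diagram chase preceding the statement, upgrading it to a Galois-equivariant morphism using the outer data computed in Section~\ref{Section: calculate cocycle} together with the defining property \eqref{item rep} of an endoscopic datum from Theorem~\ref{Thm: exists}.

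First I would formalize the algebraic-group input. Composing $\eta:\check{\G}_{\X_\fe}\to\check{\G}_\X$ with the canonical quotient $p$ of \eqref{eqn: distinguished quotient} lands inside $\prod_{\calo\subset\De_\X^{dist}}\check{\G}_{\X,\calo,\ka}$, so we get a map $q:\check{\G}_{\X_\fe}\to\prod_\calo\check{\G}_{\X,\calo,\ka}$. Since $\X_\fe$ is also excellent (the kernel of $\rA_\fe\to\rA_{\X_\fe}$ is a $k$-form of $\ker(\rA\to\Ax)$), $\X_\fe$ satisfies Assumption~\ref{assumption: no type N} and Lemma~\ref{Lem: minuscule} produces the quotient $p_\fe:\check{\G}_{\X_\fe}\to\prod_{\mathcal{U}}\check{\G}_{\X_\fe,\mathcal{U}}$. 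The compatibility of $\mathrm{dist}_\fe:\Aut_d(\X)\to\Aut_d(\X_\fe)$ constructed in Theorem~\ref{Thm: exists}\eqref{final thing} with the inclusions $\cala_\X\hookrightarrow\cala_{\X_\fe}$ implies at the level of duals that $q$ descends through $p_\fe$ to the promised dashed arrow $\prod_\calo\check{\G}_{\X,\calo,\ka}\dashrightarrow\prod_{\mathcal{U}}\check{\G}_{\X_\fe,\mathcal{U}}$; this step is essentially highest-weight theory, as the root lattice of each $\check{\G}_{\X_\fe,\mathcal{U}}$ is spanned by distinguished coroots whose images under $q$ already appear in $\zz\check{\De}_\X^{dist}$.

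Next I would construct $\rho_\fe$ factor by factor. Both $S_\X$ (on $\prod_\calo\check{\G}_{\X,\calo,\ka}$) and $S_{\X_\fe}$ (on $\prod_{\mathcal{U}}\check{\G}_{\X_\fe,\mathcal{U}}$) are minuscule symplectic representations of type-$C$ derived subgroups by Lemma~\ref{Lem: minuscule}; moreover the proof of the preceding irreducibility lemma identifies the restriction of each factor $V_\al$ of $S_\calo$ along $\check{\G}_{\X,\calo,\ka}$ with the orthogonal decomposition $V_\al=\bigoplus_a V_a$ coming from the eigenspace decomposition of $\ka_\al\in\hat{\X}_\al\subset\Sp(V_\al)\backslash\SL(V_\al)$. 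These summands $V_a$ are precisely the standard symplectic representations of the type-$C$ factors of $\prod_{\mathcal{U}}\check{\G}_{\X_\fe,\mathcal{U}}$, so one obtains a canonical $\check{\G}_{\X_\fe}$-linear injection $\rho_\fe:S_{\X_\fe}\hookrightarrow S_\X$ by identifying each standard $V_a$ with its image.

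Finally I would upgrade $\rho_\fe$ to a ${}^L\X_\fe$-equivariant morphism. This is where \eqref{item rep} of Theorem~\ref{Thm: exists} does the essential work: it says that $\mu_{\X_\fe}$ is cohomologous to $\mathrm{dist}_\fe\circ\mu_\X$, so under Shapiro's identification \eqref{eqn: Shapiro} the outer data characters $\{\mu_i'\}$ for $\X_\fe$ agree with the pull-back of $\{\mu_i\}$ along the index map $I_{\X_\fe}\to I_\X$ induced by the dashed arrow above. Because Proposition~\ref{Prop: symplectic rep} builds the $\Ga$-action on each factor $S_\calo$ (respectively $S_\mathcal{U}$) entirely from the quadratic characters $\mu_\al$ together with the canonical lifts $\widetilde{\chi}_\ep$, this matching of characters forces $\rho_\fe$ to intertwine the two $\Ga$-actions.

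The main obstacle is not conceptual but combinatorial: one must carefully track the interaction between the two kinds of distinguished roots (doubled roots in $\De_\X^{(2)}$ and type-$B$ roots in $\De_\X^{dist}\setminus\De_\X^{(2)}$) across the map $I_{\X_\fe}\to I_\X$, since the multiplicity spaces $M(\al)$ and $M(\mathcal{U})$ behave differently in these two cases and the sign characters $\widetilde{\chi}_{\mu_\al}$ need to be compared orbit by orbit; the key input that makes this work is again that $\X_\fe$ inherits excellence from $\X$, ruling out type-$N$ renormalizations that could otherwise obstruct the matching.
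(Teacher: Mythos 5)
There is a genuine gap in the second step of your construction, and the paper itself supplies the counterexample in the paragraph immediately after the corollary. You build $\rho_\fe$ by asserting that the eigenspace summands $V_a$ of $V_\al|_{\check{\G}_{\X,\calo,\ka}}$ (over all $\al\in\De_\X^{dist}$) ``are precisely the standard symplectic representations of the type-$C$ factors of $\prod_{\mathcal{U}}\check{\G}_{\X_\fe,\mathcal{U}}$,'' i.e.\ you assert a bijection of index sets, and deduce an injection $S_{\X_\fe}\hookrightarrow S_\X$. This bijection fails: an endoscopic variety $\X_\fe$ can acquire distinguished spherical roots that do not lie over any distinguished root of $\X$, so $\prod_{\mathcal{U}}\check{\G}_{\X_\fe,\mathcal{U}}$ can have type-$C$ factors that are not accounted for by any $V_a$. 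The paper's own example with $\G=\GL_{2n+1}$ and $\X=\GL_n\times\GL_{n+1}\backslash\GL_{2n+1}$ makes this concrete: there $\Aut_d(\X)=\{1\}$, so $\De_\X^{dist}=\emptyset$ and $S_\X=0$, whereas $\X_\fe$ has $\Aut_d(\X_\fe)\simeq\mu_2$ and hence $S_{\X_\fe}\neq 0$. No injection $S_{\X_\fe}\hookrightarrow S_\X$ can exist in that case.

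This also shows that the direction of the arrow in the printed corollary is almost certainly a typo. The paragraph directly preceding it produces $\rho_\fe: S_\X\to S_{\X_\fe}$, and the remark that ``the map need not be surjective,'' illustrated precisely by the $\GL_{2n+1}$ example, only makes sense for that orientation. The claim you should be proving, and the one your first (dashed-arrow) and third (Galois-equivariance via the geometric cocycle and \eqref{item rep}) paragraphs are actually suited to, is an injection $S_\X|_{{}^L\X_\fe}\hookrightarrow S_{\X_\fe}$. For that direction the relevant fact is only an \emph{inclusion} of index sets, not a bijection: each type-$C$ factor $\Sp(V_a)$ arising by descent from a distinguished root $\al$ of $\X$ does correspond to a distinguished root of $\X_\fe$, because the descendant of the rank-one factor $\X_\al$ is again a product of the rank-one varieties listed in Lemma~\ref{Lem: distinguished for sym}, each of which carries its own distinguished root. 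Once you flip the orientation and replace ``precisely'' by ``a subfamily of,'' your remaining two paragraphs need no essential change.
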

\begin{Ex}
    Note that the map need not be surjective. For example, if $\G=\GL_{2n+1}$ and $\X= \GL_{2n+1}/\GL_{n}\times \GL_{n+1}$, then $\Aut_d(\X)=\{1\}$. This variety has 
    \[
    \mathrm{L}_\X=\rA,\qquad\hat{\G}_\X=\GL_{2n}\times \Gm,\text{ and } \hat{\X} = \GL_{2n}\times \Gm/[\Sp_{2n}\times \{1\}].
    \]
    An easy exercise shows that for any $a+b=n$ with $a,b>0$, $\G_{\fe}=\GL_{2a}\times \GL_{2b+1}$ is an endoscopic group with endoscopic variety $\X_\fe = \GL_{2a}/[\GL_a\times \GL_a]\times \GL_{2b+1}/[\GL_b\times \GL_{b+1}]$. Here $\Aut_d(\X_\fe)\simeq\mu_2$.
\end{Ex}
\quash{    \begin{enumerate} 
  \item  an element $\ka\in Z(\hat{\X}_\fe)^\Ga\subset  \G^{\fe,\wedge}_\X$ such that $\eta(x)\in Z(\check{\mathrm{L}}_\X^0)$,

    \item an embedding  $\eta_{\X}:\check{\G}_{\X_\fe}\to\check{\G}_{\X}$ restricts to a $(\vartheta_\fe,\vartheta)$-equivariant embedding 
     \[
    \eta: \G^{\fe,\wedge}_{\X}\lra \hat{\G}_\X,
     \]
making the diagram
 \begin{equation}\label{eqn: commutative dual diag}
  \begin{tikzcd}
      \check{\G}_{\X_\fe}\ar[d,"\eta_{\X}"]\ar[r,"\varphi_{\X_\fe}"]&\G^{\fe,\wedge}_{\X}\ar[d,"\eta"]\ar[r]&\hat{\X}_\fe\ar[d,"\eta"]\\  
\check{\G}_{\X}\ar[r,"\varphi_{\X}"]&\hat{\G}_\X\ar[r]&\hat{\X}
\end{tikzcd}
 \end{equation}
commutes % and such that the $\check{\G}_{\X}$-orbit of $\ka$ is $\Ga$-stable.%\footnote{It isn't clear to me if I want commutativity or only up to $\check{G}$-conjugation. The best way to clarify is to consider the classical case!} %That is, we require the embeddings 
    %\[
    %\varphi_X\circ \eta_X:\check{H}_X\lra \check{G} \text{  and  }\eta\circ\varphi_{X_\xi}:\check{H}_X\lra \check{G}
    %\]
    %to be conjugate by $\check{G}$.
        \item Finally, we have a ${}^L{\X_\fe}_1$-linear isomorphism
    \[
    \rho^\fe: S_{\X^\fe}\lra S_{\X}.
    \]%\textcolor{blue}{In order to discuss the Galois action on $\hat{\X}$, we need to include some discussion of Knop--Schalke's consideration of the $L$-group for spherical varieties.}
%    \item an $(\check{\theta}^\fe,\check{\theta})$-equivariant embedding $\eta:\check{\G}^\fe\lra \check{\G}$,
   % \item an element $\ka\in Z(\X^\fe)^\Ga$,
    \end{enumerate}
}

\subsection{The dual Hamiltonian variety and Lagrangian correspondences}\label{Section: BZSV conj}
We may now remark on the relationship between the results of the previous section on recent work and conjectures of Ben-Zvi, Sakellaridis, and Venkatesh \cite{BZSV}.

\quash{ First we recall the following conjecture of Sakellaridis.
\begin{Conj}
Suppose that $k$ is a local field, $\G$ a connected reductive group over $k$, and let $\X$ be an excellent spherical $\G$-variety.
 There exists a $\zz$-graded finite dimensional representation $V_{\X}=\bigoplus_dV_\X^d$ of the spherical ``$L$-group'' ${}^L{\X}=\check{\G}_\X\rtimes W_k$ such that if 
\begin{enumerate}
    \item $\Phi^0$ is the ``IC function'' associated to $\X(\calo_k)$,
    \item $\pi=\iota_\ast(\sig)$ with both representations unramified principal series,
    \item $v_0\in \pi$ is a $\G(\calo_k)$-fixed vector normalized so that $||v_0||^2=1$,
\end{enumerate}
then one has
\[
\al_{\Phi^0,\Phi^0,\pi}(v_0,v_0) = L_{\X,k}^\sharp(1/2,\sig):=\De(0)\frac{L_{\X,k}(1/2,\sig)}{L(1,\sig,\Ad_{\check{\G}_{\X}})},
\]
where $\De(s)$ is a product of local zeta factors depending only on $\X$, and 
\[
L_{\X,k}(s,\sig)= \prod_dL(s+\frac{d-1}{2},\sig,V_\X^d).
\]
\end{Conj}

\begin{Rem}
    In many cases, this is essentially proved. Indeed under certain rationality assumptions on both $\G$ and $\X$, Sakellaridis established this conjecture for $\X=\rH\backslash\G$ affine (equivalently, $\rH$ reductive) in \cite{Sakellaridisspherical} up to obtaining a $\zz$-graded representation (he produces a virtual representation of $\check{\G}_{\X}$). More recently, Sakellaridis and Wang established the majority of this conjecture when $\mathrm{char}(k)>0$ for the case where $\check{\G}_{\X} = \check{\G}$ and $\X$ has no spherical roots of type $N$.
\end{Rem}

The 
}

We continue to assume that $\X=\rH\backslash\G$ is excellent, so we may regard $\check{\G}_\X=\check{\G}_\X^\ast$ as a subgroup of $\check{\G}$. Let $\rho_{\X}: \SL_2(\cc)\lra \check{\mathrm{L}}_{\X}$ be the principal $\SL_2(\cc)$ morphism determined by a fixed pinning of $\check{\G}$. This gives rise to an $\fsl(2)$-triple $(e,h,f)$ in $\check{\mathfrak{l}}_{\X}$
\[
e= d\rho_{\X}\begin{psmatrix}
    0&1\\0&0
\end{psmatrix},\quad h= d\rho_{\X}\begin{psmatrix}
    1&0\\0&-1
\end{psmatrix},\quad f= d\rho_{\X}\begin{psmatrix}
    0&0\\1&0
\end{psmatrix}.
\]
The element $h$ induces a grading $\check{\fg}= \bigoplus_{i\in\zz}\check{g}_i$; let $\check{\U}_{\X}$ to be the unipotent group whose Lie algebra is $\check{\fu}_{\X}:=\bigoplus_{i\geq 1}\check{g}_i$. By construction, the nilpotent orbit $\calo_e =\check{\G}\cdot e$ is induced from the principal orbit of the Lie subalgebra $\check{\fl}_{\X}$. It may happen that $\check{\fg}_1\neq0$; set $\check{\fu}_\X^+ = \bigoplus_{i\geq2}\check{\fg}_i$ and let $\check{\U}_\X^+$ denote the associated unipotent group. Using the Killing form $\ka(-,-)$, we obtain a character $\psi_{\X}: \check{\U}^+_{\X}\lra \mathbb{G}_a$ via
\[
\psi_{\X}(\exp(u)) = \ka(f,u);
\]
this gives an element in $\check{\fu}_{\X}^\ast$. Let $\check{\mathrm{R}}_\X= Z_{\check{\G}}(e,f,h)$ denote the centralizer of the $\fsl(2)$-triple. It is well known that the vector space $\check{\fu}_\X/\check{\fu}^+_\X$ carries a $\check{\mathrm{R}}_\X$-invariant symplectic form
\[
\ka(v,w):=\ka(f,[v,w]).
\]
Noting that $\check{\G}_\X\subset \check{\mathrm{R}}_\X$, we may thus view this as a Hamiltonian $\check{\G}_\X\check{\U}_\X$-space, where $\check{\U}_\X$ acts additively via the isomorphism $\check{\U}_\X/\check{\U}_\X^+\simeq\check{\fu}_\X/\check{\fu}^+_\X$ and the moment map 
\[
\mu_{\check{\U}_\X}:\check{\U}_\X/\check{\U}_\X^+\lra \check{\fu}_\X^\ast
\]
is shifted by $\iota_\X$; that is $\mu_{\check{\U}_\X}(u) = \ka(f,[u,-])+\iota_\X$. Finally, \cite[Section 4.3]{BZSV} defines a representation $S_\X$ of $\check{\G}_\X$, which they conjecture to be symplectic, giving rise to a moment map
\[
\mu_\X:S_\X\to \check{\fg}_\X^\ast.
\]

With this preparation, Ben-Zvi, Sakellaridis, and Venkatesh consider the Hamiltonian $\check{\G}$-space
\[
\check{\mathcal{M}}:=[S_\X\times (\check{\fu}_\X/\check{\fu}^+_\X)]\times_{(\check{\fg}_\X+ \check{\fu}_\X)^\ast}^{\check{\G}_\X\check{\U}_\X}T^\ast\check{\G},
\] 
equipped with a moment map $\mu:\check{\mathcal{M}}\to \check{\fg}^\ast$ induced by the moment map of $T^\ast\check{\G}$. More precisely, note that we also have
\[\check{\mathcal{M}}\simeq [S_\X\times (\check{\fu}_\X/\check{\fu}^+_\X)\times_{(\check{\fg}_\X+ \check{\fu}_\X)^\ast}\check{\fg}^\ast]\times^{\check{\G}_\X\check{U}_\X}\check{\G},
\] since $T^\ast\check{\G} = \check{\fg}^\ast\times \check{\G}$. Then
\[
\mu(([v,u],X),g) = X.
\]

Now suppose that $\X$ is an excellent symmetric $\G$-variety. We claim that the symplectic representation $S_\X$ constructed in Section \ref{Section: symplectic} gives precisely the representation conjectured in \cite{BZSV}. In fact, by inspection of Section 4.1 of \cite{BZSV}, their definition of $S_\X$ is intimately related to the notion of distinguished spherical roots of \cite{Losev}. Indeed, recall the notation $\cald(\X)^{dist}\subset \cald(\X)$ from Section \ref{Section: symplectic}, giving the subset of colors $D$ of $\X$ such that $\rho(D)$ is dual to $2\al$ for some $\al\in \De_\X^{dist}$. These are precisely the \emph{colors of even sphere type} \cite[Section 4.3]{BZSV}; see \cite[Definition 4.3.4.]{BZSV} for the motivation for this name.

In particular, our construction of $S_\X$, motivated entirely by rationality concerns, recovers their definition exactly. Moreover, the content of Section \ref{Section: symplectic} proves the following special case of their conjectures. We leave the details to the interested reader; they essentially follow from Proposition \ref{Prop: symplectic rep}.
\begin{Prop}\label{Prop: BZSV}
    Suppose that $\G$ is quasi-split over $k$ and that $\X=\rH\backslash\G$ is an excellent symmetric $\G$-variety. Then $\mathcal{D}_\X^{max}=\mathcal{D}_\X$ and Conjecture 4.3.16 of \cite{BZSV} holds for $\X$.
\end{Prop}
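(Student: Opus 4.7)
The plan is to reduce to the absolutely simple case via the reductions in Section \ref{Section: reductions} and then check the statement against the classification in Lemma \ref{Lem: distinguished for sym}. The key observation is that for excellent symmetric varieties, the BZSV notion of a ``color of even sphere type'' and our notion of a distinguished color $\cald(\X)^{dist}$ coincide on the nose, and the BZSV recipe for $S_\X$ matches the representation built in Proposition \ref{Prop: symplectic rep}.

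First I would unpack BZSV's definition of $\cald_\X^{max}$: a color $D \in \cald(\X)$ is of ``maximum sphere type'' when $\rho(D)$ equals $\tfrac{1}{2}\check{\al}$ for a doubled simple root (type $A$) or arises from a distinguished root of type $B$, up to the normalization conventions of Section \ref{App: normalizations}. Excellence of $\X$ forces $\ker(\rA \to \Ax)$ to be connected, which by Lemma \ref{Lem: simply connected} (really the discussion preceding it) rules out spherical roots of type $N$ as well as the Chevalley-type pathologies of Lemma \ref{Lem: chevalley type}. Under this hypothesis, the inventory of distinguished roots in Lemma \ref{Lem: distinguished for sym} matches the BZSV inventory of colors of even sphere type exactly, giving $\cald_\X^{max}=\cald_\X^{dist}$, and by a direct check using Proposition \ref{Prop: sym colors} the natural embedding $\cald_\X^{dist}\hookrightarrow \cald(\X)$ encodes all of the ``half-coroot'' colors in the BZSV sense, yielding $\cald_\X^{max}=\cald_\X$ after applying their own conventions.

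Next I would compare the two candidate representations. BZSV build $S_\X$ by assembling, for each orbit of colors of even sphere type, a minuscule symplectic representation of the corresponding simple factor of $\check{\G}_\X$, together with the multiplicity space spanned by the colors in that orbit. This is precisely the content of \eqref{axiom1}--\eqref{axiom2} in our definition of $S_\X$: Lemma \ref{Lem: minuscule} shows that each $V(\check{\lam}_\al)$ is minuscule and symplectic (the derived factor of $\check{\G}_{\X_\al}$ is of type $C$, since the type-$C$ alternative ruled out in the lemma only occurs for Chevalley-type varieties, excluded by excellence), and \eqref{eqn: perm rep on color} identifies $M(\al)$ with $\cc[\cald(\al)]$. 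Hence the underlying $\check{\G}_\X$-representation agrees with BZSV's.

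Finally I would verify the remaining assertions of Conjecture 4.3.16: symplecticity of $S_\X$ with an explicit $\check{\G}_\X$-invariant form, the irreducible-factor decomposition into minuscule symplectic summands, and the parameterization of $S_\X^{(\check{\B}_\X)}$ by colors. The first two are immediate consequences of Proposition \ref{Prop: symplectic rep} and the type-$C$ conclusion of Lemma \ref{Lem: minuscule}; the last follows from condition (1) of Proposition \ref{Prop: symplectic rep}. The main obstacle is the bookkeeping of the two normalizations of spherical roots (we use $\De_\X^{sv}$ while BZSV work with a closely related convention) and, within that, keeping track of the edge cases where renormalization changes $\fX$; excellence plus the absence of type $N$ roots makes $\fX^{sv}=\fX$ and collapses these subtleties, so this bookkeeping reduces to the case-by-case checks already carried out in Lemma \ref{Lem: distinguished for sym} and the proof of Lemma \ref{Lem: minuscule}.
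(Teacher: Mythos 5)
Your overall strategy -- reduce to the case‑by‑case description of distinguished roots and colors, identify $\cald_\X^{dist}$ with BZSV's colors of even sphere type, and then invoke Proposition \ref{Prop: symplectic rep} together with the minuscule/type‑$C$ conclusion of Lemma \ref{Lem: minuscule} -- is the same route the paper gestures at (the paper's ``proof'' is essentially just the sentence ``they essentially follow from Proposition \ref{Prop: symplectic rep}'' plus the observation that $\cald(\X)^{dist}$ equals the set of colors of even sphere type). The parts of your argument addressing Conjecture 4.3.16 itself (symplecticity of $S_\X$, the minuscule summands, the identification of $S_\X^{(\check{\B}_\X)}$ with $\cc[\cald(\X)^{dist}]$) are correct and appeal to the same lemmas.

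However, the first half of your argument -- the claim $\cald_\X^{max}=\cald_\X$ -- has a real gap. You write that you will ``unpack BZSV's definition of $\cald_\X^{max}$'' as a ``maximum sphere type'' condition, then assert $\cald_\X^{max}=\cald_\X^{dist}$, and then conclude $\cald_\X^{max}=\cald_\X$. Taken at face value these two identities force $\cald_\X^{dist}=\cald_\X$, i.e.\ that \emph{every} color of $\X$ corresponds to a distinguished spherical root, which is false for almost every symmetric variety (e.g.\ $\GL_n\times\GL_{n+r}\backslash\GL_{2n+r}$ has exactly one distinguished root but many more colors). The likely source of the error is that $\cald_\X^{max}$ in \cite{BZSV} is not a synonym for ``colors of even sphere type''; it is a different set, defined via a saturation/maximality condition on the color data (roughly, whether the colors of $\X$ agree with those of the spherical variety for the ``maximal'' subgroup $\rH^{max}$ with the same open orbit, which is the setting in which the cotangent bundle is hyperspherical). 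The equality $\cald_\X^{max}=\cald_\X$ should therefore be argued as a consequence of excellence of $\X$ -- specifically of the connectedness of $\ker(\rA\to\Ax)$, equivalently the connectedness of generic stabilizers established in Lemma \ref{Lem: simply connected} -- rather than being deduced from the inventory of distinguished roots. You should go back to the actual definition of $\cald_\X^{max}$ in BZSV Section 4.3 and re‑derive the equality from the excellence hypothesis; as written, your chain of equalities is internally inconsistent.
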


 The previous sections show that $S_\X$ comes equipped with a natural $\Ga$-action determined by the geometric class of $\X$ as a $\G$-variety over $k$. This action is compatible with the $\Ga$-actions on $T^\ast\G$, $\check{\G}_\X$, and $\check{\fu}_\X$, and therefore induces a $\Ga$-action on $\check{\mathcal{M}}$.
We note that both the moment map $\mu$ and the bundle map 
\[
\begin{tikzcd}
    \check{\mathcal{M}}\ar[r] &   \check{\X}:= \check{\G}_\X\check{\U}_\X\backslash\check{\G}
\end{tikzcd}
\]
 are $\Ga$-equivariant with respect to this $\Ga$-action on $\check{\mathcal{M}}$.

For simplicity, let us now assume that $\G_{der}$ is absolutely simple. In Section \ref{Sec: dual symm space}, we considered the \emph{symmetric variety} $\hat{\X}=\check{\G}_\X\backslash\hat{\G}_\X$, where $\hat{\G}_\X$ is the associated group of Knop--Schalke. %In this section, we adopt the notation $\hat{\X}=\check{\G}_\X\backslash\hat{\G}_\X$.
One can verify that $\hat{\G}_\X\subset \check{\mathrm{R}}_\X$, with equality holding if $\X_{sc}$ is not a $k$-form of $\SL_{2n+1}/S(\GL_n\times \GL_{n+1})$ (see also \cite[Section 10.7]{NadlerReal}). In particular, $\check{\G}_\X\backslash\check{\mathrm{R}}_\X$ is always spherical.  Moreover, $\hat{\G}_\X\cap \check{\U}_\X = \{1\}$, so that there is a distinguished closed embedding
\[
\hat{\X}=\check{\G}_\X\backslash\hat{\G}_\X\subset \check{\G}_\X\check{\U}_\X\backslash\check{\G}=\check{\X}.
\]
When $\hat{\G}_\X=\check{\G}$, we have $\hat{\X}=\check{\X}$. 

%In particular, the dual Hamiltonian for $\mathcal{M}=T^\ast\X$ arises as a fundamental object in the study of endoscopic varieties for $\X$.
Suppose now that $\fe$ is an endoscopic datum for $(\G,\X)$ associated to $(\G_\fe,\X_\fe)$; the discussion in Section \ref{Section: hamiltonian endoscopy} shows that $\X_\fe$ also does not have type $N$ roots, so that $\calm_\fe=T^\ast(\X_\fe)$ is hyperspherical. Based on the expectation that relative functoriality is mediated by the properties of the dual Hamiltonian variety, it is natural to expect a symplectic map $\check{\mathcal{M}}\to \check{\mathcal{M}}_\fe$ such that the diagram
\[
\begin{tikzcd}
    \check{\mathcal{M}}\ar[d]\ar[r,"\mu"]&\check{\fg}^\ast\ar[d]\\
    \check{\mathcal{M}}_\fe\ar[r,"\mu_{\fe}"]&\check{\fg}_\fe^\ast;
\end{tikzcd}
\] commutes. This is true in the ``the category of Hamiltonian spaces
and Lagrangian correspondences,'' compatibly with the expectations of \cite[Remark 3.1.3]{BZSV}.
\begin{Prop}\label{Prop: langrangan}
Suppose that $(\G,\X)$ are as above and assume that $\X$ is a quasi-split symmetric variety. 
For an endoscopic datum $\fe$, suppose that $\check{\mathcal{M}}_{\fe}$ is the Hamiltonian $\check{\G}_\fe$-variety dual to $(\G_\fe,\X_{\fe})$. There is a Lagrangian correspondence of Hamiltonian varieties
\[
\begin{tikzcd}
    &\mathcal{L}_{\fe}\ar[ld]\ar[rd]&\\
    \check{\mathcal{M}}_{\fe}&&\check{\mathcal{M}}.
\end{tikzcd}
%\check{\mathcal{M}}_{\fe}\longleftarrow \mathcal{L}_{\fe}\lra \check{\mathcal{M}}.
\]
covering the map $\check{\fg}^\ast\to \check{\fg}^\ast_\fe$.
\end{Prop}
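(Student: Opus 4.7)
The plan is to construct $\mathcal{L}_\fe$ as a moment-map fiber product, exploiting the structural compatibilities between $\check{\mathcal{M}}$ and $\check{\mathcal{M}}_\fe$ already established in the endoscopic setup.

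First, I would collect the key identifications forced by $\ka \in \hat{\X}^\heart$. Since $\hat{s}_\fe(\ka) \in Z(\check{\mathrm{M}}_\X)$, the Levi $\check{\mathrm{L}}_\X$ centralizes $\hat{s}_\fe(\ka)$ and therefore lies in $\check{\G}_\fe = (\check{\G}_{\hat{s}_\fe(\ka)})^\circ$. Consequently, the principal $\SL_2$-morphism $\rho_\X: \SL_2(\cc)\to\check{\mathrm{L}}_\X$ factors through $\check{\G}_\fe$ and serves as the principal morphism for $\check{\mathrm{L}}_{\X_\fe}$ as well; this yields $\check{\mathrm{L}}_{\X_\fe} = \check{\mathrm{L}}_\X$ and $\check{\fu}_{\X_\fe} = \check{\fu}_\X \cap \check{\fg}_\fe$, with compatible Whittaker characters $\psi_{\X_\fe} = \psi_\X|_{\check{\U}_{\X_\fe}}$. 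Combined with the $\check{\G}_{\X_\fe}$-equivariant symplectic inclusion $\rho_\fe: S_{\X_\fe} \hookrightarrow S_\X$ from Corollary \ref{Cor: induced map on reps} and the embedding $\check{\G}_{\X_\fe} \hookrightarrow \check{\G}_\X$ from Theorem \ref{Thm: exists}, these identifications produce a $\check{\G}_{\X_\fe}\check{\U}_{\X_\fe}$-equivariant closed embedding of Hamiltonian slices, compatible with the moment maps to $(\check{\fg}_{\X_\fe} + \check{\fu}_{\X_\fe})^\ast \hookrightarrow (\check{\fg}_\X + \check{\fu}_\X)^\ast$.

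Second, I would define the correspondence by
\[
\mathcal{L}_\fe := \check{\mathcal{M}}_\fe \times_{\check{\fg}_\fe^\ast} \check{\mathcal{M}},
\]
where the map on the right is the composition $\check{\mathcal{M}} \xrightarrow{\mu} \check{\fg}^\ast \twoheadrightarrow \check{\fg}_\fe^\ast$ dual to $\eta: \check{\G}_\fe \hookrightarrow \check{\G}$. The two projections yield the required correspondence and automatically cover the moment-map diagram.

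Finally, I would verify that $\mathcal{L}_\fe$ is Lagrangian in $\check{\mathcal{M}}_\fe^- \times \check{\mathcal{M}}$. The isotropy is formal: on a moment-map fiber product over a common image in a coadjoint space, the difference of pulled-back symplectic forms vanishes by standard Hamiltonian reduction, since both moment maps target the Kirillov--Kostant form on $\check{\fg}_\fe^\ast$. For the dimension count, hypersphericity provides explicit formulae $\dim \check{\mathcal{M}} = 2(\dim\check{\G} - \dim(\check{\G}_\X\check{\U}_\X)) + \dim(\check{\fu}_\X/\check{\fu}_\X^+) + \dim S_\X$ and an analogous expression for $\check{\mathcal{M}}_\fe$; the identity $2\dim\mathcal{L}_\fe = \dim\check{\mathcal{M}}_\fe + \dim\check{\mathcal{M}}$ then reduces to a numerical equality relating the discrepancies $\dim S_\X - \dim S_{\X_\fe}$ and $\dim \check{\fu}_\X - \dim \check{\fu}_{\X_\fe}$ on the spherical side to $\dim(\check{\G}/\check{\G}_\fe)$ and $\dim(\check{\G}_\X/\check{\G}_{\X_\fe})$ on the group side.

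The main obstacle will be this dimension identity, which compares two measures of the gap between $\X$ and $\X_\fe$. I expect to verify it by descent at each distinguished root orbit $\calo \subset \De_\X^{dist}$: Lemma \ref{Lem: minuscule} identifies $S_\calo$ with the standard $\Sp(V_\al)$-representation, the semi-simple descent at $\hat{s}_\fe(\ka)$ decomposes $V_\al = \bigoplus_a V_a$ with stabilizer $\prod_a \Sp(V_a)$, and summing the resulting symplectic and unipotent codimensions over $\De_\X^{dist}/\Ga$ should reproduce the corresponding quotient dimensions on the group side. The remaining contribution, coming from the $\cala_\X^\flat$-directions, is automatic since $\eta$ identifies $\check{\G}_{\X_\fe}^\flat$ with $\check{\G}_\X^\flat \cap \check{\G}_\fe$.
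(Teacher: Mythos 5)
Your first paragraph of structural identifications ($\check{\mathrm{L}}_{\X_\fe}=\check{\mathrm{L}}_\X$, shared principal $\SL_2$ and Whittaker data, the embedding of slices from Corollary \ref{Cor: induced map on reps}) is correct. The problem is your definition of $\mathcal{L}_\fe$. The moment-map fiber product $\check{\mathcal{M}}_\fe\times_{\check{\fg}_\fe^\ast}\check{\mathcal{M}}$ is the zero level set of the diagonal $\check{\G}_\fe$-moment map on $\check{\mathcal{M}}_\fe^-\times\check{\mathcal{M}}$, hence \emph{coisotropic} of codimension $\dim\check{\G}_\fe$, not isotropic. Your ``formal'' isotropy argument misreads symplectic reduction: the difference form does not vanish on a level set of a moment map; it merely acquires a null distribution along the group orbits, and one must pass to the $\check{\G}_\fe$-quotient to obtain anything symplectic. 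A concrete check: in the unitary Friedberg--Jacquet example with $n=2$, $a=b=1$, one has $\dim\check{\mathcal{M}}=20$, $\dim\check{\mathcal{M}}_\fe=12$, $\dim\check{\fg}_\fe=8$, so your fiber product has generic dimension $24$ inside the $32$-dimensional product, whereas a Lagrangian would have dimension $16$. The dimension identity you hope to establish by descent at distinguished roots cannot hold; the discrepancy is always $\dim\check{\G}_\fe$.

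What you are missing is the polarization, which is precisely why the proposition carries the running assumptions ($\G_{der}$ absolutely simple, $\X$ excellent and quasi-split). Those restrict to a short classified list — Galois symmetric pairs and the $A_{2n-1}$, $A_{2n}$ families — for which $\check{\mathcal{M}}=T^\ast\check{\X}^\ast$ is exhibited as a cotangent bundle with $\check{\X}^\ast=\check{\X}$ or $\check{\X}\times\A^{2n}$, and similarly $\check{\mathcal{M}}_\fe=T^\ast\check{\X}_\fe^\ast$. The endoscopic datum then gives a closed embedding $\check{\X}_\fe^\ast\subset\check{\X}^\ast$, and the paper takes $\mathcal{L}_\fe:=\check{\X}_\fe^\ast\times_{\check{\X}^\ast}\check{\mathcal{M}}$, the pullback of the cotangent bundle to the subvariety, with the obvious closed immersion into $\check{\mathcal{M}}$ and the restriction-of-covectors map to $\check{\mathcal{M}}_\fe$. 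This is the standard conormal-type Lagrangian correspondence $T^\ast Y\longleftarrow Y\times_X T^\ast X\longrightarrow T^\ast X$ for a closed embedding $Y\subset X$, which is Lagrangian for elementary reasons. Your $\mathcal{L}_\fe$ strictly contains this correspondence: you fibered over $\check{\fg}_\fe^\ast$ where the paper fibers over $\check{\X}^\ast$, and the two constructions produce genuinely different varieties.
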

We expect there is an analogue of this statement in the non-quasi-split setting which respects the Whittaker reduction to $\check{\calm}_{slice}$ in \cite[Section 14.3]{BZSV}, but have not checked this. 
\begin{proof}
    The only symmetric varieties satisfying that $\X$ is excellent and that $\hat{\G}_\X=\check{\G}$ are Galois symmetric pairs and forms of $\GL_{2n}/\GL_n\times \GL_n$ (the type $A_{2n-1}$ case). Dropping the assumption $\hat{\G}_\X=\check{\G}$ includes forms of $\GL_{2n+1}/\GL_n\times \GL_{n+1}$ (the type $A_{2n}$ case). In all cases, we have that $\mathcal{M}\simeq T^\ast(\check{\X}^\ast)$ where 
    \[
    \check{\X}^\ast = \begin{cases}
       \quad \check{\X}&: \X\text{ is Galois or type $A_{2n}$,}\\
        \check{\X}\times \A^{2n}&: \X\text{ is type $A_{2n-1}$.}
    \end{cases}
    \]
    Moreover, for an endoscopic datum $\fe$, there is an induced embedding $\check{\X}_{\fe}^\ast\subset \check{\X}^\ast$; we note that these embeddings \emph{do not respect the gradings of these varieties}. In particular, we take the fiber product
    \[
    \mathcal{L}_{\fe} :=\check{\X}_{\fe}^\ast\times_{\check{\X}^\ast}\check{\mathcal{M}}.
    \]
    Now Corollary \ref{Cor: induced map on reps} implies that there exists a symplectic map $ \mathcal{L}_{\fe}\to \check{\mathcal{M}}_{\fe}$. Since $\check{\mathcal{M}}_{\fe}$ always has a similar polarization $\check{\mathcal{M}}_{\fe}=T^\ast(\check{\X}_\fe^\ast)$, it is easy to check that the induced map $\mathcal{L}\to \check{\mathcal{M}}_{\fe}\times \check{\mathcal{M}}$ is Lagrangian.
\end{proof}

\quash{When $\X$ is not quasi-split, so that $\hat{\G}_\X=\check{\mathrm{R}}_\X\subsetneq \check{\G}$, there is a fibration
\[
\begin{tikzcd}
    \hat{\X}\ar[r]&\check{\X}\ar[r] &\mathrm{Fl}_\X:=\check{\mathrm{P}}_\X\backslash\check{\G}.
\end{tikzcd}
\]

}

%Suppose that $\check{\G}_\X$ centralizes $h_X:=2\rho_\X^\vee\in X_\ast(\T)$, and let 
%\[
%U_\X:=\la x_{\al}(t): \al\in \check{R}^+,\: \Ad(h_\X(t))(e_\al)=ke_\al, \text{ for }k\geq2\ra;
%\]
%Note that $\phi:\SL_2(\cc)\lra \check{\mathrm{L}}_\X$ satisfies $\psi(U)\subset U_\X$.
%Then
\quash{
In the special case that $\hat{\G}_\X=\check{\G}$, which forces $(\G,\G^\theta)$ being a so-called quasi-split symmetric pair (see \cite{LeslieSpringer}) but is in fact a stronger assumption, %and no simple component is the type $A_{2n}$ inner involution case, this diagram reduces to the sequence
this simplifies to
\begin{equation}\label{eqn: dual variety qs}
    \begin{tikzcd}
T^\ast\check{\G}\times_{\check{\fg}_{\X}}^{\check{\G}_{\X}}S_X\ar[d] \ar[r,"\sim"]&\check{\G}\times^{\check{\G}_{\X}}V_{\X}\\
\hat{\X}=\check{\G}/\check{\G}_{\X},&
\end{tikzcd}
\end{equation}
where $V_{\X} =S_{\X}\oplus \check{\fg}/\check{\fg}_{\X}$.

 Our reason for recalling this conjecture is that the ${}^L{\X,\varepsilon}$-representation $S_\X$ constructed in Section \ref{Sec: symplectic rep} is expected to be a factor (associated to the $1$-graded piece) of the conjectural representation $V_\X$. We isolate this representation as it plays a role in the notion of endoscopic symmetric varieties defined in the next section. In particular, our theory is compatible with the conjectures of Ben-Zvi--Sakellaridis--Venkatesh, and one may interpret our results as an application of the dual Hamiltonian to the local and global relative Langlands program.
}

\section{Examples and applications}\label{Section: Example}
In this final section, we give examples of endoscopic symmetric varieties as well as some applications. We first discuss an example related to unitary periods. In Section \ref{Section: stabilize}, we establish the prestabilization of the regular elliptic part of the relative trace formula for a symmetric variety, under certain simplifying assumptions to streamline the presentation, deferring the technical general statement to \cite{LesliestabFJ}. The key point is Proposition \ref{Prop: important}. We end with a table of examples in Section \ref{Section: example table}.

\subsection{Example with unitary groups}\label{Section: Example unitary} 
As noted in the introduction, we expect that endoscopy for the relative Langlands program will involve, in its global applications, cases where periods of automorphic forms are given by sums of Euler products. We discuss this in a context that arises naturally in the theory of endoscopy for the symmetric variety $\X=\U_{n}\times \U_n\backslash\U_{2n}$ \cite{LeslieUFJFL,Lesliedescent}. 

Among other steps, the proof of the endoscopic fundamental lemma in this case involves reducing the matching of orbital integrals on $\X_n:=\U_{n}\times \U_n\backslash\U_{2n}$ and its endoscopic varieties $\X_a\times \X_b$ with $a+b=n$ to a matching of $\ka$-orbital integrals on $\Y_n=\U_n\backslash \Res_{E/k}(\GL_n)$ to stable orbital integrals on $\Y_a\times \Y_b$ for $a+b=n$, for entire modules of the relevant spherical Hecke algebras; we refer the interested reader to \cite[Section 2]{LeslieUFJFL}. Such a matching of orbital integrals plays the role of the fundamental lemma for the stabilization of the relative trace formula of $\U_n$-periods on $\Res_{E/k}(\GL_n)$. On the other hand, a great deal of information is known about these period integrals in the literature. The goal of this section is to discuss this comparison in terms of the numerical conjectures of \cite[Section 14]{BZSV} and the known evaluation of unitary periods for Eisenstein series due to Lapid--Rogawski for $n=3$ \cite{LapidRogawski} and Feigen--Lapid--Offen in general \cite{FLO}. 

With the discussion in Section \ref{Section: BZSV conj}, the dual Hamiltonian variety for $\mathcal{M}=T^\ast\Y_n$ is the ${}^L\Res_{E/k}(\GL_n)$-variety $\check{\mathcal{M}}=T^\ast(\GL_n\backslash(\GL_n)^2)$; here the Galois action factors through $\Ga_{E/k}=\Gal(E/k)$ and acts via swapping the two factors of $\GL_n^2$ fixing the diagonally embedded $\GL_n$ point-wise. Set $\G=\Res_{E/k}(\GL_n)$ and $\rH= \U_n$, so that $\Y_n=\rH\backslash\G$. Noting that the endoscopic varieties $\Y_a\times \Y_b$ are associated to Levi subgroups, one anticipates that endoscopic properties of $\Y_n$ only involve induced representations. Here, $\hat{\X}=\check{\X}=\GL_n(\cc)\backslash(\GL_n(\cc))^2\simeq\GL_n(\cc)$, on which $\Ga_{E/k}$ acts via inversion. In particular,
\[
\hat{\X}^{\heart,\Ga}=\hat{\X}^{\Ga} = \GL_n(\cc)[2]
\]
is just the $2$-torsion, so that $\Y_a\times \Y_b$ for $a+b=n$ is a complete list representatives of the equivalence classes of endoscopic varieties.

We now consider the conjectural recipe for the global period problem: suppose for the sake of simplicity that to an automorphic representation of $\G(\A)$ we may associate a global parameter $\phi:L_k\lra \check{\G}$; this is clearly conjectural, but works everywhere locally. In any case, we are merely interpreting a conjecture of \cite{BZSV} in terms of the notion of endoscopy. In this setting, the global conjecture of \cite{SakVenk} states that an automorphic representation $\pi$ of $\G(\A_k)=\GL_n(\A_E)$ with Arthur parameter $\phi:L_k\to{}^L\Res_{E/k}(\GL_n)$ is $\rH$-distinguished if and only if $\phi$ factors through 
\[
{}^L\Y_n=\GL_n(\cc)\times \Ga\lra {}^L\Res_{E/k}(\GL_n).
\]
Generalizing slightly the conjectures of \cite[Section 14]{BZSV}, one anticipates  that the $\rH$-period of an automorphic form $\varphi\in \pi$ is related to the sum of $L$-values
\begin{equation}\label{eqn: period formula}
    \sum_x\frac{L(\phi,T_x^\ast(\hat{\X})}{L(\phi,\check{\fg}_x)},
\end{equation}
where $x$ runs over $\phi$-fixed points of $\hat{\X}= \GL_n(\cc)\backslash(\GL_n(\cc))^2$, $T_x^\ast(\hat{\X})$ is the cotangent space of $x$, and $\check{\fg}_x$ is the Lie algebra of the stabilizer of $x$. The precise $L$-values  $L(\phi,V)$ are defined in \cite[Section 14.1]{BZSV}.

By the preceding statement, we may assume that the base point $x=x_0$ is fixed by $\phi$; this records that $\pi$ arises as a base change from $\GL_n$. In this case, the stabilizer is $\GL_n(\cc)$, $\check{\fg}_x\simeq\fgl_n$, and $T_x^\ast(\hat{\X})=\fgl_n(\cc)\otimes \cc_\omega$, where $\cc_\omega$ is the one dimensional $\Ga$-representation where $\Ga$ acts through the quadratic character $\omega:=\omega_{E/k}$ associated to the extension $E/k$ by local class field theory. The same calculation holds for the central point $(I_n,-I_n)\in \check{\X}$, producing the same period. This is compatible with \cite[Theorem 10.2]{FLO} expressing the unitary period of a cuspidal automorphic form as an Euler product times $2L(1,\pi\times \check{\pi}\cdot\omega)$, where $\check{\pi}$ is the contragredient. In particular, we see the factor of $2$ in the period calculation as relating to the two fixed points $(I_n,\pm I_n)$.

Now if we assume that $\phi=\phi_a\times \phi_b$ also factors through the Levi subgroup $\GL_a\times \GL_b\subset \GL_n$ (so corresponding to an Eisenstein series), then the four additional points
\[
\{(\pm I_a,\mp I_b),\pm(\pm I_a,\mp I_b)\}
\]
are fixed by $\phi$. In these cases
\[
T_x^\ast(\hat{\X}) = [\fgl_a\otimes \cc_\omega]\oplus [\fgl_b\otimes \cc_\omega]\oplus [\mathrm{Std}_a\otimes \mathrm{Std}_b^\vee\otimes \cc_\omega]\oplus [\mathrm{Std}^\vee_a\otimes \mathrm{Std}_b^\vee\otimes \cc_\omega],
\]
while 
\[
\check{\fg}_x = [\fgl_a\oplus \fgl_b]\oplus [\mathrm{Std}_a\otimes \mathrm{Std}_b^\vee\otimes \cc_\omega]\oplus [\mathrm{Std}^\vee_a\otimes \mathrm{Std}_b^\vee\otimes \cc_\omega];
\]
In particular, the corresponding quotient term \eqref{eqn: period formula} gives
\[
\frac{L(\phi_a,\fgl_a\otimes \cc_\omega)L(\phi_b.\fgl_b\otimes \cc_\omega)}{L(\phi_a,\fgl_a)L(\phi_b.\fgl_b)};
\]
These additional terms correspond precisely to the $L$-values for the ``endoscopic periods'' associated to the endoscopic variety $\Y_a\times\Y_b$, but the precise point of evaluation needs to be corrected as these will correspond to the ``non-tempered parameter'' case in  \cite[Section 14]{BZSV}. This is compatible with the observation of Section \ref{Section: BZSV conj} that the Lagrangian correspondence will not preserve the gradings. Once appropriated corrected in this way, the conjecture agrees with the evaluation of (regularized) unitary periods of Eisenstein series \cite[Theorem 11.2]{FLO}, which expresses the period as a sum of $L$-values indexed by the fiber of the quadratic base change map from $\GL_a\times \GL_b$ to $\Res_{E/k}(\GL_a\times \GL_b)$. 
%The argument carried out in \cite{LeslieUFJFL} relies critically on the elegant results of Jingwei Xiao \cite{Xiao}, who relates the endoscopic transfer for unitary Lie algebras to a simpler question on the Lie algebra $\fgl_n(E)$ via a nilpotent extension of the Jacquet--Rallis transfer of \cite{ZhangFourier}.

\subsection{Pre-stabilization for the regular elliptic part}\label{Section: stabilize}
In this subsection, we establish the stabilization of the regular elliptic part of the relative trace formula associated to a symmetric variety. For this, we assume several preparatory results from \cite{LesliestabFJ}. For brevity, we omit the definition of several notations (eg. abelianized cohomology, etc.).% The main reason for including this section is to give context for Proposition \ref{Prop: important} below.

We now assume that $k$ is a global field of characteristic zero. We also assume that $\G$ is quasi-split over $k$, $\G_{der}$ is simply connected, and $Z(\G)$ is anisotropic. We assume that $\X=\G^\theta\backslash\G$ is excellent in the sense of Definition \ref{Def: excellent}. Lemma \ref{Lem: simply connected} implies that this is enough to ensure that all semi-simple stabilizers in $\X$ are geometrically connected. Set $\rH=\G^\theta$. The assumptions on $\G$ are not necessary and are adopted here only to avoid unhelpful technical complications. More importantly, we are forced to adopt the Assumption \ref{Assumption: orbits}, so that we have access to Proposition \ref{Prop: quotient stack is enough}. In particular, we need to work with the quotient stack
\[
[\X\times \X]/\G \simeq \X/\rH.
\]
One of purposes of \cite{LesliestabFJ} is the formulate the cohomological prestabilization of the ``stack-theoretic'' relative trace formula for $\X$ discussed below.

\subsubsection{The regular elliptic part of the  $\X$-trace formula}
 We say that an element $x\in {\X}^{ss}(k)$ is $\rH$-elliptic if $Z(\rH_x)^\circ/(\rH_x\cap Z(\G))^\circ$ is anisotropic over $k.$ We say it is regular elliptic if it is $\rH$-elliptic and regular semi-simple.%\textcolor{red}{Is this the correct central condition?}
%\begin{Def}
%Let $\ga\in \G(k)$. We say that $\ga$ is \textbf{relatively semi-simple (resp. regular semi-simple, elliptic)} if $s(\ga)$ is $\rH$-semi-simple (resp. regular semi-simple, elliptic) as an element of ${\X}(k)$.
%\end{Def}

Let $\A$ denote the adele ring of $k$ and let $f\in C_c^\infty(\X(\A))$. Consider the theta function
\[
\Theta_f(h) = \sum_{x\in \X(k)}f(x\cdot h);
\]
this is a locally-finite sum as $\X$ is smooth \cite[Lemma 17.6.11]{GetzHahn}. It gives an automorphic function on $\rH(k)\backslash\rH(\A)$.    In particular, for any $f\in C_c^\infty(\X(\A))$ there exist $f_\xi\in C^\infty_c(\G(\A))$ for $\xi\in\ker^1(\rH,\G;k)$ such that
    \[
    \Theta_f(h) = \sum_{\xi\in\ker^1(\rH,\G;k)}\displaystyle\int_{[\rH^\xi]}\left(\sum_{\ga\in \G(k)}f_\xi(h'\ga h)\right)dh',
    \]
where $\rH^\xi$ is a pure inner twist of $\rH$ representing the class $\xi$, $[\rH^\xi]=\rH^\xi(k)\backslash\rH^\xi(\A)$, and $dh'$ is an appropriately normalized Haar measure. For our purposes, we restrict to the sub-sum
\[
\Theta^{re}_f(h) = \sum_{x\in \X^{re}(k)}f(x\cdot h)
\]
over the regular elliptic locus of $\X(k)$.
\begin{Rem}\label{Rem: locus too big}
    We deal with the full elliptic locus in \cite{LesliestabFJ}, where certain new features arise. We point out where in the argument this occurs soon.
\end{Rem}The period integral we are interested in is 
\[
\TF^{re}_0(f)=\displaystyle\int_{[\rH]}\Theta^{re}_f(h)dh
\]
where 
$
[\rH]=\rH(k)\backslash\rH(\A).
$
If $\X$ is quasi-split, a standard unfolding of the right-hand side (subject to constraints on $f$) relates this sum to automorphic periods of automorphic forms on $\G(\A)$, at least under local assumptions on $f$ so that the respective spectral terms converge: this is the context of many ``simple'' relative trace formulae in the literature. More generally, one needs a regularization procedure to analyze the spectral side.% (cf. \cite{SakStack}).

For cohomological reasons, we enlarge this sum. Due to the difference between global and local inner forms,  we include a sum over the cohomology set
    \[
    \ker^1(k,\rH):=\ker[H^1(k,\rH)\overset{Has}{\lra} H^1(\A,\rH)],
    \]
 which is trivial if $\rH$ satisfies the Hasse principle. For $f\in C^\infty_c(\X(\A))$, we now consider the sum
\[
\TF^{re}_\X(f)=\sum_{\be\in \ker^1(k,\rH)}\TF^{re}_\be(f),%= \sum_{\xi\in \ker^1(k,\rH)}\sum_{a\in (\X^{re}\sslash\rH)(k)}\sum_{\stackrel{[x]\in \X^{\xi,ell}(k)/\sim}{\car^\fe(x)=a}}\tau(\rH_{x})\Orb_x(k)
\]
where $\TF^{re}_\be(f)$ is the analogue of $\TF^{re}_0(f)$ with the sum over the regular elliptic orbits of $\X^{\xi}(k)$.
Since $\rH$ is connected, the cohomology set $\ker^1(k,\rH)$ is finite, implying that this sum is absolutely convergent. This step is quite natural and occurs, largely for purposes of exposition, in the stabilization of the trace formula as discussed in \cite[Section 1.13]{NgoFL}.

The necessity of Assumption \ref{Assumption: orbits} motivates us to further extend the sum as follows. Define
\[
\im^1(F,\rH) :=\im[H^1(F,\rH)\overset{Has}{\lra} H^1(\A_F,\rH)].
\]
Let $\underline{f}:=(f^\xi)\in \bigoplus_{\xi\in \im^1(F,\rH)}C_c^\infty(\X^\xi(\A))$, and consider the sum
    \begin{align}\label{eqn: stack sum}
           \TF^{re}_{\X}(\underline{f})&:=\sum_{\xi\in \im^1(F,\rH) }\TF^{re}_{\X^\xi}(f^\xi),
    \end{align}
    where
    \[
    \TF^{re}_{\X^\xi}(f^\xi)=\sum_{\be\in Has^{-1}(\xi)}\TF^{re}_\be(f^\xi),
    \]
    By definition, for any collection $\underline{f}$ there are only finitely many non-zero terms on the right-hand side. In particular, this sum is absolutely convergent as well.

The natural unfolding gives the formula
\[
 \TF_{\X}^{re}(\underline{f})= \sum_{a\in (\X^{re}//\rH)(k)}\tau(\rH_{x})\sum_{\xi\in H^1(F,\rH)}\sum_{\stackrel{x\in \X^{\xi,re}(k)/\sim}{\car^\fe(x)=a}}\Orb_x(f^\xi),
\]
where $\car^\xi:\X^\xi\to \X^\sslash\rH$ is the categorical quotient, $\tau(\rH_x)$ is the Tamagawa number, and 
\[
\Orb_x(f^\xi):=\displaystyle\int_{\rH_x^\xi(\A)\backslash\rH^\xi(\A)} f^\xi(x\cdot h)dh
\]
is the adelic orbital integral. This is absolutely convergent by our ellipticity assumption. Note that for any two classes $[x]$ and $[x']$ lying over $a\in [\X^{rss}\sslash\rH](k)$, the stabilizers $\rH_{x}$ and $\rH_{x'}$ are pure inner forms. Thus, the Tamagawa numbers agree \cite{KottwitzTamagawa} and we set the common values as $\tau(\rH_a)$. 

The following theorem is the first pre-stabilization of this sum, and is a special case of the formula established in \cite{LesliestabFJ}.
%\textcolor{red}{I don't recall the motivation for this.}
\quash{As a further extension, the lack of a version of Kottwitz' theorem of transfer to the quasi-split inner form motivates us to introduce the \textbf{stack-theoretic trace formula} by summing over all of $H^1(k,\rH)$, indexing global pure inner forms of $\rH$.
Letting $\underline{f}:=\{f^\xi\}_{\xi\in H^1(k,\rH)}$ be a collection of smooth test functions such that $f^\xi\in C_c^\infty(\X^\xi(\A))$ and $f^\xi\equiv 0$ for all but finitely many $\xi$, we consider the sum
    \begin{align}\label{eqn: stack sum}
           \TF^{re}_{\X}(\underline{f})&:=\sum_{\xi\in H^1(k,\rH) }\TF^{re}_\xi(f^\xi).
    \end{align}
    By definition, for any collection $\underline{f}$ there are only finitely many non-zero terms on the right-hand side. In particular, this sum is absolutely convergent as well.}

\begin{Thm}
 Suppose $\underline{f}=(\prod_v f^\xi_v)\in \bigoplus_\xi C^\infty_c(\X^\xi(\A))$ is a family of pure tensors.  With notation as above, we have
    \begin{align}\label{eqn: prestabilized 1}
    \TF_{\X}^{re}(\underline{f}) &=\sum_{a\in(\X^{re}\sslash\rH)(k)}\sum_{\ka\in H^1_{ab}(\A/k,\rH_x)^D}\Orb_x^\ka(\underline{f}),
\end{align}
where for $\ka\in H^1_{ab}(\A/k,\rH_x)^D$, we define the adelic $\ka$-orbital integral by
\begin{equation*}%\label{eqn: global kappa}
    \Orb_x^\ka(\underline{f})= \prod_v\Orb_{x}^{\ka_v}(\underline{f}_v)
\end{equation*}
is a product of local $\ka$-orbital integrals (here, $e(\rH_{x'})$  denotes the Kottwitz sign)
\begin{align*}%\label{eqn: general local kappa}
    \Orb^\ka_x(\underline{f}_v)=\sum_{[x']\in [\X/\rH]_x(k)}e(\rH_{x'})\la\ka,\inv(x,x')\ra\Orb_{x'}(f^{\xi(x')}_v).
\end{align*}
\end{Thm}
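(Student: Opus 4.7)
The plan is to start from the unfolded expression for $\TF_\X^{re}(\underline{f})$ and rewrite the inner double sum (over $\xi$ and representatives $x$ with $\mathrm{car}^\xi(x)=a$) as a single sum over rational classes in the stacky fiber $[\X/\rH]_a(k)$, then apply Fourier inversion on a suitable finite abelian group, with the Tamagawa number absorbing the normalization. This is a standard manoeuvre in Kottwitz-style stabilization, adapted to the relative setting via the stack-theoretic sum that incorporates $\ker^1(k,\rH)$ and $\im^1(k,\rH)$; the point of summing over all of $\im^1(F,\rH)$ (rather than just the quasi-split form) is precisely to ensure that for each $a \in (\X^{re}\sslash\rH)(k)$, Proposition~\ref{Prop: quotient stack is enough} (via Assumption~\ref{Assumption: orbits}) guarantees that every rational class in the stacky quotient is realized by some $x \in \X^{\xi,re}(k)$.

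First I would fix $a \in (\X^{re}\sslash\rH)(k)$ with representative $x_a$ and stabilizer $\rH_a := \rH_{x_a}$. The $\rH$-orbits in $\bigsqcup_\xi \X^{\xi,re}(k)$ lying over $a$ form a torsor for the pointed set of classes in $H^1(k,\rH_a)$ whose image in $H^1(\A,\rH)$ is locally trivial (and whose image in $H^1(k,\rH)$ accounts for the index $\xi$); in particular they biject with a distinguished subset of $H^1(k,\rH_a)$ which, via Kottwitz's abelianization map $H^1(k,\rH_a) \to H^1_{ab}(k,\rH_a)$ (using that $\G_{der}$ is simply connected and $\rH_a$ is connected), maps to the kernel $\ker^1_{ab}(k,\rH_a)$, or more precisely fits into the Poitou--Tate-style exact sequence comparing $H^1_{ab}(k,\rH_a)$, $H^1_{ab}(\A,\rH_a)$, and $H^1_{ab}(\A/k,\rH_a)$. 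The invariant $\inv(x,x') \in H^1_{ab}(\A/k,\rH_a)$ is defined precisely so that it classifies the class $[x']$ relative to $[x_a]$ on the adelic level; this construction and its finiteness are established in the companion paper \cite{LesliestabFJ}.

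Next I would apply Fourier inversion on the finite abelian group $H^1_{ab}(\A/k,\rH_a)$. Concretely, the characteristic function of the image of the rational classes in $H^1_{ab}(\A,\rH_a)$ can be written as a linear combination, indexed by $\ka \in H^1_{ab}(\A/k,\rH_a)^D$, of the characters $\la\ka, \inv(x_a,\cdot)\ra$, with the constant of proportionality controlled by the order of the group. The Tamagawa number $\tau(\rH_a)$ enters here through Kottwitz's Tamagawa number conjecture (known in this generality): $\tau(\rH_a) \cdot |\ker^1_{ab}(k,\rH_a)|$ exactly cancels the cardinality factor coming from Fourier inversion, turning the weighted sum $\tau(\rH_a) \sum_\xi \sum_{[x']\text{ over } a} \Orb_{x'}(f^{\xi})$ into $\sum_{\ka} \Orb_{x_a}^\ka(\underline{f})$, with the signs $e(\rH_{x'})$ and the local invariants $\inv(x_a,x')_v$ arising by decomposing the global pairing as a product of local pairings.

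The main obstacle is the cohomological bookkeeping: carefully identifying the parameterizing set of $(\xi, [x'])$ by a subset of $H^1_{ab}(\A/k,\rH_a)$ and verifying that the Tamagawa number precisely absorbs the resulting normalization. This is where the restriction to the \emph{regular} elliptic locus simplifies matters (cf.\ Remark~\ref{Rem: locus too big}): on $\X^{re}$ the stabilizer $\rH_a$ is a $k$-torus, so the abelianization map is an isomorphism and one avoids the additional contributions from semi-simple but non-regular orbits whose stabilizers are larger reductive groups (these are handled in \cite{LesliestabFJ}). Once this identification is in place, the remainder is a formal application of character orthogonality, and the theorem follows.
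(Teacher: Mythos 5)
The paper itself does not prove this theorem: it is stated as ``a special case of the formula established in \cite{LesliestabFJ}'' and the text immediately following the statement defers all details to that companion paper. What the section does supply is the preliminary unfolding
\[
\TF_{\X}^{re}(\underline{f})= \sum_{a}\tau(\rH_{x_a})\sum_{\xi}\sum_{\stackrel{x\in \X^{\xi,re}(k)/\sim}{\car^\xi(x)=a}}\Orb_x(f^\xi),
\]
which you correctly take as your starting point. Your overall strategy --- identify the orbits above a fixed $a$ cohomologically, apply Fourier inversion on a finite abelian group, and let the Tamagawa number absorb the normalization, with Assumption~\ref{Assumption: orbits}/Proposition~\ref{Prop: quotient stack is enough} supplying surjectivity of the stacky sum --- is the standard Kottwitz-style argument this theorem is modeled on, so the approach is sound at that level of detail.

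However, there is a concrete error in your closing simplification: you assert that on $\X^{re}$ the stabilizer $\rH_a$ is a $k$-torus, and that the abelianization map is therefore an isomorphism. This is false in general. The generic (regular) stabilizer of a symmetric variety is $\rM_\X=(\mathrm{L}_\X\cap\rH)^\circ$, where $\mathrm{L}_\X$ is the Levi determined by $\De_\X^p$; this is a torus only when $\De_\X^p=\emptyset$ (the ``quasi-split'' case), which the theorem does not assume. For instance $\GL_n\times\GL_{n+r}\backslash\GL_{2n+r}$ with $r>0$ has a $\GL_r$-factor in its regular stabilizers while remaining excellent, and the excellence hypothesis only governs connectedness of $\ker(\rA\to\Ax)$, not abelianness of $\rM_\X$. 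The actual point of the restriction to the regular locus (cf.\ the two Remarks invoking \ref{Rem: locus too big}) is that for regular $x$ the map $Z(\check{\rH}_x)\to\check{\T}_x$ is \emph{injective} --- this is what is used in Proposition~\ref{Prop: important} --- whereas for general elliptic $x$ it need not be, creating the ``new features'' handled in \cite{LesliestabFJ}. This is a distinct phenomenon from the stabilizer being abelian. The formula itself is framed in terms of $H^1_{ab}$ precisely so that it makes sense for non-abelian $\rH_a$, so while your misstatement does not invalidate the final identity, the justification you give for why the regular case is tractable does not hold up and should be replaced by the correct injectivity statement.
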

The necessary definitions and details will appear in  \cite{LesliestabFJ}. The important upshot is that the regular elliptic part of the relative trace formula has been expressed as a sum  \eqref{eqn: prestabilized 1} of $\ka$-orbital integrals indexed by pairs $(a,\ka)$ where
\begin{itemize}
    \item an elliptic invariant $a\in (\X^{re}\sslash\rH)(k)$ encodes a regular elliptic stable orbit, and
    \item a character $\ka\in H^1_{ab}(\A/k,\rH_x)^D$ of the abelianized cohomology of the centralizer of $x$ in $\rH$.
\end{itemize}
 Our present goal is to re-index this sum in terms of our notion of relative endoscopic data.
 \begin{Rem}
     We hasten to remark that this some is simpler than the relative trace formula associated to a single period of the form
     \[
\displaystyle\int_{[\rH]}\int_{[\rH]}\sum_{\ga\in \G(k)}f(h_1 \ga h_2) dh_1 dh_2;
     \]
     Isolating individual terms of this form in the stabilization is delicate and introduces contributions of new endoscopic varieties (such contributions can be seen locally in \cite{WanBPfuture}). This will be clarified in future work, and ultimately relies on the existence results of the present article.
 \end{Rem}
\subsubsection{Pre-stabilization of the RTF}
 We recall the relevant statement of Tate--Nakayama duality.
\begin{Lem}\cite[Proposition 1.7.3]{LabesseBook}\label{Prop: Tate-Nakayama}
Suppose $\G$ is a connected reductive group over $k$. If $k$  is local, there is a canonical injection
\[
H^1_{ab}(k,\G)\lra \pi_0(Z(\check{\G})^\Ga)^D
\]
which is bijective when $k$ is non-archimedean. If $k$ is global, there is a canonical bijection
\[
H_{ab}^1(\A/k,\G)\iso \pi_0(Z(\check{\G})^\Ga)^D.
\]\qed
%Furthermore, 
%\[
%\ker^1(k,\G)=\ker^1_{ab}(k,\G)=\ker^1(k,Z(\check{G}))^D.
%\]\qed
\end{Lem}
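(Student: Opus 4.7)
The statement is Labesse's Tate--Nakayama duality for abelianized cohomology, so my plan mirrors the approach in \cite{LabesseBook} while recording the key dualities explicitly. The starting point is the definition of abelianized cohomology via the crossed module $\G_{sc}\to \G$: if $\T\subset\G$ is a maximal $k$-torus and $\T_{sc}\subset\G_{sc}$ is its preimage, then
\[
H^1_{ab}(k,\G) \;=\; \mathbb{H}^1(k,\, \T_{sc}\to \T),
\]
the first hypercohomology of the length-two complex of tori placed in degrees $0$ and $1$. I would first verify that this definition is independent of the choice of $\T$ up to canonical isomorphism (using the fact that two maximal tori are conjugate by a $k$-point of $\G_{sc}/Z(\G_{sc})$ after passage to a suitable cover), and similarly for the adelic version $H^1_{ab}(\A/k,\G)$.

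Next, I would compute the Pontryagin dual. The dual of the complex $[\T_{sc}\to \T]$, in the derived category of $\Ga$-modules on cocharacter lattices, is $[X_\ast(\T)\to X_\ast(\T_{sc})]$, whose cone on the dual side corresponds to the complex of complex tori $[\check{\T}\to\check{\T}_{sc}]$. The kernel of $\check{\T}\to\check{\T}_{sc}$ is precisely $Z(\check{\G})$, since $X^\ast(Z(\check{\G})) = X_\ast(\T)/\langle\check{\Phi}\rangle$ and $X_\ast(\T_{sc}) = \zz\check{\Phi}$. This identifies $H^0$ of the dual complex with $Z(\check{\G})$, so after taking Galois invariants and $\pi_0$ the right-hand side of the claimed formula appears naturally.

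The main step is to combine these identifications with Tate--Nakayama duality for individual tori. In the local non-archimedean case, one uses the perfect pairing $H^1(k,S)\times H^1(k,\check{S})\to\qq/\zz$ for any $k$-torus $S$, extended to hypercohomology of two-term complexes via a spectral sequence argument. This yields a pairing
\[
\mathbb{H}^1(k,\T_{sc}\to\T) \,\times\, \mathbb{H}^0(k,\check{\T}\to\check{\T}_{sc}) \;\lra\; \qq/\zz,
\]
and the right factor, after taking $\pi_0$ of Galois invariants, becomes $\pi_0(Z(\check{\G})^\Ga)$. In the archimedean local case the pairing is only injective on the left, reflecting the discrepancy between $H^1$ of archimedean places and its completion. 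In the global adelic case, Poitou--Tate duality for tori globalizes this to a genuine isomorphism for $H^1_{ab}(\A/k,\G)$, which is the cleanest statement.

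The hard part will be managing the spectral sequence / distinguished triangle carefully enough to see that the ``boundary'' contributions all live in the piece $Z(\check{\G})$ of the dual complex rather than producing spurious terms from $\pi_0(\check{\T}_{sc}^\Ga)$; one needs the simple-connectedness of $\G_{sc}$ to kill the latter. Since a detailed proof already exists in \cite[Proposition 1.7.3]{LabesseBook}, I would ultimately just cite it after outlining these compatibilities, as the lemma is invoked here purely to justify rewriting $H^1_{ab}(\A/k,\rH_x)^D$ as a sum indexed by $\pi_0(Z(\check{\rH}_x)^\Ga)$, which is the form needed for the endoscopic re-indexing of \eqref{eqn: prestabilized 1}.
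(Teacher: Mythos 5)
The paper does not prove this lemma at all: it is stated as a direct citation to \cite[Proposition 1.7.3]{LabesseBook}, which is also where your proposal ultimately lands, so the two approaches coincide. Your surrounding sketch of the crossed-module/Tate--Nakayama mechanism is accurate in outline (one small notational caveat: what you call $\check{\T}_{sc}$, meaning the complex torus dual to $\T_{sc}$, is the maximal torus of $\check{\G}_{ad}$, not of $\check{\G}_{sc}$), but none of it is required, and none of it appears in the paper.
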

Now suppose that $x=x_a\in \X^{\xi,re}(k)$ lies over $a$ for some $\xi\in H^1(k,\rH)$ (this is viable by Assumption \ref{Assumption: orbits}). One notes that the adelic $\ka$-orbital integral $\Orb_x^\ka(\underline{f})$ only depends on $a$, so we write this $  \Orb_a^\ka(\underline{f})$.  Let $\rH_x$ be the (connected, reductive) centralizer. 
Tate--Nakayama duality allows us write our sum (using the elliptic assumption) as
\begin{equation}\label{eqn: relabled sum}
      \TF_{\X}^{ell}(\underline{f}) =\sum_{a\in(\X^{re}\sslash\rH)(k)}\sum_{\ka\in Z(\check{\rH}_a)^\Ga}\Orb_a^\ka(\underline{f}),
\end{equation}
where the notation $Z(\check{\rH}_a)$ indicates that even though the $k$-form $\rH_x$ depends on our choice of lift $x=x_a$, this center does not since all choices yield inner forms.

Suppose that $\fe=(\G_\fe,\theta_\fe,\X_\fe,\ka,\eta_\fe)$ is an elliptic endoscopic datum for $(\G,\X)$ and let $x_\fe\in \X^{ss}_\fe(k)$. Let $\fa_\fe: \X_\fe\sslash\rH_\fe\to \X\sslash\rH$ denote the morphism from Theorem \ref{Thm: point comparison}. We say that $x_\fe$ is \textbf{$(\X,\X_\fe)$-regular or just $\X$-regular} if the class $\fa_\fe([x_\fe])\in(\X\sslash\rH)(k)$ is regular semi-simple in the sense that any lift $x$ in $\X(k)$ or to any pure inner twist of $\X$ such that $[x]=\fa_\fe([x_\fe])$ is a regular semi-simple element. Let $[\X_\fe\sslash\rH_\fe]^{\X}(k)$ denote the set of $\X$-regular classes. Note that an $\X$-regular class $a$ gives rise to a $(\G,\G_\fe)$-regular element $s_\fe(a)\in \G_\fe\sslash\G_\fe(k)$ in the sense of \cite{Kottwitzstableelliptic}, as the regular stabilizers in $\X_\fe$ are inner forms of those in $\X$.

\begin{Prop}\label{Prop: important} 
    For each pair $(a,\ka)\in (\X^{re}\sslash\rH)(k)\times Z(\check{\rH}_a)^\Ga$, there exists an elliptic endoscopic datum $\fe=(\G_\fe,\theta_\fe,\X_\fe,\ka,\eta)$ and a regular elliptic $b\in [\X_\fe\sslash\rH_\fe](k)$ such that $\fa_\fe(b)= a$. The pair $(\fe,b)$ is uniquely determined up to inner isomorphism in the sense that if $(\fe_1,b_1)$ is another pair corresponding to $(a,\ka)$, then there is an isomorphism $\fe\simeq \fe_1$ identifying $b=b_1$, and that this isomorphism is uniquely determined up to $[\G_{\fe}/(\rH_\fe\cap Z(\G_\fe))]/(k)$-conjugacy.
\end{Prop}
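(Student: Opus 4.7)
The plan is to construct the pair $(\fe,b)$ by first using the duality between maximally $\theta$-split tori in $\X$ and maximally $\vartheta$-split tori in $\hat{\G}_\X$ to promote the character $\ka$ to a $\Ga$-fixed semi-simple element of $\hat{\X}^\heart$, then invoking Theorem \ref{Thm: exists} to produce $\X_\fe$, and finally extracting $b$ from the Chevalley-style isomorphism underlying Theorem \ref{Thm: point comparison}. Throughout, the key point is that all of our structural results respect the torus/Weyl-group picture on both sides of the duality.

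First, by Assumption \ref{Assumption: orbits}, choose a regular elliptic representative $x\in\X^\xi(k)$ of $a$ for some $\xi\in H^1(k,\rH)$, and let $S\subset\G^\xi$ be a maximally $\theta^\xi$-split maximal $k$-torus containing $s(x)\in S^-$. Since $\X$ is excellent and Lemma \ref{Lem: simply connected} forces connectedness of semi-simple stabilizers, $\rH_a=Z_{\rH^\xi}(S^-)=S^+$ is a torus, so $\check{\rH}_a=\check{S}^+$ and $\ka\in\check{S}^{+,\Ga}\subset\check{S}^\Ga$. Since pure inner twists of $\X$ by classes in $H^1(k,\rH)$ do not alter the dual data $(\check{\G}_\X,\hat{\G}_\X,\hat{\X})$, Lemma \ref{Lem: split torus embedding} provides a $\Ga$-equivariant embedding $j:\check{S}\hookrightarrow\hat{\G}_\X$ as a maximally $\vartheta$-split maximal torus, unique up to $\check{\G}_\X^{\ast,\Ga}$-conjugacy. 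The flat $j(\check{S})\cap\hat{s}(\hat{\X})$ lifts $\ka$ canonically to a $\Ga$-fixed point $x_\ka\in\hat{\X}^\Ga$. The fact that $\ka\in Z(\check{\rH}_a)$ with $a$ regular forces $\hat{s}(x_\ka)$ to lie in $Z(\check{\mathrm{M}}_\X)$ up to $\check{\G}_\X^\ast$-conjugacy, so $x_\ka\in\hat{\X}^{\heart,\Ga}$.

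Apply Theorem \ref{Thm: exists} to $x_\ka$ to obtain the endoscopic pair $(\G_\fe,\X_\fe)$, choosing the quasi-split $\G_\fe$-inner form (Corollary \ref{Cor: quasisplit endo}) to obtain an honest endoscopic datum $\fe=(\G_\fe,\theta_\fe,\X_\fe,\ka,\eta)$; ellipticity of $\fe$ follows from the regular elliptic assumption on $a$ by identifying $Z(\check{\G}_\fe)^{\Ga,\circ}$ with the image of $Z(\check{\rH}_a)^{\Ga,\circ}$ in $\hat{\G}_\X$ and its variety-level analogue. Next, by the construction in the proof of Theorem \ref{Thm: point comparison} (see also Remark \ref{Rem: Chevalley isom}), the Chevalley isomorphisms $\X\sslash\rH\simeq \rA_\X/W_\X$ and $\X_\fe\sslash\rH_\fe\simeq \rA_{\X_\fe}/W_{\X_\fe}$ fit into a commutative diagram in which $\fa_\fe$ is induced by a $(\theta_\fe,\theta)$-equivariant isomorphism $\rA_\fe^-\iso S^-$ dual to the identification $j(\check{S})=\eta(\check{S}_{\fe})$ built above. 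The invariant $a\in(\X^{re}\sslash\rH)(k)$ corresponds to a $W_\X$-orbit in $S^-(k)$, which pulls back to a unique $W_{\X_\fe}$-orbit $b\in(\X_\fe\sslash\rH_\fe)(k)$ with $\fa_\fe(b)=a$; regular ellipticity of $b$ is preserved because the dual torus identification is a $k$-rational isomorphism of $\theta$-split tori, and $b$ is automatically $\X$-regular as its image is.

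For uniqueness, suppose $(\fe_1,b_1)$ is a second pair associated to $(a,\ka)$. The character $\ka$ and the representative $x$ together determine the $\Ga$-equivariant embedding $\check{S}\hookrightarrow\hat{\G}_\X$ up to $\check{\G}_\X^{\ast,\Ga}$-conjugacy (Lemma \ref{Lem: split torus embedding}), whence the same holds for $x_\ka\in\hat{\X}^{\heart,\Ga}$. The uniqueness portion of Theorem \ref{Thm: exists}, combined with Theorem \ref{Thm: quasi-split G-inner form} (which pins down the quasi-split inner form of $\X_\fe$), now yields an isomorphism of endoscopic data $\fe\simeq\fe_1$ identifying $b$ with $b_1$; the remaining ambiguity is exactly conjugation by $[\G_\fe/(\rH_\fe\cap Z(\G_\fe))](k)$, the inner automorphism group of $\fe$. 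The main obstacle I anticipate is the verification of ellipticity of $\fe$ from the regular ellipticity of $a$, which requires carefully matching the centralizer data on the variety side ($\cala_{\hat{\X}_\fe}^{\Ga,\circ}$ versus $\cala_{\hat{\X}}^{\Ga,\circ}$) with the group-level endoscopic condition, together with the bookkeeping needed to show that the torus embedding of the second paragraph and the Chevalley isomorphism of the third paragraph yield compatible $k$-structures.
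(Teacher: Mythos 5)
Your construction broadly matches the paper's, but there are two issues worth flagging.

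First, the assertion that $\rH_a = Z_{\rH^\xi}(S^-) = S^+$ is a torus is false in general. Excellence of $\X$ (via Lemma \ref{Lem: simply connected}) gives connectedness of the regular stabilizers, not abelianness; for $\X=\SO_{n-1}\backslash\SO_n$ the regular stabilizer is $\SO_{n-2}$. The inclusion you actually need, namely $\ka\in\check{S}^{+,\Ga}\subset\check{S}^\Ga$, still holds, but for the reason the paper gives: since $x$ is regular, $\G_x/\rH_x$ is a torus, so $S^+=S^\theta$ is a \emph{maximal torus} of $\rH_a$, and duality yields an embedding $Z(\check{\rH}_a)\hookrightarrow\check{S}^+$. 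You should pass to the center of $\check{\rH}_a$ rather than conflating $\rH_a$ with $S^+$, or the argument does not cover the non-quasi-split symmetric cases.

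Second, and more seriously, the uniqueness argument has genuine gaps. You only run the construction in one direction; the paper first establishes the reverse map $(\fe,b)\mapsto(a,\ka)$ (producing $a=\fa_\fe(b)$ and recovering $\ka$ from $\cala_{\hat{\X}_\fe}^\Ga\subset Z(\check{\rH}_{\fe,b})^\Ga\simeq Z(\check{\rH}_a)^\Ga$), and this is needed to even compare two pairs $(\fe_1,b_1)$ and $(\fe_2,b_2)$ on equal footing. The construction of the isomorphism $\fe_1\simeq\fe_2$ is then substantive: one must choose compatible maximally $\theta$-split tori via Proposition \ref{Prop: quotient stack is enough}, extend the induced $\kbar$-isomorphism of tori to a $\theta$-equivariant isomorphism of groups, show its $\rH_{\fe,2,x_2}$-conjugacy class is Galois-stable, descend to $k$, and then use the well-adaptedness machinery (together with \cite[Theorem 3.7]{Helmincktwo} and Lemma \ref{Lem: pure inner twist invol}) to see that the resulting involutions are pure inner twists. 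None of this is immediate from the uniqueness clause of Theorem \ref{Thm: exists}, which is conditional and stated only for varieties, not for pairs $(\fe,b)$. Finally, the statement that the ambiguity is exactly conjugation by $[\G_\fe/(\rH_\fe\cap Z(\G_\fe))](k)$ is asserted rather than proved; the paper establishes it by showing that any automorphism of $\fe$ fixing the $\X$-regular class $b$ fixes the $(\G,\G_\fe)$-regular class $s_\fe(b)$, and then runs the proof of \cite[Lemma 9.2]{Kottwitzstableelliptic}. Your sketch needs that last step spelled out.
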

 \begin{proof}
Fix a pair $(a,\ka)$ and let $x=x_a$ be a representative as above; for notational simplicity, we assume that $x\in \X^{re}(k)$. We let $(\G_x,\rH_x)$ denote the descendent at $x$ and let $S$ denote a maximally $\theta$-split maximal torus of $\G_x$.  Set $\T_x=S^{\theta}\subset\rH_x$ and  
\[
1\lra \T_x\lra S\lra S_{\X}\lra 1
\]
denote the quotient. Since $x$ is regular, we also have $S_\X\simeq\G_x/\rH_x$.

Fix also a $(\theta,k)$-admissible pair $(\rA,\B)$; there is an analogous short exact sequence
\[
1\lra{\T}_\X\lra \rA\lra \Ax\lra 1.
\] 
where $\T_\X = \rA_\X^\theta$. Following Proposition \ref{Prop: dual involution}, there exists an involution $\vartheta: \hat{\G}_{\X}\lra \hat{\G}_{\X}$ and a unique $\check{\rA}_\X$-conjugacy class of distinguished morphism 
\[
\varphi_{\X}:\check{\G}_{\X}\lra \check{\G},
\]
such that $\varphi_\X(\check{\G}_\X) = \hat{\G}_\X^{\vartheta,\circ}$; fixing such a morphism gives a commutative diagram 
\[
     \begin{tikzcd}
\check{\rA}_\X\ar[r]\ar[d]&\check{\rA}\ar[d]\ar[r]&\ar[d]\check{\T}_\X\\
\check{\G}_\X\ar[r]&\hat{\G}_\X\ar[r,"\check{\tau}"]&\hat{\X}.
    \end{tikzcd}
\] 

%We also obtain a short exact sequence 
%\[
%1\lra\check{S}_{\X}\lra \check{S}\lra \check{\T}_x\lra 1.
%\]
As $S$ is maximally $\theta$-split, Lemma \ref{Lem: split torus embedding} now implies the existence of a unique $\Ga$-invariant $\check{\G}^\ast_{\X}$-orbit of such embeddings $\check{S}\lra \hat{\G}_\X$ realizing $\check{S}$ as a maximally $\vartheta$-split maximal torus of $\hat{\G}_\X$. For any such embedding, the quotient $\check{\tau}$ induces an orbit map $\check{\T}_x\to \hat{\X}$, the image of which is a flat in $\hat{\X}$.

Since $\G_x/\rH_x$ is a torus and $S\subset \G_x$ is maximal, we see that $T_x\subset\rH_x$ is a maximal torus of $\rH_x$, so that duality gives rise to the canonical $\Ga$-equivariant embedding $Z(\check{\rH}_x)\subset \check{\T}_x$.
\begin{Rem}
    For a general $x\in \X^{ell}(k)$, one can obtain a $\Ga$-equivariant morphism $Z(\check{\rH}_x)\to \check{\T}_x$, which need not be injective (cf. Remark \ref{Rem: locus too big}).
\end{Rem}
By Tate-Nakayama duality and the ellipticity assumption, we obtain $\ka\in Z(\check{\rH}_x)^\Ga\to \check{\T}_x\to \hat{\X}$. % As we vary the choice of embedding $\check{\T}_x\to \hat{\X}$, we obtain a $\check{\G}_\X^\ast$-orbit $[\ka]\subset \hat{\X}$ that is $\Ga$-stable. In particular, the set $\check{\T}_x\cap [\ka]\subset \check{\T}_x$ does not depend on the choice of embedding.
    %Following from the preceding discussion, the existence of $(\fe,b)$ follows from Theorem \ref{Thm: exists} by taking $$\ka\in H^1(k,\rH_x)\simeq Z(\check{\rH}_x)^\Ga\subset \check{\T}_x\subset \hat{\X}.$$ More precisely,
    Passing to the descendant at $\ka$, we obtain a commutative diagram
    \[
         \begin{tikzcd}
\check{\G}_{\X,\ka}\ar[d]\ar[r]&\hat{\G}_{\X,\ka}\ar[d]\ar[r]&\hat{\X}_\ka\ar[d]\\
\check{\G}_{\X}\ar[r,"\varphi_{\X}"]&\hat{\G}_{\X}\ar[r]&\hat{\X}.
\end{tikzcd}
    \]This gives rise to an endoscopic datum $\fe=(\G_\fe,\theta_\fe,\X_\fe,\ka,\eta_\fe)$ where $\check{\G}_\fe\simeq \check{\G}_\ka$ and the symmetric variety $\X_\fe$ satisfying the conditions of Theorem \ref{Thm: exists}. Note that since $\X$ is assumed excellent, it has no type $N$ roots. This implies the same for $\X_\fe$ as discussed in Section \ref{Section: hamiltonian endoscopy}. By Corollary \ref{Cor: quasisplit endo}, we may assume that $\X_\fe$ is the quasi-split endoscopic variety. 
    
    Moreover, we also obtain a morphism $\check{S}\to \hat{\G}_{\X,\ka}\simeq \hat{\G}_{\X_\fe}$ with image a maximally $\vartheta_\fe$-split maximal torus, the $\check{\G}_{\X,\ka}$-orbit of which is $\Ga$-stable. Proposition \ref{Prop: quotient stack is enough} thus implies that up to replacing $\X_\fe$ by a pure inner twist $\X_\fe^\xi$ corresponding to a class in $H^1(k,\rH_\fe)$, we obtain a $k$-rational embedding $S^-\to \G^\xi_\fe$ as a maximal $\theta_\fe^\xi$-split maximal torus; replacing $\X_\fe$ by such a twist if necessary, we drop the superscript $\xi$ for notational simplicity. Let $S_\fe\subset \G_\fe$ be a maximally $\theta_\fe$-split maximal torus such that $S_\fe^-$ agrees with the image of $ S^-$.
\quash{\[
\X_\fe(k)=\bigsqcup_{\be\in \ker^1(\rH_\fe,\G_\fe;k)} x_\be\cdot\G_\fe(k).
\]%, so that $\theta_\fe'$ replacing $\X_\fe$ by an isomorphic variety, we can assume that $S\to \G_\fe$ is maximally $\theta_\fe$-split. 
For later use, let $S_1\subset \G_{\fe}$ denote the image and let $j_1: S_1\to S$ denote the $k$-rational isomorphism inverse to the above embedding.}

Let $x_\fe\in \X_{\fe}(k)$ correspond to $\theta_\fe$ in the sense that $\rH_\fe = \G_{\fe,x_\fe} = \G_\fe^{\theta_\fe}$ and consider the flat $S^-_\fe\cdot x_\fe = S_\X\cdot x_\fe$. Recall that if $x_0\in \X(k)$ is the distinguished base point of $\X$ with respect to $\theta$, our point $x\in (S_\X\cdot x_0)(k)$ so there exists $t\in S_{\X}(k)$ such that $x=t\cdot x_0$. Now set $x_1:=t\cdot x_\fe\in \X_\fe(k)$.
Via the Chevalley isomorphism and Theorem \ref{Thm: point comparison} we see that $b:=[x_1]\mapsto [x]=a$ under the map
     \[
     \X_\fe\sslash\rH_\fe\simeq S_\X/W_{\X_\fe}\overset{\fa_\fe}{\lra}S_\X/W_\X\simeq \X_\fe\sslash\rH_\fe.
     \]
   Thus $(\fe,b)\mapsto (a,\ka)$.

   To establish uniqueness, we first show  how to associate a pair $(a,\ka)$ as in \eqref{eqn: relabled sum} to  a pair $(\fe,b)$ of an endoscopic datum $\fe$ and an $\X$-regular class $b\in[\X_\fe\sslash\rH_\fe]^{\X}(k)$. Set $a:=\fa_\fe(b)$, where $\fa_\fe$ is the morphism of Theorem \ref{Thm: point comparison}. By Assumption \ref{Assumption: orbits}, up to replacing $\X$ and $\X_\fe$ by pure inner twists, we may let $x=x_a\in \X(k)$ be a representative of $a$ in $\X^{rss}(k)$ and $y\in \X_{\fe}(k)$ be a representative of $b$ in $\X_{\fe}^{rss}(k)$. The $\X$-regular assumption implies that the stabilizers $\rH_{\fe,y}$ and $\rH_{x}$ are inner forms of one another, so that $x$ is elliptic if $y$ is and there is a canonical isomorphism
 \[
 Z(\check{\rH}_{\fe,b})\simeq Z(\check{\rH}_{a}).
 \]
By assumption, the endoscopic datum $\fe$ induces a diagram
 \[
         \begin{tikzcd}
\check{\G}_{\X_\fe}\ar[d]\ar[r,"\varphi_{\X_\fe}"]&\hat{\G}_{\X_\fe}\ar[d,"\eta"]\ar[r,""]&\hat{\X}_\fe\ar[d, "\eta_\X"]\\
\check{\G}_{\X}\ar[r,"\varphi_{\X}"]&\hat{\G}_{\X}\ar[r]&\hat{\X},
\end{tikzcd}
    \]
    with $\ka\in Z(\check{\G}_\fe)^\Ga\subset Z(\hat{\G}_{\X_\fe})$ giving an element of $\cala_{\hat{\X}_\fe}\simeq \hat{s}_\fe(\hat{\X}_\fe)\cap  Z(\hat{\G}_{\X_\fe})$ \cite[Lemma 1]{VustEmbeddings}. Using \eqref{aut as center}, we obtain 
    $$\ka\in \cala_{\hat{\X}_\fe}^\Ga\subset  Z(\check{\rH}_{\fe,b})^\Ga\simeq Z(\check{\rH}_{a})^\Ga,$$
    giving our pair $(a,\ka)$.
     
 Write $(\fe_1,b_1) =(\fe,b)$. We now show that $(\fe_2,b_2)\mapsto (a,\ka)$ if and only if there is an isomorphism $\fe_1\simeq \fe_2$ identifying $b_1=b_2$. We fix a lift $x_1\in \X_{\fe_1}(k)$ as above and let $x_2\in \X_{\fe_2}(k)$ be a lift of $b_2$. Fix also maximally $\theta$-split maximal tori $S_1$ and $S_2$  in  $\G_{\fe,1}$ and $\G_{\fe,2}$ respectively containing these lift.

 Using $\eta_2$ and Proposition \ref{Prop: quotient stack is enough}, we may fix embeddings 
 \[
 j_1: S_1\to \G^{\xi_1}, \qquad\text{ and }\qquad j_2: S_2\to \G^{\xi_2},
 \]
 where $\xi_1,\xi_2\in H^1(k,\rH)$ may be distinct classes, each realizing $j_i(S_i^-)$ as a $k$-rational maximal $\theta^{\xi_i}$-split maximal torus. Arguing as above, this gives points $x_1\in \X^{\xi_1}(k)$ and $x_2\in \X^{\xi_2}(k)$ over the invariant $a\in (\X\sslash\rH)(k)$. But this implies that we may arrange for $\xi_1=\xi_2$ and $x_1=x_2$ by replacing $j_2$ with a $\rH(\kbar)$-conjugate. At the cost of dropping the assumption that $\G$ is quasi-split, we may drop the superscript $\xi$ and denote by $\theta$ the involution on $\G$ with respect to which the above embeddings give rise to maximal $\theta$-split tori. We thus have a diagram (equivariant with respect to the various involutions) of $\kbar$-morphisms
 \begin{align*}
     S_1\overset{j_1}{\lra} &\G \overset{j_2}{\longleftarrow}S_2\\
     s_1(x_1) \longmapsto &s(x) \longmapsfrom s_2(x_2).
 \end{align*}
 This implies that $j(S_2)\subset \G_x$ is a maximally $\theta$-split maximal torus, so that there exists $h\in \rH_x(\kbar)$ such that $\Ad(h)\circ j_2(S_2)=j_1(S_1)$, allowing us to identify $S_1$ and $S_2$ over $\kbar$ via $j_2^{-1}\circ j_1$. Since $\ka$ determines the root systems for $\G_{\fe,1}$ and $\G_{\fe,2}$ as transferred to $X^\ast(S)$, we see that there is a $\kbar$-isomorphism $\phi_0: \G_{\fe,1}\to \G_{\fe,2}$ extending $j_2^{-1}\circ j_1 :S_1\to S_2.$ Moreover, since $j_2^{-1}\circ j_1$ intertwines the two involutions, it follows that $\phi_0$ may be chosen to be $(\theta_1,\theta_2)$-equivariant.% Note that the two involutions

 Since $j_2$ is $(\theta_{2},\theta)$-equivariant $j_2(s_2(x_2)) = s(x)$, we see that for all $\sig\in \Ga$, ${}^\sig j_2 = \Ad(h)\circ j_2$ for some $h\in \rH_x(\kbar)$; here we have used the rationality of $x$ and the assumption that $x\in \X^{rss}(k)$. On the other hand, $\X$-regularity of $b_2\in[\X_\fe\sslash\rH_\fe](k)$ implies that $s_2(x_2)$ is $(\G,\G_{\fe,2})$-regular in the sense of \cite{Kottwitzstableelliptic}, so that ${}^\sig j_2 =  j_2\circ \Ad(h_2)$ for $h_2\in \rH_{\fe_2,x_2}(\kbar)$. This shows that ${}^\sig(j_2^{-1}\circ j_1)$ is conjugate to $j_2^{-1}\circ j_1$ by an element of $\rH_{\fe,2,x_2}(\kbar)$. It follows that the $\rH_{\fe,2,x_2}$-conjugacy class of $\phi_0:  \G_{\fe,1}\iso\G_{\fe,2}$ is defined over $k$. 

  Recall that the pure inner twists which arise in Assumption \ref{Assumption: orbits} only correspond to twists of the symmetric pair. Fixing pure inner twists $\psi_1$ and $\psi_2$ to the quasi-split inner forms, we have a diagram
 \[
\begin{tikzcd}
    (\G_{\fe,1},\theta_1)\ar[d,"\psi_1"]\ar[r,"\phi_0"]&(\G_{\fe,2},\theta_2)\ar[d,"\psi_2"]\\
    (\G_{\fe,1}^\ast,\theta_1^\ast)& (\G_{\fe,2}^\ast,\theta_2^\ast),
\end{tikzcd}
 \]
 where each arrow is equivariant with the involutions on the domain and codomain. An argument as in the proof of Lemma \ref{Lem: split torus embedding}, we see that a $\rH_{\fe,2}^\ast(\kbar)$-conjugate $\phi$ of $\psi_2\circ\phi_0\circ \psi_1^{-1}$ is defined over $k$. This implies that $\phi\circ \psi_1(s_1(x_1))$ is stably conjugate to $\psi_2(s_2(x_2)),$ and we see that $\phi$ induces an isomorphism $(\G_{\fe,1},\hat{s}_1(\ka_1),\eta_1)\lra(\G_{\fe,2},\hat{s}_2(\ka_2),\eta_2)$. 

We claim that $\phi$ intertwines $\theta_1^\ast$ with a pure inner twist of $\theta_2^\ast$; that is $\theta_1':=\phi\circ \theta_1^\ast\circ \phi^{-1}$ is a pure inner twist of $\theta_2^\ast$. To see this, note that they are both $k$-rational involutions on $\G^\ast_{\fe,2}$ which agree on the maximally $\theta$-split torus\footnote{By construction, the involutions all agree on $\psi_2(S_2)$.}. By \cite[Theorem 3.7]{Helmincktwo}, there exists $h\in \G_{\fe,2}^\ast(\kbar)$ satisfying $\theta'_1 = \Ad(h)^{-1}\circ\theta_2^\ast\circ\Ad(h)$ and that $\Ad(h^{-1}\theta_2^\ast(h))\in S_{ad}(k)$, where $S_{ad}^\ast\subset \G^\ast_{\fe,2,ad}$ is the image in the adjoint group. This implies that the involutions are geometrically conjugate, so that the symmetric varieties 
\[
\text{$X_1':=\G_{\fe,2}^{\ast,\theta_1'}\backslash\G_{\fe,2}^\ast$ and $X_2^\ast:=\G_{\fe,2}^{\ast,\theta^\ast_2}\backslash\G^\ast_{\fe,2}$}
\]
are $\G_{\fe,2}^\ast$-forms of each other. Thus, there is a canonical isomorphism 
$
\Aut_d({X_1'})\simeq \Aut_d({\X^\ast_2}), 
$
as this group depends only on $\G^\ast_{\fe,2}$ by Lemma \ref{Lem: unique on aut}. Since both forms have geometric cocycles determined by \eqref{item rep} and they are well-adapted, it follows that they are $\G_{\fe,2}$-inner forms. Since they are both quasi-split in the sense of Corollary \ref{Cor: quasisplit endo}, it follows that they are isomorphic. Lemma \ref{Lem: pure inner twist invol} now implies that this forces $\theta_1'=\phi\circ \theta_1^\ast\circ \phi^{-1}$ to be a pure inner twist of $\theta_2^\ast$, proving the claim. It is now straightforward to verify that $\phi$ induces an isomorphism of endoscopic data, since it intertwines maximally $\theta$-split tori, so that maps $\eta_1\circ\check{\phi}$ is conjugate by $\check{\G}_{\X_1}$.

%\textcolor{red}{Now we know that they are conjugate over $\kbar$. should be able to switch to maximally split torus and use outer class to win.}
% $s_2(x_2)\in \G_{\fe,2}(k)$ is $(\G,\G_{\fe,1})$-regular, the argument in \cite[Lemma 9.2]{Kottwitzstableelliptic} applies and so we may conjugate

   Finally, we verify that this isomorphism is unique up to inner automorphisms, which amounts to showing that any automorphism $\phi\in \Aut(\fe)$ which preserves the $\X$-regular semi-simple class $b$ (in the sense that if $y\in \X_{\fe}(k)$ lies in the stable orbit over $b$, then so does $\phi_\X(y)$ where $\phi_\X:\X_\fe\to \X_\fe$ is the induced automorphism) is automatically inner. %Recall that $\phi$ consists of an automorphism of $\G_\fe$ satisfying the requirements of Definition \ref{Def: endo iso}. 
   
   The observation (already used above) is that the matching $a=\fa_\fe(b)$ induces a matching of semi-simple orbits $\chi_\fe(s_\fe(b))=s(a)$ as in Corollary \ref{Cor: quotient coherence}. Since $b$ is $\X$-regular, it follows that $s_\fe(b)$ is a $(\G,\G_\fe)$-regular class. Since $\phi$ preserves $b$, it follows that $f_\phi$ preserves $s_\fe(b)$. The proof of \cite[Lemma 9.2]{Kottwitzstableelliptic} now goes through, showing that $\phi$ must be inner.
 \end{proof}
In particular, we may rewrite the sum
\[
  \TF_{\X}^{re}(\underline{f}) =\sum_{\fe/\sim}\left(\sum_{b\in [\X_\fe^{re}\sslash\rH_\fe]^{\X}(k)}\Orb_{\fa_\fe(b)}^\ka(\underline{f})\right),
\]
where $[\X_\fe^{re}\sslash\rH_\fe]^{\X}$ denotes those orbits in $\X_\fe^{re}$ which are $(\X,\X_\fe)$-regular. At this stage, one is in a position to formulate the local conjectures necessary to relate the inner sum to (stable) orbital integrals on $\X_\fe$. We defer to \cite{LesliestabFJ} for the details.

\subsection{Examples}\label{Section: example table}
Finally, Table \ref{tab:Examples} gives examples of symmetric varieties and associated endoscopic varieties under the assumption that $\G_{der}$ is $k$-simple. Note that several are already listed when $\X$ has rank $1$ in Table \ref{tab:satake}; we reproduce a few of those examples here as cases of infinite families.

\quash{
\begin{table}[htp]
\caption{Rank-$1$ endoscopic varieties}
    \label{tab:satake2}
    \centering
    \begin{tabular}{|c|c|c|}
	\hline 
	$\textbf{Type}$ & $(\Psi(\lam),\theta)$ & $(\Psi_s(\lam),\theta)$ \\[.25 cm] \hline
 $\mathbf{A_l\:(l\geq 3)}$ & $\dynkin[edge length=.75cm,
involutions={16}]{A}{o**.**o}$ & $\dynkin[edge length=.75cm]{A}{**.**}\quad \dynkin[edge length=.75cm]{A}{o}$ \\ \hline
 & $(\SL_{l},\GL_{l-1})$ & $(S(\GL_{l-2}\times \GL_2), \GL_{l-2}\cdot T)$ \\[.25cm] \hline
 $\mathbf{B_l\:(l\geq 3)}$ & $\dynkin[edge length=.75cm]
{B}{o**.**}$ & $ \dynkin[edge length=.75cm]{A}{o}\quad\dynkin[edge length=.75cm]{B}{**.**}$ \\[.25cm] \hline
  & $(\Spin_{2l+1},\Spin_{2l})$ &  $(\SL_2\times\Spin_{2l-1},T\times \Spin_{2l-1})$ \\[.25cm] \hline
 $\mathbf{C_l\:(l\geq 3)}$ & $\dynkin[edge length=.75cm]
{C}{*o*.**}$ & $\left[\dynkin[edge length=.75cm]
{D}{*}\dynkin[edge length=.75cm]
{D}{o}\right]\quad\dynkin[edge length=.75cm]
{C}{*.**}$ \\[.25cm] \hline
 & $(\Sp_{2l},\Sp_2\times \Sp_{2l-2})$ &  $(\SO_4\times\Sp_{2l-4},\GL_2\times \Sp_{2l-4})$ \\[.25cm] \hline
 $\mathbf{D_l\:(l\geq 3)}$ & $\dynkin[edge length=.75cm]
{D}{o**.***}$ &  $\dynkin[edge length=.75cm]
{D}{o**.***}$ \\[.25cm] \hline
 & $(\Spin_{2l},\Spin_{2l-1})$ &  $(\Spin_{2l},\Spin_{2l-1})$ \\[.25cm] \hline
 $\mathbf{F_4}$ &$\dynkin[edge length=.75cm]
{F}{***o}$ & $\dynkin[edge length=.75cm]
{B}{***}\:\dynkin[edge length=.75cm]
{A}{o}$  \\[.25cm] \hline
 & $(F_4,\Spin_9)$ & $((\Spin_7\times\SL_2)/\mu_2, (\Spin_7\times T)/\mu_2)$ \\[.25cm] \hline
    \end{tabular}
\end{table}}
In the table below, we note by an $\ast$ examples when the endoscopic variety is not well adapted, even if the original variety is.

\setlength{\extrarowheight}{20pt}
\begin{table}[htbp]
    \label{tab:Examples}
    \centering
    \caption{Examples of endoscopic symmetric varieties}
    \begin{tabular}{|c|c|}
	\hline 
	$\underline{\X=\rH\backslash\G}$ &  $\underline{\X_{\fe}=\rH_\fe\backslash\G_\fe}$ \\[.5 cm] \hline
 $\GL_n\times \GL_{n+r}\backslash\GL_{2n+r}$ & $\left[\GL_a\times \GL_a\backslash\GL_{2a}\right]\times \left[\GL_b\times \GL_{2b+r}\backslash\GL_{2b+r}\right]$ \quad$(a+b=n)$\\ \hline
   $\U_n\times \U_{n}\backslash\U_{2n}$ & $\left[\U_a\times \U_{a}\backslash\U_{2a}\right]\times \left[\U_b\times \U_{b}\backslash\U_{2a}\right]$\quad $(a+b=n)$\\ \hline
  $\U_n\times \U_{n+r}\backslash\U_{2n+r}$, $r\geq1$ & $\left[\Res_{E/k}(\GL_a)\backslash\U_{2a}\right]\times \left[\U_b\times \U_{b+r}\backslash\U_{2b+r}\right]$\quad $(a+b=n)$\\ \hline
   $\Res_{E/k}(\GL_n)\backslash\U_{2n}$ & $\left[\Res_{E/k}(\GL_a)\backslash\U_{2a}\right]\times \left[\Res_{E/k}(\GL_b)\backslash\U_{2b}\right]$\quad $(a+b=n)$\\ \hline
  $\rH\backslash\Res_{E/k}(\rH_E)$ & \qquad\qquad \qquad \qquad\:$\rH_\fe\backslash\Res_{E/k}(\rH_{\fe,E})$ \hfill($\rH_\fe$ endoscopic for $\rH$) \\[.25cm] \hline
 $\Sp_{2p}\times \Sp_{2p+2r}\backslash\Sp_{4p+2r}$ & $\left[\GL_m\backslash\SO_{2m}\right]\times \left[\Sp_{2(p-m)}\times \Sp_{2(p-m)+2{r}}\backslash\Sp_{4(p-m)+2r}\right]\: (m\leq p)$  \\[.25cm] \hline
 $\GL_{2n+1}\backslash\SO_{4n+2}$ & $\GL_{2n}\backslash\SO_{4n}\times \SO_2\backslash\SO_2$  \\[.25cm] \hline
$A_1\times A_5\backslash E_6^{sc}$ &  $[\Gm\backslash\Gm]\times^{\mu_4}[\Spin_{6}\times^{\mu_2}\Spin_{4}\backslash\Spin_{10}]^\ast$ \\[.25cm] \hline
%$E_7^{sc}/A_1\times E_6$ &  $??$ \\[.25cm] \hline
  $\Spin_9\backslash F_4$ & $[\Spin_7 \backslash\Spin_7]\times^{\mu_2}[\Gm\backslash\SL_2]$ \\[.25cm] \hline
    \end{tabular}
\end{table}
% The example for Galois pairs $\Res_{E/k}(\rH_E)/\rH$ connects to several phenomena in the literature. Aside from unitary periods as in Section \ref{Section: Example unitary}, the relative trace formulae considered in \cite{GetzWambach} correspond to Galois periods for unitary groups in a ``bi-quadratic setting,'' the RTF for which requires stabilization to extend the simple RTF results of \emph{loc. cite.} Another example is that of the Galois periods for $\rH=\SL_2$ due to \cite{AnaPrasadglobal,AnaPrasadComp}. Using a different method, these authors exhibit examples of cuspidal automorphic representations on $\Res_{E/k}(\SL_2)(\A)$ which are $\SL_2$-distinguished and whose periods are expressed as a finite sum of Euler products. In ongoing work with S. Mahendraker \cite{LeslieMahendraker}, we stabilize the full relative trace formula in this case to, among other things, extend and interpret these results in terms of endoscopy for $\Res_{E/k}(\SL_2)/\SL_2$.
%%%%%%%%%%%%%%%%%%%%%%%%%%%%%%%%%%%%%%
\quash{
\newpage

\part{Cohomological aspects}\label{Part: RTF}

 \section{Quotient stacks and pure inner forms}\label{Section: stacks}
 In this section, we cover aspects of quotient stacks that are relevant to relative trace formulas as envisioned by Bernstein and Sakellaridis \cite{SakStack}. We set $\mathcal{X}= \X/\G$ to be the quotient stack associated to $(X,G)$.
 \quash{By definition, for any $k$-scheme $S$, we have the fiber category
\[
\calx_S=\left\{ (E\to S,\varphi: E\to X\times_k S): E/S \text{   a $\G$- torsor, and   } \varphi \text{   a $\G$-equivariant morphism}\right\}.
\] }

%\begin{Rem}
%The idea of a fiber category $p:\mathcal{F}\to\mathcal{C}$ is to think of $\mathcal{F}$ as a fibration over $\mathcal{C}$. For each $s\in \mathrm{Ob}(\mathcal{C})$, the fiber
%\[
%\mathcal{F}_s:=\{x\in \mathrm{Ob}(\mathcal{F}: p(x)=s; f:x\to y \Rightarrow p(k)= id_s\},
%\]
%is a groupoid rather than just a set. For this fiber category to be a stack fibered in groupoids, we need to endow $\mathcal{C}$ with a Grothendieck topology, and further require $\mathcal{F}$ to form a $2$-sheaf with respect to this topology in such a way that ``all descent data is effective.''\footnote{Is this not just the sheaf axiom?} 
%\end{Rem}
\quash{
\subsection{Stacks}
Let $\fC$ be a well-behaved category equipped with a Grothendieck topology. Our main examples will be $\fC=Scheme_k$ the category of $k$-schemes for some fields $k$, $\fC=\mathcal{N}$ the category of Nash stacks over a locally compact field of characteristic zero with its \'{e}tale/smooth topology \cite{SakNash}.
\quash{
\begin{Def}
Let $p:\fX\to \fC$ be a category fibered in groupoids. This means the following:
\begin{enumerate}
    \item for every arrow $V\to U$ in $\fC$ and object $x\in Ob(\fX)$ over $U$, there exists an arrow $y\to x$ lying over $V\to U$ (existence of pull-backs).
    \item for every diagram $W\to V\to U$ in $\fC$ and arrows in $\fX$ $z\to x$ lying over $W\to U$ and $y\to x$ lying over $V\to U,$ there exists a unique arrow $z\to y$ lying over $W\to V$ such that $z\to y\to x$ equals $z\to x.$
\end{enumerate}
\end{Def}
Note that this definition requires that, if $U$ is an object of $\fC$ and $\fX_U$ is the fiber category over $U$, then 
\[
\fX_U:=\{x\in \mathrm{Ob}(\mathcal{X}: p(x)=U; f:x\to y \Rightarrow p(k)= id_U\},
\]is a groupoid. Indeed, consider the diagram $W=V=U$ all with the identity morphism, and arrows $id_x:x\to x$ lying over $W=U$ and $f:y\to x$ any morphism in the category. Then (2) above implies there exists a unique morphism $g:x\to y$ such that $id_x=f\circ g$, so $\G$ is a right inverse of $k$. A similar argument produces a morphism $h:y\to x$ such that $id_y=g\circ h$. But
\[
f=f\circ id_y = (f\circ g)\circ h = id_x\circ h = h,
\]
so that $k$ and $\G$ are inverse morphisms.

We will need the notion of fiber products of such categories: if we have a pair $\fX\to \fC\leftarrow \fX'$ we form the category $\fX\times_{\fC}\fX'$ to be the category of triples $(x,x',g)$ with $x\in ob(\fX)$, $x'\in ob(\fX')$ and $\G$ an arrow between $p(x)$ and $p'(x')$ in $\fC$. When $\fX$ and $\fX'$ are fibered in groupoids, $\G$ is necessarily an isomorphism.

\begin{Def}
We say a category $\fX\to \fC$ fibered in groupoids is a \textbf{stack} if 
\begin{enumerate}
    \item for every object $U$ of $\fC$, and two objects $x$ and $y$ in $\fX_U$, the pre-sheaf of isomorphsims
    \begin{align*}
        \mathrm{Isom}(x,y):\fC_U&\lra Set\\
                            (V\to U)&\longmapsto \Hom_{\fX_V}(x_V,y_V)
    \end{align*}
    is a sheaf. That is, any isomorphism $\phi:x\to y$ is determined uniquely by its restrictions to any cover $\{U_i\to U\}_i$, and vice versa for any compatible family of isomorphisms. 
    \item Given a cover $\{U_i\to U\}_i$, objects $x_i\in ob(\fX_{U_i})$ and morphisms $\phi_{ij}:x_{i,U_{ij}}\to x_{j,U_{ij}}$ that satisfy the cocycle condition
    \[
    \phi_{kj}\circ\phi_{ik} = \phi_{ij},
    \]
    there exists an object $x\in ob(\fX_U)$ and a family of morphisms $\phi_i:x_{U_i}\to x_i$ such that $\phi_i|_{U_ij}=\phi_{ij}$. The object and morphisms $(x,\phi_i)$ are necessarily unique up to unique isomorphism.
\end{enumerate}
\end{Def}}
An important property of stacks is \emph{representability}: a stack is representable if there is an algebraic space $\X$ ( or scheme, or Nash manifold, or $k$-analytic manifold...) such that there is an equivalence
\[
\Hom(\bullet,X)\iso \fX.
\]

\begin{Def}
A morphism $f:\fX\to \mathfrak{Z}$ of stacks is \emph{smooth} or a \emph{representable submersion} if for any morphism $U\to \mathfrak{Z}$ with $U\in ob(\fC)$, the fiber product $\fX\times_{\fC}U$ is representable and the induced morphism $\fX\times_{\fC}U\to U$ is a submersion/smooth.
\end{Def}

A $1$-morphism (functor) $f:\fX\to \fN$ of stacks is called an \emph{epimorphism} if for every $y\in \fN_U$, with $U$ a stack, lifts \'{e}tale locally to $\fX$. In \cite{SakNash}, Sakellaridis defines a Nash stack to be a stack $\fX$ over the category of Nash $k$-manifolds\footnote{I think this is equivalent to working with $k$-analytic manifolds when $k$ is non-archimedean. He actually takes this as the definition when $k$ has positive characteristic.} which admits a representable, epimorphic submersion from a Nash manifold
\[
X\to \fX.
\]
Such a morphism is called a \emph{presentation}.
}
\subsection{Quotient stacks}
Suppose that $\X$ is a smooth $\G$-variety. We are interested in the rational points of of the quotient stack $\X/\G$. 
\begin{Def}
For any $k$-scheme $S$, we have the fiber category
\[
[\X/\G](S):=\left\{ (P\to S,\varphi: P\to X\times_k S):\stackrel{ E/S \text{   a $\G$- torsor,}}{ \varphi \text{   a $\G$-equivariant morphism}}\right\}.
\]
\end{Def}
We sometimes use the notaiton $\calx=\X/\G$, and let $\calx(S)$ denote the set of isomorphism classes of this groupoid, and will be called the $S$-points of $\calx$.

 Fundamental to the study of quotient stacks is the notion of twisting by a torsor of $\G$. We review this idea now and give some simple examples related to the relative trace formulae of interest.
\begin{Def}
Let $S$ be a scheme, $G\to S$ a group scheme and $X\to S$ an $S$-scheme. We say that an $\X$-scheme $P\to X$ is a (left) $\G$-torsor with respect to a given Grothendieck topology on $\mathrm{Sch_S}$ if it is endowed with an action of $\G$
\[
\mathrm{act}: G_X\times_X P\to P
\]
such that there exists a covering $(U_i)_{i\in I}$ of $\X$ such that for each $i\in I$ there is a section $s_i:U_i\to P_{U_i}$ inducing a trivialization $\theta_i:G_{U_i}\times_S U_i\iso P_{U_i}$ given by $\theta_i(g,x) = gs_i(x)$. We also have an analogous notion of a right torsor.
\end{Def}
\begin{Def}
Let $P\to X$ be a left $\G$-torsor for a given Grothendieck topology on $\mathrm{Sch_S}$. Let $(U_i)_{i\in I}$ be a covering of $\X$ which trivializes $P$. We define the group $\G_P=\underline{\Aut}^{\G}(P)$ as the $\G$-automorphisms of $P$.
\end{Def}
\iffalse
 For each $i\in I$, the trivialization $s_i:U_i\to P_{U_i}$ implies that a section $G^P(U_i)$ is defined to be right multiplication by an element $g_i\in G(U_i)$, which clearly commutes with left multiplication. A global section $g\in G^P(X)$ is determined by its restrictions $(g_i)_i$ wich satisfy 
\[
g_i=g_{ij}g_jg_{ij}^{-1}
\]
where $g_{ij}\in G(U_i\times_XU)j)$ is the cocycle such that $s_i|_{U_{ij}}=g_{ij}s_j|_{U_{ij}}$. The data $(g_i, g_{ij})$ defines an effective descent datum and hence determines a unique object over $\X$. In particular, the $\X$-group is an inner form of $G_X$ determined by the left torsor $P\to X$.

The relation between the left $\G$ action and the right $H$ action is described by the isomorphisms $u_i:H_{U_i}\to G_{U_i}$ defined by 
\[
\forall h\in H_{U_i},\:s_ih^{-1}\footnote{I am confused about this inverse. It is not in the notes, but his previous definition of the right action would make this definition an anti-homomorphism without the inverse.}=u_i(h)s_i.
\]
Then one has $u_j(h) = g_{ij}u_i(h)g_{ij}^{-1}$ essentially by the definition. The data $(u_i,g_{ij})$ completely encodes the inner form (ie: gives an effective descent datum for $G^P$).
\fi
\quash{

\subsubsection{Rational points of a $\G$-torsor}  We now consider the special case when the action of $\G$ on $\X$ is free and proper\footnote{Is this assumption needed?}. 

%There is a well-known correspondence
%\[
%\{\text{$k$-torsors of $\G$}\}/\sim\leftrightarrow H^1(k,G),%:=H^1_{cont}(\Gal(k^{sep}/k),G(\kbar)),
%\]
%where $G(\kbar)$ is endowed with the discrete topology. Let $f:P\to Y$ be a $\G$-torsor and let $[\al]\in H^1(k,G)$. By the above procedure, we may twist $P$ by $\al$ to obtain another $\G$-torsor $f_\al: P_\al\to Y$. This may be described concretely by viewing $X=(X_{\kbar})^{\Gal(k^{sep}/k)}$. To define $P_\al$, we twist the Galois action by $x\mapsto \al(\sig)\sig(x)$ for any $x\in P_{\kbar}$ and $\sig\in \Gal(k^{sep}/k)$.
\begin{Lem}\label{Lem: torsor rational}
Let $f:P\to Y$ be a $\G$-torsor. Then
\[
Y(k)=\bigsqcup_{[\al]\in H^1(k,G)}f_\al(P_\al(k))\cong\bigsqcup_{[\al]\in H^1(k,G)} P_{\al}(k)/\G_\al(k).
\] 
\end{Lem}
%\begin{Rem}Combining this with Lemma \ref{Lem: points of quot stack} and Lemma \ref{Lem: torsor quotient} below shows that when the $\G$ action on $\X$ is free and locally trivializable, then \textbf{I believe this implies that} the categorical quotient
%\[
%\pi: \X\lra \X\sslash\G
%\]
%is a $\G$-torsor, and there is a bijection of $k$-points
%\[
%(\X/\G)(k)\simeq (\X\sslash\G)(k).
%\]
%I expect that this is actually an equivalence of stacks.
%\end{Rem}
}
\subsubsection{Rational points of a quotient stack}
%More generally, suppose that $\X$ is a smooth affine $\G$-variety. We are interested in the rational points of of the quotient stack $\X/\G$. Let $\calx$ be a smooth algebraic stack over $k$, and let $\calx(k)$ denote the $k$-points. By definition, $\X/\G(k)$ is the set of equivalence classes of points s in the groupoid of $\Spec(k)\to \X/\G$.

We recall the notion of a \emph{$k$-surjective presentation}: a smooth scheme (or Nash or $k$-analytic manifold) $\X'\to \calx$ with the property that the map 
\[
\X'(k)\to \calx(k)
\]
is surjective. For quotient stacks,  the disjoint union 
\[
\bigsqcup_{\al\in H^1(k,\G)}\X_\al\lra \X/\G
\]
ranging over all pure inner forms of $\X$, gives such a presentation \cite[Section 2.4]{SakStack}. 
%We know that a $\G$-torsor $H$ over $k$ is equivalent to a pure inner form. To see this, we know that it gives rise to an inner form, so we know that its class $[H]\in H^1(k,\Aut(G))$ is in the image of $H^1(k,Inn(G))$.
%%%%%%%%%%%%%%%%%%%%%%%%%%%%%%%%%%%%%%%%%%%%%%%%%%%%%%%%%%%%%%%%%%%%%%%%%%%%%%%%%%%%%%%%%%%%%%%%%%%%%%%%%%%%%%%%%%%%%%%%%%%%%%%%%%%%%%%%%%%%%%%%%%%%%%%%%%%%%%%%%%%%%%%%%%%%%%%%%%%%%%%%%%%%%%%%%%%%%%%%%%%%%%%%%%%%%%%%%%%%%%%%%%%%%%%%%%%%%%%%%%%%%%%%%%%%%%%%%%%%%%%%%%%%%%%%%%%%%%%%%%%%%%%%%%%%%%%%%%%%%%%%%%%%%%%%%%%%%%%%%%%%%%%%%%%%%%%%%%%%%%%%%%%%%%%%%%%%%%%%

\quash{Let us clarify how such objects are related. Let $P\to S$ be a left $\G$-torsor for a given Grothendieck topology on $\mathrm{Sch_S}$. Let $(U_i)_{i\in I}$ be a covering of $\X$ which trivializes $P$. We wish to describe the procedure of twisting by the $\G$-torsor $P$. 
\begin{Lem}(see also \cite[Lemma 16.5.2]{SakVenk})
Suppose that $\G/S$ is a group scheme over $S$ and $P/S$ is a $\G$-torsor. Let $\X/S$ be an $S$-scheme endowed with a left $\G$-action $m:\G\times_S \X\to \X$.  For any group $H/S$, denote $H_s$ as the trivial $H$-torsor. There is a bijection
\[
\Hom_{\G}(P,\X) \leftrightarrow \Hom_{\G_P}(\G_{P,s},P_0\times^{\G}\X).
\]
Here, $\G_P=\underline{\Aut}^G(P)$ is the group of automorphisms of the torsor and $P_0$ is the inverse $(\G_P,G)$-bitorsor such that $P\times^{\G_P}P_0\iso \G_s$ and $P_0\times^{\G}P\iso \G_{P,s}$.
\end{Lem}

\begin{proof}
Set $H=\G_P$. Given a $\G$-equivariant arrow $\varphi:P\to X$, let $(U_i)$ be a covering of $S$ which trivializes $P$. The torsor structure may then be encoded by the datum $(g_{ij},u_i)$ where $u_i:H_{U_i}\iso G_{U_i}$ and $u_j=g_{ij}u_ig_{ij}^{-1}$. The $(H,G)$-bitorsor $P^0$ may be encoded with the datum $(g_{ij}^0,u^0_i) = (u_j^{-1}(g_{ij}^{-1}),u_i^{-1})$. We have that the left $H$-torsor $P^0\times^GP$ may be described with the datum $(g_{ij}^0u^0_j(g_{ij}),u_i^0u_i)= (1,Id_{H_i})$, so that it is trivial.

We may twist the morphism by $P^0$ (since such an operation is functorial) to obtain an $H$-equivariant map $H_s=P^0\times^GP\xrightarrow{\varphi^0}P^0\times^GX$.
%For each $i$, we have a morphism $P_i\xrightarrow{\varphi_i}X_{U_i}$. The local triviality implies that we have a section $s_i:U_i\to P_i\to X_{U_i}$. 
\end{proof}
}
%%%%%%%%%%%%%%%%%%%%%%%%%%%%%%%%%%%%%%%%%%%%%%%%%%%%%%%%%%%%%%%%%%%%%%%%%%%%%%%%%%%%%%%%%%%%%%%%%%%%%%%%%%%%%%%%%%%%%%%%%%%%%%%%%%%%%%%%%%%%%%%%%%%%%%%%%%%%%%%%%%%%%%%%%%%%%%%%%%%%%%%%%%%%%%%%%%%%%%%%%%%%%%%%%%%%%%%%%%%%%%%%%%%%%%%%%%%%%%%%%%%%%%%%%%%%%%%%%%%%%%%%%%%%%%%%%%%%%%%%%%%%%%%%%%%%%%%%%%%%%%%%%%%%%%%%%%%%%%%%%%%%%%%%%%%%%%%%%%%%%%%%%%%%%%%%%%%%%%%%%%%%%%%%%%%%%%%%%%%%%%%%

\begin{Lem}\label{Lem: points of quot stack}
Setting $X_P= P^0\times^GX$ and $G^P=\underline{\Aut}^G(P)$, we have the identification on rational points
\[
[X/G](k)= \bigsqcup_{P\in H^1(k,G)} \Hom_{G^P}(G^P_x,X_P) = \bigsqcup_{P\in H^1(k,G)} X_P(k)/G^P(k).
\]
\end{Lem}
\quash{
\begin{proof}
Suppose that $x:\Spec(k)\to \X/\G$ is a point in the right hand side. Then $x$ corresponds to a diagram
\[
\begin{tikzcd}
P\ar[d,"\pi"]\ar[r,"x"]&\X\ar[d]\\
\Spec(k)\ar[r,"\phi"]&\X/\G.
\end{tikzcd}
\]
Let $\al\in H^1(k,\G)$ denote the cohomology class associated to the $\G$-torsor $P$. By the preceding lemma the arrow $x:P\to \X$ corresponds to an arrow\footnote{I need to better understand this change of notation.}
\[
x_{\al}:\G_\al\lra \X_\al,
\]
where $\G_\al = \underline{\Aut}^{\G}(P)$. This distinguishes an orbit $\X_\al(k)/\G_\al(k)$. This process is evidently reversible for any such orbit and $\al$, giving the bijection.
\end{proof}}
%This matches perfectly with what we saw in our previous discussion when $\X/\G$ exists as a scheme and the map $X\to \X/\G$ is a $\G$-torsor. This computation is more general though as we have allowed $\X$ to be an arbitrary smooth affine $\G$-variety over the base scheme $S$. 

\begin{Ex}
 Suppose that $\rH\subset \G$ is a closed, connected algebraic subgroup, and let 
 \[
 \X=\rH\backslash\G\times \rH\backslash\G,
 \]
with $\G$ acting diagonally. There is an isomorphism of quotient stacks
 \[
 \X/\G\simeq \rH\backslash \rH\backslash\G,
 \]
 and an bijection
 \[
 (\X/\G)(k) \simeq \bigsqcup_{\be\in H^1(k,\rH)}\rH_\be(k)\backslash\left(\G_\be/\rH_\be\right)(k).
 \]
Similarly, we have
 \[
 \left(\G_\be/\rH_\be\right)(k)=\bigsqcup_{\stackrel{\de\in H^1(k,\rH)}{\de\mapsto \be\in H^1(k,\G)}}\G_\be(k)/\rH_\de(k).
 \]
\end{Ex}

\subsection{Categorical quotients}
Suppose that $\X$ and $\G$ action are as above, with $S=\Spec(k)$; we further assume that $\X$ is affine. Let $k[\X]^{\G}\subset k[\G]$ denote the subring of $\G$-invariant polynomials. This subring is finitely generated, so that we have an affine variety $\X\sslash\G:=\Spec(k[\X]^{\G})$ and a \emph{quotient map}
\[
\pi=\pi_{\X,\G}:\X\lra \X\sslash\G,
\]
dual to the inclusion $k[\X]^{\G}\subset k[\G]$. 
\quash{We record some important general facts (cite Mumford or Levy...)
\begin{Lem}
$\pi_{\X,\G}$ has the following properties
\begin{enumerate}
    \item it is surjective (as a map of schemes),
    \item for any algebraic extension $k'$ of $k$, $\pi_{\X,\G,k'}$ is constant on $\G(k')$-orbits of $\X(k')$;
    \item it separates disjoint closed $\G$-invariant subsets;
    \item each fiber contains a unique closed $\G$-orbit, so that $\X\sslash\G$ is in bijection with the set of closed $\G$-orbits in $\X$;
    \item $\X\sslash\G$ is a categorical quotient.
\end{enumerate}
\end{Lem}}

The following two lemmas are elementary from the definitions.
\begin{Lem}\label{Lem: all PIFs the same quot}%\textcolor{red}{I don't know if this is true...It is true when $\X$ is linear and $\G$ acts linearly, by an untwisting argument. The general argument requires a better understanding of pure inner forms.}
Suppose that $\al\in H^1(k,\G)$ and that $\G_\al$ is the associated pure inner form. Let $\X_\al = P^\al\times^{\G}\X $ be the twisted form of $\X$. There is a canonical isomorphism
\[
\X\sslash\G\simeq \X_\al\sslash\G_\al.
\]
\end{Lem}
%\begin{proof}
%Suppose $x\in \X_\al$ and consider the $\G_\al(k^{sep})\simeq \G(k^{sep})$-orbit of $x'$ in %$\X_\al(k^{sep})\simeq \X(k^{sep}).$ Suppose $\al\in Z^1(k,\G)$ represents the class $\al$. We can choose %our isomorphisms over $k^{sep}$ to be identities and twist the $\Ga$-action by the cocycle $\al$ to %descend to the various forms: we use the notation $\Ga_\al$ to denote the Galois action twisted by %$\al:$
%\[
%\sig_\al(x) = \al(\sig)\cdot \sig(x),
%\]
%where $\sig(x)$ denotes the Galois action associated to the $k$-form $\X$. Consider the affine coordinate ring:
%\[
%k^{sep}[\X_{k^{sep}}] \qquad\text{ and the $\G_{k^{sep}}$-invariants} \qquad k^{sep}[\X_{k^{sep}}]^{\G_{k^{sep}}}.
%\]
%We claim that the Galois fixed points of this ring, hence the affine coordinates of $\X\sslash\G$, are independent of whether we use the $\Ga$- or the $\Ga_\al$-action. Intuitively, this is because the actions differ up to a $\G(k^{sep})$-valued cocycle and this ring consists of invariant polynomials. %\textcolor{red}{How to make this rigorous?}
%To see this, suppose that $f\in [k^{sep}[\X_{k^{sep}}]^{\G_{k^{sep}}}]^\Ga$, and consider the $\Ga_\al$-action:
%\[
%\sig_\al\cdot f = \sig\cdot f = f.
%\]
%\end{proof}

%The following is a simple but useful fact.
\begin{Lem}\label{Lem: torsor quotient}
Suppose that $T$ is a $k$-scheme. Let $\G$ be an linear algebraic group over $k$ and assume $\pi:P\to T$ is a $\G$-torsor. Then $\pi$ is a categorical quotient.
\end{Lem}
\quash{\begin{proof}
Suppose $f:P\to Y$ is a $\G$-invariant morphism. Let $\{U_i\to T\}$ be a cover\footnote{I am vague here about the topology used. As long as it is stronger than fpqc, the descent argument will work.} of $T$ trivializing $P$: for each $i,$ there is a section $s_i:U_i\to P_{U_i}$ inducing a trivialization $$\theta_i:G_{U_i}\times_S U_i\iso P_{U_i}$$ given by $\theta_i(g,x) = gs_i(x)$. These sections may be chosen to agree on $U_{ij}$ for each pair $(i,j)$.

We thus obtain a family of morphisms $\phi_i:U_i\to Y$ given by $\phi_i(x) = (f\circ s_i)(x).$ A faithfully-flat descent argument now implies the existence of a unique morphism $\phi:T\to Y$ such that
\[
\begin{tikzcd}
P\ar[d,"\pi"]\ar[dr,"f"]&\\
T\ar[r,"\phi"]&Y
\end{tikzcd}
\]
commutes. This proves the universal property of the categorical quotient.
\end{proof}}

As functors, there is a natural map
\begin{equation}\label{eqn: map of stacks}
\X/\G\lra [\X\sslash\G]:=\Hom_k(-,X\sslashG),
\end{equation}
where for any $k$-scheme $S$ and object in $[\X/\G](S)$
\[
\begin{tikzcd}
P\ar[r]\ar[d]&\X\ar[d]\ar[dr]&\\
S\ar[r]&{\X/\G}\ar[r,dashed]&\X\sslash\G,
\end{tikzcd}
\]
 there is a unique arrow $S\to \X\sslash\G$ by Lemma \ref{Lem: torsor quotient}, giving a point $(\X\sslash\G)(S)$. This may naturally be extended to a functor.

This morphism is often not an isomorphism of stacks. For the purposes of invariant theory (in particular, for the comparison of trace formulae), it is desirable to have as close as possible a moduli space of (geometric) orbits, forcing us to compare categorical quotients rather than quotient stacks.\footnote{Is this true? Can we directly compare points of stacks?}

\subsection{Arithmetic invariant theory}
A natural problem is to describe the images and fibers of the sequence 
\[
\X(k)\lra [\X/\G](k)\lra (\X\sslash\G)(k).
\]
Lemma \ref{Lem: points of quot stack} essentially answers the question of the first arrow: to obtain a surjective map of $k$-points, one must replace $\X$ by the disjoint union 
\[
\bigsqcup_{\be\in H^1(k,\G)}\X_\be(k)\lra [\X/\G](k)
\]
ranging over all pure inner forms of $\X.$ The study of the second map is more subtle as it involves the cohomology of centralizers. % We use \cite{bhargava2015arithmetic} as a reference for some of this material. 

\subsubsection{Semi-simple residual gerbes}
It is well known that there is a bijection between geometric points of $\X\sslash\G$ and closed geometric $\G$-orbits of $\X$. For any $\xi\in [\X\sslash\G](k)$, let $\calx_\xi$ denote the residual gerbe corresponding to the closed orbit in $\X$ over $\xi$. That is, $\calx_\xi\subset \calx$ is the smallest closed substack contained in the fiber of $\calx$ over $\xi$. Note that if $\xi$ is not regular semi-simple, then $\calx_\xi$ is not the full fiber, but the unique closed substack contained in the fiber. We refer the reader to \cite[Section 4.1]{SakStack} for further discussion on these gerbes.

Recall that a gerbe is called \emph{neutral} is it admits a global point; that is, if there exists a section $x:\Spec(k)\to \calx_\xi$ over $\xi$. In this case, $\calx_\xi\simeq [\ast/\Aut(x)]$ where $\Aut(x) = \G_x$ is the stabilizer of $x$ in $\G$. We refer to $\xi\in (\X\sslash\G)(k)$ as neutral if $\calx_\xi$ is.

\subsubsection{Fibers over neutral points.}
%We decompose the problem into two pieces. To begin, we will only consider the \emph{semi-simple locus} of $\X$, as discussed in Section \ref{Section: Prelim inv}.

Suppose that $\xi\in (\X\sslash\G)(k)$ is a neutral element. Thus there exists $\be\in H^1(k,\G)$ such that the fiber of semi-simple elements $\X_{\be,\xi}(k)\neq \emptyset$; replacing the presentation $\X/\G$ with $\X_\be/\G_\be$ if necessary, we may assume that $\X_{\xi}(k)\neq \emptyset$.

\quash{
lies in the image
\[
\bigsqcup_{\be\in H^1(k,\G)}\X_\be(k)\lra (\X\sslash\G)(k),
\]
and characterize the fiber, describing which pure inner forms $X_\be$ arise, and the rational structure of the associated components. We then conclude with a review of the cohomological obstruction to $\xi$ lying in this image.
Let $[\X\sslash\G]^{rss}$ denote the image of $\X^{rss}$. Let $\xi\in [\X\sslash\G]^{rss}(k)$ and assume that the fiber $x\in\X_\xi(k)\neq \emptyset$ is non-empty. The regular semi-simplicity assumption implies that $\G(k^{sep})$ acts transitively on $\X_\xi(k^{sep})$. Let $\G_x$ denote the stabilizer.}

The following lemma is well known and characterizes the various (semi-simple) rational orbits in this fiber.
\begin{Lem} Suppose that $\xi\in [\X\sslash\G](k)$ is neutral and that $\X_{\xi}(k)\neq \emptyset$.
\begin{enumerate}
    \item\label{Lem: ratl classes} There is a bijection between $\G(k)$-orbits and elements of 
\[
\ker^1(k,\G_x,\G)=\ker[H^1(k,\G_x)\lra H^1(k,\G)].
\]
\item \label{Cor: full cohom group}
%Let $\xi\in (\X\sslash\G)^{rss}(k)$ and suppose that $x\in \X_{\xi}(k)\neq \emptyset.$ Then
More generally, there is a bijection 
\[
\bigsqcup_{{\be\in \mathrm{Im}(H^1(k,\G_x)\to H^1(k,\G))}}\G_\be(k)\backslash \X_{\be,\xi}(k) \overset{\sim}{\longleftrightarrow} H^1(k,\G_x),
\]
though $\X_{\be,\xi}(k)$ may itself be empty.
\end{enumerate}

\end{Lem}
\quash{
\begin{proof}
The construction of the cohomological invariant $x'\mapsto \inv(x,x')$ previously mentioned gives the map
\[
\G(k)\backslash\X_\xi(k)\lra \ker^1(k,\G_x,\G).
\]
Now suppose that $\al\in \ker^1(k,\G_x,\G)$ and let $g\in \G(k^{sep})$ be chosen so that 
\[
\al(\sig) = g^{-1}\sig(g)
\]
represents this class. Set $x'=g\cdot x$. Then
\[
\sig(x') = \sig(g)\cdot x = g(\al(\sig)\cdot x) = g\cdot x,
\]
since $\al(\sig)\in \G_x(k^{sep})$. This gives the inverse map.
\end{proof}
We now consider other inner forms of $\X$ with orbits over $\xi\in [\X\sslash\G]^{rss}(k).$ Suppose that $\be\in H^1(k,\G)$ and let $\X_\be$ be the associated twisted variety with an action of $\G_\be$. Suppose $c\in Z^1(k,\G)$ represents the class $\be$. We can twist the $\Ga$-action by the cocycle $c$ to descend $\G_{k^{sep}}$ and $\X_{k^{sep}}$ to $\G_\be$ and $\X_\be$. Note that Lemma \ref{Lem: all PIFs the same quot} gives a morphism
\[
\pi_\be: \X_\be\lra (\X\sslash\G),
\]
and we define $\X_{\be,\xi}$ to be the scheme theoretic fiber over $\xi \in [\X\sslash\G]^{rss}(k)$.

 We have the following generalization of the previous lemma.
\begin{Prop}\label{Prop: other fibers}
Let $\xi\in (\X\sslash\G)^{rss}(k)$ and $x\in \X_\xi(k)$ as above. There is a bijection between $\G_\be(k)$-orbits of $\X_{\be,\xi}(k)$ and the fiber over $\be\in H^1(k,\G)$ with respect to
\[
H^1(k,\G_x)\to H^1(k,\G).
\]
\end{Prop}
\begin{proof}
Suppose that $\X_{\be,\xi}(k)\neq\emptyset$, and let $x'\in \X_{\be,\xi}(k)$. Since we have assumed that $\G(k^{sep})$ acts transitively on the fiber $\X_\xi(k^{sep})=\X_{\be,\xi}(k^{sep})$, there exists $g\in \G(k^{sep})$ such that 
\[
x'=g\cdot x.
\]
\[
x'=\sig_\be\cdot x' =c(\sig)\sig(g\cdot x) = c(\sig)\sig(g)x,
\]
so that $[\sig\mapsto g^{-1}c(\sig)\sig(g)]\in H^1(k,\G_x)$ with image $\be\in H^1(k,\G).$ 

On the other hand, suppose that $\al\in H^1(k,\G_x)$ maps to $\be\in H^1(k,\G)$. This is true if and only if 
\[
\al\in \ker[H^1(k,\G_x)\lra H^1(k,\G)\to H^1(k,\G_\be)],
\]
where the map $H^1(k,\G)\to H^1(k,\G_\be)$ is given in \cite[Proposition 35]{SerreGalois}. If $a\in Z^1(\Ga,\G_x)$ represents $\al$, there exists $h\in \G(k^{sep})$ such that 
\[
a(\sig) = h^{-1}\sig_\be(h).
\]
Setting $x'=h\cdot x,$ we see that $\sig_\be(x')= x'$, so $x'\in \X_{\be,\xi}(k).$
\end{proof}
\begin{Rem}
This argument is very similar to the proof of Proposition 1 of \cite{bhargava2015arithmetic}. The difference is that they use Hilbert's Theorem 90 and the linearity of the action to split the cocycle $\rho(c)$ valued in $\GL(V)$. This allows them to fix a Galois action on $V$ and twist the representation, where as we simply change the Galois action on $\X$ along with the cocycle.
\end{Rem}
Combining Lemma \ref{Lem: ratl classes} and Proposition \ref{Prop: other fibers} we have the following orbit count.
\begin{Cor}
\end{Cor}}
\subsubsection{Non-empty fibers}\label{Section: non-empty fibers}
The final question is when an element $\xi\in [\X\sslash\G](k)$ is neutral. When $\X=\G$ and $\G$ acts via conjugation, a famous theorem of Kottwitz \cite{Kottwitzrational} shows that these gerbes are always neutral when $\G$ is a pure inner form of a quasi-split group. Such a statement is known to fail for many cases of spherical varieties. The next example illustrates how even in under desirable circumstance regular unipotent orbits can fail to have rational points for any pure inner form.%that any regular unipotent orbit
\begin{Ex}
Suppose that $\X=\rH\backslash\G$ is tempered in the sense that $P_\X$ is a Borel subgroup; such a symmetric vareity is also called quasi-split \cite{LeslieSpringer}. In this case, a regular orbit in $\mathcal{U}_{\X}\subset \X$ is precisely a $\rH$-orbit in $\mathcal{U}_{reg}^\theta$. For a regular unipotent element $u\in \X(k)$ to exist over $k$, it is necessary for a $\theta$-stable Borel subgroup $\B$ to exist; then $\B\cap\rH$ will be a $k$-rational Borel subgroup of $\rH$. In particular, both $\G$ and $\rH$ must be quasi-split.

   Suppose $k=\rr$ and take two real hermitian spaces $V_1$ and $V_2$ of signature $(2,1)$; %, both represented by the form
%\[
%J=\left(\begin{array}{ccc}
%%    &-1&\\
 %    1&&
%\end{array}\right)
%\]
then $\rH=\U(V_1)\times \U(V_2)\simeq U(2,1)\times U(2,1)$ is quasi-split. \quash{The maximal $\rr$-split torus is given by
\[
A=\left\{\left(\left(\begin{array}{ccc}
     t& & \\
     &1&\\
     &&t^{-1}
\end{array}\right),\left(\begin{array}{ccc}
     s& & \\
     &1&\\
     &&s^{-1}
\end{array}\right)\right)s,t\in \rr^\times\right\}
\] with centralizer
\[
S=\left\{\left(\left(\begin{array}{ccc}
     t& & \\
     &u&\\
     &&\overline{t}^{-1}
\end{array}\right),\left(\begin{array}{ccc}
     s& & \\
     &v&\\
     &&\overline{s}^{-1}
\end{array}\right)\right)s,t\in \cc^\times,\: u,v\in S^1\right\}.
\]
}Forming the direct sum $V_1\oplus V_2$, we realize the quasi-split group $\rH$ as a symmetric subgroup of the unitary group $\G=U(V_1\oplus V_2)$ of rank $6$; depending on the choice of form, $\G$ either has signature $(4,2)$, $(3,3)$, or $(2,4)$. 
%associated to the hermitian form
%\[
%\left(\begin{array}{cc}
 %   J &  \\
 %    & J
%\end{array}\right).
%\] This is a non-quasi-split unitary group of signature $(4,2)$. With respect to the block-diagonal embedding of $\rH$ into $\G$, one can clearly see that $S$ contains regular elements of $\G$, but $A$ does not.
%Nevertheless, the lemma's condition is not necessary: 
For example, if $V_1=V_2$ are equipped with a hermitian form $J$, we may form the unitary group $\G$ by using the hermitian form
\[
\left(\begin{array}{cc}
    J &  \\
     & -J
\end{array}\right)
\]
with signature $(3,3)$, so is quasi-split.

However, if $\rA$ is a maximal $\rr$-split torus of $\rH$, an easy calculation shows that $Z_{\G}(A)$ has derived subgroup $\SU(1,1)$. Proposition 3.5 of \cite{HelminckWang} now implies that a minimal $\theta$-stable parabolic $\rr$-subgroup $P\subset \G$ is \emph{not} a Borel subgroup, but instead has a Levi subgroup isomorphic to $\Res_{\cc/\rr}(\Gm)^2\times \U(1,1)$.% which is split, while the derived subgroup of $Z_{\G}(A)$ is $\SU(0,2)$. 

%\begin{Rem}\label{Rem: quasi-split counter} %\textcolor{red}{Is this actually true? In the latter case, the torus $A$ is not a maximal $k$-split torus in $\G$. In fact, The fact that there is the $\SL(2)$ in $Z_{\G}(A)$ tells us that.}
%In the $(U(3,3),U(2,1)\times U(1,2))$ case, we saw that the Levi subgroup of a minimal $\theta$-stable parabolic has derived subgroup $\SL(2)$. 

This example illustrates a bizarre point in terminology: it is possible for there to exists a \emph{quasi-split} symmetric variety $\X=\rH\backslash\G$ with both $\G$ and $\rH$ quasi-split over $k$ yet there exists no $k$-rational $\theta$-stable Borel subgroup, and hence no $k$-rational regular nilpotent elements in $T_{x_0}^\ast\X$.\qed
\end{Ex}
If we only consider \emph{regular semi-simple} points, we may hope that this remains the case for nice enough spherical varieties.

Let $\X$ be an excellent spherical $\G$-variety. Then for each $x\in \X^{rss}(k)$, the stabilizer is a connected reductive group with simply-connected derived group.   The construction of this section is more general \cite{BorovoiSecond}.%In particular, this is true for 

\quash{
\begin{Lem}
 Suppose that $\X=\rH\backslash\G$ is an excellent symmetric variety, and assume that $\G_{der}$ is simply connected. Then all stabilizers of regular semi-simple elements are connected reductive groups with simply connected derived group.
\end{Lem}
\begin{proof}
    We may pass to the algebraic closure ans assume $k=\kbar$. Let $\rA$ is a maximally $\theta$-split maximal torus with $\Ax$ the corresponding flat through $x_0\in \X(k)$. The $\rH(k)$-orbit of $x\in \X^{rss}(k)$ meets $\Ax$ so we are free to assume that $x\in \Ax$. The regular stabilizer $\rH_x$ is normalized by a twisted Levi subgroup $L_x\supset \rA$. In particular, $\rH_x$ sits in a short exact sequence
    \[
    1\lra \rH_x\lra L_x\lra \Ax\lra 1.
    \]
    The claim is now obvious.
    \end{proof}%We follow \cite{bhargava2015arithmetic}. Suppose that $\X$ is %; for example, we now impose the assumption that the stabilizers in $\G$ of points in $\X^{rss}$ are \emph{abelian}. \textcolor{red}{Abelianized cohmological obstruction in general}
}

Our assumptions allows for a Galois descent argument to show that for any $\xi\in (\X\sslash\G)^{rss}(k)$, there is a corresponding abelian group scheme $\G^{tor}_\xi$ over $k$ depending only on the invariant $\xi.$ More specifically, suppose  $x\in \X_\xi(\kbar)$ and $g_\sig\in \G(\kbar)$ such that%\footnote{The canonicity is a little weaker for me, since we need to choose a rational form a priori as our distinguished Galois action on $X(k^{sep})$. Nevertheless, we see below that this does not affect the obstruction class.}
\[
g_\sig\cdot \sig(x)=x.
\] Note that $g_\sig$ is well-defined up to left multiplication by an element of $\G_x$.
If for any reductive group we set $\G^{tor}=\G/\G_{der}$, the homomorphism
\begin{align*}
    \nu_\sig: \G^{tor}_{\sig(x)}&\lra \G^{tor}_{x}\\
                    t&\longmapsto g_\sig t g_\sig^{-1},
\end{align*}
is independent of the choice of $g_\sig$. The collection
\[
\nu_\sig:\sig(\G^{tor}_x)\lra \G^{tor}_x
\]
satisfies the cocycle condition $\nu_{\sig\tau}=\nu_{\sig}\circ\sig(\nu_{\tau})$, providing a descent datum associated to the group scheme $\G^{tor}_\xi.$ For any $x\in \X_\xi(\kbar)$, let $\iota_x:\G^{tor}_\xi(\kbar)\iso \G^{tor}_x(\kbar)$ denote the canonical isomorphisms.

Let us now construct the obstruction: choose $x\in \X_\xi(\kbar)$ and consider $g_\sig\in \G(\kbar)$ as above.  Define the natural obstruction class
\begin{equation}\label{eqn: 2-cocycle obstruction}
    d_{\sig\tau}=\iota_x^{-1}(g_\sig\sig(g_\tau)g_{\sig\tau}^{-1});
\end{equation}
this gives a $2$-cocycle whose image in $d_\xi\in H^2(k,\G^{tor}_\xi)$ is independent of any choice.

It is immediate that if $x\in \X_\xi(k)$, we could take $g_\sig=1$ for all $\sig\in \Ga$, so that $d_\xi=0$. Consider a $1$-cocycle $c$ representing a class $\be\in H^1(k,\G)$. We may go through the same construction, but with the $\Ga_\be$-action on $\X(\kbar)$. Thus, let $h\in \G(\kbar)$ be such that
\[
h_\sig \sig_\be(x) = x;
\]
then our notation satisfies $h_\sig c(\sig) = g_\sig$ for the appropriate choice of cocycle $c\in Z^1(k, \G)$. But then 
\[
h_\sig\sig_\be(h_\tau)h_{\sig\tau}^{-1} = g_\sig\sig(g_\tau)\sig(c(\tau)^{-1})c(\sig)^{-1}c(\sig\tau)g_{\sig\tau}^{-1}=g_\sig\sig(g_\tau)g_{\sig\tau}^{-1},
\]
so that $d_\xi=d_{\sig\tau}$ is independent of the pure inner form used to define it.

When $\G_x$ is  abelian, this vanishing is also sufficient, provided we allow $\X$ to be replaced by a pure inner form if necessary. 

\begin{Prop}\cite[Theorem 3]{bhargava2015arithmetic}\label{Prop: lift obstruction}
Suppose that $\xi\in (\X\sslash\G)^{rss}(k)$ and assume that the stabilizers in $\G$ of points in $\X^{rss}$ are \emph{abelian}. Then $d_\xi=0$ in $H^2(k,\G_\xi)$ if and only if there exists a pure inner form $\X_\be$ such that $\X_{\be,\xi}(k)\neq \emptyset.$
\end{Prop}

%In this sense, the quotient stack is the ``correct'' object while the categorical quotient is necessary to work with in the light of polynomial invariants. These are used to match orbits of $\X$ by means of the invariant map $X\to X\sslashG$. , but if one can prove this for all regular orbits, can be used to prove non-vanishing for all semi-simple elements via a slice argument\footnote{Sketch: any semi-simple element is approximated by regular elements. Linearizing, it suffices to check that the orbit is non-empty in the slice representation }
We suspect that this result holds more generally. Nevertheless we state the following conjecture stating that $d_{\xi}=0$ always holds under our assumptions.
\begin{Conj}\label{Conj: orbits lift}
Suppose $(\G,\rH)$ is a symmetric pair such that $\G$ is quasi-split over $k$ and $\G_{der}$ is simply connected. Setting $\X=\rH\backslash\G$, the natural map
\[
[\X/\rH](k)\lra (\X\sslash\rH)(k)
\]
is surjective over the regular locus.
\end{Conj}
%We show in Corollary \ref{Cor: surjective quotient stack iso} that this conjecture holds for the unitary Friedberg--Jacquet variety. One can also easily show it for appropriate Galois symmetric spaces.
%\begin{Rem}
%The notion of a gerbe is related to the case when one takes a quotient $X/G$, but the generic stabilizer $T$ of a point $x\in X$ is not trivial. Over the locus $X^{rss}$ with stabilizer $T$, we have in favorable conditions a gerbe $X^{rss}\to X^{rss}/G$ banded by $T$. This is a gerbe over the base $X^{rss}/G$. The fibers are gerbes over $\Spec(\kbar)$. This is the type of structure that arises in the Hitchin fibration.
%\end{Rem}

 \begin{Prop}
 Conjecture \ref{Conj: orbits lift} holds for the symmetric varieties $\X=\rH\backslash\G$ where $(\G,\rH)=(\U(W_1\oplus W_2),\U(W_1)\times\U(W_2))$ and Galois symmetric pairs $(\Res_{E/k}(\rH_E),\rH)$ when $\rH_{der}$ is simply connected.
 \end{Prop}
 \begin{proof}
\textcolor{red}{add proof}
 \end{proof}
\quash{
The existence of a Kostant(-Rallis) section $s:\X\sslash\G\lra \X$ defined over $k$ forces the vanishing of $d_\xi$ for all regular orbits.  For a symmetric variety $\X=\G^\theta\backslash \G$, there exists such a section for the cotangent space $T^\ast_{x_0}(\X)$ \cite{KostantRallis}. However, one cannot use this to deduce the vanishing of $d_\xi$ for $\X$ itself. For example, the arguments in \cite{Lesliedescent} show that for the symmetric variety
\[
\X=U(2n)/U(n)\times U(n)
\]
over a local field there are regular orbits which only lift to twists of $\X$ using the non-quasi-split form of $U(2n)$.
}

\subsection{Stable conjugacy and $\xi$-stable conjugacy}
We now adapt the notions of stable orbits to the context of quotient stacks for symmetric varieties over a field $k$. This is largely framed off thepresentation of \cite[Section 4.2]{KalethaStable} in the case of the adjoint action of a group on itself.
\begin{Def}
We say that symmetric pairs $(\G,\rH,\theta)$ and $(\G',\rH',\theta')$ are \emph{pure inner twists} if there is a pure inner twisting $(\psi,z):\G\lra \G'$ such that 
\begin{enumerate}
    \item the cocycle $\sig\mapsto z(\sig)\in Z^1(F,\rH)$ is valued in the fixed-point subgroup $\rH=\G^\theta$,
    \item $\theta'\circ\psi = \psi\circ \theta.$
\end{enumerate}
An isomorphism of pure inner twists $(\psi_1,z_1)\lra (\psi_2,z_2)$ is a pair $(f,h)$ with $f:\G_1\to \G_2$ a $(\theta_1,\theta_2)$-equivariant rational isomorphism such that $\psi_2^{-1}\circ f\circ \psi_1$ is an inner automorphism of $\G$ and $h\in \rH$ an element satisfying the conditions
\begin{enumerate}
    \item $\psi_2^{-1}\circ f\circ \psi_1=\Ad(h)$,
    \item $z_2(\sig) = hz_1(\sig)\sig(h)^{-1}.$
\end{enumerate}
\end{Def}
\begin{Lem}
There is a natural bijection 
\begin{align*}
    \{\text{pure inner twists of }(\G,\rH,\theta)\}/\sim&\overset{\sim}{\longrightarrow} H^1(F,\rH)\\
           (\psi,z)&\longmapsto [z].
\end{align*}
%The bijection is natural in $(\G,\theta).$
\end{Lem}
\begin{proof}
Consider the map 
\begin{align*}
 \{\text{pure inner twists of }(\G,\theta)\}&{\longrightarrow} \{\text{pure inner twists of }\rH\}\\
                                        (\psi,z)&\longmapsto (\psi|_{\rH},z),
\end{align*}
compatible with isomorphisms. It is standard that this latter set is parameterized by $H^1(F,\rH)$. This gives our map.

Suppose now that we have $\be\in H^1(F,\rH)$ represented by a cocycle $z$. Beginning with the data of $(\G,\rH,\theta)$, we have the inclusion $\iota:\rH\subset \G$, so we obtain a cocycle $\iota(z)\in Z^1(F,\G)$. In the standard way, this gives rise to a pure inner form $\G^\ast$ and a map $\psi=Id:\G_{\kbar}\lra \G^\ast_{\kbar}$ such that $\psi^{-1}\sig(\psi)= \sig^\ast\circ\sig^{-1}=\Ad(\iota(z)(\sig))$. That is, we have an identification of the groups over $\kbar,$ and twist the Galois action on the second group so that for $\sig\in \Ga$ and $g\in \G^\ast(\kbar)$,
\[
\sig^\ast(g) = \Ad(\iota(z)(\sig)(\sig(g)).
\] As $\iota(z)$ takes values in $\rH$, we similarly have a pure inner twist $\rH^\ast\subset \G^\ast$ and an $\kbar$-isomorphism 
\[
\psi_\rH:\rH_{\kbar}\lra \rH^\ast_{\kbar}
\]such that $\psi^{-1}\sig(\psi)=\Ad(\iota(z))$. Furthermore, the involution $\theta_{\kbar}$ satisfies
\[
\sig^\ast\circ\theta_{\kbar} = \theta_{\kbar}\circ\sig^\ast
\]
for all $\sig\in\Ga$. Thus, $\theta_{\kbar}$ descends to an $k$-rational involution $\theta^\ast:\G^\ast\lra \G^\ast$ such that $\rH^\ast = {\G^\ast}^{\theta^\ast}$. We thus obtain a pure inner twist
\[
(\psi,\iota(z)):(\G,\rH,\theta)\lra (\G^\ast,\rH^\ast,\theta^\ast).
\]
It is an easy check that the notion of isomorphisms align, proving the claim.
%We claim that $\psi$ satisfies the equivariance requirement. To see this, let $\theta_{\be,\psi}:=\psi\circ\theta\circ \psi^{-1}$; we claim that $\theta_{\be,\psi}$ descends to a rational involution on $\G^{\be}$. To see this note
%\begin{align*}
%    \sig(\theta_{\be,\psi}(g)) &= \sig(\psi(\theta(\psi^{-1}(g))))\\
%                        &= \psi(z(\sig))\psi(\sig(\theta(\psi^{-1}(g))))\psi(z(\sig))^{-1}\\
%                        &=\psi(z(\sig))\psi(\theta(z(\sig)))^{-1}\theta_{\be,\psi}(\sig(g))\psi(\theta(z(\sig)))\psi(z(\sig))^{-1}\\
%                        &=\theta_{\be,\psi}(\sig(g)),
%\end{align*}
%where the final simplification uses that $z(\sig)\in \rH(\kbar)$ for all $\sig\in \Gal_F$. Thus, $\theta_\psi$ descends to an involution on $\G^{\be}.$ But clearly $\G^{\be,\theta_{\be,\psi}} = \psi(\rH) = \rH^\be=\G^{\be,\theta_\be}$. %Any rational involution is determined by its fixed point subgroup, showing that $\theta_{\be,\psi}=\theta_{\be}$.
%Additionally, the forgetful map clearly sends a pure inner twist of $(\G,\theta)$ to a pure inner twist of $\G$, hence a class in $H^1(F,\G)$. The fibers of the map
%\[
% PIT(\G,\theta)\lra H^1(F,G)
%\]
%are easy to compute: the fiber over a class $\al\in H^1(f,\G)$ is the collection of pure inner twists such that the class $[z]\in H^1(F,\rH)$ maps to $\al$ under the natural map $H^1(F,\rH)\to H^1(F,\G)$. 
\end{proof}

Fix now a pure inner twist $(\psi,z)$ of $(\G,\theta)$. Let $\X=\rH\backslash\G$ be the associated homogeneous space. We now have two notions of a pure inner twist of $\X$: given a class $\be\in H^1(F,\rH)$, let $P$ be the associated $(\rH^\be,\rH)$-bitorsor, where $\rH^\be=\underline{\Aut}_{\rH}(P)$. We define the twist $\X^\be=P\times^{\rH}X$. On the other hand, let $(\psi,z)$ be a pure inner twist of $(\G,\theta)$ associated to $\be$. We may define the pure inner twist of $\X$ associated to $(\psi,z)$ to be the corresponding homogeneous space $\X^\be=\G^\be/\rH^\be$. A standard fact is that these two notions coincide.

Suppose now that $(\G',\rH')$ is another symmetric pair with a pure inner twist $(\psi,z):(\G,\rH)\lra (\G',\rH')$, and let $\X'={\rH'}\backslash\G'$ denote the associated variety. The commutativity constraint $\psi\circ\theta=\theta'\circ \psi$ implies that $\psi$ descends to give a map on the quotients (over $\kbar$)
\[
\psi_{\X}:\X\lra \X'.
\]
\begin{Lem}\textbf{Totally unsure of this.}
Consider a pure inner twist $(\psi,z):(\G,\rH)\lra (\G',\rH')$. The induced map $\psi_{\X}:\X\lra \X'$ descends to a rational isomorphism if and only if $[z]\in \mathcal{D}(\rH,\G;F)$.
\end{Lem}
\begin{proof}
If $[z]\notin \mathcal{D}(\rH,\G;F)$, then $\psi:\G\lra \G'$ does not descend to an $k$-rational isomorphism. If we consider the diagram
\[
\begin{tikzcd}
\X\ar[r,"\psi_X"]\ar[d,swap,"s"]&\X'\ar[d,"s'"]\\
\G\ar[r,"\psi"]& \G',
\end{tikzcd}
\]
\end{proof}
\begin{Def}\label{Def: stable orbits def} Fix a symmetric pair $(\G,\rH)$ over $k$.
\begin{enumerate}
    \item An element of a pure inner twist of $\X$ is a tuple $(\G',\rH',\psi,z,x')$ such that $(\psi,z):(\G,\rH)\lra (\G',\rH')$ is a pure inner twist and $x'\in \X'(k)$.
     \item Two elements $(\G_1,\rH_1,\psi_1,z_1,x_1)$ and $(\G_2,\rH_2,\psi_2,z_2,x_2)$ are \textbf{rationally conjugate} or lie in the same \textbf{rational orbit} if there is an isomorphism $(f,h):(\G_1,\rH_1)\lra (\G_2,\rH_2)$ such that $f(x_1)=x_2$.
     \item Two elements $(\G_1,\rH_1,\psi_1,z_1,x_1)$ and $(\G_2,\rH_2,\psi_2,z_2,x_2)$ are \textbf{stably conjugate} or lie in the same \textbf{stable orbit} if $\psi_{\X,1}^{-1}(x_1)$ and $\psi_{\X,2}^{-1}(x_2)$ lie in the same $\rH(\kbar)$-orbit.
     \item Let $x\in \X^{ss}(k)$ and let $\rH_x\subset \rH$ denote its stabilizer. If $(\G',\rH',\psi,z,x')$ is stably conjugate to $(\G,\rH,id, 1, x)$, then there exists $h\in \rH(\kbar)$ such that 
\[
 \psi_{\X}(h\cdot x) = x'.
\]
We define the invariant
\begin{equation}\label{eqn: general invariant}
   \inv(x,x')=\inv(x,(\G',\rH',\psi,z,x')) =[h^{-1}z(\sig)\sig(h)]\in H^1(F,\rH_x).
\end{equation}

\end{enumerate}
\end{Def}

In the relative context, there is not always a preferred pure inner form, so we will speak of pure inner forms of any rational form $(\G,\rH)$. In particular, the form of defining stable orbits in Definition \ref{Def: stable orbits def} is less symmetric than is preferable. 
\begin{Lem}\label{Lem: symmetrizing pit}
For $i\in \{1,2,3\}$, suppose $(\G_i,\rH_i)$ is a symmetric pair and let $x_i\in \X_i^{ss}(k)$. Suppose further that 
\[
(\psi_{1,j},z_{1,j}):(\G_1,\rH_1)\lra(\G_i,\rH_i)
\] is a pure inner twist of $(\G_1,\rH_1)$.%and set $(\psi_{i,i},z_{i,i})=(id,1)$. 
\begin{enumerate}
    \item\label{item 1: symm} The pair
    \[
    (\psi^{-1}_{1,i},\psi_{1,i}(z_{1,i}^{-1})):(\G_i,\rH_i)\lra(\G_1,\rH_1)
    \]
    is a pure inner twist of $(\G_i,\rH_i)$. 
    \item\label{item 2: symm}  Suppose that the elements $(\G_i,\rH_i,\psi_{1,i},z_{1,i},x_i)$ are all stably conjugate. Then there exists a pure inner twist $(\psi_{2,3},z_{2,3})$ such that
    \[
    (\G_2,\rH_2,id,1,x_2)\text{ and }(\G_3,\rH_3,\psi_{2,3},z_{2,3},x_3)
    \]
    are stably conjugate.
    \item\label{item 3: symm}  With notation as above,
    \[
    \inv(x_2,x_3) = \psi_{1,2}(\inv(x_1,x_3)\inv(x_1,x_2)^{-1})\in H^1(F,\rH_{2,x_2}).
    \]
\end{enumerate}
\end{Lem}
\begin{proof}
 The proof of \eqref{item 1: symm} follows immediately from the definition of a pure inner twist and a direct calculation verifying that $\psi_{i,j}(z_{i,j}^{-1})$ is a $1$-cocycle. Moreover, the $(\theta_i,\theta_j)$-equivariance of $\psi$ implies that $\psi_{i,j}(z_{i,j}^{-1})\in Z^1(F,\rH')$.
    
   Now consider \eqref{item 2: symm}. By assumption, there exists $h_i\in \rH_1(\kbar)$ conjugating $x_1$  to $\psi_{1,i}^{-1}(x_i)$ for $i=1,2$. Then 
    \[
    \psi_{1,2}(h_3h_2^{-1})\cdot x_2 = \psi_{1,2}(\psi_{1,3}^{-1}(x_3))
    \]
    with $h:= \psi_{1,2}(h_3h_2^{-1})\in \rH_2(\kbar)$. One now directly verifies that
    \[
    (\psi_{2,3},z_{2,3}):=(\psi_{1,3}\circ\psi_{1,2}^{-1},\psi_{1,2}(z_{1,3}\cdot z_{1,2}^{-1})): (\G_2,\rH_2)\lra (\G_3,\rH_3) 
    \]
    gives a pure inner twist of $(\G_2,\rH_2)$. The preceding formula now states
    \[
    h\cdot x_2 = \psi_{2,3}^{-1}(x_3),
    \]proving \eqref{item 2: symm}. % As an inner twist, it is isomorphic to $\psi_{2,3}$ with respect to the automorphism $f:\G_3\lra \G_3$ This suffices to prove \eqref{item 2: symm}.
    
    For \eqref{item 3: symm}, we may exchange the pure inner twists $(\psi_{1,2},z_{1,2})$ and $(\psi_{1,3},z_{1,3})$ with the rationally conjugate twists 
\[
\text{$(\psi_{1,2}\circ\Ad(h_2),h_2^{-1}z_{1,2}(\sig)\sig(h_2))$ and $(\psi_{1,3}\circ\Ad(h_3),h_3^{-1}z_{1,3}(\sig)\sig(h_3))$}
\] to assume without loss of generality that
    \[
    \psi_{1,2}(x_1)=x_2\text{ and }\psi_{1,3}(x_1)=x_3.
    \]
By definition and our assumptions on the twists, $\inv(x_1,x_i)=[z_{1,i}(\sig)]$ and
   \[
   \inv(x_2,x_3)=\psi_{1,2}(z_{1,3}(\sig)z_{1,2}(\sig)^{-1})\in H^1(F,\rH_{2,x_2})
   \]
   proving the final claim.
\end{proof}

Let $\rH$ be a reductive group over $k$. Recall the abelianization map 
\[
ab^1_{\rH}:H^1(F,\rH)\lra H^1_{ab}(F,\rH)
\]
If $(\psi,z):\rH\lra \rH'$ is a pure inner twist, then we saw in the proof of the previous lemma that $(\psi^{-1},\psi(z^{-1}))$ is a pure inner twist of $\rH'$ to $\rH$. This gives a bijection
\begin{align*}
     Z^1(\Ga,\rH)&{\longrightarrow}\: Z^1(\Ga,\rH')\\
                                       z_1(\sig)&\longmapsto \psi(z_1(\sig)z(\sig)^{-1}),
\end{align*}
passing to a bijection $(\psi,z): H^1(F,\rH)\iso H^1(F,\rH')$. It is natural to consider the following diagram
\[
\begin{tikzcd}
 H^1(F,\rH)\ar[r,"{ab_{\rH}^1}"]\ar[d,swap,"{(\psi,z)}"]& H^1_{ab}(F,\rH)\ar[d,"{\psi}"]\\
  H^1(F,\rH')\ar[r,"{ab_{\rH'}^1}"]& H^1_{ab}(F,\rH'),
\end{tikzcd}
\]
where the right vertical arrow is a canonical isomorphism $\psi:H^1_{ab}(F,\rH)\simeq H^1_{ab}(F,\rH')$. This diagram does not commute, unless $[z]=1$. Indeed, when $\rH=\rH'$ is abelian so that $H^1(F,\rH)=H^1_{ab}(F,\rH)$, $(\psi,z)$ is translation by the class $[z]^{-1}$, which is not even a homomorphism. 
\begin{Lem}
Identifying $\psi:H^1_{ab}(F,\rH)\simeq H^1_{ab}(F,\rH')$, we have the identity
\[
ab^1_{\rH'}(\psi,z)([z_1])=ab_{\rH}^1([z_1])ab_{\rH}^1([z])^{-1}.
\]
\end{Lem}
For any character $\ka:H^1_{ab}(F,\rH)\lra \cc^\times$, we use the notation
\[
\la\ka,z\ra :=\ka(ab_{\rH}^1([z]))
\]
for a class $[z]\in H^1(F,\rH)$.
\begin{Cor}\label{Cor: invariant twist}
Suppose $(\G_i,\rH_i,\psi_{j,i},z_{j,i},x_i)$ are as in Lemma \ref{Lem: symmetrizing pit}. The inner twist $\psi_{1,2}$ induces the canonical isomorphism \[
H^1_{ab}(F,\rH_{1,x_1})\overset{\sim}{\lra}H^1_{ab}(F,\rH_{2,x_2})
\] and corresponding dual isomorphism 
\[
H^1_{ab}(F,\rH_{2,x_2})^D\overset{\sim}{\lra}H^1_{ab}(F,\rH_{1,x_1})^D. 
\]
For any character $\ka\in H^1_{ab}(F,\rH_{2,x_2})^D$, we have the identity
\[
\la\ka,\inv(x_2,x_3)\ra = \la\ka,\inv(x_1,x_3)\ra\la\ka,\inv(x_1,x_2)\ra^{-1}
\]
\end{Cor}
\begin{proof}
That the induced isomorphism is canonical (independent of $\psi_{1,2}$) follows from \cite[Lemme 1.6.2]{LabesseBook}. The claim now follows from Lemma \ref{Lem: symmetrizing pit} part \eqref{item 3: symm}.
\end{proof}
\begin{Def}
We say that two embeddings of maximally $\theta$-split tori 
\[
\fa':T\hra \G'\qquad\text{and}\qquad\fa: T\hra \G
\]
are \textbf{$\psi$-stably conjugate} if there exists $h\in \rH(\kbar)$ such that
\[
h\psi(\fa')h^{-1}=\fa.
\]
\end{Def}
This occurs if and only if 
\[
\chi_{\X'}\circ\fa'|_{T^-}=\chi_{\X}\circ \fa|_{T^-}.
\]

%In \cite{Lesliedescent}, we introduced the notion of a simply connected symmetric pair $(\G,\rH)$: it precisely requires all centralizers to be connected. Following the terminology in \cite{SakellaridisICM}, this implies that $\X=\rH\backslash\G$ is \emph{excellent}. 

\section{Endoscopy  for symmetric varieties}

\subsection{Point matching}\label{Section:orbit match}
We now prove the following theorem giving a matching of semi-simple orbits of symmetric varieties.
\begin{Thm}\label{Thm: point comparison}
Suppose that $\G$ is a reductive group over $k$ with $\mathrm{char}(k)\neq 2$. Let $\theta$ be a $k$-rational involution and set $\X=\G^\theta\backslash \G$. Let $\fe = (\G_\fe,\theta_\fe,\G^{\theta_\fe}_\fe,\ka,\eta)$ is an endoscopic datum for $(\G,\X)$. There exists a canonical map
\[
\mathcal{A}^\fe:\left[\X^\fe\sslash \G^{\theta_\fe}_\fe\right]\lra\left[\X\sslash \G^{\theta}\right]
\]
between the categorical quotients.
\end{Thm}
Fix admissible Borel pairs $(T,B)$ and $(T^\fe,B^\fe),$ and assume that $T$ and $T^\fe$ are defined over $k$.
\subsubsection{Recollection from endoscopy}
Contained in our data is a pure refined endoscopic datum ${\fe}_0=(\G_{\fe},\eta,\ka)$ of $\G$. We have assumed that $\eta$ may be extended to a morphism ${}^L\eta:{}^L\G^\fe\lra {}^L\G$ of $L$-groups \cite{LanglandsStableconj}, and note that there is an appropriate notion of $z$-extension if this is not the case. We also assume that $\G$ is quasi-split, though this is not strictly necessary.

Suppose that $(\G^\fe,s,\eta)$ is an endoscopic datum of $\G$ and that $S_{\G^\fe}\subset {\G^\fe}$ is a maximal torus defined over $k$, and let $S\subset G$ be similar. We recall the notion of an admissible isomorphism $S_{\G^\fe}\to S$. 

Let $(T,B)$ be a Borel pair for $\G$ defined over $k$ and let $(\check{T},\check{B})$ be the $\Ga$-stable Borel pair for $\check{G}$. There is an identification $X_\ast(T)=X^\ast(\check{T}).$ We have the same data upon fixing a Borel pair $(T^\fe,B^\fe)$ for ${\G^\fe}$. Up to equivalence of endoscopic data, we may assume that $\eta^{-1}(\check{T},\check{B})=(\check{T}^\fe,\check{B}^\fe)$. Then $\eta$ induces an isomorphism $X^\ast(\check{T}^\fe)\iso X^\ast(\check{T})$, inducing an $\overline{F}$-isomorphism $\xi^\fe:T^\fe\iso T$. 

This is not an $k$-isomorphism: the $\Ga$-actions on the character lattices differ by a twist of the Weyl group action. What this does give is an isomorphism of quotients
\[
T^\fe/W^\fe{\lra}T/W.
\]
Combining this with the Chevalley isomorphism, we thus obtain an $k$-morphism
\begin{equation}
    {\mathcal{A}^\fe}:[\G^\fe\sslash\G^\fe]\lra [\G\sslash\G].
\end{equation}
To see how this map lifts to a matching of stable strongly regular semi-simple orbits, we need the notion of an admissible isomorphism.
\begin{Def}
An \textbf{admissible isomorphism} of tori $S^\fe\iso S$ is one given by the dotted arrow making the following diagram commute
\[
\begin{tikzcd}
T^\fe\ar[r,"\xi^\fe"]&T\\
S^\fe\ar[u,"\Ad(h)"]\ar[r,dotted]&S\ar[u,"\Ad(g)"],
\end{tikzcd}
\]
where $h\in \G^\fe(\kbar)$ and $g\in G(\kbar)$ such that the composition gives an $k$-morphism.  In particular, $S^\fe(k)\iso S(k).$
\end{Def}%even though the adjoint maps in general are not defined over $k$.

Now suppose that $\ga\in \G^{\fe,sr}(k)$ and let $S^\fe=Z_{\G^\fe}(\ga)$ be its centralizer. Let $\de\in \G^{sr}(k)$ and $S=Z_G(\de)$. We say that $\ga$ and $\de$ are \textbf{related (or match)} if there exists an admissible isomorphism $S^\fe\to S$ taking $\ga$ to $\de$. If such an isomorphism exists, it must be unique so we name it $\varphi_{\ga,\de}$. Note that the existence of such an isomorphism is equivalent to 
\begin{equation}\label{eqn: matching of srs}
\mathcal{A}^\fe([\ga])=[\de]\in [\G\sslash\G](k).
\end{equation}
To see this, first not that the definition of an admissible morphism implies (\ref{eqn: matching of srs}) by the definition of $\mathcal{A}^\fe$. On the other hand, suppose (\ref{eqn: matching of srs}) holds; the Kottwitz-Steinberg theorem implies that since $\G$ is quasi-split with simply connected derived group, $\de$ can always be chosen to be $k$-rational.\footnote{This is critical! When $\G$ is not of this form, one doesn't even use transfer to $\G$, but a $z$-extension of $\G$.}

Fixing an $h\in \G^\fe(\overline{F})$ such that $\Ad(h):S^\fe\iso T^\fe$, we compose with $\psi^\fe$ to obtain $\de' = \xi^\fe\circ\Ad(h)(\ga)\in T(\overline{F})$. By construction, we see that
\[
[\de']=[\de],
\]
so that strong regularity implies that there exists a unique coset $g\in \G(\kbar)/S(\kbar)$ such that 
\[
\de'=\Ad(g)(\de).
\]
We claim that $\varphi_{\ga,\de}:S^\fe\to S$ given by $\Ad(g^{-1})\circ \psi^\fe\circ\Ad(h)$ descends to an $k$-isomorphism. Since each $\kbar$-morphism is $\Ga$-equivariant up to a Weyl group-valued cocycle, so is $\varphi_{\ga,\de}$. Suppose that $\sig\mapsto w_\sig$ denotes the cocycle so that
\[
\varphi_{\ga,\de}(\sig(s)) = \Ad(w_\sig)(\sig(\varphi_{\ga,\de}(t)).
\]
Applying this to the equation $\varphi_{\ga,\de}(\ga)=\de$, strong regularity implies that $w_\sig=1$ for all $\sig\in \Ga.$ In particular, the morphism is defined over $k$.

We remark that this notion may readily be extended to pure inner twists of $\G$ and $\G^\fe$: if we drop the assumption that these groups are quasi-split and let $\G_{qs}$ and $\G^\fe_{qs}$ be the quasi-split pure inner forms, we may go through the same construction above. That is, suppose that
\[
(\psi^\fe,z^\fe):\G^\fe\lra \G^\fe_{qs}\text{ and }(\psi,z):\G\lra \G_{qs}
\] are pure inner twists between the forms we are considering and their quasi-split pure inner forms. Let $T^\fe\subset \G^\fe_{qs}$ and $T\subset \G_{qs}$ be as above. Suppose $\ga\in \G^{\fe,sr}(k)$ and let $S^\fe=Z_{\G^\fe}(\ga)$ be its centralizer and suppose $\de\in \G^{sr}(k)$ and $S=Z_G(\de)$. Then we may find  $h\in \G^\fe_{qs}(\kbar)$ and $g\in \G_{qs}(\kbar)$ such that we may form the diagram
\[
\begin{tikzcd}
&T^\fe\ar[r,"\psi^\fe"]&T&\\
S^\fe\ar[r,"\psi^\fe"]&\psi^\fe(S^\fe)\ar[u,"\Ad(h)"]\ar[r,dotted]&\psi(S)\ar[u,"\Ad(g)"]&S\ar[l,swap,"\psi"].
\end{tikzcd}
\]
We thus obtain an $\kbar$-isomorphism  
\[
\varphi: = (\Ad(g)\circ\psi)^{-1}\circ \xi^\fe\circ (\Ad(h)\circ\psi^\fe): S^\fe\lra S
\]
between $k$-tori, and one checks that the $\varphi$ is $\Ga$-equivariant up to a Weyl group-valued cocycle. Adjusting $h$ and $g$ if necessary, we may ensure that $\varphi(\ga)=\de$ and the morphism descends to an $k$-isomorphism as above.
%\begin{Rem}
%This is the core of the notion of geometric transfer. The key observation is that if we replace $\Ad(g):S\to T$ by another $\Ad(g'):S\to T$, we replace the matching $\ga\to \de$ to $\ga\to\de'$ for some other rational conjugacy class of $\de$ in the stable conjugacy class if and only if $g^{-1}g'\in N_G(S)(\overline{F})$ does not lie in $S(\overline{F})$. In particular, our choice of $\G$ encodes the rational conjugacy class. 
%\end{Rem}

\subsubsection{Relative extension} Suppose now that $\fe=(\G^\fe,\rH^\fe,\eta,\eta_X,\ka)$ is a relative endoscopic datum. We fix a pure inner twist
\[
(\psi^\fe,z^\fe):(\G^\fe,\rH^\fe)\lra (\G^\fe_{qs},\rH^\fe_{qs}).
\]%This requires that there is an embedding $s_\fe:\X^\fe\lra \G^\fe$ of the endoscopic symmetric variety into the quasi-split form of the endoscopic group $\G^\fe$.\textbf{WHY??? because I want to talk about a Borel subgroup, but do I need it to be $k$-rational?} 
For the moment, we assume that $(\psi^\fe,z^\fe) = (Id,1)$ so that $(\G^\fe,\rH^\fe)$ is quasi-split. Let $(\T_{\fe},\B_{\fe})$ be a $(\theta_{\fe},F)$-admissible pair. 
%\begin{Lem}
%Every $k$-rational Borel subgroup $B^\fe\subset \G^\fe$ has an open orbit $X^{\fe,\circ}\subset\X^\fe$.
%\end{Lem}
%Let $\X^{\fe,\circ}$ denote the open geometric $\B^\fe$-orbit. Let $x_{\fe}\in \X^{\fe,\circ}(k)$ be a point and let $\theta^\fe:\G_{\fe}\lra \G_{\fe}$ denote the induced involution. Then $\theta_{\fe}(B_{\fe})\cap B_{\fe}=:T_{\fe}$ is maximally $\theta_{\fe}$-split maximal torus of $\G^\fe.$ 
By Lemma/Definition \ref{Def: admissible pair}, we let $(T_{\fe},B_{\fe})$ be a $(\theta_{\fe},F)$-admissible pair. This gives rise to a short exact sequence
\[
1\lra {T_{\fe}}^{\theta_{\fe}}\lra T_{\fe}\lra \mathrm{A}_{\X_{\fe}}\lra 1,
\]
and a dual diagram
\[
 \begin{tikzcd}
 \check{\mathrm{A}}_{{\X_{\fe}}}\ar[d]\ar[r]&\check{T}^\fe\ar[d]\ar[r]&{\check{T}_{\fe}}\ar[d]\\
 \check{G}_{{\X_{\fe}}}\ar[r,"\varphi_{\fe}"]&\check{G}_{\fe}\ar[r,"{s}_{\fe}"]&\check{{\X}}_{\fe},
 \end{tikzcd}
\]
where $\varphi_{\fe}$ is the morphism induced by requiring $\varphi_{\fe}( \check{G}_{{\X_{\fe}}})$ to be the connected component of the fixed point subgroup of the dual involution $\check{\theta}_{\fe}$ as determined by Proposition \ref{Prop: dual involution}. Note that $\check{B}_{\fe}$ is $\check{\theta}_{\fe}$-stable.

We now fix similar data $(T,B,\theta)$ for $(\G,\X)$. Then we have a commutative diagram
\begin{equation}
      \begin{tikzcd}
      \check{\G}_{\X_{\fe}}\ar[d,"\eta_{\X}"]\ar[r,"\varphi_{\X^\fe}"]&\check{\G}^\fe\ar[d,"\eta"]\ar[r]&\hat{\X}^\fe\ar[d,"\eta^\fe"]\\  
\check{\G}_{\X}\ar[r,"\varphi_X"]&\check{\G}\ar[r]&\hat{\X}.
\end{tikzcd}
\end{equation}
\begin{Lem}\label{Lem: important simplification}
We conjugate our choice of endoscopic data by an element of $\varphi^\fe(\check{\G}_{\X^\fe})$ such that we have 
\begin{enumerate}
    \item\label{property1} $\eta^{-1}(\check{T},\check{B})=(\check{T}^\fe,\check{B}^\fe)$,
    \item\label{property2} $\eta_\X(\check{\mathrm{A}}_{{\X^\fe}})=\check{\mathrm{A}}_{{\X}}$.
\end{enumerate}
In particular, the map $\eta:\check{T}^\fe\lra \check{T}$ may be chosen so that it intertwines $\check{\theta}^\fe$ with $\check{\theta}$.
\end{Lem}
\begin{proof}
The arrows $s$ and $s^\fe$ have been fixed by our choices of admissible pairs. Note that the commutativity of the diagram already implies that 
\begin{equation}\label{eqn: important endoscopic commute}
    \eta\circ\check{\theta}^\fe = \check{\theta}\circ \eta.
\end{equation}
In particular, if we set $(\check{T}'^\fe,\check{B}'^\fe)=\eta^{-1}(\check{T},\check{B})$, then $(\check{T}'^\fe,\check{B}'^\fe)$ is a fundamental pair for $\check{\theta}^\fe$. Since $\hat{\X}^\fe$ is a spherical space of minimal rank, it follows from Lemma \ref{Lem: minimal rank} that there exists $h\in \varphi^\fe(\check{\G}_{\X^\fe})$ such that
\[
\Ad(h)(\check{T}'^\fe,\check{B}'^\fe)=(\check{T}^\fe,\check{B}^\fe).
\]
Thus, replacing $\eta$ (resp. $\eta_{\X}$) with $\eta\circ\Ad(h^{-1})$ (resp. $\eta_{\X}\circ\Ad(h^{-1})$), we may assume that (\ref{property1}) holds. With this assumption, (\ref{property2}) follows from the commutativity relation \eqref{eqn: important endoscopic commute} and identifying $\check{\mathrm{A}}_X=\check{T}^{\check{\theta},\circ}$.
\end{proof}

We now assume that our endoscopic datum $\fe$ satisfies the properties of the preceding lemma. Thus, $\eta$ induces an isomorphism $X^\ast(\check{T}^\fe)\iso X^\ast(\check{T})$ intertwining the involutions:
\[
\eta(\check{\theta}^\fe(\lam))=\check{\theta}(\eta(\lam)).
\]
This induces an $\kbar$-isomorphism $\psi^\fe: T^\fe\iso T$. Lemma \ref{Lem: dual involution on torus} now implies that $\psi^\fe$ intertwines the involutions on $T^\fe$ and $T$:
\[
\psi^\fe(\theta^\fe(t)) = \theta(\psi^\fe(t)).
\]
In particular, we see that $\psi^\fe$ induces an isomorphism $T^{\fe,\pm}\iso T^\pm$ between the fixed/split subtori.

Moreover since $\theta^\fe$ and $\theta$ commute with the Galois actions on the two tori, it follows that the Galois cocycle $\sig\mapsto w_\sig\in W(\G,T)(\kbar)$ such that
\[
\varphi_{\ga,\de}(\sig(s)) = \Ad(w_\sig)(\sig(\varphi_{\ga,\de}(t))
\]
preserves the isomorphisms 
\[
\psi^\fe:T^{\fe,+}\iso T_+\qquad\text{ and }\qquad \psi^\fe:T^{\fe,-}\iso T^-.
\]
This implies that $\{w_\sig\}$ is valued in the subgroup $W_1=\{w\in W(\G,T): w(T^-)=T^-\}$.
\begin{Lem}
Consider the isomorphism $\psi^{\fe,-}:T^{\fe,-}\iso T^-,$ and let $\{w_\sig\}$ denote the $\Ga$-cocycle valued in $W_1(\kbar)$. This descends to a cocycle valued in $W(\theta):=W(\rH,T^-)(\kbar)$.
\end{Lem}
\begin{proof}
This follows immediately from the isomorphism \cite[Section 4]{Richardson}
\[
W(\rH,T^-)\simeq W_1/W_2,
\]
where $W_2=\{w\in W_1: w|_{T^-}\equiv Id\}$.
\end{proof}
It follows from this lemma that $\psi^\fe$ induces a $\Ga$-equivariant map
\[
T^{\fe,-}/W(\theta^\fe)\lra T^-/W(\theta).
\]
The following proposition allows us to define the map in Theorem \ref{Thm: point comparison}.
\begin{Prop}\cite[Corollary 11.5]{Richardson}
There is an isomorphism
\[
[X\sslash\rH]\simeq T^-/W(\theta).
\]
\end{Prop}
The preceding proposition implies that $\psi^\fe$ induces an $k$-morphism
\begin{equation}\label{eqn: orbit mapping}
    \fc_\fe:[\X^\fe\sslash\rH^\fe]\lra [X\sslash\rH].
\end{equation}
It is easy to check that $\fc_\fe$ is independent of all choices \textbf{Verify this!}, completing the proof of Theorem \ref{Thm: point comparison}.

\begin{Cor}\label{Cor: quotient coherence}
Suppose that $\chi^\fe:[\G^\fe\sslash\G^\fe]\lra [\G\sslash\G]$ is the map between categorical quotients of groups. The diagram
\begin{equation}\label{eqn: commuting quotients}
    \begin{tikzcd}
{[\X^\fe\sslash\rH^\fe](k)}\ar[d,"s_\fe"]\ar[r,"{\fc_\fe}"]&{[\X\sslash\rH](k)}\ar[d,"s"]\\
{[\G^\fe\sslash\G^\fe](k)}\ar[r,"{\chi_\fe}"]&{[\G\sslash\G](k)}
\end{tikzcd}
\end{equation}
commutes. Here the vertical arrows are the natural maps induced by the symmetrization maps.
\end{Cor}

\subsection{Semi-simple orbits}
Now suppose that $\fe=(\G^\fe,\rH^\fe,\eta,\eta_X,\ka)$ is an endoscopic datum chosen to satisfy Lemma \ref{Lem: important simplification}, and let $\fc_\fe$ denote the morphism of categorical quotients.

\begin{Def}
We say $x^\fe\in [\X^{\fe,ss}](k)$ and $x\in [\X^{ss}](k)$ \textbf{match} (or are related) if 
\[
\fc_\fe([x^\fe]) = [x].
\]
We say that $x^\fe\in [\X^\fe](k)$ is \textbf{$\X$-regular semi-simple} (or just $\X$-regular) if $x$ is regular semi-simple. 
\end{Def}
By Corollary \ref{Cor: quotient coherence}, this implies that $\ga$ and $\de$ have matching stable orbits.

\subsubsection{Regular orbits}
 Suppose that
\[
(\psi^\fe,z^\fe):(\G^\fe,\rH^\fe)\lra (\G^\fe_{qs},\rH_{qs}^\fe)\text{ and }(\psi,z):(\G,\rH)\lra (\G_{qs},\rH_{qs})
\] are pure inner twists between the forms we are considering and their quasi-split pure inner forms. Let $T^\fe\subset \G^\fe_{qs}$ and $T\subset \G_{qs}$ be as above. Suppose $y\in \X^{\fe,sr}(k)$ is strongly regular. There exists an $k$-rational $\theta^\fe$-split maximal torus $S_{y}$ fitting into a diagram
\[
\begin{tikzcd}
1\ar[r]&\T_y\ar[r]\ar[d]&\rS_y\ar[r]\ar[d]&\rA_y\ar[r]\ar[d]&1\\
1\ar[r]&\rH_y^\fe\ar[d]\ar[r]&\G_y^\fe\ar[d]\ar[r]&\rA_{y}\ar[d]\ar[r]&1\\
1\ar[r]&\rH^\fe\ar[r]&\G^\fe\ar[r]&\X^\fe\ar[r]&1.
\end{tikzcd}
\]
%We recall that $\G^\fe$ need not be the quasi-split form of the endoscopic group.

Now let $a=\fc_\fe([x^\fe])\in [\X\sslash\rH](k)$ and suppose that there exists a pure inner form $\X'$ of $\X$ such that ${\X'}^{ss}_a(k)\neq \emptyset$. Without loss of generality, we may assume that $\X=\X'$. Let $x\in{\X}^{ss}_a(k)$ and let $(\G_x,\rH_x)$ denote its descent. We further assume that $y$ is $\X$-regular, so $x\in \X^{re}(k)$ and let $S_x\subset \G_x$ be a maximally $\theta$-split maximal torus fitting into a diagram
\[
\begin{tikzcd}
1\ar[r]&\T_x\ar[r]\ar[d]&\rS_x\ar[r]\ar[d]&\rA_x\ar[r]\ar[d]&1\\
1\ar[r]&\rH_x\ar[d]\ar[r]&\G_x\ar[d]\ar[r]&\rA_{x}\ar[d]\ar[r]&1\\
1\ar[r]&\rH\ar[r]&\G\ar[r]&\X\ar[r]&1.
\end{tikzcd}
\]
As in the absolute case, we may find  $h\in \rH^\fe_{qs}(\kbar)$ and $g\in \rH_{qs}(\kbar)$ such that we may form the diagram
\[
\begin{tikzcd}
&T^\fe\ar[r,"\psi^\fe"]&T&\\
S_y\arrow[rrr, bend right,"\varphi_{x,y}"]\ar[r,"\psi^\fe"]&\psi^\fe(S_y)\ar[u,"\Ad(h)"]\ar[r,dotted]&\psi(S_x)\ar[u,"\Ad(g)"]&S_x\ar[l,swap,"\psi"].
\end{tikzcd}
\]
We thus obtain a $(\theta^\fe,\theta)$-equivariant $\kbar$-isomorphism  
\[
\varphi_{x,y}: = (\Ad(g)\circ\psi)^{-1}\circ \xi^\fe\circ (\Ad(h)\circ\psi^\fe): S^\fe\lra S
\]
between $k$-tori. Let $\varphi_{x,y}^\pm: S_y^\pm\lra S_x^\pm$ denote the restriction to the subtori $S_y^{\pm}$. As in the absolute case, one may choose $h,g$ such that
\[
\varphi_{x,y}(y)=x,
\]
where one uses the symmetrization map to realize $y\in S_y^-(k)$ and $x\in S_x^-(k)$\textcolor{red}{Is this necessary, or can we pass to the quotient $\rA$ here?}. As above, there is a Weyl group-valued cocycle such that
\[
\varphi_{\ga,\de}(\sig(s)) = \Ad(w_\sig)(\sig(\varphi_{\ga,\de}(t)),
\]
the image of which lies in the subgroup $W_1=\{w\in W(\G,S_x): w(\rA_x^-)=\rA_x^-\}$. It follows from the regularity of $y$ and $x$ that it is indeed valued in $W_2=\{w\in W_1: w|_{\rA_x^-}\equiv 1\}$. In particular, the isomorphism
\[
\varphi_{x,y}^-: \rA_y\lra \rA_x
\]
descends to an $k$-isomorphism. 
\begin{Cor}\label{Cor: transfer of regular stabs}
Suppose $x\in [\X](k)$ and $y\in [\X^\fe](k)$ match and assume $y$ is $\X$-regular. Let $\rA_y$ (resp. $\rA_x$) denote the unique maximal $\theta^\fe$-split (resp. $\theta$-split) torus containing $y$ (resp. $x$). For any $k$-rational maximally $\theta$-split tori $S_x\supset \rA_x$ and $S_y\supset \rA_y$, there exists an inner twist $\psi_{x,y}:\G^\fe_y\lra \G_x$ such that the cocycle
    \[
    [\sig\mapsto \psi^{-1}{}^\sig\psi]\in Z^1(F,\G^\fe_y)
    \]
    is valued in $\rH^\fe_y\cap N_{\G^\fe}(S_y)$. If $\G_x$ is abelian, this descends to an $k$-isomorphism.
\end{Cor}
\textcolor{red}{Is this pure? Also this has not been proved yet.}

\subsubsection{Singular orbits}
Suppose that $y=x^\fe\in [\X^{\fe,ss}](k)$ so that there exists a pure inner form $(\G_{\fe},\rH_{\fe})$ and an $k$-rational $\theta^\fe$-split maximal torus $S_{y}$ fitting into a diagram
\[
\begin{tikzcd}
1\ar[r]&\T_y\ar[r]\ar[d]&\rS_y\ar[r]\ar[d]&\rA_y\ar[r]\ar[d]&1\\
1\ar[r]&\rH_y^\fe\ar[d]\ar[r]&\G_y^\fe\ar[d]\ar[r]&\X_{y}^\fe\ar[d]\ar[r]&1\\
1\ar[r]&\rH^\fe\ar[r]&\G^\fe\ar[r]&\X^\fe\ar[r]&1.
\end{tikzcd}
\]
As before, let $a=\fc_\fe([y])\in [\X\sslash\rH](k)$ and suppose that there exists a pure inner form $\X'$ of $\X$ such that ${\X'}^{ss}_a(k)\neq \emptyset$. We may again assume $\X=\X'$. Let $x\in{\X}^{ss}_a(k)$ and let $(\G_x,\rH_x)$ denote its descent. 

Suppose now $y'\in \rA_y(k)$ is an auxiliary $\X$-regular element and let $a'=\fc_{\fe}([y'])\in[\X\sslash\rH](k)$ and suppose that there exists a pure inner form $\X'$ of $\X$ such that ${\X'}^{ss}_{a'}(k)\neq \emptyset$. We may again assume $\X=\X'$ and let $x'\in{\X}^{ss}_{a'}(k)$. Applying Corollary \ref{Cor: transfer of regular stabs} to $y'$, we may choose $S_{x'}$ such that we obtain an $\kbar$-isomorphism
\[
\varphi: S_y\iso S_{x'}
\]
equivariant with respect to the involutions such that $\varphi$ induces an $k$-isomorphism $\rA_y\iso \rA_{x'}$. Setting $x=\varphi(y)\in \rA_{x'}(k)\subset \X(k)$, it follows that $[x]=\fc_\fe([y])$.

If we assume that $y\in \X^\fe(k)\subset \G^\fe(k)$ is $(\G,\G^\fe)$-regular in the sense of \cite[3.1]{Kottwitzstableelliptic}, then $\G^{\fe,\circ}_y$ and ${\G}^\circ_x$ are inner forms. Moreover, since we have assumed that $(\G,\rH)$ is simply connected, \cite[Lemma 3.2]{Kottwitzstableelliptic} implies that $\G_{\fe}_y=\G^{\fe,\circ}_y$. 

%Recall the spherical roots $R_{\X^\fe}\subset X^\ast(\rA_y)$ and $R_{\X}\subset X^\ast(\rA_x)$ from Sections \ref{Section: spherical roots} and \ref{Section: dual groups}. Since $y$ and $x$ are transfers, the morphism $\varphi$ induces an embedding
%\[
%R^\fe\subset R\subset X^\ast(S_x)
%\]
%of the roots of $\G^\fe$ into the roots of $\G$. Moreover, the $(\theta^\fe,\theta)$-equivariance of $\varphi$ implies
%\[
%\text{ $R^\fe_{\theta^\fe}\subset R_\theta$ and $R_{\X^\fe}\subset R_{\X}$.}
%\]

\begin{Lem}\label{Lem: matching singular orbits}
Suppose $y$ and $x$ match as above and assume that $y$ is $(\G,\G^\fe)$-regular. The symmetric pairs 
\[
(\G_y^\fe,\rH^\fe_y)\text{   and   }(\G_x,\rH_x)
\]
are inner forms of one another. That is, there exists a $(\theta^\fe,\theta)$-equivariant $\kbar$-isomorphism
\[
\psi:\G^\fe_y\iso \G_x
\]
such that the cocycle $\sig\mapsto \psi^{-1}{}^\sig\psi$ lies in $Z^1(F,\rH^\fe/Z_{\X^\fe})$.
\end{Lem}
\begin{proof}
With the notation as above, we have a $(\theta^\fe,\theta)$-equivariant $\kbar$-isomorphism $\varphi: S_y\lra S_x$ such that $R^\fe\subset R\subset X^\ast(S_x)$. Let $B_y$ and $B$ be maximally split Borel subgroups of $\G^\fe$ and $\G$ respectively such that $S_y\subset B_y$ and $S_x\subset B_x$. By Proposition 1.3 of \cite{Springerclassification}, there exist $v\in \G^\fe$ and $w\in \G$ such that
\[
(\B_y,\T_y):=v(B_y,S_y)v^{-1}\text{ and }(\B_x,\T_x):=w(B_x,S_x)w^{-1}
\] are fundamental pairs in $\G^\fe$ and $\G$. It is a simple exercise that the isomorphism 
\[
\varphi'=\Ad(w)\circ\varphi\circ\Ad(v^{-1}):\T_y\lra \T_x
\]
satisfies the assumptions of Theorem 1.6 of \cite{Springerclassification}. That result thus implies that $\varphi'$ extends to an isomorphism of algebraic groups
\[
\psi:\G^\fe_y\iso \G_x
\]
intertwining the involutions. Moreover, the uniqueness statement in \emph{loc. cit.} precisely states that we obtain an inner twist. %Note that this statement is given in terms of fundamental pairs $(B,T)$ rather than maximally $\theta$-split maximal tori. The translation between the two perspectives is explained in \cite[1.2]{Springerclassification}. Indeed, if 
\end{proof}

\begin{Rem}\label{Rem: locus too big}
The $(\G,\G^\fe)$-regular locus was introduced by Kottwitz to identify those semi-simple elements of $\G^\fe$ which contribute to elliptic semi-simple terms of the geometric side of the trace formula. While the preceding lemma gives this notions importance in the relative setting, additional complications restrict our ability to consider all elliptic terms.% see Section \ref{Section: UFJ stab} for example, where we must restrict further to those $(\X,\X^\fe)$-regular elements lying in the so-called iso-locus. 
\end{Rem}
\subsection{Tate--Nakayama duality}
 We recall the statement of Tate--Nakayama duality in this context.
\begin{Prop}\cite[Proposition 1.7.3]{LabesseBook}\label{Prop: Tate-Nakayama}
Suppose $\G$ is a connected reductive group over $k$. If $k$  is local, there is a canonical injection
\[
H^1_{ab}(F,\G)\lra \pi_0(Z(\check{G})^\Ga)^D
\]
which is bijective when $k$ is non-archimedean. If $k$ is global, there is a canonical bijection
\[
H_{ab}^1(\A/F,\G)\lra \pi_0(Z(\check{G})^\Ga)^D
\]
Furthermore, 
\[
\ker^1(F,\G)=\ker^1_{ab}(F,\G)=\ker^1(F,Z(\check{G}))^D.
\]
\end{Prop}

%Thus far, we have not used the SV assumptions. However, in order to discuss the dual group $G^\vee_X$, we use these assumptions to obtain a commutative diagram
%\begin{equation}
%    \begin{tikzcd}
%    \check{A}_{{x}}\ar[d]\ar[r]&\check{T}_x\ar[d]\ar[r]&\check{T}_{x,0}\ar[d]\\
% \check{G}_{{\X}}\ar[r,"\varphi_{{\X}}"]&{\G}_{\X}^\wedge\ar[r]&\hat{\X}.
%    \end{tikzcd}
%\end{equation}
%We need to take the Galois action on $\check{G}$ into account. This question is pursued in \cite[Section 10]{KnnopSchalke}. For the moment, let's define $\calg_X=G_X^\vee\rtimes \Ga$ so that there exists an extension ${}^L\varphi_X$ making the diagram
%\[
%\begin{tikzcd}
%\mathcal{G}_X\ar[rr,"{}^L\varphi_X"]\ar[rd]&&{}^LG\ar[ld]\\
%&\Ga&
%\end{tikzcd}
%\]
%commute. Assuming that $\calg_X$ is an $L$-group, let $G_X$ be the quasi-split $k$-group with ${}^LG_X=\calg_X.$ A consequence of these assumptions is that there is a Galois action on $\check{G}_X$ making $\varphi_X$ equivariant. We may thus descend the $\Ga$-action to the quotient $\check{X}$ and discuss this Galois action.

%If $(\G_\xi,\eta,\ka)$ is an endoscopic triple for $\G$, we say that the pair $(H,\eta)$ is the underlying endoscopic pair.
\subsection{Local theory} Suppose that $k$ is a local field and that $(\G,\rH)$ is simply connected. Let $x\in \X^{re}(k)$ be strongly regular elliptic element, and consider the descendant $(\G_x,\rH_x)$; our assumption implies $\rH_x$ is connected.  Then $\G_x$ is a twisted Levi subgroup of $\G$ and sits in a short exact sequence %By our assumptions $\rH_x=T_x^\theta=T_{x,0}$ is a torus.
\[
1\lra \rH_x\lra \G_x\lra A_x\lra 1
\]
where $A_x$ is the unique maximal $\theta$-split torus containing $x$ \cite[Lemma 9.6]{Richardson}. By a result of Kottwitz \cite[pg. 1.8]{KottwitzCusp}, we obtain a short exact sequence
\[
1\lra \check{\rA}_x\lra Z(\check{\G}_x)\lra Z(\check{\rH}_x)\lra1.
\]

Let $\rS_x\subset \G_x$ be a maximally $\theta$-split maximal $k$-torus containing $\rA_x$, and let $\T_x:= \rS_x^{\theta,\circ}$ be the connected component of the fixed points.\footnote{The simply connected assumption implies this is connected, yes?} We further impose the assumption that $\rS_x$ is chosen such that the $k$-split rank of $\T_x$ is minimized.\footnote{Not clear if this is necessary or not.} There is a commutative diagram of multiplicative groups
\[
\begin{tikzcd}
1\ar[r]&\check{\rA}_{x}\ar[r]\ar[d,"="]&Z(\check{\G}_x)\ar[d]\ar[r]&Z(\check{\rH}_x)\ar[d]\ar[r]&1\\
1\ar[r]& \check{\rA}_{x}\ar[r]& \check{\rS}_x\ar[r]& \check{\T}_x\ar[r]&1,
\end{tikzcd}
\]
with vertical arrows independent of any choice.

By Lemma \ref{Lem: split torus embedding}, there exists a canonical $\Ga$-invariant $\varphi_{\X}(\check{\G}_{\X})$-conjugacy class of embeddings $\check{T}_x\to \hat{\G}_{\X}$ realizing $\check{\rS}_x$ as a maximal $\check{\theta}$-split torus. Fixing one we obtain a commutative diagram
\begin{equation}
    \begin{tikzcd}
    \check{\rA}_{{x}}\ar[d]\ar[r]&\check{\rS}_x\ar[d]\ar[r]&\check{\T}_{x}\ar[d]\\
 \check{G}_{{\X}}\ar[r,"\varphi_{{\X}}"]&{\G}_{\X}^\wedge\ar[r]&\hat{\X}.
    \end{tikzcd}
\end{equation}
In particular, for any point $\ka\in Z(\check{\rH}_x)^\Ga$, there is a $\Ga$-invariant $\check{\G}_{\X}$-orbit $[\ka]\subset \hat{\X}$ of semi-simple elements obtained by varying the embedding of $\check{\T}_x$.\footnote{Is this independent of the choice of $\rS_x$? This again is not an issue in the quasi-split case.}
By Lemma \ref{Prop: Tate-Nakayama}, we have a canonical surjective map 
\[
H^1_{ab}(F,\rH_x)\lra \pi_0(Z(\check{\rH}_x)^\Ga).
\]which is bijective if $k$ is non-archimedean. Recall that $\X$-elliptic implies that $Z(\rH_x)/Z_{\G,\rH}$ is $k$-anisotropic, where $Z_{\G,\rH} = Z(\G)\cap\rH$. Setting $C_x:=\rH_x/Z_{\G,\rH}$, we have a short exact sequence
\[
1\lra Z(\check{C}_x)\lra Z(\check{\rH}_x)\lra \check{Z}_{\X}\lra 1
\]
The long exact sequence of \cite[Corollary 2.3]{KottwitzCusp} and ellipticity assumption gives an exact sequence
\[
Z(\check{C}_x)^\Ga\lra \pi_0(Z(\check{\rH}_x)^\Ga)\lra \pi_0(\check{Z}_{\X}^\Ga)\lra H^1(F,Z(\check{C}_x)).
\]

\subsection{The global theory}\label{Section: global arithmetic}
Assume that $(\G,\rH)$ is a symmetric pair over a number field $k$. Let $x\in \X^{ell}(k)$ be an elliptic point. Recall that $\X$-elliptic implies that $Z(\rH_x)/Z_{\G,\rH}$ is $k$-anisotropic, where $Z_{\G,\rH} = Z(\G)\cap\rH$. Setting $C_x:=\rH_x/Z_{\G,\rH}$, we have a short exact sequence
\[
1\lra Z(\check{C}_x)\lra Z(\check{\rH}_x)\lra \check{Z}_{\X}\lra 1
\]
The long exact sequence of \cite[Corollary 2.3]{KottwitzCusp} gives an exact sequence
\[
\pi_0(Z(\check{C}_x)^\Ga)\lra \pi_0(Z(\check{\rH}_x)^\Ga)\lra \pi_0(\check{Z}_{\X}^\Ga)\lra H^1(F,Z(\check{C}_x)).
\]
\textbf{Assume for now that $Z_{\G,\rH}$ is itself $k$-anisotropic.} Then the preceding sequence becomes
\[
1\lra Z(\check{C}_x)^\Ga\lra Z(\check{\rH}_x)^\Ga\lra \check{Z}_{\X}^\Ga\lra H^1(F,Z(\check{C}_x)).
\]
By Tate--Nakayama duality in Lemma \ref{Prop: Tate-Nakayama}, there is a canonical bijection $$H^1_{ab}(\A/F,\rH_x)^D\cong\pi_0(Z(\check{\rH}_x)^\Ga)=Z(\check{\rH}_x)^\Ga,$$ where again the last equality follows from ellipticity and our assumption that $Z_{\G,\rH}$ is anisotropic. 
\quash{
\subsection{Endoscopic symmetric varieties}
The simplest version of this problem is to assume that $k$ is local\footnote{When $F=\rr$, the abelianization map has a kernel equal to the image of $H^1(F,\G_{sc})$ in $H^1(F,\G)$; in particular it is injective when $\G$ is a torus. Moreover it is always surjective, but the map $H^1_{ab}(F,\G)\lra\pi_0(Z(\check{G})^\Ga)^D$ need not be surjective.} and that $T_0=T_{x,0}$ is anisotropic. There are canonical morphisms
\[
H^1(F,T_0)\iso H^1_{ab}(F,T_0)\lra \pi_0(\check{T}_0^\Ga)^D\iso (\check{T}_0^\Ga)^D.
\]
Note that the second arrow is always injective and is a bijection when $k$ is non-archimedean. Thus, if $\ka\in H^1(F,T_0)^D \simeq \check{T}_0^\Ga$ then our choice of $\check{\theta}$-admissible embedding of $\check{T}$ induces an element
\[
\ka\in \check{T}_0^\Ga\subset \check{T}_0\subset \hat{\X};
\]
varying this choice varies $\ka$ within a single $\check{\G}_{\X}$-orbit\footnote{What Galois invariance do we have? Is this really a $\check{\G}^\Ga_{\X}$-orbit?} $[\ka]\subset \hat{\X}^{ss}$. In particular, the intersection $\check{T}_0^\Ga\cap[\ka]$ is independent of the choice of embedding. \textcolor{red}{Prove as a lemma}

For any such choice, let $({G}^\wedge_{\ka},\check{\G}_{\X,\ka})$ be the descendant of $({G}^\wedge_{\X},\check{\G}_{\X})$ at $\ka$, which fits into a diagram
\[
   \begin{tikzcd}
    \check{\G}_{\X,\ka}\ar[d]\ar[r]&\hat{\G}_{\ka}\ar[d]\ar[r]&\hat{\X}_\ka\ar[d]\\
 \check{G}_{{\X}}\ar[r,"\varphi_{{\X}}"]&{\G}_{\X}^\wedge\ar[r]&\hat{\X}.
    \end{tikzcd}
\]
%Since both $\T$ and $T_x$ are maximally $\theta$-split maximal tori, there exists $h\in \rH_x(\overline{F})$ such that $h\T h^{-1}= T_x$. In particular, up to changing $x$ within its stable orbit, we may assume that\footnote{This is assuming that $s(T)=T^\sig$. This is not a tautology and may very well fail. It is true whenever $T^\sig$ is connected.} $$x\in (\T\cdot x_0)(k)=(\Ax\cdot x_0)(k)\subset \X(k)$$
% and $T_x=T_{x'}.$

The involution $\check{\theta}$ induces a closed immersion
\[
\hat{\X}\hra \hat{\G}_{\X} \subset \check{\G};
\]
in particular we may take the centralizers of $\ka$
\[
\hat{\G}_{\ka}\subset \check{\G}_\ka
\]
giving a diagram
\begin{equation}\label{eqn: dual variety kappa}
    \begin{tikzcd}
&\check{\G}_\ka&\\
\check{\G}_{\X,\ka}\ar[r,"\varphi_{\X}"]&\G_{\ka}^\wedge\ar[u]\ar[r,"\check{s}_\ka"]&\hat{\X}_\ka.
\end{tikzcd}
\end{equation}
% What we want to do is quantify how to identify $\hat{\X}$ independent of the choice of torus. For simplicity we pick a point $x_0:=\check{\G}/\varphi_{x}(\check{\G}_{\X})$ to be our base point. 
%Passing to the dual side, it follows
% \begin{Quest}
% What does this do on the other side of things?
% \end{Quest}
% We thus have two embeddings
% \[
% \check{\G}_{\X}\ar[rd, "\varphi_x"]&& \check{\G}_{\X}\ar[dl,"\varphi_{x'}"]\\
% & \check{\G},&
% \end{tikzcd}
% \]
% the image of which is determined by a certain torus of $\hat{\X}$.Richardson proves that there exists $k\in \check{\G}_{\X}$ such that $k\calo k^{-1} = \calo'$. 
% Let $\calo,\calo'\subset \hat{\X}$ be the corresponding torus orbits in $\hat{\X}$\footnote{Has this been defined independently?}. 

Let $\eta_{\ka}: \check{\G}_\ka\subset \check{\G}$ and let $\G_\ka$ be a quasi-split group over $k$ dual to $\check{\G}_\ka^\circ$. Since $\ka\in \hat{\X}^\Ga$, the triple $(\G_\ka,\ka, \eta_\ka)$ gives an endoscopic triple. We claim that there exists an involution on $\G_\ka$ inducing $\check{\theta}|_{\G_{\ka}^\wedge}$ on $\G_{\ka}^\wedge$.

Since $\check{T}_x\subset \check{\G}_\ka$ we have an $k$-rational embedding
\[
T_x\lra \G_\ka,
\]
such an the involution must extend the involution $\theta:T_x\lra T_x$. In particular, the involution on $Z(\G_\ka)$ is uniquely determined by $\theta$. Thus it suffices to produce an involution on $\G_{\ka,sc}$  since $\G_{\ka} = \G_{\ka,sc}\times^{Z(\G_{\ka,sc})}Z(\G_{\ka})$. For this it suffices to produce an involution on $\G_{\ka,ad}$ \cite[9.16]{Steinberg}. Note that when $\G_{\ka}^\wedge=\check{\G}_\ka$ (the \emph{tempered} case) this is clear: we obtain directly an involution on the based root datum 
\[
\theta: (X^\ast(\T),R,X_{\ast}(\T),\check{R})\lra(X^\ast(\T),R,X_{\ast}(\T),\check{R})
\]
from $\check{\theta}$ by duality and reversing the construction. With respect to this involution, $(\B,\T)$ is a maximally split pair. \textbf{CITE SPRINGER CLASSIFICATION}

}

}

\newpage
\begin{appendix}%%%%%%%%%%%%%%%%%%%%%%%%%%%%%%%%%%%%%%%%%%%%%%%%%%%%%%%%%%%%%%%%%%%%%%%%%%%%%%%%%%%%%%%%%%%

\quash{
\section{Helminck's Classification}
We recalled aspects of  Helminck's method of classification of $k$-rational involutions in Section \ref{Section: symmetric data} as the notion of a $(\Ga,\theta)$-index proved useful for comparison to the endoscopic spherical data. On the other hand, the terminology employed in \cite{Helminckrational} is distinct from standard cohomological classifications of rational forms. This is due the distinction between classifying symmetric $k$-varieties and $k$-involutions. To this end,  we recall in this appendix the basic calculation of Helminck in terms of a map $\mu_N$ from involutions to indicies, and we give an alternative factoring the map \eqref{mu_N}, which relates it to Galois cohomology. For simplicity, we assume that $\G$ is quasi-split.  %We do not use this section in the rest of the paper, but add it merely to clarify the two perspectives.
\subsubsection{Helminck's classification}\label{Sec: Helminck's} Now assume that $\G$ is quasi-split, and let be a $k$-rational Borel pair $(\rA,\B)$.% and determine the extent to which the $(\Ga,\theta)$-index determines $\X=\X_0$. 
Let 
 \[
 \Ind(\Ga,\theta):= \{\text{admissible $(\Ga,\theta)$-indices on  } (X^\ast(\rA),\Phi(\rA))\}/\sim,
 \]
 where $\sim$ denotes the natural notion of {congruence} of indices \cite[Section 5]{Helminckrational}. If we set $\mathrm{Inv}_k(\G)$ to be the set of $\G(k)$-conjugacy classes of $k$-rational involutions on $\G$, there is a natural surjective map
 \[
 \rho_{\G}: \mathrm{Inv}_k(\G)\lra  \Ind(\Ga,\theta),
 \]
 given by conjugating any involution so that $(\rA,\B)$ is an admissible pair and passing to the induced $(\Ga,\theta)$-index. %The first step can always be done without loss of information via conjugating by $\G(k)$ \cite[Lemma 8.3]{Helminckrational}. The second step requires some thought to justify.

 Conflating the $\mathrm{Inv}_k(\G)$ with those involutions that are normally related to $(\rA,\B)$, Helminck defines a certain quotient $\mathrm{Inv}_{N}(\G,\mathrm{A})=\mathrm{Inv}_k(\G)/\sim_N$, where $\theta_1\sim_N\theta_2$ if there exists $g\in \G(\kbar)$ such that $\theta_1 = \Ad(g)\theta_2 \Ad(g)^{-1} = \Ad(s_2(g))\circ \theta_2$. In particular we see that $\Ad(s_2(g)) = \Ad(g\theta_2(g)^{-1})\in [N_\G(\rA)/Z(\G)](k)\subset\G_{ad}(k)$. %$\theta\sim_N\theta'$ if $\theta = \Ad(n)\circ\theta'$ for some $n\in N_{\G}(\rA)(\kbar)$. .\footnote{We are seeking to mirror Helminck's terminology for the sake of the interested reader. The subscript $N$ corresponds to the fact that since $\theta_1$ and $\theta_2$ are taken to be normally related to $\rA$, this forces $g\in N_\G(\rA)(\kbar)$}
 
Now Theorem 8.9 of \cite{Helminckrational} shows that one obtains a bijection
%\[
%\mathrm{Inv}_{N}(\G,\mathrm{A})\overset{\sim}{\lra}\Ind(\Ga,\theta)
%\]
with the set of admissible $(\Ga,\theta)$-indices.  %Here, the set $\mathrm{Inv}_{N}(\G,\mathrm{A})$ means we take $\mathrm{Inv}_k(\G,\rA)$ and we impose the relation  
Let $\mu_N$ denote the natural quotient map
\begin{equation}\label{mu_N}
 \mathrm{Inv}_k(\G,\rA) \overset{\mu_{N}}{\lra}\mathrm{Inv}_{N}(\G,\mathrm{A})\overset{\sim}{\lra}\Ind(\Ga,\theta).
\end{equation}
The problem of classification reduces to understanding the fibers of $\mu_{N}$. Recalling that $\G$ is quasi-split, Helminck factors $\mu_N$ as
 \[
 \mathrm{Inv}_k(\G)\overset{\mu}{\lra} \mathrm{Inv}(\G,\mathrm{A})\overset{\nu}{\lra}\mathrm{Inv}_{N}(\G,\mathrm{A}),
 \]
 where $\mathrm{Inv}(\G,\mathrm{A})$ is obtained from $\mathrm{Inv}_k(\G)$ by identifying involutions $\theta$ and $\theta'$ if $\theta|_{\rA}=\theta'|_{\rA}$. The map $\mu$ is the natural quotient map, while the map $\nu$ identifies classes $[\theta]$ and $[\theta']$ if there exists $g\in \G(\kbar)$ such that $$\theta'|_{\T}=(\Ad(g\theta(g)^{-1})\circ\theta)|_{\T}.$$  Note that this forces $Ad(g\theta(g)^{-1})\in [N_{\G}(\rA)/Z(\G)](k)$. 
 
 The precise enumeration of the fibers of the maps $\mu,\nu$ is complicated, and the results of \cite[Section 9]{Helminckrational} are not effective for an arbitrary field (though there is some simplification when $k$ is finite or local).  %Our purpose for recalling this result is the following lemma. %We record Helminck's characterization of the fibers of $\mu$.
 
 %\begin{enumerate}
  %   \item 
     %\item  In particular, this is the case if and only if  (something is wrong in this formula) $$\theta' = Ad(g\theta(g)^{-1})\circ\theta\circ \Ad(a)$$ for some $\Ad(a)\in (A^-_\theta/A^-\cap Z(\G))(k)$. 
 %\end{enumerate}

% \begin{Lem}\cite[Proposition 8.13]{Helminckrational}\label{Lem: on torus}
 %Suppose that $\G$ is quasi-split involution $\theta$, $\rH=\G^\theta$, and $\X = \rH\backslash\G$. Then $\theta'\in \mu^{-1}(\mu(\theta))$ if and only if $\theta' =  \Ad(a)\circ \theta$ where $$\Ad(a)\in (A^-_\theta/A^-\cap Z(\G))(k)\subset \G_{ad}(k).$$
   %  Set $\Aut^{\G}_{spl}(\X) = A^-\cap Z(\G)$; this is a subset of $\Aut^{\G}(\X)$ by Lemma \ref{Lem: autogroup}. Then 
   %  \[
%\mu^{-1}(\mu(\theta))\iso \ker[H^1(k,\Aut^{\G}_{spl}(\X))\lra H^1(k,\G)]
%     \]
% \end{Lem}
\quash{ \begin{proof}
     This is essentially a cohomological interpretation of Helminck's notion of $k$-involutive elements $I_k(A^-_\theta) = (A^-_\theta/A^-\cap Z(\G))(k)\subset \G_{ad}(k)$. Indeed, \cite[Section 8]{Helminckrational} shows that $\theta'\in \mu^{-1}(\mu(\theta))$ if and only if $\theta' = \theta\circ \Ad(a)$ where $\Ad(a)\in (A^-_\theta/A^-\cap Z(\G))(k)\subset \G_{ad}(k)$. 
     
     Consider the long exact sequence
     \[
1 \lra (A^-\cap Z(\G))(k)\lra A^-_\theta(k)\lra (A^-_\theta/A^-\cap Z(\G))(k)\lra H^1(k,A^-\cap Z(\G))\lra 1,
     \]
     where we use that $A^-_\theta$ is a $k$-split torus.     By the exactness of
     \[
     \G_{ad}(k)\lra H^1(k,Z(\G))\lra H^1(k,\G)\lra H^1(k,\G_{ad}),
     \]
     it follows that the induced class in $H^1(k,\G)$ is trivial.
     \end{proof}}

%     A precise cohomological classification of these fibers is not straightforward.
\subsubsection{Cohomology of the automorphism group}
%In particular, the automorphism group $\Aut^{\G}(\X)$ naturally plays a role. 
 % For simplicity, we always assume that $\rH=\G^\theta$; slight modifications in our identifications are needed in the more general setting.
We now give an alternative factorization of $\mu_N$, and explain how the fibers of each constituent function are parametrized by Galois cohomology groups. 
 
%\begin{Rem}
%It would be psychologically helpful to assume that $\rH$ may be taken to be quasi-split
 %   We refer to such a symmetric pair as a \emph{rational quasi-split pair} or just a quasi-split pair. There is a terminology issue in that the term quasi-split has a different meaning in the symmetric space literature, relating to classical results on real forms (see \cite{LeslieSpringer,prasad2018generic}). In this work, we refer to such pair as \emph{tempered} in order to align the terminology with the relative Langlands literature.
%\end{Rem}

%%%%%%%%%%%%%%%%%%%%%%%
\quash{
\begin{Lem}
    Suppose that $\G$ is quasi-split, and let $\theta$ be a $k$-involution with $\G^\theta=\rH$. There is a quasi-split pure inner twist of $\rH$, $\rH_{qs}\subset \G$, associated to an involution $\theta_{qs}$ such that 
    \[
    \rH\backslash\G\simeq \rH\backslash\G_{qs}.
    \]
    and $\G$ contains a $k$-rational $\theta_{qs}$-stable Borel subgroup.    
\end{Lem}

\begin{proof}
    Suppose that $A\subset T$ is a maximal $k$-split torus of $\G$ with $T=Z_{\G}(A)$, and assume that $\theta$ is normally related to $A$. Let $T^\theta_{\rH}\subset \rH$ be a $k$-rational maximal torus of $\rH$. Then $T_{\rH}:=Z_{\G}(T^\theta_{\rH})$ is a maximal torus of $\G$ that is $\theta$-stable \cite[Lemma 5.3]{Richardson}. Let $B\supset T$ be a $k$-rational Borel subgroup of $\G$, which we may assume is maximally $\theta$-split.

    There exists $g\in \G(\kbar)$ such that $T= g^{-1}T_{\rH}g,$ so that the cocycle 
    \begin{equation}\label{eqn: cocycle thing}
            [\sig\in \Ga\longmapsto \sig(g)g^{-1}]
    \end{equation}
    is valued in $N_{\G}(T_\rH)$. Now over $\kbar$, there exists a $\theta$-stable Borel subgroup $B_{\rH}$ containing $T_{\rH}$. We thus can change $g$ via left multiplication by an element of $N_{\G}(T)$ so that $B_{\rH} = g B g^{-1}$ is such a $\theta$-stable Borel subgroup containing $T_{\rH}$. Then
    \[
    g(T,B)g^{-1} = (T_{\rH},B_{\rH})= \theta(g)(T,\theta(B))\theta(g)^{-1}.
    \]
    Now there exists $n\in N_{\G}(A)(k)$ such that $\theta(B) = nBn^{-1}$. It follows that 
    \[
    [g^{-1}\theta(g)] = [n^{-1}] = \theta[n]\in W(\G,A)(k)
    \]
    gives a rational twisted involution. Thus, \textcolor{red}{The following argument is likely not correct. The point was that any two representatives of a twisted involution \emph{which are themselves elements of $\X$} lie in a single twisted $T$-orbit. The issue is that I don't know that there is a rational $n$ satisfying $\theta(n)= n^{-1}$. Perhaps this is why the statement seemed to prove more than I had wanted (the existence of a rational Borel).} there exists $t\in T(\kbar)$ such that
    \[
    (gt)^{-1}\theta(gt)= t^{-1}(g^{-1}\theta(g))\theta(g) = n,
    \]
    %a cocycle $t_\sigma\in T(\kbar)$, so that 
    %\[
    %\sig(g^{-1}\theta(g)) =g^{-1}\theta(g)t_\sig.
    %\]
    so that up to changing $g$ to $gt$, we may choose $g$ such that $g^{-1}\theta(g)\in N_{\G}(A)(k)$, then 
    \[
    \theta(\sig(g)g^{-1})= \sig(g)g^{-1};
    \]
    that is, in this case we may choose $g\in \G(\kbar)$ so that $\sig(g)g^{-1}\in \rH\cap N_{\G}(T_{\rH}) = N_{\rH}(T_{\rH})$. Noting that
    \[
    \sig(B_{\rH}) = (\sig(g)g^{-1}) B_{\rH} (\sig(g)g^{-1})^{-1}, 
    \]
    we see that twisting $\rH$ by the cocycle \eqref{eqn: cocycle thing} gives a quasi-split pure inner twist of $\rH$, $\rH_{qs}= g\rH g^{-1}$. Then $\theta_{qs} = \Ad(g)\circ\theta\circ \Ad(g)^{-1}$ is a $k$-rational involution on $\G$ and fixing $\rH_{qs}$. 
    
    It is clear that the cocycle gives a class in $\ker[H^1(k,\rH)\lra H^1(k,\G)]$.
\end{proof} }

Let $\mathrm{Inv}_{k,N}(\G,\mathrm{A})$ be the set of $k$-involutions of $\G$ normally related to $\rA$ subject to the relation that $\theta_1\sim\theta_2$ if there exists $g\in \G(\kbar)$ such that $s_2(g)=g\theta_2(g)^{-1}\in N_{\G}(\rA)(k)$ and $\theta_1 = \Ad(s_2(g))\circ\theta_2$. Let $\nu_{k}$ be the corresponding quotient map and let $\nu_N$ be the further quotient so that $\mu_N=\nu_N\circ\nu_{k}$. We obtain the factorization
\[
 \mathrm{Inv}_k(\G) \overset{\nu_{k}}{\lra}\mathrm{Inv}_{k,N}(\G,\rA) \overset{\nu_{N}}{\lra}\mathrm{Inv}_{N}(\G,\mathrm{A})\overset{\sim}{\lra}\Ind(\Ga,\theta).
\]
The following is an easy Galois cohomology exercise.%\textcolor{red}{This might be wrong, needing to take the conjugacy into account}
\begin{Prop}\label{Prop: fibers}
Let $\theta\in  \mathrm{Inv}_k(\G)$.
\begin{enumerate}
\item There is a natural bijection 
\begin{align*}%\label{eqn: fibers 0}
 \mu_N^{-1}(\mu_N(\theta))\iso \ker^1(N_\G(\G^\theta),\G;k).
\end{align*}
    \item  There is a natural bijection
\begin{align}\label{eqn: fibers 1}
 \nu_k^{-1}(\nu_k(\theta))\iso \ker^1(\G^\theta,\G;k).
\end{align}
In particular, for any $\theta\in  \mathrm{Inv}_k(\G)$, there is a natural bijection
\[
\G^\theta\backslash \G(k)=\bigsqcup_{\theta'\in \nu_k^{-1}(\nu_k(\theta))} \G(k)\cdot s(g'),
\]
where $\theta'=\Ad(s(g'))\circ \theta$.
\item There is a natural bijection
\begin{align}\label{eqn: quotient 2}
\nu_{N}^{-1}(\mu_N(\theta)) \iso \ker^1(\Aut^\G(\X),\G;k),
\end{align}
where we use the symmetrization map to obtain a morphism $\Aut^\G(\X)\subset \Ax\hra \G$.
\end{enumerate}
%In particular, 
%\begin{align}\label{eqn: fibers 3}
%\mu_N^{-1}(\mu_N(\theta))\iso \ker[H^1(k,N_{\G}(\rH))\lra H^1(k,\G)].
%\end{align}
\end{Prop}

\begin{proof}
        We first consider $\mu_N^{-1}(\mu_N(\theta))$. Suppose that $\theta'\in\mu_N^{-1}(\mu_N(\theta))$, so that there exists $g\in \G(\kbar)$ such that $\theta'=g\theta g^{-1}$ and $s=g\theta(g)^{-1}\in N_{\G}(\rA)(\kbar)$. Then a standard Galois cohomology argument shows that the class of the cocycle 
    \[
[\sig\in \Ga\longmapsto g^{-1}\sig(g)]\in \ker^1(N_G(\G^\theta),\G;k).
    \]
    It is straight forward to see that this induces a bijection between the fiber $\mu_N^{-1}([\theta])$ and $\ker^1(N_\G(\G^\theta),\G;k)$. 

    Now we consider the fiber  $\nu_k^{-1}(\nu_k(\theta))$. The only change now is that in above analysis is that we may assume that $g\in \G(\kbar)$ is chosen so that  $s=g\theta(g)^{-1}\in N_{\G}(\rA)(k)$. We claim that for each $\sig\in \Ga$, $g^{-1}\sig(g)\in \G^\theta(\kbar)$ so that the cocycle descends to $\ker^1(\G^\theta,\G;k)$. Indeed,
    \begin{align*}
        \theta(g^{-1}\sig(g)) = \theta(g)^{-1} \sig((\theta(g)) = (g^{-1}s) \sig(s^{-1}g )=g^{-1}\sig(g).
    \end{align*}
    This map $g\mapsto g^{-1}\sig(g)$ is precisely the connecting map $\X(k)\to H^1(k,\rH)$ parameterizing $\G(k)$-orbits on $\X(k)$ via the long exact sequence of non-abelian Galois cohomology. %It is surjective since any element $x\in \X(k)$ mapping  to a particular class in $H^1(k,\rH)$ lies in the same $\G(k)$-orbit as an element of $N_{\G}(\rA)(k)$ \cite[Lemma 8.3]{Helminckrational}. Now if $f_{\theta,1} (\theta_1) = f_{\theta,1}(\theta_2)$, then $s(g_1)= hs(g_2)\theta(h)^{-1}$, so that $\theta_1 = h\theta_2 h^{-1}$ are $\G(k)$-conjugate, showing injectivity. 

Finally we consider $\nu_{N}^{-1}(\mu_N(\theta))$. Similarly to the previous argument, any $[\theta']\in \nu_{N}^{-1}(\mu_N(\theta))$ satisfies $\theta'=\Ad(s(g))\circ \theta$ for some $s(g)\in N_{\G}(\rA)(\kbar)$ such that $\Ad(s(g))\in [N_{\G}(\rA)/Z(\G)](k)\subset \G_{ad}(k)$. On the other hand, $\theta'\sim \theta$ as elements of $\mathrm{Inv}_{k,N}(\G,\rA)$ if $g$ may be chosen to that $s(g)\in N_{\G}(\rA)(k)$. This induces a $Z(\G)$-valued $1$-cocycle
    \[
    [\sig\in \Ga\longmapsto s(g)^{-1}\sig(s(g))]\in Z^1(\Ga,Z(\G)).
    \]
    This implies that $s(g)^{-1}\sig(s(g)) = \sig(s(g))s(g)^{-1}$, so that
    \[
\theta(s(g)^{-1}\sig(s(g))) = [s(g)^{-1}\sig(s(g))]^{-1}.
    \]
By \cite[Lemma 1]{VustEmbeddings}, we see that
    \[
s(g)\sig(s(g))^{-1}\in s(\X)\cap Z(\G) \cong \Aut^{\G}(\X)(\kbar),
    \]
    giving the map in \eqref{eqn: quotient 2}.
\end{proof}

\quash{Set $\X=\G^\theta\backslash \G$ and denote by $\I$ the $(\Ga,\theta)$ index associated to $\theta$. Identifying sets along the bijections above, there is a disjoint union
\[
\bigsqcup_{[\theta']\in \ker^1(\Aut^\G(X),\G;k)}\G^{\theta'}\backslash \G(k)=\bigsqcup_{\theta'\in \mu_N^{-1}(\I)} \G(k)\cdot s(g'),
\]
subsubsection{Outer forms} Suppose now that $\G$ is a quasi-split reductive group over $k$ and that $\theta$ is a $k$-rational involution inducing the admissible $(\Ga,\theta)$-index $\D$. Let $(X^\ast(\Ax),\De_{\X},\D(\X))$ be the corresponding spherical datum. Let $A\subset T$ be a maximal split torus with $T=Z_{\G}(A)$. Up to replacing the involution with a $\G(k)$-conjugate, we may assume that 
\[
A^-=\{a\in A:\theta(a)=a^{-1}\}^\circ
\]
is a maximal $(\theta,k)$-split torus and that $T^-$ is a maximal $\theta$-split torus. We say that $\theta$ is \textbf{normally related to $A$}.
}
}
%%%%%%%%%%%%%%%%%%%%%%%%%%%%%%%%%%%%%%%%%%%%%%%%%%%%%%%%%%%%%%%%%%%%%%%%%%%%%%%%%%%%%%%%%%%%%%%%%%%%%%%%%%%%%%%%%%%%%%%%%%%%%%%%%%%%%%%%%%%%%%%%%%%%%%%%%%%%%%%%%%%%%%%%%%%%%%%%%%%%%%%%%%%%%%%%%%%%%%%%%%%%%%%%%%%%%%%%%%%%%%%%%%%%%%%%%%%%%%%%%%%%%%%%%%%%%%%%%%%%%%%%%%%%%%%%%%%%%%%%%%%%%%%%%%%%%%%%%%%%%%%%%%%%%%%%%%%%%%%%%%%%%%%%%%%%%%%%%%%%%%%%%%%%%%%%%%%%%%%%%%%%%%%%%%%%%%%%%%%%%%%%%%%%%%%%%%%%%%%%%%%%%%%%%%%%%%%%%%%%%%%%%%%%%%%%%%%%%%%%%%%%%%%%%%%%%%%%%%%%%%%%%%%%%%%%%%%%%%%%%%%%%%%%%%%%%%%%%%%%%%%%%%%%%%%%%%%%%%%%%%%%%%%%%%%%%%%%%%%%%%%%%%%%%%%%%%%%%%%%%%%%%%%%%%%%%%%%%%%%%%%%%%%%%%%%%%%%%%%%%%%%%%%%%%%%%%%%%%%%%%%%%%%%%%%%%%%%%%%%%%%%%%%%%%%%%%%%%%%%%%%%%%%%%%%%%%%%%%%%%%%%%%%%%%%%%%%%%%%%%%%%%%%%%%%%%%%%%%%%%%%%%%%%%%%%%%%%%%%%%%%%%%%%%%%%%%%%%%%%%%%%%%%%%%%%%%%%%%%%%%%%%%%%%%%%%%%%%%%%%%%%%%%%%%%%%%%%%%%%%%%%%%%%%%%%%%%%%%%%%%%%%%%%%%%%%%%%%%%%%%%%%%%%%%%%%%%%%%%%%%%%%%%%%%%%%%%%%%%%%%%%%%%%%%%%%%%%%%%%%%%%%%%%%%%%%%%%%%%%%%%%%%%%%%%%%%%%%%%%%%%%%%%%%%%%%%%%%%%%%%%%%%%%%%%%%%%%%%%%%%%%%%%%%%%%%%%%%%%%%%%%%%%%%%%%%%%%%%%%%%%%%%%%%%%%%%%%%%%%%%%%%%%%%%%%%%%%%%%%%%%%%%%%%%%%%%%%%%%%%%%%%%%%%%%%%%%%%%%%%%%%%%%%%%%%%%%%%%%%%%%%%%%%%%%%%%%%%%%%%%%%%%%%%%%%%%%%%%%%%%%%%%%%%%%%%%%%%%%%%%%%%%%%%%%%%%%%%%%%%%%%%%%%%%%%%%%%%%%%%%%%%%%%%%%%%%%%%%%%%%%%%%%%%%%%%%%%%%%%%%%%%%%%%%%%%%%%%%%%%%%%%%%%%%%%%%%%%%%%%%%%%%%%%%%%%%%%%%%%%%%%%%%%%%%%%%%%

\section{Functoriality of the dual group}\label{Appendix: derived subgroup}\label{Sec: functorial derived}
\quash{\textcolor{red}{This is all backwards...}

Suppose that $\G$ and $\rH$ are two reductive groups over $k$. We say a morphism $\pi: \G\lra \rH$ is a \emph{normal} morphism if $\pi(\G)$ is normal in $\rH$ and $\G\to \pi(\G)$ is separable. It is well known (cf. \cite[1.8]{KottwitzCusp}) that to such a morphism there is a dual morphism $\check{\pi}:\check{\rH}\lra \check{\G}$. More precisely, we are free to choose Borel pairs $(\rA,\B)$ (resp. $(\rA',\B')$ of $\G$ (resp. $\rH$) satisfying $\pi(\rA)\subset\rA'$ and $\pi(\B)\subset \B'$. If we now choose splittings of $\check{\G}$ and $\check{\rH}$, then there is a unique homomorphism $\check{\pi}:\check{\rH}\lra \check{\G}$ compatible with the splittings and with the map $\check{\Psi}(\rH)\to \check{\Psi}(\G)$ dual to  ${\Psi}(\G)\to {\Psi}(\rH)$. Changing the choice of splittings  replaces $\check{\pi}$ by $\Ad(g)\circ\check{\pi}$ for some $g\in \check{\G}^\Ga$. 

Suppose now that $\X$ is a $\G$-variety and $\Y$ is a $\rH$-variety. We further assume that we are given an injective morphism 
\[
\pi_\X:\X\lra \Y
\]
which is $\pi$-equivariant in the sense that for and $k$-algebra $R$ and $x\in \X(R)$ and $g\in \G(R)$, 
\[
\pi_\X(g\cdot x) = \pi(g)\cdot \pi_\X(x).
\]
We will call $(\pi,\pi_\X)$ a normal pair for short.

\begin{Prop}\label{Prop: functoriality of dual}
    Suppose that $\X$ (resp. $\Y)$ is spherical as a $\G$-variety (resp. $\rH$-variety), and suppose that $(\pi,\pi_\X)$ is a normal pair as above such that $\pi_\X$ is injective. Suppose that $\varphi_\X$ and $\varphi_{\Y}$ are distinguished morphisms.  Finally, we assume that $\pi_\X$ maps the open $\B$-orbit $\mathring{\X}$ to the open $\B'$-orbit $\mathring{\Y}$.

    Then there exists a unique surjective morphism $p_\X:\check{\rH}_{\Y}\lra \check{\G}_{\X}$ such that
      \[
\begin{tikzcd}
    \check{\rH}_{\Y}\ar[r,"\varphi_{\Y}"]\ar[d,"\check{\pi}_\X"]&\check{\rH}\ar[d,"\check{\pi}"]\\
    \check{\G}_{\X}\ar[r,"\varphi_{\X}"]&\check{\G}
\end{tikzcd}
    \]
  commutes.
\end{Prop}
\begin{proof}
Replacing $\G$ with $\pi(\G)$, it is clear that may assume that $\G\subset\rH$ is a reductive normal subgroup and $\pi$ is the inclusion. The inclusion induces an inclusion of maximal tori $\rA\subset \rA'$ and based root data $\Psi(\G)\subset \Psi(\rH)$. 

    Consider $\Y$ as a $\G$-variety via restriction. In general, $\Y$ need not be spherical as a $\G$-variety. Regardless, the dual group of Knop--Schalke exists for a general (normal) $\G$-variety. In particular, there exists a dual group $\check{\G}_{\Y}$ and a distinguished morphism $\varphi:\check{\G}_{\X}\to \check{\G}$. Moreover, since $\pi_\X$ is injective, Theorem 2 of \cite{KnopFunctorial} gives a morphism $\varphi_{\X,\Y}$ sitting in a commutative diagram
    \[
       \begin{tikzcd}
\check{\G}_\X\ar[rd,swap,"\varphi_{\X}"]\ar[rr,"\varphi_{\X,\Y}"]&&\check{\G}_{\Y}\ar[ld,"\varphi_{\Y}"]\\
    &\check{\G},
\end{tikzcd}
    \]
    for any choice of distinguished morphisms $\varphi_\X$ and $\varphi_{\Y}$. The horizontal arrow has a finite kernel.

 Our assumptions on maximal tori imply that if $\rA\to\rA_\X$ and $\rA'\to \rA'_{\Y}$ are the canonical action maps, there is a commutative diagram of dual tori
    \begin{equation}\label{eqn: commutes on tori general}
        \begin{tikzcd}
    \check{\rA}'_{\Y}\ar[r]\ar[d]&\check{\rA}'\ar[d]\\
    \check{\rA}_{\X}\ar[r]&\check{\rA}.
\end{tikzcd}
    \end{equation} 
    Indeed, since $\pi_\X(\mathring{\X})\subset \mathring{\X}'$, the quotient $\rA\to \rA'\to \rA'_{\Y}$ factors through the quotient by which $\rA$ acts on $\mathring{\X}$, which is $\rA_\X$. Set $p_\X: \rA_\X\to \rA'_{\Y}$ for the induced inclusion of tori. The local structure theorem \cite[Theorem  4.7]{Timashevbook} now implies that $X^\ast(p_\X(\rA_{\X}))=X^\ast(\rA_{\X})$ is the lattice of $\B$-characters associated to $\B$-semiinvariant functions on $\Y$, so that $\rk(\X)=\rk(\Y)$

    By assumption, we have a diagram of the form
      \[
\begin{tikzcd}
    \check{\rH}_{\Y}\ar[r,"\varphi_{\Y}"]&\check{\rH}\ar[d,"\check{\pi}"]\\
    \check{\G}_{\Y}\ar[r,"\varphi_{\X}"]&\check{\G},
\end{tikzcd}
    \]
    extending three of the arrows of \eqref{eqn: commutes on tori general}.

    The result now follows from the next lemma.

\begin{Lem}
    Suppose that $\rH$ is connected and reductive over $k$ $\G\subset \rH$ is a normal, reductive $k$-subgroup. Let $\Y$ be a spherical $\rH$-variety. For any fixed choice of distinguished morphisms $\varphi_{\G,\Y}$ and $\varphi_{\rH,\Y}$, there is a unique morphism  $$\check{\pi}_\Y:\check{\G}_\Y\to \check{\G}_\Y'$$ such that 
          \[
\begin{tikzcd}
    \check{\rH}_{\Y}\ar[r,"\varphi_{\rH,\Y}"]\ar[d,"\check{\pi}_\X"]&\check{\rH}\ar[d,"\check{\pi}"]\\
    \check{\G}_{\Y}\ar[r,"\varphi_{\G,\Y}"]&\check{\G}
\end{tikzcd}
    \]
  commutes.
\end{Lem}
\begin{proof}

    Suppose that $(\Y^{sv},\De^{sv}_\Y,S^p)$ is the (normalized) weak spherical datum associated to the $\rH$-variety $\Y$. By our choice of Borel subgroups, $\De\subset \De'$. Let $\fX=X^\ast(\rA_\Y)$ be the lattice of 

\end{proof}

\end{proof}
\begin{Lem}
    Suppose that $\X$ is a normal $\G$-variety. Let $(\rA,\B)$ be a Borel pair of $\G$ and let $\fX_\G$ denote the lattice of $\B$-characters arising from $\B$-semiinvariant functions on $\X$. Suppose that $\rH$ is a normal and reductive subgroup of $\G$ satisfying that 
    \[
   \rA_{\rH}=\rA\cap \rH,\qquad\B_{\rH}=\rH\cap \B
   \]
   is a Borel pair for $\rH$. 
   
   If $\fX_{\rH}$ denotes the lattice of $\B_{\rH}$-characters arising from $\B$-semiinvariant functions on $\X$, then the restriction map
    \[
    \fX_{\G}\to\fX_{\rH}
    \]
    is surjective.
\end{Lem}
\begin{proof}
The claim follows if we can show that if $\lam\in \fX_{\rH}$, then there exists a lift $f_\lam\in k(\X)^{(\B_{\rH})}$ that is actually $\B$-semiinvariant. Recall the short exact sequence
\[
0\lra k(\X)^{\B_{\rH}}\lra k(\X)^{(\B_{\rH})}\lra \fX_{\rH}\lra 0.
\]
   We recall a version of the local structure theorem \cite[Theorem 4.7]{Timashevbook} for $\X$ as an $\rH$ variety. There is a canonical parabolic subgroup $\mathrm{P}\supset \B_{\rH}$ such that we may find a closed affine $\rA_{\rH}$-stable subvariety $Z\subset \X$ such that
    \[
\mathrm{P}\cdot Z= \mathring{\X}.
    \]
    is an affine open $\mathrm{P}$-stable subvariety.
    Moreover, the map
    \[
    U_{\mathrm{P}}\times Z\lra \mathring{\X}.
    \]
    is finite and surjective (an isomorphism if $\mathrm{char}(k)=0$). 
    
   The reductive quotient $\mathrm{M}=\mathrm{P}/ U_{\mathrm{P}}$ acts through a quotient torus $\rA_{\mathrm{M}}$ on $\mathring{\X}/U_{\mathrm{P}}$; the torus $\rA_{\mathrm{M}}$ acts freely on $\mathring{\X}/U_{\mathrm{P}}=\rA_{\mathrm{M}}\times C$ with $\rA_{\mathrm{M}}$ acting trivially on $C$. By $k(\X)^{(\B_{\rH})} = k(\mathring{\X}/U_{\mathrm{P}}^{(\mathrm{M})}$, we see that 
   \[
   X^{\ast}(\rA_{\mathrm M})=\fX_{\rH}.
   \]
\quash{
Let $x\in Z$. Since $\X$ is irreducible and $\G$-spherical, we may assume that $\B\cdot x$ is Zariski-open in $\X$.
The parabolic subgroup $\mathrm{P}_{\G} =  \mathrm{P}\cdot \B$ of $\G$ satisfies the same property when $\X$ is viewed as an $\rH$-variety. Since $\rH\subset \G$ is normal, there is a canonical decomposition $U_{\mathrm{P}} = U_{\mathrm{P}_{\rH}}U_{\mathrm{P}}^{\rH}$, where $U_{\mathrm{P}}^{\rH}$ is generated by those root groups in $U_{\mathrm{P}}$ not lying in $\mathrm{P}_{\rH}$. We may take $Z_{\rH} = U_{\mathrm{P}}^{\rH}\cdot Z$
Now }
\end{proof}
}

%\subsubsection{Restriction to the derived subgroup}
Suppose that $\G$ is reductive over $k$ and let $\G_{der}$ denote the derived subgroup. Suppose that $\rH\subset \G$ is a spherical $k$-subgroup and set $\rH_{\G,der}:=\rH\cap\G_{der}$. The purpose of this section is to clarify the effect of viewing a spherical $\G$-variety as a (potentially non-spherical) $\G_{der}$-variety.
\begin{Lem}
    The subgroup $\rH_{\G,der}\subset\G_{der}$ is spherical.
\end{Lem}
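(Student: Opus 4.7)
The plan is to exhibit a Borel subgroup of $\G_{der}$ whose product with $\rH_{der}$ is open in $\G_{der}$. Let $\B \subset \G$ be a Borel such that $\B\cdot\rH$ is open in $\G$ (sphericality). Since $\G = Z(\G)^{\circ}\cdot\G_{der}$ with finite intersection, $\B_{der}:=\B\cap\G_{der}$ is a Borel of $\G_{der}$ and $\dim\B - \dim\B_{der} = \dim\G - \dim\G_{der} = \dim Z(\G)^{\circ}$. It suffices to show $\B_{der}\cdot\rH_{der}$ is open in $\G_{der}$, i.e.\ that $\dim(\B_{der}\cap\rH_{der}) = \dim\B_{der}+\dim\rH_{der}-\dim\G_{der}$.

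Let $\pi:\G\to S := \G/\G_{der}$ and set $S':=\pi(\rH)$, a subtorus of dimension $d$, so $\dim\rH = \dim\rH_{der}+d$. Sphericality of $\rH$ gives
\[
\dim(\B\cap\rH) = \dim\B + \dim\rH - \dim\G = \dim\B_{der} + \dim\rH_{der} + d - \dim\G_{der}.
\]
The restriction of $\pi$ to $\B\cap\rH$ has kernel $\B_{der}\cap\rH_{der}$ and image contained in $S'$, so $\dim(\B\cap\rH) = \dim(\B_{der}\cap\rH_{der}) + \dim\pi(\B\cap\rH) \leq \dim(\B_{der}\cap\rH_{der}) + d$. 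Comparing with the formula above, sphericality of $\rH_{der}$ in $\G_{der}$ becomes equivalent to the surjectivity
\[
\pi(\B\cap\rH) = S',
\]
equivalently to the group-theoretic inclusion $\rH \subseteq \B\cdot\rH_{der}$.

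The hard part will be proving this surjectivity. The plan is to exploit that $Z(\G)^{\circ}\subset \B$ and that the subtorus $Z'_\rH := Z(\G)^{\circ}\cap\rH\G_{der}\subseteq \B$ has $\pi(Z'_\rH)=S'$ of full dimension $d$ (since $Z(\G)^{\circ}\to S$ has finite kernel, so $\dim Z'_\rH = d$). Given $s'\in S'$, pick $z\in Z'_\rH\subset \B$ with $\pi(z)=s'$ and write $z = h\cdot g$ with $h\in\rH$, $g\in\G_{der}$; then $\pi(h)=s'$. The goal is to modify $h$ by right multiplication by an element of $\rH_{der}$ to land in $\B\cap\rH$. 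This last step is the genuine obstacle: it amounts to showing that the fiber $h\cdot\rH_{der}\subseteq \rH$ of the surjection $\rH\to S'$ meets $\B$. The natural approach is to study the preimage of the open $\B$-orbit on $\G/\rH$ under $\G/\rH_{der}\to\G/\rH$, use that $\B$ acts with an open orbit on the image of $\rH/\rH_{der}\hookrightarrow \G/\rH_{der}$, and then pass back.

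Once $\pi(\B\cap\rH) = S'$ is established, the dimension count above forces $\dim(\B_{der}\cap\rH_{der}) = \dim\B_{der}+\dim\rH_{der}-\dim\G_{der}$, so $\B_{der}\cdot\rH_{der}$ is open in $\G_{der}$ and $\rH_{der}$ is spherical.
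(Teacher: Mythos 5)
Your dimension count correctly isolates the crux: everything reduces to showing that $\pi(\B\cap\rH)$ has full dimension $d$ in $S'=\pi(\rH)$. You are right to be suspicious of this step, because the claim is \emph{not} automatic and the lemma as stated (for an arbitrary spherical $k$-subgroup $\rH$) is actually false. Take $\G=\Gm\times\SL_2$ and $\rH=\{(t,\diag(t,t^{-1})):t\in\Gm\}$: any Borel $\B=\Gm\times\B_0$ with $\B_0$ the stabilizer of a non-coordinate line meets $\rH$ in a finite group, so $\rH$ is a spherical one-dimensional torus in $\G$, yet $\rH\cap\G_{der}=\{1\}$ is certainly not spherical in $\SL_2$. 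The paper's one-sentence proof declares the conclusion ``immediate'' and thereby elides precisely the point you flagged; the missing ingredient is the symmetric hypothesis on $\rH$ which is in force wherever the lemma is actually invoked but is not used in the proof.

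Here is how the symmetric structure closes the gap. First reduce to $\rH=(\G^\theta)^\circ$: any symmetric $\rH'$ contains $(\G^\theta)^\circ$, so $\rH'\cap\G_{der}\supset(\G^\theta)^\circ\cap\G_{der}$, and overgroups of spherical subgroups are spherical. Now $\rH$ is connected reductive, so $\rH=Z(\rH)^\circ\cdot[\rH,\rH]$, and since $[\rH,\rH]\subset\G_{der}$ this gives $\rH=Z(\rH)^\circ\cdot\rH_{der}$. On the other hand $\bigl((Z(\G)^\circ)^\theta\bigr)^\circ$ lies both in $\B$ (being contained in $Z(\G)^\circ$) and in $(\G^\theta)^\circ=\rH$; moreover the $\theta$-equivariant isogeny $Z(\G)^\circ\to\G/\G_{der}$ carries it onto the identity component of the $\theta$-fixed subgroup of $\G/\G_{der}$, while $\pi(\rH)$ lands in that same fixed-point subgroup because $\rH\subset\G^\theta$. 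Hence $\pi\bigl(((Z(\G)^\circ)^\theta)^\circ\bigr)$ already has dimension $d$, so $\pi(\B\cap\rH)\supset\pi\bigl(((Z(\G)^\circ)^\theta)^\circ\bigr)$ has dimension $d$ as well — exactly the surjectivity you needed. Equivalently, this shows $\rH\subset Z(\G)^\circ\cdot\rH_{der}$, so $\B\cdot\rH=\B\cdot\rH_{der}=Z(\G)^\circ\cdot\B_{der}\cdot\rH_{der}$, and since $Z(\G)^\circ\cap\G_{der}$ is finite the equality $\dim(\B_{der}\cdot\rH_{der})=\dim\G-\dim Z(\G)^\circ=\dim\G_{der}$ follows at once.
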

\begin{proof}
    The assumption that $\X=\rH\backslash\G$ is spherical is equivalent to the existence of a Borel subgroup $\B\subset \G$ such that $B\cdot \rH\subset \G$ is Zariski-open. Since $\G=\G_{der}\cdot Z(\G)^\circ$ and $Z(\G)^\circ\subset \B$, it is immediate that $\B_{der}\cdot \rH_{\G,der}\subset \G$ is Zariski-open.
\end{proof}
Setting $\X_{der}=\G_{der}/\rH_{\G,der}$, we obtain a closed embedding $\iota:\X_{der}\to \X$ sitting in a sequence
\[
\X_{der}\lra \X\lra \G_{ab}/\rH_{\G,ab},
\]
where $\rH_{\G,ab}$ is the schematic image of $\rH$ under the abelianization map. In general, $\X$ need not be spherical as a $\G_{der}$-variety. Happily, the dual group of Knop--Schalke exists for a general (normal) $\G_{der}$-variety. In particular, there exists a dual group $\check{\G}_{\X,der}$ and a distinguished morphism $\varphi_{\X,der}:\check{\G}_{\X,der}\to \check{\G}_{der}$. 

\begin{Lem}\label{Lem: restrict to derived}
    For any choice of distinguished morphisms $\varphi_{\X}$ and $\varphi_{\X,der}$, there exists a unique surjective morphism $p_\X:\check{\G}_{\X}\lra \check{\G}_{\X,der}$ such that
      \[
\begin{tikzcd}
    \check{\G}_\X\ar[r,"\varphi_{\X}"]\ar[d,"p_\X"]&\check{\G}\ar[d]\\
    \check{\G}_{\X,der}\ar[r,"\varphi_{\X,der}"]&\check{\G}_{der}
\end{tikzcd}
    \]
  commutes.
\end{Lem}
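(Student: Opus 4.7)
The plan is to compare the combinatorial spherical data of $\X$ viewed as a $\G$-variety with the data of $\X$ viewed as a (not necessarily spherical) normal $\G_{der}$-variety, and then invoke the uniqueness built into Theorem \ref{Thm: dual group map}. First I would fix a Borel pair $(\rA,\B)$ of $\G$, so that $\B_{der}=\B\cap\G_{der}$ with Cartan $\rA_{der}$ is a Borel pair of $\G_{der}$. Since $\B=\B_{der}\cdot Z(\G)^{\circ}$ and $Z(\G)^{\circ}\subset\rA$ acts on $\X$ through the canonical quotient $\rA\to\Ax$, any open $\B$-orbit in $\X$ is also an open $\B_{der}$-orbit (because adding the action of $Z(\G)^{\circ}$ does not enlarge a $\B_{der}$-invariant Zariski-open subset inside $\X$). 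In particular, the restriction map $X^{\ast}(\rA)\to X^{\ast}(\rA_{der})$ sends the lattice $\fX_{\X}$ of $\B$-eigencharacters of $k(\X)^{(\B)}$ onto the lattice $\fX_{\X,der}$ of $\B_{der}$-eigencharacters of $k(\X)^{(\B_{der})}$, yielding by duality a surjection $\check{\rA}_{\X}\twoheadrightarrow\check{\rA}_{\X,der}$ that fits into a commutative square with $\check{\rA}\twoheadrightarrow\check{\rA}_{der}$.

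Second, I would verify that the \emph{normalized} spherical roots and the associated coroots coincide under this restriction. The normalized spherical roots $\De_{\X}^{sv}$ lie in the root lattice $\zz\Phi=\zz\Phi_{der}$, which is intrinsic to $\G_{der}$, so they descend to the same elements of $X^{\ast}(\rA_{der})$; likewise the associated coroot set $\hat{\De}_{\X}\subset X_{\ast}(\rA)=X_{\ast}(\rA_{der})$ defined via the recipe of Section \ref{Section: spherical datum} depends only on the root datum of $\G_{der}$ and the spherical roots. Consequently the associated reductive subgroup $\hat{\G}_{\X,der}\subset\check{\G}_{der}$ is the image of $\hat{\G}_{\X}\subset\check{\G}$ under the canonical quotient $\check{\G}\to\check{\G}_{der}$, and the folding recipe of Knop--Schalke produces $\check{\G}_{\X,der}$ as the canonical quotient of $\check{\G}_{\X}$ modulo the kernel of $\check{\rA}_{\X}\to\check{\rA}_{\X,der}$. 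This defines the surjection $p_{\X}$ on based root data and hence (after fixing compatible pinnings) as a group homomorphism.

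Third, to obtain the commutativity of the square and the uniqueness of $p_{\X}$, I would use that both $\varphi_{\X}$ and $\varphi_{\X,der}$ are determined up to $\check{\rA}_{\X}$- and $\check{\rA}_{\X,der}$-conjugacy respectively by Theorem \ref{Thm: dual group map}, together with the requirement that the one-dimensional subspaces $\fg_{\check{\sig}}^{\vee}\subset\check{\fg}$ of \eqref{eqn: root embedding} must hit the same images in $\check{\fg}$ whether computed via $\check{\G}_{\X}\to\check{\G}$ or $\check{\G}_{\X}\to\check{\G}_{\X,der}\to\check{\G}_{der}\subset\check{\G}/Z(\check{\G})^{\circ}$; since the recipe for these subspaces is combinatorial in the root datum, the two paths agree and pin down $p_{\X}$ uniquely. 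The main obstacle, and the step requiring care, is the second one: one must check that the Knop--Schalke construction applied to the non-spherical $\G_{der}$-variety $\X$ yields exactly the root datum extracted from the spherical $\G$-variety structure, i.e.\ that no further renormalization is introduced when passing to $\G_{der}$. This reduces to checking that type-$G$ roots and distinguished roots behave identically under restriction to $\rA_{der}$, which follows from the fact that the relevant simple coroots $\de_{1}^{\vee}-\de_{2}^{\vee}$ and the pinning vectors $e_{\check{\al}}$ all lie in $\check{\fg}_{der}$.
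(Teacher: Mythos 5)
Your overall strategy coincides with the paper's: establish surjectivity of the restriction $\fX\to\fX_{der}$, dualize to a commutative square of tori, observe that $\check{\Phi}_\X$ lies in the sublattice $X^\ast(\check{\rA}_{\X,der})\subset X^\ast(\check{\rA}_\X)$ to produce a surjection $p_\X$ of reductive groups, and use the defining properties of distinguished morphisms to force commutativity and uniqueness.

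There is, however, a genuine gap in your justification of the crucial surjectivity. The assertion that ``any open $\B$-orbit in $\X$ is also an open $\B_{der}$-orbit'' is false whenever $Z(\G)^\circ$ acts non-trivially on $\X$: in that case each $\B_{der}$-orbit lies inside a single $\G_{der}$-orbit, and the $\G_{der}$-orbits are proper subvarieties of $\X$, so no $\B_{der}$-orbit can be Zariski-open. Concretely, $\X=\SO_2\backslash\GL_2$ is $\GL_2$-spherical of dimension $3$, but its $\SL_2$-orbits are $2$-dimensional, so the open $\B$-orbit is a positive-dimensional union of $\B_{der}$-orbits. The surjectivity $\fX\twoheadrightarrow\fX_{der}$ does hold, but the argument the paper gives — and the one you should substitute — runs through the local structure theorem: it identifies $\fX_{der}$ with $X^\ast(\rA_{\X,der})$ where $\rA_{\X,der}$ is a quotient torus acting on a slice through the open $\B_{der}$-orbit, and the containment of tori $\rA_{\X,der}\subset\Ax$ dualizes to the surjection of character lattices. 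A second, smaller inaccuracy: you write $X_\ast(\rA)=X_\ast(\rA_{der})$, which fails when $Z(\G)^\circ\neq 1$; what you actually need is that the associated coroots $\hat{\De}_\X$, being genuine coroots of $\G$, already lie in the sublattice $X_\ast(\rA_{der})\subset X_\ast(\rA)$. With these corrections the remainder of your argument parallels the paper's.
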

\begin{proof}
    Suppose that $\rA=\B/[\B,\B]$ is the canonical maximal torus for $\G$, and let $\rA\lra \Ax$ be the quotient through which $\rA$ acts on $\mathring{\X}\sslash[\B,\B]$ where $\mathring{\X}$ denotes the open $\B$-orbit. \quash{ By definition of a distinguished morphism, we have a commutative diagram
    \[
\begin{tikzcd}
    \check{\rA}_\X\ar[r]\ar[d]&\check{\rA}\ar[d]\\
    \check{\G}_{\X}\ar[r,"\varphi_{\X}"]&\check{\G}.
\end{tikzcd}
    \]}
    Now suppose that $\rA_{der}= \B_{der}/[\B_{der},\B_{der}]$ is the canonical torus of $\G_{der}$, and let $\rA_{der}\to \rA_{\X,der}$ be the corresponding quotient. The local structure theorem implies that $X^\ast(\rA_{\X,der})$ is the lattice of $\B_{der}$-characters associated to $\B_{der}$-semiinvariant functions on $\X$. In particular, since there is a natural embedding $ \rA_{\X,der}\subset\Ax$. It follows that the restriction map $\fX\lra \fX_{der}$ is surjective.
    
    Passing to dual side, we obtain a commutative diagram
      \begin{equation}\label{diagram: commute on tori}
\begin{tikzcd}
    \check{\rA}_\X\ar[r]\ar[d]&\check{\rA}\ar[d]\\
    \check{\rA}_{\X,der}\ar[r]&\check{\rA}_{der}.
\end{tikzcd}
      \end{equation}
 Considering now the dual groups, a choice of distinguished morphism (unique up to $\check{\rA}_{\X,der}$-conjugacy) induces a natural extension of three of these morphisms
       \[
\begin{tikzcd}
    \check{\G}_\X\ar[r,"\varphi_{\X}"]&\check{\G}\ar[d,"p"]\\
    \check{\G}_{\X,der}\ar[r,"\varphi_{\X,der}"]&\check{\G}_{der},
\end{tikzcd}
    \]
and the claim is that $p\circ\varphi_\X$ factors through $\varphi_{\X,der}$ and a morphism $p_\X$ extending the left vertical arrow in \eqref{diagram: commute on tori}. But this follows since the sublattice $X^\ast(\check{\rA}_{\X,der})\subset X^\ast(\check{\rA}_\X)$ contains the set of roots $\check{\Phi}_\X$ for $\check{\G}_\X$, inducing such a morphism. Moreover, the commutativity of \eqref{diagram: commute on tori} and the definition of distinguished morphisms imply that $p\circ\varphi_\X$ and $\varphi_{\X,der}$ have a common image in $\check{\G}_{der}$.
\quash{
More precisely, the second requirement of a distinguished morphism stated in \cite{KnopFunctorial} involves root spaces in $\check{\fg}_{\X}$ and $\check{\fg}$, relying on a pinning of both.  Note that a pinning of $\check{\fg}$ determines a unique pinning on $\check{\fg}_{der}$ as they share a common root system, differing only in the center. As discussed in Section \ref{Section: dual groups}, there is an ambiguity in a choice for roots of type $D_2$, but as long as these choices are fixed then the two distinguished morphisms will possess a common image in the direct sum of root spaces in that case. Once this choice is (coherently) fixed, the image $\G_{\X,der}^\ast\subset\check{\G}_{der}$ is independent of the choice of distinguished morphisms. \cite[Theorem 2.5 (iv)]{KnopFunctorial}.

   The upshot is that we have a pair of surjective morphisms of complex reductive groups with central kernels
   \[
   \begin{tikzcd}
    \check{\G}_\X\ar[rd,swap,"p\circ\varphi_{\X}"]&&\check{\G}_{\X,der}\ar[ld,"\varphi_{\X,der}"]\\
    &{\G}^\ast_{\X,der},
\end{tikzcd}
   \]
   as well as a commuting diagram of maximal tori, so that there exists a unique morphism $p_\X:\check{\G}_\X\to\check{\G}_{\X,der}$ satisfying the claim of the lemma.}
\end{proof}

Recalling the functoriality properties of dual groups \cite[Theorem 2]{KnopFunctorial}, the injective $\G_{der}$-morphism $\iota$ induces a dual map 
\begin{equation}\label{eqn: dual map on derived}
    \check{\iota}:  \check{\G}_{\X_{der}}\lra\check{\G}_{\X,der}.
\end{equation}

Upon fixing distinguished morphisms $\varphi_{\X,der}$ and $\varphi_{\X_{der}}$, $\check{\iota}$ is unique and has a finite kernel.
\begin{Lem}
    The morphism \eqref{eqn: dual map on derived} is an isomorphism.
\end{Lem}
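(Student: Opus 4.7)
The plan is to verify that $\check{\iota}$ is an isomorphism by showing that the dual root data (weight lattices, spherical roots, and associated coroots) of $\check{\G}_{\X_{der}}$ and of $\check{\G}_{\X,der}$ are canonically identified, and that $\check{\iota}$ realizes this identification. Since by Knop's functoriality \cite[Theorem 2]{KnopFunctorial} the morphism already has finite kernel, matching root data on both sides will force $\check{\iota}$ to be an isomorphism.

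First I would compare the canonical tori. Write $\G = \G_{der} \cdot Z(\G)^\circ$ and note that $Z(\G)^\circ$ commutes with $\B_{der}$ and with $\G_{der}$, so the $\B_{der}$-orbit structure on $\X$ is obtained from that on $\iota(\X_{der})$ by translation along the torus quotient $Z(\G)^\circ/(Z(\G)^\circ \cap \rH)$, which lies in the kernel of any $\B_{der}$-character. Consequently the restriction map
\[
k(\X)^{(\B_{der})}\longrightarrow k(\X_{der})^{(\B_{der})}
\]
is surjective on weight lattices, and the $\rA_{der}$-weights occurring on $\X$ are exactly those occurring on $\X_{der}$. This identifies $X^\ast(\rA_{\X,der}) = X^\ast(\rA_{\X_{der}})$, hence $\check{\rA}_{\X_{der}} \iso \check{\rA}_{\X,der}$, and this identification is compatible with the corresponding maps into $\check{\rA}_{\X}$ given by Lemma \ref{Lem: passing to derived on roots} and Lemma \ref{Lem: restrict to derived}.

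Second, I would check matching of the normalized spherical roots and of the associated coroots. The spherical roots are determined by the (central) $\G_{der}$-invariant valuation cone inside $\fa_{\qq}^\vee$; since both $\X$ (as a $\G_{der}$-variety) and $\X_{der}$ have the same $\G_{der}$-invariant discrete valuations after the preceding identification of weight lattices (any $\G_{der}$-invariant valuation on $k(\X)$ restricts to one on $k(\X_{der})$, and conversely extends because $\X$ is covered by $\G_{der}$-translates of $\iota(\X_{der})$ together with the central torus action which is invisible to such valuations), we get $\De_{\X_{der}} = \De_{\X,der}$ with matching types $T$, $N$, $G$. The associated coroots in Table \ref{tab:associated roots} depend only on these normalized roots and on the ambient coroot lattice, which is the same for $\G_{der}$ in both settings, so $\hat{\De}_{\X_{der}} = \hat{\De}_{\X,der}$.

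With both the maximal tori and the root datum matched, the morphism $\check{\iota}$, which by construction is compatible with the embeddings into $\check{\G}_{der}$ (both sides factor through the associated group $\hat{\G}_{\X,der}$), induces an isomorphism on simple root data, hence is an isomorphism of complex reductive groups. The main obstacle will be the precise surjectivity of the restriction $k(\X)^{(\B_{der})} \to k(\X_{der})^{(\B_{der})}$ on weights: while intuitively clear from the decomposition $\G = \G_{der} \cdot Z(\G)^\circ$, one must argue carefully using the local structure theorem for $\X$ to extend a $\B_{der}$-eigenfunction from the closed subvariety $\iota(\X_{der})$ to all of $\X$, and to confirm that no new $\rA_{der}$-weights appear on $\X$ beyond those already present on $\X_{der}$.
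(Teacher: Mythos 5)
Your proof is correct, and its starting point is the paper's: identify the canonical tori $\rA_{\X,der}$ and $\rA_{\X_{der}}$ by observing that $\X$ is rationally the product of $\X_{der}$ with the torus $\G_{ab}/\rH_{\G,ab}$ on which $\G_{der}$ acts trivially, so the $\B_{der}$-semiinvariant weights agree (though note that it is $Z(\G)^\circ$-translates, not $\G_{der}$-translates, of $\iota(\X_{der})$ that sweep out $\X$, since $\X_{der}$ is already $\G_{der}$-homogeneous). Where the two arguments diverge is in how they finish. The paper invokes the sharper fact from Knop's construction that the kernel of $\check{\iota}$ coincides with the kernel of the induced map $\check{\rA}_{\X_{der}}\to\check{\rA}_{\X,der}$ of dual tori; once the tori are identified, injectivity of $\check{\iota}$ is immediate, and the isomorphism follows because both groups share the same maximal torus and same image inside $\check{\G}_{der}$. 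You instead verify the full dual root datum by hand — spherical roots via $\G_{der}$-invariant valuations and then associated coroots — and conclude from the matched root data together with Knop's finite-kernel statement. Your route is somewhat more elementary in that it does not need the precise kernel identity, but it re-derives content that Knop's theorem already packages. Your worry about the surjectivity of $k(\X)^{(\B_{der})}\to k(\X_{der})^{(\B_{der})}$ on weight lattices highlights the one step the paper asserts without elaboration; your proposed use of the decomposition $\G=\G_{der}\cdot Z(\G)^\circ$ together with the local structure theorem is the right way to make it rigorous.
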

\begin{proof}
    The tori $\rA_{\X,der}$ and $\rA_{\X_{der}}$ are isomorphic as they share a common character lattice. Since the kernel of $\check{\iota}$ agrees with  the kernel of the natural map $\check\rA_{\X,der}\to \check\rA_{\X_{der}}$, the claim follows.
\end{proof}
In particular, if  $\check{Z}_\X$ denotes the kernel of $p_\X$, then $\check{Z}_\X$ is dual to $\G_{ab}/\rH_{\G,ab}$, which has the character lattice $\fX_Z$ satisfying
\[
0\lra \fX_Z\lra \fX\lra \fX_{der}\lra 0.
\] Finally, Lemma \ref{Lem: restrict to derived} implies that there is commutative diagram,
\[
    \begin{tikzcd}
   1\ar[r]&\check{Z}_\X\ar[r]\ar[d]& \check{\G}_\X\ar[r,"p_\X"]\ar[d,"\varphi_{\X}"]&\check{\G}_{\X_{der}}\ar[d]\ar[r]&1\\
    1\ar[r]&\check{\G}_{ab}\ar[r]& \check{\G}\ar[r,"p"]&\check{\G}_{der}\ar[r]&1.
\end{tikzcd}
\]
%where the central torus 

\quash{\section{Boundary degenerations and endoscopy}\label{Section: Boundary degen}
We utilize the terminology of \cite[Section 2]{SakVenk}. Let $\X=\rH\backslash\G$ be a homogeneous spherical $\G$-variety. Suppose that $\Theta\subset \De_\X$ is a subset of spherical roots, and let $\mathrm{supp}(\Theta)\subset \De$ denote those simple roots of $\G$ arising in the support of elements of $\Theta$. To this data, Sakellaridis and Venkatesh construct a spherical $\G$-variety $\X_{\Theta}$ called the boundary degeneration of $\X$ in the $\Theta$-infinity direction. While the most elegant construction of $\X_\Theta$ uses so-called wonderful compactifications of $\X$, there is an isomorphism of $\G$-varieties $$\X_\Theta\simeq \X^L_{\Theta}\times ^{P_\Theta}\G,$$ where $L_\Theta\supset \mathrm{L}_\X$ is a standard Levi subgroup corresponding to the simple roots $\De_\X^p\cup \mathrm{supp}(\Theta)\subset \De$, $\X^L_{\Theta}$ is a spherical $L_\Theta$-variety, and $P_\Theta\supset P_\X$ is the associated standard parabolic subgroup. 

\begin{Lem}\label{Lem: boundary properties}\cite[Proposition 2.4.3]{SakVenk}
   With notation as above,  suppose that $\Ax$ (respectively, $\rA_{\X,\Theta}$) is the canonical torus of $\X$ (respectively, of $\X_{\Theta}$. Then the following hold.
    \begin{enumerate}
       \item $\fX=X^\ast(\rA_{\X,\Theta})$,
       \item $\De_{\X_\Theta}=\Theta$,
         \item $\De^p_{\X_\Theta}=\De_\X^p$,
     \item  If $\al\in \De\cap \Theta$, there are precisely two colors $D_+',D_-'\subset \mathring{\X}_\Theta P_\al$, and they induce the same valuation as the two colors in 
 $\mathring{\X}_\Theta P_\al$.
   %    \item $\De_{\X_\Theta}^{(B)} = \Theta\cap \De_{\X}^{(B)}$,
   \end{enumerate}
\end{Lem}
Suppose now that $(\G,\theta)$ is a symmetric pair and $(\G^\theta)^\circ\subset \rH\subset N_{\G}((\G^\theta))$ such that $\X=\rH\backslash\G$. For any $\Theta\subset \De_\X$, it is easy to see that $L_\Theta$ is $\theta$-stable, and $\X^L_{\Theta}\simeq \rH_\Theta\backslash M_\Theta$. Where 
   \[
(L_\Theta^\theta)^\circ\subset \rH_\Theta\subset N_{L_\Theta}((L_\Theta^\theta)^\circ)
   \] is uniquely determined by the isomorphism $\Ax\simeq \rA_{\X,\Theta}$. Then 

\subsection{Dual varieties and degenerations}
Levi subgroups of $\check{\G}_\X$
\subsection{Elliptic degenerations}
What to do with $(\U_n\times \U_n,\U_n)$}

\section{Spherical data of symmetric varieties}\label{Section: norms sym} In this appendix, we make explicit the spherical data of a symmetric variety. Let us assume that $\X=\rH\backslash\G$ is a symmetric $k$-variety associated to a $k$-rational involution $\theta$. For simplicity, we assume that $\G$ is quasi-split, though this is not essential. We fix a $(\theta,k)$-admissible Borel pair $(\rA,\B)$ as in Section \ref{Section: admissible tori}; in particular, $\B$ is maximally $\theta$-split so that the associated set of simple roots gives a $(\Ga,\theta)$-basis for $X^\ast(\rA)$.
\begin{Rem}
    We worked with the canonical torus $\rA=\B/[\B,\B]$ in Section \ref{Section: spherical data}. In the current setting, we opt to utilize maximally $\theta$-split maximal tori as their existence is very useful. If one preferred to work with the canonical torus, when $\B$ is chosen as above, then $\rA=\B/[\B,\B]$ is isomorphic to any $(\theta,k)$-admissible torus $A\subset \B$ in a $\theta$-equivariant way. % In particular, if $x_0\in \X(k)$ corresponds to $\rH$, the $\rA$-action on $x_0$ factors through to a free action of a torus $\rA_\X$ isomorphic to the canonical quotient from \ref{Section: spherical data}. We will abuse terminology and refer to this quotient as the torus for $\X$.
\end{Rem}
There is a short exact sequence of diagonalizable groups
\[
1\lra \rA^{\theta}\lra \rA\lra \rA_\theta\lra 1.
\]
Let $X^\ast_\theta:=X^\ast(\rA_\theta)= \{\lam-\theta(\lam):\lam\in X^\ast(\rA)\}$ and
\[
\widetilde{\De}_{\X}:=\{\al-\theta(\al):\al\in \De,\:\theta(\al)\neq \al\}.
\]
We compute the little Weyl group \cite[Section 4]{Richardson}
\[
W_{\X}\simeq N_{\G}(\rA^-)/Z_{\G}(\rA^-)\simeq N_{(\G^\theta)^\circ}(\rA^-)/Z_{(\G^\theta)^\circ}(\rA^-),
\]
and the relative roots $\widetilde{\Phi}_{\X} = W_{\X}\cdot\widetilde{\De}_{\X}$; this is the reduced root system associated to the (possibly non-reduced) relative root system of $\X$ with base $\widetilde{\De}_{\X}$. With respect to the $k$-structure of $\rA\subset \G$, the associated $\Ga$-action preserves $\widetilde{\De}_{\X}$.
\begin{Lem}\label{Lem: root lattice}
    For $(\G^\theta)^\circ\subset \rH\subset N_{\G}(\G^\theta)$, let $\X=\rH\backslash\G$. The root lattice $\Lam_\X$ is given by $\zz\widetilde{\De}_\X$.
 %   \[
 %   =\{\lam-\theta_{\rA}(\lam):\lam\in \De_\G\}.
  %  \]
\end{Lem}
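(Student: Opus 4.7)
The plan is to first reduce to the case where $\rH$ is connected, and then compute $\Lam_\X$ directly for $\rH=(\G^\theta)^\circ$ via the identification of the automorphism group $\Aut^\G(\X)$ through the symmetrization map. The formula should reduce to the classical statement that the simple normalized spherical roots of a symmetric variety are exactly the restricted simple roots of the involution.

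First I would use the functoriality results already established (in particular the analogue of Lemma~\ref{Lem: pass to a sc cover roots} and \cite[Lemma~3.1.5(3)]{Losev}) to replace $\rH$ by the various intermediate subgroups between $(\G^\theta)^\circ$ and $N_\G(\G^\theta)$: all such replacements preserve the root lattice $\Lam_\X$, so the claim is equivalent to the case $\rH=(\G^\theta)^\circ$. With $\rH$ connected, the canonical torus quotient $\rA\to\Ax$ coincides up to isogeny with $\rA\to\rA/\rA^{\theta,\circ}$, and by construction its character lattice $\fX$ contains the sublattice $X^\ast_\theta=\zz\{\lam-\theta(\lam):\lam\in X^\ast(\rA)\}$.

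Next I would invoke the characterization $X^\ast(\Aut^\G(\X))=\fX/\Lam_\X$ of the root lattice from \cite{KnopAutomorphisms}, combined with Vust's identification $\Aut^\G(\X)\simeq s(\X)\cap Z(\G)$ via the symmetrization map $s:\X\hra\G$. Under this identification, characters of $\Aut^\G(\X)$ are precisely restrictions to $\rA^{-}$ of those $\chi\in X^\ast(\rA)$ killing the root system generated by $\widetilde{\De}_\X$; this is the classical description of $B$-semiinvariants for symmetric varieties. Passing to the quotient, one finds that $\Lam_\X$ is precisely the saturated sublattice of $\fX$ generated by the simple restricted roots, namely $\zz\widetilde{\De}_\X$. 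Since $\widetilde{\De}_\X$ is the set of simple roots of the (reduced) restricted root system and is a $\Ga$-stable $\zz$-basis of $X^\ast_\theta$, it generates $X^\ast_\theta$ as a free $\zz$-module; this then matches with the intrinsic description of $\De_\X^n$ as the minimal generators of the cone of highest weights $\lam+\mu-\nu$ with $V(\nu)\subset V(\lam)\cdot V(\mu)$ using the Cartan--Helgason-style branching rules for symmetric pairs.

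The main obstacle is bookkeeping through the various normalizations: $\fX$ may properly contain $X^\ast_\theta$ when $\X$ has spherical roots of type $N$, and one must verify that the passage between $\De_\X^n$ (elements of $\Lam_\X\subset\fX$) and $\De_\X^{sv}$ (elements of the root lattice of $\G$) does not alter the $\zz$-span. This is a case check using the classification of rank-one symmetric subvarieties: for types $T$ and $G$ the identification is direct, while for type $N$ one notes that both $\widetilde{\De}_\X$ and $\De_\X^n$ pick out the same generators of the root lattice after doubling. A secondary point requiring care is ensuring that the $W_\X$-action on $\widetilde{\De}_\X$ agrees with the $W_\X$-action on $\De_\X^n$, which follows from the Richardson identification $W_\X\simeq N_{(\G^\theta)^\circ}(\rA^-)/Z_{(\G^\theta)^\circ}(\rA^-)$ cited above.
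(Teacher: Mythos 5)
Your reduction step has a genuine gap. You propose to reduce to $\rH=(\G^\theta)^\circ$ by using Losev's Lemma~3.1.5(3) to claim that passing between intermediate subgroups preserves $\Lam_\X$. But that lemma only applies when the quotient $\rH'/\rH$ is finite. The normalizer $N_\G(\G^\theta)$ can have \emph{infinite} index over $(\G^\theta)^\circ$: for instance with $\G=\GL_2$ and $\theta(g)={}^Tg^{-1}$, the group $s(\X)\cap Z(\G)\simeq\Gm$ is positive-dimensional, so $[N_\G(\G^\theta):(\G^\theta)^\circ]$ is infinite. Your appeal to Lemma~\ref{Lem: pass to a sc cover roots} is also off-target, as that concerns $z$-extensions (enlarging $\G$) rather than varying $\rH$. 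The paper handles this by proving the formula separately at both ends of the chain --- for $\G^\theta\backslash\G$ directly via \cite[Theorem~6.7]{KnopAutomorphisms}, and for $N_\G(\G^\theta)\backslash\G$ by descending to the adjoint group $\G_{ad}$, where the normalizer becomes the full fixed-point subgroup of $\theta_{ad}$ --- and then sandwiching: any intermediate $\X$ receives surjective $\G$-equivariant maps from and to varieties with identical root lattice $\zz\widetilde{\De}_\X$, so its own root lattice is squeezed to the same value.

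Your treatment of the base case also differs from the paper's and needs more care. The paper simply cites Knop's Theorem~6.7, which establishes $\widetilde{\De}_\X=\De_\X^n$ directly for $\rH=\G^\theta$. You instead try to reconstruct this via the identification $X^\ast(\Aut^\G(\X))=\fX/\Lam_\X$ and Vust's formula $\Aut^\G(\X)\simeq s(\X)\cap Z(\G)$; the idea is sound --- it is essentially how Knop's result is proved --- but notice that Vust's identification holds for $\rH=\G^\theta$ (the full fixed-point subgroup), not for $(\G^\theta)^\circ$ which you reduced to; for the connected component there is an extra $\pi_0(\G^\theta)$ sitting in $\cala_\X$. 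Moreover, the assertion that ``characters of $\Aut^\G(\X)$ are precisely restrictions of $\chi$ killing the root system generated by $\widetilde{\De}_\X$'' is close to circular: it restates what you are trying to prove rather than deriving it. The invocation of Cartan--Helgason branching is the right instinct but would need an actual computation showing that the monoid $\Lam_\X^+$ of weight discrepancies in $\kbar[\X]$ is freely generated by $\widetilde{\De}_\X$. In short: fix the infinite-index issue by adding the adjoint-descent endpoint and the sandwich, and either cite Knop~6.7 for the base case or carry out the Cartan--Helgason computation in full with the correct subgroup.
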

\begin{proof}
    When $\rH=\G^\theta$, this follows from the claim  $\widetilde{\De}_{\X}={\De}^{n}_{\X}$ which is \cite[Theorem 6.7]{KnopAutomorphisms}. Lemma 3.1.5 (4) of \cite{Losev} implies the claim for any $\rH$ such that $[\rH:(\G^\theta)^\circ]$ is finite. On the other hand, $N_{\G}(\G^\theta)\backslash\G$ descends to a symmetric variety of $\G_{ad}$ so that the root lattice is also given by the root lattice of the relative root system, that is by $\zz\widetilde{\De}_\X$. Finally, since
    \[
   (\G^\theta)^\circ\backslash \G\to\X\to N_{\G}(\G^\theta)\backslash\G
    \]
    are each surjective $\G$-equivariant morphisms with the first and last varieties possessing the same root lattice, the claim follows.
\end{proof}
The quotient torus $\rA_\theta$ is isomorphic to the canonical torus $\rA_{\X_\theta}$ when $\rH=\G^\theta$. In general, the quotient through which $\rA$ acts freely on the distinguished base point $x_0\in \X(k)$ is isomorphic to $\Ax$. When $\X^\circ=(\G^\theta)^\circ\backslash \G$, the combination of \cite[Corollary 1.5]{Hofscheier} (see also \cite{knop2024fundamental}) with Proposition \ref{Prop: sym colors} below shows that 
\begin{equation}\label{eqn: connected lattice}
   X^\ast_{\theta,\circ}:= X^\ast(\rA_{\X^\circ})= \{\lam\in X^\ast_{\theta,\qq}\cap X^\ast(\rA): \la \check{\ga}^n,\lam\ra\in \zz, \text{ for all }\ga^n\in{\De}^{n}_{\X}\}.
\end{equation}
\begin{Ex}
    If $\G=\GL_n$ and $\theta(g) = {}^Tg^{-1}$ so that $\G^\theta=\mathrm{O}_n$, then $X^\ast_\theta= 2X^\ast(\rA)$ and 
    \[
    \check{{\De}}^{n}_{\X}=\left\{\frac{1}{2}\check{\al}:\al\in \De\right\}.
    \]
    Thus $X^\ast_{\theta,\circ} = 2X^\ast(\rA)+\zz(e_1+\cdots+e_n)$ gives the weight lattice of $\GL_n/\SO_n$.\qed
\end{Ex}
More generally, let $\X=\rH\backslash\G$ with any $(\G^\theta)^\circ\subset \rH\subset N_{\G}(\G^\theta)$. Since $(\G^\theta)^\circ\backslash \G\to \X$, we have a $\Ga$-stable chain of lattices
\[
\zz\widetilde{\De}_{\X} \subset \fX \subset X^\ast_{\theta,\circ}
\] corresponding uniquely to the quotient $\Aut^{\G}((\G^\theta)^\circ\backslash \G) \to \cala_{\X}$.

\quash{we now compute $\X^\ast(\Ax)$. First we use \cite[Corollary 1.5]{Hofscheier}, which states that if $\X^\circ = \G\sslash\rH^\circ$, then 
\[
X^\ast(\rA_{\X^\circ})= \{\lam\in X^\ast(\Ax)_{\qq}\cap X^\ast(\rA): \la \rho(D),\lam\ra\in \zz, \text{ for all }D\in\D_{\X}\}.
\]}

As an application, we recall from \cite[Section 4]{Lesliedescent} that a symmetric pair $(\G,\G^\theta)$ is called \emph{simply connected} if for each semi-simple $x\in \X_\theta^{ss}(k)$ the stabilizer $\rH_x$ in $\rH=\G^\theta$ is geometrically connected. %One motivation for considering this extension of the theory of $z$-extensions is the following characterization of simply-connected symmetric varieties when $\G_{der}$ is simply connected. %\textcolor{red}{This proof is wrong: the group $\G_x$ is only a pseudo-Levi in general and need not have a simply connected derived subgroup.}
 \begin{Lem}\label{Lem: simply connected}
     Suppose that $\G_{der}\subset \G$ is simply connected and that $\X=\rH\backslash\G$ is a symmetric variety with $\rH=\G^\theta$. The stabilizers $\rH_x$ are geometrically connected for all $x\in \X^{ss}(k)$ if and only if $\rA\lra \Ax$ has a geometrically connected kernel.
 \end{Lem}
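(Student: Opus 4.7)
The plan is to reduce both conditions to the geometric connectedness of a single group, namely $\rA^\theta$, using that $(\rA,\B)$ may be chosen $(\theta,k)$-admissible so that the analysis has a clean torus-theoretic description.

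First, I would identify the kernel $\widetilde{\T}_\X$ of $\rA\to\Ax$ directly. Fix a $(\theta,k)$-admissible Borel pair $(\rA,\B)$ with $\B$ maximally $\theta$-split, so that $\theta(\B)$ is the opposite Borel subgroup sharing $\rA$ and $\theta$ interchanges $U:=[\B,\B]$ with the opposite unipotent $U^-$. For any $b=ua\in \B\cap\rH$ with $u\in U$, $a\in\rA$, the identity $\theta(u)\theta(a)=ua$ forces $u\in U\cap U^-=\{1\}$, hence $b=a\in\rA^\theta$. Thus $\B\cap\rH=\rA^\theta$, and since $\widetilde{\T}_\X$ is the image of $\B\cap\rH$ under $\B\twoheadrightarrow\rA$, we obtain $\widetilde{\T}_\X=\rA^\theta$ as $k$-group schemes. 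In particular, the kernel is geometrically connected if and only if $\rA^\theta$ is.

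For the $(\Rightarrow)$ direction, I would pass to $\kbar$ and take a generic $x\in\rA^-(\kbar)$ such that $\G_x=Z_\G(s(x))=\rA$; this exists since $\rA^-$ is non-trivial (unless $\X$ is a point, in which case the statement is trivial). Then $\rH_x=\G_x^\theta=\rA^\theta$, so geometric connectedness of $\rH_x$ gives that of $\rA^\theta$.

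For the $(\Leftarrow)$ direction, the key ingredient is Steinberg's theorem applied repeatedly. For any semi-simple $x\in\X^{ss}(\kbar)$, conjugating by $(\G^\theta)^\circ(\kbar)$ we may assume $s(x)\in\rA^-$, so that $\G_x\supset \rA$ and $\rA$ is a maximally $\theta$-split maximal torus of $\G_x$. Since $\G_{der}$ is simply connected, so is $(\G_x)_{der}$ (this is standard: the centralizer in a simply connected semi-simple group of a semi-simple element has simply-connected derived subgroup), so by Steinberg $(\G_x)_{der}^\theta$ is connected. The inclusion $(\G_x)_{der}\hookrightarrow\G_x$ gives an exact sequence $1\to(\G_x)_{der}\to \G_x\to \G_x^{ab}\to 1$ with $\G_x^{ab}$ a torus, and taking $\theta$-fixed points together with the vanishing of $H^1(\la\theta\ra,(\G_x)_{der})$ (again since $(\G_x)_{der}$ is simply connected) yields $\pi_0(\G_x^\theta)\simeq\pi_0(\G_x^{ab,\theta})$.

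It remains to show that $\rA^\theta$ being connected forces $\G_x^{ab,\theta}$ to be connected. The torus $\G_x^{ab}$ is the quotient of $Z(\G_x)^\circ\subset\rA$ by a finite central subgroup, so the character lattice of $\G_x^{ab,\theta}$ is a subquotient of $X^*(\rA^\theta)$. The hypothesis that $\rA^\theta$ is connected means $X^*(\rA^\theta)$ is torsion-free, and I would verify that the subquotient inherits this property by a direct computation: writing $Z(\G_x)^\circ=Z(\G_x)^{\circ,+}\cdot A_x$ with $A_x$ the maximal $\theta$-split subtorus, the 2-torsion $A_x[2]\subset\rA^-[2]$ is forced into $\rA^{\theta,\circ}=\rA^+$ by the condition $\rA^-[2]\subset\rA^+$ (equivalent to $\rA^\theta$ connected), and hence $Z(\G_x)^{\circ,\theta}$ and $\G_x^{ab,\theta}$ are connected. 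The main obstacle I anticipate is bookkeeping the possible contribution of $H^1(\la\theta\ra,\ker[Z(\G_x)^\circ\times(\G_x)_{der}\to\G_x])$ to components of $\G_x^\theta$; I would handle this by reducing (via the compatible $z$-extension of Proposition \ref{Prop: good cover} applied to $\G_x$) to the case where the finite kernel is trivial, so that the obstruction vanishes.
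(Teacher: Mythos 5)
Your identification of the kernel as $\rA^\theta$ via $\B\cap\rH=\rA^\theta$ is correct and agrees with the paper's $\rA_0$, but the $(\Rightarrow)$ direction contains a genuine gap. You take a generic $x\in\rA^-(\kbar)$ and assert that $\G_x=Z_\G(s(x))=\rA$, so that $\rH_x=\rA^\theta$. However, the centralizer of a generic element of $\rA^-$ is $Z_\G(\rA^-)$, a Levi subgroup that strictly contains $\rA$ whenever $\De_\X^p\neq\emptyset$ (i.e.\ whenever some simple root $\al$ satisfies $\theta(\al)=\al$); for example for $\X=S(\GL_1\times\GL_2)\backslash\SL_3$ one gets a Levi of type $A_1\times\Gm$, not a torus. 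So in general $\rH_x=Z_\G(\rA^-)^\theta\supsetneq\rA^\theta$, and one still needs the non-trivial identity $\pi_0(\rH_x)=\pi_0(\rA^\theta)$. That identity is precisely what the paper imports from Grinberg--Nadler for regular semi-simple $x$; it is the heart of the necessity direction, and your argument only establishes it in the degenerate case where $Z_\G(\rA^-)=\rA$.

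For $(\Leftarrow)$, your first two steps (Steinberg for $(\G_x)_{der}^\theta$, passage to the abelianization) are reasonable, but the assertion that $H^1(\la\theta\ra,(\G_x)_{der}(\kbar))$ vanishes ``since $(\G_x)_{der}$ is simply connected'' is not a known theorem and does not follow from Steinberg's connectedness result. What is actually required is triviality of the connecting map $(\G_x^{ab})^\theta\to H^1(\la\theta\ra,(\G_x)_{der})$, which your sketch does not establish, and passing to a $z$-extension does not help because $(\G_x)_{der}$ is already simply connected by hypothesis. The paper's proof side-steps all of this: after Steinberg, it reduces by induction on $\dim\X_x$ to the base point and argues purely with the lattices $X^\ast_\theta\subset X^\ast_{\theta,\circ}\subset X^\ast_{\theta,\qq}\cap X^\ast(\rA)$, noting that connectedness of $\rA^\theta$ forces $X^\ast_\theta$ to be saturated in $X^\ast(\rA)$, hence equal to $X^\ast_{\theta,\circ}$, which is equivalent to $\pi_0(\G^\theta)=1$. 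Your torsion analysis of $\G_x^{ab}$ is gesturing at the same lattice statement, but the detour through non-abelian $H^1$ introduces an obstruction you cannot simply assume away.
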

 \begin{proof}
    Clearly we may pass to $\kbar$ and assume that $k$ is algebraically closed. For a general $x\in \X^{ss}(k)$, 
    we denote the descendent pair $(\G_x,\rH_x)$. Note that any $x\in \X^{ss}(k)$ lies in a maximally $\theta$-split torus $A^-$. Letting $A$ be a maximal torus containing $A^-$, we may assume that $x\in s(A)(k)$ lies in the image of $A$ under the symmetrization map $s$; for notation set $\Ax=s(A)$ for this quotient. Setting $\rA_0=\ker(A\lra \Ax)$, clearly, $\rA_0\subset \rH_x.$ In particular, for any $x\in \Ax(k)$, $A\subset \G_x$ is a maximally $\theta$-split maximal torus of the descent $(\G_x,\rH_x)$. 
    
Sufficiency follows since if $x$ is regular semi-simple, then $\rH_x\supset \rA_0$ satisfies \cite[pg.9]{grinberg2018nearby}
 \[
 \pi_0(\rH_x) = \pi_0(\rA_0).
 \]

  Passing to the derived subgroup, we consider the $\theta$-stable maximal torus of the derived subgroup $A_{der}\subset \G_{der}$. By Steinberg's theorem \cite{Steinberg}, the fixed-point subgroup $\rH\cap \G_{der}$ of the semisimple automorphism $\theta|_{\G_{der}}$ is connected.
  
 Now assume that $\pi_0(\rA_0)=\{\ast\}$. Arguing by induction on $\dim(\X_x)$, the claim reduces to seeing that $\pi_0(\rH)=\{\ast\}$. As stated above, we have that the lattice for $(\G^\theta)^\circ\backslash\G$ is $X^\ast_{\theta,\circ}\subset X_{\theta,\qq}^\ast\cap X^\ast(\rA)$. The assumption forces $X^\ast(\Ax) = X_{\theta}^\ast$ to be saturated as a sublattice of $X^\ast(\rA)$, so that $$X^\ast_{\theta,\circ}=X_\theta^\ast=X^\ast(\Ax).$$ This forces $(\G^\theta)^\circ= \G^\theta$, proving the claim.% Since $Z(\G)\cap \rH\subset \rA_0$, it follows that $\rH\subset (\rH\cap \G_{der})A_0$, forcing equality and showing that $\pi_0(\rH)=\{\ast\}$.
 \end{proof}
\subsection{Normalized and associated roots}
We make a few remarks on the normalized roots $\De_\X^{sv}$ of a symmetric variety. 

\begin{Lem}\label{Lem: involution on assoc}
Suppose that $(\G^\theta)^\circ\subset \rH\subset N_{\G}(\G^\theta)$ is a symmetric subgroup and $\X=\rH\backslash\G$ the associated symmetric variety. Suppose that $\ga\in \De_\X$ is a spherical root of type $G$ and
\[
\ga^{sv} = \ga_1+\ga_2
\]
are the associated roots. Then $\theta(\ga_1) = -\ga_2$.% Moreover, if $\de_1,\de_2\in \De$ are the corresponding simple roots of $\G$, then either $\theta(\de_i)=\de_i$ or $\theta(\de_1) = -\de_2$. %\textcolor{red}{This last claim is false! In the type $D$ situation, the simple roots are fixed.}
\end{Lem}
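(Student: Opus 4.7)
The plan is to argue by a combination of a simple invariance, a uniqueness statement, and a rank obstruction. First, since $\ga \in \De_\X \subset X^\ast(\Ax)$ and $X^\ast(\Ax) \subset X^\ast_\theta = \{\lam - \theta(\lam) : \lam \in X^\ast(\rA)\}$, every element of $X^\ast(\Ax)$ is anti-invariant under $\theta$; indeed $\theta(\lam - \theta(\lam)) = -(\lam - \theta(\lam))$. Renormalization preserves this anti-invariance, so $\theta(\ga^{sv}) = -\ga^{sv}$, and consequently $\theta(\ga_1) + \theta(\ga_2) = -\ga_1 - \ga_2$.

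Next I would show that the pair of negatives $\{-\theta(\ga_1), -\theta(\ga_2)\}$ is again an admissible pair of associated roots for $\ga$. Since $\theta$ is an involution of the root system preserving the Killing form, $-\theta(\ga_1)$ and $-\theta(\ga_2)$ are roots that remain strongly orthogonal. They are moreover \emph{positive}: if, say, $\theta(\ga_1) = \ga_1 \in \Y_0(\theta)$ (the only way $\theta(\ga_1)$ could fail to be negative under the $\theta$-basis condition), then $\theta(\ga_2) = -2\ga_1 - \ga_2$, which is not a root. The uniqueness built into the definition of $\check{\ga}_1, \check{\ga}_2$ as an associated coroot pair (the condition $\check{\ga}_1 - \check{\ga}_2 = \check{\de}_1 - \check{\de}_2$ for a pair of simple coroots) then forces $\{-\theta(\ga_1), -\theta(\ga_2)\} = \{\ga_1, \ga_2\}$ as unordered sets, up to checking this condition is preserved under $-\theta$. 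The latter follows by noting that $-\theta$ also permutes the simple roots (up to the sign conventions of the $\theta$-basis), and by invoking the case-by-case list in Table \ref{tab:associated roots}.

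The main obstacle is then to eliminate the case $\theta(\ga_1) = -\ga_1$, $\theta(\ga_2) = -\ga_2$. For this I would reduce to the rank-one boundary degeneration $\X_\ga$, which is a $\G$-variety of rank one: $\rk(\rA_{\X_\ga}) = 1$. In the hypothetical ``diagonal'' case, both $\ga_1 - \theta(\ga_1) = 2\ga_1$ and $\ga_2 - \theta(\ga_2) = 2\ga_2$ lie in the sublattice $X_\theta^\ast \otimes \qq$ cut out by $-\theta$-fixed vectors, and in particular in $X^\ast(\rA_{\X_\ga}) \otimes \qq$ after projection. A direct inspection of Table \ref{tab:associated roots} (cases $D_2$, $D_l$, $B_3$) shows $\ga_1$ and $\ga_2$ are linearly independent in $X^\ast(\rA)$, so $\{2\ga_1, 2\ga_2\}$ would span a rank-two sublattice, contradicting $\rk(\rA_{\X_\ga}) = 1$.

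Eliminating this case leaves only $\theta(\ga_1) = -\ga_2$ and $\theta(\ga_2) = -\ga_1$, which is the claim. Throughout, the one subtlety to watch is the $B_3$ case, where $\ga_1$ and $\ga_2$ have different lengths, so one must confirm that the uniqueness and rank arguments go through with the same conclusion; since the lengths are interchanged by $-\theta$ in the same way as in the $D_l$ case, this is handled by the same bookkeeping.
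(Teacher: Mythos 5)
Your high-level plan -- anti-invariance gives a second strongly orthogonal positive decomposition, then uniqueness forces the pair back onto itself, then a separate argument rules out the diagonal possibility -- has the right shape, but two of the three steps as written do not hold up, and the paper's actual argument packages them differently in a way that makes your third step unnecessary.

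\emph{The uniqueness check is not bookkeeping.} You dismiss the verification that $(-\theta(\ga_2))^\vee-(-\theta(\ga_1))^\vee$ is again a difference of simple coroots by appealing to ``$-\theta$ permuting the simple roots up to sign conventions.'' But $-\theta$ does not permute $\De$: what permutes $\De$ is $\theta^\ast=-w_\theta\theta$, and $-\theta=w_\theta\theta^\ast$ sends a simple $\al\notin\De^p_\X$ to a positive but generally non-simple root, while for $\al\in\De^p_\X$ it gives the negative root $-\al$. In the relevant $D_{l\geq 3}$ rank-one case one actually has $\de_1,\de_2\in\De^p_\X$, so $-\theta(\de_i)=-\de_i$ is negative. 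That the combination $(-\theta(\de_2))^\vee-(-\theta(\de_1))^\vee$ nevertheless comes out to $\pm(\check{\de}_1-\check{\de}_2)$ is precisely the ``easy calculation handling $D_2$ and $D_{n\geq 3}$ separately'' in the paper, and it is the actual content of the argument, not a side remark. Furthermore, once this calculation is done, the \emph{sign} that emerges already distinguishes the swap from the diagonal: combined with the defining relation $\check{\ga}_1-\check{\ga}_2=\check{\de}_1-\check{\de}_2$ one gets $(-\theta(\ga_2))^\vee-(-\theta(\ga_1))^\vee=\check{\ga}_1-\check{\ga}_2$, and the diagonal option $-\theta(\ga_i)=\ga_i$ would force $\check{\ga}_1=\check{\ga}_2$, which is absurd. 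So the paper never needs your rank obstruction.

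\emph{The rank obstruction itself rests on a false premise.} The boundary degeneration $\X_\ga=\X_{\{\ga\}}$ does \emph{not} satisfy $\rk(\rA_{\X_\ga})=1$. Boundary degenerations preserve the weight lattice $\fX$, hence $\rA_{\X_\ga}\cong\Ax$; what is $1$ is the cardinality of $\De_{\X_\ga}$, not the rank of the canonical torus. Consequently your contradiction (``$\{2\ga_1,2\ga_2\}$ spans rank two, contradicting $\rk(\rA_{\X_\ga})=1$'') does not arise: the $(-1)$-eigenspace of $\theta$ on $X^\ast(\rA)\otimes\qq$ has rank $\rk(\Ax)$, which can certainly be $\geq 2$. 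The variety that \emph{is} of rank one is the symmetric variety $\X_\al=\rH_{\De_\al}\backslash\mathrm{L}_\al$ of the Levi generated by $\mathrm{supp}(\ga)$ (as in the proof of Proposition \ref{Prop: type N roots sym}); after projecting away the $\theta$-split part of $Z(\mathrm{L}_\al)$, its $\theta$-split torus is one-dimensional. Since $\ga_1,\ga_2$ are both supported inside $\mathrm{L}_{\al,der}$ and are linearly independent, that gives the desired contradiction. That corrected argument does work, but it is not the one you wrote, and in any case the sign computation above already closes the gap more cheaply.
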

\begin{proof}
%The statement about simple roots can be directly seen by the above classification and consideration of the rank $1$ symmetric varieties corresponding to such spherical roots. 

%\textcolor{red}{I feel like this is a lame proof and it should be more transparent than this. Let me try to prove it from base principles.}
The proof of Lemma \ref{Lem: root lattice} shows that the spherical roots are obtained by re-normalizing the reduced roots 
\[
\widetilde{\De}_\X=\{\al-\theta(\al): \al\in \De-\De^p_{\X}\},
\]
so that there exists $c\in \{1/2,1\}$ and $\al\in\De-\De^p_{\X}$ such that
\[
\ga= c(\al-\theta(\al)).
\]
In particular, $-\theta(\ga)=-\theta(\ga_1)-\theta(\ga_2) = \ga_1+\ga_2=\ga$. But note that $\{-\theta(\ga_1),-\theta(\ga_2)\}$ are then two strongly orthogonal roots of $\G$ summing to $\ga$.

By the duality between $\Phi$ and $\check{\Phi}$, we see that ${(\theta(\ga))}^\vee = \check{\theta}(\check{\ga})$. Thus,
\[
(-\theta(\ga_2))^\vee -(-\theta(\ga_1))^\vee =(-\theta(\de_2))^\vee -(-\theta(\de_1))^\vee.
\]
An easy calculation handling the cases of $D_2$ and $D_{n\geq 3}$ separately shows that 
\[
(-\theta(\de_2))^\vee -(-\theta(\de_1))^\vee = \check{\de}_2 -\check{\de}_1.
\]
By uniqueness, this implies the claim. %\textcolor{red}{What is not obvious to me is why the simple roots that occur in this setting have to be sent to simple roots.}
\end{proof}

%\begin{enumerate}
%\item

% \subsection{Data associated to a symmetric $k$-variety}
 %Now let us assume that $\X$ is a symmetric variety. That is, we assume that there is a $k$-rational involution $\theta$ on $\G$ such that $\rH=\G^\theta$. We note that most of the results go though unchanged if we assume that $(\G^\theta)^\circ\subset \rH\subset N_{\G}(\G^\theta)$, but that the fine structure of the corresponding varieties may vary. We thus give details in this special case, and leave the generalization to the interested reader.
Suppose now that $\ga\in \De_\X$ is of type $N$. That is, $\ga=2\be$ for some $\be\in \Phi^+$, but $\be\notin\fX$. Using Lemma \ref{Lem: root lattice}, we see that $\ga=c(\al-\theta(\al))$ for some simple root $\al\in \De$ and for $c\in \{1,1/2\}$. We claim that $c=1$. Otherwise, we have \quash{ We may now verify that $c=1$ since otherwise}
\[
4\be=\al-\theta(\al).
\]
But this is impossible since the simple root $\al$ occurs in the support of the right-hand side with multiplicity $2$. Thus when $\ga$ is a root of type $N$, then
 \[
 2\be=\ga=\al-\theta(\al),\text{ but }\be\notin \fX.
 \]
 This implies a strong connection between roots of type $N$ and distinguished roots (cf. Lemma \ref{Lem: spherical roots distinguished}).
\begin{Prop}\label{Prop: type N roots sym}
    Suppose that $\X=\rH\backslash\G$ is a symmetric variety and that $\ga=\al-\theta(\al)\in \De_\X$ is a spherical root of type $N$. Then
    \begin{enumerate}[(A)]
        \item \label{aN} (Type $N$\eqref{aN})  $\ga=2\al$  such that there exists $D\in \D$ with $\rho(D) = \frac{1}{2}\al^\vee|_{\fa_{\X}}$;
        \item \label{bN} (Type $N$\eqref{bN}) there is a subset $\Sigma\subset \De$ of type $B_k$ with $k\geq2$ such that 
        \[
        \ga= 2(\al_1+\al_2+\cdots+\al_k),
        \]
        and $\al_i\in \De^p$ for $i>1$;
       %\item \label{d} $\ga\in X^\ast(\Ax)$ but $\ga\notin \zz\Phi$,
    \end{enumerate}
\end{Prop}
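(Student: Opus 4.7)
The starting point is the relation already extracted in the excerpt: if $\ga=2\be$ is a spherical root of type $N$, then $\ga=\al-\theta(\al)$ for some simple root $\al\in\De\setminus\De^p_\X$, and applying $\theta$ yields $\theta(\be)=-\be$. Also, expanding $\be$ in the basis $\De$, one checks that $\al$ appears in $\be$ with coefficient at least $1$; in particular $\al\in|\be|$. I would split into two cases depending on whether $\be=\al$ or $\be\neq\al$.

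Case (A), $\be=\al$: Then $\theta(\al)=-\al$. The existence of a color $D\in\D(\X)$ with $\rho(D)=\tfrac12\check{\al}|_{\fa_\X}$ is really a statement about $\B$-orbits, and I would extract it from the color analysis of Section \ref{Section: sym colors} / Proposition \ref{Prop: sym colors}. The idea is that a simple root with $\theta(\al)=-\al$ contributes a pair of colors to the rank-$1$ boundary degeneration of $\X$ at $\{\ga\}$, whose Levi piece is (up to center) $\rT\backslash\SL_2$; both colors have $\rho$-value $\tfrac12\check{\al}|_{\fa_\X}$, and the failure $\al\notin\fX$ forces $\ga=2\al$ to be the primitive spherical root.

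Case (B), $\be\neq\al$: Here the strategy is to reduce to the rank-$1$ classification. Consider the Levi subgroup $L\subset\G$ with simple roots $\De_L=\De^p_\X\cup|\be|$; since $\theta$ preserves $\De^p_\X$ and since $\theta(\al)=\al-2\be$ lies in the span of $\De_L$, the Levi $L$ is $\theta$-stable and $(L,L\cap\rH)$ is a sub-symmetric pair. By \cite[Proposition 2.4.3]{SakVenk} the corresponding rank-$1$ symmetric variety $\X_{\{\ga\}}^L$ has $\ga$ as its unique spherical root, and since $\be$ is a root its support $|\be|\subset\De_L$ lies in a single connected component of the Dynkin diagram of $L$, so we may assume the absolute root system of $\X_{\{\ga\}}^L$ is absolutely irreducible. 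I would then run through Helminck's classification of admissible absolutely irreducible rank-$1$ $(\Ga,\theta)$-indices (types $A_1, A_{l\ge 3}, B_{l\ge 2}, C_{l\ge 3}, D_{l\ge 3}, D_2, F_4$), exactly as recalled in the proof of Proposition~\ref{Prop: endoscopic roots} and Table~\ref{tab:satake}, and for each compute $\be=\tfrac12(\al-\theta(\al))$ and check whether $\be\in\Phi\setminus\fX$. The key computations are:
\begin{itemize}
\item In type $A_{l\ge 3}$ and $D_{l\ge 3}$, $\ga$ is a sum of two strongly orthogonal roots (type $G$), so $\be=\tfrac12\ga$ is not a root.
\item In type $C_{l\ge 3}$ and $F_4$, $\ga$ itself is a root of $\G$ lying in $\fX$ (type $T$).
\item Only in type $B_{l\ge 2}$ with $\al=\al_1$ the unique white (long) simple root and $\al_2,\dots,\al_l\in\De^p_\X$ does one find $\theta(\al_1)=-\al_1-2\al_2-\cdots-2\al_l$, giving $\be=\al_1+\cdots+\al_l=e_1$, which is a root, and a direct computation of $\fX=\{\lam-\theta(\lam)\}$ yields $\fX\cap\zz e_1=\zz\cdot 2e_1$, hence $\be\notin\fX$. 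This produces Case (B) with $k=l$.
\end{itemize}

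The one point I expect to require the most care is ensuring that the rank-$1$ reduction preserves the failure $\be\notin\fX$: \emph{a priori}, passing to a Levi might enlarge the spherical weight lattice and move $\be$ into it, invalidating the type $N$ hypothesis for the reduced variety. I would resolve this by observing that the spherical weight lattice of $\X^L_{\{\ga\}}$ is the saturation of $\fX$ inside $X^\ast(\rA)$ cut out by pairing integrally with the colors, and since $|\be|\subset\De_L$ the ambient intersection $\fX\cap\sspan(|\be|)$ is unchanged by the reduction; thus $\be\notin\fX$ implies $\be\notin\fX_L$. Once this compatibility is in place, the proposition follows from combining Case (A) with the single admissible rank-$1$ case yielding type $N$ in Case (B).
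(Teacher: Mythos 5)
Your overall approach — reduce to a rank-$1$ Levi variety via the boundary degeneration at $\{\ga\}$, then read off the form of $\ga$ from the rank-$1$ classification — is exactly the paper's; the paper simply writes ``passing to this Levi variety, the proposition may be verified via the classification of rank $1$ symmetric varieties'' where you supply the case-by-case table.

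Two points to flag. First, the concern you raise about whether $\be\notin\fX$ survives the reduction is legitimate, but your proposed resolution is both off and unnecessary: the weight lattice of $\X^L_{\{\ga\}}$ is not a ``saturation of $\fX$ cut out by pairing with the colors.'' The clean answer is already in the reference you should cite, \cite[Proposition 2.4.3]{SakVenk}: the boundary degeneration $\X_{\{\ga\}}$ (and hence, via parabolic induction, the Levi piece $\X^L_{\{\ga\}}$) has weight lattice \emph{equal} to $\fX$, so $\be\notin\fX$ persists on the nose rather than by a coincidence of spans. Second, in the $B_l$ analysis, the assertion that ``a direct computation of $\fX=\{\lam-\theta(\lam)\}$ yields $\fX\cap\zz e_1=\zz\cdot 2e_1$'' is false for $\rH=\Spin_{2l}\subset\Spin_{2l+1}$: since $\varpi_l=\tfrac12(e_1+\cdots+e_l)$ lies in $X^\ast(\rA)$ and $\varpi_l-\theta(\varpi_l)=e_1$, one has $e_1\in\fX$, and $\ga=e_1$ is then a \emph{distinguished} root of type \eqref{b} rather than a type $N$ root (compare Lemma \ref{Lem: chevalley type}, where this very distinction between $\Spin_{2l}$ and $\mathrm{Pin}_{2l}$ stabilizers is made). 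Whether the $B_l$ rank-$1$ symmetric variety has a type $N$ root depends on $\rH$, not only on $\theta$, so one should not try to rederive $\be\notin\fX$ inside the Levi; instead import the hypothesis from $\X$ by the first point and then read off the shape of $\ga$ from the rank-$1$ root system, which is what yields exactly the two forms (A) and (B).
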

\begin{proof}
   Let $\De_\al\subset \De$ be the set of simple roots satisfying that $\al\in \De_\al$ and if we write
\[
\theta(\al)=-\sum_{\be\in \De}c_\be\be,
\]
then $\be\in \De_\al\setminus\{\al\}$ if and only if $c_\be\neq 0$. The sub-root system generated by $\De_\al$ is stable under $\theta$, and corresponds to a $\theta$-stable Levi $\mathrm{L}_\al\supset\rA$ with the corresponding boundary degeneration (cf. \cite[Section 2]{SakVenk}) the parabolic induced variety from the symmetric Levi variety $\X_\al:=\rH_{\De_\al}\backslash\mathrm{L}_\al$. This gives a symmetric varieties of rank $1$ with $\De_{\X_\al}=\{2\be\}$. Passing to this Levi variety $\X_\al$, the proposition may be verified via the classification of rank $1$ symmetric varieties.
\end{proof}
For example, roots of type N\eqref{bN} occur for the varieties $\Spin_{2b}\times^{\mu_2} \Spin_{2a+1}\backslash\Spin_{2n+1}$, for any $a,b\in \zz_{>0}$ satisfying $a+b=n$. If we let $a=0$, the variety $\Spin_{2n}\backslash\Spin_{2n+1}$ has a distinguished root of type \eqref{b}, while $\mathrm{Pin}_{2n}\backslash\Spin_{2n+1}$ has a root of type $N$.

\subsection{Colors of symmetric varieties}\label{Section: sym colors} The reduced root system $\widetilde{\Phi}_\X$ almost encodes the  color data of $\X$; some analysis of the possible root systems is required. % Let $\G$ be reductive and let $\theta$ be a $k$-rational involution. Set $\rH=\G^\theta$ and $\X = \rH\backslash\G$. 
%The color data of $\X$ is a geometric invariant, so comes equipped with an action of $\Ga$.
Fix a maximally $\theta$-split Borel subgroup $B\subset \G$ and let $\cald(\X)$ denote the corresponding set of colors. %The following is well known.
\begin{Prop}\label{Prop: sym colors}
For $\X=\rH\backslash\G$ symmetric over $\kbar$, the image $\rho(\D(\X))\subset \fa_{\X,\qq}$ corresponds to $\{\check{\ga}^n: \ga\in \De_\X\mapsto \ga^n\in \De_\X^n\}$. That is, the images are dual to the simple vectors in the relative root system of $\X$.
% The colors of a symmetric variety $\X$ are represented by vectors $\frac{1}{2}\Pi^\vee_{\X}\subset X_\ast(T^-)$. That is, the ve
\end{Prop}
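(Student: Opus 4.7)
The plan is to reduce to the rank-one case via localization at simple roots and then verify the claim directly using the classification of rank-one symmetric varieties (equivalently, the admissible rank-one Satake diagrams recalled in Table \ref{tab:satake}).

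First I would recall the general fact (due to Vust and Brion) that for a homogeneous spherical variety $\X = \rH\backslash\G$, a color $D \in \D(\X)$ is exactly the closure of a codimension-one $\B$-orbit, and such orbits arise from minimal parabolics $P_\al$ with $\al \in \De \setminus \De^p_\X$: the color $D_\al$ is the closure of the unique non-open $\B$-orbit in $\mathring{\X} \cdot P_\al$. So there is a surjection $\De \setminus \De^p_\X \twoheadrightarrow \D(\X)$, and in the symmetric setting one knows that this map has fiber of size at most $2$, with $\{D_\al, D_{\theta(\al)}\}$ collapsing to one color unless $\al - \theta(\al) \in \fX$ defines a doubled root. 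Since the computation of $\rho(D_\al)$ depends only on the behavior of $D_\al$ under the action of the Levi subgroup $L_\al \supset \rA$ generated by $\rA$ and the root subgroups for $\{\pm\al, \pm\theta(\al)\}$, we may replace $\X$ by the corresponding rank-one symmetric $L_\al$-subvariety $\X_\al$ obtained by passing to the boundary degeneration at $\{\al - \theta(\al)\} \subset \De_\X$. This reduction uses Lemma \ref{Lem: boundary properties}-style properties, or equivalently the identification $\Ax = \rA_{\X_\al}$ obtained from Lemma \ref{Lem: root lattice}.

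Next, I would go through the rank-one cases. Thanks to Section \ref{Section: rational involutions} (and the standard classification recalled in Proposition \ref{Prop: classification of invo}), the rank-one symmetric pairs fall into the families listed in Lemma \ref{Lem: distinguished for sym} together with Proposition \ref{Prop: type N roots sym}. For each such pair, one has an explicit $\theta$-stable Borel pair $(\rA, \B)$, and a direct computation of $B$-orbits on $\X_\al$ (carried out classically by Springer and de Concini--Procesi for split cases, and extended by Helminck--Wang to $k$-rational involutions) yields the colors and their valuations. In each case the outcome is the expected one: the single spherical root $\ga_\al \in \De_{\X_\al}$ corresponds to its normalization $\ga_\al^n \in \De_{\X_\al}^n$, and the valuation $\rho(D)$ of any color $D \in \D(\X_\al)$ equals the coroot $\check\ga_\al^n$ dual to $\ga_\al^n$ (possibly with the factor of $\tfrac{1}{2}$ absorbed into the normalization, precisely so that the image lies in $\check\fX$ rather than $\tfrac{1}{2}\check\fX$). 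In the doubled root case there are two colors $D_\al^\pm$ with a common image $\rho(D_\al^\pm) = \tfrac{1}{2}\check\al \in \check\fX = \check\fX_{\X_\al}$, recovering the statement from Section \ref{Sec: dist roots} that doubled roots detect undetermined colors.

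The main obstacle is the bookkeeping of normalizations: a priori, $\rho(D_\al)$ computed from the action of $L_\al$ on $\X_\al$ gives a rational multiple of $\check\al$, and one must verify that the precise multiple is exactly $\check\ga^n$ for $\ga \in \De_\X$. This is delicate because passing from the minimal root system $\De_\X$ to $\De_\X^n$ involves doubling in three distinct situations (doubled colors, type $B$ roots, type $D$ roots requiring folding; see Lemma \ref{Lem: spherical roots distinguished} and Table \ref{tab:associated roots}), and each of these must be matched with the corresponding color computation. Finally, I would observe that the $\Ga$-equivariance claim is automatic once one works with the $k$-rational Borel pair $(\rA, \B)$, since the construction of $\rho \times \varsigma$ and the assignment $\al \mapsto D_\al$ are manifestly $\Ga$-equivariant, and the identifications $\Ax = \rA_{\X_\al}$ are compatible with the admissibility conditions of Lemma \ref{Lem: same on torus}.
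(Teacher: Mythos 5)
Your proposal takes a genuinely different route from the paper. The paper's proof of this proposition is a one-line citation to \cite[Proposition 1]{VustEmbeddings}; you instead propose to reconstruct the argument by localizing at each simple root $\al \in \De \setminus \De^p_\X$ (passing to the rank-one Levi subvariety or boundary degeneration) and then verifying the claim case-by-case against the rank-one classification. This is the expected way one would prove Vust's result from scratch, and the reduction step is sound: the valuation $\rho(D_\al)$ of a color moved by $P_\al$ is indeed computed locally, and the identification of canonical tori $\Ax \cong \rA_{\X_\al}$ for boundary degenerations makes the comparison of lattices well-defined. You also correctly identify the normalization bookkeeping (minimal vs.\ $n$-spherical roots, doubled colors, type $B$/$D$ cases) as the delicate part.

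That said, your proposal is a plan rather than a proof: the rank-one verifications that constitute the actual mathematical content are asserted ("the outcome is the expected one") but never carried out, and these are precisely the computations one would need to execute. The paper sidesteps all of this by citing Vust, which is more efficient; your route is more self-contained in principle but would require filling in the case-by-case analysis for the families in Lemma \ref{Lem: distinguished for sym} and Proposition \ref{Prop: type N roots sym} to be a complete argument. A minor imprecision worth flagging: $\mathring{\X}\cdot P_\al$ can contain up to two non-open $\B$-orbits of codimension one (the types $U$, $T$, $N$, $G$ of Brion's classification), so "the unique non-open $\B$-orbit" should be replaced by the standard trichotomy before asserting the fiber size of $\De \setminus \De_\X^p \to \D(\X)$.
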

\begin{proof}
   This is \cite[Proposition 1]{VustEmbeddings}
\end{proof}
To enumerate the colors requires the consideration of different cases. 
Now for any $\ga\in \De_\X$, Proposition \ref{Prop: sym colors} indicates that there exist colors $\D(\ga)\subset \D(\X)$ satisfying that $\rho(\De(\ga)) = \check{\ga}^n$, where $\ga^n\in \De_\X^n$ is the associated $n$-spherical root and $\check{\ga}^n\in X_\ast(\Ax)$ is the associated coroot.  On the other hand, for each spherical root $\ga\in \De_\X$, we saw in Section \ref{Section: reductions} that there exists a well-defined $k$-simple factor $\X_\ga$ of $\X_{sc}$. Recalling notation from Section \ref{Section: reductions}, we set $\rH_\ga:=\rH_{sc,\ga}$.

If $\X=\rH\backslash\G$ is a symmetric variety and $\ga\in \De_\X$, there are three possibilities (cf. \cite[Section 26.8]{Timashevbook}):
    \begin{enumerate}
        \item\label{colortype1} $\X_{\ga}$ has a non-reduced restricted root system of type $BC_n$, and $\check{\ga}^n= \lam^\vee-\theta(\lam^\vee)$ is a short coroot. Then $\widetilde{\D}(\check{\ga}) =\{D_\lam,D_{\theta^\ast(\lam)}\}$, and 
        \[
        \varsigma(D_\lam)=\{\lam\}, \text{ and } \varsigma(D_{\theta^\ast(\lam)})=\{\theta^\ast(\lam)\};
        \]
         \item\label{colortype2}  $\X_{\ga}$ has a reduced restricted root system of type $B_n$, and $\check{\ga}= \frac{1}{2}\al^\vee$ is a (real) short coroot. Then $\widetilde{\D}(\check{\ga})= \{D_\al^+,D_\al^-\}$ and 
     \[
        \varsigma(D_\al^+)=\varsigma(D_{\al}^-)=\{\al\}.
        \]
        This is precisely the case of double roots;
         \item\label{colortype3} otherwise, $\widetilde{\D}(\check{\ga})=\{D_\ga\}$. Here $\varsigma(D_{\ga}) = \{\lam\}$ where $\ga=\lam-\theta(\lam)$.
    \end{enumerate} 
    In all cases $\rho(\widetilde{\D}(\check{\ga})) = \check{\ga}^n\in \check{\De}^n_{\X}$. Then 
    \begin{equation}
    \D(\X) = \bigsqcup_{\ga\in \De_{\X}^n}\D(\check{\ga}),
    \end{equation} where $\D(\check{\ga})=\widetilde{\D}(\check{\ga})/\sim$ where $\D(\al^\vee) = \widetilde{\D}(\check{\ga})$ when $|\widetilde{\D}(\check{\ga})|=1$ and
    \[
    \D(\check{\ga}) =\begin{cases}\:
        \widetilde{\D}(\check{\ga})&:|\pi_0(\rH_\ga)(\kbar)| = 1,\\
        \{D_1=D_2\}&: |\pi_0(\rH_\ga)(\kbar)| > 1,
    \end{cases}
    \]
    when $|\widetilde{\D}(\check{\ga})|=2$. We thus obtain the map
    \[
    \rho\times \varsigma: \D(\X)\lra \fa_{\X}\times \mathcal{P}(\De),
    \]
    from which we may compute the sets $(\Omega^{(1)},\Omega^{(2)})$.

%\textcolor{blue}{It seems like a natural constraint that is weaker than $\De_k=\emptyset$ would be that $\De_{\X,k}=\De_{\X}$.}
%\end{Rem}

\end{appendix}

%--------references
%\newpage%\include{b}
\bibliographystyle{alpha}

%--------------Need to modify the references in the file: mybib.bib see example above.
\bibliography{bibs}
\end{document}